\documentclass[10pt]{amsart}

\usepackage{amsmath, latexsym, amsfonts, amssymb, amsthm,MnSymbol,stmaryrd}
\usepackage{graphicx,hyperref,dsfont,epsfig,caption,wrapfig,subfig,pifont,tikz}
\usepackage{tabularx}
\PassOptionsToPackage{dvipsnames,svgnames,table}{xcolor}
\usepackage{booktabs}
\usepackage{movie15}
\setlength{\oddsidemargin}{0mm}
\setlength{\evensidemargin}{0mm}
\setlength{\textwidth}{164mm}
\setlength{\headheight}{0mm}
\setlength{\headsep}{12mm}
\setlength{\topmargin}{0mm}
\setlength{\textheight}{220mm}
\setcounter{secnumdepth}{3}
\hypersetup{
    linktoc=page,
    linkcolor=red,          % color of internal links
    citecolor=blue,        % color of links to bibliography
    filecolor=blue,      % color of file links
    urlcolor=cyan,
    colorlinks=true           % color of external links
}

\frenchspacing
\newcommand{\nocontentsline}[3]{}
\newcommand{\tocless}[2]{\bgroup\let\addcontentsline=\nocontentsline#1{#2}\egroup}
\numberwithin{equation}{section}

\newtheorem{theorem}{Theorem}[section]
\newtheorem{lemma}[theorem]{Lemma}
\newtheorem{proposition}[theorem]{Proposition}
\newtheorem{corollary}[theorem]{Corollary}

\newtheorem{remark}[theorem]{Remark}
\newtheorem{definition}[theorem]{Definition}

\theoremstyle{definition}

%%%%%%%%%%%%%%%%%%%%%%%%%%%%%%%%%%%%%%%%%%%%%%%%%%%%%%%%%%%%%%%%%%%%%%%%%%%%%%

\renewcommand{\tilde}{\widetilde}          % wider `tilde'
\DeclareMathSymbol{\leqslant}{\mathalpha}{AMSa}{"36} % nicer `smaller or equal'
\DeclareMathSymbol{\geqslant}{\mathalpha}{AMSa}{"3E} % nicer `larger or equal'
\DeclareMathSymbol{\eset}{\mathalpha}{AMSb}{"3F}     % nicer `emptyset'
\renewcommand{\leq}{\;\leqslant\;}                   % redef. of < or =
\renewcommand{\geq}{\;\geqslant\;}                   % redef. of > or =
\newcommand{\dd}{\text{\rm d}}             % a straight d for differentials

       % \sum-like symbol for union
       % \sum-like symbol for inter
 % max with 2 lines
 % min with 2 lines
 % sup with 2 lines
 % inf with 2 lines
 % sum with 2 lines
 % sum with 3 lines
 % union with 2 lines
 % inter with 2 lines
     % \int with 2 lines
     % \lim with 2 lines
 % liminf 2 lines
 % limsup 2 lines
     % product 2 lines
 % prod. 3 lines

%%%%%%%%%%%%%%%%%%%%%%%%%%%%%%%%%%%%%%%%%%%%%%%%%%%%%%%%%%%%%%%%%%%%%%%%%%%%%%
%%%%%%%%%%%%%% My personal (variable) abbreviations %%%%%%%%%%%%%%%%%%%%%%%%%%
%%%%%%%%%%%%%%%%%%%%%%%%%%%%%%%%%%%%%%%%%%%%%%%%%%%%%%%%%%%%%%%%%%%%%%%%%%%%%%
\newcommand{\C}{\mathbb{C}}
\renewcommand{\H}{\mathbb{H}}
\newcommand{\D}{\mathbb{D}}
\newcommand{\R}{\mathbb{R}}
\newcommand{\Z}{\mathbb{Z}}
\newcommand{\N}{\mathbb{N}}
\newcommand{\Q}{\mathbb{Q}}

\newcommand{\E}{\mathds{E}}
\renewcommand{\P}{\mathds{P}}

\newcommand{\cjd}{\rangle}
\newcommand{\cjg}{\langle}

\newcommand{\hf}{\frac{_1}{^2}}

%%%%%%%%%%%%%%
%%% Pascal macros
%%%%%%%%%%%%%%

\def\l{\mathbf{l}}
\def\k{\mathbf{k}}

\newcommand{\pl}{\partial}
\newcommand{\bbar}{\overline}
\newcommand{\mc}{\mathcal}
\newcommand{\la}{\lambda}
\newcommand{\til}{\widetilde}

\def\vhi{\boldsymbol{\varphi}}

\usepackage{xparse}
\usepackage{float}
\restylefloat{table}

%%%%%%%%%%%%%%%%%%%%%%%%%%%%%%%%%%%%%%%%%%%%%%%%%%%%%%%%%%%%%%%%%%%%%%%%%%%%%%%
%%%%%%%%%%%%%%%%  Title, author, affiliation, date %%%%%%%%%%%%%%%%%%%%%%%%%%%%
%%%%%%%%%%%%%%%%%%%%%%%%%%%%%%%%%%%%%%%%%%%%%%%%%%%%%%%%%%%%%%%%%%%%%%%%%%%%%%%

\def\eps{\epsilon}

\def\T{\mathbb{T}}

\def\bi{\begin{itemize}}
\def\ei{\end{itemize}}

\def\bnum{\begin{enumerate}}
\def\enum{\end{enumerate}}
 %Liouville BM
\def\<#1{\langle #1 \rangle}

\newcommand{\caA}{{\mathcal A}}
\newcommand{\caB}{{\mathcal B}}
\newcommand{\caC}{{\mathcal C}}
\newcommand{\caD}{{\mathcal D}}
\newcommand{\caE}{{\mathcal E}}
\newcommand{\caF}{{\mathcal F}}

\newcommand{\caH}{{\mathcal H}}
\newcommand{\caI}{{\mathcal I}}

\newcommand{\caM}{{\mathcal M}}

\newcommand{\caO}{{\mathcal O}}
\newcommand{\caP}{{\mathcal P}}

\author{Colin Guillarmou}
\address{Universit\'e Paris-Saclay, CNRS,  Laboratoire de math\'ematiques d'Orsay, 91405, Orsay, France.}
\email{colin.guillarmou@math.u-psud.fr}

\author{Antti Kupiainen}
\address{University of Helsinki, Department of Mathematics and Statistics}
\email{antti.kupiainen@helsinki.fi}

\author{R\'emi Rhodes}
\address{Aix Marseille Univ, CNRS, I2M, Marseille, France, and Institut Universitaire de France (IUF)}
\email{remi.rhodes@univ-amu.fr}

\author{Vincent Vargas}
\address{Universit\'e de Gen\`eve, Section de math\'ematiques, UNI DUFOUR, 24 rue du G\'en\'eral Dufour,
CP 64 1211 Geneva 4, Switzerland}
\email{Vincent.Vargas@unige.ch}

\title{Segal's axioms and bootstrap for Liouville Theory}

\date{}
\begin{document}

\begin{abstract} 
 In 1987, Segal gave a functorial definition of Conformal Field Theory (CFT) that was designed to capture the mathematical essence of the Conformal Bootstrap formalism pioneered in physics by  Belavin, Polyakov and Zamolodchikov. In Segal's formulation, the basic objects of CFT, the correlation functions of conformal primary fields, are viewed as functions on the moduli space of Riemann surfaces with marked points which behave naturally under gluing of surfaces. In this paper we give a probabilistic realisation of Segal's axioms in Liouville Conformal Field Theory (LCFT), a CFT that plays a fundamental role in the theory of random surfaces and two-dimensional quantum gravity.  Namely, to a Riemann surface $\Sigma$  
with marked points and boundary given by a union of parameterised circles, we associate a Hilbert-Schmidt operator $\caA_\Sigma$, called the {\it amplitude} of $\Sigma$, which acts on  a tensor product of Hilbert spaces assigned to the boundary circles. We show that this correspondence is functorial: gluing of surfaces along boundary circles maps to a composition of the corresponding operators. Correlation functions of LCFT, constructed probabilistically in earlier works by the authors and F. David, can then be expressed as compositions of the amplitudes of simple building blocks where $\Sigma$ is a sphere with $b\in \{1,2,3\}$ disks removed (hence $b$ boundary circles) and $3-b$ marked points. These amplitudes in turn are shown to be determined by basic objects of LCFT: its {\it spectrum} and its {\it structure constants} determined in earlier works by the authors. As a consequence, we obtain a formula for the correlation functions as   multiple integrals over the spectrum of LCFT, the structure of these  integrals being  related  to a pant decomposition of the surface. The integrand is the square modulus of a function called conformal block: its structure is encoded by the commutation relations of an algebra of operators called the Virasoro algebra  and it depends holomorphically  on the moduli of the surface with marked points. The integration measure involves a product of  structure constants, which have an explicit expression, the so-called DOZZ formula.  Such a holomorphic factorisation of correlation functions has been conjectured in physics since the 80's and we give here its first rigorous derivation for a non-trivial CFT. 
  \end{abstract}

\maketitle
\tableofcontents

%%%%%%%%%%%%%%%%%%%%%%%%%%%%%%%%%%%%%%%%%%%%%%%%%%%%%%%%%%%%%%%%%%%%%%%%%%%%%%%%%%%%%%%%%%%%%%%%%%%%%%%%%%%%%%%%%%%%%%%%%%%%%%%%%%%%%%%%%%%%%%%%%%%%%%%%%%%%%%%%%%%%%%%%%%%%%%%%%%%%%%%%%%%%%%
\section{Introduction}

 Ever since the foundational work by  Belavin, Polyakov and Zamolodchikov \cite{BPZ84} Conformal Field Theory (CFT hereafter) has been a challenge and inspiration to mathematicians. Described by Polyakov \cite{polyakov2008quarksstrings} as ``complex analysis in the quantum domain", it has entered fields of mathematics ranging from the geometric Langlands program (see \cite{TeschnerHitchin} for Liouville CFT which is the topic of this paper) to probability theory through representation theory. The question ``what sort of mathematical object CFT is ?" has been addressed  by devising a set of  axioms it should satisfy and finding examples satisfying these axioms. An axiomatic scheme that grew out of the algebraic structures of CFT is the theory of vertex operator algebras \cite{borcherds,Frenkel:1988xz} and a more physical example are the axioms for conformal bootstrap spelled out in the two dimensional case in \cite{BPZ84}.  

Inspired both by the algebraic structure of CFT and by  the bootstrap picture, G. Segal presented  geometric axioms for CFT that make it particularly attractive for mathematicians \cite{Segal87}. In Segal's approach, CFT is   a {\it functor} from the category whose objects are disjoint unions $\caC=\bigsqcup_i \caC_i$ of parametrised circles $\caC_i$ and morphisms are closed oriented Riemannian surfaces $(\Sigma,g)$ with boundary, to the category whose objects are Hilbert spaces and morphisms are Hilbert-Schmidt operators. For objects, the CFT functor maps   a circle $\caC_i$ to a Hilbert space $\caH$ and  $\caC$ to the tensor product $ \caH^{\otimes_i}$. For morphisms, the CFT functor maps the surface $(\Sigma,g)$, with parametrised boundaries,  to a Hilbert-Schmidt operator $\caA_{\Sigma,g }: \caH^{\otimes_i}\to \caH^{\otimes_j}$, where $i$ is the number of positively oriented boundaries and $j$ the number of negatively oriented boundaries. The CFT functor is then required to behave in a natural way under the operation of gluing surfaces along boundaries which operation maps to a composition of the corresponding operators. 

The motivation for Segal's axioms came from yet another approach to CFT (and to quantum field theory in general) due to Feynman which connects CFT to probability theory, as we now explain.  Let $(\Sigma,g)$ be  a closed oriented Riemannian surface. In this approach, CFT is described in terms of a positive measure on a set  $\caF$  of (generalised)  functions $\phi$ on $\Sigma$.   Expectation (denoted by $\langle \cdot\rangle_{\Sigma,g}$ in what follows) under this measure is formally given as a {\it path integral}  
\begin{align}\label{pathi}
\langle F\rangle_{\Sigma,g}=\int_\caF F(\phi)e^{-S_{\Sigma}(g,\phi)}D\phi
\end{align}
for suitable  observables $F:\caF\to \C$ where $S_{\Sigma} (g,\cdot):\caF\to \R$ is an {\it action functional} (whose expression depends on the metric $g$) and $D\phi$ a formal Lebesgue measure on $\caF$. In this approach, the Hilbert space corresponding to Segal's axioms is a suitable $L^2$ space on the set of functions defined on the (unit) circle $\mathbb{T}$, equipped with some measure, while for a Riemannian surface $(\Sigma,g)$ with boundary, an operator $\caA_{\Sigma,g}$ is then described formally as an integral kernel
\begin{align}\label{ampli}
\caA_{\Sigma,g}(\boldsymbol{\varphi})=\int_{\phi|_\caC=\boldsymbol{\varphi}}e^{-S_\Sigma (g,\phi)}D\phi
\end{align}
where $\boldsymbol{\varphi}=(\varphi_1,\dots,\varphi_b)$ and $\varphi_i$ are  (generalised) functions defined on the boundary circles $\mc{C}_1,\dots,\mc{C}_b$ of $\partial\Sigma$.   We will call these integral kernels {\it amplitudes} in what follows. The physics heuristics behind Segal's axioms come from the following observation:
if $\Sigma=\Sigma_1\cup \Sigma_2$ is cut along a boundary circle $\mc{C}\simeq \mathbb{T}$ into two surfaces $(\Sigma_1,g)$ and $(\Sigma_2,g)$ 
and if the action $S_{\Sigma}(g,\phi)$ is local, then formally
\[\begin{split} 
\mc{A}_{\Sigma,g}(\varphi_1,\dots,\varphi_{b})=&  \int_{\mc{F}(\mc{C})}  \Big(\int_{\phi_1|_{\mc{C}}=\varphi, \phi_1|_{\mc{C}_j}=\varphi_j}e^{-S_{\Sigma_1}(g,\phi_1)}D\phi_1\Big)
\Big(\int_{\phi_2|_\caC=\varphi, \phi_2|_{\mc{C}'_j}=\varphi_j}e^{-S_{\Sigma_2}(g,\phi_2)}D\phi_2\Big) D\varphi\\
=&  \int_{\mc{F}(\mc{C})} \mc{A}_{\Sigma_1,g}(\varphi,\varphi_1,\dots,\varphi_{j_1})\mc{A}_{\Sigma_2,g}(\varphi,\varphi_{j_1+1},\dots,\varphi_{b}) D\varphi
\end{split}\]  
where $\pl \Sigma_1=\mc{C}\cup (\cup_{j=1}^{j_1}\mc{C}_j)$, $\pl \Sigma_2=\mc{C}\cup (\cup_{j=j_1+1}^{b}\mc{C}'_j)$ and $\mc{F}(\mc{C})$ denotes the set of fields on $\mc{C}$. Viewing $\mc{A}_{\Sigma_j,g}$ as integral kernels of operators acting on tensor products of $L^2(\mc{F}(\mathbb{T}))$ (for some measure on $\mc{F}(\mathbb{T})$), the right-hand side is nothing but the integral kernel of the composition of the associated operators.
For a lucid introduction to mathematicians of this point of view to CFT, we refer to \cite{Gawedzki96_CFT}.  In that spirit and from a probabilistic point of view, Segal's axioms are a natural and beautiful generalisation of the notion of semigroup for stochastic Markov processes indexed by the real line, in which case a Hilbert-Schmidt operator $P_{s,t}$ (the transition operator) is attached to any line segment  $[s,t]$ and the Hilbert space is the state space of the Markov process. In a nutshell, Segal's axioms can be seen as a Markov field indexed by Riemann surfaces.

Before we move to the particular case of Liouville CFT, let us quickly recall the basic objects of the conformal bootstrap axiomatics of \cite{BPZ84}  in the probabilistic setup.  One postulates the existence of  random fields $V_{\Delta}$  indexed by $\Delta\in \R$ and defined on $\Sigma$ so that the {\it correlation functions}
$\langle \prod_{j=1}^m V_{\Delta_{j}}(x_j)\rangle_{\Sigma,g}$ 
exist for arbitrary choices of labels $\Delta_j$ and non-coinciding points $x_j\in\Sigma$. The fields $V_{\Delta}$  are called primary conformal fields and are assumed to satisfy the relations 
\begin{align}\label{ax1}
\text{{\bf (Weyl covariance)}}& & \langle \prod_{j=1}^m V_{\Delta_{j}}(x_j)\rangle_{\Sigma,e^\omega g}  =&e^{c S_{\rm L}^0(\Sigma,g,\omega)-\sum_{j=1}^m\Delta_{j}\omega(x_j)}\langle \prod_{j=1}^m V_{\Delta_{j}}(x_j)\rangle_{\Sigma,g}
\\ 
\text{{\bf (Diffeomorphism invariance)}} & & \langle \prod_{j=1}^m V_{\Delta_{j}}(x_j)\rangle_{\Sigma,\psi^\ast g} =&\langle \prod_{j=1}^m V_{\Delta_{j}}(\psi(x_j))\rangle_{\Sigma,g}\label{ax2}
\end{align}
for all smooth $\omega:\Sigma\to \R$ and smooth diffeomorphisms $\psi:\Sigma\to\Sigma$, where the Liouville functional $S_{\rm L}^0(\Sigma,g,\omega)$ is given by 
\begin{equation}\label{SL0}
S_{\rm L}^0(\Sigma,g,\omega):=\frac{1}{96\pi}\int_{\Sigma}(|d\omega|_g^2+2K_g\omega) {\rm dv}_g
\end{equation}
with $K_g$ the scalar curvature and $ {\rm v}_g$ the volume form on $\Sigma$ determined by the metric $g$. Here $c\in\R$ is the {\it central charge} of the CFT and $\Delta$ are  called {\it conformal weights}. Note that \eqref{ax1} and \eqref{ax2} mean that the correlation functions can be viewed as functions   (more precisely  sections of a line bundle)  on  the {\it moduli space} $\caM_{{\bf g},m}$ of Riemann surfaces with $m$ marked points and having the genus of $\Sigma$ (see Subsection \ref{sub:moduli}). A fundamental role is played by the three point correlation function on the sphere $\hat\C=\C\cup\{\infty\}$. In this case there are no moduli and it is determined up to a constant $C(\Delta_1,\Delta_2,\Delta_3)$ depending on the conformal weights. 
This constant is called the {\it structure constant} of the CFT, see Section \ref{sub:dozzintro}.

In physics, Liouville CFT  was introduced  by Polyakov in his path integral formulation of String Theory \cite{Polyakov81} and it served as a motivation for Belavin, Polyakov and Zamolodchikov in their aforementioned  work on CFT \cite{BPZ84}. It plays a fundamental role in the study of random surfaces, quantum cohomology and many other fields of physics and mathematics. It corresponds to taking the particular action, called Liouville action, defined for  $C^1$ maps $\phi:\Sigma\to\R$  by
\begin{equation}\label{introactionL}
S_\Sigma(g,\phi):= \frac{1}{4\pi}\int_{\Sigma}\big(|d\phi|_g^2+QK_g \phi  + 4\pi \mu e^{\gamma \phi  }\big)\,{\rm dv}_g 
\end{equation}
where the parameters of LCFT are  $\mu>0$,  $\gamma\in (0,2)$ and $Q=\frac{\gamma}{2}+ \frac{2}{\gamma}$.  This theory, with central charge $c_{{\rm L}}=1+6Q^2$, has been extensively studied in theoretical physics. The primary fields of LCFT were conjectured to be given by the exponentials $e^{\alpha\phi(x)}$ with $\alpha\in\C$ and conformal weights $\Delta_{\alpha}=\frac{\alpha}{2}(Q-\frac{\alpha}{2})$. To have $\Delta_{\alpha}\in\R$, one then needs $\alpha\in \{Q+i\R\}\cup \R$.  The fields with $\alpha\in \{Q+i\R\}$ were conjectured in  the physics literature \cite{CurtrightThorn82, Braaten_Curtright_Thorn, GERVAIS1984125} to produce the {\it spectrum} of LCFT, a crucial input in the Belavin-Polyakov-Zamolodchikov axiomatics of CFT (see subsections \ref{subsubsec:spectral} and \ref{sub:virasoro}) whereas the correlation functions of the fields with $\alpha\in\R$ are the ingredients of the Kniznik-Polyakov-Zamolodchikov theory of random surfaces \cite{doi:10.1142/S0217732388000982}.  Finally the so-called DOZZ formula for the structure constants of LCFT was proposed in physics by Dorn-Otto \cite{DornOtto94} and Zamolodchikov-Zamolodchikov \cite{Zamolodchikov96} in the nineties, see Appendix \ref{app:dozz}.

The mathematical resolution of LCFT  can be summarised in the following steps: 

\vskip 1mm

\noindent {\bf Step 1}. Give a probabilistic construction of the path integral \eqref{pathi} for the correlation functions and prove \eqref{ax1} and \eqref{ax2}.

\vskip 1mm

\noindent {\bf Step 2}. Find an explicit formula for the %$3$-point functions $G(0,1,\infty, g_0)$, called \emph
{structure constant}. %, on the Riemann sphere.

\vskip 1mm

\noindent  {\bf Step 3}. Find a Hilbert space $\mc{H}$ for which there is a state/field correspondence, and a representation of two commuting Virasoro algebras $({\bf L}_n)_{n\in \Z}, (\til{{\bf L}}_n)_{n\in \Z}$. Decompose the Hilbert space 
$\mc{H}$ in terms of highest weight representations of the Virasoro algebras, by spectral resolution of the 
Hamiltonian ${\bf H}={\bf L}_0+\tilde{{\bf L}}_0$ of the theory.

\vskip 1mm

\noindent {\bf Step 4}. Compute the correlation functions on all surfaces, implementing the conformal bootstrap method. This involves viewing the correlations as pairings of elements in $\mc{H}$ (or tensor powers of $\mc{H}$) via the state field correspondence, decomposing these pairings according to the eigenstates of ${\bf H}$, and then computing the matrix coefficients of these elements on the eigenbasis. This final step is known as the  \emph{modular bootstrap}, a concept initially proposed in physics in \cite{SONODA1988} (see also \cite{10.1007/978-90-481-2810-5_46}). It also allows the rigorous construction of the \emph{conformal blocks}, which are supposed to be holomorphic functions on the Teichm\"uller space.

\vskip 1mm

For Step 1, the path integral construction was carried out by David and the last three authors in \cite{DKRV16} in the case of genus ${\bf g}=0$, or in \cite{DRV16_tori} for   ${\bf g}=1$, and then extended  to arbitrary genus   in \cite{GRVIHES}.  The rigorous definition of the path integral was given  in terms of the Gaussian Free Field (GFF) on $(\Sigma,g)$ which gives rise to
a cylinder measure  $\nu$ on the Sobolev space $H^s(\Sigma)$ of order  $s<0$, see Section \ref{sec:lcft}. The GFF gives a mathematical sense to the formal measure  $e^{-\frac{1}{4\pi }\int_\Sigma |d \phi|_g^2 d{\rm v}_g}D\phi$. The LCFT correlation functions are then given by 
\begin{align}\label{corre}
\langle \prod_{i=1}^m V_{\Delta_{i}}(x_i)\rangle_{\Sigma,g}:=\int_{H^{s}(\Sigma)} \prod_{i=1}^m e^{\alpha_i\phi(x_i)}e^{-\frac{1}{4\pi}\int_{\Sigma}(QK_g \phi  + 4\pi \mu e^{\gamma \phi  })\,{\rm dv}_g}d\nu(\phi)
\end{align}
for $\alpha_i\in (-\infty,Q)$ and $\Delta_i=\frac{\alpha_i}{2}(Q-\frac{\alpha_i}{2})$ under the condition $\sum_{i=1}^m\alpha_i>\chi(\Sigma)Q$ where $\chi(\Sigma)$ is the Euler characteristic of $\Sigma$. The exponentials of the generalised function $\phi$ on the r.h.s. are defined through  limits of  regularized and renormalised expressions. In particular they satisfy the axioms \eqref{ax1}, \eqref{ax2}. Because of the definition \eqref{corre} and the correspondence $\alpha_i\leftrightarrow \Delta_i$, it is customary in LCFT to label the primary fields with the weights $\alpha_i$ instead of the conformal weights $\Delta_i$, i.e. to write $\langle \prod_{i=1}^m V_{\alpha_{i}}(x_i)\rangle_{\Sigma,g}$ for correlation functions and we will adopt this convention from now on. 

Specializing to $\Sigma=\hat\C$ and $m=3$, the 3-point correlation functions in any metric $g$ on $\hat \C$ conformal to the canonical sphere metric can be decomposed as 
\begin{align*}%\nonumber%
 \langle  &V_{\alpha_1}(z_1)  V_{\alpha_2}(z_2) V_{\alpha_3}(z_3)  \rangle_{\hat\C,g}\\
  =&|z_1-z_3|^{2(\Delta_{\alpha_2}-\Delta_{\alpha_1}-\Delta_{\alpha_3})}|z_2-z_3|^{2(\Delta_{\alpha_1}-\Delta_{\alpha_2}-\Delta_{\alpha_3})}|z_1-z_2|^{2(\Delta_{\alpha_3}-\Delta_{\alpha_1}-\Delta_{\alpha_2})}
\Big(\prod_{i=1}^3g(z_i)^{-\Delta_{\alpha_i}}\Big) \\
&\times C(g)C_{\gamma,\mu}^{{\rm DOZZ}} (\alpha_1,\alpha_2,\alpha_3 )
 \end{align*} 
where $C(g)$ is an explicit constant, which only depends on the metric $g$, and the constant $C_{\gamma,\mu}^{{\rm DOZZ}} (\alpha_1,\alpha_2,\alpha_3 )$ is called the structure constant (it depends neither  on $g$ nor on $z_1,z_2,z_3$, see Section \ref{sec:lcft}).
Step 2 has then been solved by the  last three authors in \cite{KRV_DOZZ}, where they proved the so called DOZZ formula for the structure constant (see appendix \ref{app:dozz} for a reminder). 

Step 3 and 4 were established in a particular case, namely for $4$-point correlation on the sphere, in our previous work  \cite{GKRV20_bootstrap}. In particular, the Hilbert space is constructed using the Gaussian Free Field on the unit circle $\T$, the Virasoro operators ${\bf L}_n,\til{\bf L}_n$ are defined using an intertwining method with the free field, and the spectral resolution of ${\bf H}$ is done by means of scattering theory.  The importance of understanding the spectral analysis of the Hamiltonian of LCFT was stressed in physics by Teschner in \cite{Teschner_revisited}.

\begin{figure}
\centering
\begin{tikzpicture}
\node[inner sep=10pt] (F1) at (-5.2,0){\includegraphics[scale=0.6]{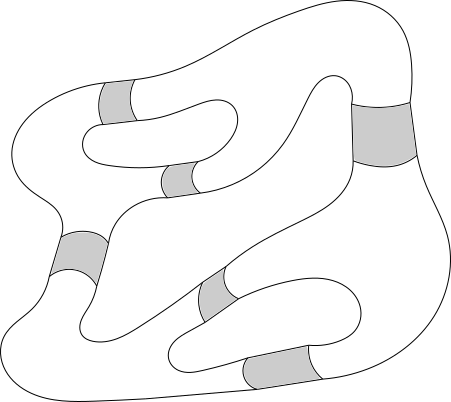}
\hspace{0.5cm}};
\node (F) at (-7.5,0){$\mc{P}_1$};
\node (F) at (-8,-2.5){$\mc{P}_2$};
\node (F) at (-3,-0.7){$\mc{P}_3$};
\node (F) at (-3,2.7){$\mc{P}_4$};
\node (F) at (-1.9,1){$\mathbb{A}_{q_1}$};
\node (F) at (-7,2.3){$\mathbb{A}_{q_2}$};
\node (F) at (-8.3,-0.8){$\mathbb{A}_{q_3}$};
\node (F) at (-5.8,-0.2){$\mathbb{A}_{q_4}$};
\node (F) at (-5.8,-1){$\mathbb{A}_{q_5}$};
\node (F) at (-4.3,-3.4){$\mathbb{A}_{q_6}$};
\end{tikzpicture}
  \caption{The plumbed surfaces $\Sigma_{\bf q}$ with four pairs of pants $\mc{P}_1,\dots,\mc{P}_4$ and six annuli $\mathbb{A}_{q_1},\dots,\mathbb{A}_{q_6}$ of modulus ${\bf q}=(q_1,\dots,q_6)$ between the pairs of pants.}\label{figureplumbing}
\end{figure}

In the present paper, we carry out  Step 4 in full generality and compute all correlation functions on all surfaces in terms of the structure constants and the conformal blocks. We stress that the method developed in \cite{GKRV20_bootstrap} uses crucially the reflection positivity and the symmetry of the Riemann sphere under $z\mapsto 1/\bar{z}$ to realise the conformal bootstrap for the spherical $4$-point function. Even the computation of the $5$-point function cannot directly be obtained by this method. To achieve the full conformal bootstrap, 
we use an approach introduced by Segal, which we implement for the first time in the probabilistic setting of conformal field theory.
The Hilbert space is $\mc{H}=L^2(H^{s}(\T),\mu_0)$ for some measure $\mu_0$, with $H^{s}(\T)$ the Sobolev space of fixed order $s<0$ on the circle $\T$. To each surface with parametrised boundary and   marked points, we construct an operator on $\mc{H}$, called \emph{Segal amplitude}, via its integral kernel defined as a conditional expectation. Then we show that gluing surfaces amounts to composing these amplitudes. This method, based on ``cutting'' the path integral into pieces, is sufficiently rich and flexible to construct the conformal blocks in full generality and to show their holomorphic dependence in the moduli parameters, as well as to perform the modular bootstrap. This approach also produces a rigorous way to establish the state/field correspondence and the so-called Operator Product Expansion, and gives a geometric way to represent the conformal symmetries of the model: indeed, it has allowed us, in subsequent works \cite{BGKRV,BGKR1}, to produce a probabilistic construction with geometric flavour of the Virasoro algebra in $\mc{H}$ using the Segal amplitudes, which will be fundamental in constructing a projective representation of the mapping class group in the space of conformal blocks.

To state our main result on the LCFT correlation functions we use particular holomorphic local coordinates, called {\it plumbing coordinates}, on the moduli space  $\mc{M}_{{\bf g},m}$ of Riemann surfaces  $(\Sigma,g, {\bf x})$ of genus ${\bf g}$ with $m$ marked points ${\bf x}=(x_1,\dots , x_m)$  for $2{\bf g}-2+m>0$ (see Section \ref{sub:plumbing}). These local coordinates are not defined globally on Teichm\"uller or moduli space but are particularly well adapted to the Segal approach to study the holomorphy of the conformal blocks. 
In these coordinates 
${\bf q}=(q_1,\dots,q_{3{\bf g}-3+m})\in \C^{3{\bf g}-3+m}$ with $|q_j|\leq 1$, 
the Riemann surfaces in $\mc{M}_{{\bf g},m}$ can be represented by cutting $\Sigma$ into elementary Riemann surfaces with boundary (called complex building blocks), namely 
pairs of pants, annuli with $1$ marked points and disks with two marked points, and then gluing some annuli $\mathbb{A}_{q_j}:=\{z\in \C\;|\, |z|\in [|q_j|,1]\}$ with a twist angle ${\rm arg}(q_i)$ between the complex building blocks  with the choice of flat metric $|dz|^2/|z|^2$ on the annuli (see Figure \ref{figureplumbing} for a case with $m=0$ and ${\bf g}=3$). 
The surfaces with marked points that we obtain are denoted 
by  $(\Sigma_{\bf q},g_{\bf q},{\bf x})$. Our main result can be outlined as follows:
\begin{theorem}\label{mainth} 
For Riemann surfaces of genus ${\bf g}$ and $m$ marked points, the correlation functions defined probabilistically by \eqref{corre} take  the following 
form  in the plumbing coordinates ${\bf q}=(q_1,\dots,q_{3{\bf g}-3+m})$:
\[
\frac{2^{\frac{3{\bf g}-3+m}{2}}\prod_{j=1}^{3{\bf g}-3+m} |q_j|^{-\frac{1+6Q^2}{12}} }{(2\pi)^{6{\bf g}-6+2m-1}}\int_{\R_+^{3{\bf g}-3+m}}C_{\Sigma,g}(\boldsymbol{\alpha},{\bf p})\boldsymbol{\rho}(\boldsymbol{\alpha},{\bf p})\prod_{j=1}^{3{\bf g}-3+m} |q_j|^{2\Delta_{Q+ip_j}} 
\Big|\tilde{\mc{F}}_{\bf p}(\boldsymbol{\alpha},{\bf q})\Big|^2\dd{\bf p},\]
where the  $\tilde{\caF}_{\bf p}(\boldsymbol{\alpha},{\bf q})$ is the conformal block normalised\footnote{The reason of the tilde notation for the normalised blocks is that in subsequent work \cite{BGKR2} we study the unnormalised blocks, that we denote by 
$\caF_{\bf p}(\boldsymbol{\alpha},{\bf q})$. As functions of $q_j$, they differ essentially by a factor $\prod_{j}\exp(\Delta_{Q+ip_j}\log q_j)$ and $\caF_{\bf p}(\boldsymbol{\alpha},{\bf q})$ needs to be considered on Teichm\"uller space where $\log q_j$ is well defined.} by 
$\tilde{\caF}_{\bf p}(\boldsymbol{\alpha},0)=1$, it is holomorphic in the moduli parameter ${\bf q}$ and defined for almost all ${\bf p}=(p_1,\dots,p_{3{\bf g}-3+m})$. 
The function $\rho({\bf p},\boldsymbol{\alpha})$ is a product of DOZZ structure constants $C^{\rm DOZZ}_{\gamma,\mu}(\tilde\alpha_j,\tilde\alpha_k,\tilde\alpha_\ell)$ with $\tilde\alpha_j,\tilde\alpha_k,\tilde\alpha_\ell$ belonging to the set $\{\alpha_j\}_{j=1}^m\cup \{Q\pm ip_j\}_{j=1}^{3{\bf g}-3+m}$ and the constant $C_{\Sigma,g}(\boldsymbol{\alpha},{\bf p})$ is an explicit constant depending on the choice of metric $g$ but not on ${\bf q}$.   
\end{theorem}
For a precise statement see Theorem \ref{th:fullexpressioncorrel}. 
The special cases of the complex tori or the Riemann sphere are also treated in details in Section \ref{sec:special} using the marked points as moduli parameters. As an example, in Theorem \ref{Torus1pt}, we give the first mathematical proof of the well-known physics formula   for the 1-point correlation function on the flat torus $\T^2_\tau:=\C/(2\pi\Z+2\pi\tau\Z)$ (with ${\rm Im}(\tau)>0)$: 
\[\cjg V_{\alpha_1}(0)\cjd_{\T^2_\tau}=\frac{|q|^{-\frac{1+6Q^2}{12}}}{2e}
\int_0^\infty  |q|^{2\Delta_{Q+ip}} C^{\rm DOZZ}_{\gamma,\mu}(Q+ip,\alpha_1,Q-ip)|\tilde{\mc{F}}_{p}(\alpha_1,q)|^2  \dd p\]
where $q=e^{2i\pi\tau}$. 
The $m$-point function on $\T^2_\tau$, for $x_1=0$ and ${\rm Im}(x_j)<{\rm Im}(x_{j+1})<2\pi {\rm Im}(\tau)$, is also shown (in Theorem \ref{th:kpointtorus}) to be given by the $L^1$-integral
\[\begin{split}
\cjg \prod_{j=1}^mV_{\alpha_j}(x_j)\cjd_{\T_\tau}=\frac{\prod_{j=1}^m|q_j|^{-\frac{1+6Q^2}{12}}}{2^{2m-1}\pi^{m-1}e^m} 
\int_{\R_+^m} \prod_{j=1}^m|q_j|^{2\Delta_{Q+ip_j}}\Big(\prod_{j=1}^m C^{\rm DOZZ}_{\gamma,\mu}(Q+ip_j,\alpha_j,Q-ip_{j-1})\Big)\Big|_{p_0=p_m}|\tilde{\mc{F}}_{\bf p}(\boldsymbol{\alpha},{\bf q})|^2\dd{\bf p}.
\end{split}\]
with $q_j:=e^{i(x_{j+1}-x_j)}$ for $j=1,\dots,m-1$,  $q_m:=qe^{-ix_m}$, ${\bf p}=(p_1,\dots,p_m)$, $\boldsymbol{\alpha}=(\alpha_1,\dots,\alpha_m)\in (0,Q)^m$.

The holomorphic factorisation of the correlation functions of a CFT i.e. the fact that the dependence of the moduli comes through the square of the absolute value of a holomorphic function dates back to string theory in the early 70's. In that setup and for general Riemann surfaces it was explicitly conjectured in physics in 1985 by Belavin and Knizhnik \cite{Belavin_knizhnik} and even considered as a way (if not an axiom) to construct CFTs from the conformal blocks. A related approach was also developed by Friedan and Shenker \cite{FriedanShenker87}. More recently the connection to the plumbing construction has been stressed by Teschner in several physics papers, see e.g. \cite{10.1007/978-90-481-2810-5_46}.\\

The conformal blocks are fundamental holomorphic functions of the moduli parameters that have been introduced in physics 
as building blocks for the correlation functions of CFT. These functions, constructed from the representation theory of Virasoro algebra, are of complex geometric and algebraic nature. Unlike the structure constants  that are model-dependent,  the conformal blocks are universal, in the sense that they only depend on the central charge and the conformal weights.
In our case where the central charge is $c_{\rm L}=1+6Q^2>25$, their construction was mathematically unknown.  A consequence of our work is a rigorous construction for all surfaces and a proof of their holomorphy. 
  They are defined as series in the plumbing  parameters $q_j$
\begin{align}\label{blocks}
\tilde{\caF}_{\bf p}(\boldsymbol{\alpha},{\bf q})=\sum_{(n_1,\dots,n_{L})\in \N^L}q_1^{n_1}\dots q_{L}^{n_L}W_{{\bf p},\boldsymbol{\alpha}}(n_1,\dots,n_L)
\end{align}
where $L:=3{\bf g}-3+m$, the coefficients $W_{{\bf p},\boldsymbol{\alpha}}$  only depend on the commutation relations of the Virasoro algebra and on the conformal weights associated to ${\bf p},\boldsymbol{\alpha}$. The  main difficulty to define the blocks is the convergence of the series because the coefficients are not   explicit or tractable enough   to control  their  growth. 

Our work is related to the approach discussed in physics by Friedan and Shenker \cite{FriedanShenker87} to construct CFTs: roughly speaking the partition function of a CFT is viewed in  \cite{FriedanShenker87}  as the squared norm of a holomorphic section of a projective holomorphic vector bundle (the bundle of conformal blocks) on the moduli space  equipped with a projectively flat Hermitian metric. However, as mentionned in \cite{FriedanShenker87}, for CFTs with central charge $c>1$ (such as Liouville), the space of blocks is infinite dimensional, which complicates the problem significantly. This approach is also discussed in physics by Teschner and Vartanov \cite{Teschner_Vartanov} for Liouville CFT, 
but the mathematical implementation remains conjectural. 
Our normalised conformal blocks depend on the choice of plumbing coordinates (thus on a choice of parametrised circles cutting the surface), and understanding how these conformal blocks change under change of plumbing coordinates is intricate but fundamental in order to show that the conformal blocks can be extended as global holomorphic sections of a line bundle over the Teichm\"uller space.  Using the tools developed here and the construction of Virasoro generators using Segal amplitudes, we have proved with Baverez in a series of papers \cite{BGKRV,BGKR1,BGKR2} the global structure of the (unnormalised) conformal blocks  conjectured in physics by \cite{FriedanShenker87,Teschner_Vartanov}, which is crucial for the projective representation of the mapping class group (see below).

To conclude, we list the key contributions of our paper for Liouville CFT:
\begin{itemize}
\item A probabilistic construction of the Segal amplitudes associated to each surface with parametrised  boundary and marked points, and a proof of Segal axioms in that context 
(conformal and diffeomorphism invariance, gluing of amplitudes) -- in Sections \ref{probamp} and \ref{sec:gluing},
\item A rigorous construction of the normalised conformal blocks  for all surfaces using Segal amplitudes, and a proof of their holomorphy with respect to the moduli parameters (plumbing coordinates) by proving Ward identities for pairs of pants -- in the sections \ref{Section:blocks}, \ref{sec:ward} and \ref{sec:computingamplitudes},
\item A rigorous implementation of the conformal bootstrap for all surfaces with marked points, to express the correlation functions in terms of conformal blocks and structure constants -- in Section \ref{Section:blocks}.
\end{itemize}
Furthermore, we  emphasize several applications based on or using this work: 
\begin{itemize}
\item Based on this work and the subsequent conformal bootstrap for boundary Liouville theory \cite{GRW1,GRW2}, Ghosal, Remy, Sun and Sun \cite{GRSS2} have recently been able to extend the definition of the conformal blocks to other values of the parameters (including $p_i$ complex) for the $4$-point sphere and $1$-point torus, and produce a proof that the transition kernels for conformal blocks are given by Ponsot-Teschner formula.
\item An explicit representation of the Virasoro algebra on $\mc{H}$ as generators of Markov semigroups and as Segal amplitudes of deformed annuli (called Segal semigroup) in joint work with Baverez \cite{BGKRV,BGKR1}.
\item A description of the conformal blocks as global holomorphic sections of a line bundle over Teichm\"uller space that produce a quantization of Teichm\"uller space and a projective unitary representation of the mapping class group in the space of conformal blocks \cite{BGKR2}.
\end{itemize}
 
\subsection{Overview of the proof of Theorem \ref{mainth}}

\begin{figure}[h] 
\centering
 \begin{tikzpicture}[xscale=0.6,yscale=0.6,every node/.style={scale=0.75},scale=0.75]  
 \tikzstyle{noeud}=[minimum width=2cm,text width=5cm,minimum height=0.8cm,rectangle,rounded corners=5pt,draw,fill=yellow!50,text=black,font=\bfseries,text centered,text badly centered]
 \tikzstyle{core}=[minimum width=2cm,text width=4cm,minimum height=0.8cm,rectangle,rounded corners=5pt,draw,fill=lime!50,text=black,font=\bfseries,text centered,text badly centered]
  \tikzstyle{main}=[minimum width=2cm,text width=4cm,minimum height=0.8cm,rectangle,rounded corners=5pt,draw,fill=teal!30,text=black,font=\bfseries,text centered,text badly centered]
    \tikzstyle{exa}=[minimum width=2cm,text width=4cm,minimum height=0.8cm,rectangle,rounded corners=5pt,draw,fill=cyan!30,text=black,font=\bfseries,text centered,text badly centered]
  \tikzstyle{sommet}=[circle,draw,fill=black]
\tikzstyle{pas}=[thick]  
\tikzstyle{fleche}=[->,>= stealth,thick]
\node[noeud] (PBG) at (8,10) {Background  sec. \ref{sec:geometric}: \\ Riemann surfaces\\Cutting and gluing surfaces};
\node[noeud] (PBT) at (21,10) {Background sec. \ref{section:traces}:\\ Partial traces for Hilbert-Schmidt operators};
\node[core] (CA) at (3,7) {Constructing amplitudes, sec. \ref{probamp}};
\node[core] (GA) at (12,7) {Gluing amplitudes, sec. \ref{sec:gluing}};
\node[core] (CSGA) at (3,4) {Semigroup of annuli sec. \ref{sub:hamiltonian}};
\node[core] (CH) at (3,1) { Hamiltonian and spectral resolution, sec. \ref{subsubsec:spectral} };
\node[core] (CCTP) at (12,3) {Decomposing correlation functions into pant amplitudes, Prop. \ref{prop:decompositionAmplitude}};
\node[core] (CDB) at (6,-3) {Defining the conformal blocks sec. \ref{sub:cfb}};
\node[core] (CW) at (21,1) {Ward identities\\ Prop. \ref{propward}};
\node[core] (CCPA) at (16,-3) {Computing the pant amplitudes, Th. \ref{pantDOZZ}};
\node[main] (MTH) at (8,-7) { Conformal bootstrap for Liouville CFT, Theorem \ref{th:fullexpressioncorrel}};
\node[exa] (Ex) at (16,-7) {Examples  sec. \ref{sec:special}\\\textcolor{black}{Torus and sphere} };
%\draw[fleche] (GA) |- (6,6) -| (CSGA);
\draw[fleche] (PBG) -- (8,8.5) -| (CA);
\draw[fleche] (PBG) -- (8,8.5) -| (GA);
\draw[fleche] (CDB) |- (8,-5) -| (MTH);
\draw[fleche] (CA) -- (GA);
\draw[fleche] (CCPA) |- (8,-5) -| (MTH);
\draw[fleche] (MTH) -- (Ex);
\draw[fleche] (CA) -- (CSGA);
\draw[fleche] (CSGA) -- (CH);
\draw[fleche] (GA) -- (CCTP);
\draw[fleche] (CCTP) |- (7,-1) -| (CDB);
\draw[fleche] (CH) |- (7,-1) -| (CDB);
\draw[fleche] (CW) |-  (CCPA);
\draw[fleche] (CW) |-  (CCPA);
\draw[fleche] (PBT) |- (8,8.5) -|  (GA);
\draw[fleche] (GA)  -| (CCPA);
to[jump]
\end{tikzpicture}
\caption{Diagram of the proof of Theorem \ref{th:fullexpressioncorrel}} 
\label{fig:diagram}
\end{figure}

 In this section we give an outline of the probabilistic verification of Segal's axioms and the argument  leading to Theorem \ref{mainth} (or Theorem  \ref{th:fullexpressioncorrel}), summarised in the flowchart Fig. \ref{fig:diagram}. 
To  give a probabilistic definition of the integral kernels \eqref{ampli} that enter the Segal axioms, 
we  consider oriented surfaces $\Sigma$, equipped with a complex structure, with  $m\geq 0$ marked points $x_i$ in the interior of $\Sigma$ and boundary $\partial\Sigma=\bigcup_{i=1}^b \caC_i$ where each $\caC_i$ is parametrised by a real analytic map $\zeta_i:\T\to\caC_i$ from the standard circle $\T$. If the orientation of $\zeta_i(\T)$ agrees with that of $\caC_i$ inherited from $\Sigma$ we call $\caC_i$ outgoing and otherwise incoming. Let $I_{\rm in}$ ($I_{\rm out}$) index the incoming (resp. outgoing) circles.  Furthermore, we equip $\Sigma$ with a smooth Riemannian metric compatible with the complex structure, such that a neighborhood of  $\mc{C}_i$ is isometric to the flat annulus $(\{z\in \C\,|\, |z|\in (\delta,1]\},|dz|^2/|z|^2)$ for some $\delta>0$; such metrics are called \emph{admissible} and behave nicely under sewing of Riemann surfaces. Finally we specify $m$ real numbers $\alpha_i<Q$. Given the data  $(\Sigma, g,{\bf x},\boldsymbol{\alpha},\boldsymbol{\zeta})$ with ${\bf x}=(x_1,\dots,x_m)$,  $\boldsymbol{\alpha}=(\alpha_1,\dots,\alpha_m)$ and $\boldsymbol{\zeta}=(\zeta_1,\dots,\zeta_b)$ we then construct a Hilbert-Schmidt operator  
$$\caA_{\Sigma,g,\bf x,\boldsymbol{\alpha},\boldsymbol{\zeta}}:\bigotimes_{i\in I_{\rm in}}\caH\to \bigotimes_{i\in I_{\rm out}}\caH $$ 
 where the Hilbert space is 
 \begin{align*}
 \caH=L^2(H^{s}(\T),\dd\mu_0)
\end{align*}
and the Sobolev space $H^{s}(\T)$ with $s<0$ is equipped with a cylinder sigma algebra and a Gaussian cylinder measure $\mu_0$ coming from the restriction of the Gaussian Free Field to $\T$ (see the exact definition in Subsection \ref{sub:hilbert}). The operator $\caA_{\Sigma,g,\bf x, \boldsymbol{\alpha},\boldsymbol{\zeta}}$ has an integral kernel, denoted by  $\caA_{\Sigma,g,\bf x,\boldsymbol{\alpha},\boldsymbol{\zeta}}(\vhi_{\rm in},\vhi_{\rm out})$ and called an amplitude, 
 so that for $\psi\in\otimes_{i\in I_{\rm in}}\caH$ we have 
\begin{align*}
(\caA_{\Sigma,g,\bf x,\boldsymbol{\alpha},\boldsymbol{\zeta}}\psi)(\vhi_{\rm out})=\int \caA_{\Sigma,g,\bf x,\boldsymbol{\alpha},\boldsymbol{\zeta}}(\vhi_{\rm in},\vhi_{\rm out})\psi(\vhi_{\rm in})\prod_{i\in I_{\rm in}}\dd\mu_0(\varphi_i).
\end{align*}
The probabilistic definition of these amplitudes is a rigorous version of the path integral formula  \eqref{ampli}:
\begin{align}\label{amplitudeintro}
 \caA_{\Sigma,g,\bf x,\boldsymbol{\alpha},\boldsymbol{\zeta}}(\vhi_{\rm in},\vhi_{\rm out}) :=\caA^0_{\Sigma,g,\boldsymbol{\zeta}}(\vhi_{\rm in},\vhi_{\rm out}) 
 \E\big[ \prod_{j=1}^me^{\alpha_j\phi_g(x_j)}
 e^{-\frac{1}{4\pi}\int_{\Sigma}(QK_g \phi_g  + 4\pi \mu e^{\gamma \phi_g  })\,{\rm dv}_g}\big]
\end{align}
where the expectation $\E$ is over 
  $X_{g,D}$, the Gaussian Free Field on $(\Sigma,g)$ with Dirichlet boundary condition on $\partial\Sigma$, $\phi_g=P\vhi+X_{g,D}$ where $P\vhi$ is the harmonic extension of the boundary fields $(\vhi_{\rm in},\vhi_{\rm out})$ to $\Sigma$ and $\caA^0_{\Sigma,g,\boldsymbol{\zeta}}(\vhi_{\rm in},\vhi_{\rm out}) $ is the free field amplitude defined in terms of the Dirichlet-to-Neumann operator of $\Sigma$ and a $\zeta$-function renormalised determinant of the Dirichlet Laplacian, see Definition \ref{def:amp}. A heuristic explanation for this definition is given in the beginning of Section \ref{sec:amplitudes_Segal}. We show that the amplitudes satisfy versions of the diffeomorphism and Weyl axioms \eqref{ax1} and  \eqref{ax2}, see Proposition \ref{Weyl}.
  
 Let now $(\Sigma_i,g_i,{\bf x}_i,\boldsymbol{\alpha}_i,\boldsymbol{\zeta}_i)$, $i=1,2$ be two surfaces as above. Pick $k$ outgoing  analytically parametrised boundary circles from $\partial\Sigma_1$ and $k$ incoming parametrised circles from $\partial\Sigma_2$. We can then glue the two surfaces by identifying $\pl \Sigma_1$ with $\pl \Sigma_2$ using their parametrisations, and obtain a new Riemann surface  $(\Sigma,g,{\bf x},\boldsymbol{\alpha},\boldsymbol{\zeta})$ with marked points (see  Subsection \ref{Sec:Riemann surfaces with marked points} for details).  Our main result on these amplitudes is the Segal gluing axiom (see Propositions \ref{glueampli} and \ref{selfglueampli}), which can be seen as a generalisation of the Markov property to stochastic processes indexed by surfaces instead of times:
 
  \begin{theorem}\label{gluingthm}
The probabilistic amplitudes satisfy the composition law
 \begin{align*}
C  \caA_{\Sigma_2,g_2,{\bf x}_2,\boldsymbol{\alpha}_2,\boldsymbol{\zeta}_2}\, \caA_{\Sigma_1,g_1,{\bf x}_1,\boldsymbol{\alpha}_1,\boldsymbol{\zeta}_1}= \caA_{\Sigma,g,\bf x,\boldsymbol{\alpha},\boldsymbol{\zeta}}.
\end{align*}
where, for $\partial\Sigma=\emptyset$,  $\caA_{\Sigma,g,\bf x,\boldsymbol{\alpha},\emptyset}$ is defined to be the correlation function \eqref{corre}. The constant $C$ is given by $C= \frac{1}{(\sqrt{2} \pi)^{k}}$ if $\partial\Sigma \not =\emptyset$ and $C=\frac{\sqrt{2}}{(\sqrt{2} \pi)^{k-1}} $ if $\partial\Sigma =\emptyset$.
\end{theorem}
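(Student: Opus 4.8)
The plan is to reduce the gluing identity to a Gaussian computation for the free-field amplitude, followed by a Fubini-type interchange justified by the probabilistic estimates on Liouville correlations. First I would treat the non-self-gluing case ($\Sigma = \Sigma^1 \cup \Sigma^2$ along $k$ circles). The starting point is the explicit formula \eqref{amplitudeintro}: the amplitude factors as the free amplitude $\caA^0_{\Sigma,g,\boldsymbol\zeta}$ times the Liouville expectation over the Dirichlet GFF $X_{g,D}$ with the field $\phi_g = P\vhi + X_{g,D}$. The key structural input is the \emph{Markov property} of the GFF: the field $X_{g,D}$ on $\Sigma$ restricted to the gluing curve $\mc C = \cup_{j=1}^k \caC_j$ has a law $\mu_{\mc C}$, and conditionally on its boundary values the restrictions to $\Sigma^1$ and $\Sigma^2$ are independent GFFs with Dirichlet data on $\mc C$. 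Thus one writes the GFF on $\Sigma$ as $X_{g,D}|_{\Sigma^i} = P_i\varphi_{\mc C} + X^i_{g,D}$ for $i=1,2$, where $\varphi_{\mc C}$ is distributed according to the trace measure and $X^i_{g,D}$ are the independent Dirichlet GFFs on $\Sigma^i$. Inserting this decomposition into the expectation in \eqref{amplitudeintro}, the curvature term $\int_\Sigma QK_g\phi_g\,{\rm dv}_g$ splits over $\Sigma^1$ and $\Sigma^2$ (using that the admissible metric is cylindrical hence flat near $\mc C$, so no boundary curvature term is produced), the GMC term $\int_\Sigma \mu e^{\gamma\phi_g}{\rm dv}_g$ splits because the measure is supported away from the curve and is a sum of the two pieces, and the vertex insertions $\prod_j e^{\alpha_j\phi_g(x_j)}$ distribute between the two surfaces according to which one contains $x_j$.

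The second ingredient is the matching of the Gaussian pieces. I would show that the product $\caA^0_{\Sigma^1,g,\boldsymbol\zeta^1}\,\caA^0_{\Sigma^2,g,\boldsymbol\zeta^2}$, integrated against $\dd\mu_0$ on the $k$ glued circles, equals $C^{-1}\caA^0_{\Sigma,g,\boldsymbol\zeta}$. This is the heart of the free-field computation: both sides are Gaussian kernels built from Dirichlet-to-Neumann operators, and the identity is the statement that the DtN operator of $\Sigma$ is obtained by "parallel composition" of the DtN operators of $\Sigma^1$ and $\Sigma^2$ across $\mc C$ — a Schur-complement/gluing identity for DtN maps. The determinant factors must also match: here the $\zeta$-regularized determinant of the Dirichlet Laplacian on $\Sigma$ should factor as the product over $\Sigma^1,\Sigma^2$ times the relevant contribution of the interface, and this is precisely where the numerical constant $C = 1/(\sqrt2\pi)^k$ (and the Gaussian normalization $\dd\mu_0$) enters — via the Polyakov–Alvarez / Burghelea–Friedlander–Kappeler gluing formula for determinants and the normalization conventions for the infinite-dimensional Gaussian measures. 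I would keep careful track of the $\sqrt2$ and $\pi$ factors coming from the $2\pi$-periodic circle and from the split of the zero mode.

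For the self-gluing case ($\Sigma$ obtained by gluing $k$ outgoing to $k$ incoming circles of a \emph{single} surface $\Sigma^1$), the argument is structurally the same — the Markov property still applies across $\mc C$ — but now $\mc C$ is a union of curves all lying in one surface, so after conditioning one integrates over $\varphi_{\mc C}$ appearing in \emph{both} "slots" of the amplitude kernel; this produces the partial trace of the operator rather than a composition, which explains why the constant changes to $C = \sqrt2/(\sqrt2\pi)^{k-1}$ (one factor is "used up" by the surviving zero mode when $\partial\Sigma=\emptyset$) and why in that case the left side collapses to the closed-surface correlation function \eqref{corre}. Finally, to identify $\caA_{\Sigma,g,\varnothing}$ with \eqref{corre} one notes that when there is no remaining boundary, $P\vhi \equiv 0$, the Dirichlet GFF on $\Sigma$ with no boundary is the full GFF used in \eqref{corre} up to the zero-mode treatment, and the free amplitude reduces to the partition function normalization; the harmonic extension and DtN data degenerate, and one recovers \eqref{corre} after integrating out the single global zero mode (the source of the extra $\sqrt2$).

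The main obstacle I expect is \textbf{the rigorous justification of the Fubini interchange and the convergence of the glued integral}, not the algebra. The amplitude kernels are only Hilbert–Schmidt, the Gaussian Multiplicative Chaos $e^{\gamma\phi_g}$ has heavy tails, and the harmonic extension $P\varphi_{\mc C}$ of a rough boundary field is itself only a distribution near $\mc C$, so one must show the random variable $\int_\Sigma \mu e^{\gamma(P\varphi_{\mc C}+\dots)}$ has finite (and suitably integrable in $\varphi_{\mc C}$) exponential moments, and that the product of the two amplitudes is $\mu_0^{\otimes k}$-integrable over the glued circles. This requires quantitative Cameron–Martin / Girsanov bounds for the Liouville measure under the shift by $P\varphi_{\mc C}$, uniform control of the $L^2$-norms of the component amplitudes (Hilbert–Schmidt bounds proved earlier in the paper), and care near the marked points where the GMC moments are constrained by $\alpha_j < Q$ — exactly the Seiberg bound appearing in \eqref{corre}. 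A secondary technical point is handling the non-compactness of the zero mode of $\varphi_{\mc C}$: the integral over the constant part of the boundary field is an honest Lebesgue integral on $\R^k$ and its convergence is what ties the bound $\sum\alpha_j > \chi(\Sigma)Q$ to the finiteness of the glued expression.
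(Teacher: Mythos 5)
Your proposal follows essentially the same route as the paper's proof of Propositions \ref{glueampli} and \ref{selfglueampli}: the Markov decomposition of the Dirichlet GFF across the gluing curve, the splitting of the curvature, GMC and vertex terms, the matching of the free-field Gaussian amplitudes via the Dirichlet-to-Neumann maps (the paper's identity that $\mathbf{D}_{\Sigma_1}+\mathbf{D}_{\Sigma_2}$ restricted to $\mc{C}$ equals the jump operator $\mathbf{D}_{\Sigma,\mc{C}}$, combined with a Cameron--Martin shift to identify the law of the boundary trace against $\mu_0^{\otimes k}$), the Burghelea--Friedlander--Kappeler determinant gluing formula for the constant $C$, and the integrability control via the a priori amplitude bounds of Theorem \ref{integrcf}. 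The treatment of the closed/self-gluing case via the surviving zero mode also matches, so the proposal is correct and takes the paper's approach.
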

  The probabilistic definition of the Segal amplitude is guided by the analysis of the free field and its Markov property. The proof of the gluing property is also fundamentally a free field property, but since we work in a massless setting with a zero mode, the potential plays an important role for convergence purpose. A treatment of Segal's axioms in a probabilistic context appeared earlier in a work by Pickrell \cite{Pickrell} for the $P(\phi)_2$ QFT. It was done in the massive case and not in a CFT context as originally designed by Segal, where the zero mode problem needs to be handled. Therefore we do not rely on  \cite{Pickrell} and write a self-contained and detailed proof in our massless case, including the analysis of the Dirichlet-to-Neumann maps that come into play.

This gluing property allows us to construct the correlation functions of LCFT  by composition of amplitudes of basic building blocks: (a) annuli with zero or one marked points,  (b) discs with one or two marked points, (c) pairs of pants (i.e. sphere with three discs removed).  The amplitudes of these building blocks give rise to the fundamental data of LCFT: (a) the \emph{Hamiltonian} and the {\it spectrum} of LCFT  (b)  {\it highest weight states} of the Virasoro algebra and (c) the {\it structure constants} of LCFT as we   explain now:
 
 \vskip 2mm
 
 \noindent (a) {\it Hamiltonian and spectrum:}  they are both encoded in the annulus amplitudes with no marked points. Indeed, by Weyl covariance of the amplitude, it suffices to consider the annuli $\mathbb{A}_q=\{z\in \C\,|\, |z|\in [|q|,1]\}$ with $q\in\D$ (with $\D\subset \C$ the unit disk) and boundaries parametrised by $z\in \T\mapsto z$ and  $z\in \T\mapsto qz$ and equipped with the dilation invariant metric $g=|z|^{-2}|dz|^2$. Denoting the corresponding amplitudes by
  $\caA_q$ 
  they form a semigroup under gluing:
 \begin{align*}
\frac{\caA_q}{ \sqrt{2}\pi } \frac{\caA_{q'}}{ \sqrt{2}\pi }= \frac{\caA_{qq'}}{ \sqrt{2}\pi}.
\end{align*} 
We prove in Section \ref{section:semigroup} that this semigroup matches (up to constant) the semigroup studied in \cite{GKRV20_bootstrap} and, writing $q=e^{-t+i\theta}$, it can be written as (see Proposition \ref{prop:annulussimple})
\begin{align*}%\label{}
\caA_q=\sqrt{2}\pi e^{c_{\rm L}\frac{t}{12}}e^{-t\mathbf{H}}e^{i\theta\mathbf{\Pi}}
\end{align*}
where the {\it LCFT Hamiltonian} $\mathbf{H}$ is a positive (unbounded) self-adjoint operator on the Hilbert space $\caH$ and $\mathbf{\Pi}$ is a  self-adjoint operator representing the rotations of $\T$ in $\caH$. The operators ${\bf H}$ and $\mathbf{\Pi}$ commute and, in \cite{GKRV20_bootstrap}, we constructed a joint spectral resolution for them\footnote{Only $\mathbf{H}$ was studied in  detail in \cite{GKRV20_bootstrap}, for $\mathbf{\Pi}$ see Subsection \ref{sec:momenta}.}. The operators $\mathbf{H}$ and $\mathbf{\Pi}$ have a family of generalised eigenfunctions denoted by $\Psi_{\alpha,\nu,\tilde\nu}$, where the indices are as follows: $\alpha\in {\rm Spec}:=Q+i\R_+$ is a {\it spectral parameter} that belongs to the so-called \emph{spectrum} of LCFT,  while $\nu,\tilde \nu$ are two Young diagrams, i.e. finite non-increasing sequences of positive integers $\nu=(\nu(1),\dots,\nu(k))$ where $\nu(i)\geq \nu(i+1)$. We have in particular the relation
\begin{align}\label{eigen}
 \caA_q\Psi_{\alpha,\nu,\tilde\nu}=\sqrt{2}\pi  |q|^{2\Delta_\alpha-\frac{c_{\rm L}}{12}}q^{|\nu|}\bar q^{ |\tilde\nu|}\Psi_{\alpha,\nu,\tilde\nu}.
\end{align}
Here $|\nu|=\sum_i\nu(i)$ is the length of the Young  diagram. Furthermore we have the completeness relation for $u_1,u_2\in \mc{H}$ (the inner product of which is denoted $\langle\cdot,\cdot\cjd_{\mc{H}}$)
\begin{align}\label{completeness}
\cjg u_1,u_2\cjd_{\mc{H}}=\frac{1}{2\pi}\sum_{\nu,\nu',\tilde\nu,\tilde\nu'}\int_{\R_+}\cjg u_1,\Psi_{Q+ip,\nu,\tilde\nu}\cjd_{\mc{H}} \cjg \Psi_{Q+ip,\nu',\tilde\nu'},u_2\cjd_{\mc{H}} F^{-1}_{Q+ip}(\nu,\nu')F^{-1}_{Q+ip}(\tilde\nu,\tilde\nu')\dd p
\end{align}
where the matrix $F^{-1}_\alpha(\nu,\nu')$ is the inverse of a positive definite quadratic form called the Schapovalov form of the Verma module of highest weight $\Delta_{\alpha}$  of the Virasoro algebra with central charge $1+6Q^2$. The Schapovalov coefficient $F_\alpha(\nu,\nu')$ is a Gram-Schmidt coefficient coming from the fact that the generalised eigenstates $\Psi_{\alpha,\nu,\tilde\nu}$ are not orthogonal. This completeness relation can be interpreted as a Plancherel type formula on the Hilbert space $\mc{H}$.

 \vskip 1mm

 \noindent (b)   {\it Highest weight states of the Virasoro algebra:} consider the amplitude   $  \mc{A}_\alpha:=\caA_{\D,g_{\D},0,\alpha,\zeta}$ of the unit disk $\D$ with a marked point at $0$ with weight $\alpha\in (-\infty,Q)$, with $g_\D=e^h|dz|^2$  an admissible metric for some smooth $h:\D\to\R$ and %admissible metric for some $h\in C^\infty$
 $\zeta$ the standard parametrisation of $\partial\D$. It is a measurable function of the boundary field $\varphi\in H^s(\T)$, $s<0$. We show in Section \ref{eigenana} that, up to multiplicative constant, $\mc{A}_\alpha$ is the analytic continuation of $\alpha\mapsto \Psi_{\alpha}$ from $\alpha\in Q+i\R$ to $\alpha\in (-\infty,Q)$; in particular, the disk amplitude  is a generalised eigenfunction of ${\bf H}$, corresponding to a highest weight state. Similarly, we show that the generalised eigenstates $\Psi_{\alpha,\nu,\tilde\nu}$ for $\nu,\tilde\nu$ non empty Young diagrams, dubbed descendant states, can be analytically continued from the spectrum line $\alpha\in {\rm Spec}:=Q+i\R_+$ to a subset of the real line $\alpha<C$ (see Prop. \ref{defprop:desc}) and, in this range of parameters $\alpha$, the descendant $\Psi_{\alpha,\nu,\tilde\nu}$ admits a probabilistic representation in terms of disk amplitudes with a marked point at $0$ with weight $\alpha$ and further insertions of a field $T$ called the Stress-Energy-Tensor (SET), which we expand in more details below.  These SET insertions encode the action of the Virasoro generators on the highest weight states.

\vskip 1mm
 
 \noindent (c)  {\it Structure constants:} consider a sphere $\hat\C$ with $b\in\{1,2,3\}$ analytic disks removed, and with $3-b$ marked points.  The amplitudes of these surfaces are determined by the LCFT {\it structure constants}. 
To explain this connection, consider the case $b=3$ when the surface is a pair of pants $\caP$.  We equip it with an admissible metric $g_{\caP}$ and boundary parametrisations $\boldsymbol{\zeta}=(\zeta_1,\zeta_2,\zeta_3)$ and consider the case where all the boundary circles are incoming so the pant amplitude is a map $\caA_{\caP,g_{\caP},\boldsymbol{\zeta}}:\otimes_{j=1}^3\caH\to\C$. 
In view of the completeness relation \eqref{completeness}, it will be crucial to evaluate it at the state $\otimes_{j=1}^3\Psi_{\alpha_j,\nu_j,\tilde\nu_j}$. The matrix coefficients of $\caA_{\caP,g_{\caP},\boldsymbol{\zeta}}$ on this basis 
produce a function of $({\bf p},\boldsymbol{\nu})$ where ${\bf p}=(p_1,p_2,p_3)$ is defined by $\alpha_j=Q+ip_j\in Q+i\R^+$ 
and $\boldsymbol{\nu}=(\nu_1,\nu_2,\nu_3)$ are Young diagrams, this function is called \emph{block amplitude} 
and is $\ell^2$ summable in the Young diagrams $\boldsymbol{\nu}$ variables and $L^2$ in ${\bf p}\in \R_+^3$ 
by the Plancherel formula \eqref{completeness} if one uses the Schapovalov forms for the pairing. 
The result is the fundamental holomorphic factorisation property:

\begin{theorem}\label{pantresult}
Let  $\caP=(\caP,\boldsymbol{\zeta})$ be a pair of pants with parametrised boundary and $g_\caP$ be an admissible metric. Then for $\alpha_j\in Q+i\R$, j=1,2,3 we have
\begin{align*}%\label{}
\caA_{\caP,g_{\caP},\boldsymbol{\zeta}}(\otimes_{j=1}^3\Psi_{\alpha_j,\nu_j,\tilde\nu_j})=Ze^{\sum_{j=1}^3c_j\Delta_{\alpha_j}}\, 
w_{\caP}(\boldsymbol{\Delta_{\alpha}},
 \boldsymbol{\nu})
 \overline{ w_{\caP}(\boldsymbol{\Delta_{\alpha}},
 \tilde{\boldsymbol{\nu}})}C^{\rm DOZZ}_{\gamma,\mu}(\alpha_1,\alpha_2,\alpha_3)
\end{align*}
where the constants $Z,c_j$ depend on the metric $g_\caP$ and $w_{\caP}(\boldsymbol{\Delta_{\alpha}},
 \boldsymbol{\nu})$ is a polynomial in the conformal weights $\Delta_{\alpha_i}$. Here $\boldsymbol{\nu}=(\nu_1,\nu_2,\nu_3)$ is $3$-plet of Young diagrams and $\boldsymbol{\Delta_{\alpha}}=(\Delta_{\alpha_1},\Delta_{\alpha_2},\Delta_{\alpha_3})$.
\end{theorem}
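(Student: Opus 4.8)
\emph{Plan.} The strategy is to realise the pair-of-pants amplitude as a building block of the sphere three-point function — for which the DOZZ formula is known by \cite{KRV} — and then to generate the Virasoro descendants by conformal Ward identities. First I would use the Weyl and diffeomorphism covariance of the amplitudes (Proposition \ref{Weyl}) to replace the admissible metric $g_\caP$ by a fixed reference admissible metric: the conformal anomaly functional $S_{\rm L}^0$ produces the scalar $Z$, while the behaviour of the Weyl factor in the collar neighbourhoods of the three boundary circles, together with the homogeneity of degree $\Delta_{\alpha_j}$ of each $\Psi_{\alpha_j,\nu_j,\tilde\nu_j}$ under the dilation part of the $j$-th boundary action (read off from \eqref{eigen}), produces the factors $e^{c_j\Delta_{\alpha_j}}$. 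So it suffices to establish the factorisation for a standard pair of pants with a standard metric, up to a universal multiplicative constant.

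\emph{Highest-weight case.} Next I would treat $\nu_i=\tilde\nu_i=\emptyset$ for $i=1,2,3$. By item (a) of the overview, $\Psi_{\alpha,\emptyset,\emptyset}$ equals (the analytic continuation in $\alpha$ of) the one-marked-point disk amplitude $\mc{A}_\alpha$. Gluing a disk $(\D,\cdot,0,\alpha_i,\zeta)$ onto the $i$-th boundary circle of $\caP$ using the parametrisation $\zeta_i$, and iterating the gluing law of Theorem \ref{gluingthm}, identifies $\caA_{\caP,g_\caP,\boldsymbol\zeta}(\otimes_{i=1}^3\Psi_{\alpha_i,\emptyset,\emptyset})$, up to a universal constant, with the three-point function $\langle\prod_{i=1}^3 V_{\alpha_i}(x_i)\rangle_{\hat\C,g}$ on a sphere equipped with an admissible metric, the $x_i$ being the images of the disk centres. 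Removing the explicit metric- and position-dependent prefactor recalled in the Introduction leaves $\caA_{\caP,g_\caP,\boldsymbol\zeta}(\otimes_i\Psi_{\alpha_i,\emptyset,\emptyset}) = Z e^{\sum_j c_j\Delta_{\alpha_j}}\,C^{\rm DOZZ}_{\gamma,\mu}(\alpha_1,\alpha_2,\alpha_3)$ for $\alpha_i<Q$. Both sides are holomorphic in $(\alpha_1,\alpha_2,\alpha_3)$ — the left side because the Hilbert-Schmidt operator $\caA_{\caP,g_\caP,\boldsymbol\zeta}$ acts continuously on the holomorphic family $\alpha\mapsto\Psi_\alpha$, the right side by the explicit DOZZ expression — so the identity extends to $\alpha_i\in Q+i\R$. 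This is the claim with $w_\caP(\boldsymbol{\Delta_{\alpha}},\boldsymbol{\emptyset})=1$.

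\emph{Descendants.} Finally, for general Young diagrams I would express $\Psi_{\alpha,\nu,\tilde\nu}$ via the action on $\Psi_\alpha$ of the holomorphic and antiholomorphic Virasoro lowering operators $L_{-\nu}$ and $\bar L_{-\tilde\nu}$ in their geometric realisation, and realise those operators as contour integrals of the Wick-renormalised stress-energy tensor $T$ around the parametrised boundary circles. Because $\caP$ carries no marked point, $T$ is holomorphic on $\caP$, so pushing the contour on the $i$-th leg through the pair of pants produces the conformal Ward identities and expresses the answer as a differential operator — polynomial in the weights $\Delta_{\alpha_j}$, with coefficients determined by the conformal geometry of $(\caP,\boldsymbol\zeta)$ — applied to the primary amplitude $\caA_{\caP,g_\caP,\boldsymbol\zeta}(\otimes_j\Psi_{\alpha_j,\emptyset,\emptyset})$. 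Doing this on all three legs and in both chiralities, and using that the holomorphic and antiholomorphic contour integrals commute and act independently, yields $\caA_{\caP,g_\caP,\boldsymbol\zeta}(\otimes_i\Psi_{\alpha_i,\nu_i,\tilde\nu_i}) = w_\caP(\boldsymbol{\Delta_{\alpha}},\boldsymbol{\nu})\,\overline{w_\caP(\boldsymbol{\Delta_{\alpha}},\tilde{\boldsymbol{\nu}})}\,\caA_{\caP,g_\caP,\boldsymbol\zeta}(\otimes_i\Psi_{\alpha_i,\emptyset,\emptyset})$ with $w_\caP$ polynomial in the weights; combined with the previous step this gives the theorem.

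\emph{Main obstacle.} The difficult step is the rigorous execution of the descendant argument. One must construct $T$ probabilistically on $\caP$ with enough control up to the boundary to pair it against analytic functions on the boundary circles; justify the contour deformations, which requires analyticity and regularity of the amplitude in the boundary fields near $\partial\caP$ and of the family $\alpha\mapsto\Psi_{\alpha,\nu,\tilde\nu}$; check that the descendant states produced by these contour integrals agree, with the normalisation dictated by the Shapovalov form entering \eqref{completeness}, with the eigenfunctions $\Psi_{\alpha,\nu,\tilde\nu}$ of \cite{GKRV}; and control the analytic continuation of the full descendant identity from the Seiberg region $\alpha_i<Q$ to the spectrum $Q+i\R$, which needs uniform bounds. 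Already making sense of the statement requires a preliminary point: $\caA_{\caP,g_\caP,\boldsymbol\zeta}$, a priori only Hilbert-Schmidt, must first be extended so as to pair against the generalised eigenstates $\Psi_{\alpha_i,\nu_i,\tilde\nu_i}$ with $\alpha_i\in Q+i\R$, which do not lie in $\caH$; this should follow from the decay and regularity estimates on the pants amplitude proved earlier in the paper.
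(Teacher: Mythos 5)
Your overall architecture --- reduce to a reference metric by Weyl covariance, identify the primary case with the sphere three-point function via gluing and invoke DOZZ, then generate descendants by stress-energy contour integrals and Ward identities, and finish by analytic continuation to the spectrum line --- is the same as the paper's (Section 10, Theorem \ref{pantDOZZ}), and your primary-field step and your identification of the extension problem for pairing the Hilbert--Schmidt amplitude against generalised eigenstates are both sound.

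The genuine gap is in the descendant step, and it is not the one you flag as the main obstacle. The only rigorous realisation available of $\Psi_{\alpha,\nu,\tilde\nu}$ as a contour integral of SET insertions (Lemma \ref{TTLemma} / Lemma \ref{ampdiskSET}) holds for $\alpha$ \emph{real and very negative}, $\alpha<a\wedge(Q-\gamma)$ with $a$ depending on $|\nu|+|\tilde\nu|$; there is no geometric Virasoro action by boundary contour integrals directly at $\alpha\in Q+i\R$ that you could apply to the already-established primary identity. But once you are forced down to such negative $\alpha_1,\alpha_2,\alpha_3$, the glued object is a sphere correlation with $\sum_i\alpha_i\ll 2Q$: the first Seiberg bound \eqref{seiberg1} fails, the three-point function is infinite by Proposition \ref{limitcorel}, and even the pairing of the pant amplitude with $\otimes_i\Psi_{\alpha_i,\nu_i,\tilde\nu_i}$ diverges (the decay rate $s=\sum\alpha_i-Q\chi$ in Theorem \ref{integrcf} is no longer large enough to absorb the growth $e^{(Q-\alpha_i)c_-}$ of the eigenstates). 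So your plan is caught between two regimes, neither of which supports the whole argument. The paper's resolution --- absent from your proposal --- is to insert $m-3$ auxiliary vertex operators $V_{\alpha_j}(z_j)$, $j=4,\dots,m$, inside the pant to restore the Seiberg bound at very negative $\alpha_1,\alpha_2,\alpha_3$, derive the Ward identities for the resulting $m$-point function with holes $U_t$ in the potential around the insertions (needed to make the SET correlations well defined), compute the residues, pass to the limit $t\to\infty$, and only then analytically continue simultaneously in all the weights, sending the auxiliary weights to $0$ and $\alpha_1,\alpha_2,\alpha_3$ to $Q+i\R$ along a connected component of the domain of holomorphy. Without this device (or a substitute for it) the descendant identity cannot be established, so the proof as proposed does not close.
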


For a  statement including the other building blocks, see Theorem \ref{pantDOZZ}.
 The proof proceeds by using the analytic continuation of   the states $\Psi_{\alpha,\nu,\tilde\nu}$ to real values of $\alpha$ so as to get a  probabilistic representation for these states. For $\nu=\tilde\nu=\emptyset$, the $\Psi_{\alpha,\emptyset,\emptyset}$ is just the disk amplitude discussed in (b) above and in general (namely when the Young diagrams are non empty) it is given by a disk amplitude with further Stress Energy Tensor (SET) insertions in addition to the vertex operator $e^{\alpha\phi}$ in \eqref{amplitudeintro}. The SET is a random field $T(z)$ formally given by $T(z)=Q\partial^2_z\phi-(\partial_z\phi)^2$ and rigorously defined through regularised expressions. By Segal's gluing axiom, evaluating the pant amplitude at these states amounts to  gluing  such amplitudes to the pant amplitude. So, we end up studying  probabilistic expressions for correlation functions of the type
 \begin{align*}%\label{correset}
\langle \prod_{i=1}^kT(u_i) \prod_{j=1}^l\bar T(v_i) \prod_{i=1}^3 e^{\alpha_i\phi(x_i)}\rangle_{\hat\C,g}.
\end{align*}
The crucial combinatorial input to compute such quantities are the  so called {\it Ward identities}, obtained in  Section \ref{sec:computingamplitudes}. They allow us to express  these correlation functions  in terms of holomorphic derivatives (for $T$) and antiholomorphic derivatives (for $\bar T$) of the correlation function $\langle  \prod_{i=1}^3 e^{\alpha_i\phi(x_i)}\rangle_{\hat\C,g}$, see Propositions \ref{propward} and \ref{wardite}. This holomorphic factorisation leads to the one in Theorem \ref{pantresult}. This specific role of the Ward identities is well established in physics, see for instance \cite{Gawedzki96_CFT,Ribault14}.

Theorem \ref{mainth} now follows by combining the previous arguments. First we need some coordinates on the moduli space $\caM_{\bf g}$ that are adapted to our datas, namely the plumbing construction of surfaces with marked points.

Again, as an example,  consider the case of $m=0$ and genus ${\bf g}\geq 2$. A topological surface of genus ${\bf g}\geq 2$ can be cut  along $3{\bf g}-3$ noncontractible loops to  $2{\bf g}-2$ (topological) pairs of pants. Conversely it is shown in \cite{Hinich-Vaintrob} that the moduli space  $\mc{M}_{\bf g}$  can be covered by gluing admissible pants together with flat annuli $\mathbb{A}_{q_i}$, $i=1,\dots, 3{\bf g}-3$ (see Figure \ref{gluedcylinder}). The variables ${\bf q}\in \D^{3{\bf g}-3}$ provide local coordinates for $\caM_{\bf g}$, see  Section \ref{sub:plumbing}  for details and also for extension to surfaces with marked points. In particular these coordinates cover also  the boundary strata of the compactification of the moduli space.

 Gluing two pant amplitudes  along a boundary circle of each (with Theorem \ref{gluingthm}) and applying the Plancherel identity \eqref{completeness} yields
 \begin{align*}
\caA_{\caP_1,g_{1},\boldsymbol{\zeta}_1}\caA_{\caP_2,g_{2},\boldsymbol{\zeta}_2}=\frac{1}{2\pi}\sum_{\nu,\nu',\tilde\nu,\tilde\nu'}\int_{\R_+} F^{-1}_{Q+ip}(\nu,\nu')F^{-1}_{Q+ip}(\tilde\nu,\tilde\nu')
\caA_{\caP_1,g_{1},\boldsymbol{\zeta}_1}(\cdot, \Psi_{Q+ip,\nu,\tilde\nu})\caA_{\caP_2,g_{2},\boldsymbol{\zeta}_2}(\Psi_{Q+ip,\nu',\tilde\nu'},\cdot)\dd p.
\end{align*}
Incorporating the plumbing annulus $\caA_q$, i.e. considering the amplitude $\caA_{\caP_1,g_{1},\boldsymbol{\zeta}_1}\caA_q\caA_{\caP_2,g_{2},\boldsymbol{\zeta}_2}$, modifies the above relation by powers of $q$ and $\bar q$ by using \eqref{eigen}. Applying these identities to all gluings and using the holomorphic factorisation (i.e. Theorem \ref{pantresult}) for pant amplitudes, one can factorise the resulting sums over Young diagrams $\boldsymbol{\nu},\tilde{\boldsymbol{\nu}}$ as a product of sums over $\boldsymbol{\nu}$ and over $\tilde{\boldsymbol{\nu}}$. The result is the factorised expression in Theorem \ref{mainth} and the expression for the conformal blocks \eqref{blocks}.
The proof for the convergence of the conformal blocks  relies on viewing these series as pairings of Hilbert-Schmidt operators (called block amplitudes) acting on functions defined on the set of Young diagrams: Young diagrams parametrise the eigenstates of the Liouville Hamiltonian, and the block amplitudes are matrix coefficients of Segal amplitudes on the eigenbasis.
The Plancherel formula  \eqref{completeness} for the eigenbasis of the 
Hamiltonian then transfers the Hilbert-Schmidt properties of Segal amplitudes to the block amplitudes, and therefore implies the convergence of the conformal blocks.  

The convergence of the blocks was hitherto unknown, except in the torus $1$-point case   \cite{GosalRemySun20} and the sphere $4$-point case in  \cite{GKRV20_bootstrap}. In extending these two particular cases, our work introduces several significant innovations, both in terms of conceptual understanding and technical implementation. Firstly, the Segal amplitude construction provides a robust and general method for encoding the complex geometric aspects of the problem within a probabilistic framework.  Secondly, to express the block 
amplitudes in terms of algebraic/geometric data depending only on the Virasoro representations 
(conformal weights, central charge), we need to prove a general Ward identity for pairs of pants, 
a task that is considerably more involved than for the $4$-point correlation of the sphere.

\subsection{Related works and future directions}

Segal's axioms have been earlier addressed in a non-conformal setup in \cite{Pickrell} where the gluing axiom was proved for  $P(\phi)_2$ quantum field theories. See also \cite{Dimock2007}  where correlations on $\hat\C$ were discussed for the (conformal) free field. A more recent treatment of Segal's axioms appeared after our work in \cite{Lin} for $P(\phi)_2$.  

As far as conformal bootstrap is concerned, this has been mostly studied mathematically in the setup of  Vertex Operator Algebras (VOA) introduced by Borcherds \cite{borcherds} and Frenkel-Lepowsky-Meurman \cite{Frenkel:1988xz}, see also \cite{Huang_1997} and  \cite{Huang_1997} for more recent developments. Here one aims at algebraic construction of the conformal blocks. The main problems are the question of the convergence of expansions analogous to \eqref{blocks}, their transformation properties under modular group and the problem of constructing CFT by combining holomorphic and antiholomorphic blocks. The case of minimal CFTs is treated in the case of genus $0$ or $1$ Riemann surfaces (see \cite{Huang_blog} and references therein) but the higher genus case is more problematic: in particular we stress that the Moore-Seiberg argument of modular invariance based on the crossing symmetry of the sphere $4$-point correlation function and the modular invariance of the  torus $1$-point correlation function has a gap and still remains a conjecture (see \cite[section 3]{Huang_blog}). While there has been progress in these issues for rational CFTs where the spectrum is countable (see \cite{Bin_Gui} for the discussion of the higher genus case and references to earlier work)  
non-rational CFTs like LCFT have remained a challenge in this setup (the reader may consult \cite{10.1007/978-90-481-2810-5_46} for a lucid introduction to the difficulties of nonrational CFTs).

Finally we want to mention some recent probabilistic work related to LCFT and the topics of this paper. 
In \cite{GosalRemySun20} a probabilistic expression for the torus 1-point conformal block was derived in terms of multiplicative chaos, which opens new perspectives on the probabilistic representation of conformal blocks.
The conformal bootstrap for LCFT on surfaces with boundaries is currrently been developed \cite{Wu,GRW1,GRW2} in collaboration with Wu: in particular, Segal's axioms in that case has recently been proved in \cite{GRW1} as a follow up of our paper. In that case,  there are 4 possible structure constants which were computed in \cite{AngRemySun21_FZZ,Remy20,MR4483018,Ang_Remy_Sun_Zhu}. 

 Recently, there has been a large effort in probability theory  to make sense of Polyakov's path integral formulation of LCFT within the framework of random conformal geometry and the scaling limit of random planar maps: see \cite{LeG13,Mie13,MS15a,MS16a,MS16b,DingDDF19_tightness,DubedatFGPS19_metric,GM20} for the construction of a random metric space describing (at least at the conjectural level) the limit of random planar maps  and \cite{holden_sun_cardy} for exact results on their link with LCFT. A key role was played by the pioneering work \cite{MatingOfTrees} where was devised a set of tools based on couplings between GFF and Schramm-Loewner Evolution (SLE) or variants like Conformal Loop Ensembles (CLE,  see \cite{sheffield_werner_CLE}). Integrability of geometric observables for CLE was conjectured in physics in \cite{PhysRevLett.116.130601} via CFT techniques, stating that the expectation value of  the joint moments of the conformal radii of outermost loops surrounding three points for CLE on the sphere agrees with the imaginary DOZZ formula proposed by \cite{Zamolodchikov_05}. Based on the mating of tree technology \cite{MatingOfTrees} and integrability result for LCFT \cite{KRV_DOZZ}, the breakthrough work  \cite{AngSun21_CLE} establishes this conjecture on rigorous grounds.  It is a major open question to determine other CFT aspects (full set of structure constants, spectrum and Segal's axioms) of CLE. 
 
 In physics, it has been argued that the CFT description of  loop models involves the imaginary LCFT, which can be roughly described as a path integral \eqref{pathi} based on the Liouville action \eqref{introactionL} with imaginary parameter $\gamma$.  A compactified version of this path integral has recently been proposed in \cite{CILT2023} leading to a first instance of non-unitary CFT (i.e. the Hamiltonian is non self-adjoint) with central charge $c<1$ and discrete spectrum. Even more interesting, it gives rise to a  \textit{logarithmic CFT}, topic which has been under active research in physics but remains poorly understood mathematically. Concretely, this means that  the Hamiltonian  should be diagonalisable in Jordan blocks, and that the involved Virasoro representations are indecomposable but not irreducible. This provides a playground to understand the structure of the conformal bootstrap for logarithmic CFT. Another fascinating direction to be understood mathematically is the conformal bootstrap for CFT with extended symmetry. The prototype of CFT with $\mathcal{W}$-symmetry algebra is the Toda CFT, constructed probabilistically in \cite{TODA_2023}, whose CFT structure remains a mystery in spite of recent  progress \cite{cercle2022,cercle2024,cerclehughenin2024}. Another instance are the Wess-Zumino-Witten models with Kac-Moody symmetry algebra, whose probabilistic construction has been achieved in \cite{GKR_WZW} in the case when the target space is the hyperbolic space $\mathbb{H}^3$ and which is another example of non-unitary CFT but now with continuous spectrum.\\

\noindent\textbf{Acknowledgements.} C. Guillarmou acknowledges the support of  European Research Council (ERC)  Consolidator grant  725967 and A. Kupiainen  the support of the ERC Advanced Grant 741487. R. Rhodes is partially supported by the Institut Universitaire de France (IUF). R. Rhodes and V. Vargas acknowledge the support of the ANR-21-CE40-0003. The authors wish to thank  C. Klimcik and especially J. Teschner, whose works have been a deep source of inspiration for us, for many enlightening discussions. We also thank R. Canary, P. Haissinsky, V. Hinich and A. Vaintrob for answering our questions on the plumbing coordinates.

\begin{table}[H]
\caption{Key Notations}
\begin{tabularx}{\textwidth}{@{}XX@{}}
\toprule
  Notations: \\
  $(\Sigma,g)$, ${\rm dv}_g$,  $\dd \ell_g$, ${\bf g}$  & Riemannian surface, volume form on $\Sigma$, $\pl \Sigma$, genus\\
$K_g$, $k_g$ & scalar and geodesic curvature  \\
 $\Delta_g, \Delta_{g,D}$, $G_g$ $G_{g,D}$ & Laplacians (Dirichlet if $\pl\Sigma\not=\emptyset$), Green functions \\
 $X_g$, $X_{g,D}$, $M^{g}_{\gamma}(h,\dd x)$ & GFF and Dirichlet GFF, GMC measure for $h$\\
 $\gamma\in (0,2)$, $Q=\frac{\gamma}{2}+\frac{2}{\gamma}$, $\mu$ & Liouville CFT parameters\\
 $c_{\rm L}=1+6Q^2$, $\Delta_\alpha:= \frac{\alpha}{2}(Q-\frac{\alpha}{2})$ & LCFT central charge and conformal weights\\
 ${\bf x}=(x_1,\dots,x_m)$, $\boldsymbol{\alpha}=(\alpha_1,\dots,\alpha_m)$  & Marked points and attached weights on $\Sigma$ \\
 $(\mc{C}_1,\dots,\mc{C}_{3{\bf g}-3+m})$, ${\bf q}=(q_1,\dots,q_{3{\bf g}-3+m})$, $\mc{G}$ & Cutting curves, plumbing coordinates, gluing graph\\
 $\mc{M}_{{\bf g},m}$, $\mc{T}_{{\bf g},m}$ & Moduli and Teichm\"uller space with marked points \\
$\boldsymbol{\zeta}=(\zeta_1,\dots,\zeta_b)$, $\boldsymbol{\sigma}=(\sigma_1,\dots,\sigma_b)$, $\boldsymbol{\tilde \varphi}=(\tilde\varphi_1,\dots,\tilde\varphi_{b})$& Boundary parametrisations, orientations \& fields \\
$\mc{H}=L^2(H^s(\mathbb{T}),\mu_0)$, $\cjg \cdot,\cdot\cjd_{\mc{H}}$ &  Hilbert space of LCFT, scalar product\\
 $\mathbf{D}_\Sigma$, $\mathbf{D}$, $P\tilde{\boldsymbol{\varphi}}$ & DN map on $\Sigma$, on 
 $\D$, harmonic extension of $\tilde{\boldsymbol{\varphi}}$\\
$\caA_{\Sigma,g}^0$, $\caA_{\Sigma,g,\bf x,\boldsymbol{\alpha},\boldsymbol{\zeta}}$ & Free field and Segal 
amplitudes\\
${\bf p}=(p_1,\dots,p_{3{\bf g}-3+m})$, $\boldsymbol{\nu}=(\nu_1,\dots,\nu_{3{\bf g}-3+m})$ & Family of spectral parameters \& Young diagrams\\
${\bf H}$, $\Psi_{Q+ip,\nu,\tilde{\nu}}$, $H_{Q+ip,\nu,\tilde{\nu}}$ & Hamiltonian, descendant \& normalised eigenstates \\
$\tilde{\mc{F}}_{\bf p}(\boldsymbol{\alpha},\bf{q})$, $\tilde{\mc{B}}_{\mc{P},J,{\bf x},\boldsymbol{\alpha},\boldsymbol{\zeta}}$
 & Normalised conformal block \& block amplitude \\
 $ \omega_{\mc{P}}(\boldsymbol{\Delta_{\alpha}},\boldsymbol{\nu},{\bf z})$ &  Ward coefficients\\
 $C^{\rm DOZZ}_{\gamma,\mu}(\alpha_1,\alpha_2,\alpha_3)$ & DOZZ $3$-point correlation function for LCFT\\
\end{tabularx}
\end{table}

%%%%%%%%%%%%%%%%%%%%%%%%%%%%%%%%%%%%%%%%%%%%%%%%%%%%%%%%%%%%%%%%%%%%%%%%%%%%%%%%%%%%%%%%%%%%%%%%%%%%%%%%%%%%%%%%%%%%%%%%%%%%%%%%%%%%%%%%%%%%%%%%%%%%%%%%%%%%%%%%%%%%%%%%%%%%%%%%%%%%%%%%%%%%%%%%%%%%%%%%
\section{Liouville Conformal Field Theory on closed Riemann surfaces}\label{sec:lcft}
 %%%%%%%%%%%%%%%%%%%%%%%%%%%%%%%%%%%%%%%%%%%%%%%%%%%%%%%%%%%%%%%%%%%%%%%%%%%%%%%%%%%%%%%%%%%%%%%%%%%%%%%%%%%%%%%%%%%%%%%%%%%%%%%%%%%%%%%%%%%%%%%%%%%%%%%%%%%%%%%%%%%%%%%%%%%%%%%%%%%%%%%%%%%%%%%%%%%%%%%%%%%%%%%%%%%%%%%%%%%%%%%%%%%%%%%%%%%%%%%%%%%%%%%%%%%%%%%%%%%%%%%%%%
In this section, we summarise the   construction of LCFT on closed Riemann surfaces carried  out in \cite{GRVIHES}. To begin with, we  gather some preliminary material in Subsection \ref{prelimLCFT}. Following this, we  state the definitions for the path integral (see equation \eqref{def:pathintegralF}) or the correlation functions (see Proposition \ref{limitcorel}). Then we recall their main properties (see Proposition \ref{covconf2}), including diffeomorphism invariance and conformal covariance. Finally, and importantly, we recall the expression of the structure constants in terms of the DOZZ formula in Subsection  \ref{sub:dozzintro}.

 \subsection{Background and notations} \label{prelimLCFT}
%%%%%%%%%%%%%%%%%%%%%%%%%%%%%%%%%%%%%%%%%%%%%%%%%%%%%%

In this paper, we will consider Riemann surfaces $\Sigma$ with or without boundary and equipped with a metric $g$ with associated volume form ${\rm dv}_g$. Integration of functions $f$ will be denoted $\int_{\Sigma} f(x)  {\rm dv}_g(x)$ and $L^2(\Sigma,{\rm dv}_g)$ is the standard space of square integrable functions with respect to ${\rm v}_g$. In this context, we consider the standard non-negative Laplacian $\Delta_{g}=d^*d$ where $d^*$ is the $L^2$ adjoint of the exterior derivative $d$. We will denote $K_g$ the scalar curvature and $k_g$ the geodesic curvature along the boundary.

%%%%%%%%%%%%%%%%%%%%%%%%%%%%%%%%%%%%%%%%%%%%%%%%%%%%%%

\subsubsection{Regularised determinants}
The determinant of the Laplacian ${\det}'(\Delta_g)$ on a compact manifold was introduced by Ray-Singer \cite{Ray-Singer}. We recall its definition and  we refer to  \cite{Ray-Singer,OsgoodPS88} for details and its properties.
We will be interested in two cases. First, for a Riemannian metric $g$ on a connected oriented compact surface $\Sigma$ without boundary, the Laplacian $\Delta_{g}$ has discrete spectrum
${\rm Sp}(\Delta_{g})=(\la_j)_{j\in \N}$ with $\la_0=0$ and $\la_j\to +\infty$ (each $\lambda_j$ is repeated with multiplicity). 
We can define the determinant of $\Delta_{g}$ by 
\[ {\det} '(\Delta_{g})=\exp(-\pl_s\zeta(s)|_{s=0})\]
where $\zeta(s):=\sum_{j=1}^\infty \la_j^{-s}$ is the spectral zeta function of $\Delta_{g}$, which admits a meromorphic continuation from ${\rm Re}(s)\gg 1$ to $s\in \C$ and is holomorphic at $s=0$.

Second,  for a Riemannian metric $g$ on a connected oriented compact surface $\Sigma$ with boundary, the Laplacian $\Delta_{g}$ with Dirichlet boundary conditions has discrete spectrum $(\la_{j,D})_{j \geq 1}$ and the determinant is defined in a similar way as $\det (\Delta_{g,D}): = e^{- \zeta'_{g,D}(0)}$ where $\zeta_{g,D}$   is the spectral zeta function of $\Delta_{g}$ with Dirichlet boundary conditions defined for ${\rm Re}(s)\gg 1$ by $\zeta_{g,D}(s):=\sum_{j=1}^\infty \la_{j,D}^{-s}$. One can prove that $\zeta_{g,D}(s)$ admits a meromorphic extension to $s\in \C$ that is holomorphic at $s=0$.

\subsubsection{Green functions} The Green function $G_g$ on a surface $\Sigma$ without boundary  is defined 
to be the integral kernel of the resolvent operator $R_g:L^2(\Sigma)\to L^2(\Sigma)$ satisfying $\Delta_{g}R_g=2\pi ({\rm Id}-\Pi_0)$, $R_g^*=R_g$ and $R_g1=0$, where $\Pi_0$ is the orthogonal projection in $L^2(\Sigma,{\rm dv}_g)$ on $\ker \Delta_{g}$ (the constants). By integral kernel, we mean that for each $f\in L^2(\Sigma,{\rm dv}_g)$
\[ R_gf(x)=\int_{\Sigma} G_g(x,x')f(x'){\rm dv}_g(x').\] 
The Laplacian $\Delta_{g}$ has an orthonormal basis of real valued eigenfunctions $(e_j)_{j\in \N}$ in $L^2(\Sigma,{\rm dv}_g)$ with associated eigenvalues $\la_j\geq 0$; we set $\la_0=0$ and $\varphi_0=({\rm v}_g(\Sigma))^{-1/2}$.  The Green function then admits the following Mercer's representation in $L^2(\Sigma\times\Sigma, {\rm dv}_g \otimes {\rm dv}_g)$
\begin{equation}
G_g(x,x')=2\pi \sum_{j\geq 1}\frac{1}{\lambda_j}e_j(x)e_j(x').
\end{equation}
Similarly, on a surface with smooth boundary $\Sigma$, we will consider the Green function with Dirichlet boundary conditions $G_{g,D}$ associated to the Laplacian $\Delta_{g}$. In this case, the associated resolvent operator
\[ R_{g,D}f(x)=\int_{\Sigma} G_{g,D}(x,x')f(x'){\rm dv}_g(x')\] 
solves $\Delta_{g}R_{g,D}=2\pi {\rm Id}$.

\subsubsection{Gaussian Free Fields} In the case of a surface $\Sigma$ with no boundary, the Gaussian Free Field (GFF in short) is defined as follows. Let $(a_j)_j$ be a sequence of i.i.d. real Gaussians   $\mc{N}(0,1)$ with mean $0$ and variance $1$, defined on some probability space   $(\Omega,\mc{F},\mathbb{P})$,  and define  the Gaussian Free Field with vanishing mean in the metric $g$ by the random functions
\begin{equation}\label{GFFong}
X_g:= \sqrt{2\pi}\sum_{j\geq 1}a_j\frac{e_j}{\sqrt{\la_j}} 
\end{equation}
 where  the sum converges almost surely in the Sobolev space  $H^{s}(\Sigma)$ for $s<0$ defined by
\[ H^{s}(\Sigma):=\{f=\sum_{j\geq 0}f_j\varphi_j\,|\, \|f\|_{s}^2:=|f_0|^2+\sum_{j\geq 1}\lambda_j^{s}|f_j|^2<+\infty\}.\] 
This Hilbert space is independent of $g$, only its norm depends on a choice of $g$.
The covariance is then the Green function when viewed as a distribution, which we will write with a slight abuse of notation
\[\mathbb{E}[X_g(x)X_g(x')]= \,G_g(x,x').\]
In the case of a surface $\Sigma$ without boundary, we will denote the Liouville field by $\phi_g:=c+X_g$ where $c\in\R$ is a constant that stands for the constant mode of the field.

In the case of a surface with boundary $\Sigma$, the Dirichlet Gaussian free field (with covariance $G_{g,D}$) will be denoted $X_{g,D}$. It is defined similarly to the sum \eqref{GFFong} with the $(e_j)_j$  and $(\lambda_j)_j$ replaced by the normalised eigenfunctions $(e_{j,D})_j$ and ordered eigenvalues $(\lambda_{j,D})_j$ of the Laplacian with Dirichlet boundary conditions, the sum being convergent   almost surely  in the Sobolev space  $H^{s}(\Sigma)$ (for all $s\in (-1/2,0)$) defined by
\[ H^{s}(\Sigma):=\{f=\sum_{j\geq 0}f_j\varphi_{j,D}\,|\, \|f\|_{s}^2:=\sum_{j\geq 0}\lambda_{j,D}^{s}|f_j|^2<+\infty\}.\] 
In this context, we  will always consider the harmonic extension $P \tilde{\boldsymbol{\varphi}}$ of a boundary field $ \tilde{\boldsymbol{\varphi}}$ (the exact definition appears when more relevant in subsection \ref{subDTN} below). In the case of a surface with boundary the Liouville field will be denoted by  
\begin{equation}\label{defliouvillefield}
\phi_g:= X_{g,D}+P \tilde{\boldsymbol{\varphi}}. 
\end{equation}
Hence $\phi_g$ will depend on boundary data in this case.
 
 \subsubsection{$g$-Regularisations} As Gaussian Free Fields are rather badly behaved (they are distributions), we will need to consider their regularisations. We then introduce a regularisation procedure, which we will call \emph{$g$-regularisation}. Let $\Sigma$ be a surface with or without boundary equipped with a Riemannian metric $g$ and associated distance $d_g$. For a random distribution $h$ on $\Sigma$ and  for $\eps>0$ small, we define a regularisation $h_{\eps}$ of $h$ by averaging on geodesic circles of radius $\eps>0$: let $x\in \Sigma$ and let $\mc{C}_g(x,\eps)$ be the geodesic circle of center $x$ and radius $\eps>0$, and let $(f^n_{x,\eps})_{n\in \N} \in C^\infty(\Sigma)$ be a sequence with $||f^n_{x,\eps}||_{L^1}=1$ 
which is given by $f_{x,\eps}^n=\theta^n(d_g(x,\cdot)/\eps)$ where $\theta^n(r)\in C_c^\infty((0,2))$ is non-negative, 
supported near $r=1$ and such that $f^n_{x,\eps}{\rm dv}_g$ 
converges in $\mc{D}'(\Sigma)$ to the uniform probability measure 
$\mu_{x,\eps}$
on $\mc{C}_g(x,\eps)$ as $n\to \infty$ (for $\eps$ small enough, the geodesic circles form a sub-manifold and the restriction of $g$ along this manifold gives rise to a finite measure, which corresponds to the uniform measure after renormalisation so as to have mass $1$). If the pairing $\langle h, f_{x,\eps}^n\rangle$ converges almost surely towards a random variable $h_\epsilon(x)$ that has a modification which is continuous in the parameters $(x,\epsilon)$, we will say that $h$ admits a $g$-regularisation $(h_\epsilon)_\epsilon$. This is the case for the GFF $X_g$ or $X_{g,D}$, see \cite[Lemma 3.2]{GRVIHES}. We will denote $X_{g,\epsilon}$, $X_{g,D,\epsilon}$ their respective $g$-regularisations
and $\phi_{g,\epsilon}$ the $g$-regularisation of the Liouville field.

\subsection{Construction of Liouville Conformal Field Theory} 
 %%%%%%%%%%%%%%%%%%%%%%%%%%%%
 
\subsubsection{Gaussian multiplicative chaos}
For $\gamma\in\R$ and $h$ a random distribution admitting a  $g$-regularisation $(h_\epsilon)_\epsilon$, we define the measure 
\begin{equation}\label{GMCg}
M^{g,\eps}_{\gamma}(h,\dd x):= \eps^{\frac{\gamma^2}{2}}e^{\gamma h_{ \eps}(x)}{\rm dv}_g(x).
\end{equation}
Of particular interest for us is the case when $h=X_g$ or $h=X_{g,D}$ (and consequently $h=\phi_g$ too). In that case, for $\gamma\in (0,2)$,  the random measures above converge  as $\eps\to 0$ in probability and weakly in the space of Radon measures towards   non trivial random measures respectively denoted by $M^g_\gamma(X_g,\dd x)$, $M^g_\gamma(X_{g,D},\dd x)$ and $M^g_\gamma(\phi_g,\dd x)$; this is a standard fact and the reader may consult \cite{Kahane85,rhodes2014_gmcReview,GRVIHES} for further details. Clearly, when $\Sigma$ has no boundary, we have the relation $M^g_\gamma(\phi_g,\dd x)=e^{\gamma c}M^g_\gamma(X_{g},\dd x)$ and, in the case when $\Sigma$ has a  boundary then $M^g_\gamma(\phi_g,\dd x)= e^{\gamma P  \tilde{\boldsymbol{\varphi}} (x)} M^g_\gamma(X_{g,D},\dd x)$.

Also, from  \cite[Lemma 3.2]{GRVIHES}  we recall that there exist $W_g,W_g^D\in C^\infty(\Sigma)$ such that 
\begin{equation}\label{varYg}
\lim_{\eps \to 0}\E[X^2_{g,\eps}(x)]+\log\eps=W_g(x),\quad \lim_{\eps \to 0}\E[X^2_{g,D,\eps}(x)]+\log\eps=W^D_g(x)
\end{equation}
uniformly over the compact subsets of $\Sigma$. Moreover, in the case of the Dirichlet GFF,  one has the following relation if $g'=e^{\omega}g$ for some $\omega \in C^\infty(\Sigma)$ (observe that the Dirichlet GFF is the same for $g$ and $g'$): 
\begin{equation}\label{varYgconformal}
W^D_{g'}(x)= W^D_g(x)+\frac{\omega(x)}{2}
\end{equation}
which leads directly to the scaling relation 
\begin{equation}\label{scalingmeasure}
M^{g'}_\gamma(X_{g',D},\dd x)= e^{\frac{\gamma}{2}Q \omega(x)} M^g_\gamma(X_{g,D},\dd x).
\end{equation}

\subsubsection{LCFT path integral}
  Let  $\Sigma$ be a closed Riemann surface of genus ${\bf g}$, $g$ be a fixed metric on $\Sigma$ and   $\gamma\in(0,2)$, $\mu>0$ and $Q=\frac{\gamma}{2}+\frac{2}{\gamma}$. Recall that the Liouville field is $\phi_g=c+X_g$ with $c\in\R$. For $F:  H^{s}(\Sigma)\to\R$ (with $s<0$) a bounded continuous functional, we set 
\begin{align}\label{def:pathintegralF}
\cjg F(\phi)\cjd_{\Sigma, g}:=& \big(\frac{{\rm v}_{g}(\Sigma)}{{\det}'(\Delta_{g})}\big)^\hf  \int_\R  \E\Big[ F( \phi_g) \exp\Big( -\frac{Q}{4\pi}\int_{\Sigma}K_{g}\phi_g\,{\rm dv}_{g} - \mu   M^g_\gamma(\phi_g,\Sigma)  \Big) \Big]\,\dd c . 
\end{align}
 By \cite[Proposition 4.1]{GRVIHES}, this quantity  defines a measure and moreover the partition function defined as  the total mass of this measure, i.e $\cjg 1 \cjd_ {\Sigma, g}$, is finite iff the genus $ \mathbf{g}\geq 2$.

\subsubsection{Vertex operators} On a surface $\Sigma$ with or without boundary, we introduce the regularised vertex operators, for fixed $\alpha\in\R$ (called {\it weight}) and $x\in \Sigma$, (when $\pl\Sigma\not=\emptyset$, we use the field \eqref{defliouvillefield})
\begin{equation*}
V_{\alpha,g,\eps}(x)=\eps^{\alpha^2/2}  e^{\alpha  \phi_{g,\epsilon}(x) } .
\end{equation*}
Notice that in the case of a surface with boundary and if $g'=e^{\omega}g$, then the relation \eqref{varYgconformal} gives
\begin{equation}\label{scalingvertex}
V_{\alpha,g',\eps}(x)=(1+o(1)) e^{\frac{\alpha^2}{4} w(x)} V_{\alpha,g,\eps}(x)
\end{equation}
when $\eps$ goes to $0$. We will use this fact later.

Next, if the surface $\Sigma$ is closed, the correlation functions are defined by the limit
\begin{equation}\label{defcorrelg}
\cjg \prod_i V_{\alpha_i,g}(x_i) \cjd_ {\Sigma, g}:=\lim_{\eps \to 0} \: \cjg \prod_i V_{\alpha_i,g,\eps}(x_i) \cjd_ {\Sigma, g}
\end{equation}
where we have fixed $m$ distinct points $x_1,\dots,x_m$  on $\Sigma$ with respective associated weights $\alpha_1,\dots,\alpha_m\in\R$. Non triviality of correlation functions are then summarised in the following proposition (see \cite[Prop 4.4]{GRVIHES}):

 \begin{proposition}\label{limitcorel} Let $ x_1,\dots,x_m$ be distinct points on a closed surface $\Sigma$ and $ (\alpha_1,\dots,\alpha_m)\in\R^m$.
The limit \eqref{defcorrelg} exists and is non zero if and only if the weights $(\alpha_1,\dots,\alpha_m)$ obey the Seiberg bounds
 \begin{align}\label{seiberg1}
 & \sum_{i}\alpha_i + 2 Q ({\bf g}-1)>0,\\ 
 &\forall i,\quad \alpha_i<Q\label{seiberg2}.
 \end{align}
 \end{proposition}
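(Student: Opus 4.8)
The plan is to integrate out the constant mode $c$ and then use the Cameron--Martin (Girsanov) theorem to absorb the vertex operators and the curvature term into a deterministic shift of the Gaussian field $X_g$, reducing the correlation function to an explicit finite prefactor times a negative moment of a Gaussian multiplicative chaos (GMC) measure carrying a logarithmically singular weight. The first Seiberg bound \eqref{seiberg1} will appear as the convergence condition of the $c$-integral, and the second bound \eqref{seiberg2} as the a.s.\ finiteness (and positivity) condition for the chaos mass. Concretely, fix $\eps>0$ and write $\phi_g=c+X_g$ in \eqref{def:pathintegralF}. By Gauss--Bonnet $\tfrac1{4\pi}\int_\Sigma K_g\,{\rm dv}_g=\chi(\Sigma)=2-2{\bf g}$, the only $c$-dependence of the regularized integrand is through $e^{(\sum_i\alpha_i+2Q({\bf g}-1))c}$ (from the factors $\eps^{\alpha_i^2/2}e^{\alpha_i\phi_{g,\eps}(x_i)}$ and from $-\tfrac{Q}{4\pi}\int_\Sigma K_g\phi_g\,{\rm dv}_g$) and through $e^{-\mu e^{\gamma c}Y_\eps}$ with $Y_\eps:=M^{g,\eps}_\gamma(X_g,\Sigma)$, since $M^{g,\eps}_\gamma(\phi_g,\cdot)=e^{\gamma c}M^{g,\eps}_\gamma(X_g,\cdot)$. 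Writing $s:=\sum_i\alpha_i+2Q({\bf g}-1)$ and substituting $u=\mu e^{\gamma c}Y_\eps$ gives $\int_\R e^{sc}e^{-\mu e^{\gamma c}Y_\eps}\,\dd c=\gamma^{-1}\Gamma(s/\gamma)(\mu Y_\eps)^{-s/\gamma}$ when $s>0$ and $+\infty$ when $s\le 0$. As all factors are positive, Tonelli allows one to interchange the $c$-integral with $\E$: this already gives the ``only if'' part of \eqref{seiberg1}, since for $s\le 0$ the regularized correlations equal $+\infty$ for every $\eps$, while for $s>0$ one gets
\[
\cjg\prod_iV_{\alpha_i,g,\eps}(x_i)\cjd_{\Sigma,g}=\Big(\tfrac{{\rm v}_g(\Sigma)}{{\det}'(\Delta_g)}\Big)^{1/2}\frac{\Gamma(s/\gamma)\mu^{-s/\gamma}}{\gamma}\,\E\Big[\prod_i\eps^{\alpha_i^2/2}e^{\alpha_iX_{g,\eps}(x_i)}\,e^{-\frac{Q}{4\pi}\int_\Sigma K_gX_g\,{\rm dv}_g}\,Y_\eps^{-s/\gamma}\Big].
\]

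Next I would apply Girsanov to $X_g$ with respect to the linear functional $\sum_i\alpha_iX_{g,\eps}(x_i)-\tfrac{Q}{4\pi}\int_\Sigma K_gX_g\,{\rm dv}_g$. Using $\E[X_{g,\eps}(x_i)^2]=-\ln\eps+W_g(x_i)+o(1)$ from \eqref{varYg}, the powers $\eps^{\alpha_i^2/2}$ cancel the Gaussian normalization and one is left with the finite deterministic prefactor $\prod_{i<j}e^{\alpha_i\alpha_jG_g(x_i,x_j)}\prod_ie^{\frac{\alpha_i^2}{2}W_g(x_i)}$ times metric/curvature-dependent constants (all finite because the $x_i$ are distinct), while $X_g$ is shifted by the deterministic function $h(x):=\sum_i\alpha_iG_g(x_i,x)-\tfrac{Q}{4\pi}\int_\Sigma K_g(y)G_g(y,x)\,{\rm dv}_g(y)$. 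Since $G_g(x_i,x)=-\ln d_g(x,x_i)+O(1)$ near $x_i$, the weight $e^{\gamma h}$ is comparable to a smooth positive function times $\prod_id_g(x,x_i)^{-\gamma\alpha_i}$, and $Y_\eps$ becomes $M^{g,\eps}_\gamma(X_g+h_\eps,\Sigma)$ with $h_\eps\to h$; by the standard theory of GMC with (logarithmically) singular weights, $M^{g,\eps}_\gamma(X_g+h_\eps,\cdot)$ converges in probability, hence along a subsequence a.s., to the measure $M:=e^{\gamma h}\,M^g_\gamma(X_g,\cdot)$.

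It then remains to analyze when $\E[\cdots Y_\eps^{-s/\gamma}]$ has a finite nonzero limit. Away from the $x_i$ the weight $e^{\gamma h}$ is bounded above and below, so the total mass $M(\Sigma)$ is a.s.\ finite and strictly positive iff each local mass $\int_{B(x_i,\delta)}d_g(x,x_i)^{-\gamma\alpha_i}\,M^g_\gamma(X_g,\dd x)$ is a.s.\ finite; by the multifractal behaviour of GMC (the typical scaling $M^g_\gamma(B(x_i,r))=r^{\gamma Q+o(1)}$, which is strictly finer than the first-moment condition $\gamma\alpha_i<2$) this holds iff $\alpha_i<Q$, and the local mass is $+\infty$ a.s.\ when $\alpha_i\ge Q$. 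If \eqref{seiberg2} holds, then $0<M(\Sigma)<\infty$ a.s., and the uniform negative-moment bound $\sup_\eps\E[Y_\eps^{-p}]<\infty$ (valid for every $p>0$, inherited from the nondegenerate GMC on the region away from the $x_i$) gives uniform integrability, so the expectation converges to $\big(\prod_{i<j}e^{\alpha_i\alpha_jG_g(x_i,x_j)}\big)\big(\prod_ie^{\frac{\alpha_i^2}{2}W_g(x_i)}\big)\cdot(\mathrm{const})\cdot\E[M(\Sigma)^{-s/\gamma}]\in(0,\infty)$, proving existence and non-vanishing. If instead $\alpha_{i_0}\ge Q$ for some $i_0$, then $\liminf_\eps Y_\eps=+\infty$ a.s.\ (along the a.s.-convergent subsequence; truncating $h$ at level $L\uparrow\infty$ reduces this to the bounded-weight case), so Fatou's lemma forces the limit of the expectation, and hence of the correlation, to be $0$. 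Combining these facts with Step~1 yields both directions of the proposition.

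The main obstacle is Step~3: the GMC analysis at the insertion points. Establishing the sharp threshold $\alpha_i<Q$ for a.s.\ finiteness of $\int_{B(x_i,\delta)}d_g(x,x_i)^{-\gamma\alpha_i}M^g_\gamma(\dd x)$ requires the fine (multifractal) behaviour of chaos rather than a first-moment computation; securing the uniform-in-$\eps$ negative moments of $Y_\eps$ is what legitimizes passing to the limit inside the expectation; and on the complementary side one must check that the regularized correlations genuinely vanish rather than merely staying bounded. The $c$-integration and the Girsanov bookkeeping, by contrast, are routine.
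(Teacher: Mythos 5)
Your proposal is correct and follows essentially the same route as the argument this paper relies on: Proposition \ref{limitcorel} is quoted from \cite[Prop.\ 4.4]{GRV} without proof, and the proof there (as in the genus-zero case of \cite{DKRV}) proceeds exactly as you do — integrate out the zero mode to produce the $\gamma^{-1}\Gamma(s/\gamma)(\mu Y_\eps)^{-s/\gamma}$ factor, whence \eqref{seiberg1}, then apply Girsanov to reduce everything to a negative moment of a GMC measure with log-singular weight $\prod_i d_g(x,x_i)^{-\gamma\alpha_i}$, for which $\alpha_i<Q$ is the sharp a.s.-finiteness threshold, whence \eqref{seiberg2}. One cosmetic remark: in the case $\alpha_{i_0}\geq Q$, the conclusion $\E[Y_\eps^{-s/\gamma}]\to 0$ follows from $Y_\eps\to\infty$ in probability combined with the uniform negative-moment bound $\sup_\eps\E[Y_\eps^{-p}]<\infty$ for some $p>s/\gamma$ (i.e.\ uniform integrability), not from Fatou's lemma, which gives an inequality in the unhelpful direction here.
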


The correlation function then obeys the following transformation laws\footnote{The reader may compare \eqref{confan} with the general axiomatic of CFTs exposed in \cite{Gawedzki96_CFT}.} (see \cite[Prop 4.6]{GRVIHES}):
 \begin{proposition}\label{covconf2}{\bf (Conformal anomaly and diffeomorphism invariance)}
Let $g,g'$ be two conformal metrics on a closed surface $\Sigma$ with $g'=e^{\omega}g$ for some $\omega\in C^\infty(\Sigma)$, and let $ x_1,\dots,x_m$ be distinct points on $\Sigma$ and $ (\alpha_1,\dots,\alpha_m)\in\R^m$ obeying the Seiberg bounds. Then we have
\begin{equation}\label{confan} 
\log\frac{\cjg \prod_i V_{\alpha_i,g'}(x_i) \cjd_ {\Sigma, g'}}{\cjg \prod_i V_{\alpha_i,g}(x_i) \cjd_ {\Sigma, g}}= 
\frac{1+6Q^2}{96\pi}\int_{\Sigma}(|d\omega|_g^2+2K_g\omega) {\rm dv}_g-\sum_i\Delta_{\alpha_i}\omega(x_i)
\end{equation}
where the real numbers $\Delta_{\alpha_i}$, called {\it conformal weights}, are defined by the relation for $\alpha\in\R$
 \begin{align}\label{deltaalphadef}
\Delta_\alpha=\frac{\alpha}{2}(Q-\frac{\alpha}{2}).
\end{align} 
Let $\psi:\Sigma\to \Sigma$ be an orientation preserving diffeomorphism. Then  
\[ \cjg \prod_i V_{\alpha_i,\psi^*g}(x_i) \cjd_ {\Sigma, \psi^*g}=\cjg \prod_i V_{\alpha_i,g}(\psi(x_i)) \cjd_ {\Sigma, g}  .\]
\end{proposition}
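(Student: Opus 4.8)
The statement to prove is Proposition \ref{covconf2}, the conformal anomaly (Weyl covariance) and diffeomorphism invariance of the LCFT correlation functions. Let me sketch a proof.

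The plan is to first reduce everything to the level of the $g$-regularized objects and then pass to the limit $\eps \to 0$. The diffeomorphism invariance is the easy half: a diffeomorphism $\psi$ is an isometry between $(\Sigma, \psi^*g)$ and $(\Sigma, g)$, so the Laplacian, its spectrum, eigenfunctions, Green function, and hence the law of the GFF $X_g$ all transform naturally ($X_{\psi^*g} \overset{d}{=} X_g \circ \psi$). The geodesic distance $d_{\psi^*g}(x,y) = d_g(\psi(x),\psi(y))$, so the $g$-regularization is also covariant: $X_{\psi^*g,\eps}(x) \overset{d}{=} X_{g,\eps}(\psi(x))$. Thus the regularized vertex operators, the curvature term $\int K_g \phi\, dv_g$ (since $K_{\psi^*g} = K_g \circ \psi$ and the volume form pulls back), the GMC measure, and the determinant $\det'(\Delta_g) = \det'(\Delta_{\psi^*g})$ and volume ${\rm v}_{\psi^*g}(\Sigma) = {\rm v}_g(\Sigma)$ all match up, giving the identity directly before taking limits.

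The Weyl covariance (conformal anomaly) is the substantive part. First I would observe that for $g' = e^\omega g$ the Dirichlet-irrelevant point is moot since $\Sigma$ is closed; the GFF $X_{g'}$ differs from $X_g$ not just by the change of norm on $H^s$ but by a shift of the zero mode — more precisely one uses the decomposition relating $X_{g'}$ and $X_g$ and the fact that the constant $c$ is integrated over $\R$, so the extra mean-zero constraint can be absorbed. The key analytic inputs are: (i) the Polyakov anomaly formula for the $\zeta$-regularized determinant, $\log \frac{\det'(\Delta_{g'})}{{\rm v}_{g'}(\Sigma)} - \log\frac{\det'(\Delta_g)}{{\rm v}_g(\Sigma)} = -\frac{1}{12\pi}\int_\Sigma(\tfrac14|d\omega|_g^2 + \tfrac12 K_g\omega)\,dv_g$ (this is the classical result of Ray-Singer/Osgood-Phillips-Sarnak and accounts for the $\frac{1}{96\pi}$ term with coefficient $1$ inside the $1+6Q^2$); (ii) the behaviour of the regularized vertex operator under the conformal change, from \eqref{scalingvertex}–\eqref{varYgconformal}: $V_{\alpha,g',\eps}(x) = (1+o(1)) e^{\frac{\alpha^2}{4}\omega(x)} V_{\alpha,g,\eps}(x)$, which combined with the shift of $\phi$ produces the $-\sum_i \Delta_{\alpha_i}\omega(x_i)$ term together with the $6Q^2$ part of the central charge (via a term $-\frac{\alpha Q}{2}\omega$ matching $\Delta_\alpha = \frac{\alpha}{2}(Q-\frac{\alpha}{2})$ and the curvature coupling); (iii) the Gauss-Bonnet/curvature transformation $K_{g'} = e^{-\omega}(K_g + \Delta_g\omega)$ so that $\frac{Q}{4\pi}\int K_{g'}\phi\, dv_{g'} = \frac{Q}{4\pi}\int(K_g+\Delta_g\omega)\phi\, dv_g$, and the scaling of GMC $M^{g'}_\gamma(\phi_{g'},dx) = e^{\frac{\gamma Q}{2}\omega}M^g_\gamma(\phi_g,dx)$ after the appropriate substitution, which is what makes the $\mu$-term invariant once $\phi$ is shifted.

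The strategy is then to write $\cjg \prod_i V_{\alpha_i,g',\eps}(x_i)\cjd_{\Sigma,g'}$ out using \eqref{def:pathintegralF}, substitute $\phi_{g'} = \phi_g + u$ where $u$ is a deterministic function built from $\omega$ (essentially $u = -\frac{Q}{2}\omega$ plus corrections handled by the zero-mode integration), use the Cameron-Martin/Girsanov theorem for the Gaussian field to absorb the shift, collect the resulting deterministic factors, and recognize the exponent as exactly the right-hand side of \eqref{confan} after sending $\eps\to0$ and invoking Proposition \ref{limitcorel} for existence of both sides. The main obstacle is bookkeeping the precise constants: one must carefully track how the $\zeta$-determinant anomaly (coefficient $c=1$ worth of central charge), the Gaussian shift / Weyl factor of the vertex operators, the curvature coupling, and the GMC scaling conspire to produce the single clean expression $\frac{1+6Q^2}{96\pi}\int(|d\omega|_g^2 + 2K_g\omega)dv_g - \sum_i \Delta_{\alpha_i}\omega(x_i)$; each piece is individually standard but the cancellations are delicate. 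This is exactly the content of \cite[Prop 4.6]{GRV}, so I would cite that proof and only indicate the structure here.
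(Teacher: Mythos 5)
Your proposal is correct and follows the same route as the paper, which for this closed-surface statement simply cites \cite[Prop 4.6]{GRV} and carries out the identical argument (vertex/GMC scaling, curvature transformation, Girsanov shift by $-\tfrac{Q}{2}\omega$, Polyakov's determinant anomaly) in detail only for the boundary analogue, Proposition \ref{Weyl}. All the ingredients and cancellations you identify — the $e^{\alpha^2\omega/4}$ vertex factor combining with the $-\tfrac{\alpha Q}{2}\omega$ shift to give $-\Delta_{\alpha}\omega(x_i)$, the Girsanov variance producing the $6Q^2$ part, and the $\zeta$-determinant contributing the remaining $1$ in $1+6Q^2$ — are exactly those of the cited proof.
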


\subsection{Case of the Riemann sphere and the DOZZ formula}\label{sub:dozzintro}
%%%%%%%%%%%%%%%%%%%%%%%%%%%%%%%%%%%%%%%%%%%%%%%%%%%%%%%%%%%%%%

We identify   the Riemann sphere with the extended complex plane $\hat\C$ by stereographic projection. On the sphere, every metric is (up to diffeomorphism) conformal to the round metric $g_0:=\frac{4}{(1+|z|^2)^2}|dz|^2$. From Proposition \ref{limitcorel}, the $m$-point correlations exist and are non trivial   if and only if the  Seiberg bounds \eqref{seiberg1} and \eqref{seiberg2} are satisfied, here with genus $\mathbf{g}=0$. This implies in particular that $m$ must be greater or equal to $3$. 

Applying the transformation rules of Proposition \ref{covconf2}, one gets   that these correlation functions 
are {\it conformally covariant}. More precisely, if $g=g(z)|dz|^2$ is a conformal metric and if $z_1, \cdots, z_m$ are $m$ distinct points in $\hat \C$  then for a M\"obius map $\psi(z)= \frac{az+b}{cz+d}$ (with $a,b,c,d \in \C$ and $ad-bc=1$) 
 \begin{equation}\label{KPZformula}
\langle \prod_{i=1}^m V_{\alpha_i,g}(\psi(z_i))    \rangle_{\hat\C,g}=  \prod_{i=1}^m \Big(\frac{|\psi'(z_i)|^2g(\psi(z_i))}{g(z_i)}\Big)^{-  \Delta_{\alpha_i}}     \langle \prod_{i=1}^m V_{\alpha_i,g}(z_i)     \rangle_{\hat\C,g}.
\end{equation}

The M\"obius covariance implies in particular that the three point functions ($m=3$) are determined up to a constant, called the {\it structure constant}, which was proven to be given by the DOZZ formula in \cite{KRV_DOZZ}. This means
\begin{align}
\label{3pointDOZZ}
 \langle  &V_{\alpha_1}(z_1)  V_{\alpha_2}(z_2) V_{\alpha_3}(z_3)  \rangle_{\hat\C,g}\\
  =&|z_1-z_3|^{2(\Delta_{\alpha_2}-\Delta_{\alpha_1}-\Delta_{\alpha_3})}|z_2-z_3|^{2(\Delta_{\alpha_1}-\Delta_{\alpha_2}-\Delta_{\alpha_3})}|z_1-z_2|^{2(\Delta_{\alpha_3}-\Delta_{\alpha_1}-\Delta_{\alpha_2})}
\Big(\prod_{i=1}^3g(z_i)^{-\Delta_{\alpha_i}}\Big)\nonumber \\
&\times \frac{1}{2}C_{\gamma,\mu}^{{\rm DOZZ}} (\alpha_1,\alpha_2,\alpha_3 ) \big(\frac{{\rm v}_{g}(\hat\C)}{{\det}'(\Delta_{g})}\big)^\hf e^{-6Q^2 S_{\rm L}^0(\hat\C,g,\omega)}  \nonumber
 \end{align} 
with $\omega:\hat\C\to \R$ defined by $e^\omega g= |z|_+^{-4}|dz|^2$, $S_{\rm L}^0(\hat\C,g,\omega)$ given by \eqref{SL0} and $C_{\gamma,\mu}^{{\rm DOZZ}} (\alpha_1,\alpha_2,\alpha_3 ) $ is an explicit  function whose expression is recalled in Appendix \ref{app:dozz}.

%%%%%%%%%%%%%%%%%%%%%%%%%%%%%%%%%%%%%%%%%%%%%%%%%%%%%%%%%%%%%%%%%%%%%%%%%%%%%%%%%%%%%%%%%%%%%%%%%%%%%%%%%%%%%%%%%

 \section{Geometric decomposition of Riemann surfaces and gluing}\label{sec:geometric}
%%%%%%%%%%%%%%%%%%%%%%%%%%%%%%%%%%%%%%%%%%%%%%%%%%%%%%%%%%%%%%
%%%%%%%%%%%%%%%%%%%%%%%%%%%%%%%%%%%%%%%%%%%%%%%
 
In this section, we start by going over some material about Riemann surfaces with marked points and 
with parametrised boundary. These are the natural geometric objects in Segal's formalism to which we shall attach probabilistically defined amplitudes: the marked points are related to vertex operators and the parametrisations of the boundary by maps $\T\to \pl \Sigma$ allow to glue these surfaces along their boundary and will be used in next sections to attach to each boundary component $\pl_j\Sigma$ a field $\varphi_j$ on the unit circle $\T$, which is the starting point to defined amplitudes. To describe the holomorphic structure of the correlation functions via its conformal blocks as functions of the moduli parameters, we need to use a convenient description of the moduli space and Teichm\"uller space of Riemann surfaces. As we explain below, this is done through a family of holomorphic coordinates $q_1,\dots,q_{2{\bf g}-3+m}$, called \emph{plumbing coordinates}, on the moduli space, that correspond geometrically to glue or cut complex annuli 
$\mathbb{A}_{q_i}=\{z\in \C\,|\, |q_i|\leq |z|\leq 1\}$ between each pair of pants in a fixed pant decomposition, as shown in Figures \ref{figureplumbing} and \ref{gluedcylinder}, where  $|q_i|$ correspond to the modulus of the annuli $\mathbb{A}_{q_i}$ and the argument of $q_i$ correspond to the twisting angle used for gluing the annuli between two pants. We also 
need to introduce a convenient way, namely a multigraph, to encode the topological decomposition of the surface into pairs 
of pants, in order to express the way the Segal amplitudes of pants are paired together, to define appropriately the conformal blocks and to decompose the correlation functions in terms of conformal blocks. Finally, the conformal blocks will be defined in Section \ref{Section:blocks} as functions on the Teichm\"uller space, and we will thus express the conformal blocks in terms of these coordinates: we shall see that gluing complex annuli produce holomorphic families of conformal blocks in the variables $q_i$.

\subsection{Riemann surfaces with marked points}
\label{Sec:Riemann surfaces with marked points} 
\subsubsection{Closed Riemann surfaces with marked points}
A closed Riemann surface $(\Sigma,J)$ is a smooth oriented compact surface $\Sigma$ with no boundaries, equipped with a complex structure $J$ (i.e. $J \in {\rm End}(T\Sigma)$ with $J^2=-{\rm Id}$), or equivalently a set of charts $\omega_j:U_j\to \D\subset \C$ so that $\omega_j\circ \omega_{k}^{-1}:\D\to \D$ is a biholomorphic map, where $\D=\{z\in \C\,| \, |z|<1\}$ is the unit disk. The complex structure $J$ is the canonical one (i.e. $J\pl_x=\pl_y, J\pl_y=-\pl_x$) when viewed in $\D$ via the charts. 
A fixed set of points $z_i\in\Sigma$, $i=1,\dots,m$ on $\Sigma$ are called \emph{marked points} on $\Sigma$.

\subsubsection{Riemann surfaces with analytic boundary.}\label{subsub:boundary} A compact Riemann surface $\Sigma$ with real analytic boundary $\partial\Sigma=\sqcup_{j=1}^{b}\caC_j$ is a 
compact oriented surface with smooth boundary with a family of charts 
 $\omega_j:V_j\to \omega_j(V_j)\subset \C$ for $j=1,\dots,j_0$ where $\cup_j V_j$ is an open covering of $\Sigma$ and $\omega_k \circ \omega_j^{-1}$ are holomorphic maps (where they are defined), and $\omega_j(V_j\cap\pl \Sigma)$ is a real analytic curve if $V_j\cap\pl\Sigma\not=\emptyset$. 
Using the Riemann uniformization theorem, we can moreover assume that for $j\in [1,b]$, $V_j$ are neighborhoods of $\mc{C}_j$ with 
$\omega_j(V_j)=\mathbb{A}_\delta$ where $\mathbb{A}_{\delta}=\{z\in \C\,|\, |z|\in [\delta,1]\}$ for some $\delta\in (0,1)$ with $\omega_j(\mc{C}_j)=\{|z|=1\}$, while for all other $j$, $V_j$ are open sets not intersecting $\pl \Sigma$ satisfying $\omega_j(V_j)=\D\subset \C$. The charts induce a complex structure $J$ as for the closed case, and we shall often write $(\Sigma,J)$ for the Riemann surface with boundary.

Recall that the orientation is given by a choice of non-vanishing $2$-form $w_\Sigma\in C^\infty(\Sigma;\Lambda^2T^*\Sigma)$, and $(\omega_j^{-1})^*w_\Sigma=e^{f_j} dx\wedge dy$ in $\D$ for some function $f_j$ if $z=x+iy$ is the complex variable of $\D\subset \C$. The boundary circles $\mc{C}_j$ all inherit an orientation from the orientation of $\Sigma$, simply by taking 
the $1$-form $\iota_{\mc{C}_j}^*(i_{\nu}\omega_\Sigma)$  where $i_\nu$ is the interior product with a non-vanishing exterior pointing vector field $\nu$ to $\Sigma$ and $\iota_{\mc{C}_j}:\mc{C}_j\to \Sigma$ is the natural inclusion. 
In the chart given by the annulus $\mathbb{A}_\delta$, the orientation is then given by $d\theta$ (i.e. the counterclockwise orientation) on the unit circle parametrised by $(e^{i\theta})_{\theta\in [0,2\pi]}$. 
We shall also add a set of marked points $x_1,\dots,x_m$ in the interior $\Sigma^\circ$ in what follows.

\subsubsection{Parametrised boundaries and gluing.} \label{sub:gluing} 
On a Riemann surface $(\Sigma,J)$ (not necessarily connected) with real analytic boundary, we can choose an analytic parametrisation by using the holomorphic charts $\omega_j$ described above as follows: let  $\T:=\{z\in \C\,|\, |z|=1\}$ be the standard unit circle, then for each $j\in [1,b]$, fix a point $p_j\in \mc{C}_j$  and an orientation $\sigma_j\in\{-1,+1\}$, and define the parametrisation $\zeta_j:\T\to \mc{C}_j$ by 
$\zeta_j(e^{i\theta})=\omega_j^{-1}(e^{-i\sigma_j\theta}\omega_j(p_j))$ (in particular $\zeta_j(1)=p_j$). Observe that the parametrisation of $\mc{C}_j$ is entirely described by $(\sigma_j,p_j)$ and the  complex coordinate chart $\omega_j$ near $\mc{C}_j$.  
We say that the boundary $\mc{C}_j$ is \emph{outgoing} if the orientation $(\zeta_j)_*(d\theta)$ is the orientation of $\mc{C}_j$ induced by that of $\Sigma$ as described above, i.e. if $\sigma_j=-1$, otherwise the parametrised boundary $\mc{C}_j$ is called \emph{incoming} if $\sigma_j=+1$. 
One can also proceed conversely by choosing an analytic parametrisation $\zeta_j:\T\to \mc{C}_j$, which produces a holomorphic chart $\omega_j$ by holomorphically extending $\zeta_j$ in an annular neighborhood of $\T\subset \C$ and taking its inverse. 

If $\mc{C}_j$ is outgoing and $\mc{C}_k$ is incoming, we can define a new Riemann surface $\Sigma_{jk}$ with $(b-2)$-boundary components by gluing/identifying $\mc{C}_j$ with $\mc{C}_k$ as follows: 
we identify $\mc{C}_j$ with $\mc{C}_k$ by setting $\zeta_j(e^{i\theta})=\zeta_k(e^{i\theta})$, in particular $p_j$ gets identified with $p_k$. A neighborhood in $\Sigma_{jk}$ of the identified circle $\mc{C}_j\simeq \mc{C}_k$ is given by $(V_j\cup V_k)/\sim $ (where $\sim$ means the identification $\mc{C}_j\sim\mc{C}_k$) which identifies with the annulus $\mathbb{A}_\delta\cup \mathbb{A}^{-1}_\delta=\{|z|\in [\delta,\delta^{-1}]\}$ by the chart
\[\omega_{jk}: z\in V_j \mapsto \frac{\omega_j(z)}{\omega_j(p_j)}\in \mathbb{A}_\delta, \quad z\in V_k\mapsto \frac{\omega_k(p_k)}{\omega_k(z)}\in \mathbb{A}^{-1}_\delta.\]
Note that 
\begin{equation}\label{gluingonboundary}
\omega_{jk}(\zeta_j(e^{i\theta}))=\frac{\omega_j(\zeta_j(e^{i\theta}))}{\omega_j(p_j)}=
e^{-i\sigma_j\theta}=e^{i\sigma_k\theta}=\frac{\omega_k(p_k)}{\omega_k(\zeta_k(e^{i\theta}))}=\omega_{jk}(\zeta_k(e^{i\theta})).
\end{equation}
The resulting Riemann surface depends on $\zeta_j,\zeta_k$, or equivalently $(p_j,\sigma_j,\omega_j)$, $(p_k,\sigma_k,\omega_k)$. We notice that if $\mc{C}_j$ and $\mc{C}_k$ belong to different connected components, $b_0(\Sigma_{jk})=b_0(\Sigma)-1$  if $b_0$ denotes the $0$-th Betti number; otherwise $b_0(\Sigma_{jk})=b_0(\Sigma)$.

We can also equip $\Sigma$ with a metric $g$ compatible with its complex structure, i.e. 
$g:=\omega_j^*(e^{f_j}|dz|^2)$ on each $V_j$ for some smooth functions $f_j$, and  
it is convenient to consider $g$ so that it induces a smooth metric on the glued Riemann surface $\Sigma_{jk}$ compatible with the complex structure. For this reason we will use a particularly convenient choice: if $\boldsymbol{\zeta}=(\zeta_1,\dots,\zeta_{b})$, we say that a metric $g$ on $\Sigma$ is \emph{admissible} with respect to $(J,\boldsymbol{\zeta})$ if $g=\omega_j^*(|dz|^2/|z|^2)$ on each neighborhood $V_j$ of $\mc{C}_j$. The metric 
$|dz|^2/|z|^2$ is invariant by the map $z\mapsto 1/z$, which implies that $g$ induces a smooth metric, called glued metric, on $\Sigma_{jk}$ that is compatible with the complex structure of $\Sigma_{jk}$. 

We also make the important remark that, starting from a Riemann surface $\Sigma$ with $b\geq 0$ boundary circles and choosing a simple analytic curve $\mc{C}$ in the interior $\Sigma^\circ$ of $\Sigma$
and a chart $\omega:V\mapsto \{|z|\in [\delta,\delta^{-1}]\}\subset \C$ for some $\delta<1$ with $\omega(\mc{C})=\T$, there is a natural Riemann surface $\tilde{\Sigma}$ obtained by compactifying 
 $\Sigma\setminus \mc{C}$ into a Riemann surface with with $b+2$ boundary circles by adding two copies $\mc{C}_1,\mc{C}_2$ of $\mc{C}$ to $\Sigma\setminus \mc{C}$ using the chart $\omega$ on respectively $\omega^{-1}(\{|z|\leq 1\})$ and $\omega^{-1}(\{|z|\geq 1\})$ (one incoming, one outgoing).
If we perform  the gluing procedure of $\mc{C}_1$ with $\mc{C}_2$ just described using the parametrisation of $\mc{C}_1$ by $\zeta_1:=\omega^{-1}|_{\T}$ and $\zeta_2:=\omega^{-1}|_{\T}$, we obviously recover $(\Sigma,J)$.
    
\subsubsection{Admissible surfaces}\label{admissiblesurfaces} A Riemann surface with analytically parametrised boundary $\boldsymbol{\zeta}:\T \to (\pl\Sigma)^b$, marked points $x_j\in \Sigma^\circ$ in the interior and a metric $g$ that is  admissible with respect to $(J,\boldsymbol{\zeta})$ is called an {\it admissible} surface. We denote it by    
 $(\Sigma,g, {\bf x},\boldsymbol{\zeta})$ if ${\bf x}:=(x_1,\dots,x_m)$ are the marked points and $\boldsymbol{\zeta}:=(\zeta_1,\dots,\zeta_b)$ are the parametrisations of the boundary connected components. We shall also denote the outgoing boundaries by $\mc{C}_j^+$ and the incoming ones by $\mc{C}_j^-$.
Notice that the parametrisation $\zeta_j$ is entirely determined by the point $p_j$ and the orientation choice $\sigma_j$ once $g$ has been fixed as above, since it is a parametrisation by arclength with 
respect to the metric $g$.

Finally, if $(\Sigma,g, {\bf x},\boldsymbol{\zeta})$ and $(\Sigma',g', {\bf x}',\boldsymbol{\zeta}')$ are two connected admissible surfaces with at least one incoming boundary circle $\mc{C}_j$ for $\Sigma$ and one outgoing boundary circle $\mc{C}_k'$ for $\Sigma'$, 
then the surface $\Sigma\#_{jk}\Sigma':=(\Sigma\sqcup \Sigma')_{jk}$ equipped with the glued metric denoted $g\# g'$ yields a new admissible surface by simply taking 
$(\Sigma\#_{jk}\Sigma',g\# g',({\bf x},{\bf x}'),(\boldsymbol{\zeta},\boldsymbol{\zeta}'))$.

\subsection{Moduli and Teichm\"uller space of Riemann surfaces with marked points}\label{sub:moduli}
 
\subsubsection{Moduli space} 
The \emph{moduli space} $\mc{M}_{\bf g}$ of genus ${\bf g}\geq 0$ is the set of Riemann surfaces 
with genus ${\bf g}$ up to biholomorphism: more precisely, each diffeomorphism $\phi\in {\rm Diff}^+(\Sigma)$ preserving orientation induces a complex structure $\phi^*J$ by pull-back, and two such complex structures are said equivalent or biholomorphic; $\mc{M}_{\bf g}$ is the set of such equivalence classes. 
If ${\bf g}\geq 2$, it is a non-compact complex orbifold of dimension $3{\bf g}-3$ which can be compactified, and if ${\bf g}=1$ it has complex dimension $1$ and can be represented as the modular surface ${\rm PSL}_2(\Z)\backslash \H^2$. In genus ${\bf g}=0$, the moduli space is a singleton.
 The moduli space $\mc{M}_{{\bf g},m}$ of Riemann surfaces with $m$ marked points $(x_1,\dots,x_m)$  is the set of closed Riemann surfaces $(\Sigma,J)$ of genus ${\bf g}$ up to biholomorphism $\phi$ satisfying $\phi(x_j)=x_j$ for all $j=1,\dots,m$. This is a complex orbifold of complex dimension $3{\bf g}-3+m$ (including when ${\bf g}=0$)   for $3{\bf g}-3+m\geq 0$. There exists a finite cover 
$\widetilde{\mc{M}}_{{\bf g},m}$ of complex dimension $3{\bf g}-3+m$ which is a smooth complex manifold (\cite[Theorem in Section 1.2]{Hinich-Vaintrob}).

\subsubsection{Teichm\"uller space}
The universal cover of $\mc{M}_{\bf g}$ (resp. $\mc{M}_{{\bf g},m}$) is the \emph{Teichm\"uller space} $\mc{T}_{\bf g}$ (resp. $\mc{T}_{{\bf g},m}$), that is the set of complex structures up to diffeomorphisms isotopic to the identity ${\rm Id}$ (resp. isotopic to ${\rm Id}$ relative to 
${\bf x}=\{x_1,\dots,x_m\}$, i.e. isotopies fixing ${\bf x}$). It is an open complex manifold of dimension $3{\bf g}-3+m$. The \emph{mapping class group} ${\rm Mod}_{\bf g}$   is the group ${\rm Mod}_{\bf g}:={\rm Diff}^+(\Sigma)/{\rm Diff}_0(\Sigma)$
where ${\rm Diff}_0(\Sigma)$ is the group of diffeomorphisms isotopic to the identity, and one has 
$\mc{M}_{\bf g}=\mc{T}_{\bf g}/{\rm Mod}_{\bf g}$. Similarly one defines ${\rm Mod}_{{\bf g},m}:={\rm Diff}^+_{\bf x}(\Sigma)/{\rm Diff}_{0,{\bf x}}(\Sigma)$ by adding that the diffeomorphisms are required to fix the marked points $(x_1,\dots,x_m)$, and one has $\mc{M}_{{\bf g},m}=\mc{T}_{{\bf g},m}/{\rm Mod}_{{\bf g},m}$. The manifold $\widetilde{\mc{M}}_{{\bf g},m}$ can be realised as a quotient $\mc{T}_{{\bf g},m}/G_{{\bf g},m}$ for some finite index subgroup $G_{{\bf g},m}$ of ${\rm Mod}_{{\bf g},m}$.\\

A complex structure is also equivalent to a smooth conformal class $[g]=\{e^{\omega}g\,|\, \omega\in C^\infty(M)\}$ of Riemannian metrics (the Hodge star operator $\star_g$ on $1$-forms is dual to $J$ and is conformally invariant), together with an orientation.
The uniformization theorem states that, on $\Sigma$ oriented with genus ${\bf g}$ and $m$ marked points with $2{\bf g}+m-3\geq 0$, for each smooth  Riemannian metric $g$ on $\Sigma$, there is  a unique complete hyperbolic metric $g_0$ (i.e. with scalar curvature $K_{g_0}=-2$) on $\Sigma\setminus {\bf x}$  in the conformal class $[g]$ and a unique $w\in C^\infty(\Sigma\setminus {\bf x})$ such that $ g=e^{w}g_0$.
If $m=0$ the metric $g_0$ and $w$ are smooth on $\Sigma$ while if $m\geq 1$, there is a neighborhood $U_i$ of each $x_i$ so that $(U_i\setminus \{x_i\},g_0)$ is isometric to 
a \emph{hyperbolic cusp} (also called puncture) 
\[ \D_r^*:=\{z\in \C\,|\, |z|\in (0,r)\}, \textrm{ with metric } \frac{|dz|^2}{|z|^2(\log |z|)^2}.\]

Using this fact, the Teichm\"uller space $\mc{T}_{{\bf g},m}$ (and thus $\mc{M}_{{\bf g},m}$) can be represented by a (finite dimensional) family of complete hyperbolic metrics. If $\{g_\tau \,|\, \tau\in \mc{M}_{{\bf g},m}\}$ is such a parametrisation by complete hyperbolic metrics, for each metric $g$ on $\Sigma$, there is $\phi\in {\rm Diff}_{\bf x}^+(\Sigma)$, $\tau\in \mc{M}_{{\bf g},m}$  and $w\in C^\infty(\Sigma)$ such that
\[ \phi^*g=e^{w}g_\tau.\]  

\subsubsection{Decomposition into blocks}\label{geometricblocks}
One can decompose the hyperbolic surface $(\Sigma,J,g_0)$ described above into surfaces with boundaries that we call blocks.
This decomposition can be viewed at three levels:\\ 
1) as a topological decomposition of $\Sigma$ into topological building blocks, i.e. surfaces with boundaries and marked points, obtained by choosing a collection of $3{\bf g}-3+m$ non intersecting free homotopy classes $[c_i]$ on $\Sigma\setminus {\bf x}$, that can be realised by any choice of simple curves $\mc{C}_i$ in the classes $c_i$, in a way that the connected components of $\Sigma\setminus \bigcup_i\mc{C}_i$ belong to the following cases
\begin{itemize}
\item a pair of pants: any surface diffeomorphic to the sphere $\mathbb{S}^2$ with $3$ disjoint disks removed
\item a cylinder with $1$ marked point: any surface diffeomorphic to the sphere $\mathbb{S}^2$ with $2$ disjoint disks and $1$ point removed 
\item a disk with $2$ marked points: any surface diffeomorphic to the sphere $\mathbb{S}^2$ with $1$ disk and $2$ disjoint points removed.
\end{itemize}
2) As a decomposition of $(\Sigma,J)$ into complex building blocks, i.e. Riemann surfaces with real analytic boundaries and marked points, where the complex structures of the topological blocks are induced by $J$ and a choice of real analytic simple curves $\mc{C}_i\subset \Sigma\setminus {\bf x}$ in the classes $[c_i]$, in a way that the connected components of $\Sigma\setminus \cup_i\mc{C}_i$ belong to the following cases
\begin{itemize}
\item a complex pair of pants, i.e. a Riemann surface biholomorphic to the Riemann sphere $\hat{\C}$ with $3$ disjoint disks removed
\item a complex cylinder with $1$ marked point, i.e.  a Riemann surface biholomorphic to the Riemann sphere $\hat{\C}$ with $2$ disjoint disks and $1$ point removed 
\item a complex disk with $2$ marked points, i.e. a Riemann surface biholomorphic to the Riemann sphere $\hat{\C}$ with $1$ disk and $2$ disjoint points removed.
\end{itemize}
3) As a decomposition into geometric building blocks, which are hyperbolic surfaces (with Gauss curvature $-1$) with geodesic boundary, where the hyperbolic metric on the topological blocks are induced by $g_0$ and choosing the curves $\mc{C}_i$ in $[c_i]$ to be the unique geodesics in those classes. The connected components of $\Sigma\setminus \bigcup_i\mc{C}_i$ belong to the following cases
\begin{itemize}
\item a hyperbolic pair of pants, i.e. a hyperbolic surface diffeomorphic to $\mathbb{S}^2$ with $3$ disjoint disks removed, such that the boundary are three simple geodesics
\item a hyperbolic surface with $2$ geodesic boundary circles and $1$ puncture/cusp, diffeomorphic to the sphere $\mathbb{S}^2$ with $2$ disjoint disks and $1$ point removed 
\item a hyperbolic surface with $1$ geodesic boundary circle and $2$ punctures/cusps, diffeomorphic to $\mathbb{S}^2$ with $1$ disk and $2$ disjoint points removed.
\end{itemize}

Of course, the geometric decomposition 3) induces a complex decomposition 2) which itself induces a topological decomposition 1). Conversely, a topological decomposition 1) and a hyperbolic metric on $\Sigma$ induces a geometric decomposition by choosing the curves $\mc{C}_i$ to be the unique geodesics in the homotopy classes $[c_i]$.
 
\subsubsection{Fenchel-Nielsen coordinates} 
Fixing a topological decomposition into building blocks as in 1) and using hyperbolic metrics to parametrise Teichm\"uller space $\mc{T}_{{\bf g},m}$ with $2{\bf g}-2+m>0$, there are global geometric coordinates called \emph{Fenchel-Nielsen coordinates} obtained as follows.
Choose a fixed hyperbolic surface $(\Sigma,g_0)$ and topological decomposition as in 1), this induces a decomposition as in 3) into $2{\bf g}-2+m$ geometric blocks $\mc{P}_j$ using the simple geodesics $\mc{C}_i$ in $[c_i]$ for $g_0$. 
We can  then construct a new family of hyperbolic metrics on $\Sigma$ as follows.
First, choose a distinguished point $p_i$ on each $\mc{C}_i$ and call $p_i(j)$ the corresponding point on $\mc{P}_j$ if $\mc{C}_i\subset \pl \mc{P}_j$. For each $(\ell_1,\dots,\ell_{3{\bf g}-3+m})\in (0,\infty)^{3{\bf g}-3+m}$, there is a unique (up to isometry) hyperbolic metric $g_j$ on each $\mc{P}_j$, with cusps at the marked points of $\mc{P}_j$ and such that each boundary component $\mc{C}_i$ of $\mc{P}_j$ is a geodesic for $g_j$ with length $\ell_i$. 
For each $\theta_j\in \R$, one can glue each 
$(\mc{P}_j,g_j)$ to $(\mc{P}_{j'},g_{j'})$ (possibly with $j=j'$) along the boundary component $\mc{C}_i$, provided $\mc{C}_i\in \pl \mc{P}_j\cap \pl\mc{P}_{j'}$ by identifying $p_i(j)$ to $R_{\theta_i}(p_{i}(j'))$ where $R_{\theta}$ is the translation of $\theta$ along the geodesic $\mc{C}_i$ parametrised by arclength. The collection $(\ell_i,\theta_i)_{i=1,\dots,3{\bf g}-3+m}$ 
produces coordinates on Teichm\"uller space called Fenchel-Nielsen coordinates associated to the family of chosen free homotopy classes represented by $\mc{C}_i$. Notice that a translation of $\theta_i\in 2\pi \Z$ produces different surfaces as elements in $\mc{T}_{{\bf g},m}$ but the same surface as element of $\mc{M}_{{\bf g},m}$.

\subsubsection{A graph representing the decomposition of $\Sigma$ into building blocks.}\label{subsubsec:graphdec}
The decomposition into $2{\bf g}-2+m$ topological blocks $\mc{P}_j$ glued along $3{\bf g}-3+m$ simple curves corresponding to simple free homotopy classes can be represented by a graph, called an \emph{admissible graph} for $\Sigma$, as follows. First we define the general notion of multigraph with phantom edges. 

\begin{definition}\label{defmultigraph}
An oriented multigraph $\mc{G}$ with $j_0\in\N$ vertices is a finite collection of
\begin{itemize}
\item vertices $V=(v_j)_{j=1,\dots,j_0}$ where $v_j=\{j\}\times \{1,\dots,b_j\}$ is a discrete set with $b_j$ elements $v_{jk}=(j,k)$ for  $1\leq k\leq b_j\in \N$,
\item phantom vertices $V^*=(v_j^*)_j\subset \N$,
\item linking edges $E_{\rm link}=(e_i)_i$ with $e_i\in (W\times W)\setminus {\rm diag}(W)$ where $W:=\bigcup_{j=1}^{j_0}v_j$ and ${\rm diag}(W)$ is the diagonal of $W\times W$, 
\item phantom edges $E_{\rm ph}=(e_i^*)_i$ with $e_i^*\in (W\times V^*)\cup (V^*\times W)$.
\end{itemize}
We define ${\rm v}=({\rm v}_1,{\rm v_2}):E_{\rm link}\to W^2$  the projection on the first and second components, so that ${\rm v}(e_i)=(v_{j_1k},v_{j_2k'})$ represents the vertices $v_{j_1}$ and $v_{j_2}$ linked by $e_i$ and a labelling $(k,k')$. 
A multigraph without orientation is defined the same way but defining the edges modulo the involution $(w,w')\mapsto (w',w)$ on $W\times W$. 
A loop is a linking edge of the form $(v_{jk},v_{jk'})$. 
\end{definition}  

To the decomposition into $2{\bf g}-2+m$ topological blocks $\mc{P}_j$ with $b_j\leq 3$ boundary components,  we associate an oriented multigraph $\mc{G}$, that we call an admissible graph, defined as follows: fix a labelling $(\pl_k\mc{P}_j)_{k\leq b_j}$ of the boundary circles of $\mc{P}_j$ and a labelling  $(x_{jk})_{k\leq 3-b_j}$ of the marked points in $\mc{P}_j$, then  
\begin{itemize}
\item each topological block $\mc{P}_j$ is represented by a vertex $v_j$ with 
$b_j=3$, each element $v_{jk}\in v_j$ is associated to the boundary circle $\pl_k\mc{P}_j$ or to a marked point $x_{jk}$ in $\mc{P}_j$,
\item a linking edge $e_i=((j,k),(j',k'))$ is associated to a curve $\mc{C}_i$ if and only if the simple curve $\mc{C}_i$ bounds $\mc{P}_j$ at $\pl_k\mc{P}_j$ 
and $\mc{P}_{j'}$ at $\pl_{k'}\mc{P}_{j'}$, and $\mc{C}_i$ has outgoing orientation in $\mc{P}_j$ and incoming orientation in $\mc{P}_{j'}$
\item a phantom edge $e_i^*=(v_{jk},v^*_i)$ is associated to a marked point $x_i=x_{jk}\in \mc{P}_j$. 
\end{itemize}

 \begin{figure}[h] 
\centering
\subfloat[Surface with genus 3 and 3 marked points]{\includegraphics[width=.4\textwidth]{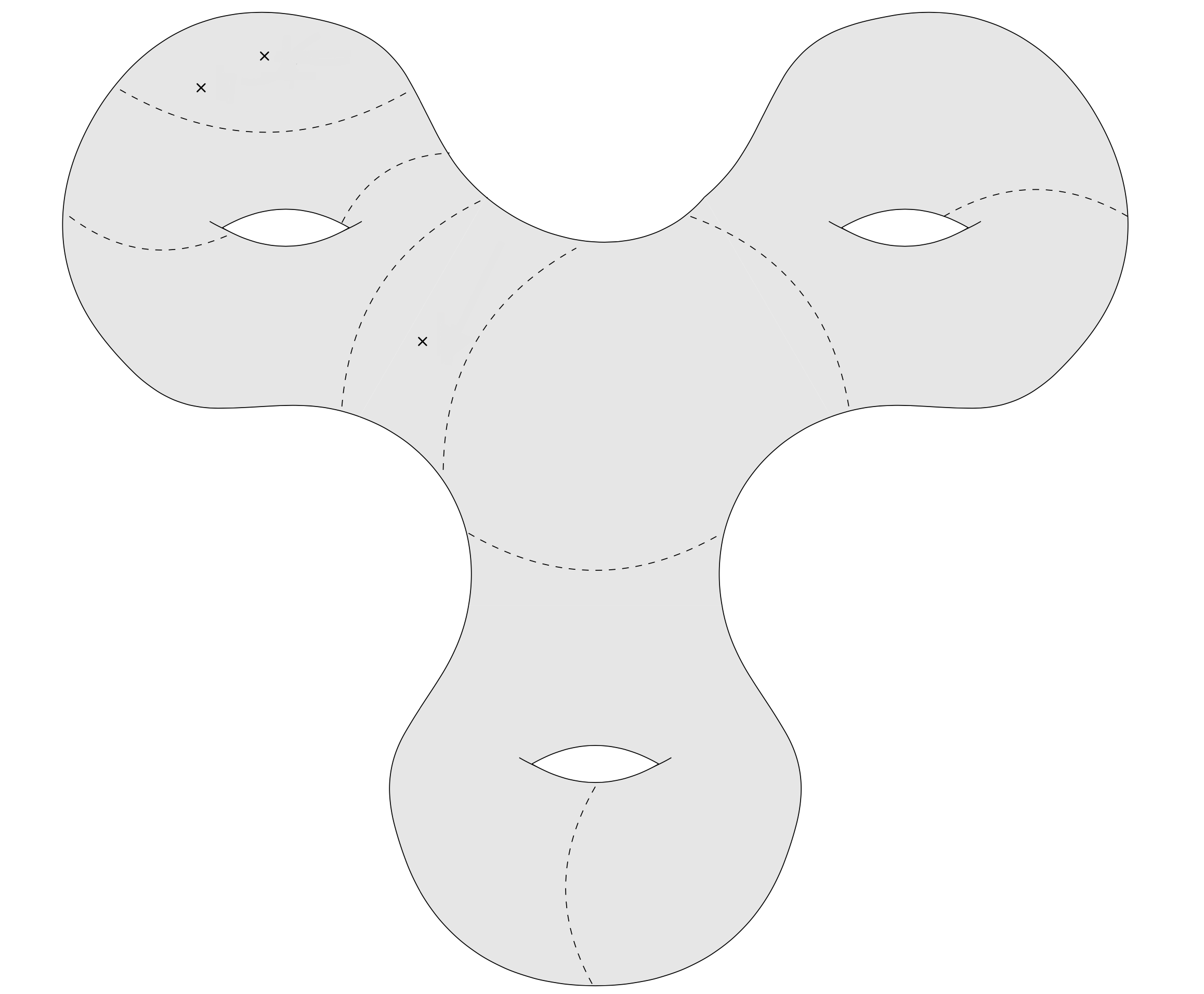} 
\begin{tikzpicture}[remember picture,overlay,xscale=0.75,yscale=0.75] 
\node at (-5.5,5){$x_1$};
\node at (-7,6.5){$x_2$};
\node at (-6.5,6.8){$x_3$};
\end{tikzpicture}
}
\,\hspace{1cm}
\subfloat[Corresponding oriented multigraph (vertices in yellow, phantom vertices or edges in blue) with $e_1=((1,1),(1,2))$, $ e_2=((1,3),(2,1))$, $e_3=((2,2),(3,1)) $, $e_4=((3,2),(3,3))$, $e_5=((2,3),(4,1))$, $e_6=((4,3),(5,1))$, $e_7=((5,2),(6,2))$, $e_8=((5,3),(6,1))$, $e_9=((6,3),(7,1))$.]{
\begin{tikzpicture}[xscale=0.75,yscale=0.75] 
\tikzstyle{sommet}=[circle,draw,fill=yellow,scale=0.7] 
\tikzstyle{phantomV}=[circle,draw,fill=cyan,scale=0.6] 
\node  (P) at (8,0.5){};
\node at (4.5,-0.3){$e_1$};
\node at (1.4,4.6){$e_7$};
\node at (1.4,3.4){$e_8$};
\node at (6.3,4.2){$e_4$};
\node[sommet] (P1) at (4,0.5){ $\mathbf{\mathcal{P}_1}$};
\node[sommet] (P2) at (4,2){ $\mathbf{\mathcal{P}_2}$};
\node[sommet] (P3) at (5,3){ $\mathbf{\mathcal{P}_3}$};
\node[sommet] (P4) at (3,3){ $\mathbf{\mathcal{P}_4}$};
\node[sommet] (P5) at (2,4){ $\mathbf{\mathcal{P}_5}$};
\node[sommet] (P6) at (0.5,4){ $\mathbf{\mathcal{P}_6}$};
\node[sommet] (P7) at (-1,4){ $\mathbf{\mathcal{P}_7}$};
\node[phantomV] (Z2) at (-2.3,2.7){ $x_2$};
\node[phantomV] (Z3) at (-2.3,5){ $x_3$};
\node[phantomV] (Z1) at (4,4.2){ $x_1$};
\draw[->,>=latex] (P1) -- (P2)  node[midway, right]{$e_2$};
\draw[->,>=latex] (P2) -- (P3)  node[midway, below right]{$e_3$};
\draw[->,>=latex] (P2) -- (P4) node[midway, above right]{$e_5$};
\draw[->,>=latex] (P4) -- (P5) node[midway, below left]{$e_6$};
\draw[->,>=latex] (P5) to[bend left] (P6);
\draw[->,>=latex] (P5) to[bend right] (P6) ;
\draw[->,>=latex] (P6) -- (P7) node[midway, below ]{$e_9$};
\draw[->,>=latex,blue] (P7) -- (Z2) node[midway, above left]{$e^*_2$}  ;
\draw[->,>=latex,blue] (P7) -- (Z3) node[midway,above right]{$e^*_3$}  ;
\draw[->,>=latex,blue] (P4) -- (Z1) node[midway,right]{$e^*_1$} ;
\draw[->,>=latex] (P1) to[out=250,in=180] (4,-0.5);
\draw[] (4,-0.5) edge[out=0, in=290] (P1) ;
\draw[->,>=latex] (P3) to[out=30,in=300] (6,4);
\draw[] (6,4) edge[out=120, in=60] (P3);
\end{tikzpicture}
}
\caption{Multigraphs associated to marked Riemann surfaces}
\label{fig:partition}
\end{figure}

The decomposition represented by the graph $\mc{G}$ is purely topological and the choice of $\mc{C}_i$ up to free homotopy gives the same decomposition. If now we fix a conformal class $[g]$ on $\Sigma$, i.e. a complex structure $J$, the choices of the curves $\mc{C}_i$ on $(\Sigma,J)$ induce 
complex structures on the topological building blocks $\mc{P}_j$.

Defining an order on $\bigcup_j v_j$ by setting $(j,k)<(j',k')$ if and only if 
$j<j'$ or $(j=j',k<k')$, we can always choose the orientation of the graph associated to a topological decomposition of $\Sigma$ so that $e_i=(v_{jk},v_{j'k'})$ if $v_{jk}<v_{j'k'}$, and
each phantom edge $e_i$ associated to a marked point $z_i\in \mc{P}_j$ will be oriented by an arrow with foot at a vertex $v_{j}$, that is $e_i^*=(v_{jk},v_i^*)$ for some $k$. Define the 
orientation sign $\sigma_{jk}$ of $v_{jk}$ by 
\[\forall i,\,\,  \sigma_{{\rm v}_2(e_i)}:=+1 ,\quad \sigma_{{\rm v}_1(e_i)}:=-1.
\] 
The orientation of the edges will be relevant below when we consider gluing surfaces with boundary.

\subsection{Decomposition of Riemann surfaces and plumbing coordinates.}  \label{sub:plumbing}
We have seen that one can parametrise Teichm\"uller space by Fenchel-Nielsen coordinates associated to a decomposition of $\Sigma$ into geometric (hyperbolic) building blocks. These coordinates are not holomorphic on Teichm\"uller space, we shall then use another description due 
to Earle-Marden, Kra \cite{marden341,EaMa12,Kra} and Hinich-Vaintrob \cite{Hinich-Vaintrob} of a certain coordinate system on Teichm\"uller space $\mc{T}_{{\bf g},m}$, based on the \emph{plumbing construction}.

\subsubsection{Plumbings}\label{plumbings} A plumbing with parameter $q\in \D$ of two surfaces $\Sigma_1, \Sigma_2$  with two distinguished marked points $x_1,x_2$ is an operation that creates a new surface $\Sigma$ by removing neighborhoods biholomorphic to $\D$ of the marked points and gluing their boundary together in a certain way involving $q$. 
We now give the precise construction, following \cite[Appendix III]{Kra}. 
Consider two Riemann surfaces $\Sigma_1,\Sigma_2$ with distinguished marked points $x_1\in \Sigma_1$ and $x_2\in \Sigma_2$. Let $\mc{D}_1\subset \Sigma_1$ and $\mc{D}_2\subset \Sigma_2$ be two topological disks containing $x_j$ in their interior, and let
\[ \omega_j: \mc{D}_j\to \bbar{\D}\]
be complex coordinates, that are assumed to be analytic up to the boundary with $\omega_j(\pl \mc{D}_j)=\T$ and $\omega_j(x_j)=0$. Let $\mc{D}_j(q)=\{m\in\mc{D}_j \,|\, |\omega_j(m)|< |q|\}$, $\Sigma^*_j:=\Sigma_j\setminus \mc{D}_j(q)$ and consider the Riemann surface $\Sigma$ obtained by 
\[ \Sigma = \Sigma^*_1\cup \Sigma^*_2/ \sim_q\]
where the equivalence relation $\sim_q$ is defined by 
\[ \forall m_1\in \Sigma_1^*, m_2\in \Sigma_2^*,\quad  m_1\sim_q m_2 \iff m_j\in \mc{D}_j\setminus \mc{D}_j(q) \textrm{ and }
\omega_1(m_1)\omega_2(m_2)=q.\]
As explained in \cite[Appendix III]{Kra}, the obtained Riemann surface $\Sigma$ depends only on the choice of $q$, $(\mc{D}_1,\omega_1)$ and $(\mc{D}_2,\omega_2)$.
If $\mathbb{A}_q:=\{z\in \C,|\, |z|\in [|q|,1]\}$ denotes the annulus of modulus $|q|$, the procedure above amounts to cut a disk $\mc{D}_j$ near each $x_j$ and glue $\Sigma_1\setminus \mc{D}_1$ to $\Sigma_2\setminus \mc{D}_2$ by inserting an annulus/cylinder of modulus $|q|$, with a twist ${\rm arg}(q)$, between the boundary components $\pl \mc{D}_j$ of $\Sigma_j\setminus \mc{D}_j$. Alternatively, $\Sigma$ can also be described in terms of the gluing procedure mentionned in Section \ref{sub:gluing} as follows: fix $\delta\in [|q|,1]$ take $\Sigma^{**}_1:=\Sigma_1\setminus \mc{D}_1(\delta)$ and $\Sigma^{**}_2:=\Sigma_2\setminus \mc{D}_2(|q|/\delta)$, take $p_1=\omega_1^{-1}(\delta)$ and $p_2=\omega_2^{-1}(q/\delta)$ and glue $\Sigma^{**}_1$ to $\Sigma^{**}_2$ along $\pl \Sigma^{**}_1$ using the coordinates $\omega_j$ by identifying the point $p_1$ to $p_2$. The result is that an annulus of modulus $|q|$  embedded into $\Sigma$.

\begin{figure}
\centering
\begin{tikzpicture}
 \node[inner sep=10pt] (F1) at (0,0)
    {\includegraphics[width=.6\textwidth]{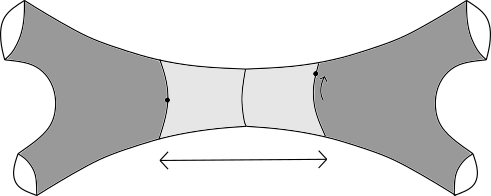}};
\node (F) at (3,0){$\Sigma_1\setminus \mc{D}_1$};
\node (F) at (-3,0){$\Sigma_2\setminus \mc{D}_2$};
\node (F) at (-1,0){$\mathbb{A}_q$};
\node (F) at (-2,0.5){$\pl \mc{D}_2$};
\node (F) at (2,-0.5){$\pl \mc{D}_1$};
\node (F) at (1.8,0.2){$\theta$};
\node (F) at (0,-1.6){$t$};
\end{tikzpicture}
  \caption{The plumbing with parameter $q=e^{-t+i\theta}$, viewed as gluing an annulus ${\mathbb{A}_q}$ to $\pl \mc{D}_1$ and $\pl \mc{D}_2$ with a twist of angle $\theta$. The length for the flat metric $|dz|^2/|z|^2$ of the annulus is $t$.}
\label{gluedcylinder}
\end{figure}

This procedure is a gluing of Riemann surfaces, but this can also be done in the Riemannian setting by choosing a metric compatible with the complex structure on $\Sigma_1\setminus \{x_1\}$ and $\Sigma_2\setminus\{x_2\}$ that descends to $\Sigma$: one can choose any metric $g_j$ on $\Sigma_j$ so that 
\[ \omega_j^*\Big(\frac{|dz|^2}{|z|^2}\Big)=g_j \textrm{ near }\mc{D}_j\setminus \{x_j\}\]
and $g_1$ glues smoothly with $g_2$ in the process.
One can also perform the plumbing procedure described above on a single Riemann surface $\Sigma$ with two distinguished marked point $x_1,x_2$ by choosing the disks $\mc{D}_1$ and $\mc{D}_2$ containing $x_1$ and $x_2$ to be disjoint in $\Sigma$.

\subsubsection{Compactification of moduli space.} There is a compactification $\bbar{\mc{M}}_{{\bf g},m}$ of $\mc{M}_{{\bf g},m}$ due to Deligne and Mumford, we call $\pl \bbar{\mc{M}}_{{\bf g},m}:=\bbar{\mc{M}}_{{\bf g},m}\setminus \mc{M}_{{\bf g},m}$ the boundary of $\mc{M}_{{\bf g},m}$.
The space $\bbar{\mc{M}}_{{\bf g},m}$ has a topological structure compatible with that of $\mc{M}_{{\bf g},m}$ and it is a compact set. 
We follow Wolpert's description \cite[Sections 2.2, 2.3 and 2.4]{10.4310/jdg/1214444322}. The (class of) Riemann surfaces in $\pl \bbar{\mc{M}}_{{\bf g},m}$ are \emph{surfaces with nodes}: a surface with nodes is a complex (singular) surface $\Sigma$ such that each point $p$ has a neighborhood isomorphic either to $\D\subset \C$ 
or, if not, to $\{(z,w)\in \D^2\subset \C^2\,|\, zw=0\}$; in the last case $p$ is called a \emph{node} . Up to passing to the finite cover $\widetilde{\mc{M}}_{{\bf g},m}$ (which also admits a compactification),  a neighborhood of a $(\Sigma,J_0)\in \pl \bbar{\mc{M}}_{{\bf g},m}$ can be described using plumbing coordinates as follows. Let ${\bf n}=\{n_1,\dots,n_k\}$ be the set of nodes of $(\Sigma,J_0)$ and let $\Sigma_0=\Sigma\setminus {\bf n}$, viewed as a Riemann surface $(\Sigma_0,J_0)$ with $m+2k$ punctures (each node produces two punctures) admitting a complete hyperbolic metric, possibly with several connected components. The product of Teichm\"uller spaces of the connected components of $\Sigma_0$ is a complex manifold and let $s=(s_1,\dots,s_d)$ be local complex coordinates on this space near $(\Sigma_0,J_0)$. We denote by $(\Sigma,J(s))$ the corresponding family of Riemann surfaces. This family can be obtained by solving Beltrami equations with Beltrami differentials $\sum_{j=1}^d s_j\nu_j$ where $\nu_j$ are Beltrami differentials supported outside a neighborhood of the $2k$-punctures corresponding to the original nodes.  
Near each node $n_j\in {\bf n}$ there is a neighborhood in $\Sigma_0$ of the form $\mc{D}_{1j}\sqcup \mc{D}_{2j}$ where $\mc{D}_{ij}$ are biholomorphic to the pointed disk $\D^*=\{z\in \C\,|\, 0<|z|<1\}$. Attach to each node $n_j$ a parameter $q_j\in \D$ and perform the plumbing construction explained above for $(\Sigma,J(s))$ using the pairs of disks $\mc{D}_{1j},\mc{D}_{2j}$ and their biholomorphism to $\D^*$ (taking the $\mc{D}_{ij}$ disjoints in $\Sigma\setminus {\bf n}$ and also disjoint from $\cup_\ell {\rm supp}(\nu_\ell)$). For $q=(q_1,\dots,q_{k})$ we then obtain a family 
$(\Sigma,J(s,q))$ of Riemann surfaces, and this family forms a neighborhood $U$ of $(\Sigma,J_0)$ in the compactification of $\widetilde{\mc{M}}_{{\bf g},m}$, and $(s,q)$ are local complex coordinates in $U$. The interior $U\cap \widetilde{\mc{M}}_{{\bf g},m}$ corresponds to $q_j\not=0$ for all $j$.

\subsubsection{Plumbings parameters associated to a graph and a family of  analytic simple curves.}\label{subsubsec:plumbingparameters} 
Let $(\Sigma,J_0)$ be a Riemann surface with $m$ marked points, viewed as an element in $\mc{T}_{{\bf g},m}$. Using the plumbing method described above, for a given admissible graph $\mc{G}$ corresponding to a   decomposition of $\Sigma$ into topological building blocks and for a choice of simple analytic curves $\mc{C}_i$ on $\Sigma$ in the free homotopy classes $[c_i]$ encoded by the graph, we can associate $3{\bf g}-3+m$ complex parameters called plumbing parameters as follows. 

For $\delta\in (0,1)$ close enough to $1$, and each $i$ we can take a biholomorphic map 
$\omega_i:V_i\to \{z\in \C\,|\, |z|\in [\delta,\delta^{-1}]\}$ where $V_i$ is an annular neighborhood of $\mc{C}_i$ so that $\omega_i(\mc{C}_i)=\T$, and denote by $p_i:=\omega_i^{-1}(1)$.
Choosing $\omega_i$ and $\mc{C}_i$ is equivalent to choosing an analytic parametrisation $\zeta_i:\T\to \mc{C}_i$ (here $\zeta_i=\omega_i^{-1}|_{\T}$). Choose the $V_i$ so that $V_i\cap V_{i'}=\emptyset$ if $i\not=i'$.
On a complex building block $\mc{P}_j$, for each $i$ such that $V_i\cap \mc{P}_j\not=\emptyset$ we use the holomorphic chart $\omega_{i,\delta}: z\in V_i\mapsto \delta\omega_i(z)$ if $\omega_i(V_i\cap\mc{P}_j)\subset \D^c$ (or $\omega_{i,\delta}: z\in V_i\mapsto \delta/\omega_i(z)$ if $\omega_i(V_i\cap\mc{P}_j)\subset \bbar{\D})$) 
to map $V_i\cap \mc{P}_j$ to $\mathbb{A}_\delta$ in a way that the curve $\mc{C}_i$ is mapped to the circle $\{|z|=\delta\}$. 
Let $k=k(i)\leq b_j$ be defined by $\pl_{k}\mc{P}_j=\mc{C}_i$, we can then glue to $\mc{P}_j$ the disk $\mc{D}_{jk}(\delta)=\D_\delta$ of radius $\delta$ at each boundary circle $\mc{C}_i=\pl_k\mc{P}_j\subset \mc{P}_j$ by using the chart $\omega_{i,\delta}$, the resulting Riemann surface $\hat{\mc{P}}_j$ is a sphere $\hat{\C}$ with $3$ distinguished marked points given by the union of ${\bf x}\cap \mc{P}_j$ with the centers of the glued disks. When $\mc{P}_j$ is a pair of pants, we have glued $3$ disks, when $\mc{P}_j$ is a cylinder with $1$ marked point we have glued $2$ disks and when $\mc{P}_j$ is a disk with $2$ marked points we have glued $1$ disk. Let $z_1:=-1/2,z_2=1/2,z_3:=i\sqrt{3}/2$ three fixed points in $\hat{\C}$ (we have chosen these so that they belong to $\D$ and $|z_i-z_j|=1$, thus $P(z_1,z_2,z_3)=1$ in \eqref{defp0}, which is convenient for later).
There is a biholomorphic map $\psi_j:\hat{\mc{P}}_j\to \hat{\C}$ so that $\psi_j^{-1}(z_1),\psi_j^{-1}(z_2)$ and $\psi_j^{-1}(z_3)$ are the $3$ marked points of $\hat{\mc{P}}_j$ and $\psi_j(\mc{P}_j)\subset \hat{\C}\setminus \{z_1,z_2,z_3\}$ is a bounded domain with $3$ 
simple analytic curves as boundaries.  Denote by 
$\mc{D}_{jk(i)}:=(V_i\cap \mc{P}_j)\cup \mc{D}_{jk(i)}(\delta)$ the disk in $\hat{\mc{P}}_j$, then the charts $\omega_{i,\delta}$ defined on $\mc{P}_j\cap V_i$ extend holomorphically to $\mc{D}_{jk(i)}$ (by construction of $\hat{\mc{P}}_j$) into maps denoted $\omega_{jk(i)}:\mc{D}_{jk(i)}\to \D$ so that $\omega_{jk(i)}(\pl_{k(i)}\mc{P}_j)=\{|z|=\delta\}$. 
We next use the plumbing construction to the families $\hat{\mc{P}}_j$ using these maps: more precisely by plumbing the disks $\mc{D}_{jk(i)}$ of $\hat{\mc{P}}_j$ with the disk $\mc{D}_{j'k'(i)}$ of $\hat{\mc{P}}_{j'}$ if $\mc{C}_i=\pl_{k(i)}\mc{P}_j=\pl_{k'(i)}\mc{P}_{j'}$ in $\Sigma$ (i.e. the edge $e_i$ of the admissible graph $\mc{G}$ is linking $v_j$ to $v_{j'}$). We perform these plumbings iteratively using a parameter $q_i\in \bbar{\D}$ for the $i$-th plumbing (corresponding to the edge $e_i$). We shall denote ${\bf q}=(q_1,\dots,q_{3{\bf g}-3+m})\in \bbar{\D}^{3{\bf g}-3+m}$ the \emph{plumbing parameter} associated to the graph $\mc{G}$ and the choice or parametrised analytic curves $\mc{C}:=\bigcup_i\mc{C}_i$ (or equivalently the curves $\mc{C}_i$ together with the charts $\omega_i$).
We denote the obtained Riemann surface by 
\[\Phi_{\mc{G},J_0,\boldsymbol{\zeta}}({\bf q})=(\Sigma,J({\bf q}))\in \mc{M}_{{\bf g},m},\]
where $\boldsymbol{\zeta}:=(\zeta_1,\dots,\zeta_{3{\bf g}-3+m})$ is the set of analytic parametrisations of 
the curves $\mc{C}_i$.
We notice that $\Phi_{\mc{G},J_0,\boldsymbol{\zeta}}({\bf q}_\delta)=(\Sigma,J_0)$ if ${\bf q}_\delta:=(\delta^2,\dots,\delta^2)$. The map ${\bf q}\mapsto \Phi_{\mc{G},J_0,\boldsymbol{\zeta}}({\bf q})\in \mc{M}_{{\bf g},m}$ is complex analytic but it is not in general an embedding from $\D^{3{\bf g}-3+m}\setminus\{0\}$, see \cite{Hinich2010}. However, it 
is known to be a biholomophism when restricted to a smaller polydisk $\D_\eps^{3{\bf g}-3+m}$ for $\eps>0$ small enough \cite{Hinich2010}. We also notice that changing $q_j$ by $q_j(t):=e^{2i\pi t}q_j$ for $t\in \R$ generates a one parameter family of surfaces in $\mc{M}_{{\bf g},m}$ whose lift to $\mc{T}_{{\bf g},m}$ contains the orbit of $\Phi_{\mc{G},J_0,\boldsymbol{\zeta}}({\bf q})$ by the cyclic group $\cjg T_j\cjd$ with generator the Dehn twist  
$T_j$ along $\mc{C}_j$, simply by restricting to $t\in \Z$. The coordinate $q_j$ is thus not well-defined (univalued) on $\mc{T}_{{\bf g},m}$ but it  is well-defined on the quotient $\mc{T}_{{\bf g},m}/\cjg T_j\cjd$.\\

In order to cover the whole moduli space with local charts where plumbing parameters are holomorphic coordinates,  we shall use a particular family of curves $\mc{C}_i$ and parametrisations $\zeta_i$, following the description given by Hinich-Vaintrob \cite[Section 5]{Hinich-Vaintrob}. Fix first an admissible graph $\mc{G}$. 
Each Riemann surface $(\Sigma,J_0)\in \mc{M}_{{\bf g},m}$ with $m$ marked points ${\bf x}=\{x_1,\dots,x_m\}$ has a unique complete hyperbolic metric $g_0$ with cusps/punctures at each $x_j$. 
We can decompose $(\Sigma,g_0,J_0)$ into geometric building blocks as explained above by choosing the curves $\mc{C}_i$ to be geodesics in the free homotopy classes $[c_i]$. We fix a point $p_i$ on $\mc{C}_i$ and use the parametrisation $\zeta_i:\T\to \mc{C}_i$ so that $\zeta_i(1)=p_i$ and 
$\zeta_i(e^{i\theta})$ is an arclength parametrisation of the geodesic with respect to $g_0$. It can be checked that $\zeta_j$ extends holomorphically: indeed, by Wolpert \cite{10.4310/jdg/1214444322}, 
if $2\pi/|\log t|$  is the length of $\mc{C}_i$ for some $t\in (0,1)$, there is a holomorphic map 
$\omega_i:V_i\to \{|z|\in [\delta,\delta^{-1}]\}$ for some $\delta\in(t^{1/2},1)$ (with $V_j$ annular neighborhood of $\mc{C}_i$) so that 
\[{\omega_i}_*g_0=F_t(z)|dz|^2/|z|^2, \quad F_t(z)=\frac{\pi^2}{(\log t)^2}\Big(\sin\Big(\pi\frac{\log |zt^{1/2}|}{\log t}\Big)\Big)^{-2}.\] 
Performing the plumbing construction as described above but using these curves $\mc{C}_i$ and the parametrisations $\zeta_i$ by arclength, and choosing the annular neighborhoods $V_i$ of $\mc{C}_i$ small enough so that $V_i\cap V_{i'}=\emptyset$ if $i\not=i'$, we construct a map 
\[ \Phi_{\mc{G},J_0,\boldsymbol{\zeta}}: \D^{3{\bf g}-3+m}\to U_{\mc{G},J_0}\subset \tilde{\mc{M}}_{{\bf g},m},\qquad \Phi_{\mc{G},J_0,\boldsymbol{\zeta}}(q)=(\Sigma,J(q)).\]
It is shown by Hinich-Vaintrob \cite{Hinich-Vaintrob} that $\Phi_{\mc{G},J_0,\boldsymbol{\zeta}}$  is a biholomorphism near $q(J_0):=(\delta^2,\dots,\delta^2)$, 
thus producing a complex chart of $\widetilde{\mc{M}}_{{\bf g},n}$ near $(\Sigma,J_0)$ using the plumbing coordinates.

One can also consider neighborhoods of the boundary of the compactification of $\widetilde{\mc{M}}_{{\bf g},m}$ in the same way: take a surface $(\Sigma,J_0)$ with nodes ${\bf n}=\{n_1,\dots,n_k\}$ and $m$ marked points ${\bf x}=\{x_1,\dots,x_m\}$. As described above for the compactification of moduli space, one can do a plumbing with small parameters $q_i$ at each node $n_i$ in a way that we open the nodes, the resulting Riemann surfaces having marked points ${\bf x}$ but no nodes: after this plumbing, the node $n_i$ is replaced by an annulus $A_i(q_i)$. Take a graph $\mc{G}$ representing a topological decomposition of such surfaces into $2{\bf g}-2+m$ blocks, where $k$ of the free homotopy classes represented by the edges of $\mc{G}$ are given by the non contractible closed simple curves generating $\pi_1(A_i(q_i))$.   
Now take the complete hyperbolic metric $g_0$ on $\Sigma_0=\Sigma\setminus ({\bf n}\cup {\bf x})$ compatible with $J_0$, which has cusps at ${\bf n}\cup {\bf x}$ and use the decomposition of $(\Sigma,g_0)$ into geometric building blocks using the graph. 
For each edge $e_i$ with $i>k$ corresponding to a closed geodesic $\mc{C}_i$ for $g_0$ bounding a geometric building block $\mc{P}_j$ of the decomposition, we can use holomorphic coordinates $\omega_i$ and the induced parametrisations $\zeta_i$ of $\mc{C}_i$ as above and apply the same procedure of adding a pointed disk to $\mc{P}_j$ to obtain a maximal family of $2{\bf g}-2+m$ Riemann spheres $\hat{\mc{P}}_j$ with $3$-marked points. 
We can then perform the plumbing construction at all the marked points corresponding to some edges $\mc{C}_i$ according to the pairing rules given by the graph $\mc{G}$, using plumbing parameters
$(q_1,\dots,q_k,q_{k+1},\dots,q_{3{\bf g}-3+m})$ where the $k$-first parameters are the plumbings at the vertices corresponding to the nodes $(n_i)_{i\leq k}$. This produces a map $\Phi_{\mc{G},J_0,\boldsymbol{\zeta}}$ as above and the surface $(\Sigma,J_0)$ is given by 
$\Phi_{\mc{G},J_0,\boldsymbol{\zeta}}({\bf q}_0)$ with ${\bf q}_0=:(0,\dots,0,\delta^2,\dots,\delta^2)$. Hinich-Vaintrob \cite[Section 5]{Hinich-Vaintrob} prove that 
$\Phi_{\mc{G},J_0,\boldsymbol{\zeta}}$ is a biholomorphism on a neighborhood of ${\bf q}_0$ in ${\D^*}^{3{\bf g}-3+m}$. Since the compactification of $\mc{M}_{{\bf g},m}$ and of its smooth finite cover $\widetilde{\mc{M}}_{{\bf g},m}$ are compact, we can extract a finite number of open sets $(U_i)_{i=1,\dots,I}\subset \widetilde{\mc{M}}_{{\bf g},m}$, ${\bf q}^{(i)}\in \D^{3{\bf g}-3+m}$, $\eps_i>0$, surfaces 
$(\Sigma, J_i)\in U_i$, graphs $\mc{G}_i$ and choice of parametrised analytic curves represented by $\boldsymbol{\zeta}_i$, such that 
\[ \Phi_{\mc{G}_i,J_i,\boldsymbol{\zeta}_i}: \{{\bf q}\in {\D^*}^{3{\bf g}-3+m}\,|\, |{\bf q}-{\bf q}^{(i)}|<\eps_i\}\to U_i\] 
is a biholomorphism.

%%%%%%%%%%%%%%%%%%%%%%%%%%%% %%%%%%%%%%%%%%%%%%%%%%%%%%%%%%%%%%%%%% %%%%%%%%%%%%%%%%%%%%%%%%%%%%%%%%%%%%%% %%%%%%%%%%%%%%%%%%%%%%%%%%%%%%%%%%%%%% %%%%%%%%%%%%%%%%%%%%%%%%%%%%%%%%%%%%%% 
\section{Amplitudes}\label{probamp}
 %%%%%%%%%%%%%%%%%%%%%%%%%%%%%%%%%%%%%% %%%%%%%%%%%%%%%%%%%%%%%%%%%%%%%%%%%%%% %%%%%%%%%%%%%%%%%%%%%%%%%%%%%%%%%%%%%% %%%%%%%%%%%%%%%%%%%%%%%%%%%%%%%%%%%%%% %%%%%%%%%%%%%%%%%%%%%%%%%%%%%%%%%%%%%% %%%%%%%%%%%%%%%%%%%%%%%%%%%%%%%%%%%%%% 
 In this section, we construct and analyse the properties of the Segal amplitudes. 
The initial step is to specify the Hilbert space on which the amplitudes will act as operators. This Hilbert space, described in Section \ref{sub:hilbert}, is basically a $L^2$   space based on the GFF on the unit circle. The Segal amplitudes for Liouville CFT are defined through their integral kernels, whose essential components are expectations of the GMC measure on the surface with respect to the Gaussian free field, conditioned on taking prescribed values  on the boundary circles, each of which identified to the unit circle via the boundary parametrisations. However, there is a non-trivial contribution coming from the free-field: this is related to the fact that the law of the GFF on $\Sigma$ restricted to a parametrised interior simple curve is not the law of the GFF on the circle, but has a Radon-Nikodym derivative expressed in terms of Dirichlet-to-Neumann (DN) maps. In Section \ref{subDTN} we thus study the DN maps and their properties, in order to lay the foundations  for the definition of the amplitudes in 
Section \ref{sec:amplitudes_Segal}. We also give in the beginning of Section \ref{sec:amplitudes_Segal}
a heuristic explanation for the probabilistic definition of the amplitudes.

\subsection{Hilbert space of LCFT}\label{sub:hilbert} The construction of the Hilbert space of LCFT relies on the following real-valued random Fourier series defined on  the unit circle $\T=\{z\in \C\,|\, |z|=1\}$ 
\begin{equation}\label{GFFcircle0}
\forall \theta\in\R,\quad \varphi(\theta)=\sum_{n\not=0}\varphi_ne^{in\theta} 
\end{equation}
with $\varphi_{n}=\frac{1}{2\sqrt{n}}(x_n+iy_n)$ for $n>0$ where $x_n,y_n$ are i.i.d. standard real Gaussians. Convergence holds in the  Sobolev space  $H^{s}(\T)$ with $s<0$, where $H^s(\T)\subset \C^\Z$ is the set of sequences s.t.
\begin{equation}\label{outline:ws}
\|\varphi\|_{H^s(\T)}^2:=\sum_{n\in\Z}|\varphi_n|^2(|n|+1)^{2s} <\infty.
\end{equation}
Such a random series arises naturally when considering the restriction of the whole plane GFF to the unit circle. Also, note that the series $ \varphi$ has no constant mode. The constant mode will play an important role in what follows and this is why we want to single it out:  we will view the random series $\tilde{\varphi}:=c+\varphi$ as the coordinate function of the space $\R\times    \Omega_\T$, where the  probability space 
\begin{align}\label{omegat}
  \Omega_\T=(\R^{2})^{\N^*}
\end{align}
  is equipped with the cylinder sigma-algebra $
 \Sigma_\T=\mathcal{B}^{\otimes \N^*}$ ($\mathcal{B}$  stands for the Borel sigma-algebra on $\R^2$) and the   product measure 
 \begin{align}\label{Pdefin}
 \P_\T:=\bigotimes_{n\geq 1}\frac{1}{2\pi}e^{-\frac{1}{2}(x_n^2+y_n^2)}\dd x_n\dd y_n.
\end{align}
Here $ \P_\T$ is supported on $H^s(\T)$ for any $s<0$ in the sense that $ \P_\T(\varphi\in H^s(\T))=1$. Our Hilbert space, denoted $\mc{H}$, is then  $\mc{H}:=L^2(\R\times \Omega_{\T})$ with underlying measure 
\begin{equation}\label{def_of_mu_0}
\mu_0:=\dd c\otimes  \P_{\T}
\end{equation} 
and Hermitian product denoted by $\langle\cdot,\cdot\cjd_{\mc{H}}$.

\subsection{Dirichlet-to-Neumann map}\label{subDTN}
%%%%%%%%%%%%%%%%%%%%%%%%%%%%%%
Let  $\Sigma$ be a compact Riemann surface   with real analytic boundary $\partial\Sigma=\bigcup_{j=1}^b\caC_j$  consisting of 
 $b$ closed simple curves, which do not intersect each other (here  $b$ could possibly be equal to $0$ in case $\pl\Sigma=\emptyset$) and parametrised with charts as in Subsection \ref{sub:gluing}. The analytic parametrisation of the boundary is denoted by $\boldsymbol{\zeta}=(\zeta_1,\dots,\zeta_b)$ with $\zeta_j:\T \to \caC_j$.
  We consider a metric $g$ on $\Sigma$ so that each boundary component has length $2 \pi$; except when mentionned, $g$ is not assumed to be admissible. We denote by $d\ell_g$ the Riemannian measure on $\pl\Sigma$ induced by $g$.
  
A generic (real valued) field $\tilde\varphi\in H^s(\T)$ (for $s<0$) will be decomposed into its constant mode $c$ and orthogonal part
$$\tilde\varphi=c+\varphi,\quad \varphi(\theta)=\sum_{n\not=0}\varphi_ne^{in\theta}$$
with $(\varphi_n)_{n\not=0}$  its other Fourier coefficients, which will be themselves parametrised by $\varphi_n=\frac{x_n+iy_n}{2\sqrt{n}}$  and $\varphi_{-n}=\frac{x_n-iy_n}{2\sqrt{n}}$  for $n>0$. In what follows, we will consider a family of such fields $\boldsymbol{\tilde \varphi}=(\tilde\varphi_1,\dots,\tilde\varphi_{b})\in (H^{s}(\T))^{b}$, in which case the previous notations referring to the $j$-th field will be augmented with an index $j$, namely, $c_j$, $\varphi_{j,n}$, $x_{j,n}$ or $y_{j,n}$.  By an abuse of notations, we will also denote by $(\cdot,\cdot)_2$ the pairing between $(H^{s}(\T))^{b}$ and $(H^{-s}(\T))^{b}$ (as it is an extension of the $L^2$ pairing of smooth functions).
 
For such a field  $\boldsymbol{\tilde \varphi}=(\tilde\varphi_1,\dots,\tilde\varphi_{b})\in (H^{s}(\T))^{b}$ with $s\in\R$, we will write   $P\tilde{\boldsymbol{\varphi}}$ for  the harmonic extension  of $\tilde{\boldsymbol{\varphi}}$, that is $\Delta_g P\tilde{\boldsymbol{\varphi}}=0$ on $\Sigma\setminus \bigcup_j\mc{C}_j$ with boundary values  $P\tilde{\boldsymbol{\varphi}}|_{\mc{C}_j}=\tilde\varphi_j\circ \zeta_j^{-1} $  for $j=1,\dots,b$.  The boundary value has to be understood in the following weak sense: for all $u\in C^\infty(\T)$
\[\lim_{r\to 1^-}\int_{0}^{2\pi}P\tilde{\boldsymbol{\varphi}}(\zeta_j(re^{i\theta}))\bbar{u(e^{i\theta})}d\theta =2\pi (\tilde\varphi_j,u)_{2}\]
where $(\cdot,\cdot)_2$ stands for the canonical inner product on $L^2(\T)$ and $\T$ is equipped with the probability measure $\dd \theta/(2\pi)$ (recall that each $\mc{C}_j$ has length $2 \pi$).

The definition of our amplitudes will involve   the Dirichlet-to-Neumann operator (DN map for short). Recall that the DN map $\mathbf{D}_\Sigma:C^\infty(\T)^{b}\to C^\infty(\T)^{b}$ is defined as follows: for $\tilde{\boldsymbol{\varphi}}\in C^\infty(\T)^{b}$ 
\[\mathbf{D}_\Sigma\tilde{\boldsymbol{\varphi}}=(-\partial_{\nu } P\tilde{\boldsymbol{\varphi}}|_{\mc{C}_j}\circ\zeta_j)_{j=1,\dots,b} \]
where $\nu$ is the inward unit normal vector fields to $\mc{C}_j$. 
Note that, by the Green formula,
\begin{equation}\label{Greenformula}
\int_{\Sigma} |dP\tilde{\boldsymbol{\varphi}}|_g^2{\rm dv}_g = 2 \pi ( \tilde{\boldsymbol{\varphi}},\mathbf{D}_\Sigma\tilde{\boldsymbol{\varphi}})_2
\end{equation}
 By formula \eqref{Greenformula}, $\mathbf{D}_\Sigma$ is a non-negative symmetric operator with kernel $\ker {\bf D}_{\Sigma}=\R \tilde{1}$ where $\tilde{1}= (1, \dots, 1)$.
 
 We will also consider the following variant of the DN map. We  consider      a compact Riemann surface  $\Sigma$ with real analytic (possibly empty) boundary $\partial\Sigma=\bigcup_{j=1}^b\caC_j$  as before. Here  $b$ could possibly be equal to $0$ in case $\Sigma$ has no boundary and we consider charts $\omega_j:V_j\to \mathbb{A}_{1-\eps_j}:=\{z\in \C\,|\, |z|\in (1-\eps_j,1]\}$ with $\eps_j\in (0,1)$ and $\omega_j(\mc{C}_j)=\{|z|=1\}$ as in Subsection      
 \ref{subsub:boundary}.
We further assume that we are given a collection of $b'$ analytic closed simple non overlapping curves  $\mc{C}':=\sqcup_{j=1}^{b'}\mathcal{C}'_j$ in the interior of $\Sigma$. Each such curve comes equipped with a holomorphic chart   $\omega'_j:V'_j\to \omega'_j(V'_j)\subset \C$ for $j=1,\dots,b'$ where $V'_j$ is an open neighborhood of $\mathcal{C}'_j$ and $\omega'_j(V'_j)=A'_j$, with $A'_j=\{z\in \C\,|\, |z|\in [1-\eps'_j,1+\eps'_j]\}$ with $\eps'_j\in (0,1)$ and $\omega_j'(\mc{C}'_j)=\{|z|=1\}$. We assume that the metric $g$ is such that there exists $f_j\in C^\infty(V_j)$ and $f_j'\in C^\infty(V_j')$ with $f_j|_{\mc{C}_j}=0$ and $f_j'|_{\mc{C}'_j}=0$ so that
\begin{equation} \label{metric_near_bdry} 
\omega_j^*\Big(\frac{|dz|^2}{|z|^2}\Big)=e^{f_j}g, \qquad{\omega'_j}^*\Big(\frac{|dz|^2}{|z|^2}\Big)=e^{f'_j}g.
\end{equation}
In the chart given by the annulus $\mathbb{A}_{1-\eps_j}$, the orientation is then given by $d\theta$ (i.e. the counterclockwise orientation) on the unit circle parametrised by $(e^{i\theta})_{\theta\in [0,2\pi]}$. 
For   a field  $\boldsymbol{\tilde \varphi}=(\tilde\varphi_1,\dots,\tilde\varphi_{b'})\in (H^{s}(\T))^{b'}$ with $s\in\R$, we will write   $P_{\mc{C}'}\tilde{\boldsymbol{\varphi}}$ for  the harmonic extension   $\Delta_g P_{\mc{C}'}\tilde{\boldsymbol{\varphi}}=0$ on $\Sigma\setminus \mc{C}'$ with boundary value $0$ on $\partial\Sigma$ and equal to $\tilde\varphi_j\circ \omega'_j $ on $\mc{C}'_j$ for $j=1,\cdots,b'$. The DN map $\mathbf{D}_{\Sigma,\mc{C}'}:C^\infty(\T)^{b'}\to C^\infty(\T)^{b'}$ associated to $\mathcal{C}'$ is then defined as the jump at $\mc{C}'$ of the harmonic extension:    for $\tilde{\boldsymbol{\varphi}}\in C^\infty(\T)^{b'}$ 
\begin{equation}\label{defDSigmaC}
\mathbf{D}_{\Sigma,{\mc{C}'}}\tilde{\boldsymbol{\varphi}}=-((\partial_{\nu_-} P_{\mc{C}'}\tilde{\boldsymbol{\varphi}})|_{\mc{C}'_j}+(\partial_{\nu_+} P_{\mc{C}'}\tilde{\boldsymbol{\varphi}})|_{\mc{C}'_j})_{j=1,\dots,b'}.
\end{equation}
Here $\partial_{\nu_\pm}$ denote the two inward normal derivatives along $\mc{C}'_j$ in the following sense: viewing $P_{\mc{C}'}\tilde{\boldsymbol{\varphi}}=\sum_{k}u_k$ as a sum of smooth functions $u_k$ supported on the closure $\Sigma_k$ of each connected component of $\Sigma\setminus \bigcup_{j}\mc{C}_j'$, then with $\nu_k$ being the inward pointing normal unit vector field of $\Sigma_k$ at $\mc{\pl} \Sigma_k$, set
\[(\partial_{\nu_-} P_{\mc{C}'}\tilde{\boldsymbol{\varphi}})|_{\mc{C}'_j}+(\partial_{\nu_+} P_{\mc{C}'}\tilde{\boldsymbol{\varphi}})|_{\mc{C}'_j}:=
\sum_{k, \Sigma_k\cap \mc{C}'_j\not=\emptyset}\partial_{\nu_k}u_k|_{\mc{C}'_j}.\]  
We notice the useful fact: $\mathbf{D}_{\Sigma,{\mc{C}'}}$ is invertible and, if $G_{g,D}$ denotes the Green function on $(\Sigma,g)$ with Dirichlet condition at $\pl \Sigma$, the Schwartz kernel of $\mathbf{D}_{\Sigma,{\mc{C}'}}^{-1}$ is (see e.g. the proof of \cite[Theorem 2.1]{Carron})
\begin{equation}\label{DNmapandGreen}
\mathbf{D}_{\Sigma,{\mc{C}'}}^{-1}(y,y')=\frac{1}{2\pi}G_{g,D}(y,y'), \quad y\not=y' \in \mc{C}'.
\end{equation}  
 Let us further introduce   the unbounded operator $\mathbf{D}$ on $L^2(\T)^{b}$ defined as the Friedrichs extension associated to  the quadratic form
 \begin{equation}\label{defmathbfD}
\forall \tilde{\boldsymbol{\varphi}} \in C^\infty(\T;\R)^{b},\quad (\mathbf{D}\tilde{\boldsymbol{\varphi}},\tilde{\boldsymbol{\varphi}}):= 2\sum_{j=1}^{b}\sum_{n>0} n|\varphi_{j,n}|^2=\frac{1}{2}\sum_{j=1}^{b}\sum_{n>0}((x_{j,n})^2+(y_{j,n})^2).
\end{equation}
Finally we consider the operators on $C^\infty(\T)^{b}$ and $C^\infty(\T)^{b'}$
\begin{align}
\label{tildeD}& \widetilde{\mathbf{D}}_{\Sigma}  :=\mathbf{D}_{\Sigma}-\mathbf{D},  
&  \Pi_0(\tilde\varphi_1,\dots,\tilde\varphi_b):=  (( \tilde\varphi_1,1)_2,\dots,( \tilde\varphi_b,1)_2) \\
& \widetilde{\mathbf{D}}_{\Sigma,{\mc{C}'}} :=\mathbf{D}_{\Sigma,{\mc{C}'}}-2\mathbf{D}, 
& \Pi'_0(\tilde\varphi_1,\dots,\tilde\varphi_{b'}):=  (( \tilde\varphi_1,1)_2,\dots,(\tilde\varphi_{b'},1 )_2).\label{defPi_0'}
\end{align}
\begin{lemma}\label{lemmaDSigma-D}
The operators $\widetilde{\mathbf{D}}_{\Sigma}$, $\mathbf{D}_{\Sigma}(\Pi_0+{\bf D})^{-1}-{\rm Id}$, respectively $\widetilde{\mathbf{D}}_{\Sigma,{\mc{C}'}}$,  
 $\mathbf{D}_{\Sigma,\mc{C}'}(2\Pi_0'+2{\bf D})^{-1}-{\rm Id}$, are smoothing operators in the sense that they are operators with smooth Schwartz kernel that are bounded for all $s,s'\in \R$ as maps 
\[(H^{s}(\T))^{b} \to (H^{s'}(\T))^{b}, \textrm{ respectively } (H^{s}(\T))^{b'} \to (H^{s'}(\T))^{b'}.\]
In particular they are trace class on $L^2(\T^b), L^2(\T^{b'})$ and the Fredholm determinant  $\det_{\rm Fr}(\mathbf{D}_{\Sigma,\mc{C}'}(2\Pi_0'+2{\bf D})^{-1})$ is well-defined.
\end{lemma}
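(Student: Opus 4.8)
The operators in the statement act on finite disjoint unions of copies of the circle $\T$, and "smoothing" means here: having a $C^\infty$ Schwartz kernel, equivalently being bounded $(H^s(\T))^b\to(H^{s'}(\T))^b$ for every $s,s'$. Since an operator on $\T^b$ with smooth kernel is automatically trace class, the last two assertions follow at once from the first two: $\mathbf{D}_{\Sigma,\mc{C}'}(2\Pi_0+2\mathbf{D})^{-1}=\mathrm{Id}+\big(\mathbf{D}_{\Sigma,\mc{C}'}(2\Pi_0+2\mathbf{D})^{-1}-\mathrm{Id}\big)$ is then the identity plus a trace class operator, so its Fredholm determinant is defined. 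Moreover, using that $\Pi_0+\mathbf{D}$ and $2\Pi_0+2\mathbf{D}$ are invertible with inverses bounded from $H^s$ to $H^{s+1}$, one has
\[
\mathbf{D}_\Sigma(\Pi_0+\mathbf{D})^{-1}-\mathrm{Id}=(\widetilde{\mathbf{D}}_\Sigma-\Pi_0)(\Pi_0+\mathbf{D})^{-1},\qquad \mathbf{D}_{\Sigma,\mc{C}'}(2\Pi_0+2\mathbf{D})^{-1}-\mathrm{Id}=(\widetilde{\mathbf{D}}_{\Sigma,\mc{C}'}-2\Pi_0)(2\Pi_0+2\mathbf{D})^{-1},
\]
with $\Pi_0$ understood on the relevant space; since $\Pi_0$ is finite rank (hence smoothing) and a smoothing operator composed with a bounded one is smoothing, everything reduces to showing that $\widetilde{\mathbf{D}}_\Sigma=\mathbf{D}_\Sigma-\mathbf{D}$ and $\widetilde{\mathbf{D}}_{\Sigma,\mc{C}'}=\mathbf{D}_{\Sigma,\mc{C}'}-2\mathbf{D}$ are smoothing.

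The plan is to reduce to a flat collar and compute there. Since $\Delta_g u=0$ is conformally invariant in two dimensions, the harmonic extensions $P\tilde{\boldsymbol\varphi}$ and $P_{\mc{C}'}\tilde{\boldsymbol\varphi}$ are unchanged when $g$ is replaced by a conformal metric, and the associated inward normal derivative along a circle is unchanged provided the conformal factor equals $1$ on that circle; by \eqref{metric_near_bdry} (and the analogous convention at $\pl\Sigma$) this is exactly the situation, so we may assume $g$ equals the flat cylinder metric $dt^2+d\theta^2$ on a collar $[0,T)\times\T$ of each relevant circle, the circle sitting at $t=0$ with its standard parametrization. For the interior map we also split $P_{\mc{C}'}\tilde{\boldsymbol\varphi}=u_++u_-$ into the one-sided harmonic extensions into the two components of $\Sigma\setminus\mc{C}'$ (each carrying Dirichlet data on $\pl\Sigma$), so that $\mathbf{D}_{\Sigma,\mc{C}'}$ is the sum of the two corresponding one-sided DN maps; it therefore suffices to prove that a one-sided DN map at a flat collar differs from $\mathbf{D}$ by a smoothing operator. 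In that collar, writing the harmonic extension of data $\psi$ on the relevant circle (zero on the other boundary circles) as $u=\sum_{n\neq0}(a_ne^{-|n|t}+b_ne^{|n|t})e^{in\theta}$ modulo the finite-rank contribution of the constant modes, the boundary condition at $t=0$ gives $a_n+b_n=\hat\psi_n$ and the DN value at $t=0$ is $\sum_{n\neq0}|n|(a_n-b_n)e^{in\theta}=\mathbf{D}\psi-2\sum_{n\neq0}|n|b_ne^{in\theta}$. Thus $\widetilde{\mathbf{D}}_\Sigma$, the off-diagonal blocks of $\mathbf{D}_\Sigma$, and each summand of $\widetilde{\mathbf{D}}_{\Sigma,\mc{C}'}$ are all, up to $\mathbf{D}$ composed with harmless operators, controlled by the "reflection map" $\psi\mapsto\sum_nb_ne^{in\theta}$, and the whole content is to show this map is smoothing.

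To estimate the coefficients $b_n$ I would evaluate $u$ also on the interior circle $\{t=T/2\}$: combined with the data at $t=0$ this yields $b_n=\big(\hat u_n(T/2)-e^{-|n|T/2}\hat\psi_n\big)/(e^{|n|T/2}-e^{-|n|T/2})$, hence $|b_n|\leq Ce^{-|n|T/2}\big(|\hat u_n(T/2)|+|\hat\psi_n|\big)$; interior elliptic estimates for the harmonic function $u$ on $\Sigma$ give $\|u(T/2,\cdot)\|_{H^k(\T)}\leq C_k\|u\|_{L^2(\Sigma)}\leq C_k'\|\psi\|_{H^{1/2}(\T)}$ (the last by the usual Poisson estimate), so $|b_n|\leq C_ke^{-|n|T/2}(1+|n|)^{-k}\|\psi\|_{H^{1/2}}$ for every $k$, which already makes $\psi\mapsto\sum b_ne^{in\theta}$ bounded from $H^{1/2}$ into $H^{k'}$ for all $k'$. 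Since $\mathbf{D}_\Sigma$, $\mathbf{D}$ and the one-sided DN maps are symmetric, $\widetilde{\mathbf{D}}_\Sigma$ and each one-sided difference are symmetric, so their matrix elements in the Fourier basis decay rapidly in both indices, which upgrades this to boundedness $H^s\to H^{s'}$ for all $s,s'$, i.e. to the smoothing property; the constant modes contribute only a finite-rank correction, trivially smoothing. Alternatively, for $\widetilde{\mathbf{D}}_{\Sigma,\mc{C}'}$ one may invoke \eqref{DNmapandGreen}: the kernel of $\mathbf{D}_{\Sigma,\mc{C}'}^{-1}$ on $\mc{C}'$ is $G_{g,D}$, whose only singularity is $-\log d_g(y,y')=-\log|\theta-\theta'|+C^\infty$ on the diagonal, which matches the singularity of the kernel of $(2\mathbf{D}+2\Pi_0)^{-1}$; the difference of kernels is smooth, and conjugating by the order-one operators $\mathbf{D}_{\Sigma,\mc{C}'}$ and $2\mathbf{D}+2\Pi_0$ gives the claim.

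The main obstacle is precisely the quantitative step: promoting the one-sided exponential decay of the reflected coefficients $b_n$ to genuine two-sided rapid decay of the matrix elements (smoothing, not merely regularity in one variable), for which the symmetry of the DN maps is the key input, together with careful bookkeeping of the constant modes and of the cross terms between distinct boundary circles.
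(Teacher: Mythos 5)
Your proof is correct, but it follows a genuinely different route from the paper's. The paper works at the level of Green functions: it builds a parametrix for $G_{g,D}$ by gluing the explicit disk Green function into a collar of $\pl\Sigma$ with cutoffs, shows the error $G_{g,D}K$ has smooth kernel near $\pl\Sigma\times\Sigma$, and reads off from \eqref{parametrixoperatorP} that the Poisson operator agrees with the disk model modulo an operator with smooth kernel near $\pl\Sigma\times\pl\Sigma$, whence $\mathbf{D}_\Sigma=\mathbf{D}+K'_{\pl\Sigma}$; the interior-curve case then follows from \eqref{DNC'} exactly as in your reduction. You instead bypass the parametrix entirely: after the same conformal reduction to a flat collar (both arguments need the normalization $f_j|_{\mc{C}_j}=0$ from \eqref{metric_near_bdry}, which you correctly make explicit), you separate variables, identify $\widetilde{\mathbf{D}}_\Sigma$ with the reflected modes $b_n$, and kill them by evaluating on an interior circle plus interior elliptic estimates. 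The one step that genuinely needs care in your approach — upgrading one-sided rapid decay (boundedness $H^{1/2}\to H^{k}$ for all $k$) to a two-sided smoothing statement — is correctly resolved by the symmetry of $\mathbf{D}_\Sigma$ and of the one-sided DN forms (from \eqref{Greenformula}), which symmetrizes the matrix-element decay; this is the analogue of what the paper gets for free from the kernel formula \eqref{DSigma-Green}. What each approach buys: the paper's parametrix yields the pseudodifferential structure and the explicit kernel description directly, which it reuses elsewhere; yours is more elementary, entirely Fourier-theoretic, and makes the exponential gain $e^{-|n|T/2}$ from the collar width visible. Your alternative sketch via \eqref{DNmapandGreen} is closer in spirit to the paper's argument but, as written, presupposes that $\mathbf{D}_{\Sigma,\mc{C}'}$ maps $H^{s}\to H^{s-1}$ for all $s$ in order to compose the smoothing difference of inverses with it on the left; that mapping property is supplied by your main argument, so it is not circular, but the first proof is the one that carries the lemma.
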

\begin{proof} First, notice that, writing $\Sigma_1^\circ,\dots, \Sigma_K^\circ$ for the connected components of 
$\Sigma\setminus \bigcup_{j}\mc{C}'_j$ and $\Sigma_k$ for the closure of $\Sigma_k^\circ$, one has 
\begin{equation}\label{DNC'} 
\mathbf{D}_{\Sigma,{\mc{C}'}}\tilde{\boldsymbol{\varphi}}=\sum_{k=1}^K 
\mathbf{D}_{\Sigma_k}\tilde{\boldsymbol{\varphi}}|_{\mc{C}'}.
\end{equation}
Next, we consider the Poisson operator and Dirichlet-to-Neumann map on the flat unit disk $(\D,|dz|^2)$. It is direct to see that 
\[ P_{\D}\tilde{\varphi}(z)=\varphi_0+\sum_{n>0} \varphi_n z^n+ \varphi_{-n} \bar{z}^n, \quad {\bf D}_{\D}\tilde{\varphi}=\sum_{n>0}
n(\varphi_ne^{in\theta}+\varphi_{-n}e^{-in\theta}), \quad \tilde{\varphi}=\sum_{n\in \Z} \varphi_ne^{in\theta}\]
so that the Poisson kernel $P_{\D}(z,e^{i\theta})=\frac{1}{2\pi}(1+2{\rm Re}(ze^{-i\theta}(1-ze^{-i\theta})^{-1})$ and 
${\bf D}_{\D}=|\pl_\theta|$ is the Fourier multiplier by $|n|$ on the disk.

Let us consider $\widetilde{\mathbf{D}}_{\Sigma}$. 
Let $G_{g,D}$ be the Green function for the Dirichlet Laplacian $\Delta_g$ on $\Sigma$.
The Poisson operator $P:C^\infty(\pl \Sigma)\to C^\infty(\Sigma)$ defined by $\Delta_gP\boldsymbol{\tilde{\varphi}}=0$ with $P\tilde{\boldsymbol{\varphi}}|_{\pl\Sigma}=\boldsymbol{\tilde{\varphi}}\circ \boldsymbol{\zeta}^{-1}$ is given by 
\[P\boldsymbol{\tilde{\varphi}}(x)=\frac{1}{2\pi}\sum_{j=1}^b\int_{\caC_j}\pl_{\nu'}G_{g,D}(x,y')\tilde{\varphi}_j(\zeta_j^{-1}(y'))d\ell_g(y')\]
where $\nu'$  denotes the inward pointing unit normal vector to $\pl \Sigma$. In particular, this expression applies to $(\Sigma,g)=(\D,|dz|^2)$. 
We shall compare the Green function $G_{g,D}$ to the Green function $G_{\D}$ of $\D$ (with Dirichlet condition at $\T$) using charts.
Take $\chi_j,\chi_j',\chi_j''\in C^\infty(\Sigma)$ be equal to $1$ near $\pl \Sigma$ and supported in $V_j$ (i.e. near $\pl \Sigma$), 
and $\chi'_j=1$ on support of $\chi_j$ and $1-\chi_j''=1$ on support of $1-\chi_j$.  
We also denote $G_{g,D}$ for the operator with $G_{g,D}(x,x')$ as Schwartz kernel and more generally we identify operators with their Schwartz kernels.
Since $g=e^{f_j}|dz|^2$ and 
$\Delta_g=e^{-f_j}\Delta_{\D}$ in the charts $\omega_j(V_j)\subset \D$ with $\Delta_{\D}$ the Laplacian on $(\D,|dz|^2)$, we have, using $\Delta_{\D}G_{\D}=2\pi {\rm Id}$ on $L^2(\D)$ and identifying implicitly $V_j$ with $\omega_j(V_j)\subset \D$, that $\Delta_g\int_{V_j}G_{\D}(\omega_j(x),\omega_j(x'))u(x'){\rm dv}_g(x')=2\pi u(x)$ for   $u\in C_c^\infty(V_j)$. 
Using that $\chi_j(\nabla \chi_j')=0=(1-\chi_j)\nabla\chi_j''$ and that the integral kernels $G_{\D}, G_{g,D}$ are smooth outside the diagonal in $\Sigma\times \Sigma$, we get 
\[ \Delta_{g}\Big(\sum_{j=1}^b\chi_j'G_{\D}^j\chi_j+(1-\chi_j'')G_{g,D}(1-\chi_j)\Big)=2\pi({\rm Id}+K)\]
where $G_{\D}^j(x,x'):=G_{\D}(\omega_j(x),\omega_j(x'))$, $K$ is an operator with smooth Schwartz kernel vanishing near $\pl \Sigma\times \Sigma$ and at $\Sigma\times \pl \Sigma$. 
We can then write 
\[ G_{g,D}=(\sum_{j}\chi'_jG_{\D}^j\chi_j+(1-\chi_j'')G_{g,D}(1-\chi_j))-G_{g,D}K.\]
We claim that the operator $G_{g,D}K$ has a Schwartz kernel that is smooth near $\pl \Sigma\times  \Sigma$ (in fact it is smooth on $\Sigma\times \Sigma$ but we do not need it). 
Indeed, the Green function is smooth outside the diagonal of $\Sigma$, $G_{g,D}\in C^\infty(\Sigma\times \Sigma\setminus \{\rm diag\})$ and $K(x'',x')=0$ for $x''$ near $\pl \Sigma$, thus 
\[ (G_{g,D}K)(x,x')=\int_{\Sigma}G_{g,D}(x,x'')K(x'',x')d{\rm v}_{g}(x'')\]
is smooth for $(x,x')$ near $\pl\Sigma\times \Sigma$. We deduce that
\begin{equation}\label{parametrixoperatorP} 
P\boldsymbol{\varphi}(x)=\sum_j \chi_j'(x) \int_{\pl\Sigma}P_{\D}(\omega_j(x),\omega_j(y))\tilde{\varphi}_j(y)d\ell_g(y)-(K'\varphi)(x)
\end{equation}
for some operator $K':C^\infty(\pl \Sigma)\to C^\infty(\Sigma)$
having an integral kernel on $\Sigma\times \pl\Sigma$ that is smooth near $\pl \Sigma\times \pl \Sigma$. 
Taking the normal derivative at $\pl \Sigma$, we deduce that
\[ \mathbf{D}_{\Sigma}={\bf D}+K'_{\pl \Sigma}\]
where $K'_{\pl\Sigma}$ has  a smooth Schwartz kernel $K'|_{\pl \Sigma\times \pl\Sigma}$ on $\pl \Sigma$. The fact that $\mathbf{D}_{\Sigma}(\Pi_0+{\bf D})^{-1}-{\rm Id}$ has smooth Schwartz kernel then follows directly. This also implies that $\widetilde{\mathbf{D}}_{\Sigma,{\mc{C}'}}$ is smoothing by using \eqref{DNC'} and that $\mathbf{D}_{\Sigma,\mc{C}'}(2\Pi_0+2{\bf D})^{-1}-{\rm Id}$ is smoothing. 
\end{proof}

\subsection{Amplitudes}\label{sec:amplitudes_Segal}
%%%%%%%%%%%%%%%%
 Let us first give a brief heuristic explanation about how to define probabilistically the Segal amplitudes. Let $(\Sigma,g)$ be a 
 Riemannian surface  with a parametrised boundary $\zeta:\T\to \pl \Sigma$ (say with $\pl \Sigma$ connected). 
Any $H^1(\Sigma)$ function decomposes as $\phi=\phi_0+P\tilde{\varphi}$ with 
$\phi_0\in H^1_0(\Sigma)$ vanishing at $\pl \Sigma$ and $P\tilde{\varphi}$ the harmonic extension of 
$\tilde{\varphi}=\phi|_{\pl \Sigma} \circ \zeta$. The Dirichlet energy obeys
\[ 
\int_{\Sigma}|d\phi|_g^2{\rm dv}_g=\int_{\Sigma}|d\phi_0|_g^2{\rm dv}_g+\int_{\Sigma}|dP\til{\varphi}|_g^2{\rm dv}_g=\int_{\Sigma}|d\phi_0|_g^2{\rm dv}_g+2 \pi ( \tilde{\varphi},\mathbf{D}_\Sigma\tilde{\varphi})_2.\]
If we now think of the Segal amplitude of $\Sigma$ to be given by the formal conditional path integral \eqref{ampli} and say $V(\phi)$ is the non-linear potential in the action, one can formally write 
\[\begin{split}
\caA_{\Sigma,g}(\til{\varphi})=& \int_{\phi|_{\pl \Sigma}=\til{\varphi}}e^{-\frac{1}{4\pi}\int_{\Sigma}|d\phi|_g^2{\rm dv}_g}e^{-\int_\Sigma V(\phi){\rm dv}_g}D\phi\\
=& e^{-\frac{1}{2}( \tilde{\varphi},\mathbf{D}_\Sigma\tilde{\varphi})_2}\int_{\phi_0|_{\pl \Sigma}=0}e^{-\frac{1}{4\pi}\int_{\Sigma}|d\phi_0|_g^2{\rm dv}_g}e^{-\int_\Sigma V(\phi_0+P\til{\varphi}){\rm dv}_g}D\phi_0 
\end{split}\]
The $e^{-\frac{1}{4\pi}\int_{\Sigma}|d\phi_0|_g^2{\rm dv}_g}D\Phi_0$ Gaussian integral can be defined probabilistically using the Dirichlet GFF on $\Sigma$, where one also needs to input the determinant of the Dirichlet Laplacian as the total mass. 
The term $e^{-\frac{1}{2}( \tilde{\varphi},\mathbf{D}_\Sigma\tilde{\varphi})_2}$ involving the DN map is ill defined if $\til{\varphi}$ has the law of a GFF (up to the constant term) on the unit circle, but what can be given a sense is $e^{-\frac{1}{2}( \tilde{\varphi},(\mathbf{D}_\Sigma-{\bf D})\tilde{\varphi})_2}$
on $H^{s}(\T)$ for any $s<0$ fixed by Lemma \ref{lemmaDSigma-D}, $\mathbf{D}_\Sigma-{\bf D}$. One is tempted to 
rewrite 
\begin{equation}\label{Atilde} 
\begin{split}
\caA_{\Sigma,g}(\til{\varphi})= &\Big(e^{-\frac{1}{2}( \tilde{\varphi},(\mathbf{D}_\Sigma-{\bf D})\tilde{\varphi})_2}\int_{\phi_0|_{\pl \Sigma}=0}e^{-\frac{1}{4\pi}\int_{\Sigma}|d\phi_0|_g^2{\rm dv}_g}e^{-\int_\Sigma V(\phi_0+P\til{\varphi}){\rm dv}_g}D\phi_0\Big)e^{-\frac{1}{2}( \tilde{\varphi},{\bf D}\tilde{\varphi})_2}\\
=:& \, \til{\caA}_{\Sigma,g}(\til{\varphi})e^{-\frac{1}{2}( \tilde{\varphi},{\bf D}\tilde{\varphi})_2}.
\end{split}\end{equation}
A direct computation gives us that the measure $\mu_0$ of \eqref{def_of_mu_0}
represents the Gaussian measure $e^{-( \tilde{\varphi},\mathbf{D}\tilde{\varphi})_2}D\til{\varphi}$ (up to normalisation of the total mass). This means that, formally, the pairing of two amplitudes with respect to the uniform measure $D\tilde{\varphi}$
becomes 
\[ \int \caA_{\Sigma,g}(\til{\varphi})\caA_{\Sigma',g'}(\til{\varphi})D\tilde{\varphi}= \int \til{\caA}_{\Sigma,g}(\til{\varphi})\til{\caA}_{\Sigma',g'}(\til{\varphi})\mu_0(d\tilde{\varphi}).\]
Since $\mu_0$ is rigorously defined, while $D\tilde{\varphi}$ is not, and since $\til{\caA}_{\Sigma,g}(\til{\varphi})$ can be given a probabilistic definition, we shall rather work with $\mu_0$ and define  amplitudes in Definition \ref{def:amp} by the natural probabilistic expression representing the $\til{\caA}_{\Sigma,g}(\til{\varphi})$ formal expression given in \eqref{Atilde}.

Given a Riemannian surface  $(\Sigma,g)$  with an analytic parametrisation $
\boldsymbol{\zeta}$ of the boundary (the metric $g$ is not necessarily taken admissible except when mentioned, e.g. in Theorem \ref{integrcf}), and given marked points ${\bf x}:=(x_1,\dots,x_m)$ in  the interior $\Sigma^\circ$ of the surface  and some weights $\boldsymbol{\alpha}:=(\alpha_1,\dots,\alpha_m)\in\R^m$ associated to ${\bf x}$, the  amplitude $\caA_{\Sigma,g,{\bf x},\boldsymbol{\alpha},\boldsymbol{\zeta}}$ is defined as follows: 

 \begin{definition}[\textbf{Amplitudes}]\label{def:amp}
 
 We suppose that the second Seiberg bound \eqref{seiberg2} holds, i.e. $\alpha_i<Q$, $i=1,\dots,m$.   
 
\noindent {\bf (A)}
Let $\partial\Sigma=\emptyset$. For $F$ continuous  nonnegative function on $H^{s}(\Sigma)$ for some $s<0$ we define 
\begin{equation}\label{defampzerobound} 
\caA_{\Sigma,g,{\bf x},\boldsymbol{\alpha}}(F):=\lim_{\epsilon\to 0}\langle F(\phi_g)\prod_{i=1}^m V_{\alpha_i,g,\epsilon}(x_i) \rangle_{\Sigma,g} .
\end{equation}
using \eqref{def:pathintegralF} with $\phi_g= c+X_{g}$ and \eqref{defcorrelg}.
 If $F=1$ then the amplitude is just the LCFT correlation function  and will be simply denoted by $\caA_{\Sigma,g,{\bf x},\boldsymbol{\alpha}}$.
 \vskip 3mm
 
\noindent {\bf (B)}  If  $\partial\Sigma$ has $b>0$ boundary  components, $\caA_{\Sigma,g,{\bf x},\boldsymbol{\alpha},\boldsymbol{\zeta}}$ is
  a function $(F,\tilde{\boldsymbol{\varphi}})\mapsto \caA_{\Sigma,g,{\bf x},\boldsymbol{\alpha},\boldsymbol{\zeta} }(F,\tilde{\boldsymbol{\varphi}})$ of  the boundary fields $\tilde{\boldsymbol{\varphi}}:=(\tilde\varphi_1,\dots,\tilde\varphi_b)\in (H^{s}(\T))^b$ with $s<0$  and of continuous nonnegative functions $F$ defined on  $H^{s}(\Sigma)$ for $s\in (-1/2,0)$.  
 It is  defined by (recall that   $\phi_g= X_{g,D}+P \boldsymbol{\tilde \varphi}$)
\begin{align}\label{amplitude}
 \caA_{\Sigma,g,{\bf x},\boldsymbol{\alpha},\boldsymbol{\zeta}}(F,\tilde{\boldsymbol{\varphi}}) =\lim_{\eps\to 0}Z_{\Sigma,g}\caA^0_{\Sigma,g}(\tilde{\boldsymbol{\varphi}})
 \E \big[F(\phi_g)\prod_{i=1}^m V_{\alpha_i,g,\epsilon}(x_i)e^{-\frac{Q}{4\pi}\int_\Sigma K_g\phi_g\dd {\rm v}_g-\frac{Q}{2\pi}\int_{\partial\Sigma}k_g\phi_g\dd \ell_g -\mu M_\gamma^g (\phi_g,\Sigma)}\big]
\end{align}
where the expectation $\E$ is over the Dirichlet GFF $X_{g,D}$, $M_\gamma^g (\phi_g,\Sigma)$ is defined as a limit in \eqref{GMCg} and $Z_{\Sigma,g}$ is  a normalisation constant given by\footnote{Note that $k_g=0$ when $g$ is assumed to be admissible.}
 \begin{align}\label{znormal}
 Z_{\Sigma,g}=\det (\Delta_{g,D})^{-\hf}\exp\big(\frac{1}{8\pi}\int_{\partial\Sigma}k_g\,\dd\ell_g\big)
 \end{align}
with $k_g$ the geodesic curvature of $\pl \Sigma$, and $\caA^0_{\Sigma,g}(\tilde{\boldsymbol{\varphi}})$ is the free field amplitude defined as  
\begin{align}\label{amplifree}
\caA^0_{\Sigma,g}(\tilde{\boldsymbol{\varphi}})=e^{-\frac{1}{2}( \tilde{\boldsymbol{\varphi}}, (\mathbf{D}_\Sigma-\mathbf{D})  \tilde{\boldsymbol{\varphi}})_2}.
\end{align}
When $F=1$, we will simply write $ \caA_{\Sigma,g,{\bf x},\boldsymbol{\alpha},\boldsymbol{\zeta}}( \tilde{\boldsymbol{\varphi}})$.
 \end{definition}

Note that the existence of the limit above in the case when $\partial\Sigma\neq\emptyset$ results from the Girsanov argument as in \cite[Section 3]{DKRV16}. The definitions above trivially extend to the situation when $F$ is no more assumed to be nonnegative but with the further requirement that $\caA_{\Sigma,g,{\bf x},\boldsymbol{\alpha}}(|F|)<\infty$ in the case $\partial\Sigma=\emptyset$ and $ \caA_{\Sigma,g,{\bf x},\boldsymbol{\alpha},\boldsymbol{\zeta}}(|F|,\tilde{\boldsymbol{\varphi}})<\infty$ $(\dd c\otimes \P_\T)^{\otimes b}$ almost everywhere in the case $\partial\Sigma\not=\emptyset$. We finally remark that $\mc{A}^0_{\Sigma,g}$ does depend on 
$\boldsymbol{\zeta}$, and its dependence on $g$ is only through the conformal class of $g$.

Let us also stress that there is a subtle difference in the above definition according to which $\Sigma$ has a boundary or not:

\begin{remark}
If  $\partial\Sigma$ has $b>0$ boundary   components then the amplitude \eqref{amplitude} is defined and non trivial, i.e. belongs to $(0,\infty)$ as soon as $\alpha_i<Q$, $i=1,\dots,m$. However, when $\Sigma$ has no boundary and $F=1$ then \eqref{defampzerobound} defines a non trivial quantity if and only if the Seiberg bounds \eqref{seiberg1} and \eqref{seiberg2} hold. If \eqref{seiberg1} does not hold then \eqref{defampzerobound} is infinite; hence, with our convention, the amplitude for a closed surface and for $F=1$ is meaningful only when the Seiberg bound \eqref{seiberg1} holds.
\end{remark}

In what follows, we shall sometimes remove the $\boldsymbol{\zeta}$ index for notational simplicity in the amplitudes when this does not play an important role, keeping in mind that each boundary component $\mc{C}_j$ of $\Sigma$ comes with a parametrisation $\zeta_j$. 
 We gather below the main properties we need for amplitudes: we first establish bounds and then prove the Weyl invariance. For boundary fields $\tilde{\boldsymbol{\varphi}}=(\tilde\varphi_1,\dots,\tilde\varphi_b)\in (H^{s}(\T))^b$, we will denote by $(c_j)_{j\leq b}$ their constant modes, set $ \bar c=\tfrac{1}{b}\sum_{j=1}^b c_j$ and $\boldsymbol{\varphi}=(\varphi_1,\dots,\varphi_b)$ the centered fields (recall that $\tilde\varphi_j=c_j+\varphi_j$). Also, recall that $c_+=c\mathbf{1}_{\{c>0\}}$ and $c_-=c\mathbf{1}_{\{c<0\}}$.  The following result will be useful to control the integrability properties of the amplitude:

 \begin{theorem}\label{integrcf}
Let $(\Sigma,g,{\bf x},\boldsymbol{\zeta})$ be an admissible surface with $\partial\Sigma\not=\emptyset$ having  $b$ boundary components, and $\boldsymbol{\alpha}$ some weights attached to the marked points ${\bf x}$. 
Then, there exists some constant $a>0$ such that for any $R>0$, there exists $C_R>0$ such that  $\mu_0^{\otimes_b}$-almost everywhere in  $ \boldsymbol{\tilde \varphi}\in H^{s}(\T)^b$ with $s<0$
\begin{align*}%\label{}
 \caA_{\Sigma,g,{\bf x},\boldsymbol{\alpha}}( \tilde{\boldsymbol{\varphi}}) \leq C_R  e^{s_0 \bar c_--R \bar c _+  -a  \sum_{j=1}^b(\bar c-c_j)^2}
A(\boldsymbol{\varphi})
\end{align*}
  satisfying $\int A(\boldsymbol{\varphi}) ^2 \,\P_\T^{\otimes_b}(\dd \boldsymbol{\varphi})<\infty$, 
where 
\begin{align*}
s_0:=\sum_i\alpha_i-Q\chi(\Sigma)
\end{align*}
and the Euler characteristic is equal to $\chi(\Sigma)=2-2\mathbf{g}-b$.  
  \end{theorem}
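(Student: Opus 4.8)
The plan is to reduce the amplitude to a quantity amenable to Girsanov's transform and then to analyze the $c$-dependence (the constant mode of the boundary fields) through the Gaussian multiplicative chaos (GMC) term, which is the only source of exponential growth/decay in $c$. First I would write $\tilde{\boldsymbol{\varphi}} = (c_j + \varphi_j)_j$ and decompose the harmonic extension $P\tilde{\boldsymbol{\varphi}} = \sum_j c_j P\tilde{1}_j + P\boldsymbol{\varphi}$, where $\tilde{1}_j$ is the boundary field that is $1$ on $\mathcal C_j$ and $0$ elsewhere. The free-field amplitude $\caA^0_{\Sigma,g}(\tilde{\boldsymbol{\varphi}}) = e^{-\frac12(\tilde{\boldsymbol{\varphi}},(\mathbf D_\Sigma - \mathbf D)\tilde{\boldsymbol{\varphi}})_2}$ is controlled using Lemma \ref{lemmaDSigma-D}: since $\widetilde{\mathbf D}_\Sigma = \mathbf D_\Sigma - \mathbf D$ is smoothing, it is bounded on $H^s(\T)^b$, so $|(\tilde{\boldsymbol{\varphi}},\widetilde{\mathbf D}_\Sigma\tilde{\boldsymbol{\varphi}})_2| \lesssim \|\tilde{\boldsymbol{\varphi}}\|_{H^s}^2$; but more importantly its restriction to the constant modes is a positive semidefinite quadratic form in $(c_1,\dots,c_b)$ with kernel exactly $\R\tilde 1$ (because $\ker \mathbf D_\Sigma = \R\tilde 1$ and $\mathbf D$ kills constants), which produces precisely the Gaussian factor $e^{-a\sum_j(\bar c - c_j)^2}$ after diagonalizing; the cross-terms between the $c_j$ and $\varphi_j$ get absorbed into $A(\boldsymbol{\varphi})$ using that the operator is smoothing.

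Next I would handle the curvature term $e^{-\frac{Q}{4\pi}\int_\Sigma K_g\phi_g\,\dd{\rm v}_g - \frac{Q}{2\pi}\int_{\partial\Sigma}k_g\phi_g\,\dd\ell_g}$. Writing $\phi_g = X_{g,D} + P\tilde{\boldsymbol{\varphi}}$ and splitting $P\tilde{\boldsymbol{\varphi}}$ as above, the term linear in $\bar c$ contributes $e^{-\bar c \cdot \frac{Q}{4\pi}(\int_\Sigma K_g\,\dd{\rm v}_g + 2\int_{\partial\Sigma}k_g\,\dd\ell_g)} = e^{-\bar c Q\chi(\Sigma)}$ by Gauss–Bonnet; the remaining linear-in-$(c_j - \bar c)$ and linear-in-$\varphi_j$ and linear-in-$X_{g,D}$ pieces are bounded (the $(c_j-\bar c)$ ones by the Gaussian factor via Cauchy–Schwarz, the $X_{g,D}$ one has finite exponential moments). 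The vertex operators $\prod_i V_{\alpha_i,g,\epsilon}(x_i)$, after taking $\epsilon\to 0$, contribute a factor $\prod_i e^{\alpha_i(\bar c + (c_{j(i)}-\bar c) + P\boldsymbol{\varphi}(x_i) + X_{g,D}(x_i))}$ up to deterministic constants; here $x_i$ is in the interior so $P\tilde 1_{j}(x_i)$ is a smooth bounded function, and the $\bar c$ part gives $e^{\bar c\sum_i\alpha_i}$. Combined with Gauss–Bonnet this yields the exponent $e^{\bar c(\sum_i\alpha_i - Q\chi(\Sigma))} = e^{\bar c s}$, which must then be split according to the sign of $\bar c$ — this is where the GMC term intervenes.

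The core estimate, and the main obstacle, is the treatment of the GMC term $e^{-\mu M_\gamma^g(\phi_g,\Sigma)}$ as $\bar c \to \pm\infty$. For $\bar c \to -\infty$, $e^{-\mu M_\gamma^g(\phi_g,\Sigma)} \le 1$, so we simply keep the factor $e^{s\bar c}$; since we will split $\bar c$ into $\bar c_+$ and $\bar c_-$, for $\bar c < 0$ this contributes $e^{s\bar c_-}$. For $\bar c \to +\infty$, we need to beat the growth $e^{s\bar c_+}$ by any polynomial (indeed exponential) rate $R$: using that $M_\gamma^g(\phi_g,\Sigma) = e^{\gamma\bar c} M_\gamma^g(\phi_g - \bar c, \Sigma)$ and that the latter chaos measure has a strictly positive total mass almost surely (here admissibility of $g$ ensures the measure is genuinely supported on an open set and the harmonic extension $P\boldsymbol{\varphi}$ is locally bounded away from the boundary), one has for any $R>0$ the elementary bound $e^{s\bar c_+ - \mu e^{\gamma\bar c_+} Z} \le C_R Z^{-R/\gamma} e^{-R\bar c_+}$ where $Z := M_\gamma^g(\phi_g - \bar c,\Sigma) > 0$; this is just $\sup_{t\ge 0} e^{(s+R)t - \mu e^{\gamma t}Z} < \infty$ with explicit dependence on $Z$. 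The leftover negative moment $Z^{-R/\gamma}$ must be shown to have a finite second moment after multiplication by the other $\boldsymbol{\varphi}$-dependent factors — this uses the standard fact (as in \cite{DKRV,GRV}) that negative moments of GMC total mass are finite to all orders, together with Cauchy–Schwarz to separate it from the vertex and curvature contributions. Assembling $A(\boldsymbol{\varphi})$ as the product of: the free-field $\boldsymbol{\varphi}$-factor, the bounded curvature and vertex contributions (after taking exponential moments in $X_{g,D}$ via Jensen/Girsanov), and the negative GMC moment, one checks $\int A(\boldsymbol{\varphi})^2\,\P_\T^{\otimes b}(\dd\boldsymbol{\varphi}) < \infty$; the delicate point is keeping the Gaussian factor $e^{-a\sum_j(\bar c - c_j)^2}$ with a uniform $a>0$ independent of everything, which forces one to extract it cleanly from $\widetilde{\mathbf D}_\Sigma$ before any Cauchy–Schwarz step consumes part of it.
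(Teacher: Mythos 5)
Your proposal follows essentially the same route as the paper's proof (Lemmas \ref{lem:amp1} and \ref{lem:amp3} and the assembly at the end of Section 4): extract the Gaussian factor in $(\bar c-c_j)$ from the free-field amplitude using $\ker\mathbf{D}_\Sigma=\R\tilde 1$, use Gauss--Bonnet for the $Q\chi(\Sigma)\bar c$ contribution, a Girsanov shift to absorb the vertex and curvature insertions, and the elementary bound $e^{s\bar c_+-\mu e^{\gamma\bar c_+}Z}\le C_R Z^{-R/\gamma}e^{-R\bar c_+}$ together with negative moments of GMC for the decay in $\bar c_+$. All of these are the paper's steps, and they are correctly identified.

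The one place where your argument as written does not close is the square-integrability of the free-field contribution to $A(\boldsymbol{\varphi})$. Bounding $|(\tilde{\boldsymbol{\varphi}},\widetilde{\mathbf{D}}_\Sigma\tilde{\boldsymbol{\varphi}})_2|$ by $C\|\tilde{\boldsymbol{\varphi}}\|_{H^s}^2$ is not sufficient: the free-field amplitude contributes to $A(\boldsymbol{\varphi})$ a factor of the form $\prod_{j,n}e^{a_n((x_{j,n})^2+(y_{j,n})^2)}$, and $\int A^2\,\dd\P_\T^{\otimes b}<\infty$ requires $a_n<1/4$ on \emph{every} mode together with convergence of the resulting infinite product $\prod_n(1-4a_n)^{-1}$; an unquantified constant $C$ in the $H^s$ bound can violate the $1/4$ threshold on the lowest modes. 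The paper's Lemma \ref{lem:amp1} gets this by writing $\mathbf{D}_\Sigma=(\Pi_0+\mathbf{D})^{1/2}({\rm Id}+K)(\Pi_0+\mathbf{D})^{1/2}$ with $K$ smoothing, using ${\rm Id}+K\ge a\,({\rm Id}-\|\tilde 1\|_2^{-2}(\tilde 1,\cdot)_2\tilde 1)$ on the finitely many low modes (which yields both the $e^{-a\sum_j(\bar c-c_j)^2}$ factor and $a_n\le\tfrac14-a$ there), and the $H^{-N}\to H^{N}$ boundedness of the smoothing remainder to force $a_n=O(n^{-N})$ on the high modes. Your closing remark about extracting the Gaussian factor cleanly points at the $(\bar c-c_j)^2$ term, but the genuinely delicate point is this low-mode/high-mode splitting for the nonzero Fourier modes, which you should make explicit. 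One further small point: the $\varphi$-linear part of the curvature term $\int_\Sigma K_g P\boldsymbol{\varphi}\,{\rm dv}_g$ is not ``bounded''; one needs that $P^*K_g$ is smooth on $\pl\Sigma$ (Lemma \ref{lem:adjointP}) so that its Fourier coefficients decay rapidly and the corresponding exponential moments are finite.
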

The proof is based on the   following two lemmas. First, we  deal with the free field amplitude.
 
\begin{lemma}\label{lem:amp1}
Let $(\Sigma,g)$ be a smooth Riemannian surface with boundary ($g$ is not necessarily admissible). There exist a constant $a>0$ such that for all $N>0$, there is $k_N>0$ such that
\begin{align*}
\caA^0_{\Sigma,g}(\tilde{\boldsymbol{\varphi}})\leq  \prod_{j=1}^be^{-a( \bar c-c_j)^2}\prod_{n=1}^\infty e^{a_n((x_{j,n})^2+(y_{j,n})^2)} 
\end{align*}
where $a_n\leq (\frac{1}{4}-a)$ for $n\leq k_N$ and $a_n\leq \frac{1}{4n^{N}}$ for $n\geq k_N$.
\end{lemma}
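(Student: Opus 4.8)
The claim is a pointwise upper bound on the free field amplitude $\caA^0_{\Sigma,g}(\tilde{\boldsymbol{\varphi}})=e^{-\frac12(\tilde{\boldsymbol{\varphi}},(\mathbf{D}_\Sigma-\mathbf{D})\tilde{\boldsymbol{\varphi}})_2}$, so the whole point is to control the quadratic form $q(\tilde{\boldsymbol{\varphi}}):=(\tilde{\boldsymbol{\varphi}},\widetilde{\mathbf{D}}_\Sigma\tilde{\boldsymbol{\varphi}})_2$ where $\widetilde{\mathbf{D}}_\Sigma=\mathbf{D}_\Sigma-\mathbf{D}$ is the smoothing operator from Lemma \ref{lemmaDSigma-D}. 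I would first split the exponent into the contribution of the constant modes $(c_j)$ and the contribution of the oscillatory modes. For the constant modes: since $\ker\mathbf{D}_\Sigma=\R\tilde1$ and $\mathbf{D}$ annihilates all constants, the form $q$ restricted to the span of the $\tilde1_j=(0,\dots,1,\dots,0)$ is $(\mathbf c,\mathbf{D}_\Sigma\mathbf c)$ where $\mathbf c=(c_1,\dots,c_b)$; this is a nonnegative quadratic form on $\R^b$ with one-dimensional kernel $\R(1,\dots,1)$, hence it is bounded below by $a'\sum_j(\bar c-c_j)^2$ for some $a'>0$ (equivalence of norms on the $(b-1)$-dimensional orthogonal complement of the diagonal). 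This yields the $e^{-a\sum_j(\bar c-c_j)^2}$ factor; the relevant $a$ is a fixed fraction of $a'$ (we must keep a margin for the cross terms below).

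Next the oscillatory modes. Write $\tilde{\boldsymbol{\varphi}}=\mathbf c\cdot(\text{constants})+\boldsymbol{\varphi}$ with $\boldsymbol{\varphi}$ centered, expand $q(\tilde{\boldsymbol{\varphi}})=q(\mathbf c)+2(\mathbf c\cdot(\cdot),\widetilde{\mathbf D}_\Sigma\boldsymbol{\varphi})+q(\boldsymbol{\varphi})$. The key input is that $\widetilde{\mathbf D}_\Sigma$ has a smooth Schwartz kernel, hence is bounded $H^{-N'}(\T)^b\to H^{N'}(\T)^b$ for every $N'$; in Fourier modes this means that the matrix elements of $\widetilde{\mathbf D}_\Sigma$ decay faster than any polynomial in the mode indices. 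Plugging the parametrization $\varphi_{j,n}=(x_{j,n}+iy_{j,n})/(2\sqrt n)$, the $(m,n)$ matrix entry of the quadratic form in the variables $(x_{j,n},y_{j,n})$ is bounded by $C_{N}\,(mn)^{-1/2}(1+m+n)^{-N-1}$ for any $N$. Thus $|q(\boldsymbol{\varphi})|\leq C_N\sum_{j,k}\sum_{m,n}(mn)^{-1/2}(1+m+n)^{-N-1}(|x_{j,m}|+|y_{j,m}|)(|x_{k,n}|+|y_{k,n}|)$, and after absorbing a cross term $ab\leq \tfrac12\epsilon a^2+\tfrac12\epsilon^{-1}b^2$ with $n$-dependent weights one gets $|q(\boldsymbol{\varphi})|\le \sum_{j,n}b_n((x_{j,n})^2+(y_{j,n})^2)$ with $b_n$ decaying faster than any power of $n$; only finitely many $b_n$ exceed $\tfrac14$, and for large $n$ one can make $b_n\le \tfrac1{8n^{N}}$, say. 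The cross term $2(\mathbf c\cdot(\cdot),\widetilde{\mathbf D}_\Sigma\boldsymbol{\varphi})$ is handled the same way: it is $\sum_{j}c_j\times(\text{rapidly decaying linear functional of the }x,y)$, and by Cauchy–Schwarz in $n$ with a rapidly decaying weight it is bounded by $\delta\sum_j c_j^2+C_\delta(\text{small})\cdot\sum b_n'((x_{j,n})^2+(y_{j,n})^2)$; but we must be careful because $\sum_j c_j^2$ is not controlled by $\sum_j(\bar c-c_j)^2$. The resolution: only the differences $(c_j-c_k)$ appear, because $\widetilde{\mathbf D}_\Sigma$ does \emph{not} annihilate constants in general — however the cross term $(\mathbf c\cdot 1,\widetilde{\mathbf D}_\Sigma\boldsymbol{\varphi})$ with $\mathbf c$ along the diagonal equals $(\bar c\,\tilde1,\mathbf D_\Sigma\boldsymbol{\varphi})-(\bar c\,\tilde1,\mathbf D\boldsymbol{\varphi})=0$ since $\tilde1\in\ker\mathbf D_\Sigma$ and $\mathbf D\boldsymbol{\varphi}$ is centered/orthogonal to constants; so indeed only $(c_j-\bar c)$ enter the cross term, and these are absorbed into the $-a\sum_j(\bar c-c_j)^2$ reserve.

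Combining: $-\tfrac12 q(\tilde{\boldsymbol{\varphi}})\le -a\sum_{j=1}^b(\bar c-c_j)^2+\sum_{j,n}a_n((x_{j,n})^2+(y_{j,n})^2)$ with, after renaming constants, $a_n\le \tfrac14-a$ for $n\le k_N$ and $a_n\le \tfrac1{4n^N}$ for $n\ge k_N$, which is exactly the asserted form (the product over $j$ in the statement is just the regrouping of the sum over $j$ inside the exponential). The main obstacle I anticipate is the bookkeeping around the constant modes: one has to verify carefully that (i) $q(\mathbf c)$ really dominates $\sum_j(\bar c-c_j)^2$ with a quantitatively usable constant, leaving enough room, and (ii) that the $c$–$\varphi$ cross term genuinely only involves $c_j-\bar c$ and not $\bar c$ itself, which hinges precisely on $\tilde1\in\ker\mathbf D_\Sigma$ together with $\mathbf D$ killing constants; everything else (the rapid decay of the off-diagonal entries, the Cauchy–Schwarz absorptions producing the $n^{-N}$ tails) is routine once Lemma \ref{lemmaDSigma-D} is in hand. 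A minor technical point is that $g$ is not assumed admissible here, so one should not use any special structure of $\mathbf D_\Sigma$ beyond nonnegativity, the kernel being $\R\tilde1$, and the smoothing property of $\widetilde{\mathbf D}_\Sigma$ — all of which are available in the stated generality.
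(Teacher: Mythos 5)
Your treatment of the constant modes (coercivity of $(\mathbf c,\mathbf D_\Sigma\mathbf c)$ on the orthogonal complement of the diagonal), your observation that the $c$--$\varphi$ cross term only involves $c_j-\bar c$ because $\tilde 1\in\ker\mathbf D_\Sigma$ and $\mathbf D$ kills constants, and your handling of the high Fourier modes via the smoothing property are all sound and match the paper's proof in spirit. But there is a genuine gap in your treatment of the \emph{low} oscillatory modes, and it is precisely where the hardest part of the lemma lives.

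You bound $|(\boldsymbol\varphi,\widetilde{\mathbf D}_\Sigma\boldsymbol\varphi)|$ by a quadratic form $\sum_{j,n}b_n((x_{j,n})^2+(y_{j,n})^2)$ with $b_n$ rapidly decaying, and then assert that "only finitely many $b_n$ exceed $\tfrac14$" and that after "renaming constants" one gets $a_n\leq\tfrac14-a$ for $n\leq k_N$. This does not follow. The smoothing property of $\widetilde{\mathbf D}_\Sigma$ gives decay of its matrix elements as the mode index grows, but for any fixed low mode it gives only a \emph{bounded} matrix element, with a constant that has no reason to be $\leq\tfrac12-2a$; your two-sided bound therefore yields $a_n=\tfrac12 b_n$ possibly much larger than $\tfrac14$ for small $n$, and the asserted conclusion $a_n\leq\tfrac14-a$ fails for exactly those modes. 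Note that even the sharp trivial bound $-\tfrac12(\boldsymbol\varphi,\widetilde{\mathbf D}_\Sigma\boldsymbol\varphi)=-\tfrac12(\boldsymbol\varphi,\mathbf D_\Sigma\boldsymbol\varphi)+\tfrac12(\boldsymbol\varphi,\mathbf D\boldsymbol\varphi)\leq\tfrac14\sum(x_{j,n}^2+y_{j,n}^2)$ only gives $a_n\leq\tfrac14$, and the strict margin $\tfrac14-a$ is not cosmetic: it is what makes $\int A(\boldsymbol\varphi)^2\,d\P_\T^{\otimes b}<\infty$ in Theorem \ref{integrcf}, since $\int e^{2a_n(x^2+y^2)}e^{-\frac12(x^2+y^2)}\,dx\,dy$ diverges at $a_n=\tfrac14$.

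The missing ingredient is the quantitative coercivity of $\mathbf D_\Sigma$ on all of $\tilde 1^\perp$, not just on the span of the constants. Writing $\mathbf D_\Sigma=(\Pi_0+\mathbf D)^{1/2}(\mathrm{Id}+K)(\Pi_0+\mathbf D)^{1/2}$ with $K$ compact self-adjoint (Lemma \ref{lemmaDSigma-D}), the operator $\mathrm{Id}+K$ is nonnegative Fredholm with one-dimensional kernel, hence $\mathrm{Id}+K\geq a\,(\mathrm{Id}-\mathrm{proj}_{\tilde 1})$ for some $a>0$, which translates into the operator inequality $\mathbf D-\mathbf D_\Sigma\leq -a(\Pi_0-\mathrm{proj}_{\tilde 1})+(1-a)\mathbf D$. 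Applying this to the truncated field $\Pi_k\tilde{\boldsymbol\varphi}$ (constants \emph{and} low Fourier modes together) is what simultaneously produces the $-a\sum_j(\bar c-c_j)^2$ term and the coefficient $\tfrac{1-a}{4}$, i.e.\ $\tfrac14$ minus a definite margin, on the modes $n\leq k$; the smoothing property is then needed only to control the remainder $(\widetilde{\mathbf D}_\Sigma\tilde{\boldsymbol\varphi},\tilde{\boldsymbol\varphi})-(\widetilde{\mathbf D}_\Sigma\Pi_k\tilde{\boldsymbol\varphi},\Pi_k\tilde{\boldsymbol\varphi})$, which is $O(k^{-N/2})$ times a rapidly weighted quadratic form. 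Your proposal applies the coercivity only to the finite-dimensional constant-mode block and tries to handle everything else by smoothness alone, and that is where the argument breaks.
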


\begin{proof} By Lemma \ref{lemmaDSigma-D},  
we can write 
\[\mathbf{D}_\Sigma=\widetilde{\mathbf{D}}_\Sigma+{\bf D}=(\Pi_0+{\bf D})^{1/2}({\rm Id}+K)(\Pi_0+{\bf D})^{1/2}\] with $K=(\Pi_0+{\bf D})^{-1/2}(\widetilde{\mathbf{D}}_\Sigma-\Pi_0)(\Pi_0+{\bf D})^{-1/2}$ having 
$C^\infty$ Schwartz kernel and self-adjoint on $L^2(\T)^b$ (recall $\Pi_0$ is defined in  \eqref{tildeD}) . Since $\mathbf{D}_\Sigma$ is a non-negative operator with kernel $\ker {\bf D}_{\Sigma}=\R \tilde{1}$,  ${\rm Id}+K$ is  a non-negative Fredholm self-adjoint operator with $\ker ({\rm Id}+K)=\R \tilde{1}$ (recall $\tilde{1}=(1,\dots,1)$). Then there is $a>0$ so that 
\[{\rm Id}+K\geq a \left ( {\rm Id}-\frac{(\tilde{1},\cdot )_2}{\|\tilde{1}\|^2_2}\tilde{1}  \right ) .\]
This implies (in the sense of quadratic forms)
\begin{equation}\label{upperboundtildeD}
{\bf D}-\mathbf{D}_\Sigma\leq -a \left (\Pi_0- \frac{(\tilde{1},\cdot )_2}{\|\tilde{1}\|^2_2}\tilde{1}  \right )+(1-a){\bf D}. 
\end{equation}
Let $\Pi_k:L^2(\T)^b\to {E_k}^b$ be the $L^2$-orthogonal projection where $E_k:={\rm span}\{e^{in\theta}\, |\, |n|\leq k\}$; note that $\Pi_k$ extends continuously to $H^{s}(\T)^b$. 
Using \eqref{upperboundtildeD}, for each $k>0$
\[-\Pi_k \widetilde{\mathbf{D}}_\Sigma \Pi_k \leq -a \left ( \Pi_0-\frac{(\tilde{1},\cdot )_2}{\|\tilde{1}\|^2_2}\tilde{1}  \right )+(1-a){\bf D}\Pi_k \]
and we obtain 
\begin{equation}\label{boundPi_k}
-(\widetilde{\mathbf{D}}_\Sigma \Pi_k\tilde{\boldsymbol{\varphi}},\Pi_k\tilde{\boldsymbol{\varphi}})_2\leq -a\sum_{j=1}^b(\bar{c}-c_j)^2+\frac{1-a}{2}\sum_{j=1}^b\sum_{n=1}^k (x_{j,n})^2+(y_{j,n})^2.
\end{equation} 
Since $\widetilde{\mathbf{D}}_\Sigma$ is a smoothing operator, we can write for all $N\in\N$  
\[ \widetilde{\mathbf{D}}_\Sigma= (\Pi_0+{\bf D})^{-N}(\Pi_0+{\bf D})^{N} \widetilde{\mathbf{D}}_\Sigma (\Pi_0+{\bf D})^{N}(\Pi_0+{\bf D})^{-N}=(\Pi_0+{\bf D})^{-N}K_N(\Pi_0+{\bf D})^{-N}\]
where $K_N$ is a bounded operator on $L^2(\T)^b$ with norm bounded by $C_N$. 
We then get, since $\|(\Pi_0+{\bf D})^{-N}({\rm Id}-\Pi_k)\|_{L^2\to L^2}\leq k^{-N}$ for $k>0$,  that there is $C_N>0$ so that
\begin{align*}
\|K_N (\Pi_0+{\bf D})^{-N} ({\rm Id}-\Pi_k)\tilde{\boldsymbol{\varphi}}\|_{L^2(\T)^b} & = \|K_N (\Pi_0+{\bf D})^{-N/2} ({\rm Id}-\Pi_k) (\Pi_0+{\bf D})^{-N/2} \tilde{\boldsymbol{\varphi}}\|_{L^2}   \\
& \leq  C_N \| (\Pi_0+{\bf D})^{-N/2} ({\rm Id}-\Pi_k) (\Pi_0+{\bf D})^{-N/2} \tilde{\boldsymbol{\varphi}}\|_{L^2}   \\ 
&  \leq C_N \| (\Pi_0+{\bf D})^{-N/2} ({\rm Id}-\Pi_k) (\Pi_0+{\bf D})^{-N/2} ({\rm Id}-\Pi_k) \tilde{\boldsymbol{\varphi}}\|_{L^2} \\
& \leq C_N k^{-N/2} \| (\Pi_0+{\bf D})^{-N/2} ({\rm Id}-\Pi_k) \tilde{\boldsymbol{\varphi}}\|_{L^2} \\ 
&  \leq C_Nk^{-N/2}\|({\rm Id}-\Pi_k)\tilde{\boldsymbol{\varphi}}\|_{H^{-N/2}(\T)^b}
\end{align*}
where the last equality is a consequence of the definition of ${\bf D}$ and the norm $\|.\|_{H^{-N/2}(\T)^b}$. We also have $\|(\Pi_0+{\bf D})^{-N}\Pi_k\tilde{\boldsymbol{\varphi}}\|_{L^2(\T)^b}= \|\Pi_k\tilde{\boldsymbol{\varphi}}\|_{H^{-N}(\T)^b}$, thus since $\widetilde{\mathbf{D}}_\Sigma$ is symmetric on $L^2(\T)^b$, 
\[\begin{split} 
-(\widetilde{\mathbf{D}}_\Sigma \tilde{\boldsymbol{\varphi}},\tilde{\boldsymbol{\varphi}})_2+(\widetilde{\mathbf{D}}_\Sigma \Pi_k\tilde{\boldsymbol{\varphi}},\Pi_k\tilde{\boldsymbol{\varphi}})_2
\leq & C_Nk^{-N}\|(1-\Pi_k)\tilde{\boldsymbol{\varphi}}\|^2_{H^{-N/2}(\T)^b}\\
& +2C_Nk^{-N/2}
\|(1-\Pi_k)\tilde{\boldsymbol{\varphi}}\|_{H^{-N/2}(\T)^b}\|\Pi_k\tilde{\boldsymbol{\varphi}}\|_{H^{-N}(\T)^b}.
\end{split}\]
Combining with \eqref{boundPi_k}, there is $C'_N>0$ and $a>0$ such that for all $k>0$
\[
\begin{split} 
-(\widetilde{\mathbf{D}}_\Sigma \tilde{\boldsymbol{\varphi}},\tilde{\boldsymbol{\varphi}})_2  
\leq & -a\sum_{j=1}^b(\bar{c}-c_j)^2+\frac{1-a+k^{-N/2}}{2}\sum_{j=1}^b\sum_{n=1}^k (x_{j,n})^2+(y_{j,n})^2\\
& +C'_Nk^{-N/2}\sum_{j=1}^b\sum_{n=k+1}^\infty \frac{1}{n^{N/2}}((x_{j,n})^2+(y_{j,n})^2).
\end{split}\]
Finally, take $k>0$ large enough so that $k^{-N/2}<a/2$ and $C_N'k^{-N/2}<\frac{1}{4}$.
This gives the desired estimate.
\end{proof}

The next lemma aims  to bound the expectation  in \eqref{amplitude}: 
\begin{lemma}\label{lem:amp3}
Assume $\partial\Sigma\not=\emptyset$ with $b$ boundary components. For all $R>0$, there exist some constants $C_R,a_R>0$ such that 
\begin{align*} 
  \E \big[ \prod_{i=1}^m V_{\alpha_i,g}(x_i)e^{-\frac{Q}{4\pi}\int_\Sigma K_g X_{g,D}\dd {\rm v}_g-\mu M_\gamma^g(\phi_g,\Sigma)}\big]\leq C_R e^{\sum_i\alpha_i \bar c_--R \bar c_++a_R\sum_{j=1}^b |c_j-\bar c|}B(\boldsymbol{\varphi})
\end{align*}
where $ B(\boldsymbol{\varphi})>0$ and $\int  B(\boldsymbol{\varphi})^p \,\P_\T^{\otimes_b}(\dd \boldsymbol{\varphi})<\infty$ for all $p<\infty$.
\end{lemma}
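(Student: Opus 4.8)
The plan is to isolate the constant (zero) mode $\bar c$, remove the vertex insertions by a Cameron--Martin shift of $X_{g,D}$, and then estimate what remains by Hölder's inequality together with negative moments of Gaussian multiplicative chaos (GMC). First, write $c_j=\bar c+d_j$ with $d_j:=c_j-\bar c$ (so $\sum_jd_j=0$); by linearity of the harmonic extension $P\tilde{\boldsymbol{\varphi}}=\bar c+h$, where $h:=P(d_1,\dots,d_b)+P\boldsymbol{\varphi}$ is harmonic on $\Sigma^\circ$ and \emph{does not depend on} $\bar c$, and by the maximum principle (harmonic measure is $\le 1$) $\sup_{\Sigma^\circ}|P(d_1,\dots,d_b)|\le \sum_j|c_j-\bar c|$. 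Hence $\phi_g=\bar c+h+X_{g,D}$, so $V_{\alpha_i,g,\epsilon}(x_i)=e^{\alpha_i\bar c}e^{\alpha_ih_\epsilon(x_i)}\epsilon^{\alpha_i^2/2}e^{\alpha_iX_{g,D,\epsilon}(x_i)}$ and $M^g_\gamma(\phi_g,\Sigma)=e^{\gamma\bar c}\int_\Sigma e^{\gamma h}M^g_\gamma(X_{g,D},\dd x)$. Applying the Girsanov shift $X_{g,D}\mapsto X_{g,D}+\sum_i\alpha_iG_{g,D}(x_i,\cdot)$, which turns $\prod_i\epsilon^{\alpha_i^2/2}e^{\alpha_iX_{g,D,\epsilon}(x_i)}$ as $\epsilon\to0$ into the finite constant $\prod_ie^{\frac{\alpha_i^2}{2}W^D_g(x_i)}\prod_{i<i'}e^{\alpha_i\alpha_{i'}G_{g,D}(x_i,x_{i'})}$ times the evaluation of the rest at the shifted field, one gets (absorbing also the deterministic factor $e^{-\frac{Q}{4\pi}\int_\Sigma K_g\sum_i\alpha_iG_{g,D}(x_i,\cdot)\dd{\rm v}_g}$ into a constant $C_{\boldsymbol{\alpha},{\bf x},g}$)
\[
\E[\cdots]=C_{\boldsymbol{\alpha},{\bf x},g}\,e^{\bar c\sum_i\alpha_i}e^{\sum_i\alpha_ih(x_i)}\,\E\Big[e^{-\frac{Q}{4\pi}\int_\Sigma K_gX_{g,D}\dd{\rm v}_g}\,e^{-\mu e^{\gamma\bar c}Z(h)}\Big],\qquad Z(h):=\int_\Sigma e^{\gamma h(x)+\gamma\sum_i\alpha_iG_{g,D}(x_i,x)}M^g_\gamma(X_{g,D},\dd x)\ge0 .
\]

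Next I would extract the factor $e^{-R\bar c_+}$ and bound $Z(h)$ from below. Using $e^{-u}\le p!\,u^{-p}$ and $e^{-u}\le1$ for an integer $p\ge R/\gamma$, a case distinction on the sign of $\bar c$ gives $e^{-\mu e^{\gamma\bar c}Z(h)}\le C_R\,e^{-R\bar c_+}\big(1+Z(h)^{-p}\big)$. Fix a compact $K\subset\Sigma^\circ$ with $K\cap\{x_1,\dots,x_m\}=\emptyset$; on $K$ one has $\sum_i\alpha_iG_{g,D}(x_i,\cdot)\ge-C_K$, $|P(d_1,\dots,d_b)|\le\sum_j|c_j-\bar c|$, and $|P\boldsymbol{\varphi}|\le\Lambda(\boldsymbol{\varphi}):=\sup_K|P\boldsymbol{\varphi}|<\infty$ a.s., so $Z(h)\ge e^{-C_K-\gamma\Lambda(\boldsymbol{\varphi})-\gamma\sum_j|c_j-\bar c|}M^g_\gamma(X_{g,D},K)$; likewise $|h(x_i)|\le\sum_j|c_j-\bar c|+\Lambda'(\boldsymbol{\varphi})$ with $\Lambda'(\boldsymbol{\varphi}):=\max_i|P\boldsymbol{\varphi}(x_i)|$. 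Collecting all occurrences of $\sum_j|c_j-\bar c|$ into one exponential $e^{a_R\sum_j|c_j-\bar c|}$ and applying Cauchy--Schwarz to the remaining $X_{g,D}$-expectation,
\[
\E\Big[e^{-\frac{Q}{4\pi}\int_\Sigma K_gX_{g,D}\dd{\rm v}_g}\big(1+M^g_\gamma(X_{g,D},K)^{-p}\big)\Big]\le \big\|e^{-\frac{Q}{4\pi}\int_\Sigma K_gX_{g,D}\dd{\rm v}_g}\big\|_{L^2}\big(1+\|M^g_\gamma(X_{g,D},K)^{-p}\|_{L^2}\big)=:I_R<\infty,
\]
one obtains the asserted bound with $B(\boldsymbol{\varphi}):=C_{\boldsymbol{\alpha},{\bf x},g}\,I_R\,e^{\frac{R}{\gamma}C_K}\,e^{R\Lambda(\boldsymbol{\varphi})+(\sum_i|\alpha_i|)\Lambda'(\boldsymbol{\varphi})}$ (enlarging $p$ and $a_R$ so that the exponents match $R/\gamma$ and $R$).

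It then remains to verify $\int B(\boldsymbol{\varphi})^q\,\P_\T^{\otimes_b}(\dd\boldsymbol{\varphi})<\infty$ for all $q<\infty$, i.e.\ that $\Lambda(\boldsymbol{\varphi})$ and $\Lambda'(\boldsymbol{\varphi})$ have exponential moments of every order. This follows from the Borell--TIS inequality: on the fixed compact $K\cup\{x_1,\dots,x_m\}\subset\Sigma^\circ$ the field $P\boldsymbol{\varphi}$ is the interior restriction of the harmonic extension of the $H^s(\T)$-valued Gaussian Fourier series $\boldsymbol{\varphi}$, hence a centered Gaussian field with a.s.\ continuous (indeed real-analytic) sample paths and uniformly bounded variance, so $\E[e^{\lambda\Lambda(\boldsymbol{\varphi})}]<\infty$ and $\E[e^{\lambda\Lambda'(\boldsymbol{\varphi})}]<\infty$ for every $\lambda\in\R$. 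The finiteness of $\|e^{-\frac{Q}{4\pi}\int_\Sigma K_gX_{g,D}\dd{\rm v}_g}\|_{L^2}$ is immediate (the exponent is a centered Gaussian variable), and $\E[M^g_\gamma(X_{g,D},K)^{-2p}]<\infty$ is the standard negative-moment bound for the GMC mass of a fixed open set (see e.g.\ \cite{review,GRV}).

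The main obstacle is exactly this lower bound on the weighted chaos mass $Z(h)$: one must rule out that $Z(h)$ is very small, uniformly in $\boldsymbol{\varphi}$ and in $\mathbf{c}$ up to controlled exponential factors. Localizing to a fixed interior compact set $K$ away from the insertions---where both the Green-function weight $\sum_i\alpha_iG_{g,D}(x_i,\cdot)$ and the harmonic weight $h$ are bounded---reduces the problem to negative moments of ordinary GMC on $K$ together with Gaussian concentration of $\sup_K|P\boldsymbol{\varphi}|$, both of which are classical. Everything else (the $\bar c_+$-bookkeeping via $e^{-u}\le p!\,u^{-p}$, the Girsanov computation, the Hölder splitting) is routine.
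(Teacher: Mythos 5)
Your proposal is correct and follows essentially the same route as the paper's proof: a Girsanov shift to remove the vertex insertions, the maximum principle to control the harmonic extension of the non-constant boundary modes, the elementary bound $e^{-u}\le C_p u^{-p}$ combined with a localization to an interior compact set away from the insertions to produce the $e^{-R\bar c_+}$ factor via negative moments of GMC, and finally exponential moments of $\sup_K|P\boldsymbol{\varphi}|$ to control $B(\boldsymbol{\varphi})$. The only (harmless) deviation is that the paper includes the curvature term $-\frac{Q}{4\pi}\int_\Sigma K_g X_{g,D}\,{\rm dv}_g$ in the Girsanov shift, whereas you keep it in the expectation and dispose of it by Cauchy--Schwarz at the end; both variants work.
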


\begin{proof} The proof relies on an elementary use of    the Girsanov transform applied to the product 
\[e^{-\frac{Q}{4\pi}\int_\Sigma K_g X_{g,D}\dd {\rm v}_g}\prod_{i=1}^m V_{\alpha_i,g}(x_i).\]
This transform has the effect of shifting the law of the field $X_{g,D}$ by the function $u(x):=\sum_{i=1}^m \alpha_iG_{g,D}(x,x_i)-\frac{Q}{4\pi} \int_\Sigma G_{g,D}(x,y)K_g(y)  {\rm dv}_g(y)$, which is smooth on $\Sigma\setminus\{{\bf x}\}$. Therefore, the expectation is bounded by
\begin{equation}\label{eq:boundexp}
e^{v(\mathbf{x})}\prod_{i=1}^m e^{\alpha_i P\tilde{\boldsymbol{\varphi}}(x_i)} \E \big[ \exp\big(-\mu \int_\Sigma e^{\gamma u (x)}M^g_\gamma(\phi_g,\dd x)\big)\big]
\end{equation}
where the function (recall the definition \eqref{varYg})  
\begin{align*}
 v (\mathbf{x}):=&\sum_{i=1}^m\frac{\alpha_i^2}{2}W^D_g(x_i)+\sum_{i\not =i'}\alpha_i\alpha_{i'}G_{g,D}(x_i,x_{i'}) +\frac{Q^2}{32\pi^2}\iint_{\Sigma^2}G_{g,D}(x,x')K_g(x)K_g(x'){\rm dv}_g(x){\rm dv}_g(x')\\
 &-\frac{Q}{4\pi}\sum_{i=1}^m\alpha_i \int_\Sigma G_{g,D}(x_i,x)K_g(x) {\rm dv}_g(x) 
 \end{align*}
comes from the variance of the Girsanov transform. In this expression the first term comes from the renormalisation of vertex operators.

  Then we focus on the product involving  harmonic extensions and we write  $P\tilde{\boldsymbol{\varphi}}=\bar c+f+P\boldsymbol{\varphi}$ where $f$ is the harmonic extension on $\Sigma$ of the boundary fields $c_j-\bar c$ on the boundary curve $\mathcal{C}_j$. By the maximum principle, $|f(x)|\leq \sum_{j=1}^b |c_j-\bar c|$. Therefore 
 \[\prod_{i=1}^me^{\alpha_i P \tilde{\boldsymbol{\varphi}}(x_i)} \leq   e^{(\sum_i\alpha_i)\bar c+(\sum_{i}|\alpha_i|)\sum |c_j-\bar c|)}e^{(\sum_{i=1}^m\alpha_i)\sup_{i=1,\dots,m}P \boldsymbol{\varphi}(x_i)}.\]

When $\bar{c}\leq 0$, we simply bound the exponential term in the expectation in \eqref{eq:boundexp} by $1$. When $\bar{c}\geq 0$, choose now any non empty closed ball $B$, which  contains no $z_i$ and such that $B\cap\partial \Sigma=\emptyset$. We have $  \inf_{x\in B} u (x)>-\infty$ so that, using the inequality $e^{-x}\leq C_Rx^{-R}$ for $x>0$, the expectation appearing in \eqref{eq:boundexp} is bounded by 
\[C_Re^{ -\gamma R \inf_{x\in B} P \boldsymbol{\varphi}(x)-\gamma R\bar c+\gamma R\sum |c_j-\bar c|}\E[ M^{g}_\gamma(X_{g,D},B)^{-R}],\] 
where we have treated the constant modes similarly as before.

We complete the proof by applying the H\"older inequality,  by recalling  that GMC measures possess negative moments of all orders \cite[Th. 2.12]{rhodes2014_gmcReview} and by using that the random variables $e^{   \inf_{x\in K} P \boldsymbol{\varphi}(x)}$ or $e^{  \sup_{x\in K} P \boldsymbol{\varphi}(x)}$ possess moments of all orders for any compact subset $K$ of $\Sigma^\circ$.
\end{proof}   
  
\begin{proof}[Proof of Theorem \ref{integrcf}.]
To complete the proof of our claim, there is only one more thing to analyze: the contribution of the harmonic extension in the integral with curvature in the expectation  \eqref{amplitude}. For this, and again,  we write  $P\tilde{\boldsymbol{\varphi}}=\bar c+f+P\boldsymbol{\varphi}$ where $f$ is the harmonic extension on $\Sigma$ of the boundary fields $c_j-\bar c$ on $\mathcal{C}_j$. Then 
\begin{align*}%\label{}
\frac{Q}{4\pi}\int_\Sigma K_g  P\tilde{\boldsymbol{\varphi}} dv_g= R_1+R_2+R_3  
\end{align*}
with
\[\begin{gathered}
R_1:=\frac{Q}{4\pi}\int_\Sigma K_g \bar c \, {\rm dv}_g=Q\chi(\Sigma) \bar c ,\qquad 
R_2:=\frac{Q}{4\pi}\int_\Sigma K_g f \, {\rm dv}_g\geq -\sum_j  d|c_j-  \bar c|,\\ 
R_3:=\frac{Q}{4\pi}\int_\Sigma K_g P\boldsymbol{\varphi} \, {\rm dv}_g\geq -  \sum_{n\neq 0}d_n\sum_{j=1}^b |\varphi_{j,n}|
\end{gathered}\]
for some constant $d>0$ and some sequence $d_n=\mc{O}(n^{-\infty})$ with fast decay as $n\to\infty$. For $R_1$ above, we used Gauss-Bonnet. For $R_2$, we use the maximum principle to bound $f$ and for $R_3$ we use that 
\[\int_\Sigma K_g P\boldsymbol{\varphi} {\rm dv}_g=\int_{\pl \Sigma} \boldsymbol{\varphi} P^*K_g {\rm d\ell}_g\]
where $P^*$ is the adjoint operator to $P$ and that $P^*$ maps $C^\infty(\Sigma)$ to $C^\infty(\pl \Sigma)$ by Lemma \ref{lem:adjointP} in the Appendix (here $K_g\in C^\infty(\Sigma)$). We complete the proof by gathering these estimates and Lemmas \ref{lem:amp1}  and \ref{lem:amp3}, by noticing that for each $R>0$ there is $C_R>0$ so that for all $c_j$ 
\[-\frac{a}{2}\sum_j|c_j-\bar{c}|^2+(a_R+d)\sum_j|c_j-\bar{c}|\leq C_R.\qedhere\]
 \end{proof}
 
 We complete this section by studying how amplitudes react to conformal changes of metrics and to reparametrisation by diffeomorphisms:  
 \begin{proposition}\label{Weyl} The amplitudes obey the following transformation rules:\\
{\bf 1) Weyl covariance:}  Let   $\omega\in C^\infty(\Sigma)$ with in addition $\omega|_{\pl \Sigma}=0$ if $\partial\Sigma\not=\emptyset$. Then for $F$  measurable  nonnegative function on $H^{-s}(\Sigma)$  (recall \eqref{SL0})
\begin{align*}
\caA_{\Sigma,e^\omega g,{\bf x},\boldsymbol{\alpha}}(F,\tilde{\boldsymbol{\varphi}})= \caA_{\Sigma,g,{\bf x},\boldsymbol{\alpha}}\big(F(\cdot-\frac{Q}{2}\omega ),\tilde{\boldsymbol{\varphi}}\big)\exp\Big((1+6Q^2)S_{\rm L}^0(\Sigma,g,\omega)-\sum_i\Delta_{\alpha_i}\omega(x_i) \Big).
\end{align*}
{\bf 2) Diffeomorphism invariance:} Let $\psi:\Sigma\to\Sigma'$ be an  orientation preserving diffeomorphism and $F$  measurable  nonnegative function on $H^{-s}(\Sigma)$. Setting $F_\psi(\phi):=F(\phi\circ\psi)$,  we have
 \begin{align*} 
\caA_{\Sigma,  \psi^*g,{\bf x},\boldsymbol{\alpha},\boldsymbol{\zeta}}(F,\tilde{\boldsymbol{\varphi}})= \caA_{\Sigma', g,\psi({\bf x}),\boldsymbol{\alpha},\psi\circ\boldsymbol{\zeta}}\big(F_\psi ,\tilde{\boldsymbol{\varphi}}\big).
\end{align*}
\end{proposition}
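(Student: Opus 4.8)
\textbf{Proof strategy for Proposition \ref{Weyl}.}

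The plan is to trace through the definitions of the amplitudes in Definition \ref{def:amp}, keeping careful track of how each ingredient transforms under a Weyl rescaling $g\mapsto e^\omega g$ or a diffeomorphism pullback $g\mapsto\psi^*g$, and then to assemble the pieces. The case $\partial\Sigma=\emptyset$ of part (1) is essentially Proposition \ref{covconf2}, so the real content is the boundary case; part (2) is a change-of-variables statement.

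For \textbf{(1), Weyl covariance with $\partial\Sigma\neq\emptyset$}, first I would note that since $\omega\in H^1_0(\Sigma)$ vanishes on $\partial\Sigma$, the Dirichlet GFF $X_{g,D}$ is literally unchanged: $X_{e^\omega g,D}=X_{g,D}$ in law, because the Dirichlet Laplacian only sees the conformal class on the interior and the Dirichlet Green function is conformally invariant in dimension two up to the boundary condition, which is untouched. Likewise the harmonic extension operator $P$ and the DN operator $\mathbf{D}_\Sigma$ depend only on the conformal class (again because $\star_g$ on $1$-forms is conformally invariant and $\omega|_{\partial\Sigma}=0$), so $\caA^0_{\Sigma,g}(\tilde{\boldsymbol\varphi})$ is unchanged; similarly $k_g=0$ after we reduce to admissible metrics, or in general the boundary term $\int_{\partial\Sigma}k_g\phi_g\,d\ell_g$ and the boundary integral in $Z_{\Sigma,g}$ are unaffected because $\omega$ and its normal derivative structure near $\partial\Sigma$... — here one must be a little careful, but $\omega\in H^1_0$ means we may as well use an admissible representative and $k_g=0$. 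The genuinely transforming pieces are then: (a) the $\zeta$-renormalized determinant $\det(\Delta_{g,D})$, whose variation is the Polyakov anomaly formula — this produces the $(1+6Q^2)S^0_{\rm L}(\Sigma,g,\omega)$ factor, exactly as in the closed case computed in \cite{GRV}; (b) the regularized vertex operators, where \eqref{scalingvertex} gives the factor $e^{\frac{\alpha_i^2}{4}\omega(x_i)}$ per insertion; (c) the curvature term $\exp(-\frac{Q}{4\pi}\int_\Sigma K_{e^\omega g}\phi\,dv_{e^\omega g})$, which transforms by the standard identity $K_{e^\omega g}dv_{e^\omega g}=(K_g+\Delta_g\omega)dv_g$, and integration by parts against $\phi$ then shifts $\phi\mapsto\phi-\frac{Q}{2}\omega$ up to a Gaussian-type correction; and (d) the GMC mass $\mu M^g_\gamma(\phi_g,\Sigma)$, where the scaling relation \eqref{scalingmeasure} combined with the field shift $\phi\mapsto\phi-\frac{Q}{2}\omega$ leaves $M_\gamma$ invariant. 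Putting the shift $\phi\mapsto\phi-\frac{Q}{2}\omega$ consistently through (c) and (d) is what produces $F(\cdot-\frac{Q}{2}\omega)$ on the right, and collecting the leftover scalar factors from (a), (b), (c) gives precisely $\exp\big((1+6Q^2)S^0_{\rm L}(\Sigma,g,\omega)-\sum_i\Delta_{\alpha_i}\omega(x_i)\big)$ once one uses $\Delta_{\alpha_i}=\frac{\alpha_i}{2}(Q-\frac{\alpha_i}{2})=\frac{\alpha_i Q}{2}-\frac{\alpha_i^2}{4}$ to combine the $\frac{\alpha_i^2}{4}\omega(x_i)$ from the vertex rescaling with the $-\frac{\alpha_i Q}{2}\omega(x_i)$ coming from the field shift in the $e^{\alpha_i\phi(x_i)}$ insertion. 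This bookkeeping is routine given Proposition \ref{covconf2}; one just has to be careful that no boundary contributions escape, which is exactly why the hypothesis $\omega\in H^1_0$ is imposed.

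For \textbf{(2), diffeomorphism invariance}, the argument is a direct change of variables. A diffeomorphism $\psi:\Sigma\to\Sigma'$ pulls the metric $g$ on $\Sigma'$ back to $\psi^*g$ on $\Sigma$, and it induces: an isometry $\psi^*:L^2(\Sigma',dv_g)\to L^2(\Sigma,dv_{\psi^*g})$ intertwining $\Delta_g$ with $\Delta_{\psi^*g}$, hence $\det(\Delta_{\psi^*g,D})=\det(\Delta_{g,D})$ and $G_{\psi^*g,D}(x,y)=G_{g,D}(\psi(x),\psi(y))$; consequently $X_{\psi^*g,D}\stackrel{\rm law}{=}X_{g,D}\circ\psi$, and the GMC transforms by pushforward, $M^{\psi^*g}_\gamma(\phi\circ\psi,A)=M^g_\gamma(\phi,\psi(A))$. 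The curvature and geodesic-curvature integrals are diffeomorphism invariant, the vertex operators at $x_i$ become vertex operators at $\psi(x_i)$, and the boundary parametrizations $\boldsymbol\zeta$ become $\psi\circ\boldsymbol\zeta$ so that the harmonic-extension operator and DN operator match on the nose. Substituting the field variable $\phi\mapsto\phi\circ\psi$ throughout \eqref{amplitude} and relabelling everything then yields $\caA_{\Sigma,\psi^*g,{\bf x},\boldsymbol\alpha,\boldsymbol\zeta}(F,\tilde{\boldsymbol\varphi})=\caA_{\Sigma',g,\psi({\bf x}),\boldsymbol\alpha,\psi\circ\boldsymbol\zeta}(F_\psi,\tilde{\boldsymbol\varphi})$, where $F_\psi(\phi)=F(\phi\circ\psi)$ as claimed. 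The only subtle points are that the regularizing procedure ($g$-regularization on geodesic circles) is manifestly natural under isometries $\Sigma\to\Sigma'$ of the relevant metrics — so the $\eps\to0$ limits commute with the substitution — and that the orientation-preservation of $\psi$ guarantees the incoming/outgoing labelling of boundary circles is preserved.

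\textbf{Main obstacle.} I expect the genuinely delicate step to be (1)(a)–(c) near the boundary: verifying that the Polyakov anomaly formula for $\zeta$-determinants still produces exactly $S^0_{\rm L}$ (with no extra geodesic-curvature boundary term, which is where the hypothesis $\omega|_{\partial\Sigma}=0$ and the reduction to admissible metrics with $k_g=0$ are essential), and that the field shift $\phi\mapsto\phi-\frac{Q}{2}\omega$ can be performed honestly at the level of the regularized expectation — i.e. that the Cameron–Martin/Girsanov shift by $-\frac{Q}{2}\omega$ is legitimate for $X_{g,D}$ (it is, since $\omega\in H^1_0\subset$ Cameron–Martin space of the Dirichlet GFF) and that it commutes with the $\eps\to0$ renormalization of the vertex operators and of $M_\gamma$. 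Everything else is a careful but mechanical transcription of the closed-surface computation in \cite{GRV} to the present setting with a Dirichlet boundary.
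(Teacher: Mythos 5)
Your overall strategy coincides with the paper's: part (2) is the one-line change of variables $\phi_g=\phi_{\psi_*g}\circ\psi$, and part (1) is the combination of the vertex scaling \eqref{scalingvertex}, the GMC scaling \eqref{scalingmeasure}, the curvature identity $K_{e^\omega g}=e^{-\omega}(K_g+\Delta_g\omega)$, a Girsanov/Cameron--Martin shift of $X_{g,D}$ by $-\tfrac{Q}{2}\omega$ producing the $6Q^2$ part of the anomaly, and the Polyakov formula for $\det(\Delta_{g,D})$ producing the remaining $1\cdot S^0_{\rm L}$. Your bookkeeping of how $\tfrac{\alpha_i^2}{4}\omega(x_i)$ and $-\tfrac{\alpha_iQ}{2}\omega(x_i)$ combine into $-\Delta_{\alpha_i}\omega(x_i)$, and of why $M_\gamma$ is invariant after the shift, is correct.

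There is, however, one genuine gap, precisely at the place you flag as the ``main obstacle'': the boundary terms. Your proposed shortcut --- ``reduce to an admissible representative with $k_g=0$'' --- is circular, since passing to such a representative is itself a Weyl change of the kind whose covariance is being proved; and the claim that the boundary integrals are ``unaffected'' is false in general. Even though $\omega|_{\partial\Sigma}=0$, its normal derivative need not vanish, and the geodesic curvature transforms as $k_{e^\omega g}=e^{-\omega/2}(k_g-\partial_\nu\omega/2)$, so the term $\tfrac{Q}{2\pi}\int_{\partial\Sigma}k_{g'}P\tilde{\boldsymbol{\varphi}}\,\dd\ell_{g'}$ in \eqref{amplitude} genuinely changes. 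The resolution in the paper is a cancellation you do not identify: the Green identity applied to the harmonic extension gives $\tfrac{Q}{4\pi}\int_\Sigma\Delta_g\omega\,P\tilde{\boldsymbol{\varphi}}\,\dd{\rm v}_g=\tfrac{Q}{4\pi}\int_{\partial\Sigma}\partial_\nu\omega\,P\tilde{\boldsymbol{\varphi}}\,\dd\ell_g$, which exactly cancels the $-\partial_\nu\omega/2$ contribution from $k_{g'}$ (the Girsanov shift is applied only to the random part $\int\Delta_g\omega\,X_{g,D}$, not to the harmonic-extension part, which you conflate slightly). A matching cancellation of $\partial_\nu\omega$ boundary terms is also needed between the $\exp(\tfrac{1}{8\pi}\int_{\partial\Sigma}k_g\,\dd\ell_g)$ factor in $Z_{\Sigma,g}$ and the boundary term $\tfrac{1}{16\pi}\int_{\partial\Sigma}\partial_\nu\omega\,\dd\ell_g$ in the Polyakov formula for $\det(\Delta_{g,D})$ with Dirichlet conditions; your proposal does not account for either. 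Without these two cancellations the claimed anomaly $(1+6Q^2)S^0_{\rm L}(\Sigma,g,\omega)$ would acquire spurious boundary contributions.
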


\begin{proof}
We prove the case $\partial \Sigma\not=\emptyset$ as  $\partial \Sigma=\emptyset$  is proved in \cite[Proposition 4.6]{GRVIHES}. Let us first prove 1). We have to deal with a change of conformal metrics $g'=e^{\omega} g$ in the definition \eqref{amplitude}.    
Recall from \eqref{scalingmeasure} that $M_\gamma^{g'}(X_{g',D},\dd x)=e^{\frac{\gamma Q}{2}\omega(x)}M_\gamma^{g}(X_{g,D},\dd x)$ and  from \eqref{scalingvertex} that $V_{\alpha_i,g'}(x_i)=e^{\frac{\alpha_i^2}{4}\omega(x_i)}V_{\alpha_i,g}(x_i)$ (the last identity making sense when inserted inside expectation values). 
Then, because of the relation $k_{g'}=e^{-\omega/2}(k_g-\partial_\nu\omega/2)$ with $\nu$ the unit inward normal at $\partial\Sigma$, the boundary term  in  \eqref{amplitude} becomes
\begin{equation}\label{boundgeod}
\frac{Q}{2\pi}\int_{\partial\Sigma}k_{g'}P\tilde{\boldsymbol{\varphi}}\dd \ell_{g'}=\frac{Q}{2\pi}\int_{\partial\Sigma}(k_{g}-\partial_\nu\omega/2)P\tilde{\boldsymbol{\varphi}}\dd \ell_{g}. 
\end{equation}
Also, from the relation for curvatures $K_{g'}=e^{-\omega}(K_g +\Delta_g\omega)$ we deduce that the term involving the curvature in  \eqref{amplitude} reads (recall that $X_{g',D}=X_{g,D}$)
\begin{align*}
\frac{Q}{4\pi}\int_\Sigma K_{g'} (X_{g',D} +P\tilde{\boldsymbol{\varphi}})\dd {\rm v}_{g'}=& \frac{Q}{4\pi}\int_\Sigma  K_g (X_{g,D}+P\tilde{\boldsymbol{\varphi}})\dd {\rm v}_g+ \frac{Q}{4\pi}\int_\Sigma  \Delta_g\omega (X_{g,D}+P\tilde{\boldsymbol{\varphi}})\dd {\rm v}_g.
\end{align*}
In the last integral above, the contribution of the harmonic extension is treated  by the Green identity to produce
$$ \frac{Q}{4\pi}\int_\Sigma  \Delta_g\omega  P\tilde{\boldsymbol{\varphi}} \dd {\rm v}_g=\frac{Q}{4\pi}\int_{\partial \Sigma}\partial_\nu \omega P\tilde{\boldsymbol{\varphi}}\,\dd \ell_{g},$$
which cancels out with the corresponding term in \eqref{boundgeod}.
Therefore the expectation in  \eqref{amplitude} expressed in the metric $g'$ writes
$$  \E \big[F(\phi_g)e^{ -\frac{Q}{4\pi}\int_\Sigma  \Delta_g\omega  X_{g,D} \dd {\rm v}_g}\prod_{i=1}^m V_{\alpha_i,g'}(x_i)e^{-\frac{Q}{4\pi}\int_\Sigma K_g (X_{g,D}+P\tilde{\boldsymbol{\varphi}})\dd {\rm v}_g-\frac{Q}{2\pi}\int_{\partial\Sigma}k_{g}P\tilde{\boldsymbol{\varphi}}\dd \ell_{g}-\mu M_\gamma^{g'}(X_{g',D},\Sigma)}\big].$$
Now we apply the Girsanov transform to the first exponential term. The variance of this transform is given by 
$$ \frac{Q^2}{16\pi^2}\int_{\Sigma^2}  \Delta_g\omega (x) G_D(x,y)   \Delta_g\omega (y)   {\rm dv}_g(x) {\rm dv}_g(y)= \frac{Q^2}{8\pi}\int_{\Sigma}|d\omega|_g^2{\rm dv}_g.$$
It has the effect of shifting the mean of the GFF $X_{g,D}$, i.e. $X_{g,D}$ becomes $X_{g,D}-\frac{Q}{2}\omega$. Therefore the expectation becomes
\begin{align*} 
\exp\Big(&\frac{ 6Q^2}{96\pi}\int_{\Sigma}(|d\omega|_g^2+2K_g\omega) {\rm dv}_g-\sum_i\Delta_{\alpha_i}\omega(z_i)\Big)\\
&\times \E \big[F(\phi_g-\frac{Q}{2}\omega) \prod_{i=1}^n V_{\alpha_i,g}(x_i)e^{-\frac{Q}{4\pi}\int_\Sigma K_g (X_{g,D}+P\tilde{\boldsymbol{\varphi}})\dd {\rm v}_g-\frac{Q}{2\pi}\int_{\partial\Sigma}k_{g}P\tilde{\boldsymbol{\varphi}}\dd \ell_{g}-\mu M_\gamma^{g,D}(\Sigma)}\big],
\end{align*}
which is almost our claim up to the term $1+$ in front of the $6Q^2$. This term comes from the Polyakov formula for the regularised determinant of Laplacian \cite[section 1]{OsgoodPS88}: indeed Polyakov's formula implies that 
\begin{align*}
 Z_{\Sigma,  g'}=&\det (\Delta_{g',D})^{-\hf}\exp\big(\frac{1}{8\pi}\int_{\partial\Sigma}k_{g'}\,\dd\ell_{g'}\big)\\
=&  \det (\Delta_{g,D})^{-\hf}\exp\Big(\frac{1 }{96\pi}\int_{\Sigma}(|d\omega|_g^2+2K_g\omega) {\rm dv}_g  +\frac{1}{16\pi}\int_{\partial\Sigma}\partial_\nu\omega\,\dd\ell_g+\frac{1}{8\pi}\int_{\partial\Sigma}k_{g'}\,\dd\ell_{g'}\Big).
\end{align*}
Here we have used $\omega|_{\pl \Sigma}=0$. We conclude by using again the relation $k_{g'}=e^{-\omega/2}(k_g-\partial_\nu\omega/2)$. We also notice that $\mc{A}^0_{\Sigma,g}(\tilde{\boldsymbol{\varphi}})=\mc{A}^0_{\Sigma,g'}(\tilde{\boldsymbol{\varphi}})$ since the quadratic form 
$({\bf D}_\Sigma \tilde{\boldsymbol{\varphi}},\tilde{\boldsymbol{\varphi}})_{2}=\int_{\Sigma}|\nabla^gP\tilde{\boldsymbol{\varphi}}|_g^2{\rm dv}_g$ 
depends only on the conformal class of $g$ for $\tilde{\boldsymbol{\varphi}}$ smooth (and using the density of $C^\infty(\pl \Sigma)\subset 
H^{s}(\pl\Sigma)$). 

For 2), it easily follows from the fact that $\phi_g=\phi_{\psi_*g}\circ\psi$.
 \end{proof}
 
 %%%%%%%%%%%%%%%%%%%%%%%%%%%%%%%%%%%%%% %%%%%%%%%%%%%%%%%%%%%%%%%%%%%%%%%%%%%% %%%%%%%%%%%%%%%%%%%%%%%%%%%%%%%%%%%%%% %%%%%%%%%%%%%%%%%%%%%%%%%%%%%%%%%%%%%% %%%%%%%%%%%%%%%%%%%%%%%%%%%%%%%%%%%%%% %%%%%%%%%%%%%%%%%%%%%%%%%%%%%%%%%%%%%%
 \section{Gluing of surfaces and amplitudes}\label{sec:gluing}
 %%%%%%%%%%%%%%%%%%%%%%%%%%%%%%%%%%%%%% %%%%%%%%%%%%%%%%%%%%%%%%%%%%%%%%%%%%%% %%%%%%%%%%%%%%%%%%%%%%%%%%%%%%%%%%%%%% %%%%%%%%%%%%%%%%%%%%%%%%%%%%%%%%%%%%%% %%%%%%%%%%%%%%%%%%%%%%%%%%%%%%%%%%%%%% %%%%%%%%%%%%%%%%%%%%%%%%%%%%%%%%%%%%%%
 
In this section, we prove the gluing properties of the Segal amplitudes, in the case of two different surfaces (Proposition \ref{glueampli}) and then in the case of self-gluing (Proposition \ref{selfglueampli}). In some sense, the main part of the proof is the gluing/composition of the free field amplitudes $\mc{A}^0_{\Sigma,g}$ and, for this, the core argument is Lemma \ref{density1} which is proved by using finite dimensional approximations and the properties of zeta regularised determinants of elliptic operators together with Fredholm determinants.\\  
 
 Let $(\Sigma_1,g_2, {\bf x}_1,\boldsymbol{\zeta}_1)$ and $(\Sigma_2,g_2, {\bf x}_2,\boldsymbol{\zeta}_2)$ be two admissible surfaces with $\partial\Sigma_i\neq\emptyset$ for $i=1,2$. Let  $\caC_{1,j}\subset\partial\Sigma_1$, $j\in\mathcal{J} :=  \{1,\dots,k\}$ with $k\leq \min(b_1, b_2)$,  be outgoing boundary components and $\caC_{2,j}\subset\partial\Sigma_2$, $j\in\mathcal{J}$,  be incoming boundary components. Then we can glue the two surfaces $(\Sigma_i,g_i)$, $i=1,2$, as described in Subsection \ref{sub:gluing} to form an admissible surface  $(\Sigma_1\#_{\mathcal{J}}\Sigma_2,g_1\# g_2,({\bf x}_1,{\bf x}_2),(\boldsymbol{\zeta}_1,\boldsymbol{\zeta}_2))$
 with $b_1+b_2-2k$ boundary circles by gluing the circle  $\caC_{1,j}$  in $\Sigma_1$ to the  circle  $\caC_{2,j}$  in $\Sigma_2$ so that the   glued circle has an annular neighborhood in $\Sigma_1\#_{\mathcal{J}}\Sigma_2$ with metric $g_1\# g_2=|z|^{-2}|dz|^2$ in local chart.    We consider marked points $\mathbf{x}_1=\{x_{11},\dots,x_{1m_1}\}$ on $\Sigma_1$ with respective weights $\boldsymbol{\alpha}_1=(\alpha_{11},\dots,\alpha_{1m_1})$ and   $\mathbf{x}_2=\{x_{21},\dots,x_{2m_2}\}$ on $\Sigma_2$ with respective weights $\boldsymbol{\alpha}_2=(\alpha_{21},\dots,\alpha_{2m_2})$. Set $\mathbf{x}:=(\mathbf{x}_1,\mathbf{x}_2)$, $\boldsymbol{\alpha}:=(\boldsymbol{\alpha}_1,\boldsymbol{\alpha}_2)$ and denote by $\boldsymbol{\zeta}$ the collection of parametrisations of the boundaries $\mc{C}_{1,j}$ and $\mc{C}_{2,j}$ where  the index $j$ runs over all possible indices with $j>k$. 
The surface $\Sigma$ thus has an analytic boundary made up of the curves $\caC_{1,j} $ for $j=k+1,\dots,b_1$ and the curves $\caC_{2,j}\subset $ for $j=k+1,\dots,b_2 $. We denote by $\mc{C}_j=\mc{C}_{1j}=\mc{C}_{2j}$ the glued curves on $\Sigma$ for $j=1,\dots,k$.
 Boundary conditions on $\Sigma$ will thus be written as couples $(\tilde{\boldsymbol{\varphi}}_1,\tilde{\boldsymbol{\varphi}}_2)\in   (H^{s}(\T))^{b_1-k}\times (H^{s}(\T))^{b_2-k}$. Similarly, for $i=1,2$, the surface $\Sigma_i$ had analytic boundary made up of the curves  $\caC_{i,j} $ for $j=1,\dots,k$ and  $\caC_{i,j} $ for $j=k+1,\dots,b_i$ and boundary conditions on $\Sigma_i$ will thus be written as couples $(\tilde{\boldsymbol{\varphi}},\tilde{\boldsymbol{\varphi}}_i)\in   (H^{s}(\T))^{k}\times (H^{s}(\T))^{b_i-k}$, $s<0$.

 The corresponding amplitudes compose as follows:
\begin{proposition}\label{glueampli}
Let $F_1,F_2$ be measurable  nonnegative functions respectively  on $H^{s}(\Sigma_1)$ and  $H^{s}(\Sigma_2)$ for $s<0$ and let us denote by $F_1\otimes F_2$ the functional on $H^{s}(\Sigma_1\#_{\mathcal{J}}\Sigma_2,g_1\# g_2)$ defined by  \[F_1\otimes F_2(\phi_{g_1\# g_2}):=F_1(\phi_{g_1\# g_2|\Sigma_1})F_2(\phi_{g_1\# g_2|\Sigma_2}).\] Then
 \begin{align*}%\label{}
&\caA_{\Sigma_1\#_{\mathcal{J}}\Sigma_2,g_1\# g_2, {\bf x},\boldsymbol{\alpha}}
(F_1\otimes F_2,\tilde{\boldsymbol{\varphi}}_1,\tilde{\boldsymbol{\varphi}}_2)\\
&\quad =C\int \caA_{\Sigma_1,g_1,{\bf x}_1,\boldsymbol{\alpha}_1}(F_1,\tilde{\boldsymbol{\varphi}} ,\tilde{\boldsymbol{\varphi}}_1)\caA_{\Sigma_2,g_2,{\bf x}_2,\boldsymbol{\alpha}_2}(F_2,\tilde{\boldsymbol{\varphi}},\tilde{\boldsymbol{\varphi}}_2) \dd\mu_0^{\otimes_k} (\tilde{\boldsymbol{\varphi}}).
\end{align*}
 where   $C= \frac{1}{(\sqrt{2} \pi)^{k}}$ if $\partial\Sigma \not =\emptyset$ and $C=\frac{\sqrt{2}}{(\sqrt{2} \pi)^{k-1}} $ if $\partial\Sigma =\emptyset$.
  \end{proposition}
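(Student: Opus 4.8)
The plan is to unfold the probabilistic definition of $\caA_{\Sigma,g,{\bf x},\boldsymbol{\alpha}}$ on the glued surface $\Sigma:=\Sigma_1\#_{\mathcal{J}}\Sigma_2$ and, after a Gaussian computation on the gluing circles $\mc{C}=\mc{C}_1\cup\dots\cup\mc{C}_k$, recognize the right-hand side. By linearity it suffices to take $F=F_1\otimes F_2$, and the bounds of Theorem \ref{integrcf} justify every use of Fubini and of the interchange of the $\eps\to 0$ limits below. Since the metric $g_1\# g_2$ equals the flat cylinder $|dz|^2/|z|^2$ on a collar of $\mc{C}$, the scalar curvature $K_g$ vanishes there, so the curvature integral, the mass $M_\gamma^g(\phi_g,\Sigma)$ and the vertex operators $\prod_i V_{\alpha_i,g,\eps}(x_i)$ each split into a contribution supported in $\Sigma_1$ and one supported in $\Sigma_2$; moreover the $g$-regularization of $\phi_g$ restricted to $\Sigma_i$ differs from the $g_i$-regularization of the Liouville field on $\Sigma_i$ by a quantity that is harmless as $\eps\to 0$, so that in the limit $F_1\otimes F_2$ times the vertex, curvature and GMC factors factorizes as a product of a $\Sigma_1$-functional and a $\Sigma_2$-functional.

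The core is the Markov property of the Dirichlet GFF. Decompose $X_{g,D}=X^{\mc{C}}+P_{\mc{C}}(X_{g,D}|_{\mc{C}})$, where $X^{\mc{C}}$ is the GFF on $(\Sigma,g)$ with Dirichlet condition on $\pl\Sigma\cup\mc{C}$ --- equivalently the independent pair $X_{g_1,D}\oplus X_{g_2,D}$ --- and $P_{\mc{C}}$ is harmonic extension off $\mc{C}$ with zero data on $\pl\Sigma$. Setting $\boldsymbol{\varphi}^{\mc{C}}:=\phi_g|_{\mc{C}}$, one then has $\phi_g|_{\Sigma_i}=X_{g_i,D}+P_{\Sigma_i}(\boldsymbol{\varphi}^{\mc{C}},\tilde{\boldsymbol{\varphi}}_i)$, i.e. $\phi_g|_{\Sigma_i}$ is exactly the Liouville field on $\Sigma_i$ with boundary field $(\boldsymbol{\varphi}^{\mc{C}},\tilde{\boldsymbol{\varphi}}_i)$, while $\boldsymbol{\varphi}^{\mc{C}}$ is a Gaussian vector with mean $P\tilde{\boldsymbol{\varphi}}|_{\mc{C}}$ and covariance $G_{g,D}|_{\mc{C}\times\mc{C}}=\mathbf{D}_{\Sigma,\mc{C}}^{-1}$ by \eqref{DNmapandGreen}. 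Conditioning on $\boldsymbol{\varphi}^{\mc{C}}$ therefore turns the expectation in \eqref{amplitude} for $\Sigma$ into the $\boldsymbol{\varphi}^{\mc{C}}$-average of the product of the expectations defining $\caA_{\Sigma_1}(F_1,\boldsymbol{\varphi}^{\mc{C}},\tilde{\boldsymbol{\varphi}}_1)$ and $\caA_{\Sigma_2}(F_2,\boldsymbol{\varphi}^{\mc{C}},\tilde{\boldsymbol{\varphi}}_2)$, each divided by its free prefactor $Z_{\Sigma_i}\caA^0_{\Sigma_i}(\boldsymbol{\varphi}^{\mc{C}},\tilde{\boldsymbol{\varphi}}_i)$. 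Rewriting the Gaussian average as an integral against $\mu_0^{\otimes k}$ introduces the Radon--Nikodym density, proportional to $\exp(-\tfrac12(\boldsymbol{\varphi}-P\tilde{\boldsymbol{\varphi}}|_{\mc{C}},\mathbf{D}_{\Sigma,\mc{C}}(\boldsymbol{\varphi}-P\tilde{\boldsymbol{\varphi}}|_{\mc{C}}))_2+(\boldsymbol{\varphi},\mathbf{D}\boldsymbol{\varphi})_2)$, with a finite normalization $\mathcal{N}$ expressible through $\det_{\rm Fr}(\mathbf{D}_{\Sigma,\mc{C}}(2\Pi_0+2\mathbf{D})^{-1})$ and the constant-mode Gaussians by Lemma \ref{lemmaDSigma-D}.

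Assembling these, $\caA_{\Sigma}(F_1\otimes F_2,\tilde{\boldsymbol{\varphi}}_1,\tilde{\boldsymbol{\varphi}}_2)$ becomes $\frac{Z_{\Sigma}\caA^0_{\Sigma}(\tilde{\boldsymbol{\varphi}}_1,\tilde{\boldsymbol{\varphi}}_2)}{Z_{\Sigma_1}Z_{\Sigma_2}\mathcal{N}}$ times the $\mu_0^{\otimes k}$-integral of $\caA_{\Sigma_1}\caA_{\Sigma_2}\big/(\caA^0_{\Sigma_1}\caA^0_{\Sigma_2})$ against the above density. Two facts make the prefactor collapse to the constant $C$. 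First, the additivity of Dirichlet energy: if $u$ is the piecewise-harmonic extension to $\Sigma$ of $(\boldsymbol{\varphi},\tilde{\boldsymbol{\varphi}}_1,\tilde{\boldsymbol{\varphi}}_2)$, $v:=P\tilde{\boldsymbol{\varphi}}$ its globally harmonic extension, and $w:=u-v=P_{\mc{C}}(\boldsymbol{\varphi}-v|_{\mc{C}})$, then $\int_\Sigma|du|_g^2\,{\rm dv}_g=\int_\Sigma|dv|_g^2\,{\rm dv}_g+\int_\Sigma|dw|_g^2\,{\rm dv}_g$, the cross term vanishing by Green's formula because $v$ is smooth across $\mc{C}$ and $w$ vanishes on $\pl\Sigma$; via \eqref{Greenformula} and $\widetilde{\mathbf{D}}_{\Sigma,\mc{C}}=\mathbf{D}_{\Sigma,\mc{C}}-2\mathbf{D}$ this cancels all the $\boldsymbol{\varphi}$- and $\tilde{\boldsymbol{\varphi}}$-dependent exponents in $1/(\caA^0_{\Sigma_1}\caA^0_{\Sigma_2})$, in $\caA^0_{\Sigma}$ and in the Radon--Nikodym weight (in particular the mean shift $P\tilde{\boldsymbol{\varphi}}|_{\mc{C}}=v|_{\mc{C}}$ absorbs the linear terms). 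Second, the Burghelea--Friedlander--Kappeler gluing formula for $\zeta$-regularized determinants, $\det(\Delta_{g,D})=\det(\Delta_{g_1,D})\det(\Delta_{g_2,D})\det_{\rm Fr}(\mathbf{D}_{\Sigma,\mc{C}})$ up to an explicit elementary constant, combines with $\mathcal{N}$ and with the finite-dimensional Gaussian normalizations of the constant modes to produce exactly $C=(\sqrt2\,\pi)^{-k}$ --- one factor $\sqrt2$ per glued circle arising from the $\mathbf{D}$ versus $2\mathbf{D}$ mismatch, the powers of $\pi$ from the constant modes.

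I expect the second ingredient --- the precise bookkeeping of the regularized determinants and of the zero/constant modes --- rather than the conceptual Markov-property argument, to be the main obstacle. In particular the case $\pl\Sigma=\emptyset$ (when the gluing yields a closed surface and $\caA_{\Sigma,g,{\bf x},\boldsymbol{\alpha},\emptyset}$ is the correlation function \eqref{def:pathintegralF}) is genuinely different: one must use the variant of the BFK formula involving the reduced determinant ${\det}'(\Delta_g)$ and a volume factor ${\rm v}_g(\Sigma)^{1/2}$, and reconcile the extra global zero-mode integration $\int_\R\dd c$ of \eqref{def:pathintegralF} with the constant modes of the external boundaries before gluing; this shift of one $\sqrt2\,\pi$ into a $\sqrt2$ accounts for the modified constant $C=\sqrt2\,(\sqrt2\,\pi)^{-(k-1)}$. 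Finally, the self-gluing situation of two boundary components of a single connected surface, stated separately as Proposition \ref{selfglueampli}, is handled by the same argument applied to a single Dirichlet GFF and a single interface.
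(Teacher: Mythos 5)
Your proposal follows essentially the same route as the paper's proof: the Markov decomposition of the Dirichlet GFF across the gluing circles, the Cameron--Martin/Radon--Nikodym identification of the law of the trace field with a density against $\mu_0^{\otimes k}$ involving $\widetilde{\mathbf{D}}_{\Sigma,\mc{C}}$ and a Fredholm determinant, the cancellation of the free-field amplitude ratio via additivity of Dirichlet energy across $\mc{C}$, and the Burghelea--Friedlander--Kappeler formula (Theorem B, resp.\ B$^*$ with the zero mode in the closed case) to collapse the determinant prefactors into $C$. The only cosmetic difference is that you package the quadratic-form cancellation as an orthogonal decomposition $\int|du|^2=\int|dv|^2+\int|dw|^2$, whereas the paper expands the DN quadratic forms term by term; the content is identical.
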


One can also consider the case of self gluing. 
 Let $(\Sigma,g, {\bf x},\boldsymbol{\zeta})$ be an admissible surface with $b$ boundary components such that the boundary contains an  outgoing boundary component  $\caC_j \subset\partial\Sigma$ and an incoming boundary component $\caC_k\subset\partial\Sigma$. Then we can glue the two boundary components as described in Subsection \ref{sub:gluing} to produce the surface $(\Sigma_{jk},g, {\bf x},\boldsymbol{\zeta})$. In this context, we will write the boundary component for $\Sigma$ as  $(\tilde\varphi_j,\tilde\varphi_k  ,\tilde{\boldsymbol{\varphi}'}) \in   (H^{s}(\T)) \times (H^{s}(\T)) \times (H^{s}(\T))^{b-2}$ where $\tilde\varphi_j$ corresponds to  $\caC_j$, $\tilde\varphi_k$ corresponds to  $\caC_k$ and $\tilde{\boldsymbol{\varphi}}'$ corresponds to the boundary components of $\Sigma_{jk}$. Now, we have

\begin{proposition}\label{selfglueampli}
Let  $F$ be a measurable  nonnegative function on $H^{s}(\Sigma)$ for $s<0$
 \[
\caA_{\Sigma_{jk},g, {\bf x},\boldsymbol{\alpha}}
(F,\tilde{\boldsymbol{\varphi}}')=C \int \caA_{\Sigma,g,{\bf x},\boldsymbol{\alpha}}(F,\tilde\varphi , \tilde\varphi, \tilde{\boldsymbol{\varphi}}') \dd\mu_0(\tilde{\varphi}).
\]
where $C= \frac{1}{\sqrt{2} \pi}$ if $\partial\Sigma \not =\emptyset$ and $C= \sqrt{2}  $ if $\partial\Sigma =\emptyset$.
 \end{proposition}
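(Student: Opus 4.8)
The plan is to reduce Proposition~\ref{selfglueampli} to the already-proven gluing result Proposition~\ref{glueampli} by a doubling trick, rather than redoing the Gaussian computation from scratch. First I would observe that the self-gluing of $\Sigma$ along the pair $(\caC_j,\caC_k)$ can be realized as follows: introduce an auxiliary copy, take $\Sigma' = \Sigma \sqcup (\text{trivial cylinder})$, or more precisely recognize that gluing $\caC_j$ to $\caC_k$ on a \emph{single} connected surface is the ``same'' operation as gluing along one circle pair but where both boundary circles live on the same component. The free-field and GFF structures split the same way in either case because the action $S_\Sigma(g,\phi)$ is local and the harmonic extension $P$ and Dirichlet GFF $X_{g,D}$ are built from the Green function with Dirichlet conditions on \emph{all} of $\partial\Sigma$ simultaneously; cutting along a single interior analytic curve $\caC$ (using the remark at the end of Section~\ref{sub:gluing}) decomposes $\Sigma_{jk}$ into $\Sigma$ with the two extra boundary circles, with matching parametrizations.

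The key steps, in order, would be: (1) Write out the definition~\eqref{amplitude} of $\caA_{\Sigma_{jk},g,{\bf x},\boldsymbol{\alpha}}(F,\tilde{\boldsymbol{\varphi}}')$ in terms of the Dirichlet GFF $X_{g,D}$ on $\Sigma_{jk}$ and the harmonic extension of the boundary field $\tilde{\boldsymbol{\varphi}}'$. (2) Use the Markov/domain-decomposition property of the GFF: conditionally on its values $\tilde\varphi$ along the glued curve $\caC \simeq \caC_j \simeq \caC_k$ inside $\Sigma_{jk}$, the field restricted to $\Sigma$ (cut open along $\caC$) is a Dirichlet GFF on $\Sigma$ plus the harmonic extension of $(\tilde\varphi,\tilde\varphi,\tilde{\boldsymbol{\varphi}}')$, and the conditioning is against the law $\mu_0$ on $H^s(\T)$ of the GFF restricted to $\caC$ — this is exactly the statement proved (in the two-surface case) in the proof of Proposition~\ref{glueampli}, and the argument is insensitive to whether the two sides of $\caC$ belong to one or two components. (3) Track the determinant and free-field prefactors: $\det(\Delta_{g,D})$ for $\Sigma_{jk}$ factorizes, via the Mayer--Vietoris / BFK gluing formula for $\zeta$-determinants, into $\det(\Delta_{g,D})$ for $\Sigma$ (with Dirichlet data now also on $\caC_j,\caC_k$) times a determinant of a Dirichlet-to-Neumann type operator on $\caC$, and the $\widetilde{\mathbf D}_\Sigma$ Gaussian factors in $\caA^0$ combine with this DN determinant to produce precisely the measure $\dd\mu_0(\tilde\varphi)$ and the constant $C$; the value of $C$ (namely $\tfrac{1}{\sqrt2\pi}$, resp. $\sqrt2$ when $\partial\Sigma_{jk}=\emptyset$) is identical to the per-circle constant in Proposition~\ref{glueampli} because it is a purely local quantity attached to a single glued circle. (4) Handle the $F=1$, $\partial\Sigma_{jk}=\emptyset$ case separately, where the amplitude becomes the correlation function~\eqref{corre} and the extra $\sqrt2$ arises from the leftover zero-mode $\int_\R \dd c$ integration, exactly as in Proposition~\ref{glueampli}.

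The main obstacle I anticipate is Step~(3): making the splitting of the $\zeta$-regularized determinant $\det(\Delta_{g,D})$ precise under gluing along an interior curve, and verifying that the BFK-type gluing formula contributes exactly the DN-operator determinant that, together with $\caA^0_{\Sigma,g}$, reassembles into the Gaussian measure $\mu_0$ with the stated constant. Since this same identity is already embedded in the proof of Proposition~\ref{glueampli} for two surfaces, the honest approach is to isolate that part of the argument and check that nothing in it used the disjointness of the two glued pieces — the only global input is the positivity and invertibility properties of $\mathbf D_{\Sigma,\caC'}$ recorded in Lemma~\ref{lemmaDSigma-D}, which hold verbatim here. A secondary technical point is integrability: one must justify the use of Fubini in $\int \caA_{\Sigma,g,{\bf x},\boldsymbol{\alpha}}(F,\tilde\varphi,\tilde\varphi,\tilde{\boldsymbol{\varphi}}')\,\dd\mu_0(\tilde\varphi)$, which follows from the exponential bounds of Theorem~\ref{integrcf} applied to $\Sigma$ with its $b$ boundary circles (the two glued circles now carry the \emph{same} field $\tilde\varphi$, so the Gaussian decay $e^{-a\sum(\bar c - c_j)^2}$ in particular controls the diagonal), together with the finiteness of the resulting integral which is guaranteed precisely by admissibility of $\Sigma_{jk}$ and the Seiberg bound in the closed case.
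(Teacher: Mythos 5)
Your proposal is correct and follows essentially the same route as the paper: the authors prove Propositions \ref{glueampli} and \ref{selfglueampli} simultaneously, stating explicitly that the self-gluing case is identical up to notational changes, with the same three ingredients you identify (the Markov decomposition of the GFF along the glued curve, the computation of the density of the restricted field's law against $\mu_0$ via the Dirichlet-to-Neumann operator, and the BFK determinant gluing formula — Theorem B, resp.\ B$^*$, of \cite{BFK} — which holds for an interior curve regardless of whether it separates). Your observation that Proposition \ref{decompGFF} and Lemma \ref{lemmaDSigma-D} never use disjointness of the two sides of $\caC$ is precisely the point the paper relies on implicitly.
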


\noindent 
\emph{Proof of propositions \ref{glueampli} and \ref{selfglueampli}.}  We split the proof in two cases depending on whether the resulting surface $\Sigma_1\#_{\mathcal{J}}\Sigma_2$ has a non trivial boundary (case 1) or not (case 2). We deal first with case 1 and, since the proof of propositions \ref{glueampli} and \ref{selfglueampli} are the same up to notational changes, we only deal with Proposition \ref{glueampli}.
For notational simplicity, we will write $\Sigma$ for $\Sigma_1\#_{\mathcal{J}}\Sigma_2$, $g$ for $g_1\# g_2$ and $m=m_1+m_2$.

\medskip
$\bullet$ Assume first $\partial\Sigma \not= \emptyset$. Let us denote   the union of glued boundary components by $\mathcal{C}:=\bigcup_{j=1}^k\mc{C}_j$  and define
\begin{equation}
\mathcal{B}_{\Sigma,g}(F_1\otimes F_2,\tilde{\boldsymbol{\varphi}}_1,\tilde{\boldsymbol{\varphi}}_2):=\E \big[F_1\otimes F_2(\phi_g) \prod_{i=1}^{m} V_{\alpha_i,g}(x_i)e^{-\frac{Q}{4\pi}\int_\Sigma K_g \phi_g\dd {\rm v}_g-\mu M^g_\gamma(\phi_g,\Sigma)}\big]
\end{equation}
where $\phi_g=X_{g,D}+P(\tilde{\boldsymbol{\varphi}}_1,\tilde{\boldsymbol{\varphi}}_2)$, $X_{g,D}$ is the Dirichlet GFF on $\Sigma$ and expectation $\E$ is taken over this Dirichlet GFF, $P(\tilde{\boldsymbol{\varphi}}_1,\tilde{\boldsymbol{\varphi}}_2)$ stands for the harmonic extension to $\Sigma$ of the boundary fields $\tilde{\boldsymbol{\varphi}}_1,\tilde{\boldsymbol{\varphi}}_2$, which stand respectively for the  boundary conditions on the remaining (i.e. unglued) components of $\partial \Sigma_1$ and  $\partial \Sigma_2$, namely
\[\Delta_g P(\tilde{\boldsymbol{\varphi}}_1,\tilde{\boldsymbol{\varphi}}_2)=0\quad  \text{on }\Sigma ,\quad\quad \text{ and for }j>k\quad P(\tilde{\boldsymbol{\varphi}}_1,\tilde{\boldsymbol{\varphi}}_2)_{|\mc{C}_{1,j}}= \tilde{\varphi}_{1,j}\circ \zeta_{1,j}^{-1}  ,\quad\quad P(\tilde{\boldsymbol{\varphi}}_1,\tilde{\boldsymbol{\varphi}}_2)_{|\mc{C}_{2,j}}= \tilde{\varphi}_{2,j}\circ\zeta_{2,j}^{-1}  .\]

Therefore
$$\caA_{\Sigma,g, {\bf x},\boldsymbol{\alpha}}(F_1\otimes F_2,\tilde{\boldsymbol{\varphi}}_1,\tilde{\boldsymbol{\varphi}}_2)=Z_{\Sigma,g}\caA^0_{\Sigma,g}(\tilde{\boldsymbol{\varphi}}_1,\tilde{\boldsymbol{\varphi}}_2)\mathcal{B}_{\Sigma,g}(F_1\otimes F_2,\tilde{\boldsymbol{\varphi}}_1,\tilde{\boldsymbol{\varphi}}_2).$$
Let now $X_1:=X_{g_1,D}$ and $X_2:=X_{g_2,D}$ be two  independent Dirichlet GFF respectively on $\Sigma_1$ and $\Sigma_2$. Then we have the following decomposition in law (see Proposition \ref{decompGFF})
\begin{align*}%\label{}
X_{g,D}\stackrel{{\rm law}}=X_1+X_2+P{\bf Y}
\end{align*}
where ${\bf Y}$ is the restriction of $X_{g,D}$ to the  glued boundary components $\mathcal{C}$ expressed in parametrised coordinates, i.e.   ${\bf Y} =( X_{g,D|_{{\caC}_1}}\circ \zeta_{1,1},\dots,X_{g,D|_{{\caC}_k}}\circ \zeta_{1,k})$, and  $P{\bf Y}$ is its harmonic extension to $ \Sigma $ vanishing on $\partial \Sigma $, which is non empty. We stress that, since  $X_{g,D}$ is only a distribution,  making sense of ${\bf Y}$ is not completely obvious but, using the parametrisation, this can be made in the same way as making sense of the restriction of the GFF to a circle: since this is a standard argument, we do not elaborate more on this point. Finally we denote by $h_\mathcal{C}$ the restriction of  the harmonic function $P(\tilde{\boldsymbol{\varphi}}_1,\tilde{\boldsymbol{\varphi}}_2)$ to $\mathcal{C}$ in parametrised coordinates
\[ h_{\mc{C}}:=(P(\tilde{\boldsymbol{\varphi}}_1,\tilde{\boldsymbol{\varphi}}_2)_{|\mc{C}_{1}}\circ\zeta_{1,1},\dots,P(\tilde{\boldsymbol{\varphi}}_1,\tilde{\boldsymbol{\varphi}}_2)_{|\mc{C}_{k}}\circ \zeta_{1,k}).\]
Observe now the trivial fact that, on $\Sigma_i$ ($i=1,2$), the function $P(\tilde{\boldsymbol{\varphi}}_1,\tilde{\boldsymbol{\varphi}}_2)+P{\bf Y}$ is harmonic with boundary values (expressed in parametrised coordinates  on $\Sigma_i$) $ \tilde\varphi_{i,j}$ on $\mathcal{C}_{ij}$ for $j>k$ and $({\bf Y}+h_{\mathcal{C}})_j$ on $\mc{C}_{j}$ for $j=1,\dots,k$.  We observe that $h_{\mc{C}}$ is a random function with values in $(H^N(\T))^k$ for all $N>0$. 
Thus we get
\begin{equation}
\mathcal{B}_{\Sigma,g}(F_1\otimes F_2,\tilde{\boldsymbol{\varphi}}_1,\tilde{\boldsymbol{\varphi}}_2)= \int \mathcal{B}_ {\Sigma_1,g_1}(F_1,\tilde{\boldsymbol{\varphi}} +h_{\mc{C}},\tilde{\boldsymbol{\varphi}}_1 )\mathcal{B}_ {\Sigma_2,g_2}(F_2,\tilde{\boldsymbol{\varphi}} +h_{\mc{C}},\tilde{\boldsymbol{\varphi}}_2 )\dd \P_{{\bf Y}} (\tilde{\boldsymbol{\varphi}})
\end{equation}
with $\P_{{\bf Y}}$ the law of $ {\bf Y} $ and, for $i=1,2$,
\begin{equation}
\mathcal{B}_{\Sigma_i,g_i}(F_i,\tilde{\boldsymbol{\varphi}} ,\tilde{\boldsymbol{\varphi}}_i):=\E \big[F_i( \phi_i)\prod_{q=1}^{m_i } V_{\alpha_{iq},g_i}(x_{iq})e^{-\frac{Q}{4\pi}\int_{\Sigma_i} K_{g_i} \phi_i\dd {\rm v}_{g_i}-\mu M_\gamma(\phi_i,\Sigma_i)}\big]
\end{equation}
where $\E$ is taken with respect to the Dirichlet GFF  $X_i$  on $\Sigma_i$, $\phi_i=X_i+P(\tilde{\boldsymbol{\varphi}} ,\tilde{\boldsymbol{\varphi}}_i)$ and $P(\tilde{\boldsymbol{\varphi}} ,\tilde{\boldsymbol{\varphi}}_i)$ stands for the harmonic extension on $\Sigma_i$ of the boundary fields $\tilde{\boldsymbol{\varphi}} ,\tilde{\boldsymbol{\varphi}}_i$ respectively on $\mathcal{C}$ and $\partial\Sigma_i\setminus \mathcal{C}$. Notice that the orientation of incoming boundary curves $\mc{C}_{2,j}$ for $j\leq k$ on $\Sigma_2$ is opposite to their orientation as curves drawn on $\Sigma$   due to \eqref{gluingonboundary}, which is used to get  the relation above. Writing   $T_* \P_{\boldsymbol{Y}} $ for the pushforward of the measure $ \P_{\boldsymbol{Y}} $ by the map $T:\tilde{\boldsymbol{\varphi}}\mapsto \tilde{\boldsymbol{\varphi}}+h_{\mc{C}}$ and then writing the $\mathcal{B}$'s in terms of the amplitudes on $\Sigma_1$ and $\Sigma_2$, we obtain
\begin{equation}\label{comput1}
\begin{split}
\caA_{\Sigma,g, {\bf x},\boldsymbol{\alpha}}(F_1\otimes  F_2,\tilde{\boldsymbol{\varphi}}_1,\tilde{\boldsymbol{\varphi}}_2)=& \frac{Z_{\Sigma,g}}{Z_{\Sigma_1,g_1}Z_{\Sigma_2,g_2}}\caA^0_{\Sigma,g}(\tilde{\boldsymbol{\varphi}}_1,\tilde{\boldsymbol{\varphi}}_2) \\
& \times \int \frac{\mathcal{A}_{\Sigma_1,g_1,{\bf x}_1,\boldsymbol{\alpha}_1}(F_1,\tilde{\boldsymbol{\varphi}}  ,\tilde{\boldsymbol{\varphi}}_1 )}{\caA^0_{\Sigma_1,g_1}(\tilde{\boldsymbol{\varphi}} ,\tilde{\boldsymbol{\varphi}}_1)} \frac{\mathcal{A}_{\Sigma_2,g_2,{\bf x}_2,\boldsymbol{\alpha}_2}(F_2,\tilde{\boldsymbol{\varphi}}  ,\tilde{\boldsymbol{\varphi}}_2 )}{\caA^0_{\Sigma_2,g_2}(\tilde{\boldsymbol{\varphi}} ,\tilde{\boldsymbol{\varphi}}_2)} \,  \dd T_*\P_{\boldsymbol{Y}} (\tilde{\boldsymbol{\varphi}}) .
\end{split}\end{equation}

The proof of the case  $\partial\Sigma  \not= \emptyset$ is completed with the two following lemmas:

\begin{lemma}\label{density1}
The measure $T_*\P_{\boldsymbol{Y}} $  satisfies
\[T_*\P_{\boldsymbol{Y}} = \pi^{-k/2}  {\rm det}_{\rm Fr}(\mathbf{D}_{\Sigma,{\mc{C}}}(2\mathbf{D}_0)^{-1})^{1/2} \frac{\caA^0_{\Sigma_1,g_1}(\tilde{\boldsymbol{\varphi}} ,\tilde{\boldsymbol{\varphi}}_1)\caA^0_{\Sigma_2,g_2}(\tilde{\boldsymbol{\varphi}} ,\tilde{\boldsymbol{\varphi}}_2)}{\caA^0_{\Sigma,g}(\tilde{\boldsymbol{\varphi}}_1,\tilde{\boldsymbol{\varphi}}_2)}  \dd\mu_0^{\otimes_k} (\tilde{\boldsymbol{\varphi}})\]
with $\mathbf{D}_{\Sigma,\mc{C}}$ the Dirichlet-to-Neumann operator on $L^2(\mc{C})=L^2(\mathbb{T})^k$ defined as in \eqref{defDSigmaC}, $\mathbf{D}_0:=\Pi_0+\mathbf{D}$  on $L^2(\T)^k$ defined using \eqref{defmathbfD} and \eqref{tildeD}, and the Fredholm determinant  $\det_{\rm Fr}(\mathbf{D}_{\Sigma,{\mc{C}}}(2\mathbf{D}_0)^{-1})$ is well defined by Lemma \ref{lemmaDSigma-D}. 
\end{lemma}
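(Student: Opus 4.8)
\textbf{Proof plan for Lemma \ref{density1}.}

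The plan is to identify $T_*\P_{\bf Y}$ as a Gaussian measure on $(H^s(\T))^k$ and compute its density against the reference measure $\mu_0^{\otimes_k}$ by matching covariances. First I would recall that ${\bf Y}$ is the restriction, in parametrized coordinates, of the Dirichlet GFF $X_{g,D}$ on $\Sigma$ to the union of glued curves $\mc{C}=\bigcup_{j=1}^k\mc{C}_j$; by the standard trace construction for the GFF this is a centered Gaussian field on $(H^s(\T))^k$ whose covariance is the restriction of $G_{g,D}$ to $\mc{C}\times\mc{C}$, which by \eqref{DNmapandGreen} is exactly $\mathbf{D}_{\Sigma,\mc{C}}^{-1}$. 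Hence $\P_{\bf Y}$ is the centered Gaussian with covariance operator $\mathbf{D}_{\Sigma,\mc{C}}^{-1}$, and $T_*\P_{\bf Y}$ is the same Gaussian translated by the deterministic (smooth, valued in $(H^N(\T))^k$ for all $N$) function $h_{\mc{C}}$, hence Gaussian with mean $h_{\mc{C}}$ and the same covariance $\mathbf{D}_{\Sigma,\mc{C}}^{-1}$.

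Next I would compute the Radon--Nikodym derivative of $T_*\P_{\bf Y}$ with respect to $\mu_0^{\otimes_k}=(\dd c\otimes\P_\T)^{\otimes k}$. The measure $\mu_0^{\otimes_k}$ is the (improper) Gaussian on $(H^s(\T))^k$ whose ``covariance'' is $(2\mathbf{D}_0)^{-1}=(2\Pi_0+2\mathbf{D})^{-1}$ on the non-constant modes together with Lebesgue measure $\dd c$ on each constant mode: indeed from \eqref{defmathbfD} the law $\P_\T$ of $\varphi=\sum_{n\neq0}\varphi_ne^{in\theta}$ has $\E[|\varphi_{n}|^2]$ matching the quadratic form $2\sum_n n|\varphi_n|^2$ inverted, i.e. covariance $\frac{1}{2}\mathbf{D}^{-1}$ on each circle, and the constant mode is Lebesgue. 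By Lemma \ref{lemmaDSigma-D} the operator $\mathbf{D}_{\Sigma,\mc{C}}(2\mathbf{D}_0)^{-1}-\mathrm{Id}$ is trace class, so the two Gaussian structures are mutually absolutely continuous (Feldman--H\'ajek type, with the constant-mode Lebesgue directions handled separately since $\mathbf{D}_{\Sigma,\mc{C}}$ is invertible whereas $\mathbf{D}$ kills constants — this is precisely why the $\Pi_0$ terms appear). The general formula for the density of a centered Gaussian of covariance $A^{-1}$ against one of covariance $B^{-1}$ (with $AB^{-1}-\mathrm{Id}$ trace class) is $\det_{\rm Fr}(AB^{-1})^{1/2}\exp(-\tfrac12(\tilde{\boldsymbol\varphi},(A-B)\tilde{\boldsymbol\varphi}))$; translating by $h_{\mc{C}}$ introduces the usual Cameron--Martin factor $\exp(-\tfrac12(h_{\mc{C}},\mathbf{D}_{\Sigma,\mc{C}}h_{\mc{C}})+(\tilde{\boldsymbol\varphi},\mathbf{D}_{\Sigma,\mc{C}}h_{\mc{C}}))$. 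With $A=\mathbf{D}_{\Sigma,\mc{C}}$ and $B=2\mathbf{D}_0$ this gives the prefactor $(2\pi)^{-k/2}\det_{\rm Fr}(\mathbf{D}_{\Sigma,\mc{C}}(2\mathbf{D}_0)^{-1})^{1/2}$ (the $(2\pi)^{-k/2}$ coming from the normalization of the $k$ constant-mode Gaussians relative to $\dd c$) times $\exp(-\tfrac12(\tilde{\boldsymbol\varphi}+h_{\mc{C}},(\mathbf{D}_{\Sigma,\mc{C}}-2\mathbf{D}_0)(\tilde{\boldsymbol\varphi}+h_{\mc{C}})))$ after completing the square.

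It then remains to recognize this exponential as the ratio of free-field amplitudes. Here I would use the factorization of the Dirichlet-to-Neumann operator: writing $\mathbf{D}_{\Sigma,\mc{C}}=\sum_{k}\mathbf{D}_{\Sigma_k}|_{\mc{C}}$ as in \eqref{DNC'} (one term from $\Sigma_1$, one from $\Sigma_2$, noting the incoming/outgoing orientation matches \eqref{gluingonboundary}), together with the identity relating the harmonic extension $P(\tilde{\boldsymbol\varphi}_1,\tilde{\boldsymbol\varphi}_2)$ on $\Sigma$ to the harmonic extensions on $\Sigma_1$ and $\Sigma_2$ with boundary value $\tilde{\boldsymbol\varphi}+h_{\mc{C}}$ on $\mc{C}$: the Dirichlet energy is additive, $\int_\Sigma|dP|^2=\int_{\Sigma_1}|dP_1|^2+\int_{\Sigma_2}|dP_2|^2$, which by \eqref{Greenformula} translates into $(\cdot,\mathbf{D}_\Sigma\,\cdot)_2=(\cdot,\mathbf{D}_{\Sigma_1}\,\cdot)_2+(\cdot,\mathbf{D}_{\Sigma_2}\,\cdot)_2$ on the relevant fields. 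Subtracting the $\mathbf{D}$ pieces (one $\mathbf{D}$ for $\Sigma$, two for $\Sigma_1,\Sigma_2$, hence the $2\mathbf{D}$ in $\widetilde{\mathbf{D}}_{\Sigma,\mc{C}}$) and exponentiating yields exactly $\caA^0_{\Sigma_1,g_1}(\tilde{\boldsymbol\varphi},\tilde{\boldsymbol\varphi}_1)\caA^0_{\Sigma_2,g_2}(\tilde{\boldsymbol\varphi},\tilde{\boldsymbol\varphi}_2)/\caA^0_{\Sigma,g}(\tilde{\boldsymbol\varphi}_1,\tilde{\boldsymbol\varphi}_2)$ as in \eqref{amplifree}. \textbf{The main obstacle} I anticipate is the careful bookkeeping of the constant modes: $\mathbf{D}$ has a nontrivial kernel (the constants) while $\mathbf{D}_{\Sigma,\mc{C}}$ is invertible, so the Gaussian comparison is genuinely between a proper Gaussian on all of $(H^s(\T))^k$ and a measure that is Lebesgue in the $k$ constant directions; getting the $\Pi_0$ shifts, the power of $2$, and the factor $(2\pi)^{-k/2}$ exactly right — rather than up to an unchecked constant — requires treating the finite-dimensional constant-mode marginal by hand and then taking a projective limit in the non-constant modes, which is where the trace-class statement of Lemma \ref{lemmaDSigma-D} does the real work.
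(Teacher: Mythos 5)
Your plan follows essentially the same route as the paper's proof: identify $\P_{\boldsymbol{Y}}$ as the Gaussian whose covariance is the Green function restricted to $\mc{C}$ (the inverse of $\mathbf{D}_{\Sigma,\mc{C}}$ by \eqref{DNmapandGreen}), apply Cameron--Martin for the shift by $h_{\mc{C}}$, compute the density against $\mu_0^{\otimes_k}$ by finite-dimensional Fourier truncation together with the Fredholm determinant supplied by Lemma \ref{lemmaDSigma-D} (treating the Lebesgue constant modes separately, exactly as you flag), and identify the resulting exponential with the amplitude ratio via the additivity of the Dirichlet-to-Neumann quadratic forms across $\mc{C}$. One algebraic correction: the exponent produced by the shift is $-\tfrac12(\tilde{\boldsymbol{\varphi}},\widetilde{\mathbf{D}}_{\Sigma,\mc{C}}\tilde{\boldsymbol{\varphi}})_2+(\tilde{\boldsymbol{\varphi}},\mathbf{D}_{\Sigma,\mc{C}}h_{\mc{C}})_2-\tfrac12(h_{\mc{C}},\mathbf{D}_{\Sigma,\mc{C}}h_{\mc{C}})_2$ with $\widetilde{\mathbf{D}}_{\Sigma,\mc{C}}=\mathbf{D}_{\Sigma,\mc{C}}-2\mathbf{D}$ (the $\Pi_0$ enters only the determinant normalization, since $\mu_0$ is translation-invariant in the constant modes and the reference Gaussian part is not shifted), not $-\tfrac12(\tilde{\boldsymbol{\varphi}}+h_{\mc{C}},(\mathbf{D}_{\Sigma,\mc{C}}-2\mathbf{D}_0)(\tilde{\boldsymbol{\varphi}}+h_{\mc{C}}))_2$, which as written carries a spurious $|\mathbf{c}|^2$ term and the wrong cross-terms in $h_{\mc{C}}$; the correct expression is precisely what the amplitude-ratio computation yields.
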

\begin{proof}

First we get rid of the shift with the help of Cameron-Martin. Let us denote by $G:H^s(\T)^k\to H^{s+1}(\T)^k$ the operator acting by $(Gf)_j(e^{i\theta}):=\sum_{j'=1}^k\int_0^{2\pi}\E[{\bf Y}_j(e^{i\theta}){\bf Y}_{j'}(e^{i\theta'})]f_{j'}(e^{i\theta'})\,\dd \theta'$, which is nothing but the Green operator restricted to $\mc{C}$ in local coordinates. By Cameron-Martin, and using from \eqref{DNmapandGreen} that $G\mathbf{D}_{\Sigma,{\mc{C}}} =2\pi {\rm Id}$ on $H^s(\T)^k$ 
\begin{equation}\label{CameronMartin}
T_*\P_{{\bf Y}}=\exp\big(({\bf Y},{\bf D}_{\Sigma,{\mc{C}}} h_{\mc{C}})_2-\frac{1}{2}(h_{\mc{C}},{\bf D}_{\Sigma,{\mc{C}}} h_{\mc{C}})_2\big) \P_{\boldsymbol{Y}}.
\end{equation}
Now we claim for measurable bounded functions $F$
\begin{equation}\label{densityDN}
\int F( \tilde{\boldsymbol{\varphi}})\P_{\boldsymbol{Y}}(\dd \tilde{\boldsymbol{\varphi}})=\frac{1}{ \pi^{k/2}\det(\mathbf{D}_{\Sigma,{\mc{C}}}(2\mathbf{D}_0)^{-1})^{-1/2} }\int F(    \tilde{\boldsymbol{\varphi}})\exp(-\frac{1}{2}(\tilde{\boldsymbol{\varphi}},\widetilde{\mathbf{D}}_{\Sigma,{\mc{C}}} \tilde{\boldsymbol{\varphi}})_2)\dd\mu_0^{\otimes_k} (\tilde{\boldsymbol{\varphi}}).
\end{equation}
Indeed, consider the projection $\Pi_N:(H^s(\T))^k\to (H^s(\T))^k$ where the Fourier series of each component is truncated to its first $N$   Fourier components, and let $\mc{H}_N=\Pi_N(H^s(\T))^k$. Then we set $ \mathbf{D}_{\Sigma,{\mc{C}}}^N:=\Pi_N\circ  \mathbf{D}_{\Sigma,{\mc{C}}}\circ \Pi_N$, $ \mathbf{D}_0^N:=\Pi_N\circ  \mathbf{D}_0\circ \Pi_N$, $ \mathbf{D}^N:=\Pi_N\circ  \mathbf{D}\circ \Pi_N$, $\widetilde{\mathbf{D}}_{\Sigma,{\mc{C}}}^N:=\Pi_N\circ (\mathbf{D}_{\Sigma,{\mc{C}}}-2\mathbf{D})\circ \Pi_N$ and
\[\dd\P^N(\tilde{\boldsymbol{\varphi}}):=\frac{1}{\pi^{k/2}\det_{\mc{H}_N}(\mathbf{D}_{\Sigma,{\mc{C}}}^N(2\mathbf{D}^N_0)^{-1})^{-1/2} }\exp\Big(-\frac{1}{2}(\tilde{\boldsymbol{\varphi}},\widetilde{\mathbf{D}}^N_{\Sigma,{\mc{C}}} \tilde{\boldsymbol{\varphi}})_2\Big)\dd\mu_0^{\otimes_k} (\tilde{\boldsymbol{\varphi}}).\]
Let  $\mathbf{c}:=(c_1,\dots,c_k)$ and $|\mathbf{c}|^2=\sum_{i=1}^kc_i^2$ its Euclidean norm. First notice that  $(\tilde{\boldsymbol{\varphi}}, \mathbf{D}_{\Sigma,{\mc{C}}} \tilde{\boldsymbol{\varphi}})_2\geq a |\mathbf{c}|^2+a(\tilde{\boldsymbol{\varphi}},2 \mathbf{D}  \tilde{\boldsymbol{\varphi}})_2$ for some $a>0$ (by
 Lemma \ref{lemmaDSigma-D}, $(2{\bf D}_0)^{-1/2}\mathbf{D}_{\Sigma,{\mc{C}}}(2{\bf D}_0)^{-1/2}$ is a positive Fredholm self-adjoint  bounded operator  on $L^2$ of the form $1+K$ for $K$ compact, thus $(2{\bf D}_0)^{-1/2}\mathbf{D}_{\Sigma,{\mc{C}}}(2{\bf D}_0)^{-1/2}\geq a\, {\rm Id}$ for some $a>0$), 
 we deduce that $(\tilde{\boldsymbol{\varphi}},\widetilde{\mathbf{D}}^N_{\Sigma,{\mc{C}}} \tilde{\boldsymbol{\varphi}})_2\geq  a |\mathbf{c}|^2+(a-1) (\Pi_N\tilde{\boldsymbol{\varphi}},2 \mathbf{D}  \Pi_N\tilde{\boldsymbol{\varphi}})_2$, from which we deduce that $\P^N$ is a finite measure. Observe now that computing its total mass boils down to computing a finite dimensional Gaussian integral, which can be computed to show  that $\P^N$ is a probability measure.   
Indeed, this follows from the two following facts. First the probability measure  $\P_\T ^{\otimes_k}\circ \Pi_N^{-1} $ is given by
\[\int F(\Pi_N\boldsymbol \varphi)\P_\T ^{\otimes_k}  (\dd \boldsymbol \varphi) =\int F(\Pi_N\boldsymbol \phi) e^{-\frac{1}{2}(\Pi_N \boldsymbol \phi,2\mathbf{D} \boldsymbol \Pi_N \boldsymbol \phi)}\frac{\dd \Pi_N\boldsymbol \phi}{(2\pi)^{Nk}\det_{\mc{H}_N}'( 2\mathbf{D}^N)^{-1/2} }\]
with $\boldsymbol \phi:=(\phi_1,\dots,\phi_k)$, $\phi_j(\theta)=\sum_{n>0}\big(u^j_n\sqrt{2}\cos(n\theta)-v^j_n\sqrt{2}\sin(n\theta)\big)$ for $j=1,\dots,k$, $\dd \Pi_N\boldsymbol \phi=\prod_{j=1}^k\prod_{n=1}^N \dd u_n^j\dd v_n^j$. Furthermore $\det_{\mc{H}_N}'( 2\mathbf{D}^N)^{-1/2} =2^{k/2}\det_{\mc{H}_N}( 2\mathbf{D}_0^N)^{-1/2}$.
Second, we have the following relation for   nonnegative symmetric matrices $A,B$ with $B$ positive definite and $F$ continuous bounded
$$\int_{\R^p}F(x)e^{-\frac{1}{2}\langle (B-A) x,x\rangle}e^{-\frac{1}{2}\langle A x,x\rangle}\,\frac{\dd x}{(2\pi)^{p/2}\det(A_0)^{-1/2}}=\det(BA_0^{-1})^{-1/2}\int_{\R^p}F(x)e^{-\frac{1}{2}\langle B x,x\rangle} \,\frac{\dd x}{(2\pi)^{p/2}\det(B)^{-1/2}},$$
where $A_0$ is the symmetric positive definite matrix of the operator on $\R^p$ that is equal to identity on the kernel of $A$ and equal to $A$ on the orthogonal of the kernel. Those two relations also show that
$$\int F(\Pi_N \tilde{\boldsymbol{\varphi}})\P^N(\dd \tilde{\boldsymbol{\varphi}})=\frac{1}{(2\pi)^{(N+\frac{1}{2})k}\det_{\mc{H}_N}(\mathbf{D}_{\Sigma,{\mc{C}}}^N )^{-1/2} }\int F(\mathbf{c}+\Pi_N \boldsymbol{\phi})\exp(-\frac{1}{2}(\mathbf{c}+\boldsymbol{ \phi}, \mathbf{D}^N_{\Sigma,{\mc{C}}} (\mathbf{c}+\boldsymbol{ \phi}))_2)   \dd \mathbf{c}\dd \Pi_N\boldsymbol \phi.$$
In particular, for $f$   a trigonometric polynomial
$$\int e^{ ( f, \tilde{\boldsymbol{\varphi}})_2}\P^N(\dd \tilde{\boldsymbol{\varphi}})=e^{\frac{1}{2}(f, (\mathbf{D}_{\Sigma,{\mc{C}}}^N)^{-1}f)_2}\to e^{\frac{1}{4\pi}(f,Gf)_2}=\E[ e^{ (f, \boldsymbol{Y})_2}], \quad \text{as }N\to\infty$$
where we have used the fact that $(\mathbf{D}_{\Sigma,{\mc{C}}}^N|_{\mc{H}_N})^{-1}\Pi_N\to G/(2\pi)$ when $N\to \infty$, as bounded operators on $H^s(\T)^k$ for all $s\in \R$. On the other hand, we claim that the left-hand side above converges as $N\to\infty$ towards
\begin{equation}\label{limitconvdominee}
\frac{1}{ \pi^{k/2}\det_{\rm Fr}(\mathbf{D}_{\Sigma,{\mc{C}}}(2\mathbf{D}_0)^{-1})^{-1/2} }\int e^{ ( f, \tilde{\boldsymbol{\varphi}})_2}\exp(-\frac{1}{2}(\tilde{\boldsymbol{\varphi}},\widetilde{\mathbf{D}}_{\Sigma,{\mc{C}}} \tilde{\boldsymbol{\varphi}})_2)\dd\mu_0^{\otimes_k} (\tilde{\boldsymbol{\varphi}}).
\end{equation}
Indeed, since $\widetilde{\mathbf{D}}_{\Sigma,{\mc{C}}}$  is a smoothing operator (by Lemma \ref{lemmaDSigma-D}),  we have the convergence $(\tilde{\boldsymbol{\varphi}},\widetilde{\mathbf{D}}^N_{\Sigma,{\mc{C}}} \tilde{\boldsymbol{\varphi}})_2\to (\tilde{\boldsymbol{\varphi}},\widetilde{\mathbf{D}}_{\Sigma,{\mc{C}}} \tilde{\boldsymbol{\varphi}})_2$  almost everywhere (wrt  $(\dd c\otimes \P_\T)^{\otimes_k}$). Moreover  $\det_{\mc{H}_N}(\mathbf{D}_{\Sigma,{\mc{C}}}^N(2\mathbf{D}^N_0)^{-1})\to \det(\mathbf{D}_{\Sigma,{\mc{C}}}(2\mathbf{D}_0)^{-1})$ as $N\to\infty$: indeed, since $\Pi_N$ commutes with ${\bf D}_0$, we have 
 $\mathbf{D}_{\Sigma,{\mc{C}}}^N(2\mathbf{D}^N_0|_{\mc{H}_N})^{-1}\Pi_N=\Pi_N\mathbf{D}_{\Sigma,{\mc{C}}}(2\mathbf{D}_0)^{-1}\Pi_N$ and since $ \mathbf{D}_{\Sigma,{\mc{C}}}(2\mathbf{D}_0)^{-1}-{\rm Id}$ is smoothing, we have 
 $\Pi_N(\mathbf{D}_{\Sigma,{\mc{C}}}(2\mathbf{D}_0)^{-1}-{\rm Id})\Pi_N\to \mathbf{D}_{\Sigma,{\mc{C}}}(2\mathbf{D}_0)^{-1}-{\rm Id}$ in the space of trace class operators on $L^2(\T)^k$; finally the map $K\mapsto \det_{\rm Fr}({\rm Id}+K)$ is continuous on the space of trace class operators. We can then apply the dominated convergence theorem to obtain \eqref{limitconvdominee} using the following estimate: since for all $N_1>0$ and $\epsilon>0$, there is $N_0>0$ such that   for all $N>N_0$,  $\|\widetilde{\mathbf{D}}^N_{\Sigma,{\mc{C}}}-\widetilde{\mathbf{D}}_{\Sigma,{\mc{C}}}^{N_0}\|_{H^{-N_1}\to H^{N_1}}\leq \epsilon$ (again from Lemma \ref{lemmaDSigma-D}), there is $a>0$ such that for any $\epsilon>0$ and $N_1>0$, there is $N_0>0$ such that for all $N>N_0$ and $\tilde{\boldsymbol{\varphi}}\in H^{s}(\T)^k$ for $s<0$,
 \begin{equation}\label{lowerboundDNmaptilde}
 (\tilde{\boldsymbol{\varphi}},\widetilde{\mathbf{D}}^N_{\Sigma,{\mc{C}}} \tilde{\boldsymbol{\varphi}})_2\geq a|\mathbf{c}|^2+(a-1)(\Pi_{N_0}\tilde{\boldsymbol{\varphi}},2\mathbf{D}\Pi_{N_0}\tilde{\boldsymbol{\varphi}})_2-\epsilon\|\tilde{\boldsymbol{\varphi}}\|_{H^{-N_1}}^2.
 \end{equation}
This completes the proof of our claim \eqref{densityDN}.

Finally we compute the ratio of free field amplitudes: observe that for $\tilde{\boldsymbol{\varphi}}_i\in H^1(\T)^{b_i-k}$ and $\tilde{\boldsymbol{\varphi}} \in H^1(\T)^k$
\begin{align*}
 \frac{\caA^0_{\Sigma,g}(\tilde{\boldsymbol{\varphi}}_1,\tilde{\boldsymbol{\varphi}}_2)}{\caA^0_{\Sigma_1,g_1}(\tilde{\boldsymbol{\varphi}} ,\tilde{\boldsymbol{\varphi}}_1)\caA^0_{\Sigma_2,g_2}(\tilde{\boldsymbol{\varphi}} ,\tilde{\boldsymbol{\varphi}}_2)}  =&e^{-\frac{1}{2}(\tilde{\boldsymbol{\varphi}},2\mathbf{D}\tilde{\boldsymbol{\varphi}})_2+\frac{1}{2}((\tilde{\boldsymbol{\varphi}},\tilde{\boldsymbol{\varphi}}_1),\mathbf{D}_{\Sigma_1} (\tilde{\boldsymbol{\varphi}},\tilde{\boldsymbol{\varphi}}_1))_2+\frac{1}{2}((\tilde{\boldsymbol{\varphi}},\tilde{\boldsymbol{\varphi}}_2),\mathbf{D}_{\Sigma_2} (\tilde{\boldsymbol{\varphi}},\tilde{\boldsymbol{\varphi}}_2))_2-\frac{1}{2}((\tilde{\boldsymbol{\varphi}}_1,\tilde{\boldsymbol{\varphi}}_2),\mathbf{D}_{\Sigma} (\tilde{\boldsymbol{\varphi}}_1,\tilde{\boldsymbol{\varphi}}_2))_2} 
 \\ =& e^{-\frac{1}{2}(\tilde{\boldsymbol{\varphi}},2\mathbf{D}\tilde{\boldsymbol{\varphi}})_2-A_1-A_2+A_3+A_4}
 \end{align*}
with \begin{align*}
A_1&= ((0,\tilde{\boldsymbol{\varphi}}_1),\mathbf{D}_{\Sigma_1} (h_{\mc{C}},0))_2+((0,\tilde{\boldsymbol{\varphi}}_2),\mathbf{D}_{\Sigma_2} (h_{\mc{C}},0))_2\\
A_2&=\frac{1}{2}((h_{\mc{C}},0),\mathbf{D}_{\Sigma_1} (h_{\mc{C}},0))_2+\frac{1}{2}((h_{\mc{C}},0),\mathbf{D}_{\Sigma_2} (h_{\mc{C}},0))_2\\
A_3&=\frac{1}{2}((\tilde{\boldsymbol{\varphi}},0),\mathbf{D}_{\Sigma_1} (\tilde{\boldsymbol{\varphi}},0))_2+\frac{1}{2}((\tilde{\boldsymbol{\varphi}},0),\mathbf{D}_{\Sigma_2} (\tilde{\boldsymbol{\varphi}},0))_2\\
A_4&= ((0,\tilde{\boldsymbol{\varphi}}_1),\mathbf{D}_{\Sigma_1} (\tilde{\boldsymbol{\varphi}},0))_2+((0,\tilde{\boldsymbol{\varphi}}_2),\mathbf{D}_{\Sigma_2} (\tilde{\boldsymbol{\varphi}},0))_2,
 \end{align*}
 relation that we have obtained by using the relation 
\begin{equation}\label{zc1}
((\tilde{\boldsymbol{\varphi}}_1,\tilde{\boldsymbol{\varphi}}_2),\mathbf{D}_{\Sigma} (\tilde{\boldsymbol{\varphi}}_1,\tilde{\boldsymbol{\varphi}}_2))_2=((h_{\mc{C}},\tilde{\boldsymbol{\varphi}}_1),\mathbf{D}_{\Sigma_1} (h_{\mc{C}},\tilde{\boldsymbol{\varphi}}_1))_2+((h_{\mc{C}},\tilde{\boldsymbol{\varphi}}_2),\mathbf{D}_{\Sigma_2} (h_{\mc{C}},\tilde{\boldsymbol{\varphi}}_2))_2
\end{equation}
 in the first line above and then expanded all terms to get the second line. Let us mention here that \eqref{zc1} follows from the fact that the harmonic extension $P(\tilde{\boldsymbol{\varphi}}_1,\tilde{\boldsymbol{\varphi}}_2) $ on $\Sigma$ is smooth on $\mathcal{C}$ so that 
 \begin{equation}\label{zc0}
 -\partial_{\nu_-} P(\tilde{\boldsymbol{\varphi}}_1,\tilde{\boldsymbol{\varphi}}_2) _{|\mc{C}}-\partial_{\nu_+} P(\tilde{\boldsymbol{\varphi}}_1,\tilde{\boldsymbol{\varphi}}_2) _{|\mc{C}} =0.
 \end{equation}
  Now we stress that, by construction of the DN map $\mathbf{D}_{\Sigma,{\mc{C}}} $, we have for  all $\tilde{\boldsymbol{\varphi}}\in H^1(\T)^k$
\begin{equation}\label{zc2}
((\tilde{\boldsymbol{\varphi}},0),\mathbf{D}_{\Sigma_1} (\tilde{\boldsymbol{\varphi}},0))_2+((\tilde{\boldsymbol{\varphi}},0),\mathbf{D}_{\Sigma_2}(\tilde{\boldsymbol{\varphi}},0))_2=(\tilde{\boldsymbol{\varphi}},\mathbf{D}_{\Sigma,{\mc{C}}}\tilde{\boldsymbol{\varphi}})_2. 
\end{equation}
 Therefore $A_2=\frac{1}{2}(h_{\mc{C}},\mathbf{D}_{\Sigma,{\mc{C}}} h_{\mc{C}})_2$ and $A_3=\frac{1}{2}(\tilde{\boldsymbol{\varphi}},\mathbf{D}_{\Sigma,{\mc{C}}} \tilde{\boldsymbol{\varphi}})_2$. For $A_1$, we insert $(0,\tilde{\boldsymbol{\varphi}}_1)=(h_{\mc{C}},\tilde{\boldsymbol{\varphi}}_1)-(h_{\mc{C}},0)$, and similarly for $(0,\tilde{\boldsymbol{\varphi}}_2)$, and using that ${\bf D}_{\Sigma_i}^*={\bf D}_{\Sigma_i}$ we obtain 
\[((h_{\mc{C}},\tilde{\boldsymbol{\varphi}}_1),\mathbf{D}_{\Sigma_1} (h_{\mc{C}},0))_2+((h_{\mc{C}},\tilde{\boldsymbol{\varphi}}_2),\mathbf{D}_{\Sigma_2} (h_{\mc{C}},0))_2=-(\partial_{\nu_-} P(\tilde{\boldsymbol{\varphi}}_1,\tilde{\boldsymbol{\varphi}}_2) _{|\mc{C}}+\partial_{\nu_+} P(\tilde{\boldsymbol{\varphi}}_1,\tilde{\boldsymbol{\varphi}}_2) _{|\mc{C}},h_{\mc{C}})_2=0\]
Thus $A_1=-(h_{\mc{C}},\mathbf{D}_{\Sigma,{\mc{C}}} h_{\mc{C}})_2$ using \eqref{zc2}. The same trick applied to $A_4$ gives $A_4=-(\tilde{\boldsymbol{\varphi}},\mathbf{D}_{\Sigma,{\mc{C}}}h_{\mc{C}})_2$. Combining everything, we deduce
\[\frac{\caA^0_{\Sigma,g}(\tilde{\boldsymbol{\varphi}}_1,\tilde{\boldsymbol{\varphi}}_2)}{\caA^0_{\Sigma_1,g_1}(\tilde{\boldsymbol{\varphi}} ,\tilde{\boldsymbol{\varphi}}_1)\caA^0_{\Sigma_2,g_2}(\tilde{\boldsymbol{\varphi}} ,\tilde{\boldsymbol{\varphi}}_2)}  = \exp\Big(\frac{1}{2}(\tilde{\boldsymbol{\varphi}},\widetilde{{\bf D}}_{\Sigma,{\mc{C}}} \tilde{\boldsymbol{\varphi}})_2\Big)\exp\Big(-(\tilde{\boldsymbol{\varphi}},{\bf D}_{\Sigma,{\mc{C}}} h_{\mc{C}})_2+\frac{1}{2}(h_{\mc{C}},{\bf D}_{\Sigma,{\mc{C}}} h_{\mc{C}})_2\Big),\] 
which proves the lemma by using also  \eqref{densityDN} and \eqref{CameronMartin}.
\end{proof}

\begin{lemma}\label{density2}
Assume $\partial\Sigma\not=\emptyset$. Then
\[ Z_{\Sigma_1,g_1}Z_{\Sigma_2,g_2}  =  (2\pi)^{k/2}Z_{\Sigma,g}  {\rm det}_{\rm Fr}(\mathbf{D}_{\Sigma,{\mc{C}}}(2\mathbf{D}_0)^{-1})^{1/2} \]
\end{lemma}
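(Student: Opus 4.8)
The plan is to prove the identity relating the $\zeta$-regularized Dirichlet determinants of $\Sigma_1,\Sigma_2$ and $\Sigma$ via the Fredholm determinant of the relative Dirichlet-to-Neumann operator, together with the explicit boundary-length corrections built into $Z_{\Sigma,g}$. The key input is the classical \emph{Mayer--Vietoris type} gluing formula for the $\zeta$-determinant of the Laplacian due to Burghelea--Friedlander--Kappeler (BFK); stated for our situation it reads, for a closed (or bounded) surface $\Sigma$ cut along a collection of disjoint simple curves $\mc{C}=\bigcup_{j=1}^k\mc{C}_j$ into $\Sigma_1\sqcup\Sigma_2$,
\begin{equation}\label{BFKformula}
\det(\Delta_{g,D}^{\Sigma})=\det(\Delta_{g_1,D}^{\Sigma_1})\det(\Delta_{g_2,D}^{\Sigma_2})\cdot \det_{\rm Fr}\big(\mathbf{D}_{\Sigma,\mc{C}}\big)\cdot C_{\rm BFK}(\mc{C})
\end{equation}
where $\mathbf{D}_{\Sigma,\mc{C}}$ is exactly the "jump of the normal derivative" operator defined in \eqref{defDSigmaC} (so $\mathbf{D}_{\Sigma,\mc{C}}=\mathbf{D}_{\Sigma_1}|_{\mc{C}}\oplus$-part $+\mathbf{D}_{\Sigma_2}|_{\mc{C}}$-part in the sense of \eqref{DNC'}), and $C_{\rm BFK}(\mc{C})$ is a purely local quantity on $\mc{C}$ coming from the heat-kernel coefficients, which in two dimensions depends only on the lengths $|\mc{C}_j|$ (here all equal to $2\pi$ since the metric is admissible near $\mc{C}$). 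First I would recall \eqref{BFKformula} with a precise reference (BFK; see also the exposition in Carron, already cited in the excerpt around \eqref{DNmapandGreen}), checking that the normalization of $\mathbf{D}_{\Sigma,\mc{C}}$ there agrees with ours.

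Next I would convert the Fredholm determinant of $\mathbf{D}_{\Sigma,\mc{C}}$ alone into the Fredholm determinant of $\mathbf{D}_{\Sigma,\mc{C}}(2\mathbf{D}_0)^{-1}$ appearing in the statement. For this one writes $\det_{\rm Fr}(\mathbf{D}_{\Sigma,\mc{C}})=\det_{\rm Fr}(\mathbf{D}_{\Sigma,\mc{C}}(2\mathbf{D}_0)^{-1})\cdot \det_{\zeta}(2\mathbf{D}_0)$, where $\det_{\zeta}(2\mathbf{D}_0)$ is the $\zeta$-regularized determinant of the reference operator $2\mathbf{D}_0=2(\Pi_0+\mathbf{D})$ on $L^2(\T)^k$. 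By Lemma \ref{lemmaDSigma-D} the operator $\mathbf{D}_{\Sigma,\mc{C}}(2\mathbf{D}_0)^{-1}-{\rm Id}$ is trace class, so this factorization of determinants is legitimate (the product $\det_{\rm Fr}({\rm Id}+K)\det_\zeta(A)$ for $A$ an elliptic positive first-order operator and $K$ trace class equals $\det_\zeta(({\rm Id}+K)A)$ up to the standard multiplicative anomaly, which here vanishes because $({\rm Id}+K)$ is a multiplicative perturbation by a trace-class operator — this is the classical statement that $\det_\zeta$ is multiplicative under trace-class perturbations). The quantity $\det_{\zeta}(2\mathbf{D}_0)$ is completely explicit: $2\mathbf{D}_0$ acts on each of the $k$ copies of $L^2(\T)$ as the operator with eigenvalue $1$ on constants and $2|n|$ on $e^{in\theta}$ with multiplicity $2$ for $n\geq 1$; its $\zeta$-determinant is computed from $\zeta(s)=2\cdot 2^{-s}\zeta_R(s)$ (Riemann zeta, since the constant mode contributes $1$) giving $\det_\zeta(2\mathbf{D}_0)=(2\pi)^{?}$ per circle — I would carry out this one-variable computation explicitly and obtain a constant of the form $(2\pi)^{k/2}$ (this is where the $\pi^{k/2}$ in the statement comes from, matching the $(2\pi)^{-k/2}$ that appears in Lemma \ref{density1}).

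Finally I would assemble: starting from $Z_{\Sigma,g}=\det(\Delta_{g,D})^{-1/2}\exp(\tfrac1{8\pi}\int_{\pl\Sigma}k_g\,d\ell_g)$ and the analogous formulas for $\Sigma_1,\Sigma_2$, take the ratio $Z_{\Sigma_1,g_1}Z_{\Sigma_2,g_2}/Z_{\Sigma,g}$. The boundary-curvature exponentials: since $g_1\# g_2$ is admissible near $\mc{C}$, the geodesic curvature $k_{g_i}$ of the glued circles vanishes (admissible metrics have $k_g=0$ on the relevant pieces, as noted in the footnote to \eqref{znormal}), and the curves $\mc{C}_{1,j},\mc{C}_{2,j}$ which get glued contribute no boundary term to either side; the remaining boundary components of $\Sigma_1$ and $\Sigma_2$ are exactly the boundary components of $\Sigma$, with the same $k_g$, so the curvature exponentials cancel identically in the ratio. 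Hence the ratio reduces to $\det(\Delta_{g,D})^{1/2}/(\det(\Delta_{g_1,D})\det(\Delta_{g_2,D}))^{1/2}$, which by \eqref{BFKformula} equals $\big(\det_{\rm Fr}(\mathbf{D}_{\Sigma,\mc{C}})\,C_{\rm BFK}\big)^{1/2}=\big(\det_{\rm Fr}(\mathbf{D}_{\Sigma,\mc{C}}(2\mathbf{D}_0)^{-1})\cdot (2\pi)^{k/2}\cdot C_{\rm BFK}\big)^{1/2}$, and one checks that $C_{\rm BFK}(\mc{C})$ with $k$ circles of length $2\pi$ contributes exactly the factor needed to produce $\pi^{k/2}$ after taking the square root. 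The main obstacle is getting all the explicit constants right: the precise value of the BFK local term $C_{\rm BFK}$ in two dimensions (which involves a $\zeta'(0)$-type contribution from the heat coefficients of a circle and is sensitive to conventions for $\mathbf{D}_{\Sigma,\mc{C}}$ — factor-of-$2$ issues between $\mathbf{D}_{\Sigma,\mc{C}}$ and $\mathbf{D}_{\Sigma_1}+\mathbf{D}_{\Sigma_2}$), and matching it against the explicit $\det_\zeta(2\mathbf{D}_0)$ computation so that the net numerical constant is $\pi^{k/2}$ and not some other power of $2\pi$; I would pin this down by testing the formula on the simplest case (e.g. $\Sigma$ an annulus cut into two annuli, or gluing two disks into a sphere) where every determinant is classically known.
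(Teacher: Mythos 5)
Your proposal is essentially the paper's own proof: the paper invokes Theorem B of \cite{BFK} in the form $\det(\Delta_{\Sigma,D})=\det(\Delta_{\Sigma_1,D})\det(\Delta_{\Sigma_2,D})\det(\mathbf{D}_{\Sigma,\mc{C}})$ (with \emph{no} local anomaly term in this two-dimensional Dirichlet setting, so your $C_{\rm BFK}$ is just $1$), then splits $\det(\mathbf{D}_{\Sigma,\mc{C}})=\det_{\rm Fr}(\mathbf{D}_{\Sigma,\mc{C}}(2\mathbf{D}_0)^{-1})\det(2\mathbf{D}_0)$ exactly as you do, citing \cite[Proposition 6.4]{Kontsevich-Vishik} for the step you justify by multiplicativity under trace-class perturbations, and the boundary-curvature factors cancel as you say. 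The one point to fix is your provisional constant: the spectral zeta function of $2\mathbf{D}_0$ is $k\,2^{1-s}\zeta_R(s)$, so $\det(2\mathbf{D}_0)=\pi^{k}$ (not $(2\pi)^{k/2}$), and the $\pi^{k/2}$ in the statement is simply the square root of this coming from the $\det^{-1/2}$ in the definition of $Z_{\Sigma,g}$.
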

\begin{proof}
It is a direct computation to check that the spectral zeta function of  $2\mathbf{D}_0$ is $k(2^{-s}+2^{1-s} \zeta (s))$ where $\zeta$ is the standard Riemann zeta function (recall ${\bf D}$ is the Fourier multiplier by $|n|$ on each circle). Since $\zeta(0)=-\frac{1}{2}$ and $\zeta'(0)= -\frac{1}{2} \log 2 \pi$ we get
\begin{equation}\label{detD0pik}
{\rm det} (2\mathbf{D}_0)=(2\pi)^k.
\end{equation}
Theorem B in \cite{BurgheleaFK92} says that $\det(\Delta_{\Sigma,D})=\det(\Delta_{\Sigma_1,D})\det(\Delta_{\Sigma_2,D})\det({\bf D}_{\Sigma,\mc{C}})$, where the determinant of ${\bf D}_{\Sigma,\mc{C}}$ is well-defined using the meromorphic continuation of its spectral zeta functions (see \cite{BurgheleaFK92,kontsevich1994}), since it is a positive self-adjoint elliptic pseudo-differential operator of order $1$ on a compact manifold. 
Finally \cite[Proposition 6.4]{kontsevich1994} states that $\det({\bf D}_{\Sigma,\mc{C}})=\det_{\rm Fr}({\bf D}_{\Sigma,\mc{C}}(2{\bf D}_0)^{-1})\det(2{\bf D}_0)$.
\end{proof}

Combining \eqref{comput1} with Lemmas \ref{density1} and \ref{density2} concludes the case when $\partial\Sigma\not=\emptyset$.

\medskip
$\bullet$ Assume now $\partial\Sigma  = \emptyset$: Observe first that if suffices to consider the case when $k=1$. Indeed, the case $k\geq 2$ can be decomposed into first gluing the other $k-1$ boundaries and then gluing the 
 last boundary circle. Using Proposition \ref{glueampli} and \ref{selfglueampli} in the case $\pl \Sigma\not=\emptyset$, it remains only to consider the last gluing circle, which means we are in the case $k=1$. 
 We only  consider the case where we glue two disjoint surfaces $\Sigma_1,\Sigma_2$ (Proposition \ref{glueampli}) as the other case is the same up to notational changes.  
  
So we have now $k=1$ and we denote the glued boundary component by  $\mathcal{C}:= \mc{C}_1$.   By definition, the amplitude for $\Sigma$ is  
\begin{equation}\label{amp0}
\caA_{\Sigma,g,{\bf x},\boldsymbol{\alpha}}(F_1\otimes F_2)=\langle F_1\otimes F_2(\phi_g)\prod_{i=1}^m V_{\alpha_i,g}(x_i) \rangle_{\Sigma,g} .
\end{equation}
Let now $X_1$ and $X_2$ be two  independent Dirichlet GFF respectively on $\Sigma_1$ and $\Sigma_2$. We assume that they are both defined on $\Sigma$ by setting $X_i=0$ outside of $\Sigma_i$. Then we have the following decomposition in law (see Proposition \ref{decompGFF})
\begin{align*}%\label{}
X_g\stackrel{\rm law}=X_1+X_2+P{\rm X}-c_g
\end{align*}
where ${\rm X}$ is the restriction of the GFF $X_g$ to the  glued boundary component  $\mathcal{C}$ expressed in parametrised coordinates, i.e. ${\rm X} = X_{g|_{{\caC}_1}}\circ \zeta_{1,1} $,   $P{\rm X}$ is its harmonic extension to $ \Sigma $ and $c_g:=\frac{1}{{\rm v}_g(\Sigma)}\int_\Sigma (X_1+X_2+P{\rm X})\,\dd {\rm v}_g$. Therefore, plugging this relation into the amplitude  \eqref{amp0},  and then shifting the $c$-integral by $c_g$, we get      
\begin{align*}
 \caA_{\Sigma,g,{\bf x},\boldsymbol{\alpha}}(F_1\otimes F_2)=&\frac{({\det}'(\Delta_{g}))^{-\frac{1}{2}}}{{\rm v}_{g}(\Sigma)^{-\frac{1}{2}}Z_{\Sigma_1,g_1}Z_{\Sigma_2,g_2}}\int \frac{\caA_{\Sigma_1,g_1,{\bf x}_1,\boldsymbol{\alpha}_1, \zeta_1}(F_1,c +  \varphi)\caA_{\Sigma_2,g_2,{\bf x}_2,\boldsymbol{\alpha}_2,\zeta_2} (F_2,c +   \varphi)}{\caA^0_{\Sigma_1,g_1}(c +   \varphi)\caA^0_{\Sigma_2,g_2}(c +    \varphi ) }  \dd c \,  \dd \P_{{\rm X}}    (\varphi).
\end{align*}
 Now we make a further shift in the $c$-variable in the expression above to subtract the mean $m_{\mc{C}}({\rm X}):=\frac{1}{2\pi}\int_0^{2\pi}{\rm X}(e^{i\theta})\,\dd \theta$ to the field ${\rm X}$. As a consequence we can replace the law $ \P_{{\rm X}} $ of ${\rm X}$ in the above expression by the law $  \P_{{\rm X}-m_{\mc{C}}({\rm X})}$ of the recentered field ${\rm X}-m_{\mc{C}}({\rm X})$.
 
Now we  claim, for measurable bounded functions $F$
\begin{equation}\label{densityDNbis}
\int F( \boldsymbol{ \varphi}) \dd\P_{{\bf X}-m_{\mc{C}}({\bf X})}(\boldsymbol{ \varphi})=\frac{ \sqrt{2}}{  \det(\mathbf{D}_{\Sigma,{\mc{C}},0}(2\mathbf{D}_0)^{-1})^{-1/2} }\int F(\boldsymbol{ \varphi})\exp\Big(-\frac{1}{2}(\boldsymbol{ \varphi},\widetilde{\mathbf{D}}_{\Sigma,{\mc{C}}} \boldsymbol{ \varphi})_2\Big)   \dd \P_\T   ( \boldsymbol{ \varphi}).
\end{equation}
 with $\mathbf{D}_{\Sigma,{\mc{C}},0}=\mathbf{D}_{\Sigma,\mc{C}}+\Pi_0'$, recall \eqref{defPi_0'}.
The proof of this claim follows the same lines as the proof of \eqref{densityDN}, using that  $\mathbf{D}_{\Sigma,{\mc{C}}}G=2\pi {\rm Id}$ on the space $\{f\in H^s(\T) \, |\, \int_0^{2\pi}f(e^{i\theta})  \dd \theta=0 \}$ and the following estimate, proved exactly as \eqref{lowerboundDNmaptilde}: there is $a>0$ such that for any $\epsilon>0$, there is $N_0>0$ such that 
$$(\boldsymbol{ \varphi},\widetilde{\mathbf{D}}_{\Sigma,{\mc{C}}} \boldsymbol{ \varphi})_2\geq  (a-1)(\pi_{N_0}\boldsymbol{ \varphi},2\mathbf{D}\pi_{N_0}\boldsymbol{ \varphi})_2-\epsilon(\boldsymbol{ \varphi},2\mathbf{D}\boldsymbol{ \varphi})_2.$$

This achieves the proof by observing that $\exp(-\frac{1}{2}(\boldsymbol{ \varphi},\widetilde{\mathbf{D}}_{\Sigma,{\mc{C}}} \boldsymbol{ \varphi})_2)=\caA^0_{\Sigma_1,g_1}(c + \boldsymbol{   \varphi})\caA^0_{\Sigma_2,g_2}(c + \boldsymbol{  \varphi} ) $ (whatever the value of $c$, since $\mathbf{D}_{\Sigma,\mc{C}}1={\bf D}1=0$) and by using the determinant formula
\begin{equation}\label{detformula}
\frac{({\det}'(\Delta_{g})/{\rm v}_{g}(\Sigma))^{-1/2}}{Z_{\Sigma_1,g_1}Z_{\Sigma_2,g_2}} =   \det(\mathbf{D}_{\Sigma,{\mc{C}},0}(2\mathbf{D}_0)^{-1})^{-1/2},
\end{equation}
that follows from Theorem $B^*$ in  \cite{BurgheleaFK92}, the identity $\det'({\bf D}_{\Sigma,\mc{C}})=\det_{\rm Fr}({\bf D}_{\Sigma,\mc{C},0}(2{\bf D}_0)^{-1})\det(2{\bf D}_0)$ using \cite[Proposition 6.4]{kontsevich1994}, and $\det(2{\bf D}_0)=2\pi$ by \eqref{detD0pik}.
\qed

%

%%%%%%%%%%%%%%%%%%%%%%%%%%%%%%%%%%%%%%%%%%%%%%%%%%%%%%%%%%%%%%%%%%%%%%%%%%%%%%%%%%%%%%%%%%%%%%%%%%%%%%%%%%%%%%%%%%%%%%%%%%%%%%%%%%%%%%%%%%%%%%%%%%%%%%%%%%%%%%%%%%%%%%%%%%%%%%%%%%%%%%%%%%%%%%%%%%%%%%%%%%%%%%%%%%%%%%%%%%%%%%%%%%%%%%%%%%%%%%%%%%%%%%%%%%%%%%%%%%%%%%%%%%
\section{Semigroup of annuli}\label{section:semigroup}
%%%%%%%%%%%%%%%%%%%%%%%%%%%%%%%%%%%%%%%%%%%%%%%%%%%%%%%%%%%%%%%%%%%%%%%%%%%%%%%%%%%%%%%%%%%%%%%%%%%%%%%%%%%%%%%%%%%%%%%%%%%%%%%%%%%%%%%%%%%%%%%%%%%%%%%%%%%%%%%%%%%%%%%%%%%%%%%%%%%%%%%%%%%%%%%%%%%%%%%%%%%%%%%%%%%%%%%%%%%%%%%%%%%%%%%%%%%%%%%%%%%%%%%%%%%%%%%%%%%%%%%%%%

In this section, we connect the Segal amplitudes of annuli to the semigroup $e^{-t \mathbf{H}}$ constructed in \cite{GKRV20_bootstrap}, where $\mathbf{H}$ is an unbounded operator on the Hilbert space $\mathcal{H}$, called Hamiltonian of Liouville CFT. This is done in Subsection \ref{sub:hamiltonian}. \footnote{In the follow-up papers \cite{BGKRV,BGKR1}, we uncover a more general picture where we show how the annuli amplitudes encode the whole symmetry algebra of the Liouville CFT, thus providing a concrete representation of the Virasoro algebra acting on $\mathcal{H}$. In this picture, the Hamiltonian $\mathbf{H}$    turns out to be the generator of dilations. }
Next, in Subsection \ref{subsubsec:spectral}, we recall the construction from  \cite{GKRV20_bootstrap} of a family  $(\Psi_{\alpha,\k,\l})$ indexed by $\alpha\in\C$ and $\k,\l$ finite sequences of nonnegative integers. This family has two important properties: it is analytic in $\alpha$ and, when restricted to $\alpha\in Q+i\R$, it provides a  spectral resolution for $\mathbf{H}$. Yet this family is not suited for computations, and we will rearrange this basis in Subsection \ref{sub:virasoro} to obtain a new family $\Psi_{\alpha,\nu,\tilde{\nu}}$  indexed by $\alpha\in\C$ and $\nu$, $\tilde\nu$ Young diagrams. This family is still analytic in $\alpha$, provides a spectral resolution (see \eqref{changeofbasis} and \eqref{defHzarb}), and furthermore will be crucial for  understanding the eigenstates near the spectrum line $\alpha\in Q+i\R$ using a probabilistic representation, which  we will obtain for $\alpha\in\R$  in Section \ref{sec:computingamplitudes}. The analyticity will be used to transfer the  information gained on the real line to the spectrum line. A key element, developed in this section, to obtain later the probabilistic representation is the intertwining property   \eqref{intert:desc}: it allows us to transfer properties of the eigenstates of   the GFF  theory to the Liouville theory. And in Section \ref{sec:computingamplitudes},  we will show that the eigenstates of the GFF have a probabilistic representation involving the stress-energy tensor, which we will push via the intertwining to a probabilistic representation for the Liouville eigenstates $\Psi_{\alpha,\nu,\tilde{\nu}}$. In other words, this section serves to state the spectral resolution (Plancherel formula) and to prepare the argument for the probabilistic description of the eigenstates  $\Psi_{\alpha,\nu,\tilde{\nu}}$.

\subsection{Hamiltonian and semigroup of LCFT}\label{sub:hamiltonian}
%%%%%%%%%%%%%%%%%%%%%%%%%%%

In \cite[Section 3 and 5]{GKRV20_bootstrap}, a contraction semigroup $S(t):L^2(\R\times\Omega_{\T})\to L^2(\R\times\Omega_{\T})$ is defined 
by the following expression: for $F\in L^2(\R\times\Omega_{\T})$ 
\begin{equation}\label{FKgeneral}
S(t)F(c,\varphi)=e^{-\frac{Q^2t}{2}}\E \big[ F(c+\phi_t) e^{-\mu e^{\gamma c}\int_{e^{-t}<|z|<1} |z|^{-\gamma Q}M_\gamma (\phi,\dd z)}\big]
\end{equation}
where  $\phi_t: \theta\mapsto \phi(e^{-t+i\theta})$ with $\phi:=P_{\D}\varphi+X_{\D,D}$,  $X_{\D,D}$ is the GFF with Dirichlet condition on the unit disk 
$(\D,|dz|^2)$ and $P_{\D}\varphi$ is the harmonic extension in $\D$ of the random Fourier series $\varphi\in H^{s}(\T)$ defined in \eqref{GFFcircle0}, and the expectation is with respect to $X_{\D, D}$.
This semigroup can be written under the form 
\[S(t)=e^{-t{\bf H}}\] 
where  $\mathbf{H}$ is an unbounded self-adjoint operator on $L^2(\R\times\Omega_{\T})$, which has the form 
\[ \mathbf{H}={\bf H}_0+\mu e^{\gamma c}V(\varphi) \textrm{ with }{\bf H}_0:=-\frac{1}{2}\pl_c^2+\frac{1}{2}Q^2+{\bf P}\]
where ${\bf P}, V$ are symmetric unbounded non-negative operators on $L^2(\Omega_{\T})$ and ${\bf P}$ has discrete spectrum given by $\N$. Moreover ${\bf H}_0$ is the generator of $S(t)$ when we set $\mu=0$ in \eqref{FKgeneral}. By \cite[Lemma 6.5]{GKRV20_bootstrap}, for all $\beta\in \R$ the operator $e^{-t{\bf H}}$ is also bounded on the weighted spaces 
\begin{equation}\label{boundednesspropagator}
e^{-t{\bf H}}: e^{\beta c_-}L^2(\R\times \Omega_\T,\dd \mu_0)\to e^{\beta c_-}L^2(\R\times \Omega_\T,\dd \mu_0).
\end{equation}
 
 Following \cite[subsection 3.3]{GKRV20_bootstrap}, one can also consider the generator of rotations $\mathbf{\Pi}$, defined as the generator of the strongly continuous unitary group on $L^2(\R\times \Omega_{\T})$ (for $\vartheta\in\R$)
\begin{align}\label{pi:prob}
 e^{i\vartheta \mathbf{\Pi}}F(\tilde{\varphi})=F(\tilde{\varphi}(\cdot+\vartheta)).
\end{align}
 
For $q\in \D$ with $|q|=e^{-t}$ for $t>0$, consider the flat annulus
\begin{equation}\label{defaq}
\mathbb{A}_q:=\{ z\in \C\,|\, |z|\in [e^{-t},1]\}, \quad g_{\mathbb{A}}=|dz|^2/|z|^2
\end{equation}
with boundary parametrisation $\boldsymbol{\zeta}_q=(\zeta_1,\zeta_2)$ given by $\zeta_2(e^{i\theta})=e^{i\theta}$ and $\zeta_1(e^{i\theta})=qe^{i\theta}$. For $q'\in \D$, denote by $q\mathbb{A}_{q'}:=\{ z\in \C\,|\, |z|\in [|qq'|,|q|]\}$ with metric $g_{\mathbb{A}}$ and boundary parametrisation $\boldsymbol{\zeta}_{q,q'}'=(\zeta'_1,\zeta'_2)$ given by 
$\zeta'_2(e^{i\theta})=q'\zeta_2(e^{i\theta})$ and 
$\zeta'_1(e^{i\theta})=q'\zeta_1(e^{i\theta})$. Observe that one can glue $\mathbb{A}_{q}$ with $q\mathbb{A}_{q'}$ along the boundary circle $\{|z|=|q|\}$, the result becomes $\mathbb{A}_{q}\# (q\mathbb{A}_{q'})=\mathbb{A}_{qq'}$. 
Since $(q\mathbb{A}_{q'},g_{\mathbb{A}})$ is isometric to $(\mathbb{A}_{q'},g_{\mathbb{A}})$, we can use Proposition \ref{Weyl} to express its amplitude as $\mc{A}_{\mathbb{A}_{q'},g_{\mathbb{A}},\boldsymbol{\zeta}_{q'}}$, 
 and using \ref{glueampli}, one obtains that the amplitudes of these annuli satisfy 
\[\mc{A}_{\mathbb{A}_{qq'},g_{\mathbb{A}},\boldsymbol{\zeta}_{qq'}}(\tilde\varphi_1,\tilde\varphi_2)=\frac{1}{\sqrt{2}\pi}\int \mc{A}_{\mathbb{A}_q,g_{\mathbb{A}},\boldsymbol{\zeta}_q}(\tilde\varphi_1,\tilde\varphi)
\mc{A}_{\mathbb{A}_{q'},g_{\mathbb{A}},\boldsymbol{\zeta}_{q'}}(\tilde\varphi,\tilde\varphi_2)d\mu_{0}(\tilde\varphi).\]
When viewing  $(\sqrt{2}\pi)^{-1/2}\mc{A}_{\mathbb{A}_{q},g_{\mathbb{A}},\boldsymbol{\zeta}_q}$ as an integral kernel of an operator, 
this operator behaves as a semigroup. We shall now show  its semigroup property and relate it to
 $e^{-t{\bf H}}$ and $e^{i\vartheta \mathbf{\Pi}}$.
\begin{proposition} \label{prop:annulussimple}
For $q\in \D$, the annuli amplitudes in the metric $g_\mathbb{A}$ satisfy for $F,F'\in L^2(\R\times\Omega_\T)$  
\begin{align}
\int F(\tilde\varphi)\mathcal{A}_{\mathbb{A}_{|q|},g_\mathbb{A},\boldsymbol{\zeta}_{|q|}}(\tilde\varphi ,\tilde\varphi') F'(\tilde\varphi') \dd\mu_0(\tilde\varphi)\dd\mu_0(\tilde\varphi') = \sqrt{2}\pi   |q|^{-\frac{\mathbf{c}_L}{12}}\langle e^{\log |q|\mathbf{H}}F,\bar F'\cjd_{\mc{H}} \label{ident1annulus}\\
\int F(\tilde\varphi)\mathcal{A}_{\mathbb{A}_{q},g_\mathbb{A},\boldsymbol{\zeta}_q}(\tilde\varphi,\tilde\varphi') F'(\tilde\varphi') \dd\mu_0(\tilde\varphi)\dd\mu_0(\tilde\varphi') = \sqrt{2}\pi |q|^{-\frac{\mathbf{c}_L}{12}}\langle  e^{i \arg(q)\boldsymbol{\Pi}} e^{\log |q|\mathbf{H}}F,\bar F'\cjd_{\mc{H}}\label{ident2annulus}
\end{align}
with $\mathbf{c}_L=1+6Q^2$ the central charge. 
\end{proposition}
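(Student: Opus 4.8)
The plan is to identify the annulus amplitude $\mc{A}_{\mathbb{A}_q,g_\mathbb{A},\boldsymbol{\zeta}}$, viewed as an integral kernel of an operator on $\caH=L^2(\R\times\Omega_\T)$, with the semigroup operator $e^{-t\mathbf{H}}$ twisted by the rotation group, up to the conformal anomaly prefactor $\sqrt{2}\pi|q|^{-\mathbf{c}_L/12}$. I would first reduce to the case $q=|q|=e^{-t}$ real (equation \eqref{ident1annulus}), since the general case \eqref{ident2annulus} follows from it by composing with the rotation by angle $\arg(q)$: writing $q=e^{-t}e^{i\vartheta}$, the annulus $\mathbb{A}_q$ with parametrization $\zeta_1(e^{i\theta})=qe^{i\theta}$ differs from $\mathbb{A}_{e^{-t}}$ only by precomposing the inner boundary parametrization with a rotation, so by the diffeomorphism invariance part of Proposition \ref{Weyl} (applied to the rotation $z\mapsto e^{i\vartheta}z$, which is a biholomorphism of the annulus preserving $g_\mathbb{A}$) the kernel picks up exactly the action of $e^{i\vartheta\boldsymbol{\Pi}}$ as in \eqref{pi:prob}.

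For the real case, I would unfold the definition of the amplitude from Definition \ref{def:amp}(B): since $\mathbb{A}_{e^{-t}}$ has no marked points, $\mathcal{A}_{\mathbb{A}_{e^{-t}},g_\mathbb{A},\boldsymbol{\zeta}}(\tilde\varphi,\tilde\varphi')=Z_{\mathbb{A},g_\mathbb{A}}\,\caA^0_{\mathbb{A},g_\mathbb{A}}(\tilde\varphi,\tilde\varphi')\,\E[e^{-\mu M_\gamma^{g_\mathbb{A}}(\phi_g,\mathbb{A})}]$ where $\phi_g=X_{g_\mathbb{A},D}+P(\tilde\varphi,\tilde\varphi')$ and we used that $K_{g_\mathbb{A}}=0$ and $k_{g_\mathbb{A}}=0$ for the flat cylinder metric, so the curvature terms drop. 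The key is then to match this with the probabilistic formula \eqref{FKgeneral} for $S(t)=e^{-t\mathbf{H}}$. One conformally maps the annulus $\{e^{-t}\le|z|\le 1\}$ with metric $|dz|^2/|z|^2$ to the straight cylinder $[0,t]\times\T$ via $z=e^{-s+i\theta}$; under this map the Dirichlet GFF on the annulus becomes the Dirichlet GFF on the cylinder, and the harmonic extension $P(\tilde\varphi,\tilde\varphi')$ of the two boundary fields becomes the standard linear-in-$s$ interpolation plus the harmonic extensions into the two half-infinite pieces — this is exactly the field $\phi$ appearing in the Markov/semigroup picture of \cite{GKRV}. The constant modes $c,c'$ of $\tilde\varphi,\tilde\varphi'$ combine with the constant mode of the Dirichlet field to reproduce the $c+\phi_t$ structure, and the GMC term $\mu e^{\gamma c}\int |z|^{-\gamma Q}M_\gamma$ in \eqref{FKgeneral} is precisely $\mu M_\gamma^{g_\mathbb{A}}(\phi_g,\mathbb{A})$ once one unpacks the Weyl factor $|z|^{-\gamma Q}=|z|^{-\gamma Q}$ relating the flat measure $|dz|^2$ to $g_\mathbb{A}=|dz|^2/|z|^2$ and the corresponding shift in the GMC normalization.

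The remaining two points are: (i) the free-field prefactor $\caA^0_{\mathbb{A},g_\mathbb{A}}(\tilde\varphi,\tilde\varphi')=e^{-\frac12(\tilde\varphi,(\mathbf{D}_{\mathbb{A}}-\mathbf{D})\tilde\varphi)_2}$ together with $Z_{\mathbb{A},g_\mathbb{A}}=\det(\Delta_{\mathbb{A},D})^{-1/2}$ must reproduce the Gaussian semigroup kernel of $e^{-t\mathbf{H}_0}$ (the $\mu=0$ case) times the anomaly constant; this is a computation with the explicit Dirichlet-to-Neumann operator of the flat annulus, which is diagonal in the Fourier basis with eigenvalues $\coth$ and $\operatorname{csch}$ of $nt$, so $\mathbf{D}_\mathbb{A}-\mathbf{D}$ and the Dirichlet determinant can both be written out; matching against the explicit kernel of $e^{-t(-\frac12\partial_c^2+\frac12Q^2+\mathbf{P})}$ identifies the overlap and produces the $\zeta$-regularized constant $\det(\Delta_{\mathbb{A},D})^{-1/2}$ evaluated via the Kronecker limit / Dedekind $\eta$ formula, yielding exactly $\sqrt{2}\pi e^{-tQ^2/2}\cdot(\text{the }\mathbf{P}\text{-contribution})$ and after including the $\mathbf{P}$ part the full $\sqrt{2}\pi|q|^{-\mathbf{c}_L/12}e^{t\ln|q|\cdot 0}$... more precisely $\sqrt2\pi|q|^{-\mathbf{c}_L/12}$ with $\mathbf{c}_L=1+6Q^2$; (ii) the interchange of the GMC expectation with everything else, and the $L^2$ boundedness, which follow from the Girsanov/Cameron-Martin arguments already invoked in Definition \ref{def:amp} and the mapping properties of $e^{-t\mathbf{H}}$ on weighted spaces recalled in Subsection \ref{sub:hamiltonian}. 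The main obstacle I anticipate is bookkeeping the determinant and anomaly constant: getting the precise numerical factor $\sqrt{2}\pi$ and the fractional power $|q|^{-\mathbf{c}_L/12}$ requires carefully combining the $\zeta$-determinant of the Dirichlet Laplacian on the flat annulus, the $\det(2\mathbf{D}_0)=\pi^k$ type computations from Lemma \ref{density2}, and the normalization of $\mu_0$; the probabilistic/field-theoretic identification of the integrand is conceptually straightforward given \cite{GKRV}, but the scalar prefactor is where errors creep in, so I would isolate it by testing \eqref{ident1annulus} on the vacuum-type state and on Gaussian states where $\mathbf{H}$ acts explicitly, then invoke the semigroup property (which the amplitudes satisfy by Proposition \ref{glueampli}, as noted just before the statement) to pin down the $t$-dependence of the constant as a pure exponential, forcing it to be $|q|^{-\mathbf{c}_L/12}$ up to a multiplicative constant fixed at $t\to 0$.
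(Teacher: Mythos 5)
Your proposal is correct and follows essentially the same route as the paper: reduce \eqref{ident2annulus} to \eqref{ident1annulus} via the rotation action of $\boldsymbol{\Pi}$, identify the GMC expectation in the amplitude with the conditional-expectation form $H_{-\ln|q|}$ of the Feynman--Kac kernel of $e^{-t\mathbf{H}}$ by the Markov property of the GFF, compute $\caA^0_{\mathbb{A}_{|q|},g_\mathbb{A}}$ explicitly from the harmonic extension on the flat annulus and match it against the explicit kernel of $e^{-t\mathbf{H}_0}$, and fix the scalar prefactor using the known Dedekind-$\eta$ formula for $\det(\Delta_{\mathbb{A},D})$. The only difference is presentational (your fallback of testing on special states is not needed once the kernels are matched directly).
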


\begin{proof} The identity \eqref{ident2annulus} is a direct consequence of \eqref{ident1annulus} and the definition of $e^{i \arg(q)\boldsymbol{\Pi}}$. 
To prove  \eqref{ident1annulus} we first rewrite the propagator $e^{-t{\bf H}}$ in the following Feynmann-Kac form:
 for $t>0$ and any $f\in L^2(\R\times\Omega_\T)$ we claim that
\begin{equation}\label{emoinstH}
e^{-t\mathbf{H}}f(\tilde{\varphi})=\int _{\R\times \Omega_\T}K_0(t,\tilde{\varphi}, \tilde{\varphi}')H_t(\tilde{\varphi}, \tilde{\varphi}')f(\tilde{\varphi}')\,\dd \mu_0(\tilde{\varphi}')
\end{equation}
with $H_t$  defined by
 \[H_t(\tilde{\varphi}, \tilde{\varphi}')= \E\Big[\exp\Big(-\mu e^{\gamma c'}\int_{e^{-t}<|z|<1} |z|^{-\gamma Q}M_\gamma (\phi,\dd z)\Big)\, \Big|\,\phi_0= \varphi,\phi_t=c'-c+\varphi'\Big]\]
 and $K_0(t)$ the integral kernel of $e^{-t{\bf H}_0}$, i.e. 
\[e^{-t\mathbf{H}_0} f(\tilde{\varphi})=\int _{\R\times \Omega_\T}K_0(t,\tilde{\varphi}, \tilde{\varphi}')f(\tilde{\varphi}')\,\dd\mu_0(\tilde{\varphi}').\]
Indeed, recall that $\phi_t(\theta)=\phi(e^{-t+i\theta})$ with $\phi:=P_{\D}\varphi+X_{\D,D}$, therefore $\phi_0= \varphi$.   By the Feynman-Kac representation \eqref{FKgeneral}, for $f\in L^2(\R\times\Omega_{\T},\dd\mu_0)$ we have
\begin{align*}
e^{-t\mathbf{H} }f(\tilde{\varphi})=&e^{-\frac{Q^2t}{2}}\E\big[ f(c+\phi_t) e^{-\mu e^{\gamma c}\int_{e^{-t}<|z|<1} |z|^{-\gamma Q}M_\gamma (\phi,\dd z)}\big]\\
=&e^{-\frac{Q^2t}{2}}\E\big[ f(c+\phi_t) H_t(c+\phi_0,c+\phi_t)\big]
\end{align*}
where the expectation is taken with respect to $X_{\D,D}$. This gives \eqref{emoinstH}. 

Then, notice that the expectation in the definition \eqref{amplitude} of the amplitudes coincides with the function $H_{-\log|q|}(\tilde{\boldsymbol{\varphi}})$ (here we use the relation \eqref{scalingmeasure} and the fact that the curvature $K_{g_{\mathbb{A}}}=0$).
We want to compute the free-field amplitude $\mc{A}^0_{\mathbb{A}_{|q|},g_{\mathbb{A}},\boldsymbol{\zeta}}(\boldsymbol{\tilde{\varphi}})$.
Let $P\tilde{\boldsymbol{\varphi}}$ be the harmonic extension on $\mathbb{A}_{|q|}$ that satisfies $P\tilde{\boldsymbol{\varphi}}(e^{i\theta})= \tilde\varphi_{2}(e^{i\theta})$ and $P\tilde{\boldsymbol{\varphi}}(|q|e^{i\theta})= \tilde\varphi_{1}(e^{i\theta})$. Then 
$$P\tilde{\boldsymbol{\varphi}}(z)= c_2+\frac{c_1-c_2}{\log|q|}\log|z|+\sum_{n\geq 1}(a_nz^n+\bar a_n\bar z^n+b_n z^{-n}+\bar b_n\bar z^{-n})$$
with for $n\geq 1$
$$a_n=\frac{\varphi^1_n-\varphi^2_n|q|^{-n}}{|q|^{n}-|q|^{-n}},\quad  b_n=\frac{\bar \varphi^2_n|q|^n-\bar \varphi^1_n }{|q|^{n}-|q|^{-n}}.$$
With this expression, some algebra leads to the conclusion that $ \mathcal{A}^0_{\mathbb{A}_{|q|},g_\mathbb{A},\boldsymbol{\zeta}_{|q|}}(\tilde{\boldsymbol{\varphi}})$ is given by (with $e^{-t}=|q|$)
\begin{equation}\label{algebrafreefield}
 \mathcal{A}^0_{\mathbb{A}_{|q|},g_\mathbb{A},\boldsymbol{\zeta}_{|q|}}(\tilde\varphi,\tilde{\varphi}')=\exp\Big(-\frac{(c-c')^2}{2t}-\sum_n\frac{(x_n'-e^{-nt}x_n)^2}{2(1-e^{-2tn})}-\frac{x'^2_n}{2}+\frac{(y'_n-e^{-nt}y_n)^2}{2(1-e^{-2tn})}-\frac{y'^2_n}{2}\Big).
\end{equation}
Since the dynamics on the $c$-variable for the free propagator $e^{-t{\bf H}_0}$ is a Brownian motion, independent of the dynamics on $\varphi$, which is given by independent Ornstein-Uhlenbeck processes of the Fourier components of $\varphi$ (see Proposition 4.1 in \cite{GKRV20_bootstrap}), 
 $e^{-t{\bf H}_0}$ has integral kernel given by 
\begin{align}\label{kernel0}
&\mc{K}_0(t,\tilde{\varphi}', \tilde{\varphi})\\
&:=\frac{e^{-\frac{Q^2}{2}t}}{\sqrt{2\pi t}}\Big(\prod_{n\geq 1}(1-e^{-2tn})^{-1}\Big) \exp\Big(-\frac{(c-c')^2}{2t}-\sum_n\frac{(x_n-e^{-nt}x'_n)^2}{2(1-e^{-2tn})}-\frac{x^2_n}{2}+\frac{(y_n-e^{-nt}y'_n)^2}{2(1-e^{-2tn})}-\frac{y^2_n}{2}\Big)\nonumber,
\end{align}
which implies, with \eqref{algebrafreefield}
\[\mathcal{A}^0_{\mathbb{A}_{|q|},g_\mathbb{A},\boldsymbol{\zeta}_{|q|}}(\tilde\varphi,\tilde{\varphi}')=\sqrt{-2\pi\log|q|} |q|^{-\frac{6Q^2}{12}}\mc{K}_0(-\log|q|,\tilde\varphi,\tilde{\varphi}')\prod_{n\geq 1}(1-|q|^{2n}).\]
Therefore, from \eqref{emoinstH}, we deduce
\begin{align*}
\int  F(\tilde\varphi)\mathcal{A}_{\mathbb{A}_{|q|},g_\mathbb{A},\boldsymbol{\zeta}_{|q|}}(\tilde{\varphi},\tilde{\varphi}') F'(\tilde\varphi') \dd\mu_0(\tilde{\varphi},\tilde{\varphi}') 
=Z_{\mathbb{A}_q,g_{\mathbb{A}}}\sqrt{-2\pi\log|q|} |q|^{-\frac{6Q^2}{12}}\prod_{n\geq 1}(1-|q|^{2n}) \langle  e^{\log |q|\mathbf{H}}F,\bar F'\cjd_{\mc{H}}.
\end{align*}
Now we conclude using the fact that $Z_{\mathbb{A}_q,g_{\mathbb{A}}}= 2^{-1/2}(-2\pi/\log|q|)^{+1/2}|q|^{-\frac{1}{12}} \prod_{n\geq 1}(1-|q|^{2n})^{-1} $, see \cite{Wei}. 
\end{proof}

\subsection{Spectral resolution of ${\bf H}$}\label{subsubsec:spectral}

One of  the main mathematical inputs of \cite{GKRV20_bootstrap} is the construction, via scattering theory, of a complete family of generalised eigenstates for the Hamiltonian $\mathbf{H}$, which we describe now.
Let $\mc{N}$ be the set of sequences of positive integers ${\bf k}=(k_1,k_2,\dots)\in \N^{\N_+}$ 
 such that $k_n=0$ for all $n$ large enough. We denote $|\k|=\sum_n nk_n$. Note that $\mc{N}$ is countable. For $ \k,\l\in\mc{N}$, we set $\la_{\k\l}=|\k|+|\l|$. 
We recall the notations $c_-=c\mathbf{1}_{c<0}$ and  $c_+=c\mathbf{1}_{c>0}$. For $\beta_-,\beta_+\in\R$ and $p\geq 1$ we introduce the weighted $L^r$-spaces $e^{-\beta_- c_- -\beta_+c_+}L^r(\R\times \Omega_{\T})$ as the space of functions with finite $\|\cdot\|_{\beta,r}$-norm, where
\begin{equation}
\|F\|_{\beta,r}^r:=\int e^{r\beta_- c_-+r\beta_+c_+}\E[|F|^r]\,\dd c.
\end{equation}
We recall the result:\footnote{The last inequality follows directly from the spectral theoreom and the fact that $\Psi_{Q+ip,\k,\l}$ is a spectral resolution of the self-adjoint operator ${\bf H}$.} 
\begin{proposition}{\cite[Proposition 6.26]{GKRV20_bootstrap}}\label{holomorphicpsi}
There is a family $\Psi_{\alpha,\k,\l}\in e^{-\beta c_-}L^2(\R\times \Omega_{\T})$ indexed  by $\alpha\in\C $ and $ \k,\l\in\mc{N}$, where $\beta\in\R$ can be arbitrarily chosen so that $\beta>Q-{\rm Re}(\alpha)$, satisfying ${\bf H}\Psi_{\alpha,\k,\l}=(2\Delta_\alpha+|\k|+|\l|) \Psi_{\alpha,\k,\l}$ and analytic in the variable $\alpha$ in the region  
\begin{equation}\label{regionWl}
W_{\ell}:= \Big\{ \alpha \in \C \setminus \mc{D}_0\, |\,  
{\rm Re}(\alpha)\leq Q, {\rm Re}\sqrt{(Q-\alpha)^2-2\ell}>{\rm Re}(Q-\alpha)-\gamma/2\Big\}
\end{equation}
where $\mc{D}_{0}:=\bigcup_{j\geq 0}\{Q\pm i\sqrt{2j}\}$ and $\ell:=|\k|+|\l|$. This family is continuous in $\alpha\in W_{\ell}\cup  \mc{D}_0$.
In particular
\[\Psi_{Q+ip,\k,\l}\in \bigcap_{\eps>0} e^{-\eps c_-}L^2(\R\times \Omega_\T), \quad p\in\R_+, \k\in\mc{N},\l\in\mc{N},\]  
and  for all $u,v\in L^2(\R\times \Omega_\T)$
\[\cjg u, v\cjd_{L^2}=\frac{1}{2\pi}\sum_{\k,\l\in\mc{N}}\int_0^\infty 
\cjg u,\Psi_{Q+ip,\k,\l}\cjd_{L^2} \cjg\Psi_{Q+ip,\k,\l},v\cjd_{L^2} \dd p.\]
Moreover, for $N\in \N, L\in \R^+$ and $u\in L^2(\R\times \Omega_{\mathbb{T}})$
\[ \frac{1}{2\pi}\sum_{\substack{\k,\l\in\mc{N},\\ |\k|+|\l|\leq N}}\int_0^L 
\cjg u,\Psi_{Q+ip,\k,\l}\cjd_{L^2} \cjg\Psi_{Q+ip,\k,\l},v\cjd_{L^2} \dd p\leq \|u\|^2_{\mc{H}}.\]
\end{proposition}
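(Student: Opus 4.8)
The final statement is Proposition~\ref{holomorphicpsi}, which is quoted from~\cite[Proposition 6.26]{GKRV}, so strictly speaking it is not something the present paper re-proves. Nevertheless, let me sketch how I would establish the key assertions if I had to reconstruct the argument, and in particular how the spectral-theoretic completeness relation follows once the generalized eigenstates have been constructed.

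\textbf{Setup and the free model.} First I would recall from~\cite[Section 4]{GKRV} that the Hamiltonian decomposes as $\mathbf{H}=\mathbf{H}_0+\mu e^{\gamma c}V$, and that $\mathbf{H}_0=-\tfrac12\partial_c^2+\tfrac12 Q^2+\mathbf{P}$ is explicitly diagonalizable: on the $c$-variable it is (a shift of) the free Laplacian on $\R$ with continuous spectrum $[\tfrac12 Q^2,\infty)$ and generalized eigenfunctions $e^{ipc}$, $p\in\R$; on $\Omega_\T$ the operator $\mathbf{P}$ has discrete spectrum $\N_0$ with eigenspaces spanned by Hermite-type polynomials in the Gaussian coordinates, indexed by the pairs of partitions/sequences $\k,\l\in\mc{N}$ with $\mathbf{P}$-eigenvalue $|\k|+|\l|$. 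Thus $\mathbf{H}_0$ has a joint generalized eigenbasis $\Psi^0_{\alpha,\k,\l}$ with $\alpha=Q+ip$ and eigenvalue $2\Delta_\alpha+|\k|+|\l|=\tfrac12 Q^2+\tfrac12 p^2+|\k|+|\l|$, and the Plancherel formula for $\mathbf{H}_0$ is just the product of the Fourier inversion on $\R$ (giving the $\tfrac{1}{2\pi}\int_0^\infty\dd p$ with a factor $2$ from folding $p\mapsto -p$) and the discrete sum over $\k,\l$.

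\textbf{Perturbation via scattering theory.} The core of~\cite{GKRV} is to promote $\Psi^0_{\alpha,\k,\l}$ to genuine generalized eigenstates $\Psi_{\alpha,\k,\l}$ of the full $\mathbf{H}$. The plan is: (i) since $\mu e^{\gamma c}V$ decays like $e^{\gamma c}$ as $c\to-\infty$ and blows up as $c\to+\infty$, the interaction is a ``long-range but confining'' perturbation on the $c\to+\infty$ side and a short-range one on the $c\to-\infty$ side; one builds the eigenstates by solving an integral (Lippmann--Schwinger type) equation, using the free resolvent $R_0(\lambda)=(\mathbf{H}_0-\lambda)^{-1}$ acting on the weighted spaces $e^{-\beta c_-}L^2$ on which $\mathbf{H}$ is still well-behaved (as recalled after~\eqref{FKgeneral}); (ii) the contraction/compactness estimates that make this solvable are precisely what force the holomorphy in $\alpha$ on the region $W_\ell$ and the necessity of removing the discrete set $\mc{D}_\ell$, which is where resonances of the free problem at thresholds $Q\pm i\sqrt{2(j-\ell)}$ sit; (iii) the eigenvalue equation $\mathbf{H}\Psi_{\alpha,\k,\l}=(2\Delta_\alpha+|\k|+|\l|)\Psi_{\alpha,\k,\l}$ and the membership $\Psi_{\alpha,\k,\l}\in e^{-\beta c_-}L^2$ for $\beta>Q-\mathrm{Re}(\alpha)$ then follow by construction, and continuity up to $\mc{D}_\ell$ from uniform bounds. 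I would treat the holomorphy/resonance bookkeeping as the main obstacle: controlling the analytic continuation of the resolvent across the continuous spectrum and pinning down exactly which thresholds obstruct it is the delicate analytic heart of the matter.

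\textbf{From eigenstates to the Plancherel identity.} Once the family $\{\Psi_{Q+ip,\k,\l}\}$ is shown to be a complete set of generalized eigenfunctions of the self-adjoint operator $\mathbf{H}$ — completeness being proved via a wave-operator/intertwining argument comparing $\mathbf{H}$ with $\mathbf{H}_0$ and using that the point spectrum of $\mathbf{H}$ is empty — the displayed identity $\cjg u,v\cjd_{L^2}=\frac{1}{2\pi}\sum_{\k,\l}\int_0^\infty \cjg u,\Psi_{Q+ip,\k,\l}\cjd_{L^2}\cjg\Psi_{Q+ip,\k,\l},v\cjd_{L^2}\,\dd p$ is exactly the spectral theorem written in this generalized eigenbasis, with the normalization fixed by matching to the free Plancherel formula. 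The two inequalities at the end are then immediate: the map $u\mapsto (\cjg u,\Psi_{Q+ip,\k,\l}\cjd_{L^2})_{p,\k,\l}$ is, up to the $\tfrac{1}{2\pi}\,\dd p$ normalization, an isometry onto its image, so the partial sum over $|\k|+|\l|\le N$ and $p\in[0,L]$ is the squared norm of a projection of $u$'s spectral representation and is bounded by $\|u\|_2^2$; equivalently it is $\langle u,E(N,L)u\rangle$ for a spectral-type projection $E(N,L)\le \mathrm{Id}$. I would present that last step in one line, since it is a purely formal consequence of the spectral theorem once the hard analytic input (construction and completeness of the $\Psi_{\alpha,\k,\l}$) is in place.
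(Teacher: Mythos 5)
This proposition is not proved in the present paper: it is quoted verbatim from \cite[Proposition 6.26]{GKRV}, and the only justification the paper offers is the footnote stating that the final inequality "follows directly from the spectral theorem and the fact that $\Psi_{Q+ip,\k,\l}$ is a spectral resolution of the self-adjoint operator ${\bf H}$" --- which is exactly your one-line argument for the last display. Your sketch of how the hard part would go is consistent in outline with what \cite{GKRV} actually does (diagonalization of ${\bf H}_0$, construction of generalized eigenstates on weighted spaces $e^{-\beta c_-}L^2$, meromorphic continuation in $\alpha$ with the threshold set $\mc{D}_\ell$ excluded, completeness via scattering theory, and then Plancherel normalized against the free model). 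One point worth flagging: the construction of $\Psi_{\alpha,\k,\l}$ in \cite{GKRV}, as recalled in Proposition \ref{defprop:desc} of this paper, is implemented through the semigroup intertwining limit $\lim_{t\to\infty}e^{t(2\Delta_\alpha+\ell)}e^{-t{\bf H}}(\chi(c)\Psi^0_{\alpha,\k,\l})$ on the region $\mathcal{I}_{\nu,\tilde\nu}$ of real $\alpha$, followed by analytic continuation to $W_\ell$, rather than by solving a Lippmann--Schwinger equation for the resolvent directly; the resolvent analysis enters in establishing the continuation across the continuous spectrum and the completeness. These are two faces of the same scattering-theoretic strategy, but if you were to write the proof out you would need the semigroup version (or the probabilistic Feynman--Kac representation behind it) to get the quantitative bounds in the weighted spaces that make the limit exist --- the abstract perturbative picture you describe does not by itself produce the region $W_\ell$ or the exceptional set $\mc{D}_\ell$ without the explicit estimates of \cite{GKRV}.
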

 
\subsection{Construction of the Virasoro descendant states}\label{sub:virasoro}
%%%%%%%%%%%%%%%%%%%%%%%%
The basis $(\Psi_{Q+ip,\k,\l})_{p,\k,\l}$ diagonalizes the Hamiltonian ${\bf H}$ but it is not very convenient to work with, as it is not related to the symmetries of our model.
There is a convenient change of basis of generalised eigenfunctions, which is better related to the Virasoro algebra and that we shall recall now from the paper \cite{GKRV20_bootstrap}.  

\subsubsection{The free theory} First, we recall how to construct this basis in the case when $\mu=0$, i.e. for the free theory. Here we follow \cite[section 4.4]{GKRV20_bootstrap}.  Let us denote by $\mathcal{S}$ the set of smooth functions depending on finitely many coordinates, i.e. of the form
$F(x_1,y_1, \dots,x_n,y_n)$  with $n\geq 1$ and $F\in C^\infty((\R^2)^n)$, with at most polynomial growth at infinity for $F$ and its derivatives. Obviously $\mathcal{S}$ is dense in $L^2(  \Omega_{\T})$. Let  
\begin{equation}\label{smoothexpgrowth}
 \mathcal{C}_\infty:=\mathrm{Span}\{ \psi(c)F\,|\,\psi\in C^\infty(\R)\text{ and }F\in\mathcal{S} \}.
 \end{equation}
We use the  complex coordinates \eqref{GFFcircle0}, i.e. we denote for $n>0$
\begin{align*}
\partial_n:=\frac{\partial}{\partial\varphi_{n}}= \sqrt{n} (\partial_{x_n}-i \partial_{y_n}) \quad \text{ and }\quad \partial_{-n}:=\frac{\partial}{\partial\varphi_{-n}}= \sqrt{n} (\partial_{x_n}+i \partial_{y_n}).
\end{align*}
Then we introduce on $ \mathcal{C}_\infty$  the following operators for $n>0$: 
\[
 \mathbf{A}_n= \tfrac{i}{2}\partial_{n},\ \ \  \mathbf{A}_{-n}=\tfrac{i}{2}(\partial_{-n}-2n\varphi_{n}) \ \ \ 
\widetilde{\mathbf{A}}_n= \tfrac{i}{2}\partial_{-n},\ \ \ \widetilde{\mathbf{A}}_{-n}=\tfrac{i}{2}(\partial_{n}-2n\varphi_{-n})\ \ \ 
\mathbf{A}_0=\widetilde{\mathbf{A}}_0=\tfrac{i}{2}(\partial_c+Q)
\]
and  the {\it normal ordered product} on $ \mathcal{C}_\infty$ by
 $:\!\mathbf{A}_n\mathbf{A}_m\!\!:\,=\mathbf{A}_n\mathbf{A}_m$ if $m>0$ and $\mathbf{A}_m\mathbf{A}_n$ if $n>0$ (i.e. annihilation operators are on the right). The free Virasoro generators are then   defined for all $n \in \Z$ by
\begin{align}
\mathbf{L}_n^0:=-i(n+1)Q\mathbf{A}_n+\sum_{m\in\Z}:\mathbf{A}_{n-m}\mathbf{A}_m: \label{virassoro}\\
\widetilde{\mathbf{L}}_n^0:=-i(n+1)Q\widetilde{\mathbf{A}}_n+\sum_{m\in\Z}:\widetilde{\mathbf{A}}_{n-m}\widetilde{\mathbf{A}}_m:\,\,.\label{virassorotilde}
\end{align}
They map  $ \mathcal{C}_\infty$ into itself. 
These operators are used to construct the descendant states of the free theory. More precisely (see \cite[subsection 4.4]{GKRV20_bootstrap}), for $\alpha\in \C$, we define 
\begin{align}\label{psialphadef}
\Psi^0_\alpha(c,\varphi):=e^{(\alpha-Q)c}\in \mathcal{C}_\infty.
\end{align}
For $\alpha\in \C$, these are generalised eigenstates of ${\bf H}^0$: they never belong to $L^2(\R \times \Omega_\T)$ but rather to some weighted spaces $e^{\beta |c|}L^2(\R\times \Omega_\T)$ for $\beta>|{\rm Re}(\alpha)-Q|$, hence their name ``generalised eigenstates". We have
\begin{equation}\label{L0psialpha}
\mathbf{L}_0^0\Psi^0_\alpha=\widetilde{\mathbf{L}}_0^0\Psi^0_\alpha=\Delta_{\alpha}
\Psi^0_\alpha,  \qquad 
\mathbf{L}_n^0\Psi^0_\alpha=\widetilde{\mathbf{L}}_n^0\Psi^0_\alpha=0,\ \ \ n>0,
\end{equation}
where  $\Delta_\alpha$ is the conformal weight \eqref{deltaalphadef}; $\Psi^0_\alpha$ is called {\it highest weight state} with highest weight $\Delta_{\alpha}$.

Next,   let $\mc{T}$ be the set of Young diagrams, i.e. the set of  sequences $\nu$ of integers   with the further requirements that $\nu(k)\geq \nu(k+1)$ and $\nu(k)=0$ for $k$ large enough. We will include $\{0\}$ as an element of $\mc{T}$. 
For a Young diagram $\nu$ we denote its length as $|\nu|=\sum_k\nu(k)$ and its size as $s(\nu)=\max\{k\,|\,\nu(k)\not=0\}$. Given two Young diagrams $\nu= (\nu(i))_{i \in [1,k]}$, $\tilde{\nu}= (\tilde{\nu}(i))_{i \in [1,j]}$ with size $k$ and $j$, we define the operators
\begin{equation*}
\mathbf{L}_{-\nu}^0=\mathbf{L}_{-\nu(k)}^0 \cdots \, \mathbf{L}_{-\nu(1)}^0, \quad \quad \quad   \tilde{\mathbf{L}}_{-\tilde \nu}^0=\tilde{\mathbf{L}}_{-\tilde\nu(j)}^0 \cdots\, \tilde{\mathbf{L}}_{-\tilde\nu(1)}^0
\end{equation*}
and define
\begin{align}\label{psibasis}
\Psi^0_{\alpha,\nu, \tilde\nu}=\mathbf{L}_{-\nu}^0\tilde{\mathbf{L}}_{-\tilde\nu}^0  \: \Psi^0_\alpha,
\end{align}
with the convention that $\Psi^0_{\alpha,\emptyset,\emptyset}:=\Psi^0_{\alpha}$ (with $\emptyset$ the empty Young diagram).
The vectors $\Psi^0_{\alpha,\nu, \tilde\nu}$ are called the \emph{descendant} states of $\Psi^0_\alpha$.  They satisfy the following properties (see \cite[Prop 4.9]{GKRV20_bootstrap}):

\begin{proposition}\label{prop:mainvir0}  
1) For each pair of Young diagrams $\nu,\tilde{\nu}\in \mathcal{T}$,  $\Psi^0_{\alpha,\nu, \tilde\nu}$ can be written as   
\begin{align}\label{psibasis1}
\Psi^0_{\alpha,\nu, \tilde\nu}=\mathcal{Q}_{\alpha,\nu,\tilde \nu}\Psi^0_\alpha
\end{align}
where $\mathcal{Q}_{\alpha,\nu,\tilde\nu}  $   is a polynomial in the coefficients $(\varphi_n)_n$ and an eigenfunction of $\mathbf{P}$ with eigenvalue $|\nu|+|\tilde\nu|$.\\
2)  For all $\alpha \in \C$
\begin{equation*}
\mathbf{L}_0^0\Psi^0_{\alpha,\nu, \tilde\nu} = (\Delta_{\alpha}%\tfrac{Q^2}{4}+ \tfrac{P^2}{4}
+|\nu|)\Psi^0_{\alpha,\nu ,\tilde\nu},\ \ \  \tilde{\mathbf{L}}_0^0\Psi^0_{\alpha,\nu, \tilde\nu} = (\Delta_{\alpha}+|\tilde\nu|)\Psi^0_{\alpha,\nu ,\tilde\nu}
\end{equation*}
and thus, since $\mathbf{H}^0=\mathbf{L}_0^0+\tilde{\mathbf{L}}_0^0$,
\begin{equation*}
\mathbf{H}^0\Psi^0_{\alpha,\nu,\tilde{\nu}} = (2 \Delta_\alpha+|\nu|+|\tilde{\nu}| )\Psi^0_{\alpha,\nu,\tilde{\nu}}  .
\end{equation*}
3) The inner products of the descendant states obey
\begin{equation}\label{scapo}  
 \langle \mathcal{Q}_{2Q-\bar\alpha,\nu,\tilde\nu} | \mathcal{Q}_{\alpha,\nu',\tilde\nu'} \rangle_{L^2(\Omega_\T)}=\delta_{|\nu| ,|\nu'|}\delta_{|\tilde\nu| ,|\tilde\nu'|}F_{\alpha}(\nu,\nu')F_{\alpha}(\tilde\nu,\tilde\nu')
\end{equation} 
where each coefficient   $F_{\alpha}(\nu,\nu')$  is a  polynomial in  ${\alpha}$, called the {\it Schapovalov form}.  The functions $(\mathcal{Q}_{\alpha,\nu,\tilde \nu})_{\nu,\tilde\nu\in\mathcal{T}}$ are linearly independent for 
\[\alpha\nin \{{\alpha_{r,s}} \mid \, r,s\in \N^\ast ,rs \leq \max(|\nu|, |\tilde \nu |)\}\quad \quad\text{with }\quad {\alpha_{r,s}}=Q-r\frac{\gamma}{2}-s\frac{2}{\gamma}.\]
 \end{proposition}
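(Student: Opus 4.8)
\textbf{Proof strategy for Proposition \ref{prop:mainvir0}.}

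The plan is to reduce everything to the commutation relations of the free-field Heisenberg algebra and a standard representation-theoretic identity. Throughout, observe that $\mathbf{A}_n,\widetilde{\mathbf{A}}_n$ satisfy $[\mathbf{A}_n,\mathbf{A}_m]=\tfrac{n}{2}\delta_{n+m,0}$ (and likewise for $\widetilde{\mathbf{A}}$, with $\mathbf{A}$ and $\widetilde{\mathbf{A}}$ commuting), which one checks directly from the definitions $\mathbf{A}_n=\tfrac{i}{2}\partial_n$, $\mathbf{A}_{-n}=\tfrac{i}{2}(\partial_{-n}-2n\varphi_n)$ for $n>0$ using $[\partial_n,\varphi_m]=\delta_{n,m}$. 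From this one gets the Virasoro relations $[\mathbf{L}_n^0,\mathbf{L}_m^0]=(n-m)\mathbf{L}_{n+m}^0+\tfrac{c_L}{12}(n^3-n)\delta_{n+m,0}$ with $c_L=1+6Q^2$, by the usual normal-ordering computation (this is standard; for LCFT it is already in \cite{GKRV}, so I would simply cite it). I would also record $[\mathbf{L}_n^0,\mathbf{A}_{-m}^0]$-type relations showing that $\mathbf{L}_{-\nu}^0$ applied to $\Psi_\alpha^0$ produces a polynomial in the $(\varphi_n)_{n>0}$ times $\Psi_\alpha^0$: indeed $\mathbf{L}_n^0$ for $n\le 0$ maps $\mathcal{C}_\infty$ into itself and, acting on $e^{(\alpha-Q)c}$ times a polynomial in the $\varphi_n$'s, yields again $e^{(\alpha-Q)c}$ times a polynomial, since $\partial_c$ hitting $\Psi_\alpha^0$ just brings down $(\alpha-Q)$. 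This gives part 1): the polynomial $\mathcal{Q}_{\alpha,\nu,\tilde\nu}$ is defined by $\Psi_{\alpha,\nu,\tilde\nu}^0=\mathcal{Q}_{\alpha,\nu,\tilde\nu}\Psi_\alpha^0$, and since $\mathbf{P}=\sum_{n>0}(\ldots)$ counts the total degree (weighted by $n$) and each $\mathbf{L}_{-k}^0$ raises it by $k$ while $\mathbf{P}\Psi_\alpha^0=0$, the eigenvalue of $\mathbf{P}$ on $\mathcal{Q}_{\alpha,\nu,\tilde\nu}$ is $|\nu|+|\tilde\nu|$. The holomorphy/polynomiality in $\alpha$ is transparent because the only $\alpha$-dependence enters through $\partial_c\Psi_\alpha^0=(\alpha-Q)\Psi_\alpha^0$.

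For part 2): from \eqref{L0psialpha} we have $\mathbf{L}_0^0\Psi_\alpha^0=\Delta_\alpha\Psi_\alpha^0$ and $\mathbf{L}_n^0\Psi_\alpha^0=0$ for $n>0$. Using $[\mathbf{L}_0^0,\mathbf{L}_{-k}^0]=k\mathbf{L}_{-k}^0$, an immediate induction on the number of parts of $\nu$ gives $\mathbf{L}_0^0\mathbf{L}_{-\nu}^0\Psi_\alpha^0=(\Delta_\alpha+|\nu|)\mathbf{L}_{-\nu}^0\Psi_\alpha^0$; since $\widetilde{\mathbf{L}}_0^0$ commutes with all $\mathbf{L}_{-k}^0$ and acts on $\Psi_\alpha^0$ by $\Delta_\alpha$, we get the stated eigenvalue $\Delta_\alpha+|\tilde\nu|$ for $\widetilde{\mathbf{L}}_0^0$. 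Adding and using $\mathbf{H}^0=\mathbf{L}_0^0+\widetilde{\mathbf{L}}_0^0$ gives the $\mathbf{H}^0$-eigenvalue. (One must also note $\mathbf{L}_0^0,\widetilde{\mathbf{L}}_0^0$ preserve $\mathcal{C}_\infty$ so these identities hold genuinely on $\mathcal{C}_\infty$, not just formally.)

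For part 3): the holomorphic and antiholomorphic sectors decouple because $\mathbf{A}$ and $\widetilde{\mathbf{A}}$ commute, so it suffices to compute $\langle \mathcal{Q}_{2Q-\bar\alpha,\nu}\mid \mathcal{Q}_{\alpha,\nu'}\rangle$ in the holomorphic sector (the polynomial $\mathcal{Q}_{\alpha,\nu,\tilde\nu}$ factors as a product of a function of $(\varphi_n)_{n>0}$ and one of $(\varphi_{-n})_{n>0}$). The key computation is to identify the $L^2(\Omega_\T)$-adjoint of $\mathbf{L}_{-\nu}^0$: from $\mathbf{A}_n^* = \mathbf{A}_{-n}$ on $L^2(\Omega_\T)$ when $\alpha\mapsto 2Q-\bar\alpha$ is used to flip the highest weight (this is the content of the pairing $\langle\mathcal{Q}_{2Q-\bar\alpha,\cdot}\mid\mathcal{Q}_{\alpha,\cdot}\rangle$; the shift $\alpha\to 2Q-\bar\alpha$ is exactly what makes $(\mathbf{L}_n^0)^*=\mathbf{L}_{-n}^0$ as operators pairing the two highest-weight modules), one reduces the inner product to $\langle \Psi^0 \mid \mathbf{L}_{\nu}^0\mathbf{L}_{-\nu'}^0\Psi^0\rangle$ where $\mathbf{L}_\nu^0 = \mathbf{L}_{\nu(1)}^0\cdots\mathbf{L}_{\nu(k)}^0$. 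This last quantity is the Shapovalov (Kac) matrix of the Verma module of the Virasoro algebra with central charge $c_L$ and highest weight $\Delta_\alpha$: it vanishes unless $|\nu|=|\nu'|$ (grading), it is a polynomial in $\Delta_\alpha$ hence in $\alpha$ (each $[\mathbf{L}_n^0,\mathbf{L}_{-m}^0]$ contributes $\mathbf{L}_{n-m}^0$ or a scalar polynomial in $n,m,c_L$, and $\mathbf{L}_0^0$ acts by $\Delta_\alpha$), and this defines $F_\alpha(\nu,\nu')$. The linear independence of the $\mathcal{Q}_{\alpha,\nu,\tilde\nu}$ for $\alpha$ outside the degenerate set $\{\alpha_{r,s}\}$ then follows from the Kac determinant formula: $\det(F_\alpha(\nu,\nu'))_{|\nu|=|\nu'|=N}$ is, up to a nonzero constant, $\prod_{rs\le N}(\Delta_\alpha-\Delta_{\alpha_{r,s}})^{p(N-rs)}$, which is nonzero precisely when $\alpha\notin\{\alpha_{r,s}: rs\le N\}$; nonvanishing of the determinant of the Gram matrix is equivalent to linear independence. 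I would cite Kac–Raina or the original Feigin–Fuchs computation for the determinant formula rather than rederiving it.

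The main obstacle is the bookkeeping in part 3): one has to be careful that the bilinear pairing being used is $\langle\mathcal{Q}_{2Q-\bar\alpha,\nu,\tilde\nu}\mid\mathcal{Q}_{\alpha,\nu',\tilde\nu'}\rangle$ and that under this pairing the operator identity $(\mathbf{L}_n^0)^{\dagger}=\mathbf{L}_{-n}^0$ holds — i.e., that $\langle \mathbf{L}_{-n}^0 u_{2Q-\bar\alpha}\mid v_\alpha\rangle = \langle u_{2Q-\bar\alpha}\mid \mathbf{L}_n^0 v_\alpha\rangle$ where $u_\beta,v_\beta$ denote elements of the Verma module with highest weight parameter $\beta$. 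This requires checking the adjointness of the Heisenberg generators $\mathbf{A}_n$ against the $\varphi_n$-Gaussian measure together with the correct treatment of the zero mode $\mathbf{A}_0=\tfrac{i}{2}(\partial_c+Q)$, whose ``adjoint'' is what forces the $\alpha\to 2Q-\bar\alpha$ reflection. Everything else is either a direct commutator computation or a citation; but this reflection-adjointness point is where I expect to spend the most care, and it is what must be stated precisely before invoking the Kac determinant.
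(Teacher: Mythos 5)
The paper does not actually prove this proposition: it is quoted verbatim from \cite[Prop 4.9]{GKRV}, and your reconstruction follows the same standard free-field route as that reference (Heisenberg commutators, $[\mathbf{L}_0^0,\mathbf{L}_{-k}^0]=k\mathbf{L}_{-k}^0$ for part 2, reduction of \eqref{scapo} to the Virasoro Shapovalov form via the adjointness $(\mathbf{L}_n^0)^{\dagger}=\mathbf{L}_{-n}^0$ under the reflection $\alpha\mapsto 2Q-\bar\alpha$, and the Kac determinant for linear independence). You also correctly single out the genuinely delicate point, namely that $(\mathbf{L}_n^0)^*\neq \mathbf{L}_{-n}^0$ as operators on $L^2(\Omega_\T)$ (one has $(\mathbf{L}_n^0)^*=\mathbf{L}_{-n}^0+2iQ\mathbf{A}_{-n}$), and that the identity only holds as a pairing between the modules built on $\Psi^0_{2Q-\bar\alpha}$ and $\Psi^0_\alpha$, which is exactly what forces the reflection of the zero mode.

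One side remark in your part 3 is false, although not load-bearing: $\mathcal{Q}_{\alpha,\nu,\tilde\nu}$ does \emph{not} factor as a polynomial in $(\varphi_n)_{n>0}$ times a polynomial in $(\varphi_{-n})_{n>0}$. A direct computation from \eqref{psibasis} gives $\mathbf{L}_{-1}^0\Psi^0_\alpha=\alpha\varphi_1\Psi^0_\alpha$ and $\widetilde{\mathbf{L}}_{-1}^0\Psi^0_\alpha=\alpha\varphi_{-1}\Psi^0_\alpha$, but
\begin{equation*}
\mathbf{L}_{-1}^0\widetilde{\mathbf{L}}_{-1}^0\Psi^0_\alpha=\alpha^2\big(\varphi_1\varphi_{-1}-\tfrac12\big)\Psi^0_\alpha ,
\end{equation*}
because $\mathbf{A}_{-1}=\tfrac{i}{2}(\partial_{-1}-2\varphi_1)$ differentiates the $\varphi_{-1}$ produced by the antiholomorphic sector; the result is the Wick-ordered product (consistent with it being a $\mathbf{P}$-eigenfunction of eigenvalue $2$), not the product $\alpha^2\varphi_1\varphi_{-1}$ of the two one-sector polynomials. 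The factorization in \eqref{scapo} must therefore be obtained, as your own reduction in fact does, from the commutativity of the two Virasoro copies together with the highest-weight conditions \eqref{L0psialpha}: after passing the adjoints one is left with $\langle \Psi^0_{2Q-\bar\alpha},\mathbf{L}_{\nu}^0\mathbf{L}_{-\nu'}^0\,\widetilde{\mathbf{L}}_{\tilde\nu}^0\widetilde{\mathbf{L}}_{-\tilde\nu'}^0\Psi^0_{\alpha}\rangle$, and the $\mathbf{L}_0^0$- and $\widetilde{\mathbf{L}}_0^0$-gradings plus $\mathbf{L}_n^0\Psi^0_\alpha=\widetilde{\mathbf{L}}_n^0\Psi^0_\alpha=0$ for $n>0$ yield $\delta_{|\nu|,|\nu'|}\delta_{|\tilde\nu|,|\tilde\nu'|}F_\alpha(\nu,\nu')F_\alpha(\tilde\nu,\tilde\nu')$ without any factorization of $\mathcal{Q}$ itself. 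With that parenthetical deleted, the argument is sound.
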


\subsubsection{Construction of descendant fields} 
Now we can focus on the construction of the descendant states in the Liouville theory. For  Young diagrams $\nu$, $\tilde\nu$, we consider the subset of $\C$ 
\begin{equation}\label{Inunu}
\mathcal{I}_{ \nu,\tilde\nu}:=\{\alpha\in\C \,|\,\exists \beta=\beta(\alpha)\in \R, \,\beta>{\rm Re} (Q-\alpha)\, \text{ and }\,{\rm Re} \big((Q-\alpha)^2\big)-2(|\nu|+|\tilde\nu|)>(\beta-\gamma/2)^2\}.
\end{equation}
Now we gather the contents of \cite[Prop 6.9]{GKRV20_bootstrap} (together with the notational warning in the proof of \cite[Prop 7.2]{GKRV20_bootstrap}) and of \cite[Prop 6.26]{GKRV20_bootstrap}:

\begin{proposition}\label{defprop:desc}
Fix  $\nu$, $\tilde\nu$ Young diagrams and $\ell=|\nu|+|\tilde\nu|$.  With the notation  \eqref{regionWl}, there is a holomorphic family of functions
\[\alpha\in W_\ell\subset\C\mapsto \Psi_{\alpha,\nu,\tilde{\nu}}\in e^{-\beta c_-}L^2(\R\times \Omega_{\T})\]
with  $\beta>Q-{\rm Re}(\alpha)$ satisfying ${\bf H}\Psi_{\alpha,\nu,\tilde{\nu}}=(2\Delta_\alpha+\ell) \Psi_{\alpha,\nu,\tilde{\nu}}$. This family is characterized by the intertwining property: for any $\chi\in C^\infty(\R)$   equal to $1$ near $-\infty$ and supported in $\R^-$
  \begin{equation}\label{intert:desc}
\Psi_{\alpha,\nu,\tilde{\nu}}= \lim_{t\to \infty}e^{t(2\Delta_\alpha+\ell)} e^{-t{\bf H}}( \chi(c)\Psi^0_{\alpha,\nu,\tilde{\nu}})\in e^{-\beta c_-}L^2(\R\times \Omega_{\T})
\end{equation}
 for any $\alpha\in \mathcal{I}_{ \nu,\tilde\nu}$ and any $\beta=\beta(\alpha)$ fulfilling the definition of $ \mathcal{I}_{ \nu,\tilde\nu}$. Furthermore, if $\alpha\in\R$ satisfies $\alpha<Q-\gamma$, we can take $\chi=1$ in the above statement.
\end{proposition}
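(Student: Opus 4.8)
\textbf{Proof plan for Proposition \ref{defprop:desc}.}

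The plan is to follow the strategy of \cite{GKRV}: first establish the intertwining limit \eqref{intert:desc} on a large enough set of real $\alpha$ using the Feynman--Kac representation of $e^{-t\mathbf{H}}$, then propagate to complex $\alpha$ by analyticity, and finally identify the limit with a genuine generalized eigenstate of $\mathbf{H}$. Concretely, the first step is to fix $\alpha\in\R$ with $\alpha<Q-\gamma$ and $\chi=1$ (as in the last sentence of the statement) and to study the family $t\mapsto e^{t(2\Delta_\alpha+\ell)}e^{-t\mathbf{H}}(\chi(c)\Psi^0_{\alpha,\nu,\tilde\nu})$. Since $\Psi^0_{\alpha,\nu,\tilde\nu}=\mathcal{Q}_{\alpha,\nu,\tilde\nu}\Psi^0_\alpha$ with $\mathcal{Q}_{\alpha,\nu,\tilde\nu}$ a polynomial in the $\varphi_n$ (Proposition \ref{prop:mainvir0}), one can use the explicit Feynman--Kac form \eqref{FKgeneral} of the semigroup and the fact that $\mathbf{H}^0\Psi^0_{\alpha,\nu,\tilde\nu}=(2\Delta_\alpha+\ell)\Psi^0_{\alpha,\nu,\tilde\nu}$ to write the quantity as an expectation over the GFF $X_{\mathbb{D},D}$ on the disk of $\Psi^0_{\alpha,\nu,\tilde\nu}$ composed with the flow, weighted by the GMC exponential $\exp(-\mu e^{\gamma c}\int |z|^{-\gamma Q}M_\gamma(\phi,\mathrm{d}z))$. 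The condition $\alpha<Q-\gamma$ is precisely what makes the GMC integral over the \emph{whole} punctured disk integrable in the limit $t\to\infty$ (the singularity of $M_\gamma$ near $0$ is controlled by $\int_0 r^{-\gamma Q+\gamma(\alpha-Q)}r\,\mathrm{d}r<\infty$), so monotone/dominated convergence gives the existence of the limit in the weighted space $e^{-\beta c_-}L^2$.

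The second step is the analytic continuation in $\alpha$. Here I would use the scattering-theoretic machinery of \cite{GKRV}: the resolvent $(\mathbf{H}-z)^{-1}$ and its meromorphic continuation across the continuous spectrum, combined with the a priori bounds from Proposition \ref{holomorphicpsi} (in particular the uniform estimate $\frac{1}{2\pi}\sum_{|\k|+|\l|\le N}\int_0^L|\langle u,\Psi_{Q+ip,\k,\l}\rangle|^2\,\mathrm{d}p\le\|u\|_2^2$), to show that $\alpha\mapsto\Psi_{\alpha,\nu,\tilde\nu}$, initially defined for $\alpha\in\R$ small, extends holomorphically to the region $W_\ell$ of \eqref{regionWl} with values in $e^{-\beta c_-}L^2$ for $\beta>Q-\mathrm{Re}(\alpha)$. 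The key point is that the removed set $\mathcal{D}_\ell=\bigcup_{j\ge\ell}\{Q\pm i\sqrt{2(j-\ell)}\}$ consists exactly of the thresholds where the branch of $\sqrt{(Q-\alpha)^2-2\ell}$ hits the imaginary axis, i.e. where the descendant at level $\ell$ would collide with the continuous spectrum; away from these, the standard Fredholm/analytic-perturbation argument applies. One then checks the eigenvalue equation $\mathbf{H}\Psi_{\alpha,\nu,\tilde\nu}=(2\Delta_\alpha+\ell)\Psi_{\alpha,\nu,\tilde\nu}$ holds on $W_\ell$ by analytic continuation from the real interval, where it follows by passing to the limit in $\mathbf{H}e^{-t\mathbf{H}}\Psi^0=-\partial_t e^{-t\mathbf{H}}\Psi^0$ together with $\mathbf{H}^0\Psi^0_{\alpha,\nu,\tilde\nu}=(2\Delta_\alpha+\ell)\Psi^0_{\alpha,\nu,\tilde\nu}$. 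Continuity up to $\mathcal{D}_\ell$ is obtained from uniform-in-$\alpha$ bounds on compact subsets of $W_\ell\cup\mathcal{D}_\ell$.

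The third step is to verify the characterization \eqref{intert:desc} for all $\alpha\in\mathcal{I}_{\nu,\tilde\nu}$ and to check independence of the cutoff $\chi$. For the $\chi$-independence, the difference $\chi_1(c)\Psi^0_{\alpha,\nu,\tilde\nu}-\chi_2(c)\Psi^0_{\alpha,\nu,\tilde\nu}$ is supported in a compact $c$-region and decays; applying $e^{t(2\Delta_\alpha+\ell)}e^{-t\mathbf{H}}$ and using that on $c\to-\infty$ the interaction term in $\mathbf{H}$ is negligible (it is $\mu e^{\gamma c}V$), one shows the two limits coincide. For the range $\alpha\in\mathcal{I}_{\nu,\tilde\nu}$ beyond $\alpha<Q-\gamma$ one invokes the same Girsanov/GMC estimates but now only needs integrability on a fixed annulus $e^{-t}<|z|<1$ for each finite $t$ plus a tail estimate as $t\to\infty$ governed by the spectral gap encoded in the inequality defining $\mathcal{I}_{\nu,\tilde\nu}$, namely $\mathrm{Re}((Q-\alpha)^2)-2\ell>(\beta-\gamma/2)^2$. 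I expect the main obstacle to be precisely this last integrability/convergence analysis near the threshold: controlling the GMC tail uniformly as $t\to\infty$ while simultaneously keeping track of the weighted $L^2$ norm requires the sharp estimates of \cite{GKRV} on negative moments of GMC and on the resolvent near the continuous spectrum, and matching the region $\mathcal{I}_{\nu,\tilde\nu}$ to $W_\ell$ so that the analytic continuation and the probabilistic representation agree on their common domain. Everything else is either a direct quotation of results from \cite{GKRV} (Propositions \ref{holomorphicpsi} and \ref{prop:mainvir0}) or routine complex-analytic bookkeeping.
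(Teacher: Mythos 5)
The paper itself offers no proof of this proposition: it is imported verbatim from \cite{GKRV} (Propositions 6.9 and 6.26 there, plus eq.\ (7.1)), so your plan has to be measured against the argument in that reference. Your three-step architecture --- probabilistic intertwining limit for real $\alpha$ via Feynman--Kac, holomorphic continuation in $\alpha$, and verification of the eigenvalue equation and of the $\chi$-independence by exploiting the decay of the interaction $\mu e^{\gamma c}V$ as $c\to-\infty$ --- is indeed the right skeleton, and your identification of $\mathcal{D}_\ell$ with the channel thresholds is essentially correct. Two things, however, need to be flagged.

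First, a concrete error: your integrability heuristic for the threshold $\alpha<Q-\gamma$ does not work. The Girsanov shift produced by $V_\alpha(0)$ contributes $|z|^{-\gamma\alpha}$ to the GMC density (not $|z|^{\gamma(\alpha-Q)}$), and the exponent you wrote, $\int_0 r^{-\gamma Q+\gamma(\alpha-Q)}r\,\mathrm{d}r$, produces the condition $\alpha>2Q-2/\gamma$, which is never satisfied for $\alpha<Q$; moreover first-moment finiteness of $\int_{\D}|z|^{-\gamma Q}M_\gamma(\phi,\mathrm{d}z)$ near the insertion would give $\alpha<-\gamma/2$, not $Q-\gamma$. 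The condition $\alpha<Q-\gamma$ in \cite{GKRV} instead governs the replacement of $\chi$ by $1$, i.e.\ the control of $e^{t(2\Delta_\alpha+\ell)}e^{-t\mathbf{H}}$ applied to $(1-\chi(c))e^{(\alpha-Q)c}\mathcal{Q}_{\alpha,\nu,\tilde\nu}$ as $c\to+\infty$. Second, and more seriously, the continuation step is where all the content lies, and \emph{standard Fredholm/analytic perturbation does not apply}: the eigenvalue $2\Delta_\alpha+\ell$ is embedded in the continuous spectrum of $\mathbf{H}$ and the states $\Psi_{\alpha,\nu,\tilde\nu}$ are never in $L^2$, only in $e^{-\beta c_-}L^2$. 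The extension from $\mathcal{I}_{\nu,\tilde\nu}$ to $W_\ell$ (in particular up to the spectrum line $Q+i\R_+$, which is where the proposition is actually used in this paper) requires the meromorphic continuation of the resolvent and the Poisson-operator construction with prescribed asymptotics at $c\to-\infty$ --- the technical core of \cite[Section 6]{GKRV} --- together with the observation that $\Psi_{\alpha,\nu,\tilde\nu}$ is a \emph{finite} linear combination of the $\Psi_{\alpha,\mathbf{k},\mathbf{l}}$ with $|\mathbf{k}|+|\mathbf{l}|=\ell$, so that it inherits the holomorphy stated in Proposition \ref{holomorphicpsi}. Relatedly, for complex $\alpha\in\mathcal{I}_{\nu,\tilde\nu}$ the existence of the limit \eqref{intert:desc} cannot be obtained by monotone or dominated convergence (the integrand is genuinely complex and unbounded in $c$); one needs a Cauchy-in-$t$ estimate, and this is precisely where the inequality $\mathrm{Re}\big((Q-\alpha)^2\big)-2\ell>(\beta-\gamma/2)^2$ defining $\mathcal{I}_{\nu,\tilde\nu}$ in \eqref{Inunu} enters. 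As written, your plan leaves these steps as black boxes, and they cannot be filled by the "routine complex-analytic bookkeeping" you invoke.
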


Notice that each $\Psi_{\alpha,\nu,\tilde{\nu}}$ is a linear combination $\{\Psi_{\alpha,\k,\l}\}_{|\k|+|\l|=|\nu|+|\tilde\nu|}$, see \cite[eq. (7.1)]{GKRV20_bootstrap}.

%%%%%%%%%%

 \subsection{Spectral resolution for the descendant states}
 %%%%%%%%%%%%%%%%%%%%%%%%%%%%%%
The set $\mc{T}$ of Young diagrams  is   countable and can be partitioned as 
\begin{equation}\label{decofT} 
\mc{T}= \bigcup_{n=0}^\infty \mc{T}_n,\quad \mc{T}_n:=\{\nu \in \mc{T}\,| \, |\nu|=n\} \textrm{ if }n>0, \quad \mc{T}_0=\{0\}.
\end{equation}
Set $d_{n}:=|\mc{T}_n|$ its cardinality, we let $\cjg\cdot,\cdot\cjd_{d_n}$ be the canonical Hermitian product on $\C^{d_n}$ and we define 
\begin{equation}\label{defofHT} 
H_\mc{T}:=\bigoplus_{n=0}^\infty \C^{d_n} \textrm{ with Hermitian product }\cjg v,v'\cjd_{\mc{T}}:= \sum_{n=0}^\infty \cjg v_n,v'_n\cjd_{d_n}
\end{equation}
where $v=\sum_n v_n$ with $v_n\in \C^{d_n}$ and $v'=\sum_n v'_n$. 
The Hilbert space 
$(H_\mc{T},\cjg \cdot,\cdot\cjd_{\mc{T}})$ is isomorphic to $\ell^2(\mc{T})$, i.e. the space $L^2(\mc{T},\mu_{\mc{T}})$ where the measure $\mu_{\mc{T}}$ is the counting measure on the set $\mc{T}$, namely
\[  \int_{\mc{T}} f(\nu)d\mu_{\mc{T}}= \sum_{\nu\in \mc{T}}f(\nu), \quad    \forall f:\mc{T}\to \R^+.\]

We will write ${\bf F}_{\alpha,n}$ for the linear endomorphism of  $(\C^{d_n},\cjg \cdot,\cdot\cjd_{d_n})$ whose matrix coefficients are the Schapovalov elements $F_{\alpha}(\nu,\nu')$ for $\nu,\nu'\in \mc{T}_n$, with the convention that ${\bf F}_{\alpha,0}=1$ (see Prop \ref{prop:mainvir0}). Notice that for $p\in\R$, ${\bf F}_{Q+ip,n}$ is positive definite and the map $\alpha\in\C \mapsto F_\alpha(\nu,\nu')$ is analytic for $\nu,\nu'\in \mc{T}_n$.
If we denote ${\bf F}_{Q+ip}:=\sum_{n=0}^\infty {\bf F}_{Q+ip,n}$ acting on $\mc{H}_{\mc{T}}$, one has for each $u_1,u_2\in e^{\delta c_-}L^2(\R_+\times \Omega_\T)$ with $\delta>0$ (see \cite[Lemma 8.1]{GKRV20_bootstrap})  
\begin{equation}\label{changeofbasis} 
\sum_{|\nu| + |\tilde \nu| \leq N} \int_0^L   \cjg u_1, H_{Q+ip,\nu,\tilde{\nu}} \cjd_{\mc{H}} \cjg  \, H_{Q+ip,\nu,\tilde{\nu}} , u_2\cjd_{\mc{H}}   {\rm d}p= \frac{1}{2\pi}\sum_{|\k|+ |\l| \leq N}  \int_0^L  \cjg u_1 , \Psi_{Q+ip,\k,\l} \cjd_{\mc{H}}  \cjg  \Psi_{Q+ip,\k,\l} , u_2\cjd_{\mc{H}}    {\rm d}p
\end{equation}
where, if $F^{-1/2}_{Q+ip}(\nu,\nu')$ 
are the matrix coefficients of the matrix ${\bf F}_{Q+ip,n}^{-1/2}$ for $|\nu|=|\nu'|=n$, one has set
\begin{equation}\label{defHzarb}
H_{Q+ip,\nu,\tilde{\nu}}: =  \sum_{|\nu'|=|\nu|, |\tilde{\nu}'|= |\tilde{\nu}|}  
F_{Q+ip}^{-1/2}(\nu, \nu')F_{Q+ip}^{-1/2}(\tilde{\nu}, \tilde{\nu}') 
\Psi_{Q+ip,\nu',\tilde{\nu}'}.
\end{equation}

 For $n,\tilde{n}\in \N$ and $p\in \R_+$, let us then define the vector spaces 
\begin{equation}\label{VPn}
\mc{V}_{p,n,\tilde{n}}:={\rm span}\{\Psi_{Q+ip,\nu,\tilde{\nu}}\, |\, \nu\in \mc{T}_n,\tilde{\nu}\in \mc{T}_{\tilde{n}} \}, \quad
\mc{V}_{p,n}:={\rm span}\{\Psi_{Q+ip,\nu,0}\, |\, \nu\in \mc{T}_n\}
\end{equation} 
and the operators
\begin{align} 
&\Pi_{\mc{V}_{p,n,\tilde{n}}}: e^{\delta  c_-}L^2(\R \times\Omega_\T)\to \C^{d_n\times d_{\tilde{n}}}\subset \mc{H}_{\mc{T}}\otimes \mc{H}_{\mc{T}}, &  (\Pi_{\mc{V}_{p,n,\tilde{n}}}u)_{\nu,\tilde{\nu}}:= 
\frac{1}{\sqrt{2\pi}}\cjg u,H_{Q+ip,\nu,\tilde{\nu}}\cjd_{\mc{H}}, \label{pairwithH}\\
 & \Pi_{\mc{V}_{p,n}}:=\Pi_{\mc{V}_{p,n,0}} : e^{\delta  c_-}L^2(\R \times\Omega_\T)\to \C^{d_n}\subset \mc{H}_{\mc{T}}, 
& (\Pi_{\mc{V}_{p,n}}u)_\nu=\frac{1}{\sqrt{2\pi}}\sum_{\nu' \in \mc{T}_n} {\bf F}_{Q+ip,n}^{-1/2}(\nu,\nu')\cjg u,\Psi_{Q+ip,\nu',0}\cjd_{\mc{H}}.
\end{align} 
By \eqref{changeofbasis} and Proposition \ref{holomorphicpsi}, we see that for each $L>0,(n,\tilde{n})\in \N^2$ and $u\in e^{\delta  c_-}L^2(\R\times \Omega_\T)$
\[ \begin{gathered}
\int_0^L \|\Pi_{\mc{V}_{p,n}}u\|_{\mc{H}_{\mc{T}}}^2\dd p\leq \int_0^L \|\Pi_{\mc{V}_{p,n,\tilde{n}}}u\|_{\otimes^2\mc{H}_{\mc{T}}}^2\dd p= \frac{1}{2\pi}\int_0^L \sum_{|\k|+ |\l| \leq n+\tilde{n}}  |\cjg u , \Psi_{Q+ip,\k,\l} \cjd_{\mc{H}}|^2  \dd p \leq \|u\|_{\mc{H}}^2,\\ 
 \int_0^L \sum_{n=0}^N\|\Pi_{\mc{V}_{p,n}}u\|_{\mc{H}_{\mc{T}}}^2\dd p\leq \lim_{(N,L)\to \infty}\int_0^L\sum_{n+\tilde{n}\leq N}\|\Pi_{\mc{V}_{p,n,\tilde{n}}}u\|_{\otimes^2\mc{H}_{\mc{T}}}^2\dd p=\|u\|^2_{\mc{H}}.\end{gathered}\]
This also implies, using a polarisation argument, the  
\begin{corollary}\label{boundednessofPi_V}
Let $\delta >0$, then for all $u, u_1,u_2\in e^{\delta  c_-}L^2(\R\times \Omega_\T)$ 
\[ \int_0^\infty \sum_{n,\tilde{n}\in\N^2}
\cjg\Pi_{\mc{V}_{p,n,\tilde{n}}}u_1,\Pi_{\mc{V}_{p,n,\tilde{n}}}u_2\cjd_{\otimes^2\mc{H}_{\mc{T}}}\dd p=\cjg u_1,u_2\cjd_{\mc{H}} ,\quad \int_0^\infty \sum_{n\in \N}\|\Pi_{\mc{V}_{p,n}}u\|_{\mc{H}_{\mc{T}}}^2\dd p\leq \|u\|^2_{\mc{H}}\]
and the integrals/sums converge in norm. 
The operator  $\Pi_{\otimes^2\mc{V}}: e^{\delta  c_-}L^2(\R\times \Omega_\T)\to L^2(\R_+;\otimes^2\mc{H}_{\mc{T}})$ defined by 
\[
 \forall f\in L^2(\R_+;\otimes^2\mc{H}_{\mc{T}})\, \quad 
\int_{0}^\infty \cjg (\Pi_{\otimes^2\mc{V}}u)(p),f(p)\cjd_{\mc{H}_{\mc{T}}}\dd p:= \int_0^\infty \sum_{n,\tilde{n}\in\N^2}
\cjg\Pi_{\mc{V}_{p,n,\tilde{n}}}u,f(p)\cjd_{\otimes^2\mc{H}_{\mc{T}}}\dd p
\]
is bounded and extends as an isometry $L^2(\R\times \Omega_\T)\to L^2(\R_+;\otimes^2\mc{H}_{\mc{T}})$.
The operator $\Pi_{\mc{V}}: e^{\delta  c_-}L^2(\R\times \Omega_\T)\to L^2(\R_+;\mc{H}_{\mc{T}})$ defined by 
\[
 \forall f\in L^2(\R_+;\mc{H}_{\mc{T}})\, \quad 
\int_{0}^\infty \cjg (\Pi_{\mc{V}}u)(p),f(p)\cjd_{\mc{H}_{\mc{T}}}\dd p:= \int_0^\infty \sum_{n\in\N}
\cjg\Pi_{\mc{V}_{p,n}}u,f(p)\cjd_{\mc{H}_{\mc{T}}}\dd p
\]
is bounded with norm $\leq 1$ and extends continuously to $L^2(\R\times \Omega_\T)$. 
\end{corollary}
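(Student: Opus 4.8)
\textbf{Proof plan for Corollary \ref{boundednessofPi_V}.}

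The plan is to deduce everything from the two displayed inequalities established just before the statement, namely, for $u,u_1,u_2\in e^{\delta c_-}L^2(\R\times\Omega_\T)$,
\[
\int_0^L\sum_{n+\tilde n\le N}\|\Pi_{\mc{V}_{p,n,\tilde n}}u\|_{\otimes^2\mc{H}_{\mc{T}}}^2\,\dd p
=\frac{1}{2\pi}\int_0^L\sum_{|\k|+|\l|\le N}|\cjg u,\Psi_{Q+ip,\k,\l}\cjd_2|^2\,\dd p\le\|u\|_2^2,
\]
together with the completeness relation of Proposition \ref{holomorphicpsi} and the change of basis identity \eqref{changeofbasis}. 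First I would let $(N,L)\to\infty$ in the displayed equality: by monotone convergence the left side converges to $\int_0^\infty\sum_{n,\tilde n}\|\Pi_{\mc{V}_{p,n,\tilde n}}u\|_{\otimes^2\mc{H}_{\mc{T}}}^2\,\dd p$ and the right side converges, by Proposition \ref{holomorphicpsi}, to $\frac{1}{2\pi}\int_0^\infty\sum_{\k,\l}|\cjg u,\Psi_{Q+ip,\k,\l}\cjd_2|^2\,\dd p=\|u\|_2^2$. This gives the $L^2$-orthonormality statement for $u_1=u_2=u$; the general bilinear identity $\int_0^\infty\sum_{n,\tilde n}\cjg\Pi_{\mc{V}_{p,n,\tilde n}}u_1,\Pi_{\mc{V}_{p,n,\tilde n}}u_2\cjd_{\otimes^2\mc{H}_{\mc{T}}}\,\dd p=\cjg u_1,u_2\cjd_2$ follows by polarisation (here one uses that the sums/integrals converge absolutely, which itself follows from Cauchy--Schwarz in $\ell^2\otimes L^2$ applied to the already-established norm bounds). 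The second inequality $\int_0^\infty\sum_n\|\Pi_{\mc{V}_{p,n}}u\|_{\mc{H}_{\mc{T}}}^2\,\dd p\le\|u\|_2^2$ is just the $\tilde n=0$ restriction of what we have proved, keeping only the $\tilde\nu=\emptyset$ summand, so it is immediate.

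Next I would construct the operator $\Pi_{\otimes^2\mc{V}}$. On the dense subspace $e^{\delta c_-}L^2\subset L^2(\R\times\Omega_\T)$ define $(\Pi_{\otimes^2\mc{V}}u)(p):=\sum_{n,\tilde n}\Pi_{\mc{V}_{p,n,\tilde n}}u\in\otimes^2\mc{H}_{\mc{T}}$; the defining pairing in the statement is then just the Riesz representation of this element against test functions $f\in L^2(\R_+;\otimes^2\mc{H}_{\mc{T}})$, so $\Pi_{\otimes^2\mc{V}}$ is well-defined as stated and the bilinear identity reads $\cjg\Pi_{\otimes^2\mc{V}}u_1,\Pi_{\otimes^2\mc{V}}u_2\cjd_{L^2(\R_+;\otimes^2\mc{H}_{\mc{T}})}=\cjg u_1,u_2\cjd_2$, i.e. $\Pi_{\otimes^2\mc{V}}$ is an isometry from $e^{\delta c_-}L^2$ with the $L^2$ norm into $L^2(\R_+;\otimes^2\mc{H}_{\mc{T}})$. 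Since $e^{\delta c_-}L^2$ is dense in $L^2(\R\times\Omega_\T)$ (e.g. it contains all compactly $c$-supported $L^2$ functions), the isometry extends uniquely and continuously to all of $L^2(\R\times\Omega_\T)$. The same argument, using the bound $\le\|u\|_2^2$ in place of equality and keeping only $\tilde\nu=\emptyset$, shows $\Pi_{\mc{V}}$ is bounded with norm $\le1$ on the dense subspace and hence extends continuously; one should remark that it need not be an isometry, only a contraction, exactly as stated, because dropping the $\tilde n>0$ terms loses positive mass.

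I expect the only genuinely delicate point to be the interchange of limits and the absolute convergence needed to justify polarisation: one must check that the double series $\sum_{n,\tilde n}\cjg\Pi_{\mc{V}_{p,n,\tilde n}}u_1,\Pi_{\mc{V}_{p,n,\tilde n}}u_2\cjd$ and its $\dd p$-integral converge absolutely, not merely conditionally, and that the partial sums over $\{n+\tilde n\le N\}$ exhaust the sum in a way compatible with \eqref{changeofbasis}. This is handled by Cauchy--Schwarz: $\sum_{n,\tilde n}|\cjg\Pi_{\mc{V}_{p,n,\tilde n}}u_1,\Pi_{\mc{V}_{p,n,\tilde n}}u_2\cjd|\le(\sum_{n,\tilde n}\|\Pi_{\mc{V}_{p,n,\tilde n}}u_1\|^2)^{1/2}(\sum_{n,\tilde n}\|\Pi_{\mc{V}_{p,n,\tilde n}}u_2\|^2)^{1/2}$ pointwise in $p$, and then another Cauchy--Schwarz in $\dd p$ against the finite quantities $\|u_i\|_2$; the finiteness of $\int_0^\infty\sum_{n,\tilde n}\|\Pi_{\mc{V}_{p,n,\tilde n}}u_i\|^2\,\dd p=\|u_i\|_2^2$ is what we proved in the first paragraph. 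Everything else is routine functional analysis (monotone/dominated convergence and density of $e^{\delta c_-}L^2$), so the corollary follows.
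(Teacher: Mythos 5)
Your proposal is correct and follows essentially the same route as the paper: the paper deduces the corollary directly from the partial-sum bounds displayed just before it (which come from \eqref{changeofbasis} and Proposition \ref{holomorphicpsi}), letting $(N,L)\to\infty$ and invoking a polarisation argument, with the operator extensions following by density of $e^{\delta c_-}L^2(\R\times\Omega_\T)$ in $L^2(\R\times\Omega_\T)$. Your write-up simply fills in the standard monotone-convergence and Cauchy--Schwarz details that the paper leaves implicit.
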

When $u\in L^2(\R\times \Omega_\T)$ is real valued, we set
$(\Pi_{\otimes^2\mc{V}}u)(-p,\nu,\tilde{\nu}):=\frac{1}{\sqrt{2\pi}}\cjg u,\bbar{H_{Q+ip,\nu,\tilde{\nu}}}\cjd_{\mc{H}}$ for $p>0,(\nu,\tilde{\nu})\in \mc{T}^2$ and, by Corollary \ref{boundednessofPi_V}, one has for $u_i\in L^2$ real-valued
\begin{equation}\label{rewriting}
\int_{\R_+\times \Omega_\T} u_1u_2 \, \dd \mu_0=
\int_0^\infty\int_{\mc{T}^2}(\Pi_{\otimes^2\mc{V}} u_1)(p,\nu,\tilde{\nu})(\Pi_{\otimes^2\mc{V}}u_2)(-p,\nu,\tilde{\nu})\, \dd \mu_{\mc{T}}(\nu)\dd \mu_{\mc{T}}(\tilde\nu)
\dd p.
\end{equation}

\subsection{Action of $\Pi$ on descendants.}\label{sec:momenta}

\begin{lemma}\label{continuityPi}
The group $( e^{i\vartheta \mathbf{\Pi}})_{\vartheta\in\R}$ maps $e^{-\beta_- c_-- \beta_+ c_+}L^2(\R\times \Omega_\T)$ into itself for any $\beta_-,\beta_+\in\R$ and commutes with the semigroup $(e^{-t\mathbf{H} } )_{t>0}$ on $e^{-\beta_- c_-}L^2(\R\times \Omega_\T)$.
\end{lemma}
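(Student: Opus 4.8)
The plan is to derive both assertions from the rotation invariance that is built into the definitions of $\mathbf{\Pi}$, $\mu_0$ and the Feynman--Kac semigroup \eqref{FKgeneral}.

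For the mapping property I would start by unwinding \eqref{pi:prob}. Write $R_\vartheta$ for the transformation $(c+\varphi)\mapsto(c+\varphi(\cdot+\vartheta))$ of $\R\times\Omega_\T$, so that $e^{i\vartheta\mathbf{\Pi}}F=F\circ R_\vartheta$; in the Fourier coordinates of \eqref{GFFcircle0}--\eqref{Pdefin}, $R_\vartheta$ fixes the constant mode $c$ and rotates each pair $(x_n,y_n)$ by the angle $n\vartheta$. Each Gaussian factor of $\P_\T$ is invariant under planar rotations, so $R_\vartheta$ preserves $\mu_0=\dd c\otimes\P_\T$ (this re-proves that $e^{i\vartheta\mathbf{\Pi}}$ is unitary on $L^2(\R\times\Omega_\T)$, as recorded in \eqref{pi:prob}). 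The extra point is that $R_\vartheta$ leaves $c$, hence $c_-$ and $c_+$, unchanged; therefore $e^{i\vartheta\mathbf{\Pi}}$ commutes with multiplication by the weight $e^{-\beta_-c_--\beta_+c_+}$. Writing $F=e^{-\beta_-c_--\beta_+c_+}G$ with $G\in L^2$ then gives $e^{i\vartheta\mathbf{\Pi}}F=e^{-\beta_-c_--\beta_+c_+}(e^{i\vartheta\mathbf{\Pi}}G)$ with $e^{i\vartheta\mathbf{\Pi}}G\in L^2$, so in fact $e^{i\vartheta\mathbf{\Pi}}$ is an isometry of each weighted space.

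For the commutation with the semigroup I would use the Feynman--Kac representation \eqref{FKgeneral}. It suffices to prove $S(t)(F\circ R_\vartheta)=(S(t)F)\circ R_\vartheta$ for $F$ bounded with compact support in $c$ (a dense subspace of each $e^{-\beta_-c_-}L^2$) and then pass to the closure, since $e^{-t\mathbf{H}}$ is bounded on $e^{-\beta_-c_-}L^2$ by \cite[Lemma 6.5]{GKRV} and $e^{i\vartheta\mathbf{\Pi}}$ is bounded there by the first part. Fixing $(c,\varphi)$ and $\phi=P_\D\varphi+X_{\D,D}$ on $(\D,|dz|^2)$, the quantity $S(t)(F\circ R_\vartheta)(c,\varphi)$ is the expectation over $X_{\D,D}$ of $F\big(c+(\phi_t)(\cdot+\vartheta)\big)$ times the multiplicative-chaos factor on the annulus $\{e^{-t}<|z|<1\}$. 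I would then introduce the rotated field $\tilde\phi(z):=\phi(e^{i\vartheta}z)$, for which $(\phi_t)(\theta+\vartheta)=\tilde\phi(e^{-t+i\theta})$, and invoke three elementary covariance facts: (i) $P_\D\varphi(e^{i\vartheta}\cdot)=P_\D(R_\vartheta\varphi)$, from the Poisson series $P_\D\varphi(z)=\sum_{n>0}(\varphi_nz^n+\varphi_{-n}\bar z^n)$; (ii) the rotation invariance in law of the Dirichlet GFF $X_{\D,D}$ on $\D$, a consequence of the rotation invariance of $G_\D$; (iii) the covariance of Gaussian multiplicative chaos, $M_\gamma(\phi(e^{i\vartheta}\cdot),A)=M_\gamma(\phi,e^{i\vartheta}A)$ (the $g$-regularization being carried out with a rotation invariant metric), combined with the rotation invariance of the annulus and of the weight $|z|^{-\gamma Q}$, which gives $\int_{e^{-t}<|z|<1}|z|^{-\gamma Q}M_\gamma(\phi,\dd z)=\int_{e^{-t}<|w|<1}|w|^{-\gamma Q}M_\gamma(\tilde\phi,\dd w)$. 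Combining (i)--(iii), the integrand is a measurable functional of $\tilde\phi$, and $\tilde\phi$ has the same law as the field built from the boundary data $R_\vartheta\varphi$; hence $S(t)(F\circ R_\vartheta)(c,\varphi)=S(t)F(c,R_\vartheta\varphi)=((S(t)F)\circ R_\vartheta)(c,\varphi)$, which is the desired identity $e^{-t\mathbf{H}}e^{i\vartheta\mathbf{\Pi}}=e^{i\vartheta\mathbf{\Pi}}e^{-t\mathbf{H}}$ on $e^{-\beta_-c_-}L^2$.

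I expect the only delicate step to be this last substitution inside the expectation: one must verify that $M_\gamma(\phi,\cdot)$, defined as an almost sure limit of regularized measures, is genuinely a measurable functional of the field and transforms covariantly under $z\mapsto e^{i\vartheta}z$, so that the joint law of $\big(\tilde\phi_t,\int_{e^{-t}<|z|<1}|z|^{-\gamma Q}M_\gamma(\tilde\phi,\dd z)\big)$ coincides with that of the corresponding pair constructed from $R_\vartheta\varphi$. This is routine given the rotation invariance of $G_\D$ and of the flat metric, but it is where the content of the lemma lies; the rest is bookkeeping, and the extension from $L^2$ to the weighted spaces is an immediate density argument.
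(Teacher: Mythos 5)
Your proof is correct and follows essentially the same route as the paper: the mapping property is exactly the paper's argument (rotation invariance of the standard Gaussian on each $(x_n,y_n)$-plane together with the fact that $R_\vartheta$ fixes the zero mode $c$, hence commutes with the weight). For the commutation the paper simply says it holds ``by construction'' with a reference to \cite{GKRV}, but your Feynman--Kac computation via the covariances of $P_\D$, $X_{\D,D}$ and the chaos measure under $z\mapsto e^{i\vartheta}z$ is precisely the argument the paper itself writes out for the analogous statement about $\mathbf{O}$ in Lemma \ref{commut:semi}, so there is no substantive difference.
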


\begin{proof}
The fact that $( e^{i\vartheta\mathbf{\Pi}})_{\vartheta\in\R}$ maps $e^{-\beta c_-}L^2(\R\times \Omega_\T)$ into itself results from the representation \eqref{pi:prob} and the fact that a standard Gaussian measure on $\R^2$ is invariant under rotations. The fact that it commutes with $(e^{-t\mathbf{H} } )_{t>0}$ is by construction, see  \cite[subsection 3.3]{GKRV20_bootstrap} (dilations commute with rotations).
\end{proof}

\begin{proposition}\label{propPi}
For $\beta>Q-{\rm Re}(\alpha)$ and $\nu,\tilde{\nu}\in \mc{T}$, we have $  e^{i\vartheta \mathbf{\Pi}}\Psi_{\alpha,\nu,\tilde{\nu}}=e^{i (|\nu|-|\tilde{\nu}|) \vartheta}\Psi_{\alpha,\nu,\tilde{\nu}}$ in $e^{-\beta c_-}L^2(\R\times \Omega_\T)$.
\end{proposition}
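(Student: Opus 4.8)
The plan is to reduce the statement to the corresponding fact for the \emph{free} descendant states $\Psi^0_{\alpha,\nu,\tilde\nu}$ and then transport it through the intertwining limit \eqref{intert:desc}. The two ingredients are: (i) $e^{i\vartheta\mathbf{\Pi}}$ commutes with $e^{-t\mathbf{H}}$ on the weighted spaces $e^{-\beta c_-}L^2(\R\times\Omega_\T)$, which is exactly Lemma \ref{continuityPi}; and (ii) the free descendants are eigenvectors of $\mathbf{\Pi}$ with eigenvalue $|\nu|-|\tilde\nu|$. For (ii), recall from Proposition \ref{prop:mainvir0} that $\Psi^0_{\alpha,\nu,\tilde\nu}=\mathcal{Q}_{\alpha,\nu,\tilde\nu}\Psi^0_\alpha$ where $\mathcal{Q}_{\alpha,\nu,\tilde\nu}$ is a polynomial in the Fourier coefficients $(\varphi_n)_{n\neq 0}$; moreover $\Psi^0_\alpha(c,\varphi)=e^{(\alpha-Q)c}$ depends only on $c$, hence is $\mathbf{\Pi}$-invariant. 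So it suffices to check that $\mathcal{Q}_{\alpha,\nu,\tilde\nu}$ is an eigenvector of $\mathbf{\Pi}$ with eigenvalue $|\nu|-|\tilde\nu|$.

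\textbf{The free computation.} Under $e^{i\vartheta\mathbf{\Pi}}$, the field transforms by $\varphi(\cdot)\mapsto\varphi(\cdot+\vartheta)$, which on Fourier modes reads $\varphi_n\mapsto e^{in\vartheta}\varphi_n$; equivalently $\partial_n\mapsto e^{-in\vartheta}\partial_n$. Thus $e^{i\vartheta\mathbf{\Pi}}$ conjugates the creation/annihilation operators by $\mathbf{A}_n\mapsto e^{-in\vartheta}\mathbf{A}_n$ and $\widetilde{\mathbf{A}}_n\mapsto e^{in\vartheta}\widetilde{\mathbf{A}}_n$ (the $\widetilde{\phantom{A}}$-sector involves $\partial_{-n}$ and $\varphi_n$, giving the opposite phase). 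Plugging this into the definitions \eqref{virassoro}, \eqref{virassorotilde} of $\mathbf{L}^0_n$ and $\widetilde{\mathbf{L}}^0_n$ — where every term has total mode-index $n$ — one gets the conjugation rule $e^{i\vartheta\mathbf{\Pi}}\mathbf{L}^0_{-k}e^{-i\vartheta\mathbf{\Pi}}=e^{ik\vartheta}\mathbf{L}^0_{-k}$ and $e^{i\vartheta\mathbf{\Pi}}\widetilde{\mathbf{L}}^0_{-k}e^{-i\vartheta\mathbf{\Pi}}=e^{-ik\vartheta}\widetilde{\mathbf{L}}^0_{-k}$ (I would double-check the signs against subsection \ref{sub:virasoro}, but the phantom point is that $\mathbf{L}^0$ carries $+k$ and $\widetilde{\mathbf{L}}^0$ carries $-k$). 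Applying $\mathbf{L}^0_{-\nu}\widetilde{\mathbf{L}}^0_{-\tilde\nu}$ to the $\mathbf{\Pi}$-invariant $\Psi^0_\alpha$ and using \eqref{psibasis} then gives $e^{i\vartheta\mathbf{\Pi}}\Psi^0_{\alpha,\nu,\tilde\nu}=e^{i(|\nu|-|\tilde\nu|)\vartheta}\Psi^0_{\alpha,\nu,\tilde\nu}$, as the total weights of $\mathbf{L}^0_{-\nu}$ and $\widetilde{\mathbf{L}}^0_{-\tilde\nu}$ are $|\nu|$ and $|\tilde\nu|$ respectively. This also re-proves the statement at the level of $\mathbf{H}^0$, consistently with Proposition \ref{prop:mainvir0}.

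\textbf{Transporting to the interacting theory.} Now fix $\alpha$ in the appropriate region, and pick $\alpha\in\mathcal{I}_{\nu,\tilde\nu}$ first (the general case follows by analytic continuation in $\alpha$ on $W_\ell$, since both sides of the claimed identity are analytic in $\alpha$ with values in $e^{-\beta c_-}L^2$, by Proposition \ref{defprop:desc}, so the identity extends from $\mathcal{I}_{\nu,\tilde\nu}$ to $W_\ell$ by uniqueness of analytic continuation). For $\alpha\in\mathcal{I}_{\nu,\tilde\nu}$, use the intertwining characterization \eqref{intert:desc}:
\begin{equation*}
\Psi_{\alpha,\nu,\tilde\nu}=\lim_{t\to\infty}e^{t(2\Delta_\alpha+\ell)}e^{-t\mathbf{H}}\big(\chi(c)\Psi^0_{\alpha,\nu,\tilde\nu}\big)
\end{equation*}
in $e^{-\beta c_-}L^2$. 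Apply $e^{i\vartheta\mathbf{\Pi}}$, which is bounded on $e^{-\beta c_-}L^2$ and commutes with $e^{-t\mathbf{H}}$ (Lemma \ref{continuityPi}), hence passes through the limit:
\begin{equation*}
e^{i\vartheta\mathbf{\Pi}}\Psi_{\alpha,\nu,\tilde\nu}=\lim_{t\to\infty}e^{t(2\Delta_\alpha+\ell)}e^{-t\mathbf{H}}\big(e^{i\vartheta\mathbf{\Pi}}(\chi(c)\Psi^0_{\alpha,\nu,\tilde\nu})\big).
\end{equation*}
Since $\chi(c)$ depends only on $c$ it is $\mathbf{\Pi}$-invariant, so $e^{i\vartheta\mathbf{\Pi}}(\chi(c)\Psi^0_{\alpha,\nu,\tilde\nu})=\chi(c)\,e^{i\vartheta\mathbf{\Pi}}\Psi^0_{\alpha,\nu,\tilde\nu}=e^{i(|\nu|-|\tilde\nu|)\vartheta}\chi(c)\Psi^0_{\alpha,\nu,\tilde\nu}$ by the free computation. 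Pulling the scalar out of the limit yields $e^{i\vartheta\mathbf{\Pi}}\Psi_{\alpha,\nu,\tilde\nu}=e^{i(|\nu|-|\tilde\nu|)\vartheta}\Psi_{\alpha,\nu,\tilde\nu}$, which is the claim.

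\textbf{Main obstacle.} The computations are essentially bookkeeping; the one point that genuinely needs care is the sign convention in the conjugation of $\mathbf{A}_n,\widetilde{\mathbf{A}}_n$ and hence of $\mathbf{L}^0_{-k},\widetilde{\mathbf{L}}^0_{-k}$ under $e^{i\vartheta\mathbf{\Pi}}$ — getting $+|\nu|-|\tilde\nu|$ rather than, say, $|\nu|-|\tilde\nu|$ with the roles of $\nu,\tilde\nu$ swapped. This is pinned down by the explicit formulas \eqref{GFFcircle0} for $\varphi_n$ in terms of $x_n,y_n$ and the convention $\partial_n=\partial/\partial\varphi_n$ together with the definition \eqref{pi:prob} of $e^{i\vartheta\mathbf{\Pi}}$, and it is consistent with the eigenvalue $2\Delta_\alpha+|\nu|+|\tilde\nu|$ of $\mathbf{H}$ on $\Psi_{\alpha,\nu,\tilde\nu}$ (Proposition \ref{defprop:desc}) since $\mathbf{H}=\mathbf{L}^0_0+\widetilde{\mathbf{L}}^0_0$ at $\mu=0$ contributes $|\nu|$ from the $\mathbf{L}^0$-sector and $|\tilde\nu|$ from the $\widetilde{\mathbf{L}}^0$-sector.
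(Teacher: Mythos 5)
Your proof is correct and follows the same overall architecture as the paper's: establish the eigenvalue equation for the free descendants, then transport it through the intertwining limit \eqref{intert:desc} using the commutation of $e^{i\vartheta\mathbf{\Pi}}$ with $e^{-t\mathbf{H}}$ (Lemma \ref{continuityPi}) and the $\mathbf{\Pi}$-invariance of $\chi(c)$, and finally extend from $\mathcal{I}_{\nu,\tilde\nu}$ to $W_\ell$ by analytic continuation in $\alpha$.

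The one place where you genuinely diverge is the free-field step. The paper proves the corresponding statement (Lemma \ref{PionEVP}) by going back to the explicit polynomial structure of $\mathcal{Q}_{\alpha,\nu,\tilde\nu}$: it is a linear combination of the polynomials $\pi_{\k\l}$ with $|\k|=|\nu|$, $|\l|=|\tilde\nu|$, each of which factors over $n$ into polynomials $\sum a_{p,q}\varphi_n^p\varphi_{-n}^q$ with $p-q=k_n-l_n$ fixed, whence the phase $e^{i\vartheta n(k_n-l_n)}$ and, after taking the product, $e^{i(|\k|-|\l|)\vartheta}=e^{i(|\nu|-|\tilde\nu|)\vartheta}$. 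You instead conjugate the generators: $e^{i\vartheta\mathbf{\Pi}}\mathbf{A}_m e^{-i\vartheta\mathbf{\Pi}}=e^{-im\vartheta}\mathbf{A}_m$, $e^{i\vartheta\mathbf{\Pi}}\widetilde{\mathbf{A}}_m e^{-i\vartheta\mathbf{\Pi}}=e^{im\vartheta}\widetilde{\mathbf{A}}_m$, hence $\mathbf{L}^0_{-k}\mapsto e^{ik\vartheta}\mathbf{L}^0_{-k}$ and $\widetilde{\mathbf{L}}^0_{-k}\mapsto e^{-ik\vartheta}\widetilde{\mathbf{L}}^0_{-k}$, and apply this to the $\mathbf{\Pi}$-invariant $\Psi^0_\alpha$ via \eqref{psibasis}. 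I checked your phases against \eqref{GFFcircle0}, \eqref{pi:prob} and the definitions in subsection \ref{sub:virasoro}: they are right (each term of $\mathbf{L}^0_n$ has total mode index $n$, so the whole generator picks up $e^{-in\vartheta}$). Your route is arguably cleaner in that it avoids recalling the explicit $\pi_{\k\l}$ from \cite{GKRV}, at the cost of having to verify the conjugation rule on $\mathcal{C}_\infty$ (which is preserved by $e^{i\vartheta\mathbf{\Pi}}$ since the rotation acts linearly on finitely many Fourier modes). Both arguments are complete; no gap.
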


 \begin{proof}
 We use Proposition \ref{defprop:desc}: for $\chi\in C^\infty(\R)$ equal to $1$ near $-\infty$ and supported in $\R^-$, and for $\alpha\in \mathcal{I}_{\nu,\tilde\nu}$
\[ \Psi_{\alpha,\nu,\tilde{\nu}}=\lim_{t\to \infty}e^{t(2\Delta_\alpha+\ell)} e^{-t{\bf H}}( \chi(c)\Psi^0_{\alpha,\nu,\tilde{\nu}})\]
where $\Psi^0_{\alpha,\nu,\tilde{\nu}}=\mathcal{Q}_{\alpha,\nu,\tilde{\nu}}e^{(\alpha-Q)c}$ and $\mathcal{Q}_{\alpha,\nu,\tilde{\nu}}\in L^2(\Omega_\T)$ is an eigenfunction of ${\bf P}$ with associated eigenvalue $\ell:=|\nu|+|\tilde{\nu}|$ (see Prop \ref{prop:mainvir0}).
We will show in Lemma \ref{PionEVP} below that for $\alpha\in\C$, $\nu,\tilde{\nu}\in\mathcal{T}$ and $\vartheta\in\R$,  $e^{i\vartheta \mathbf{\Pi}}\mathcal{Q}_{\alpha,\nu,\tilde{\nu}}= e^{i(|\nu|-|\tilde{\nu}|) \vartheta} \mathcal{Q}_{\alpha,\nu,\tilde{\nu}} $.

Therefore,  for $s>0$, using the continuity of $e^{-s{\bf H}}:e^{-\beta c_-}L^2(\R\times \Omega_\T)\to e^{-\beta c_-}L^2(\R\times \Omega_\T)$ of \eqref{boundednesspropagator} and $e^{i\vartheta \mathbf{\Pi}}: e^{-\beta c_-}L^2(\R\times \Omega_\T)\to  e^{-\beta c_-}L^2(\R\times \Omega_\T)$, and since $e^{i\vartheta \mathbf{\Pi}}$ and $ e^{-t{\bf H}}$ commute, we have for $\alpha\in \mathcal{I}_{\nu,\tilde\nu}$
\[\begin{split} 
e^{i\vartheta \mathbf{\Pi}}\Psi_{\alpha,\nu,\tilde{\nu}}=& 
\lim_{t\to \infty}e^{t(2\Delta_\alpha+\ell)} e^{i\vartheta \mathbf{\Pi}} e^{-t{\bf H}}(e^{(\alpha-Q)c}\chi(c)\mathcal{Q}_{\alpha,\nu,\tilde{\nu}})\\
=& 
\lim_{t\to \infty}e^{t(2\Delta_\alpha+\ell)}e^{-t{\bf H}} e^{i\vartheta \mathbf{\Pi}}  (e^{(\alpha-Q)c}\chi(c)\mathcal{Q}_{\alpha,\nu,\tilde{\nu}})\\
=& 
\lim_{t\to \infty}e^{t(2\Delta_\alpha+\ell)}e^{-t{\bf H}} e^{i(|\nu|-|\tilde{\nu}|) \vartheta}  (e^{(\alpha-Q)c}\chi(c)\mathcal{Q}_{\alpha,\nu,\tilde{\nu}})\\
= &  e^{i(|\nu|-|\tilde{\nu}|)   \vartheta} \Psi_{\alpha,\nu,\tilde{\nu}}.
\end{split}\]
Now this identity $e^{i\vartheta \mathbf{\Pi}}\Psi_{\alpha,\nu,\tilde{\nu}}= e^{i(|\nu|-|\tilde{\nu}|) \vartheta} \Psi_{\alpha,\nu,\tilde{\nu}}$ extends analytically in the set $W_\ell$ of \eqref{regionWl} as elements in the weighted space $e^{-\beta c_-}L^2(\R\times \Omega_\T)$ provided $\beta>Q-{\rm Re}(\alpha)$. 
 \end{proof}

\begin{lemma}\label{PionEVP}
The following identity holds for $\alpha\in\C$, $\nu,\tilde{\nu}\in\mathcal{T}$ and $\vartheta\in\R$
$$e^{i\vartheta \mathbf{\Pi}}\mathcal{Q}_{\alpha,\nu,\tilde{\nu}}= e^{i(|\nu|-|\tilde{\nu}|) \vartheta} \mathcal{Q}_{\alpha,\nu,\tilde{\nu}} .$$
\end{lemma}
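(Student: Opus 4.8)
The statement is about the action of the rotation operator $e^{i\vartheta\mathbf{\Pi}}$ on the polynomials $\mathcal{Q}_{\alpha,\nu,\tilde{\nu}}$ which, by Proposition \ref{prop:mainvir0}, are obtained by applying the free Virasoro lowering operators $\mathbf{L}^0_{-\nu}\widetilde{\mathbf{L}}^0_{-\tilde{\nu}}$ to the constant function $1$ (after stripping off the factor $e^{(\alpha-Q)c}$, which is rotation-invariant). The plan is to reduce everything to the action of $e^{i\vartheta\mathbf{\Pi}}$ on the creation/annihilation operators $\mathbf{A}_n,\widetilde{\mathbf{A}}_n$ and to check that $\mathbf{A}_n$ is a ``weight $-n$'' operator while $\widetilde{\mathbf{A}}_n$ is a ``weight $+n$'' operator for the grading induced by $\mathbf{\Pi}$.

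\textbf{Step 1: the conjugation rule for $\mathbf{A}_n$.} From \eqref{pi:prob}, $e^{i\vartheta\mathbf{\Pi}}$ acts on functions of $\tilde\varphi=c+\varphi$ by the substitution $\varphi(\cdot)\mapsto\varphi(\cdot+\vartheta)$, which in Fourier coordinates \eqref{GFFcircle0} is $\varphi_n\mapsto e^{in\vartheta}\varphi_n$. First I would record that, consequently, on the coordinate multiplication operators and the derivations $\partial_n=\partial/\partial\varphi_n$ one has $e^{i\vartheta\mathbf{\Pi}}\,\varphi_n\, e^{-i\vartheta\mathbf{\Pi}}=e^{in\vartheta}\varphi_n$ and $e^{i\vartheta\mathbf{\Pi}}\,\partial_n\, e^{-i\vartheta\mathbf{\Pi}}=e^{-in\vartheta}\partial_n$ (the chain rule; note $\partial_{x_n},\partial_{y_n}$ transform through the rotation of the pair $(x_n,y_n)$, consistent with the standard Gaussian being rotation invariant, cf. Lemma \ref{continuityPi}). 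Plugging these into the definitions of $\mathbf{A}_{\pm n}$ and $\widetilde{\mathbf{A}}_{\pm n}$ gives
\begin{align*}
e^{i\vartheta\mathbf{\Pi}}\mathbf{A}_{m}e^{-i\vartheta\mathbf{\Pi}}=e^{-im\vartheta}\mathbf{A}_{m},\qquad
e^{i\vartheta\mathbf{\Pi}}\widetilde{\mathbf{A}}_{m}e^{-i\vartheta\mathbf{\Pi}}=e^{im\vartheta}\widetilde{\mathbf{A}}_{m},\qquad m\in\Z,
\end{align*}
with $\mathbf{A}_0,\widetilde{\mathbf{A}}_0$ fixed. One checks the sign bookkeeping case by case: $\mathbf{A}_n=\tfrac{i}{2}\partial_n$ picks up $e^{-in\vartheta}$; $\mathbf{A}_{-n}=\tfrac{i}{2}(\partial_{-n}-2n\varphi_n)$ picks up $e^{in\vartheta}$ on both $\partial_{-n}$ and $\varphi_n$; similarly for the tilded ones with signs reversed.

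\textbf{Step 2: the conjugation rule for $\mathbf{L}^0_n,\widetilde{\mathbf{L}}^0_n$.} Since the normal ordering just reshuffles finitely many commuting-up-to-scalar factors, conjugating the defining formula \eqref{virassoro} term by term gives $e^{i\vartheta\mathbf{\Pi}}\mathbf{L}^0_n e^{-i\vartheta\mathbf{\Pi}}=e^{-in\vartheta}\mathbf{L}^0_n$ and $e^{i\vartheta\mathbf{\Pi}}\widetilde{\mathbf{L}}^0_n e^{-i\vartheta\mathbf{\Pi}}=e^{in\vartheta}\widetilde{\mathbf{L}}^0_n$ on $\mathcal{C}_\infty$; indeed each summand $\mathbf{A}_{n-m}\mathbf{A}_m$ contributes $e^{-i(n-m)\vartheta}e^{-im\vartheta}=e^{-in\vartheta}$, and $\mathbf{A}_n$ alone contributes $e^{-in\vartheta}$. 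In particular $\mathbf{L}^0_{-\nu}=\mathbf{L}^0_{-\nu(k)}\cdots\mathbf{L}^0_{-\nu(1)}$ is conjugated by $e^{i\vartheta\mathbf{\Pi}}$ to $e^{i|\nu|\vartheta}\mathbf{L}^0_{-\nu}$, and $\widetilde{\mathbf{L}}^0_{-\tilde\nu}$ to $e^{-i|\tilde\nu|\vartheta}\widetilde{\mathbf{L}}^0_{-\tilde\nu}$.

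\textbf{Step 3: conclusion.} The highest weight state $\Psi^0_\alpha=e^{(\alpha-Q)c}$ depends only on $c$, hence $e^{i\vartheta\mathbf{\Pi}}\Psi^0_\alpha=\Psi^0_\alpha$; equivalently, the constant polynomial $\mathcal{Q}_{\alpha,\emptyset,\emptyset}=1$ is rotation invariant. Then, writing $\Psi^0_{\alpha,\nu,\tilde{\nu}}=\mathbf{L}^0_{-\nu}\widetilde{\mathbf{L}}^0_{-\tilde\nu}\Psi^0_\alpha$ and using $e^{i\vartheta\mathbf{\Pi}}\Psi^0_{\alpha,\nu,\tilde{\nu}}=\big(e^{i\vartheta\mathbf{\Pi}}\mathbf{L}^0_{-\nu}e^{-i\vartheta\mathbf{\Pi}}\big)\big(e^{i\vartheta\mathbf{\Pi}}\widetilde{\mathbf{L}}^0_{-\tilde\nu}e^{-i\vartheta\mathbf{\Pi}}\big)e^{i\vartheta\mathbf{\Pi}}\Psi^0_\alpha=e^{i(|\nu|-|\tilde\nu|)\vartheta}\Psi^0_{\alpha,\nu,\tilde{\nu}}$, and dividing by the invariant factor $e^{(\alpha-Q)c}$ via \eqref{psibasis1}, gives $e^{i\vartheta\mathbf{\Pi}}\mathcal{Q}_{\alpha,\nu,\tilde{\nu}}=e^{i(|\nu|-|\tilde\nu|)\vartheta}\mathcal{Q}_{\alpha,\nu,\tilde{\nu}}$, as claimed. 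All manipulations take place on $\mathcal{C}_\infty$ (or on $L^2(\Omega_\T)$ for the polynomials $\mathcal{Q}_{\alpha,\nu,\tilde{\nu}}$, where $e^{i\vartheta\mathbf{\Pi}}$ acts by the rotation-invariant Gaussian's symmetry), so no analytic continuation is needed here; that is handled separately in Proposition \ref{propPi}.

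\textbf{Main obstacle.} There is no serious difficulty; the only thing requiring care is the precise transformation of the real coordinates $(x_n,y_n)$ and the derivations $\partial_{x_n},\partial_{y_n}$ under the substitution $\varphi_n\mapsto e^{in\vartheta}\varphi_n$ — one must verify that the combinations $\partial_n=\sqrt{n}(\partial_{x_n}-i\partial_{y_n})$ and $\partial_{-n}=\sqrt{n}(\partial_{x_n}+i\partial_{y_n})$ really transform as $e^{\mp in\vartheta}\partial_{\pm n}$, consistently with $\varphi_{-n}=\overline{\varphi_n}\mapsto e^{-in\vartheta}\varphi_{-n}$, so that Step 1 is correct. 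Once that bookkeeping is pinned down, Steps 2 and 3 are purely formal.
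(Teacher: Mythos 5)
Your proof is correct. It takes a somewhat different route from the paper's: the paper reduces $\mathcal{Q}_{\alpha,\nu,\tilde{\nu}}$ to a linear combination of the explicit polynomials $\pi_{\k\l}$ from \cite[(4.16)]{GKRV} and verifies directly that each is a product over $n$ of homogeneous combinations $\sum a_{p,q}\varphi_n^p\varphi_{-n}^q$ with $p-q$ fixed, so that the substitution $\varphi_n\mapsto e^{in\vartheta}\varphi_n$ produces the overall phase $e^{i(|\k|-|\l|)\vartheta}$. You instead derive the conjugation laws $e^{i\vartheta\mathbf{\Pi}}\mathbf{A}_m e^{-i\vartheta\mathbf{\Pi}}=e^{-im\vartheta}\mathbf{A}_m$ and $e^{i\vartheta\mathbf{\Pi}}\widetilde{\mathbf{A}}_m e^{-i\vartheta\mathbf{\Pi}}=e^{im\vartheta}\widetilde{\mathbf{A}}_m$ on $\mathcal{C}_\infty$, propagate them through \eqref{virassoro}--\eqref{virassorotilde} to get the grading of $\mathbf{L}^0_n,\widetilde{\mathbf{L}}^0_n$, and apply this to the definition \eqref{psibasis} of the descendants, using that $\Psi^0_\alpha$ depends only on $c$. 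Your version is more self-contained (it does not appeal to the explicit recursive form of the $\pi_{\k\l}$ in \cite{GKRV}) and makes the representation-theoretic content transparent: $\mathbf{\Pi}$ grades the chiral and antichiral Virasoro generators with opposite weights. The paper's version, in exchange, exhibits the eigenfunctions concretely as homogeneous polynomials. Your sign bookkeeping in Step 1 (the transformation of $\partial_{\pm n}$ and of $\varphi_{\pm n}$ under $\varphi_n\mapsto e^{in\vartheta}\varphi_n$) checks out, and all manipulations legitimately take place on $\mathcal{C}_\infty$, so the argument is complete.
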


\begin{proof} From  \cite[proof of Prop 4.9]{GKRV20_bootstrap}, $\mathcal{Q}_{\alpha,\nu,\tilde{\nu}}$ is a linear combination of the polynomials $\pi_{\k\l}$ with $|\k|= |\nu|,|\l|= |\tilde{\nu}|$ defined in \cite[(4.16)]{GKRV20_bootstrap}.   Therefore, it suffices to prove the result for $\pi_{\k\l}$. Then, it is enough to observe (this can easily be done recursively) that each $\pi_{\k\l}$ is a product over $n$ of polynomials of the form $P_n(\varphi)=\sum_{\mathcal{I}(k_n,l_n)} a_{p,q}\varphi_n^p\varphi_{-n}^q$ with $\mathcal{I}(k_n,l_n):=\{(p,q)\,|\, p\leq k_n,q\leq l_n,p-q=k_n-l_n\}$,   in such a way that  $$P_n(\varphi(\cdot+\vartheta))=\sum_{\mathcal{I}(k_n,l_n)} a_{p,q}(e^{in\vartheta}\varphi_n)^p (e^{-in\vartheta}\varphi_{-n})^q=e^{i\vartheta n(k_n-l_n)}P_n(\varphi(\cdot)).$$
Our claim then follows by considering the product of such quantities with the constraint that   $|\k|-|\l|= |\nu|- |\tilde{\nu}|$.\end{proof}

\subsection{Reverting orientation}\label{sub:revert}
%%%%%%%%%%%%%%%%%%%%%%%%
Let $o:\T\to\T$ be the map defined by $o(e^{i\theta})=e^{-i\theta}$. It will serve to analyze the effect of reverting orientation on the boundary of the amplitudes. We introduce the following operators
\begin{align}\label{defCandO}
\forall F\in e^{-\beta c_-}L^2(\R\times\Omega_\T), & & \mathbf{O}F(\varphi)=F(\varphi\circ o), & & \mathbf{C}F(\varphi)=\bbar F(\varphi ) 
\end{align}
defined on the weighted $L^2$-spaces with $\beta\in\R$. The operator $\mathbf{C}$ is nothing but complex conjugation. Recall that the semigroup $(e^{-t\mathbf{H}})_{t\geq 0}$ extends continuously to $e^{-\beta c_-}L^2(\R\times\Omega_\T)$ for $\beta\geq 0$ (see \cite[Lemma 6.5]{GKRV20_bootstrap}). We claim the following:
\begin{lemma}\label{commut:semi}
The following commutation relations hold on $e^{-\beta c_-}L^2(\R\times\Omega_\T)$ for $\beta\geq 0$
$$e^{-t\mathbf{H}} \mathbf{O}= \mathbf{O}e^{-t\mathbf{H}}, \quad \quad e^{-t\mathbf{H}} \mathbf{C}= \mathbf{C}e^{-t\mathbf{H}} .$$
\end{lemma}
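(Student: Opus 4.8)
\textbf{Proof strategy for Lemma \ref{commut:semi}.}

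The plan is to verify both identities directly from the Feynman--Kac representation \eqref{FKgeneral} of the semigroup $e^{-t\mathbf{H}}=S(t)$, by checking that the two operations (reverting the orientation of the circle, and complex conjugation) leave invariant every ingredient appearing in that formula: the Gaussian law of the field, the exponential of the Gaussian multiplicative chaos functional, and the way the constant mode $c$ enters. Since both $\mathbf{O}$ and $\mathbf{C}$ are bounded on $e^{-\beta c_-}L^2(\R\times\Omega_\T)$ for $\beta\geq 0$ (for $\mathbf{O}$ this is the rotation/reflection invariance of the Gaussian measure $\P_\T$ in each $\R^2$-block, exactly as in Lemma \ref{continuityPi}; for $\mathbf{C}$ it is obvious), and since $e^{-t\mathbf{H}}$ is bounded on the same space by \cite[Lemma 6.5]{GKRV}, it suffices to prove the commutation on a dense subset, e.g. bounded continuous $F$, and then extend by density and continuity.

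First I would treat $\mathbf{C}$. Apply $S(t)$ to $\mathbf{C}F=\bar F$ using \eqref{FKgeneral}:
\[
S(t)(\bar F)(c,\varphi)=e^{-\frac{Q^2t}{2}}\E\big[\bar F(c+\phi_t)\,e^{-\mu e^{\gamma c}\int_{e^{-t}<|z|<1}|z|^{-\gamma Q}M_\gamma(\phi,\dd z)}\big].
\]
The chaos integral $\int_{e^{-t}<|z|<1}|z|^{-\gamma Q}M_\gamma(\phi,\dd z)$ is a nonnegative \emph{real} random variable and the prefactor $e^{-\mu e^{\gamma c}(\cdots)}$ is real and positive; hence the whole integrand equals the complex conjugate of $F(c+\phi_t)\,e^{-\mu e^{\gamma c}(\cdots)}$, and since conjugation commutes with the (real) expectation $\E$ and with the scalar $e^{-Q^2t/2}$, we get $S(t)(\bar F)=\overline{S(t)F}=\mathbf{C}(S(t)F)$. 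This is the easy identity.

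Next, $\mathbf{O}$. The key observation is that $\phi:=P_\D\varphi+X_{\D,D}$ has the same law as $\phi\circ o$ when $\varphi$ is replaced by $\varphi\circ o$: indeed $X_{\D,D}$ is invariant in law under $z\mapsto \bar z$ (the Dirichlet GFF on the unit disk with the flat metric is reflection-invariant), and the harmonic extension is equivariant, $P_\D(\varphi\circ o)=(P_\D\varphi)\circ(z\mapsto\bar z)$. Therefore, writing $\phi_t(\theta)=\phi(e^{-t+i\theta})$, one has the identity in law $(\phi\circ o)_t(\theta)=\phi_t(-\theta)=(\phi_t)\circ o$. Moreover the reflection $z\mapsto\bar z$ preserves the annulus $\{e^{-t}<|z|<1\}$, preserves $|z|$, hence preserves the measure $|z|^{-\gamma Q}M_\gamma(\phi,\dd z)$ in total mass (using that $M_\gamma$ is covariant under the conformal — here isometric — map $z\mapsto\bar z$, which for this flat reflection is simply the pushforward of the chaos measure of $\phi\circ$(reflection)). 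Plugging $\mathbf{O}F$ into \eqref{FKgeneral} and performing the change of variables $\phi\mapsto\phi\circ o$ inside the expectation then yields
\[
S(t)(\mathbf{O}F)(c,\varphi)=e^{-\frac{Q^2t}{2}}\E\big[F\big((c+\phi_t)\circ o\big)\,e^{-\mu e^{\gamma c}\int_{e^{-t}<|z|<1}|z|^{-\gamma Q}M_\gamma(\phi,\dd z)}\big]=\mathbf{O}(S(t)F)(c,\varphi),
\]
since $c$ is unaffected by $o$. The main (only) technical point to nail down carefully is the reflection-invariance of the joint law $(X_{\D,D},M_\gamma(\phi,\cdot))$ together with the covariance of Gaussian multiplicative chaos under the isometry $z\mapsto\bar z$ of $(\D,|dz|^2)$; once that is recorded, the change of variables in the Feynman--Kac integral is routine, and the extension from a dense class to all of $e^{-\beta c_-}L^2$ follows from the boundedness statements above. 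I do not expect any genuine obstacle here; the lemma is essentially a symmetry bookkeeping exercise, and the analogous statement for $\mathbf{\Pi}$ (rotations) was already handled in Lemma \ref{continuityPi}.
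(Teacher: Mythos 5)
Your proof is correct and follows essentially the same route as the paper: the $\mathbf{C}$ identity is read off from the Feynman--Kac formula since the chaos weight is real and positive, and the $\mathbf{O}$ identity rests on exactly the two observations the paper uses, namely $P_\D(\varphi\circ o)(z)=P_\D\varphi(\bar z)$ and the reflection invariance in law of $X_{\D,D}$, followed by the change of variables $z\mapsto\bar z$ in the expectation and a density argument.
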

\begin{proof}
The second part of our claim related to $\mathbf{C}$ is obvious from the Feynman-Kac formula  \eqref{FKgeneral} and the density of $ L^2(\R\times\Omega_\T)$ in $e^{-\beta c_-}L^2(\R\times\Omega_\T)$ for $\beta\geq 0$.  For the first part of our claim, we use again  the Feynman-Kac formula  \eqref{FKgeneral} together with the two following remarks. First it is obvious to check that $P_\D(\varphi\circ o)(z)=P_\D(\varphi)(\bar z)$ for $|z|<1$ (recall that $P\varphi$ is the harmonic extension of $\varphi$ in $\D$). Second, we have the equality in law $X_{\D,D}(\bar z)=X_{\D,D}(z)$ and so that the change of variables $z\mapsto \bar z$ in the integral appearing in the expectation in  \eqref{FKgeneral}  produces $e^{-t\mathbf{H}} \mathbf{O}= \mathbf{O}e^{-t\mathbf{H}}$.
\end{proof}

The purpose of what follows is to understand the action of the operators $\mathbf{O}$ and $\mathbf{C}$  on the free  descendant   states, and then to deduce how they act on Liouville descendant   states. For this, we first gather in the following lemma  the  commutation relation between the operators $\mathbf{O}$, $\mathbf{C}$ and the free Virasoro generators:

\begin{lemma}\label{commut:vira}
The operators $\mathbf{O}$ and $\mathbf{C}$ map  $ \mathcal{C}_\infty$ into itself. On $ \mathcal{C}_\infty$, we have the relations for all $n \in \Z$
\begin{align*}
\mathbf{O} \mathbf{L}_n^0=\widetilde{\mathbf{L}}_n^0\mathbf{O}, & & \mathbf{O}\widetilde{\mathbf{L}}_n^0= \mathbf{L}_n^0\mathbf{O}\\
\mathbf{C} \mathbf{L}_n^0=\widetilde{\mathbf{L}}_n^0\mathbf{C}, & & \mathbf{C}\widetilde{\mathbf{L}}_n^0= \mathbf{L}_n^0\mathbf{C} .
\end{align*}
\end{lemma}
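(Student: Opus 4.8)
\textbf{Proof plan for Lemma \ref{commut:vira}.}

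The plan is to verify the stated intertwining relations directly at the level of the generating operators $\mathbf{A}_n$ and $\widetilde{\mathbf{A}}_n$, and then to propagate them through the definitions \eqref{virassoro} and \eqref{virassorotilde}. First I would check that $\mathbf{O}$ and $\mathbf{C}$ preserve $\mathcal{C}_\infty$: since $\mathbf{C}$ is complex conjugation it clearly maps a smooth function of finitely many coordinates with polynomial growth to another such function, and since $o$ sends $\varphi_n\mapsto \varphi_{-n}$ (equivalently, in real coordinates, $(x_n,y_n)\mapsto(x_n,-y_n)$), the operator $\mathbf{O}$ simply permutes/reflects coordinates and hence also preserves $\mathcal{C}_\infty$ and the space $C^\infty(\R)$ factor in $c$. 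This gives the first sentence of the lemma.

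Next I would compute how $\mathbf{O}$ and $\mathbf{C}$ conjugate the basic operators. Using $\mathbf{O}\varphi_n\mathbf{O}^{-1}=\varphi_{-n}$ and $\mathbf{O}\partial_n\mathbf{O}^{-1}=\partial_{-n}$ (and $\mathbf{O}$ fixes $c$, $\partial_c$), one reads off from the formulas $\mathbf{A}_n=\tfrac{i}{2}\partial_n$, $\mathbf{A}_{-n}=\tfrac{i}{2}(\partial_{-n}-2n\varphi_n)$, $\widetilde{\mathbf{A}}_n=\tfrac{i}{2}\partial_{-n}$, $\widetilde{\mathbf{A}}_{-n}=\tfrac{i}{2}(\partial_n-2n\varphi_{-n})$, $\mathbf{A}_0=\widetilde{\mathbf{A}}_0=\tfrac{i}{2}(\partial_c+Q)$ that
\[
\mathbf{O}\mathbf{A}_n\mathbf{O}^{-1}=\widetilde{\mathbf{A}}_n,\qquad \mathbf{O}\widetilde{\mathbf{A}}_n\mathbf{O}^{-1}=\mathbf{A}_n\qquad\text{for all }n\in\Z.
\]
For $\mathbf{C}$ one uses that $\mathbf{C}$ is antilinear, $\mathbf{C}\varphi_n\mathbf{C}=\bar\varphi_n=\varphi_{-n}$ (as $\varphi_{-n}=\overline{\varphi_n}$), $\mathbf{C}\partial_n\mathbf{C}=\overline{\partial_n}=\partial_{-n}$, and $\mathbf{C}\,i\,\mathbf{C}=-i$; combining the antilinearity with the exchange $n\leftrightarrow -n$, the factor $i$ and the sign conspire so that again
\[
\mathbf{C}\mathbf{A}_n\mathbf{C}=\widetilde{\mathbf{A}}_n,\qquad \mathbf{C}\widetilde{\mathbf{A}}_n\mathbf{C}=\mathbf{A}_n\qquad\text{for all }n\in\Z
\]
(here I use $\mathbf{C}=\mathbf{C}^{-1}$). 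I would also note that conjugation by $\mathbf{O}$ or $\mathbf{C}$ preserves the normal-ordering prescription, since normal ordering is defined by whether the index is positive or negative and conjugation does not change the sign of indices appearing in $\mathbf{A}_{n-m}\mathbf{A}_m$ beyond the global swap $\mathbf{A}\leftrightarrow\widetilde{\mathbf{A}}$, which is exactly what the claim records.

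Finally I would substitute these relations into \eqref{virassoro}: for $\mathbf{O}$, linearity gives
\[
\mathbf{O}\mathbf{L}_n^0\mathbf{O}^{-1}=-i(n+1)Q\,\mathbf{O}\mathbf{A}_n\mathbf{O}^{-1}+\sum_{m\in\Z}:\mathbf{O}\mathbf{A}_{n-m}\mathbf{O}^{-1}\,\mathbf{O}\mathbf{A}_m\mathbf{O}^{-1}:\,=-i(n+1)Q\,\widetilde{\mathbf{A}}_n+\sum_{m\in\Z}:\widetilde{\mathbf{A}}_{n-m}\widetilde{\mathbf{A}}_m:\,=\widetilde{\mathbf{L}}_n^0,
\]
i.e. $\mathbf{O}\mathbf{L}_n^0=\widetilde{\mathbf{L}}_n^0\mathbf{O}$ on $\mathcal{C}_\infty$, and symmetrically $\mathbf{O}\widetilde{\mathbf{L}}_n^0=\mathbf{L}_n^0\mathbf{O}$. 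For $\mathbf{C}$ the same substitution works, with care about antilinearity: since $-i(n+1)Q$ and the coefficients are real (and $Q$ is real), $\mathbf{C}$ commutes past these constants, and the quadratic term transforms as above, yielding $\mathbf{C}\mathbf{L}_n^0=\widetilde{\mathbf{L}}_n^0\mathbf{C}$ and $\mathbf{C}\widetilde{\mathbf{L}}_n^0=\mathbf{L}_n^0\mathbf{C}$. The only genuinely delicate point — the ``main obstacle'' such as it is — is bookkeeping the interaction of the antilinearity of $\mathbf{C}$ with the explicit factors of $i$ in the definitions of $\mathbf{A}_n$, $\widetilde{\mathbf{A}}_n$ and $\mathbf{L}_n^0$, and making sure the infinite sum $\sum_{m\in\Z}$ and the normal ordering behave well under conjugation on $\mathcal{C}_\infty$ (which they do, because on each fixed element of $\mathcal{C}_\infty$ only finitely many terms act nontrivially). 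I do not expect any analytic subtlety beyond this.
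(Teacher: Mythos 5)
Your overall strategy is exactly the paper's: verify the intertwining at the level of the $\mathbf{A}_n$, $\widetilde{\mathbf{A}}_n$ and then push it through the definitions \eqref{virassoro}--\eqref{virassorotilde}, noting that only finitely many terms of the sums act nontrivially on $\mathcal{C}_\infty$. The $\mathbf{O}$ part is correct as written.

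However, your treatment of $\mathbf{C}$ contains two sign errors that happen to cancel. The correct relation is $\mathbf{C}\mathbf{A}_n=-\widetilde{\mathbf{A}}_n\mathbf{C}$ (and $\mathbf{C}\widetilde{\mathbf{A}}_n=-\mathbf{A}_n\mathbf{C}$), \emph{with} a minus sign: e.g.\ for $n>0$, $\mathbf{C}\mathbf{A}_nF=\overline{\tfrac{i}{2}\partial_nF}=-\tfrac{i}{2}\partial_{-n}\bar F=-\widetilde{\mathbf{A}}_n\mathbf{C}F$, since conjugating the explicit factor $i$ produces a $-1$ while $\overline{\partial_n F}=\partial_{-n}\bar F$. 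Your claim $\mathbf{C}\mathbf{A}_n\mathbf{C}=\widetilde{\mathbf{A}}_n$ is therefore false. You then compensate by asserting that ``$-i(n+1)Q$ and the coefficients are real'' so that $\mathbf{C}$ commutes past them — but $-i(n+1)Q$ is purely imaginary, and the antilinear $\mathbf{C}$ conjugates it to $+i(n+1)Q$. With the correct signs, the linear term picks up $(+i)(-1)=-i$ and the quadratic term picks up $(-1)^2=+1$, so the conclusion $\mathbf{C}\mathbf{L}_n^0=\widetilde{\mathbf{L}}_n^0\mathbf{C}$ still holds; with your signs the same net factors appear for the wrong reasons. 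The final statement is therefore correct, but two of your intermediate assertions are false and the cancellation is accidental rather than argued; you should restate the $\mathbf{C}$-relations with the minus sign and track the conjugation of the factor $i$ in $-i(n+1)Q$ explicitly.
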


\begin{proof}
The fact that $\mathbf{O}$ and $\mathbf{C}$ map $ \mathcal{C}_\infty$ into itself is obvious. Then it is straightforward to check that, on $ \mathcal{C}_\infty$, we have the relations
\begin{align*}
\mathbf{O} \mathbf{A}_n=\widetilde{\mathbf{A}}_n\mathbf{O},\quad\quad \mathbf{O}\widetilde{\mathbf{A}}_n = \mathbf{A}_n \mathbf{O}, \\
\mathbf{C} \mathbf{A}_n=-\widetilde{\mathbf{A}}_n\mathbf{C},\quad\quad \mathbf{C}\widetilde{\mathbf{A}}_n =- \mathbf{A}_n \mathbf{C},
\end{align*}
 from which our claim follows (also, recall that on $ \mathcal{C}_\infty$ only finitely many terms contribute to the sums \eqref{virassoro} and \eqref{virassorotilde}).
\end{proof}

Now we study the effect of the operators $\mathbf{O}$ and $\mathbf{C}$ on the states in the spectrum. We claim 
\begin{proposition}\label{revert}
For $p\in\R^*$ and $\nu,\tilde\nu\in\mathcal{T}$ Young diagrams, we have the relation
$$\mathbf{O} \mathbf{C}\Psi_{Q+ip,\nu,\tilde\nu}= \mathbf{C}\mathbf{O}\Psi_{Q+ip,\nu,\tilde\nu}=\Psi_{Q-ip,\nu,\tilde\nu}. $$
\end{proposition}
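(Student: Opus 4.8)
The strategy is to transport the characterization of the Liouville descendant states $\Psi_{\alpha,\nu,\tilde\nu}$ given by the intertwining property \eqref{intert:desc} through the operators $\mathbf{O}$ and $\mathbf{C}$, using that these operators commute with the semigroup (Lemma \ref{commut:semi}) and swap the two families of free Virasoro generators (Lemma \ref{commut:vira}). First I would prove the statement at the level of the free descendants: since $\Psi^0_{\alpha,\nu,\tilde\nu}=\mathbf{L}^0_{-\nu}\widetilde{\mathbf{L}}^0_{-\tilde\nu}\Psi^0_\alpha$ with $\Psi^0_\alpha(c,\varphi)=e^{(\alpha-Q)c}$, and since $\mathbf{O}$ and $\mathbf{C}$ act trivially on the $c$-dependent factor while $\mathbf{C}$ sends $\alpha-Q$ to $\bar\alpha-Q$, one gets from Lemma \ref{commut:vira} that
\begin{align*}
\mathbf{O}\Psi^0_{\alpha,\nu,\tilde\nu}&=\widetilde{\mathbf{L}}^0_{-\nu}\mathbf{L}^0_{-\tilde\nu}\mathbf{O}\Psi^0_\alpha=\widetilde{\mathbf{L}}^0_{-\nu}\mathbf{L}^0_{-\tilde\nu}\Psi^0_\alpha=\Psi^0_{\alpha,\tilde\nu,\nu},\\
\mathbf{C}\Psi^0_{\alpha,\nu,\tilde\nu}&=\widetilde{\mathbf{L}}^0_{-\nu}\mathbf{L}^0_{-\tilde\nu}\mathbf{C}\Psi^0_\alpha=\widetilde{\mathbf{L}}^0_{-\nu}\mathbf{L}^0_{-\tilde\nu}\Psi^0_{\bar\alpha}=\Psi^0_{\bar\alpha,\tilde\nu,\nu},
\end{align*}
so that $\mathbf{O}\mathbf{C}\Psi^0_{\alpha,\nu,\tilde\nu}=\mathbf{C}\mathbf{O}\Psi^0_{\alpha,\nu,\tilde\nu}=\Psi^0_{\bar\alpha,\nu,\tilde\nu}$. (In particular $\mathbf{O}$ and $\mathbf{C}$ commute on $\mathcal{C}_\infty$.) Taking $\alpha=Q+ip$ so that $\bar\alpha=Q-ip$ gives the free version.

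Next I would feed this into the limit \eqref{intert:desc}. Fix $\nu,\tilde\nu$, let $\ell=|\nu|+|\tilde\nu|$, and choose $\chi\in C^\infty(\R)$ equal to $1$ near $-\infty$, supported in $\R^-$; note $\chi(c)$ is fixed by $\mathbf{O}$ and $\mathbf{C}$. Since $\Delta_\alpha$ depends on $\alpha$ only through $\alpha(Q-\alpha/2)/2$ and is real for $\alpha\in Q+i\R$, we have $\Delta_{Q+ip}=\Delta_{Q-ip}$, so the exponential prefactors $e^{t(2\Delta_\alpha+\ell)}$ match. Then for $\alpha=Q+ip$ with $p$ in the real range where \eqref{intert:desc} applies directly, Lemma \ref{commut:semi} gives
\[
\mathbf{O}\mathbf{C}\,\Psi_{Q+ip,\nu,\tilde\nu}=\lim_{t\to\infty}e^{t(2\Delta_{Q+ip}+\ell)}\,e^{-t\mathbf{H}}\,\mathbf{O}\mathbf{C}(\chi(c)\Psi^0_{Q+ip,\nu,\tilde\nu})=\lim_{t\to\infty}e^{t(2\Delta_{Q-ip}+\ell)}\,e^{-t\mathbf{H}}(\chi(c)\Psi^0_{Q-ip,\nu,\tilde\nu})=\Psi_{Q-ip,\nu,\tilde\nu},
\]
the convergence taking place in $e^{-\beta c_-}L^2(\R\times\Omega_\T)$ for appropriate $\beta>0$ (here $\beta$ can be taken arbitrarily small since $\mathrm{Re}(Q-\alpha)=0$), using that $\mathbf{O}$ and $\mathbf{C}$ are bounded on these weighted spaces (Lemma \ref{continuityPi}-type reasoning, or directly rotation/reflection invariance of the Gaussian measure). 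The same computation with the two operators in the other order, using that $\mathbf{O}$ and $\mathbf{C}$ commute, yields $\mathbf{C}\mathbf{O}\Psi_{Q+ip,\nu,\tilde\nu}=\Psi_{Q-ip,\nu,\tilde\nu}$. Finally, both sides are real-analytic in $\alpha$ on the region $W_\ell$ of \eqref{regionWl} (the left side because $\mathbf{O},\mathbf{C}$ are continuous and $\alpha\mapsto\Psi_{\alpha,\nu,\tilde\nu}$ is holomorphic there; note $\mathbf{C}$ is antilinear, so $\mathbf{C}\Psi_{\alpha,\nu,\tilde\nu}$ is anti-holomorphic in $\alpha$, i.e. holomorphic in $\bar\alpha$), so the identity on the line $Q+i\R$ (an uncountable set with accumulation points) propagates; but since the statement is only claimed for $p\in\R^*$ this analytic-continuation step is only needed if one wants to start the argument from $\alpha$ real with $\alpha<Q-\gamma$ (where $\chi=1$ is allowed) rather than from the spectrum line directly.

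\textbf{Main obstacle.} The delicate point is justifying that $\mathbf{O}$ and $\mathbf{C}$ pass inside the limit \eqref{intert:desc} and that the relevant weighted $L^2$-continuity holds uniformly enough: one must check that $\mathbf{O}\mathbf{C}$ maps $\chi(c)\Psi^0_{Q+ip,\nu,\tilde\nu}$ exactly to $\chi(c)\Psi^0_{Q-ip,\nu,\tilde\nu}$ (this is the free computation above, but one should be careful that $\Psi^0_{\alpha,\nu,\tilde\nu}$ is $\mathcal{Q}_{\alpha,\nu,\tilde\nu}e^{(\alpha-Q)c}$ with $\mathcal{Q}$ a polynomial in the $\varphi_n$, and that $\mathbf{C}$ conjugates the polynomial coefficients which are themselves polynomials in $\alpha$ — hence the need to track $\alpha\mapsto\bar\alpha$ carefully, and the reason the $\mathbf{O}\mathbf{C}$ combination rather than $\mathbf{O}$ or $\mathbf{C}$ alone produces the same Young-diagram labels). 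I expect this bookkeeping, together with invoking Lemma \ref{commut:semi} at the right regularity, to be the only real content; everything else is routine.
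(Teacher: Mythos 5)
Your overall strategy is exactly the paper's: transport the intertwining characterization \eqref{intert:desc} through $\mathbf{O}\mathbf{C}$ using the commutation with the semigroup (Lemma \ref{commut:semi}) and the swap of the two Virasoro families (Lemma \ref{commut:vira}), check the identity at the free level ($\mathbf{O}\mathbf{C}\Psi^0_{\alpha,\nu,\tilde\nu}=\Psi^0_{\bar\alpha,\nu,\tilde\nu}$, which your bookkeeping gets right, including the swap $\nu\leftrightarrow\tilde\nu$ under $\mathbf{O}$ and $\mathbf{C}$ separately), and then analytically continue.

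There is, however, one genuine error in how you organize the argument: you assert that \eqref{intert:desc} "applies directly" for $\alpha=Q+ip$, and you relegate the analytic continuation to an optional aside. It is not optional. The intertwining limit is only established for $\alpha\in\mathcal{I}_{\nu,\tilde\nu}$, and by \eqref{Inunu} membership in that set requires ${\rm Re}\big((Q-\alpha)^2\big)-2(|\nu|+|\tilde\nu|)>(\beta-\gamma/2)^2\geq 0$; for $\alpha=Q+ip$ one has ${\rm Re}\big((Q-\alpha)^2\big)=-p^2<0$, so the spectrum line never lies in $\mathcal{I}_{\nu,\tilde\nu}$ (even for $\nu=\tilde\nu=\emptyset$). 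Your main displayed computation is therefore carried out in a regime where its key hypothesis fails. The correct route — and the one the paper takes — is to run your computation for $\alpha\in\mathcal{I}_{\nu,\tilde\nu}$ (e.g. $\alpha$ real and sufficiently negative, where one may even take $\chi=1$), obtaining $\mathbf{C}\mathbf{O}\Psi_{\alpha,\nu,\tilde\nu}=\Psi_{\bar\alpha,\nu,\tilde\nu}$ there, and then to observe that both sides are anti-holomorphic in $\alpha$ on $W_\ell$, so the identity extends to the connected component of $W_\ell$ containing $\mathcal{I}_{\nu,\tilde\nu}$, which contains the spectrum line. With that reordering your proof coincides with the paper's.
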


\begin{proof}
First note that $\mathbf{O}$ and $\mathbf{C}$ commute on $e^{-\beta c_-}L^2(\R\times\Omega_\T)$ for any $\beta>0$ and $\Psi_{Q+ip,\nu,\tilde\nu}\in e^{-\epsilon c_-}L^2(\R\times\Omega_\T)$ for some $\epsilon>0$  (see Proposition \ref{defprop:desc}). Therefore we only need to prove the r.h.s. of our claim. We will prove this relation for $\Psi_{\alpha,\nu,\tilde\nu}$ and $\alpha$ in a complex neighborhood of the real line, and then argue by analytic continuation.
From  Proposition \ref{defprop:desc}, if $\chi\in C^\infty(\R)$ is equal to $1$ near $-\infty$ and supported in $\R^-$, then for $\alpha\in \mathcal{I}_{\nu,\tilde\nu}$  (which is stable under complex conjugation),
\[ \Psi_{\alpha,\nu,\tilde{\nu}}=\lim_{t\to \infty}e^{t(2\Delta_\alpha+|\nu|+|\tilde{\nu}|)} e^{-t{\bf H}}( \chi(c)\Psi^0_{\alpha,\nu,\tilde{\nu}}).\]
By continuity of the semigroup and using the commutation relations of Lemmas \ref{commut:semi} and \ref{commut:vira}, we deduce
\begin{align*}
 \mathbf{C}\mathbf{O}\Psi_{\alpha,\nu,\tilde{\nu}}=
 &\lim_{t\to \infty}e^{t(2\Delta_{\bar{\alpha}}+|\nu|+|\tilde{\nu}|)} e^{-t{\bf H}}( \chi(c) \mathbf{L}_{-\nu}^0\tilde{\mathbf{L}}_{-\tilde\nu}^0 \mathbf{C}\mathbf{O}\Psi^0_{\alpha })
 \\
 =&\lim_{t\to \infty}e^{t(2\Delta_{\bar{\alpha}}+|\nu|+|\tilde{\nu}|)} e^{-t{\bf H}}( \chi(c) \mathbf{L}_{-\nu}^0\tilde{\mathbf{L}}_{-\tilde\nu}^0 \Psi^0_{\bar \alpha })\\
  =&\lim_{t\to \infty}e^{t(2\Delta_{\bar{\alpha}}+|\nu|+|\tilde{\nu}|)} e^{-t{\bf H}}( \chi(c)  \Psi^0_{\bar \alpha,\nu,\tilde\nu })\\
  =&\Psi_{\bar\alpha,\nu,\tilde{\nu}}
 \end{align*}
for $\alpha\in \mathcal{I}_{\nu,\tilde{\nu}}$, where we used the obvious observations $ \mathbf{C}\mathbf{O}\Psi^0_{\alpha }=\Psi^0_{\bar \alpha }$ and $\overline{\Delta_{\alpha}}= \Delta_{\bar\alpha}$. The identity $ \mathbf{C}\mathbf{O}\Psi_{\alpha,\nu,\tilde{\nu}}=\Psi_{\bar\alpha,\nu,\tilde{\nu}}$ is an equality  of anti-holomorphic maps valid over $\mathcal{I}_{\nu,\tilde\nu}$ and thus extends to the connected component of $W_{|\nu|+|\nu'|}$ containing $\mathcal{I}_{\nu,\tilde\nu}$, hence the spectrum line.
\end{proof}

%%%%%%%%%%%%%%%%%%%%%%%%%%%%%%%%%%%%%%%%%%%%%%%%%%%%%%%%%%%%%%%%%%%%%%%%%%%%%%%%%%%%%%%%%%%%%%%%%%%%%%%%%%%%%%%%%%%%%%%%%%%%%%%%%%%%%%%%%%%%%%%%%%%%%%%%%%%%%%%%%%%%%%%%%%%%%%%%%%%%%%%%%%%%%%%%%%%%%%%%%%%%%%%%%%%%%%%%%%%%%%%%%%%%%%%%%%%%%%%%%%%%%%%%%%%%%%%%%%%%%%%%%%
 \section{Traces, Hilbert-Schmidt operators and decomposition 
 of traces using a spectral resolution}\label{section:traces}
%%%%%%%%%%%%%%%%%%%%%%%%%%%%%%%%%%%%%%%%%%%%%%%%%%%%%%%%%%%%%%%%%%%%%%%%%%%%%%%%%%%%%%%%%%%%%%%%%%%%%%%%%%%%%%%%%%%%%%%%%%%%%%%%%%%%%%%%%%%%%%%%%%%%%%%%%%%%%%%%%%%%%%%%%%%%%%%%%%%%%%%%%%%%%%%%%%%%%%%%%%%%%%%%%%%%%%%%%%%%%%%%%%%%%%%%%%%%%%%%%%%%%%%%%%%%%%%%%%%%%%%%%%

In this section, we review some basic material on trace class and Hilbert-Schmidt operators on a Hilbert space,
 and we define a notion of gluing amplitudes associated to an admissible oriented multi-graph. We use a general formalism 
 in order to use it for both the Segal amplitudes and the construction of conformal blocks, which will be obtained by gluing $L^2$
 amplitudes defined on the space $\mc{T}$ of Young diagrams.

\subsection{Traces, integral kernels and Hilbert-Schmidt norms} 
We recall in this section basic facts on Hilbert-Schmidt operators and traces, we refer to \cite[Chapter 4]{GGK_book_2000}.
Let $(M_1,\mu_1)$ and $(M_2,\mu_2)$ be two separable measured spaces and let $H_i:=L^2(M_i,\mu_i)$.
Let $ \mathbf{K}: H_1\to H_2$ be a bounded operator. We say that $ \mathbf{K}$ is Hilbert-Schmidt if $\mathbf{K}^*\mathbf{K}:H_1\to H_1$ is compact and the eigenvalues $(\lambda_j^2)_{j\in\N}$ of $\mathbf{K}^*\mathbf{K}$ (with $\la_j\geq 0$) are in $\ell^1(\N)$. Denote by $|{\bf K}|:=\sqrt{\mathbf{K}^*\mathbf{K}}$.
 If $H_1=H_2$, we say that ${\bf K}:H_1\to H_1$ is trace class if $\sum_j\la_j<\infty$, and its trace is then defined by the converging series 
\[{\rm Tr}({\bf K})=\sum_{j}\cjg {\bf K}e_j,e_j\cjd_{H_1}\]
where $(e_j)_j$ is an orthonormal basis of $H_1$.
The space $\mc{L}_2(H_1,H_2)$ of Hilbert-Schmidt operators (which will be denoted $\mc{L}_2(H_1)$ when $H_1=H_2$) is a Hilbert space and the space $\mc{L}_1(H_1)$ of trace class operators is a Banach space, when equipped with the norms
\[\|\mathbf{K}\|_{{\rm HS}}^2:=\sum_{j} \lambda_j^2={\rm Tr}({\bf K}^*{\bf K})=\sum_{j}\|\mathbf{K}e_j\|^2_{L^2}, \quad \|\mathbf{K}\|_{{\rm Tr}}:=\sum_{j=1}^\infty \la_j={\rm Tr}(|{\bf K}|).\] 
Notice that, if $(e'_i)_i$ is an orthonormal basis of $H_2$, then  $\|\mathbf{K}\|_{{\rm HS}}^2=\sum_{i,j}|\cjg {\bf K}e_j,e_i'\cjd|^2$, thus ${\bf K}^*\in \mc{L}_2(H_2,H_1)$ 
and $\|\mathbf{K}\|_{{\rm HS}}=\|\mathbf{K}^*\|_{{\rm HS}}$. We also have $\mc{L}_1(H_1)\subset \mc{L}_2(H_1)\subset \mc{L}(H_1)$ (with  $\mc{L}(H_1)$ the space of continuous operators from $H_1$ into itself) and 
\begin{equation}\label{orderschatten}
\|\,  |{\bf K}|\, \|_{\mc{L}(H_1)}\leq \|{\bf K}\|_{\rm HS}\leq \|{\bf K}\|_{\rm Tr}.
\end{equation}
The trace of $K$ is controlled by its trace class norm: 
\begin{equation}\label{boundtrace1}
|{\rm Tr}({\bf K})| \leq \|{\bf K}\|_{{\rm Tr}}.
\end{equation}

\begin{lemma}\label{lem:boundstraces}
For any family ${\bf K}_\ell\in \mc{L}_2(H_{\ell+1},H_{\ell})$ with $H_\ell=L^2(M_\ell,\mu_\ell)$ for $\ell=1,\dots,L$
\begin{align} 
& \|{\bf K}_1{\bf K}_2\|_{{\rm HS}}\leq \|\,|{\bf K}_1|\,\|_{\mc{L}(H_2)}\|{\bf K}_2\|_{\rm HS} \label{HSproduct}\\
& \|{\bf K}_1\dots {\bf K}_L\|_{{\rm HS}}\leq \|{\bf K}_1\|_{{\rm HS}}\dots \|{\bf K}_L\|_{{\rm HS}},\label{iterativetraces}\\
& \|{\bf K}_1\dots {\bf K}_L\|_{{\rm Tr}}\leq \|{\bf K}_1\|_{{\rm HS}}\dots \|{\bf K}_L\|_{{\rm HS}} \quad \textrm{ if }M_{L+1}=M_1.\label{iterativetraces2}
\end{align}
\end{lemma}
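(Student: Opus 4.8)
The three inequalities are all standard facts about Schatten classes, and I would prove them in the order \eqref{HSproduct}, \eqref{iterativetraces}, \eqref{iterativetraces2}, each one feeding into the next. For \eqref{HSproduct}, recall that the Hilbert–Schmidt norm can be computed via $\|{\bf K}\|_{\rm HS}^2=\sum_j\|{\bf K}e_j\|^2_{L^2}$ for any orthonormal basis $(e_j)$ of the domain. Taking an orthonormal basis $(e_j)$ of $H_2$ and estimating $\|{\bf K}_1{\bf K}_2 e_j\|_{L^2}\leq \|\,|{\bf K}_1|\,\|_{\mc{L}(H_2\to H_1)}\cdot$ hmm — more precisely one writes $\|{\bf K}_1 v\|_{L^2}\leq \|{\bf K}_1\|_{\mc{L}(H_2,H_1)}\|v\|_{L^2}$ and notes $\|{\bf K}_1\|_{\mc{L}(H_2,H_1)}=\|\,|{\bf K}_1|\,\|_{\mc{L}(H_2)}$ since the operator norm of ${\bf K}_1$ equals that of its modulus (both equal the top of the spectrum of $|{\bf K}_1|$). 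Then $\sum_j\|{\bf K}_1{\bf K}_2 e_j\|^2_{L^2}\leq \|\,|{\bf K}_1|\,\|_{\mc{L}(H_2)}^2\sum_j\|{\bf K}_2 e_j\|^2_{L^2}=\|\,|{\bf K}_1|\,\|_{\mc{L}(H_2)}^2\|{\bf K}_2\|_{\rm HS}^2$, which is \eqref{HSproduct}.

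For \eqref{iterativetraces}, I would argue by induction on $L$, the base case $L=1$ being trivial. For the inductive step apply \eqref{HSproduct} with ${\bf K}_1$ in the role of the first factor and ${\bf K}_2\cdots{\bf K}_L$ in the role of the second (which is Hilbert–Schmidt by the inductive hypothesis, hence in particular bounded, so the composition makes sense), giving $\|{\bf K}_1\cdots {\bf K}_L\|_{\rm HS}\leq \|\,|{\bf K}_1|\,\|_{\mc{L}(H_2)}\|{\bf K}_2\cdots {\bf K}_L\|_{\rm HS}$. Then use the first inequality in \eqref{orderschatten}, namely $\|\,|{\bf K}_1|\,\|_{\mc{L}(H_1)}\leq \|{\bf K}_1\|_{\rm HS}$ — here applied to ${\bf K}_1:H_2\to H_1$, so $\|\,|{\bf K}_1|\,\|_{\mc{L}(H_2)}\leq \|{\bf K}_1\|_{\rm HS}$ — and the inductive hypothesis $\|{\bf K}_2\cdots {\bf K}_L\|_{\rm HS}\leq \prod_{\ell=2}^L\|{\bf K}_\ell\|_{\rm HS}$ to conclude.

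For \eqref{iterativetraces2}, when $M_{L+1}=M_1$ the product ${\bf K}_1\cdots {\bf K}_L$ is an operator from $H_1$ to itself, and it is trace class because it factors as a product of (at least two, if $L\geq 2$; if $L=1$ the single operator is already Hilbert–Schmidt and one must note a Hilbert–Schmidt operator on a space need not be trace class, so implicitly $L\geq 2$ here — I would state this hypothesis) Hilbert–Schmidt operators. Write ${\bf K}_1\cdots {\bf K}_L=({\bf K}_1)({\bf K}_2\cdots {\bf K}_L)$; both factors are Hilbert–Schmidt (the second by \eqref{iterativetraces}), and the product of two Hilbert–Schmidt operators is trace class with $\|{\bf A}{\bf B}\|_{\rm Tr}\leq \|{\bf A}\|_{\rm HS}\|{\bf B}\|_{\rm HS}$ (this is the Cauchy–Schwarz inequality for the Hilbert–Schmidt inner product applied to ${\bf A}$ and ${\bf B}^*$, or equivalently a standard fact from \cite[Chapter 4]{Gohberg-Goldberg-Krupnik}). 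Combining with \eqref{iterativetraces} applied to ${\bf K}_2\cdots {\bf K}_L$ gives $\|{\bf K}_1\cdots {\bf K}_L\|_{\rm Tr}\leq \|{\bf K}_1\|_{\rm HS}\prod_{\ell=2}^L\|{\bf K}_\ell\|_{\rm HS}$.

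\textbf{Main obstacle.} None of this is deep; the only points requiring a little care are the identification $\|{\bf K}\|_{\mc{L}}=\|\,|{\bf K}|\,\|_{\mc{L}}$ (immediate from $\|{\bf K}v\|^2=\cjg {\bf K}^*{\bf K}v,v\cjd=\||{\bf K}|v\|^2$) and, for \eqref{iterativetraces2}, the inequality $\|{\bf A}{\bf B}\|_{\rm Tr}\leq \|{\bf A}\|_{\rm HS}\|{\bf B}\|_{\rm HS}$ for Hilbert–Schmidt ${\bf A},{\bf B}$, which one can either cite from \cite{Gohberg-Goldberg-Krupnik} or prove quickly via a polar decomposition and Cauchy–Schwarz. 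Since the paper already states \eqref{orderschatten} and \eqref{boundtrace1}, I would simply invoke the reference for these auxiliary facts and keep the proof to a few lines.
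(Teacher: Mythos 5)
Your proof is correct and follows essentially the same route as the paper: \eqref{iterativetraces} by induction from \eqref{HSproduct} together with $\|\,|{\bf K}|\,\|_{\mc{L}}\leq\|{\bf K}\|_{\rm HS}$, and \eqref{iterativetraces2} by citing $\|{\bf A}{\bf B}\|_{\rm Tr}\leq\|{\bf A}\|_{\rm HS}\|{\bf B}\|_{\rm HS}$ from \cite{Gohberg-Goldberg-Krupnik}; for \eqref{HSproduct} you sum $\|{\bf K}_1{\bf K}_2e_j\|^2$ directly over an orthonormal basis of the domain (which should be $H_3$, not $H_2$ — a harmless slip) instead of the paper's trace-cyclicity computation with an eigenbasis of ${\bf K}_2{\bf K}_2^*$, but the two arguments are equivalent in substance. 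Your remark that \eqref{iterativetraces2} implicitly requires $L\geq 2$ is a correct observation the paper leaves tacit.
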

\begin{proof}
The first identity can be obtained by 
Cauchy-Schwarz, using $(e_j')$ an orthonomal eigenbasis for ${\bf K}_2{\bf K}_2^*$ on $H_2$ and the cyclicity of the trace 
\begin{equation}\begin{split}\label{boundtrace2}
 \|{\bf K}_1{\bf K}_2\|_{{\rm HS}}^2& ={\rm Tr}({\bf K}_2^*{\bf K}_1^*{\bf K}_1{\bf K}_2)={\rm Tr}({\bf K}_2{\bf K}_2^*{\bf K}_1^*{\bf K}_1)=\sum_{j}\cjg {\bf K}^*_1{\bf K}_1e'_j, {\bf K}_2{\bf K}_2^*e'_j\cjd_{H_2} \\
 & \leq \|\,|{\bf K_1}|\,\|^2_{\mc{L}(H_2)}\sum_j \|{\bf K}_2{\bf K}_2^*e'_j\|_{H_2}= 
 \|\,|{\bf K}_1|\,\|^2_{\mc{L}(H_2)}\|{\bf K}_2\|^2_{{\rm HS}}.
\end{split}\end{equation}
This also implies \eqref{iterativetraces} by \eqref{orderschatten} and an iterative argument. The last bound \eqref{iterativetraces2} follows from the bound $\|{\bf K}_1{\bf K}_2\|_{\rm Tr}\leq \|{\bf K}_1\|_{\rm HS}\|{\bf  K}_2\|_{\rm HS}$ if $H_3=H_1$ (\cite[Lemma 7.2]{GGK_book_2000}). 
\end{proof}
Now assume $ \mathbf{K}$ is a compact operator of the form 
\[  \mathbf{K}u(x)=\int_{M_2} K(x,y)u(y)\dd\mu_2(y)\]
for some measurable integral kernel $K$ on $M_1\times M_2$ (we use unbold notation for kernels).
 If $(e_j)_j$ is an orthonormal basis of eigenfunctions of $ \mathbf{K}^* \mathbf{K}$ associated to the eigenvalue $\la_j^2$, ${\bf K}$ is Hilbert-Schmidt if and only if $K\in L^2(M_1\times M_2, \mu_1\otimes \mu_2)$ and the Hilbert-Schmidt norm is 
\begin{equation}\label{HSnorm=L^2}
 \begin{split}
\|\mathbf{K}\|_{{\rm HS}}^2=\sum_{j}\|\mathbf{K}e_j\|^2_{L^2}
=\sum_{j,k}\Big|\int_{M_1\times M_2}K(x,y)e_j(y)\bar{e}_k(x)\dd\mu_1(x)\dd\mu_2(y)\Big|^2
=\|K(\cdot,\cdot)\|^2_{L^2(M_1\times M_2)} .\end{split} 
\end{equation}
Conversely, any Hilbert-Schmidt operator ${\bf K}:L^2(M_1,\mu_1)\to L^2(M_2,\mu_2)$ has an integral kernel $K\in L^2(M_2\times M_1)$ given by $K(x,y)=\sum_{i,j}\cjg K,e_i'\otimes e_j\cjd_{L^2}e_i'(x)e_j(y)$.
Hilbert-Schmidt operators are convenient to work with since they can be identified as $L^2$ functions through their integral kernel.

\begin{lemma}\label{compositionHS}
Assume $\mathbf{K}=\mathbf{K}_2\mathbf{K}_1$ is a composition of two Hilbert-Schmidt operators $\mathbf{K}_1:L^2(M_1)\to L^2(M_2)$ and $\mathbf{K}_2:L^2(M_2)\to L^2(M_3)$ with integral kernels $K_1(y,z)$ and $K_2(x,y)$. Then $\mathbf{K}$ is Hilbert-Schmidt with integral kernel $K(x,z)=\int_{M_2}K_2(x,y)K_1(y,z)\dd\mu_2(y)$ that belongs to $L^2(M_3\times M_1)$. If $(M_3,\mu_3)=(M_1,\mu_1)$ it is also trace class on $L^2(M_1)$.
Furthermore, the restriction of its kernel to the diagonal  $K(x,x)$ makes sense for a.e. $x\in M_1$, is $L^1(M_1,\mu_1)$ and we have 
\[{\rm Tr}(\mathbf{K})=\int_{M_1}K(x,x)\dd\mu_1(x).\]
\end{lemma}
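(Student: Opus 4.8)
The statement is the standard fact that the composition of two Hilbert--Schmidt operators has an integral kernel given by the convolution of the two kernels, is trace class when the endpoints agree, and that the trace is the integral of the kernel on the diagonal. First I would recall from \eqref{HSnorm=L^2} and the remarks following it that $K_1\in L^2(M_2\times M_1,\mu_2\otimes\mu_1)$ and $K_2\in L^2(M_3\times M_2,\mu_3\otimes\mu_2)$. For $\mu_1$-a.e.\ $z\in M_1$ the slice $y\mapsto K_1(y,z)$ lies in $L^2(M_2,\mu_2)$ by Fubini, and for $\mu_3$-a.e.\ $x$ the slice $y\mapsto K_2(x,y)$ lies in $L^2(M_2,\mu_2)$; hence the integral $K(x,z):=\int_{M_2}K_2(x,y)K_1(y,z)\,\dd\mu_2(y)$ is absolutely convergent for a.e.\ $(x,z)$ by Cauchy--Schwarz, and
\[
\int_{M_3\times M_1}|K(x,z)|^2\,\dd\mu_3(x)\dd\mu_1(z)\leq \|K_2\|_{L^2(M_3\times M_2)}^2\,\|K_1\|_{L^2(M_2\times M_1)}^2=\|\mathbf{K}_2\|_{\rm HS}^2\|\mathbf{K}_1\|_{\rm HS}^2<\infty,
\]
so $K\in L^2(M_3\times M_1)$. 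That $K$ is the integral kernel of $\mathbf{K}=\mathbf{K}_2\mathbf{K}_1$ follows by applying Fubini once more to $\mathbf{K}u(x)=\int_{M_3}\big(\int_{M_2}K_2(x,y)K_1(y,z)\dd\mu_2(y)\big)u(z)\dd\mu_1(z)$ for $u\in L^2(M_1)$, the use of Fubini being justified by the $L^2$ bounds just obtained; so $\mathbf{K}$ is Hilbert--Schmidt by \eqref{HSnorm=L^2}. When $M_3=M_1$, the fact that $\mathbf{K}$ is trace class is \eqref{iterativetraces2} of Lemma \ref{lem:boundstraces} (or directly the cited \cite[Lemma 7.2]{Gohberg-Goldberg-Krupnik}).

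For the trace identity, assume $M_3=M_1$, $H_1=L^2(M_1,\mu_1)$, and fix an orthonormal basis $(e_j)_j$ of $H_1$. I would expand $K_1$ and $K_2$ in this basis: writing $a_{ij}:=\cjg \mathbf{K}_1 e_j,e_i\cjd_{L^2}$ and $b_{ij}:=\cjg \mathbf{K}_2 e_j,e_i\cjd_{L^2}$, we have $K_1(y,z)=\sum_{i,j}a_{ij}e_i(y)\bar e_j(z)$ and $K_2(x,y)=\sum_{i,j}b_{ij}e_i(x)\bar e_j(y)$, both series converging in $L^2(M_1\times M_1)$, with $\sum_{i,j}|a_{ij}|^2=\|\mathbf{K}_1\|_{\rm HS}^2$ and likewise for $b$. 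Then $K(x,x)=\sum_{i,j,k} b_{ik}a_{kj}e_i(x)\bar e_j(x)$, the triple series being summable because $\sum_{i,j}\big(\sum_k |b_{ik}||a_{kj}|\big)^2\leq \|\mathbf{K}_1\|_{\rm HS}^2\|\mathbf{K}_2\|_{\rm HS}^2$ by Cauchy--Schwarz. Integrating against $\dd\mu_1(x)$ and using $\int e_i\bar e_j\,\dd\mu_1=\delta_{ij}$ gives $\int_{M_1}K(x,x)\dd\mu_1(x)=\sum_{i,k}b_{ik}a_{ki}=\sum_i\cjg \mathbf{K}_2\mathbf{K}_1 e_i,e_i\cjd_{L^2}={\rm Tr}(\mathbf{K})$, where the last equality is the definition of the trace (which is basis-independent since $\mathbf{K}$ is trace class). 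The fact that $x\mapsto K(x,x)$ is $\mu_1$-integrable follows from $\int_{M_1}|K(x,x)|\dd\mu_1(x)\leq \|\mathbf{K}_1\|_{\rm HS}\|\mathbf{K}_2\|_{\rm HS}$, a consequence of Cauchy--Schwarz applied to $\int|\sum_{i,j,k}b_{ik}a_{kj}e_i(x)\bar e_j(x)|\dd\mu_1(x)\le \|\sum_k b_{\cdot k}a_{k\cdot}\|_{\ell^2(i,j)}$ together with $\|(e_i\bar e_j)_{i,j}\|$ bounded in the appropriate sense; alternatively it is immediate once one knows $K(\cdot,\cdot)=\sum_k (K_2)_k\otimes \overline{(K_1^*)_k}$ is a sum of products of $L^2$ functions.

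The only genuinely non-routine point is the careful justification of the interchanges of summation/integration: that $K(x,x)$ is well-defined pointwise a.e.\ (not merely as an $L^1$ function obtained abstractly), that it is $L^1$, and that its integral equals the trace. I would handle this by the diagonalization argument above, reducing everything to the absolute convergence of $\sum_{i,j,k}|b_{ik}||a_{kj}|$-type sums controlled by the Hilbert--Schmidt norms via Cauchy--Schwarz; once phrased in the orthonormal basis, no subtlety about the measure spaces remains since $(M_\ell,\mu_\ell)$ are separable. The remaining assertions are direct citations of Lemma \ref{lem:boundstraces} and \eqref{HSnorm=L^2}. I expect the proof to be short, essentially a few lines of Fubini plus the basis expansion.
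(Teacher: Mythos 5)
Your first half (the $L^{2}$ bound on $K$, the identification of the kernel of $\mathbf{K}_2\mathbf{K}_1$, and the trace-class assertion via \eqref{iterativetraces2}) is correct and matches the paper. The problems are concentrated in the part you yourself flag as "the only genuinely non-routine point", namely the pointwise a.e.\ definition of $K(x,x)$ and its $L^1$ bound.

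First, a notational slip that matters: $K_1(y,z)$ has $y\in M_2$ and $K_2(x,y)$ has $y\in M_2$, so the expansions must use an orthonormal basis of $L^2(M_2)$ in the middle slot (the paper introduces a separate basis $(f_k)_k$ of $L^2(M_2)$ for exactly this reason); a single basis $(e_j)$ of $H_1=L^2(M_1)$ only makes sense if $M_2=M_1$. More seriously, your main argument for defining $K(x,x)$ does not work as stated. The $\ell^2$ bound $\sum_{i,j}\bigl(\sum_k|b_{ik}||a_{kj}|\bigr)^2\le\|\mathbf{K}_1\|_{\rm HS}^2\|\mathbf{K}_2\|_{\rm HS}^2$ only shows that the double series converges in $L^2(M_1\times M_1)$ as a function of $(x,z)$, and the restriction of an $L^2$ function of two variables to the diagonal (a null set) is meaningless. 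Nor does it give pointwise absolute convergence on the diagonal: for a fixed $x$ one would need to control $\sum_{i,j}|c_{ij}||e_i(x)||e_j(x)|$ with $(c_{ij})\in\ell^2$, and this generically diverges because $\sum_i|e_i(x)|^2=+\infty$ for a.e.\ $x$ (e.g.\ the Fourier basis on the circle). The same issue undermines your sketch of the $L^1$ bound via ``$\|(e_i\bar e_j)_{i,j}\|$ bounded in the appropriate sense''. The correct route --- which the paper takes, and which your parenthetical alternative ``$K=\sum_k(K_2)_k\otimes\overline{(K_1^*)_k}$'' is gesturing at --- is to exploit the specific product structure of $K$: the pointwise Cauchy--Schwarz bound $|K(x,z)|\le\|K_2(x,\cdot)\|_{L^2(M_2)}\|K_1(\cdot,z)\|_{L^2(M_2)}$ holds for all $(x,z)$ in a product $A\times A$ of full-measure sets, which defines $K(x,x)$ for a.e.\ $x$ as an absolutely convergent integral over $M_2$, and one more Cauchy--Schwarz in $x$ gives $\int_{M_1}|K(x,x)|\,\dd\mu_1(x)\le\|K_1\|_{L^2}\|K_2\|_{L^2}$. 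With that in hand, the trace identity follows either from your basis computation (now with two bases and dominated convergence justified) or, as in the paper, from ${\rm Tr}(\mathbf{K})=\sum_j\cjg\mathbf{K}_1e_j,\mathbf{K}_2^*e_j\cjd=\int_{M_1\times M_2}K_2(x,y)K_1(y,x)\,\dd\mu_1(x)\dd\mu_2(y)=\int_{M_1}K(x,x)\,\dd\mu_1(x)$.
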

\begin{proof}
First notice that $\mathbf{K}$ has integral kernel  given,  $\mu_3\otimes \mu_1$-almost everywhere in $(x,z)\in M_3\times M_1$, by
\begin{equation}\label{Kxz} 
K(x,z)=\int_{M_2}K_2(x,y)K_1(y,z)\dd\mu_2(y)
\end{equation}
and we see by Cauchy-Schwarz that 
\begin{equation}\label{CauchySchwarzkernel} 
|K(x,z)|\leq \|K_2(x,\cdot)\|_{L^2}\|K_1(\cdot,z)\|_{L^2}.
\end{equation}
If $M_3=M_1$ we infer from \eqref{CauchySchwarzkernel} that there is a set $A\subset M_1$ of full measure   where $K_2(x,\cdot)$ and $K_1(\cdot,z)$ are defined as $L^2(M_1)$ for $x,z\in A$, hence $K(x,z)$ makes sense for $(x,z)\in A^2$. Consequently  $K(x,x)$ makes sense for a.e. $x\in M_1$ if defined as the integral \eqref{Kxz} with $x=z$ belonging to $A$.
We have by Cauchy-Schwarz
\[ \int_{M_1} |K(x,x)|\dd\mu_1(x)\leq \int_{M_1} \|K_1(\cdot,x)\|_{L^2}\|K_2(x,\cdot)\|_{L^2}\dd\mu_1(x)\leq 
\|K_1\|_{L^2(M_2\times M_1)}\|K_2\|_{L^2(M_1\times M_2)}.
\]
so that $x\mapsto K(x,x)\in L^1(M_1)$, and 
\[ \int K(x,x)\dd\mu_1(x)=\int_{M_1\times M_2} K_2(x,y)K_1(y,x)\dd\mu_1(x)\dd\mu_2(y)=\cjg K_1,K^*_2\cjd_{L^2(M_2\times M_1)}\]
with $K_2^*$ the integral kernel of ${\bf K}_2^*$.
We have that $\mathbf{K}=\mathbf{K}_2\mathbf{K}_1$ is trace class, as composition of two Hilbert-Schmidt operators, and its trace is defined by ${\rm Tr}(\mathbf{K})= \sum_{j}\cjg \mathbf{K} e_j,e_j\cjd_{L^2}$ for some orthonormal basis $(e_j)_j$  of eigenfunctions of $ \mathbf{K}^* \mathbf{K}$ (and we denote by $(f_k)_k$  an orthonormal basis of $L^2(M_2)$), which is equal to 
\[\begin{split} 
{\rm Tr}(\mathbf{K})=& \sum_{j}\cjg \mathbf{K} e_j,e_j\cjd_{L^2}=\sum_{j}\cjg \mathbf{K}_1e_j,\mathbf{K}_2^*e_j\cjd_{L^2}=
\sum_{j,k}\cjg \mathbf{K}_1e_j,f_k\cjd_{L^2} \cjg f_k,\mathbf{K}_2^*e_j\cjd_{L^2}\\
=&   \int_{M_1\times M_2} K_2(x,y)K_1(y,x)\dd\mu_1(x)\dd\mu_2(y)= \int_{M_1}K(x,x)\dd\mu_1(x).\qedhere
\end{split}\]
\end{proof}

Let us draw a parallel with \eqref{iterativetraces} using integral kernels. 
If $K_1\in L^2(M_2\times M_1), K_2\in L^2(M_1\times M_2)$ are viewed as integral kernels of Hilbert-Schmidt operators, we can integrate \eqref{CauchySchwarzkernel} in $x,z$ and get
\[ \|{\bf K}_2{\bf K}_1\|_{{\rm HS}}^2 =\int_{M_1\times M_1} \Big(\int_{M_2}|K_2(x,y)K_1(y,z)|\dd\mu_1(y)\Big)^2\dd\mu_2(x)\dd\mu_2(z)\leq \|K_1\|_{L^2}^2\|K_2\|_{L^2}^2=\|{\bf K}_1\|^2_{{\rm HS}}\|{\bf K}_2\|_{{\rm HS}}^2.\]  

\subsection{Amplitudes and gluings of amplitudes}\label{Subsec:amplitudeandgluing}
As before $(M,\mu)$ is a separable measured space and $H=L^2(M,\mu)$; we will also identify $L^2(M^\ell;\otimes^\ell \mu)$ with the tensor product $\otimes^\ell H$. 
We use the notation $\llbracket 1,n\rrbracket:=[1,n]\cap \N$ and for an $k$-uplet 
$x=(x_1,\dots,x_k)\in M^k$ and $p:\llbracket 1,\ell\rrbracket\to \llbracket 1,k\rrbracket$ an injective map, we write $\mathbf{x}_p=(x_{p(1)},\dots,x_{p(\ell)})$. 
We shall write $\mu^\ell$ for $\otimes^\ell \mu$ in what follows.
\begin{definition}
For $k\in\N^*$, we call $k$-amplitude a function $K\in L^2(M^k,\mu^k)$.
\end{definition}

Let us note that a $k$-amplitude $K$ can also be seen  as the integral kernel of a Hilbert-Schmidt operator from $L^2(M^{k_1};  \mu^{k_1})$ to $L^2(M^{k_2};   \mu^{k_2})$ with $k_1+k_2=k$.
The notion of amplitude described here is general but will be mostly applied to the LCFT amplitudes associated to Riemann surfaces with boundary introduced in the previous section and to the construction of conformal blocks in Section \ref{Section:blocks}. Now we describe the notion of  gluing of amplitudes.  
 
If $K_1$ and $K_2$ are amplitudes, we define a gluing of amplitudes by pairing/integrating out some of their coordinates together, as we now explain. 
  A choice of $\ell\leq k$ copies of $M$ in $M^k$ can be represented by an injective  map 
$p:\llbracket 1,\ell\rrbracket\to \llbracket 1,k\rrbracket$ or equivalently by a permutation $\sigma$ of $\llbracket 1,k\rrbracket$ satisfying $\sigma(j)=p(j)$ for $j\in\llbracket 1,\ell\rrbracket$ with $\sigma$ being increasing on 
$[\ell+1,k]$. 
For an amplitude $K\in L^2(M^k)$, we denote 
\[K_\sigma(x):=K(x_{\sigma^{-1}(1)},\dots,x_{\sigma^{-1}(k)}).\]
\begin{definition}
Let $K\in L^2(M^{k})$ and $K'\in L^2(M^{k'})$ be two amplitudes. Let $p:\llbracket 1,\ell\rrbracket\to \llbracket 1,k\rrbracket$ and $p':\llbracket 1,\ell\rrbracket\to \llbracket 1,k'\rrbracket$ be two injective maps, and we say that the $p(i)$-th copy of $M$ in $M^k$ is paired with the $p'(i)$-th copy of $M$ in $M^{k'}$ for each $i\in \llbracket 1,\ell\rrbracket$. Let $\sigma,\sigma'$ be the two associated permutations of $\llbracket 1,k\rrbracket$ and $\llbracket 1,k'\rrbracket$. The gluing of the amplitudes $K$ and $K'$ by the pairing $\mc{G}:=(p,p')$ is the 
amplitude $K\circ_{\mc{G}} K'\in L^2(M^{k+k'-2\ell})$ given by 
\[ \forall (x,x')\in M^{k-\ell}\times  M^{k'-\ell},\quad 
K\circ_{\mc{G}} K'(x,x'):=\int_{M^\ell}K_{\sigma}(y,x)K'_{\sigma'}(y,x')\dd\mu^\ell(y).\]
By Lemma \ref{compositionHS}, this is an amplitude satisfying
\[\|K\circ_{\mc{G}} K'\|_{L^2(M^{k+k'-2\ell})}\leq \|K\|_{L^2(M^k)}\|K'\|_{L^2(M^{k'})}.\]
\end{definition}

The iteration of several gluings $\mc{G}_1,\dots,\mc{G}_N$ of amplitudes can be described using a multigraph $\mc{G}$ (in the sense of Section \ref{subsubsec:graphdec}) as follows.
Let $K_1\in L^2(M^{k_1}),\dots,K_N\in L^2(M^{k_N})$ be $N$ amplitudes and we want to describe the multiple gluing
\[(K_1\circ\dots \circ K_N)_{\mc{G}}:=((K_1\circ_{\mc{G}_1} K_2)\circ_{\mc{G}_2}\dots )\circ_{\mc{G}_N}K_N.\] 
To each amplitude 
$K_j$ we associate a vertex $v_j=\{v_{j1},\dots,v_{jb_j}\}$ with $b_j$ elements.
The gluing of the amplitude $K_{1}$ to $K_{2}$ is described above by pairing 
$\ell$ variables $(x_{p(1)},\dots,x_{p(\ell)})$ 
in $K_{1}$ to $\ell$ variables $(x_{p'(1)},\dots,x_{p'(\ell)})$ in $K_{2}$, which we represent 
as linking edges $e_{p(i)}=(v_{1p(i)},v_{2,p'(i)})$.
The graph with $v_1,v_2$ as vertices and $(e_{p(i)})_i$ as edges represents the gluing $\mc{G}_1=(p,p')$, we shall just then denote this multigraph by $\mc{G}_1$.
 The iteration of this process allows to associate a multigraph $\mc{G}$ by linking the $N$ vertices $v_1,\dots,v_N$  through the choices of $L$ edges linking some of the vertices $(v_j)_{j=1,\dots, N}$ with $L\geq N$. This multigraph does not have loops, i.e. edges of the form $e_i=(v_{jk},v_{jk'})$ for some $j,k,k'$.
The set of vertices is denoted by $V$, the set of edges is denoted by $E$, and let
$W=\bigcup_j v_j$ and $W_{\mc{G}}:=W\setminus \bigcup_{i=1}^L ({\rm v}_1(e_i)\cup {\rm v}_2(e_i))$ (recall Section \ref{subsubsec:graphdec} for the definition of ${\rm v}_j(e_i)$).  
The resulting expression for the composition can be written as follows:  
write 
${\bf x}_j=(x_{j1},\dots,x_{jb_j})\in M^{b_j}$ for each $j$, and ${\bf x}=(x_1,\dots,x_L)\in M^L$
where $x_i:=x_{{\rm v}_1(e_i)}$, then 
 \begin{equation}\label{compositiongraph}
 (K_1\circ\dots \circ K_N)_{\mc{G}}=\int_{M^L}
 \big\cjg \delta_{\mc{G}},\prod_{j=1}^N K_j({\bf x}_j)\big\cjd \, \dd\mu^L({\bf x})
\end{equation}
where $\delta_{\mc{G}}$ is the Dirac measure on $M^{\sum_jb_j}$ defined by 
\begin{equation}\label{deltamcG}
\delta_{\mc{G}}({\bf x}_1,\dots,{\bf x}_N):=\prod_{i=1}^L\delta_{x_{{\rm v}_1(e_i)}=x_{{\rm v}_2(e_i)}}.
\end{equation} 
A priori $\cjg\delta_{\mc{G}},F\cjd$ is well-defined (in $[0,\infty]$) for $F\in C^0(M^{\sum_jb_j};\R_+)$ if one assumes that $M$ is a topological space, but we note by Lemma \ref{compositionHS} that the integral $\int \cjg \delta_{\mc{G}},\prod_{j=1}^NK_j({\bf x}_j)\cjd d\mu^L({\bf x})$ makes sense more generally in our setting since the amplitudes $K_j$ are $L^2(M^{b_j},\mu^{b_j})$: the result is a function of the variables 
$x_{jk}$ for $(j,k)\in W\setminus W_{\mc{G}}$, i.e. a function in $L^2(M^{d})$ with $d:=\sum_j b_j-2L$.
Moreover one has
\begin{equation}\label{L^2boundcomposition} 
\|(K_1\circ\dots \circ K_N)_{\mc{G}}\|_{L^2(M^d)}\leq \prod_{j=1}^N \|K_j\|_{L^2(M^{b_j})}.
\end{equation}

We conclude   with an important remark. 
\begin{remark}\label{remark:selfgluing}
It is possible to choose a multigraph 
$\mc{G}$ representing the gluing of amplitudes so that certain edges $e_i$ are loops, 
i.e. $e_i=(v_{jk},v_{jk'})$ for some $j,k,k'$. In that case, one would need to 
perform the following type of integrals
\begin{equation}\label{selfgluingK}
\int_{M}K_j(\dots,y,\dots,y,\dots)\dd\mu(y)
\end{equation} 
where the integrated entries are the $k$-th and $k'$-th ones. 
By the previous discussion, we observe that if $K_j\in L^2(M^{b_j})$ can be written under the form 
\begin{align*}
& K_j(x_1,\dots,x_{k-1},y,x_{k+1},\dots,x_{k'-1},y',x_{k'+1},\dots,x_b)=\\
&  \int_{M'} K'_j(x_1,\dots,x_{k-1},y,x_{k+1},\dots,x_{k'-1},x_{k'+1},\dots,x_b,z) K''_j(y',z)\dd\mu'(z)
\end{align*}
for some measured space $(M',\mu')$, some $K'_j\in 
L^2(M^{b_j}\times M',\mu^{b_j}\otimes \mu')$ and $K''_j\in L^2(M\times M',\mu\otimes \mu')$, then 
\eqref{selfgluingK} is defined a.e. and belongs to $L^2(M^{b_j-2})$ (again by Lemma \ref{compositionHS}). This means that loops in $\mc{G}$ and the associated self-gluing of amplitudes are not problematic as long as the amplitude corresponding to a vertex with a loop can be decomposed as a gluing of two $L^2$-amplitudes. In some sense this can be viewed as adding a vertex to $\mc{G}$ that cuts the loop. 
\end{remark}

\subsection{Connection with probabilistic amplitudes}
%%%%%%%%%%%%%%%%%%%%%%%%%%%%%%
Finally we conclude this section with an important consequence of Proposition \ref{glueampli}, which connects the probabilistic definition of amplitudes defined in Section \ref{probamp} with the material of the present section. Here we take the separable measured space $(M,\mu):=(\R\times \Omega_\T,\mu_0)$ (recall $\mu_0=\dd c\otimes\P_\T$) and the Hilbert space $H=\mc{H}=L^2(\R\times \Omega_\T,\mu_0)=L^2(H^{s}(\mathbb{T}),\mu_0)$ for $s<0$. 

 \begin{lemma}\label{Lem:splitanamplitudeinL2}
 Let  $(\Sigma,g, {\bf x},\boldsymbol{\zeta})$ be an admissible surface with $b\geq 1$ boundary components and $\boldsymbol{\alpha}=(\alpha_1,\dots,\alpha_m)$ some weights. Assume that the weights satisfy the condition $\sum_i \alpha_i-  Q \chi(\Sigma)>0$. Then the amplitude  $\caA_{\Sigma ,g,{\bf x},\boldsymbol{\alpha} }\in \otimes^b \mc{H}=L^2((\R\times \Omega_{\mathbb{T}})^b,\mu_0^b)$ is a $b$-amplitude and the integral kernel of a Hilbert-Schmidt operator $\otimes^b\mc{H}\to \C$. If $j \leq b$, there are $K_1\in L^2((\R\times \Omega_{\mathbb{T}})^{b},\mu_0^{b})$ and  $K_2\in L^2((\R\times \Omega_{\mathbb{T}})^2,\mu_0^2)$ so that 
 \[ \caA_{\Sigma ,g,{\bf x},\boldsymbol{\alpha} }(\tilde\varphi_1,\dots,\tilde\varphi_b)=\int_{\R\times \Omega_\T} 
 K_1(\tilde\varphi_1,\dots,\tilde\varphi_{j-1},\tilde\varphi_{j+1},\dots,\tilde\varphi_b,\varphi) K_2(\tilde\varphi_{j},\tilde\varphi)d\mu_0(\tilde\varphi).\]
  \end{lemma}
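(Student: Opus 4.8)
\textbf{Proof plan for Lemma \ref{Lem:splitanamplitudeinL2}.} The first assertion, that $\caA_{\Sigma,g,{\bf x},\boldsymbol{\alpha}}$ is a $b$-amplitude, i.e.\ that it belongs to $\otimes^b\mc{H}=L^2((\R\times\Omega_\T)^b,\mu_0^b)$, is precisely the square-integrability estimate furnished by Theorem \ref{integrcf}. Indeed, under the hypothesis $s:=\sum_i\alpha_i-Q\chi(\Sigma)>0$, Theorem \ref{integrcf} bounds $\caA_{\Sigma,g,{\bf x},\boldsymbol{\alpha}}(\tilde{\boldsymbol{\varphi}})$ by $C_R\,e^{s\bar c_- - R\bar c_+ -a\sum_j(\bar c-c_j)^2}A(\boldsymbol{\varphi})$ with $\int A(\boldsymbol{\varphi})^2\,\P_\T^{\otimes_b}(\dd\boldsymbol{\varphi})<\infty$. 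Choosing $R>0$ large enough, the Gaussian factor $e^{-a\sum_j(\bar c-c_j)^2}$ together with $e^{s\bar c_- - R\bar c_+}$ is square-integrable in the constant modes $(c_1,\dots,c_b)$ (after the linear change of variables $(c_1,\dots,c_b)\mapsto(\bar c, c_2-\bar c,\dots,c_b-\bar c)$: the $\bar c$ integral converges since $s>0$ controls the negative side and $R$ the positive side, while the remaining variables are controlled by the Gaussian), and so the product is in $L^2(\mu_0^{\otimes_b})$. Being an $L^2$ function on a product of two measure spaces, it is then automatically the integral kernel of a Hilbert–Schmidt operator by the discussion in Subsection \ref{section:traces} (formula \eqref{HSnorm=L^2}), in whichever splitting $b=k_1+k_2$ one likes; here the relevant splitting maps $\otimes^b\mc{H}\to\C$.

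For the factorization statement, the idea is to apply the gluing formula of Proposition \ref{glueampli} in reverse. Recall from the end of Subsection \ref{sub:gluing} (the ``important remark'' on cutting along an interior curve) that one can always cut the admissible surface $\Sigma$ along a suitable interior analytic curve $\mc{C}$ to obtain a surface with two extra boundary circles, one incoming and one outgoing, that reglues to $\Sigma$. The concrete choice I would make is to take a small annular collar of the $j$-th boundary circle $\mc{C}_j$: pick an interior analytic curve $\mc{C}$ parallel to $\mc{C}_j$, cutting off a thin annulus $\mathbb{A}$ containing $\mc{C}_j$ as one of its boundary components, so that $\Sigma=\Sigma'\#_{\mc{C}}\mathbb{A}$ where $\Sigma'$ is diffeomorphic to $\Sigma$ (same marked points, same other boundary components) and $\mathbb{A}$ is an admissible annulus with one boundary equal to $\mc{C}_j$ and the other being $\mc{C}$. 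One must ensure the cut is compatible with admissibility: choosing $\mc{C}$ and the charts as in the remark in Subsection \ref{sub:gluing}, the glued metric is the original admissible $g$.

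Then Proposition \ref{glueampli} (with $k=1$, $F_1=F_2=1$, and noting $\partial(\Sigma'\#\mathbb{A})=\partial\Sigma\neq\emptyset$ so $C=1/(\sqrt2\pi)$) gives
\[
\caA_{\Sigma,g,{\bf x},\boldsymbol{\alpha}}(\tilde\varphi_1,\dots,\tilde\varphi_b)
=\frac{1}{\sqrt2\pi}\int_{\R\times\Omega_\T}
\caA_{\Sigma',g',{\bf x},\boldsymbol{\alpha}}(\tilde\varphi_1,\dots,\tilde\varphi_{j-1},\tilde\varphi_{j+1},\dots,\tilde\varphi_b,\tilde\varphi)\,
\caA_{\mathbb{A},g_{\mathbb{A}},\emptyset,\emptyset}(\tilde\varphi_j,\tilde\varphi)\,\dd\mu_0(\tilde\varphi),
\]
where I have ordered the boundary components of $\Sigma'$ so that the one glued to $\mathbb{A}$ sits last. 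Now $\Sigma'$ still satisfies $\sum_i\alpha_i-Q\chi(\Sigma')>0$ since $\chi(\Sigma')=\chi(\Sigma)-1<\chi(\Sigma)$ (cutting adds a boundary circle and hence lowers the Euler characteristic; actually $\chi(\Sigma')=\chi(\Sigma)-1$, and the only subtlety is to check $\Sigma'$ remains connected, which holds because $\mc{C}$ is a collar curve of a single boundary component), so by the first part $\caA_{\Sigma',g',{\bf x},\boldsymbol{\alpha}}\in L^2((\R\times\Omega_\T)^b,\mu_0^b)$; set $K_1:=\frac{1}{\sqrt2\pi}\caA_{\Sigma',g',{\bf x},\boldsymbol{\alpha}}$. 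For the annulus amplitude $K_2:=\caA_{\mathbb{A},g_{\mathbb{A}},\emptyset,\emptyset}$, its $L^2(\mu_0^2)$ membership is exactly the content of Proposition \ref{prop:annulussimple}: it is (up to constant and a power of $|q|$) the integral kernel of $e^{i\arg(q)\boldsymbol\Pi}e^{\ln|q|\mathbf{H}}$, which is Hilbert–Schmidt since $e^{-t\mathbf{H}}$ is trace class for $t>0$ (it is bounded between the weighted $L^2$ spaces and its kernel is $L^2$ by the explicit Gaussian form \eqref{algebrafreefield}); alternatively, one directly applies Theorem \ref{integrcf} to $\mathbb{A}$, whose Euler characteristic is $0$ and whose weight sum is $0$, so the relevant exponent $s=0$ — here one must be slightly careful since Theorem \ref{integrcf} as stated wants to integrate $e^{s\bar c_-}$, and with $s=0$ one still needs the $e^{-R\bar c_+}$ and the Gaussian collar factor, which do give square-integrability; in any case Proposition \ref{prop:annulussimple} settles it cleanly. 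This gives the desired representation.

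\textbf{Main obstacle.} The genuinely delicate point is the first assertion: extracting square-integrability (rather than mere finiteness a.e.) from the bound in Theorem \ref{integrcf}. One has to track that the constant $R$ can be taken as large as needed independently of everything else, that the Gaussian term $e^{-a\sum_j(\bar c-c_j)^2}$ survives squaring with room to spare (replace $a$ by $a/2$), and that $A(\boldsymbol{\varphi})$ — whose \emph{second} moment is finite by the theorem — is the right object to pair with the constant-mode Gaussian. The change of variables separating $\bar c$ from the differences $c_j-\bar c$ and the convergence of the $\bar c$-integral on $(0,\infty)$ (using $R$) and on $(-\infty,0)$ (using $s>0$) is where all the care goes. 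Everything else — the choice of cutting curve, the bookkeeping of boundary orderings, the Euler characteristic computation, and the invocation of Proposition \ref{glueampli} — is routine given the machinery already developed in Sections \ref{probamp}–\ref{section:semigroup}.
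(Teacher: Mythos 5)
Your overall strategy --- cutting an annular collar off the $j$-th boundary circle and invoking Proposition \ref{glueampli} --- is exactly the paper's, and your treatment of the first assertion (square-integrability of $\caA_{\Sigma,g,{\bf x},\boldsymbol{\alpha}}$ from Theorem \ref{integrcf} when $s>0$) is fine. But there is a genuine gap in the factorization step: the annulus amplitude $\caA_{\mathbb{A}_q,g_{\mathbb{A}}}$ is \emph{not} in $L^2((\R\times\Omega_{\T})^2,\mu_0^2)$, and both of your justifications for the contrary fail. The operator $e^{-t\mathbf{H}}$ is not trace class, nor even compact: $\mathbf{H}$ has absolutely continuous spectrum (this is the whole content of the spectral resolution of Proposition \ref{holomorphicpsi}, with generalized eigenfunctions indexed by the continuous parameter $p\in\R_+$), and concretely the kernel \eqref{kernel0} contains the heat factor $e^{-(c-c')^2/2t}$, which is not square-integrable in $(c,c')$. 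Your fallback via Theorem \ref{integrcf} fails for the same reason: for the annulus the exponent is $s=0$, so on the region $c_1=c_2=\bar c\to-\infty$ the bound degenerates to $e^{0\cdot\bar c_-}\cdot e^{-a\cdot 0}\cdot A(\boldsymbol{\varphi})=A(\boldsymbol{\varphi})$, which gives no decay in the common zero mode; the Gaussian factor only controls the \emph{differences} $c_j-\bar c$, not $\bar c$ itself.

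The paper's fix is a weight transfer: writing $c$ for the zero mode of the glued field $\tilde\varphi$ and choosing $0<\epsilon<s/b$, one sets $K_2:=e^{\epsilon c_-}\caA_{\mathbb{A}_q,g_{\mathbb{A}}}$ and $K_1:=e^{-\epsilon c_-}\caA_{\Sigma_q,g,{\bf x},\boldsymbol{\alpha}}$. The factor $e^{\epsilon c_-}$ supplies the missing exponential decay in the negative zero mode for the annulus kernel, while the loss $e^{-\epsilon c_-}$ on the $\Sigma_q$ side is absorbed because that amplitude decays like $e^{s\bar c_-}$ with $s>0$ strictly (Theorem \ref{integrcf} applied to $\Sigma_q$, which satisfies the same hypothesis since $\chi(\Sigma_q)=\chi(\Sigma)$). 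Incidentally, your Euler characteristic computation is off: cutting along an interior closed curve does not change $\chi$, so $\chi(\Sigma')=\chi(\Sigma)$ (consistent with your own remark that $\Sigma'$ is diffeomorphic to $\Sigma$), not $\chi(\Sigma)-1$; this slip is harmless for the Seiberg-type condition, but the $\epsilon$-redistribution is not optional --- without it the decomposition into two $L^2$ kernels does not hold.
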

 \begin{proof}
The first statement follows directly from Theorem \ref{integrcf} and the definition of $b$-amplitude. Let us then prove the last statement and we can assume that $j=1$ without loss of generality. 
Let us write 
$\tilde{\boldsymbol{\varphi}}=( \tilde\varphi _1,\tilde{\boldsymbol{\varphi}}_2)\in H^s(\T)\times (H^s(\T))^{b-1}$ for the boundary fields of the amplitude  the amplitude  $\caA_{\Sigma ,g,{\bf x},\boldsymbol{\alpha} }$ (i.e. we single out the boundary field $\tilde\varphi _1$ attached to the first boundary component and gather all other boundary fields in  $\tilde{\boldsymbol{\varphi}}_2$).
Since $g$ is admissible, it is of the form $|z|^{-2}|dz|^2$ in local coordinates over a neighborhood of the first boundary component $\pl_1\Sigma$. By choosing $q\in (0,1)$ close enough to $1$, we can cut a small annulus $\mathbb{A}_q$ around the first boundary component so that the metric $g$ is still of the form $|z|^{-2}|dz|^2$ over this annulus, $\mathbb{A}_q$ contains no marked points and $\Sigma= \mathbb{A}_q \#_1\mathcal{1}  \Sigma_q$, where $\Sigma_q$ is the surface $\Sigma$ with the annulus removed. Let us call $\mc{C}$ the curve splitting $\Sigma$ into the two components $\mathbb{A}_q$ and $ \Sigma_q$. We can decide arbitrarily that $\mc{C}$ is incoming on $ \Sigma_q$ and outgoing on $\mathbb{A}_q$. Then Proposition \ref{glueampli} asserts that
\[\caA_{\Sigma ,g,{\bf x},\boldsymbol{\alpha} }(\tilde\varphi _1,\tilde{\boldsymbol{\varphi}}_2)=\frac{1}{\sqrt{2}\pi}\int \caA_{\Sigma_q ,g,{\bf x},\boldsymbol{\alpha} }(\tilde\varphi ,\tilde{\boldsymbol{\varphi}}_2) \caA_{\mathbb{A}_q ,g_{\mathbb{A}}}(\tilde\varphi _1,\tilde\varphi )  \dd \mu_0(\tilde\varphi).\]
Let us  now set $s_0:=\sum_i \alpha_i-  Q \chi(\Sigma)>0$ and fix $\epsilon >0$ such that $\epsilon<s_0/b$. Write $c$ for the zero mode of the boundary field $\tilde\varphi  $. Let us rewrite the product of amplitudes in the above integral as $\int K_1(\tilde\varphi ,\tilde{\boldsymbol{\varphi}}_2)K_2(\tilde\varphi _1,\tilde\varphi )d\mu_0(\tilde\varphi)$ with 
\[K_2(\tilde\varphi _1,\tilde\varphi) := e^{\epsilon c_-} \caA_{\mathbb{A}_q ,g_{\mathbb{A}}}(\tilde\varphi _1,\tilde\varphi ),\quad K_1(\tilde\varphi ,\tilde{\boldsymbol{\varphi}}_2)= e^{-\epsilon c_-}\caA_{\Sigma_q ,g,{\bf x},\boldsymbol{\alpha} }(\tilde\varphi ,\tilde{\boldsymbol{\varphi}}_2).\]
 By  Theorem \ref{integrcf}, we have that $K_1,K_2$ are $L^2$-kernels, from which we deduce the result.
 \end{proof}

 \subsection{Bounds on correlations}
 As a direct corollary of Propositions \ref{glueampli}, \ref{selfglueampli}, Lemma \ref{Lem:splitanamplitudeinL2} and the estimate \ref{L^2boundcomposition}, we obtain bounds on $m$-point correlation functions of surfaces of any genus in terms of correlation functions of genus $0$ (with $4$ insertions), genus $1$ (with $2$ insertions) or genus $2$ (with no insertions) surfaces.  
 \begin{corollary}
Let $(\Sigma,g,{\bf x},\boldsymbol{\alpha})$ be an admissible Riemann surface with $m$ marked points ${\bf x}=(x_1,\dots,x_m)$ with weights $\boldsymbol{\alpha}=(\alpha_1,\dots,\alpha_m)$ 
and let $(\mc{P}_j,g,{\bf x}_j,\boldsymbol{\alpha}_j)$ be $N:=2{\bf g}-2+m$ building blocks, i.e. pairs of pants, annuli with $1$ insertion or disks with $2$ insertions, where ${\bf x}_j={\bf x}\cap \mc{P}_j$ and let $b_j$ be the number of boundary components of the block $\mc{P}_j$. Let $\Sigma_j=\mc{P}_j\# \mc{P}_j$ be the double of $\mc{P}_j$ obtained by gluing each $\pl_{k}\mc{P}_j$ with itself and let ${\bf x}^2_j:={\bf x}^{\ell}_j\cup {\bf x}^{r}_j$ be 
the obtained marked points on the left/right copy of $\mc{P}_j$ with ${\bf x}^{\ell}_j=(x^{\ell}_{ji})_{i\leq 3-b_j}\subset {\bf x}$ and 
${\bf x}^{r}_j=(x^{r}_{ji})_{i\leq 3-b_j}$, and denote similarly $\boldsymbol{\alpha}^{\ell}_j=(\alpha^{\ell}_{ji})_{i\leq 3-b_j}$,  $\boldsymbol{\alpha}^{r}_j=(\alpha^{r}_{ji})_{i\leq 3-b_j}$. If $\sum_{i\leq 3-b_j} \alpha_{ji}-  Q \chi(\mc{P}_j)>0$ for all $j=1,\dots,N$, 
then we have the bound
\[ \cjg \prod_{i=1}^m V_{\alpha_i}(x_i)\cjd_{\Sigma} \leq \prod_{j=1}^{N}\Big\cjg \prod_{i\leq 3-b_j} V_{\alpha^{\ell}_{ji}}(x^{\ell}_{ji})V_{\alpha^{r}_{ji}}(x^{r}_{ji})\Big\cjd_{\Sigma_j}^{1/2}.\]
 \end{corollary}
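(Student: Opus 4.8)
The strategy is to decompose the closed surface $\Sigma$ into its building blocks $\mc{P}_j$ ($j=1,\dots,N$ with $N=2{\bf g}-2+m$), express $\cjg \prod_i V_{\alpha_i}(x_i)\cjd_\Sigma$ as an iterated gluing of the amplitudes $\caA_{\mc{P}_j,g,{\bf x}_j,\boldsymbol{\alpha}_j}$ via Propositions \ref{glueampli} and \ref{selfglueampli}, and then bound the resulting multiple gluing using the $L^2$ estimate \eqref{L^2boundcomposition} together with the Cauchy--Schwarz identity $\|K\|_{L^2}^2 = \cjg K,\bbar K\cjd$, which is exactly what produces the doubled surface $\Sigma_j=\mc{P}_j\#\mc{P}_j$ on the right-hand side. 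First I would fix a topological decomposition of $\Sigma$ along $3{\bf g}-3+m$ simple curves into the blocks $\mc{P}_j$, represented by an admissible multigraph $\mc{G}$ in the sense of Section \ref{subsubsec:graphdec}, and choose the gluing curves to be analytic with admissible metric $|dz|^2/|z|^2$ near each, so that the glued metric on $\Sigma$ agrees (up to Weyl, which only contributes harmless constants via Proposition \ref{Weyl} that can be absorbed or normalized away) with $g$. Then Theorem \ref{gluingthm} (in the iterated form of Proposition \ref{glueampli} and \ref{selfglueampli}, plus Remark \ref{remark:selfgluing} for any loops in $\mc{G}$, which are handled by Lemma \ref{Lem:splitanamplitudeinL2}) gives
\[
\cjg \prod_{i=1}^m V_{\alpha_i}(x_i)\cjd_\Sigma = C\,(\caA_{\mc{P}_1,g,{\bf x}_1,\boldsymbol{\alpha}_1}\circ \dots \circ \caA_{\mc{P}_N,g,{\bf x}_N,\boldsymbol{\alpha}_N})_{\mc{G}}
\]
for an explicit positive constant $C$ coming from the constants in Theorem \ref{gluingthm}; for the clean inequality as stated one normalizes metrics so that $C\leq 1$ (or one tracks $C$ and notes it equals $1$ for the chosen normalization).

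Next I would apply \eqref{L^2boundcomposition} from Subsection \ref{Subsec:amplitudeandgluing}: since the fully-glued amplitude is a $0$-amplitude (a number), \eqref{compositiongraph} with $d=0$ gives
\[
\big|(\caA_{\mc{P}_1,g,{\bf x}_1,\boldsymbol{\alpha}_1}\circ \dots \circ \caA_{\mc{P}_N,g,{\bf x}_N,\boldsymbol{\alpha}_N})_{\mc{G}}\big| \leq \prod_{j=1}^N \|\caA_{\mc{P}_j,g,{\bf x}_j,\boldsymbol{\alpha}_j}\|_{L^2((\R\times\Omega_\T)^{b_j})}.
\]
Here the hypothesis $\sum_{i\leq 3-b_j}\alpha_{ji}-Q\chi(\mc{P}_j)>0$ is precisely the condition of Lemma \ref{Lem:splitanamplitudeinL2} (equivalently Theorem \ref{integrcf} applied to each block), guaranteeing that each $\caA_{\mc{P}_j,g,{\bf x}_j,\boldsymbol{\alpha}_j}$ is genuinely an element of $L^2$ so that its Hilbert--Schmidt/$L^2$ norm is finite and the gluing integrals are all well-defined (including the self-gluings, which fit the decomposition hypothesis of Remark \ref{remark:selfgluing} via Lemma \ref{Lem:splitanamplitudeinL2}). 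Finally, for each block, by \eqref{HSnorm=L^2},
\[
\|\caA_{\mc{P}_j,g,{\bf x}_j,\boldsymbol{\alpha}_j}\|^2_{L^2((\R\times\Omega_\T)^{b_j})} = \int \caA_{\mc{P}_j,g,{\bf x}_j,\boldsymbol{\alpha}_j}(\tilde{\boldsymbol{\varphi}})\,\overline{\caA_{\mc{P}_j,g,{\bf x}_j,\boldsymbol{\alpha}_j}(\tilde{\boldsymbol{\varphi}})}\,\dd\mu_0^{b_j}(\tilde{\boldsymbol{\varphi}}),
\]
and since the amplitudes are real and nonnegative, the right-hand side is exactly the self-gluing of $\caA_{\mc{P}_j}$ with a second (conjugate, hence identical) copy along all $b_j$ boundary circles, which by Proposition \ref{glueampli} (gluing $\mc{P}_j$ to its double) equals $\cjg \prod_{i\leq 3-b_j} V_{\alpha^\ell_{ji}}(x^\ell_{ji})V_{\alpha^r_{ji}}(x^r_{ji})\cjd_{\Sigma_j}$ up to the same normalization constant. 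Taking square roots and multiplying over $j$ yields the claimed bound.

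\textbf{Main obstacle.} The routine parts are the bookkeeping of the multigraph and of the gluing constants. The genuinely delicate point is ensuring that every gluing/self-gluing integral appearing in the iteration is absolutely convergent and that the doubled-surface correlation $\cjg \cdot\cjd_{\Sigma_j}$ is itself finite --- i.e. that $\Sigma_j$ satisfies the Seiberg bound \eqref{seiberg1}. This is where the hypothesis $\sum_{i\leq 3-b_j}\alpha_{ji}-Q\chi(\mc{P}_j)>0$ does the work: doubling gives $\chi(\Sigma_j)=2\chi(\mc{P}_j)$ and weight sum $2\sum_{i\leq 3-b_j}\alpha_{ji}$, so the doubled Seiberg bound $2\sum_{i\leq 3-b_j}\alpha_{ji}-2Q\chi(\mc{P}_j)>0$ is equivalent to the assumption; one must check this carefully for each of the three block types ($b_j\in\{1,2,3\}$), and also verify that the intermediate partially-glued amplitudes stay in the appropriate (possibly weighted) $L^2$ spaces so that Remark \ref{remark:selfgluing} and Lemma \ref{Lem:splitanamplitudeinL2} apply at every step. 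I would also need a brief argument that the particular pants/annulus/disk metrics chosen for the blocks can be taken admissible and compatible with an admissible metric on $\Sigma$, with Weyl anomaly factors contributing only finite positive constants that can be arranged to be $\leq 1$ or simply absorbed into the (implicit) metric-dependence of the correlation functions.
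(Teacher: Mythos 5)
Your proposal is correct and follows exactly the route the paper intends: the paper states this result as a direct corollary of Propositions \ref{glueampli} and \ref{selfglueampli}, Lemma \ref{Lem:splitanamplitudeinL2} and the estimate \eqref{L^2boundcomposition}, which are precisely the four ingredients you combine (gluing the block amplitudes according to the multigraph, bounding the composition by the product of $L^2$-norms, and identifying $\|\caA_{\mc{P}_j}\|_{L^2}^2$ with the correlation function on the double $\Sigma_j$). The one refinement worth making is that the constants need not be "normalized away" by a choice of metric: using $\sum_j b_j=2L$, the product of $C_{\rm gluing}=\sqrt{2}\,(\sqrt{2}\pi)^{1-L}$ from \eqref{Cgluing} with the factors $2^{-1/4}(\sqrt{2}\pi)^{(b_j-1)/2}$ coming from rewriting each $\|\caA_{\mc{P}_j}\|_{L^2}$ in terms of $\cjg\cdot\cjd_{\Sigma_j}^{1/2}$ works out to $(2\pi)^{1-N/2}\leq 1$ whenever $N\geq 2$, which gives the stated inequality directly.
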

In the case of the Riemann sphere $\mathbb{S}^2$, this gives bounds of the $m$-points correlations in terms of $4$-points 
correlation functions and $2$-points correlation function on the torus ${\T^2}$ by splitting the sphere into two half spheres with $2$ insertions and $m-4$ annuli with $1$ insertion.  In the case of the torus $\T^2$, this gives bounds on the $m$-point correlation functions in terms of $2$-point correlation functions on $\T^2$. In the case of a genus ${\bf g}$ surface $\Sigma$ with no insertions, the partition function can be bounded by products of partition functions of genus $2$ surfaces $\Sigma_j$ obtained by doubling each pair of pants $\mc{P}_j$ constituting $\Sigma$.

  %%%%%%%%%%%%%%%%%%%%%%%%%%%%%%%%%%%%%%%%%%%%%%%%%%%%%%%%%%%%%%%%%%%%%%%%%%%%%%%%%%%%%%%%%%%%%%%%%%%%%%%%%%%%%%%%%%%%%%%%%%%%%%%%%%%%%%%%%%%%  %%%%%%%%%%%%%%%%%%%%%%%%%%%%%%%%%%%%%%%%%%%%%%%%%%%%%%%%%%%%%%%%%%%%%%%%%%%%%%%%%%%%%%%%%%%%%%%%%%%%%%%%%%%%%%%%%%%%%%%%%%%%%%%%%%%%%%%%%%%%
\section{Conformal blocks and the proof of the conformal bootstrap}\label{Section:blocks}
    %%%%%%%%%%%%%%%%%%%%%%%%%%%%%%%%%%%%%%%%%%%%%%%%%%%%%%%%%%%%%%%%%%%%%%%%%%%%%%%%%%%%%%%%%%%%%%%%%%%%%%%%%%%%%%%%%%%%%%%%%%%%%%%%%%%%%%%%%%%%
 
In this section, we prove the main result of the paper, namely Theorem \ref{th:fullexpressioncorrel}. 
To do so, first the decomposition of the Riemannian surface $(\Sigma,g)$ into complex building blocks $\mc{P}_j$ is combined with Section \ref{section:traces} to decompose the correlation functions as pairings of (Hilbert-Schmidt) 
 Segal amplitudes $\mc{A}_{\mc{P}_j,g,{\bf x}_j,\boldsymbol{\alpha}_j,\boldsymbol{\zeta}_j}$ 
 in tensor powers of the Hilbert space $\mc{H}$.  Next, we use the spectral resolution of Section \ref{section:semigroup} to rewrite 
 these pairings in terms of Young-diagrams. We shall see in Proposition \ref{prop:decompositionAmplitude} that this becomes a pairing of amplitudes 
 $\widehat{\mc{A}}_{\mc{P}_j,g,{\bf x}_j,\boldsymbol{\alpha}_j,\boldsymbol{\zeta}_j}$
 of Hilbert-Schmidt operators acting on tensor powers of $L^2(\R^+\times \mc{T}^2,dp\otimes \mu_{\mc{T}}^2)$.   
 These amplitudes are  obtained from the Segal amplitudes by considering their matrix coefficients on the eigenfunctions $\Psi_{Q+ip,\nu,\tilde{\nu}}$ of the Hamiltonian ${\bf H}$, or more precisely the orthonormal elements $H_{Q+ip,\nu,\tilde{\nu}}$ defined in \eqref{defHzarb}. We will then use a factorisation property proved in Theorem \ref{pantDOZZ} 
 of these amplitudes, which is a consequence of Ward identity (in Section \ref{sec:ward}): this factorisation roughly says that the amplitude $\widehat{\mc{A}}_{\mc{P}_j,g,{\bf x}_j,\boldsymbol{\alpha}_j,\boldsymbol{\zeta}_j}$ can be represented as  tensor products of an  amplitude on $L^2(\R^+\times \mc{T},dp\otimes \mu_{\mc{T}})$ with its complex conjugate. 
 Up to normalising by the DOZZ $3$-point function and an appropriate metric constant, this amplitude $\tilde{\mc{B}}_{\mc{P}_j,J,{\bf x}_j,\boldsymbol{\alpha}_j,\boldsymbol{\zeta}_j}$ is what we call \emph{normalised conformal block amplitude} of  the complex building blocks $\mc{P}_j$ (see Lemma \ref{lem:blockamplitudes}): it contains essentially the matrix coefficients of the Segal amplitude $\mc{A}_{\mc{P}_j,g,{\bf x}_j,\boldsymbol{\alpha}_j,\boldsymbol{\zeta}_j}$ on "half" the eigenfunctions of ${\bf H}$, namely on the family $(H_{Q+ip,\nu,0})_{p,\nu}$ of Virasoro descendants of the 
 "left copy" of the Virasoro algebra. 
It only depends on the complex structure $J$ of $\mc{P}_j$ but not on the metric $g$, moreover it depends holomorphically on the plumbing parameters ${\bf q}=(q_1,\dots,q_{3{\bf g}-3+m})$ when we consider families of surfaces obtained by the plumping procedure. The pairing of these  normalised conformal block amplitudes produces the normalised conformal blocks, which are holomorphic functions of ${\bf q}$ on the open set of the moduli where these coordinates are well-defined. By gathering everything, we obtain in Theorem \ref{th:fullexpressioncorrel}  the conformal bootstrap factorisation formula for the correlation functions.
 
 \emph{Notation:} At this point, to simplify the reading of this section, let us make the following comments about the notations we shall use for indices indexing functions and parameters related to the graph $\mc{G}$, i.e. the topological decomposition of $\Sigma$: if $u$ stands for a generic parameter or function attached to a simple curve $\mc{C}$ embedded in a surface (possibly with boundary), we will denote by $u_i$ the associated function/parameter attached to the curve $\mc{C}_i\subset \Sigma$, by 
${\bf u}=(u_1,\dots,u_L)$ the whole family attached to the curves 
$\mc{C}_1, \dots,\mc{C}_L$, by $u_{jk}$ the fonction/parameter attached to the boundary curve $\pl_k\mc{P}_j$ and by ${\bf u}_j=(u_{j1},\dots, u_{jb_j})$ the family attached to the whole boundary $\pl \mc{P}_j=\pl_1\mc{P}_j\sqcup\dots \sqcup \pl_{b_j}\mc{P}_j$. Below, the role
of $u$ will be played by the parametrisation $\zeta:\T\to \mc{C}$ of the curve, the random field $\tilde{\varphi}\in H^{s}(\T)$ on the curve (using $\zeta$ to view it as a field on $\mc{C}$), the spectral parameter $p$ of ${\bf H}$ acting on $L^2(H^{s}(\T))$ (cf Section \ref{subsubsec:spectral}), the Young diagrams $\nu,\tilde{\nu}$ associated to the spectral decomposition of ${\bf H}$ on $L^2(H^{s}(\T))$, the moduli parameter $q$. The edges $e_i=({\rm v}_1(e_1),{\rm v}_2(e_i))=((j,k),(j',k'))=(v_{jk},v_{j'k'})$   represent  the identification of $\pl_{k}\mc{P}_j$ with $\pl_{k'}\mc{P}_{j'}$, and this will induce an identification $u_{jk}=u_{j'k'}$, i.e. $u_{{\rm v}_1(e_i)}=u_{{\rm v}_2(e_i)}=u_i$. Note that these notations were also used in Section \ref{Subsec:amplitudeandgluing}, where the amplitudes played the role of $\mc{P}_j$ (the vertices) and the variables played the role of $\mc{C}_i$ (the edges).

\subsection{Step 1: Cutting correlations into pairings of Segal amplitudes and 
projections on eigenstates}\label{decompositionLCFT}
 
Let $(\Sigma,J)$ be a genus ${\bf g}$ Riemann surface with $m$ marked points ${\bf x}=\{x_1,\dots,x_m\}$ with respective real weights $\boldsymbol{\alpha}=(\alpha_1,\dots,\alpha_m)$, which we all assume to be strictly less than $Q$. As discussed in Section \ref{Sec:Riemann surfaces with marked points}, $\Sigma$ can be decomposed into a maximal collection of $N:=2{\bf g}-2+m$ complex building blocks $(\mc{P}_j)_{j=1,\dots,N}$ that are pairs of pants, cylinders with $1$ marked point and disks with $2$ marked points. Denote by $b_j$ the number of 
connected components of $\pl \mc{P}_j$.
The set $\bigcup_{j}\pl \mc{P}_j$ consists of $L:=3{\bf g}-3+m$ analytic simple curves $\mc{C}_i$ embedded in $\Sigma$. This decomposition is described by a multigraph $\mc{G}$ where each vertex $v_j$ is associated to $\mc{P}_j$ and each edge $e_i$ is associated to $\mc{C}_i$. Each curve $\mc{C}_i$ can be viewed as a boundary component of two building blocks, $\pl_{k}\mc{P}_j$ and $\pl_{k'}\mc{P}_{j'}$, with  
${\rm v}_1(e_i)=(j,k)=v_{jk}$ and ${\rm v}_2(e_i)=(j',k')=v_{j'k'}$ (recall the maps ${\rm v}_1,{\rm v}_2$ are defined in Definition \ref{defmultigraph}). \\

Let ${\bf x}_j:={\bf x}\cap \mc{P}_j$ and $\boldsymbol{\alpha}_j=(\alpha_{jk})_{k=1,\dots,3-b_j}$ be the collection of weights associated to the marked points in ${\bf x}_j$. We will make the following crucial assumption
\begin{equation}\label{ass:star}
\forall j,\quad  \sum_{k=1}^{3-b_j}\alpha_{jk}-(2-b_j)Q>0.
\end{equation}
Note that the Euler characteristics  of the block $\mc{P}_j$ is $2-b_j$. In view of Lemma \ref{Lem:splitanamplitudeinL2}, this assumption ensures that the amplitude associated to each $\mc{P}_j$ is Hilbert-Schmidt. 
We stress that this is not a technical condition but rather an intrinsic condition: indeed, if this condition fails to hold, further discrete terms in the bootstrap formulae appear (this fact is discussed in physics in \cite{Zamolodchikov96}).

A choice of analytic parametrisations $\zeta_i: \T\to \mc{C}_i$ for all $i$ induces an analytic parametrisation $\zeta_{jk}:\T\to \pl_k\mc{P}_j$ of the boundary circles of $\pl\mc{P}_j$, 
simply given by $\zeta_{jk(i)}:=\zeta_i$ if $k(i)\leq b_j$ is defined by $\mc{C}_i=\pl_{k(i)}\mc{P}_j$. We can make this choice of $\zeta_i$ so that ${\rm v_1}(e_i)<{\rm v}_{2}(e_i)$, i.e. $\zeta_{{\rm v}_1(e_i)}$ is outgoing while $\zeta_{{\rm v}_2(e_i)}$ is incoming, following the convention of Section \ref{subsubsec:graphdec}. Recall that $\sigma_{jk}=1$ (resp. $=-1$) when $v_{jk}={\rm v}_2(e_i)$ for some $i$, i.e. it is incoming  (resp. when $v_{jk}={\rm v}_1(e_i)$, i.e. it is outgoing).

The map $\zeta_i^{-1}$ extends holomorphically in an annular neighborhood $V_i$ of $\mc{C}_i$ into a holomorphic chart $\omega_i:V_i\to \{|z|\in (\delta,\delta^{-1})\}$ for some $\delta<1$. 
Recall that a metric $g$ on $\Sigma$ that is compatible with the complex structure $J$ and such that there is a complex coordinate $z$ near each $\mc{C}_i$ in which $g=|dz|^2/|z|^2$ and $\mc{C}_i=\{|z|=1\}$ is called  admissible with respect to  $(J,\boldsymbol{\zeta})$ where $\boldsymbol{\zeta}=(\zeta_1,\dots,\zeta_L)$.  Note that such a $g$ always exists.\\

Denoting $\boldsymbol{\zeta}_j=(\zeta_{j1},\dots,\zeta_{jb_j}):\T\to \pl_1\mc{P}_j\times\dots\times \pl_{b_j}\mc{P}_j$, 
this allows to define the LCFT amplitude
 $\mc{A}_{\mc{P}_j,g,{\bf x}_j,\boldsymbol{\alpha}_j,\boldsymbol{\zeta}_j}$ of the complex block $\mc{P}_j$. 
Let us set $M:=H^{s}(\T)$ for $s<0$ fixed, and view it as a measure space using the measure $\mu_0=\dd c\otimes \mathbb{P}_\T$. 
By Proposition \ref{glueampli},  the Liouville amplitude is given by 
\begin{equation}\label{decompositionasproduct}
\begin{split} 
\mc{A}_{\Sigma,g,{\bf x},{\bf \alpha}}&=C_{\rm gluing} (\mc{A}_{\mc{P}_1,g,{\bf x}_1,\boldsymbol{\alpha}_1,\boldsymbol{\zeta}_1}\circ\dots\circ \mc{A}_{\mc{P}_N,g,{\bf x}_N,\boldsymbol{\alpha}_N,\boldsymbol{\zeta}_N})_{\mc{G}}\\ 
& =C_{\rm gluing}\int_{M^L}
 \big\cjg \delta_{\mc{G}}(\tilde{\boldsymbol{\varphi}}_1,\dots,\tilde{\boldsymbol{\varphi}}_N),\prod_{j=1}^N\mc{A}_{\mc{P}_j,g,{\bf x}_j,\boldsymbol{\alpha}_j,\boldsymbol{\zeta}_j}(\tilde{\boldsymbol{\varphi}}_j)\big\cjd \, \dd \mu_0^L(\tilde{\boldsymbol{\varphi}})
 \end{split}\end{equation} 
where we have used the notation $\tilde{\boldsymbol{\varphi}}_j=
(\tilde\varphi_{j1},\dots,\tilde\varphi_{jb_j})\in M^{b_j}$ and $\tilde{\boldsymbol{\varphi}}:=(\tilde\varphi_1,\dots,\tilde\varphi_{L})\in M^L$ with $\tilde{\varphi}_{i}:=\tilde{\varphi}_{{\rm v}_1(e_i)}$, and we recall that $\delta_{\mc{G}}$ is defined by \eqref{deltamcG} and the gluing constant is given by  
\begin{equation}\label{Cgluing}
C_{\rm gluing}=\frac{\sqrt{2}}{(\sqrt{2}\pi)^{L-1}}.
\end{equation}
We shall decompose this integral using the spectral decomposition of ${\bf H}$ acting on $L^2(M,\mu_0)$ as described in Corollary \ref{boundednessofPi_V} and \eqref{rewriting}. 

\begin{proposition}\label{prop:decompositionAmplitude}
The following holds true 
\[\begin{split} 
\mc{A}_{\Sigma,g,{\bf x},{\bf \alpha}}& =\frac{C_{\rm gluing}}{(2\pi)^L}\int_{\R_+^L}\int_{\mc{T}^{2L}}\big\cjg \delta_{\mc{G}}(({\bf p}_j,\boldsymbol{\nu}_j,\tilde{\boldsymbol{\nu}}_j)_{j=1}^N),\prod_{j=1}^N
\widehat{\mc{A}}_{\mc{P}_j,g,{\bf x}_j,\boldsymbol{\alpha}_j,\boldsymbol{\zeta}_j}({\bf p}_j,\boldsymbol{\nu}_j,\tilde{\boldsymbol{\nu}}_j)\big\cjd \,\dd{\bf p}\,\dd \mu_{\mc{T}}(\boldsymbol{\nu})
\,\dd\mu_{\mc{T}}(\tilde{\boldsymbol{\nu}})\\
&=\frac{C_{\rm gluing}}{(2\pi)^L} (\widehat{\mc{A}}_{\mc{P}_1,g,{\bf x}_1,\boldsymbol{\alpha}_1,\boldsymbol{\zeta}_1}\circ\dots\circ \widehat{\mc{A}}_{\mc{P}_N,g,{\bf x}_N,\boldsymbol{\alpha}_N,\boldsymbol{\zeta}_N})_{\mc{G}}
\end{split}\]
with the notation 
\begin{equation}\label{notationboldsymbol}
\begin{gathered}
{\bf p}_j=(p_{j1},\dots,p_{jb_j})\in \R_+^{b_j}, \quad \boldsymbol{\nu}_j=(\nu_{j1},\dots,\nu_{jb_j})\in \mc{T}^{b_j} , \quad \tilde{\boldsymbol{\nu}}_j=(\tilde{\nu}_{j1},\dots,\tilde{\nu}_{jb_j})\in \mc{T}^{b_j},\\
 {\bf p}=(p_1\dots,p_L)\in \R_+^L, \quad \boldsymbol{\nu}=(\nu_{1},\dots,\nu_{L})\in \mc{T}^{L},\quad   
\tilde{\boldsymbol{\nu}}=(\tilde{\nu}_{1},\dots,\tilde{\nu}_{L})\in \mc{T}^L,
\end{gathered}
\end{equation}
where $p_i:=p_{{\rm v}_1(e_i)}$, $\nu_i:=\nu_{{\rm v}_1(e_i)}$, $\tilde{\nu}_i:=\tilde{\nu}_{{\rm v}_1(e_i)}$. Here 
 $\widehat{\mc{A}}_{\mc{P}_j,g,{\bf x}_j,\boldsymbol{\alpha}_j,\boldsymbol{\zeta}_j}\in L^2((\R_+\times \mc{T}^2)^{b_j},\dd{\bf p}_j\otimes \dd\mu^{\otimes b_j}_{\mc{T}^2})$ are defined, using the notation of \eqref{defHzarb}  and \eqref{defCandO}, by   
\begin{equation}\label{defhatA}
\widehat{\mc{A}}_{\mc{P}_j,g,{\bf x}_j,\boldsymbol{\alpha}_j,\boldsymbol{\zeta}_j}({\bf p}_j,\boldsymbol{\nu}_j,\tilde{\boldsymbol{\nu}}_j):=
\big\cjg \mc{A}_{\mc{P}_j,g,{\bf x}_j,\boldsymbol{\alpha}_j,\boldsymbol{\zeta}_j},\bigotimes_{k=1}^{b_j}{\bf C}^{(\sigma_{jk}+1)/2}H_{Q+ip_{jk},\nu_{jk},\tilde{\nu}_{jk}}\big\cjd_{\mc{H}}
\end{equation}
and are considered as $L^2$-amplitudes on the measured space $(\hat{M},\hat{\mu}):=(\R_+\times \mc{T}^2,\dd p\otimes \mu_{\mc{T}^2})$ in the variable $(p,\nu,\tilde{\nu})$. 
\end{proposition}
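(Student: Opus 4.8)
The plan is to combine two facts already established in the excerpt: the product decomposition \eqref{decompositionasproduct} of the Liouville amplitude as a gluing of the pant amplitudes $\mc{A}_{\mc{P}_j,g,{\bf x}_j,\boldsymbol{\alpha}_j,\boldsymbol{\zeta}_j}$ along the graph $\mc{G}$, and the Plancherel-type isometry of Corollary \ref{boundednessofPi_V} together with the rewriting \eqref{rewriting}. Concretely, I would insert a spectral resolution of the identity on $L^2(M,\mu_0)=L^2(\R\times\Omega_\T)$ at each of the $L$ edges of $\mc{G}$, that is, one copy of \eqref{rewriting} for each integration variable $\tilde\varphi_i$ in \eqref{decompositionasproduct}. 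Each such insertion replaces the pairing $\int_M K_{\sigma}(y,x)K'_{\sigma'}(y,x')\,\dd\mu_0(y)$ by $\int_0^\infty\int_{\mc{T}^2}(\Pi_{\otimes^2\mc{V}}K_{\sigma})(p,\nu,\tilde\nu)(\Pi_{\otimes^2\mc{V}}K'_{\sigma'})(-p,\nu,\tilde\nu)\,\dd p$, and the sign flip $p\mapsto -p$ on one side is exactly what produces the complex conjugation ${\bf C}$ on one of the two boundary fields. The bookkeeping of which boundary is conjugated is governed by the orientation sign $\sigma_{jk}$: the incoming boundary (where $\sigma_{jk}=+1$) carries the factor ${\bf C}$, matching the exponent $(\sigma_{jk}+1)/2$ in \eqref{defhatA}.

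The key steps, in order, are as follows. First, I would justify that the insertion of the spectral resolution is legitimate: by Lemma \ref{Lem:splitanamplitudeinL2} and assumption \eqref{ass:star}, each pant amplitude $\mc{A}_{\mc{P}_j,g,{\bf x}_j,\boldsymbol{\alpha}_j,\boldsymbol{\zeta}_j}$ lies in $\otimes^{b_j}\mc{H}$ and can be split as a gluing of two $L^2$-kernels, one of which carries a weight $e^{\pm\eps c_-}$, so that the projections $\Pi_{\mc{V}_{p,n,\tilde n}}$ of Corollary \ref{boundednessofPi_V} (which require the argument to lie in $e^{\delta c_-}L^2$) apply; this also makes all the iterated integrals absolutely convergent by the $L^2$-bound \eqref{L^2boundcomposition}. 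Second, I would carry out the substitution edge by edge, using \eqref{rewriting} to rewrite the pairing over $M$ at the $i$-th edge as an integral over $(p_i,\nu_i,\tilde\nu_i)\in\R_+\times\mc{T}^2$, collecting a factor $\frac{1}{2\pi}$ per edge — hence the global $\frac{1}{(2\pi)^L}$ in front. Third, I would identify the resulting "Fourier coefficients" of each pant amplitude with $\widehat{\mc{A}}_{\mc{P}_j,g,{\bf x}_j,\boldsymbol{\alpha}_j,\boldsymbol{\zeta}_j}$: this is where Proposition \ref{revert} (the relation $\mathbf{O}\mathbf{C}\Psi_{Q+ip,\nu,\tilde\nu}=\Psi_{Q-ip,\nu,\tilde\nu}$) and the definition \eqref{defHzarb} of $H_{Q+ip,\nu,\tilde\nu}$ via the Schapovalov square root ${\bf F}_{Q+ip}^{-1/2}$ enter, turning the $p\mapsto-p$ sign flip into the operator ${\bf C}$ on the incoming legs and matching exactly \eqref{defhatA}. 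Finally, I would recognise that the resulting multiple integral over $(\R_+\times\mc{T}^2)^L$ with the Dirac constraint $\delta_{\mc{G}}$ imposing $(p_{{\rm v}_1(e_i)},\nu_{{\rm v}_1(e_i)},\tilde\nu_{{\rm v}_1(e_i)})=(p_{{\rm v}_2(e_i)},\nu_{{\rm v}_2(e_i)},\tilde\nu_{{\rm v}_2(e_i)})$ is, by definition of the gluing $\circ_{\mc{G}}$ of amplitudes on the measured space $(\hat M,\hat\mu)=(\R_+\times\mc{T}^2,\dd p\otimes\mu_{\mc{T}^2})$ from Section \ref{Subsec:amplitudeandgluing}, exactly $(\widehat{\mc{A}}_{\mc{P}_1,g,{\bf x}_1,\boldsymbol{\alpha}_1,\boldsymbol{\zeta}_1}\circ\dots\circ\widehat{\mc{A}}_{\mc{P}_N,g,{\bf x}_N,\boldsymbol{\alpha}_N,\boldsymbol{\zeta}_N})_{\mc{G}}$, which is the second displayed formula; the first displayed formula is then just the unpacking of this gluing via \eqref{compositiongraph}.

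The main obstacle I expect is the justification of exchanging the order of integration (the $\dd\mu_0^L(\tilde{\boldsymbol\varphi})$ integral over the glued fields versus the $\dd p$ spectral integrals) and in particular making sure one can insert the spectral resolution "one edge at a time" when the pant amplitude is only guaranteed to be in $L^2$ after peeling off a weight $e^{\eps c_-}$ via a cut annulus. The subtlety is that $\Pi_{\otimes^2\mc{V}}$ is only an isometry on all of $L^2$ but the projectors $\Pi_{\mc{V}_{p,n,\tilde n}}$ individually need the $e^{\delta c_-}$-weighted space; one must therefore organise the computation so that at each stage the relevant kernel genuinely belongs to a weighted space, iterating the annulus-cutting trick of Lemma \ref{Lem:splitanamplitudeinL2}. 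Once the convergence and interchange are under control — and here the uniform bounds \eqref{changeofbasis}, Proposition \ref{holomorphicpsi} and the $L^2$-contractivity \eqref{L^2boundcomposition} of gluings do all the analytic work — the identification of the Fourier coefficients and the repackaging as a gluing on $(\hat M,\hat\mu)$ is essentially formal. I would also take care that the constant $C_{\rm gluing}$ from \eqref{Cgluing} is carried through unchanged, since the $\frac{1}{(2\pi)^L}$ comes entirely from the $L$ applications of \eqref{rewriting} and not from re-deriving the gluing constant.
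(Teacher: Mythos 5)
Your proposal is correct and follows essentially the same route as the paper's (very terse) proof: apply Corollary \ref{boundednessofPi_V} and the rewriting \eqref{rewriting} at each of the $L$ edges of \eqref{decompositionasproduct}, with the conjugation ${\bf C}$ on incoming legs coming from the definition of $(\Pi_{\otimes^2\mc{V}}u)(-p,\cdot)$, and with Lemma \ref{Lem:splitanamplitudeinL2} (via Remark \ref{remark:selfgluing}) handling the self-gluing/weighted-space issues. The only presentational difference is that the paper isolates the loop case explicitly, whereas you fold the same annulus-cutting argument into the justification of inserting the spectral resolution.
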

\begin{proof} When the graph has no loop, the result is a direct consequence of Corollary \ref{boundednessofPi_V} and \eqref{rewriting} using that the amplitudes are real valued. 
In the case $\mc{G}$ has loops, we can use Remark 
\ref{remark:selfgluing}: it suffices to use Lemma \ref{Lem:splitanamplitudeinL2} to get that   $\mc{A}_{\mc{P}_j,g,{\bf x}_j,\boldsymbol{\alpha}_j,\boldsymbol{\zeta}_j}$ is a product of Hilbert-Schmidt operators to make the argument   reduce  to the previous case with no loops and we are done. 
\end{proof}

\noindent\textbf{Example in genus 2:} Let us give a concrete example to fix the ideas and simplify the reading. Take $\Sigma$ a genus $2$ surface with no marked point, and $3$ simple curves $\mc{C}_1,\mc{C}_2,\mc{C}_3$ as on figure \ref{surfacegenre2}, with two pants $\mc{P}_1,\mc{P}_2$, where the gluing rules are given by the oriented multigraph $\mc{G}$ drawn in figure \ref{graphegenre2}: $e_1=(v_{11},v_{21})$, $e_2=(v_{22},v_{23})$,  $e_3=(v_{12},v_{13})$.  The expression \eqref{decompositionasproduct} becomes
\[\mc{A}_{\Sigma,g}=\frac{1}{\sqrt{2}\pi^{2}}\int_{M^3}\mc{A}_{\mc{P}_1,g}(\tilde{\varphi}_1,\tilde{\varphi}_3,\tilde{\varphi}_3)\mc{A}_{\mc{P}_2,g}(\tilde{\varphi}_1,\tilde{\varphi}_2,\tilde{\varphi}_2)\, \dd\mu_0^{3}(\tilde{\varphi}_1,\tilde{\varphi}_2,\tilde{\varphi}_3) \]
and Proposition \ref{prop:decompositionAmplitude} becomes, with $\boldsymbol{\nu}=(\nu_1,\nu_2,\nu_3)\in \mc{T}^3$ and $\boldsymbol{\tilde{\nu}}=(\tilde{\nu}_1,\tilde{\nu}_2,\tilde{\nu}_3)\in \mc{T}^3$
\[\mc{A}_{\Sigma,g}=\frac{2^{\frac{3}{2}}}{(2\pi)^{5}}\sum_{\boldsymbol{\nu},\boldsymbol{\tilde{\nu}}\in\mc{T}^3}\int_{\R_+^3}\widehat{\mc{A}}_{\mc{P}_1,g}(p_1,p_3,p_3,\nu_1,\nu_3,\nu_3,\tilde{\nu}_1,\tilde{\nu}_3,\tilde{\nu}_3)\widehat{\mc{A}}_{\mc{P}_2,g}(p_1,p_2,p_2,\nu_1,\nu_2,\nu_2,\tilde{\nu}_1,\tilde{\nu}_2,\tilde{\nu}_2)\dd p_1\dd p_2\dd p_3\]
where 
\begin{align*}
&\widehat{\mc{A}}_{\mc{P}_1,g}(p_1,p_2,p_3,\nu_1,\nu_2,\nu_3,\tilde{\nu}_1,\tilde{\nu}_2,\tilde{\nu}_3)=\\
& \int_{M^3}\mc{A}_{\mc{P}_1,g}(\tilde{\varphi}_1,\tilde{\varphi}_2,\tilde{\varphi}_3)\bbar{H_{Q+ip_1,\nu_1,\tilde{\nu}_1}(\tilde{\varphi}_1)}\bbar{H_{Q+ip_2,\nu_2,\tilde{\nu}_2}(\tilde{\varphi}_2)}H_{Q+ip_3,\nu_3,\tilde{\nu}_3}(\tilde{\varphi}_3)\dd\mu_0^{3}(\tilde{\varphi}_1,\tilde{\varphi}_2,\tilde{\varphi}_3),
\end{align*}
\begin{align*}
&\widehat{\mc{A}}_{\mc{P}_2,g}(p_1,p_2,p_3,\nu_1,\nu_2,\nu_3,\tilde{\nu}_1,\tilde{\nu}_2,\tilde{\nu}_3)=\\
& \int_{M^3}\mc{A}_{\mc{P}_2,g}(\tilde{\varphi}_1,\tilde{\varphi}_2,\tilde{\varphi}_3)H_{Q+ip_1,\nu_1,\tilde{\nu}_1}(\tilde{\varphi}_1)\bbar{H_{Q+ip_2,\nu_2,\tilde{\nu}_2}(\tilde{\varphi}_2)}H_{Q+ip_3,\nu_3,\tilde{\nu}_3}(\tilde{\varphi}_3)\dd\mu_0^{3}(\tilde{\varphi}_1,\tilde{\varphi}_2,\tilde{\varphi}_3).
\end{align*}

\begin{figure}
\centering
\begin{tikzpicture}
\node[inner sep=10pt] (F1) at (-5.2,0){\includegraphics[width=.4\textwidth]{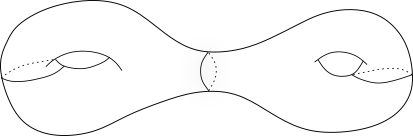}
\hspace{0.5cm}
\includegraphics[width=.5\textwidth]{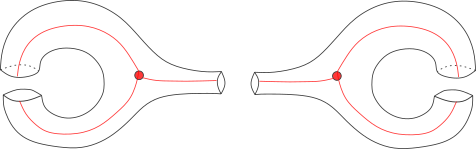}};
\node (F) at (-10.5,-0.2){$\mc{P}_1$};
\node (F) at (-8.3,-0.2){$\mc{P}_2$};
\node (F) at (-9.3,-0.2){$\mc{C}_1$};
\node (F) at (-6.8,0){$\mc{C}_2$};
\node (F) at (-12.5,0){$\mc{C}_3$};
\node (F) at (-4.5,0.2){$\pl_2\mc{P}_1$};
\node (F) at (-4.5,-0.4){$\pl_3\mc{P}_1$};
\node (F) at (-2.2,-0.6){$\pl_1\mc{P}_1$};
\node (F) at (-1,-0.6){$\pl_1\mc{P}_2$};
\node (F) at (1.4,0.2){$\pl_2\mc{P}_2$};
\node (F) at (1.4,-0.4){$\pl_3\mc{P}_2$};
\end{tikzpicture}
  \caption{Surface of genus $2$ with no marked points decomposed into $2$ pairs of pants}\label{surfacegenre2}
\end{figure}
\begin{figure}
\centering
\begin{tikzpicture}
\node[inner sep=10pt] (F1) at (0,0){\includegraphics[width=.4\textwidth]{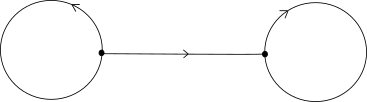}};
\node (F) at (-1.2,-0.3){$v_1$};
\node (F) at (1.3,-0.3){$v_2$};
\node (F) at (0,0.3){$e_1$};
\node (F) at (2.3,0.65){$e_2$};
\node (F) at (-2.3,0.65){$e_3$};
\end{tikzpicture}
\caption{Graph $\mc{G}$ representing the decomposition. The vertex $v_j=\{v_{j1},v_{j2},v_{j3}\}$ is associated to $\mc{P}_j$, with $v_{jk}$  associated to $\pl_{k}\mc{P}_j$. The edges are $e_1=(v_{11},v_{21})$, $e_2=(v_{22},v_{23})$,  $e_3=(v_{12},v_{13})$ and correspond to $\mc{C}_1,\mc{C}_2$, $\mc{C}_3$ respectively, with orientation.}
\label{graphegenre2}
\end{figure}

\subsection{Step 2: Introduction of normalised conformal block amplitudes}\label{sub:cfb}

In this section we introduce the normalised conformal blocks amplitudes for complex building blocks. As we shall see, the conformal blocks fit in the language of amplitudes explained in Section \ref{section:traces} and will be the basic components from which one constructs the conformal blocks. 

Consider the complex building blocks $(\mc{P}_j,J_j:=J|_{\mc{P}_j})$ of the decomposition of $(\Sigma,J)$ using the graph $\mc{G}$ as before, with cutting curves $\mc{C}_i$ and their analytic parametrisations $\zeta_i:\T\to \mc{C}_i$.
We still use the analytic parametrisation $\zeta_{jk}:\T\to \pl_k\mc{P}_j$ of the boundary circles as defined in Section \ref{decompositionLCFT}.
As explained in Section \ref{subsubsec:plumbingparameters}, these parametrisations allow us to glue a disk 
$\mc{D}_{jk}(\delta)=\D_{\delta}$ to $\mc{P}_j$ at $\pl_k\mc{P}_j$ for each $k\leq b_j$, where $\delta<1$ is a parameter chosen in Section \ref{subsubsec:plumbingparameters}: the obtained Riemann surface $(\hat{\mc{P}}_j,\hat{J}_j)$ is a sphere with $3$ marked points given by the union of the points in ${\bf x}_j$ and the centers $x_{jk}$ of the glued discs $\mc{D}_{jk}(\delta)$. We define 
${\bf x}'_j:=(x_{jk})_{k=1,\dots,b_j}$.  
There is a biholomorphism $\psi_j:\hat{\mc{P}}_j\to \hat{\C}$  with $\psi_j^{-1}(-1/2)$, $\psi_j^{-1}(1/2)$, $\psi_j^{-1}(i\sqrt{3}/2)$ being the $3$ marked points of $\hat{\mc{P}}_j$: $\psi_j(x_{j1})=-1/2$, $\psi_j(x_{j2})=1/2$ and $\psi_j(x_{j3})=i\sqrt{3}/2$.  
Then $\Sigma_j:=\psi_j(\mc{P}_j)\subset \hat{\C}$ is a domain with $b_j$ analytic boundary curves 
$\psi_j(\pl_k\mc{P}_j)$ for $k=1,\dots,b_j$. 
If $g$ is an admissible metric for $(J,\boldsymbol{\zeta})$, the metric $\hat{g}_j:={\psi_j}_*g$ on $\Sigma_j$ can be written as $\hat{g}_j=e^{h_j}g_0$ for some $h_j\in C^\infty(\mathbb{S}^2)$, where $g_0$ is the round sphere metric on $\mathbb{S}^2=\hat{\C}$. The next Lemma defines the notion of normalised block amplitude and is motivated by the holomorphic factorisation property in Theorem \ref{pantDOZZ} that we shall prove later.
\begin{lemma}[\textbf{Normalised conformal block amplitude}]\label{lem:blockamplitudes}
Let $\mc{P}_j$ be the complex blocks of the decomposition of $(\Sigma,J)$ represented by a multigraph $\mc{G}$ and some curves $\mc{C}_i$ with analytic parametrisations $\zeta_i:\T\to \mc{C}_i$. Let 
$\boldsymbol{\zeta}_j:\T\to \pl_1\mc{P}_j\times \dots\times \pl_{b_j}\mc{P}_j$ be the induced analytic parametrisations and let $g$ be an admissible metric for $(J,\boldsymbol{\zeta})$.  Assume \eqref{ass:star} holds. 
The normalised conformal block amplitude of $(\mc{P}_j,J,{\bf x}_j,\boldsymbol{\alpha}_j,\boldsymbol{\zeta}_j)$, defined as the 
$L^2(\mc{T}^{b_j},\mu_{\mc{T}}^{b_j})\simeq \mc{H}_{\mc{T}}^{\otimes b_j}$ valued measurable function of ${\bf p}_j=(p_{j1},\dots,p_{jb_j})\in \R_+^{b_j}$ by
\[ \tilde{\mc{B}}_{\mc{P}_j,J,{\bf x}_j,\boldsymbol{\alpha}_j,\boldsymbol{\zeta}_j}({\bf p}_j,\boldsymbol{\nu}_j):=
\frac{\big\cjg \mc{A}_{\mc{P}_j,g,{\bf x}_j,\boldsymbol{\alpha}_j,\boldsymbol{\zeta}_j}, \bigotimes_{k=1}^{b_j}{\bf C}^{(\sigma_{jk}+1)/2}H_{Q+ip_{jk},\nu_{jk},0}\big\cjd}{C(\mc{P}_j,g, \boldsymbol{\alpha}_j, {\bf p}_j){\bf C}^{\rm DOZZ}_{\gamma,\mu}({\boldsymbol{\alpha}_j},{\bf p}_j,\boldsymbol{\sigma}_j)} \]
is such that 
\[ \int_{\R^{b_j}}\int_{\mc{T}}  C_{\mc{P}_j,g}(\boldsymbol{\alpha}_j, {\bf p}_j)^2|{\bf C}^{\rm DOZZ}_{\gamma,\mu}({\boldsymbol{\alpha}_j},{\bf p}_j,\boldsymbol{\sigma}_j)|^2 |\tilde{\mc{B}}_{\mc{P}_j,J,{\bf x}_j,\boldsymbol{\alpha}_j,\boldsymbol{\zeta}_j}({\bf p}_j,\boldsymbol{\nu}_j)|^2 d\mu_{\mc{T}}(\boldsymbol{\nu}_j)\dd {\bf p_j}<\infty\] 
where ${\bf C}^{\rm DOZZ}_{\gamma,\mu}({\boldsymbol{\alpha}_j},{\bf p}_j,\boldsymbol{\sigma}_j)$ is the DOZZ structure constant given by
\begin{equation}\label{dozzblock}
{\bf C}^{\rm DOZZ}_{\gamma,\mu}({\boldsymbol{\alpha}_j},{\bf p}_j,\boldsymbol{\sigma}_j):=\left\{
\begin{array}{ll}
C^{\rm DOZZ}_{\gamma,\mu}(Q+i\sigma_{j1}p_{j1},\alpha_{j1},\alpha_{j2}), & b_j=1\\[0.1cm]
C^{\rm DOZZ}_{\gamma,\mu}(Q+i\sigma_{j1}p_{j1},Q+i\sigma_{j2}p_{j2},\alpha_{j1}), & b_j=2\\[0.1cm]
C^{\rm DOZZ}_{\gamma,\mu}(Q+i\sigma_{j1}p_{j1},Q+i\sigma_{j2}p_{j2},Q+i\sigma_{j3}p_{j3}), & b_j=3
\end{array}\right.
\end{equation}
with
$\boldsymbol{\sigma}_j:=(\sigma_{j1},\dots,\sigma_{jb_j})$ are the orientation signs of each boundary of $\pl\mc{P}_j$
 \begin{equation}\label{metricconstantanomaly}
C_{\mc{P}_j,g}(\boldsymbol{\alpha}_j, {\bf p}_j):=\left\{
\begin{array}{ll}
C(\mc{P}_j,g, \Delta_{Q+ip_{j1}},\Delta_{\alpha_{j2}},\Delta_{\alpha_{j3}}), & b_j=1\\[0.1cm]
C(\mc{P}_j,g,\Delta_{Q+ip_{j1}},\Delta_{Q+ip_{j2}},\Delta_{\alpha_{j1}}), & b_j=2\\[0.1cm]
C(\mc{P}_j,g,\Delta_{Q+ip_{j1}},\Delta_{Q+ip_{j2}},\Delta_{Q+ip_{j3}}), & b_j=3
\end{array}\right.
\end{equation}
where  $C(\mc{P}_j,g, \cdot)$ is the constant \eqref{metricconstant}.
\end{lemma}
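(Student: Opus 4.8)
The plan is to prove the statement in two independent parts: first, that the function $\mc{B}_{\mc{P}_j,J,{\bf x}_j,\boldsymbol{\alpha}_j,\boldsymbol{\zeta}_j}({\bf p}_j,\boldsymbol{\nu}_j)$ indeed belongs to the weighted $L^2$-space described, and second, that it is independent of the choice of admissible metric $g$ in the conformal class determined by $(J,\boldsymbol{\zeta})$. For the first part, the natural route is to invoke the holomorphic factorization of the pant amplitudes, which is Theorem \ref{pantresult} (more precisely its extension Theorem \ref{pantDOZZ} covering all building blocks): evaluating $\mc{A}_{\mc{P}_j,g,{\bf x}_j,\boldsymbol{\alpha}_j,\boldsymbol{\zeta}_j}$ on the states $\bigotimes_k \Psi_{\alpha_k,\nu_k,\tilde\nu_k}$ gives, up to the metric-dependent constant $C(\mc{P}_j,g,\cdot)e^{\sum c_j\Delta}$, a factor $w_{\mc{P}_j}(\boldsymbol{\Delta},\boldsymbol{\nu})\overline{w_{\mc{P}_j}(\boldsymbol{\Delta},\tilde{\boldsymbol{\nu}})}\, C^{\rm DOZZ}_{\gamma,\mu}$. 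Here we specialize to $\tilde\nu_{jk}=0$, so only the holomorphic half $w_{\mc{P}_j}(\boldsymbol{\Delta},\boldsymbol{\nu})$ survives and the $\overline{w}$-factor is $\overline{w_{\mc{P}_j}(\boldsymbol{\Delta},0)}$, a constant. One must also pass from the $\Psi_{\alpha,\nu,\tilde\nu}$ basis to the $H_{Q+ip,\nu,\tilde\nu}$ basis via the Schapovalov change of basis \eqref{defHzarb}, but since the Schapovalov matrix coefficients $F^{-1/2}_{Q+ip}(\nu,\nu')$ are analytic in $p$ this is harmless. Dividing by $C(\mc{P}_j,g,\boldsymbol{\alpha}_j,{\bf p}_j){\bf C}^{\rm DOZZ}_{\gamma,\mu}$ exactly cancels those two factors, leaving a polynomial-in-$\Delta$ expression in the $p$-variables whose $\boldsymbol{\nu}_j$-dependence is that of the finitely many descendant coefficients.

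For the $L^2$-membership itself, I would argue as follows. By Lemma \ref{Lem:splitanamplitudeinL2} and assumption \eqref{ass:star}, the amplitude $\mc{A}_{\mc{P}_j,g,{\bf x}_j,\boldsymbol{\alpha}_j,\boldsymbol{\zeta}_j}$ is a genuine $b_j$-amplitude, i.e. lies in $\otimes^{b_j}\mc{H}$, hence is Hilbert–Schmidt. Applying the spectral resolution of Corollary \ref{boundednessofPi_V} to each of the $b_j$ tensor slots, one learns that the map $(p_{j1},\dots,p_{jb_j})\mapsto \cjg \mc{A}_{\mc{P}_j,g,\dots},\bigotimes_k {\bf C}^{(\sigma_{jk}+1)/2}H_{Q+ip_{jk},\nu_{jk},\tilde\nu_{jk}}\cjd$ — this is precisely $\widehat{\mc{A}}_{\mc{P}_j,g,{\bf x}_j,\boldsymbol{\alpha}_j,\boldsymbol{\zeta}_j}$ of Proposition \ref{prop:decompositionAmplitude} — is in $L^2((\R_+\times\mc{T}^2)^{b_j}, d{\bf p}_j\otimes d\mu^{\otimes b_j}_{\mc{T}^2})$ with norm bounded by $\|\mc{A}_{\mc{P}_j,g,\dots}\|_{L^2}$. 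Restricting the Young-diagram indices $\tilde\nu_{jk}$ to $0$ only restricts to a subset of summation indices, so $\mc{B}_{\mc{P}_j}$, which is $\widehat{\mc{A}}_{\mc{P}_j}$ with $\tilde\nu_{jk}=0$ divided by $C(\mc{P}_j,g,\boldsymbol{\alpha}_j,{\bf p}_j){\bf C}^{\rm DOZZ}_{\gamma,\mu}$, satisfies
\[
\int_{\R_+^{b_j}}\big\|\mc{B}_{\mc{P}_j,J,{\bf x}_j,\boldsymbol{\alpha}_j,\boldsymbol{\zeta}_j}({\bf p}_j,\cdot)\big\|^2_{\mc{H}_{\mc{T}}^{\otimes b_j}}\,|{\bf C}^{\rm DOZZ}_{\gamma,\mu}({\boldsymbol{\alpha}_j},{\bf p}_j,\boldsymbol{\sigma}_j)|^2\,|C(\mc{P}_j,g,\boldsymbol{\alpha}_j,{\bf p}_j)|^2\,\dd{\bf p}_j<\infty.
\]
The issue is then that the weight I want is $|{\bf C}^{\rm DOZZ}_{\gamma,\mu}|^2\,d{\bf p}_j$ without the factor $|C(\mc{P}_j,g,\boldsymbol{\alpha}_j,{\bf p}_j)|^2$; but from the explicit form \eqref{metricconstant} of $C(\mc{P},g,\boldsymbol{\Delta})$, the quantity $|C(\mc{P}_j,g,\boldsymbol{\alpha}_j,{\bf p}_j)|$ is bounded above and below by positive constants uniformly in ${\bf p}_j\in\R_+^{b_j}$ (it is $|Z\,e^{\sum c_k\Delta_{Q+ip_{jk}}}|$ with $\Delta_{Q+ip}=\tfrac{Q^2}{4}+\tfrac{p^2}{4}$ real, so the exponent's real part is bounded, and the $c_k$-dependence is fixed), so the two weights are comparable and the claim follows.

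Finally, the metric-independence. Here I would use the Weyl covariance of the amplitudes, Proposition \ref{Weyl}(1), together with the known conformal anomaly behaviour of the DOZZ three-point constant packaged in \eqref{3pointDOZZ} and of $C(\mc{P},g,\boldsymbol{\Delta})$. Changing $g$ to $e^\omega g$ with $\omega\in C^\infty(\mathbb{S}^2)$ (after pushing forward by $\psi_j$) multiplies $\mc{A}_{\mc{P}_j,e^\omega g,\dots}$ by $\exp\big((1+6Q^2)S^0_{\rm L}(\mc{P}_j,g,\omega)-\sum_k\Delta_{\alpha_{jk}}\omega(x_{jk})\big)$ and shifts the functional argument by $-\tfrac{Q}{2}\omega$; but since $g$ is required to be admissible, $\omega$ must vanish on the annular neighbourhoods of the $\mc{C}_i$, hence its value at the glued-disc centres $x_{jk}$ and the anomaly term are entirely determined by the normalisation built into $C(\mc{P}_j,g,\boldsymbol{\alpha}_j,{\bf p}_j)$ — this is exactly why that constant was inserted in the denominator. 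Thus the ratio defining $\mc{B}_{\mc{P}_j}$ is invariant. The shift of the functional argument by $-\tfrac{Q}{2}\omega$ does not affect the pairing with the descendant states $H_{Q+ip,\nu,0}$ because those are defined intrinsically from the Virasoro structure and the semigroup $e^{-t{\bf H}}$, independently of the bulk metric away from $\pl\Sigma$; more precisely, the pairing $\cjg\mc{A}_{\mc{P}_j,g,\dots},\bigotimes_k{\bf C}^{(\sigma_{jk}+1)/2}H_{Q+ip_{jk},\nu_{jk},0}\cjd$ is, after the conformal change, rescaled by precisely the metric-dependent prefactor. I expect the main obstacle to be bookkeeping the anomaly contributions: one has to check that the factor $C(\mc{P}_j,g,\boldsymbol{\alpha}_j,{\bf p}_j)$ defined through \eqref{metricconstant} absorbs exactly the combination of $S^0_{\rm L}(\mc{P}_j,g,\omega)$, the vertex-operator Weyl factors at the glued-disc centres, and the determinant/Poisson-kernel normalisation factors appearing in the definition of the amplitude, with no leftover $\omega$-dependence — this is the same computation that underlies the metric-covariance statement in \eqref{3pointDOZZ} but must be tracked carefully through the gluing of the discs $\mc{D}_{jk}(\delta)$ to $\mc{P}_j$ and the biholomorphism $\psi_j$.
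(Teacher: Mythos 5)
Your overall strategy coincides with the paper's (very short) proof: the $L^2$ membership comes from the fact that $\mc{A}_{\mc{P}_j,g,{\bf x}_j,\boldsymbol{\alpha}_j,\boldsymbol{\zeta}_j}$ lies in a weighted $L^2$ space by Theorem \ref{integrcf} under \eqref{ass:star}, combined with the Plancherel-type bound of Corollary \ref{boundednessofPi_V} and the non-vanishing of the DOZZ constant on the spectrum line; the metric independence is the cancellation of the Weyl anomaly of the amplitude against that of $C(\mc{P}_j,g,\boldsymbol{\alpha}_j,{\bf p}_j)$. The preliminary appeal to Theorem \ref{pantDOZZ} is not needed for this lemma, and in the Weyl-covariance discussion the vertex anomaly factors $e^{-\Delta_{\alpha_{jk}}\omega(x_{jk})}$ attach to the actual marked points ${\bf x}_j\subset\mc{P}_j^\circ$, not to the glued-disc centres (which lie outside $\mc{P}_j$ and enter $C(\mc{P}_j,g,\cdot)$ only through the conformal radii, which are unchanged since $\omega$ vanishes near $\pl\mc{P}_j$); but these are cosmetic points.

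The one step that does not hold as written is your justification that $|C(\mc{P}_j,g,\boldsymbol{\alpha}_j,{\bf p}_j)|$ is ``bounded above and below by positive constants uniformly in ${\bf p}_j$.'' From \eqref{metricconstant} the ${\bf p}_j$-dependence of this constant is through $\prod_k {\rm Rad}_{z_k}(\psi(\mc{D}_k))^{2\Delta_{Q+ip_{jk}}}$ with $\Delta_{Q+ip}=\tfrac{Q^2+p^2}{4}$ real and unbounded, so the exponent $\sum_k 2\Delta_{Q+ip_{jk}}\log {\rm Rad}_{z_k}(\psi(\mc{D}_k))$ grows quadratically in $|{\bf p}_j|$ rather than staying bounded; your parenthetical ``the exponent's real part is bounded'' is false unless every conformal radius equals $1$. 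Consequently the comparison of the two weights $|{\bf C}^{\rm DOZZ}|^2|C|^2\,\dd{\bf p}_j$ and $|{\bf C}^{\rm DOZZ}|^2\,\dd{\bf p}_j$, as you phrase it, requires at least a uniform lower bound $\inf_{{\bf p}_j}|C(\mc{P}_j,g,\boldsymbol{\alpha}_j,{\bf p}_j)|>0$, which holds when ${\rm Rad}_{z_k}(\psi(\mc{D}_k))\geq 1$ for all $k$ but not in general; when some radius is $<1$ the factor $|C|^{-2}$ grows like $e^{cp^2}$ and the crude bound from Corollary \ref{boundednessofPi_V} no longer suffices. The paper's proof is silent on this point (it simply invokes Corollary \ref{boundednessofPi_V}), so you are not worse off than the source, but the explicit argument you supply to close that gap is incorrect and should either be replaced by the hypothesis ${\rm Rad}\geq 1$ (or a rescaling of the parametrizations achieving it) or by a finer estimate on the pairings $\cjg\mc{A},\otimes_k H_{Q+ip_{jk},\nu_{jk},0}\cjd$ that exhibits the compensating decay.
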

\begin{proof}
For $\alpha_{jk}$ satisfying \eqref{ass:star}  and $p_{jk}\in \R_+$, the DOZZ constant ${\bf C}^{\rm DOZZ}_{\gamma,\mu}({\boldsymbol{\alpha}_j},{\bf p}_j,{\boldsymbol{\sigma}_j})$ is never vanishing. By Corollary 
\ref{boundednessofPi_V}, we then obtain the announced property of $\tilde{\mc{B}}_{\mc{P}_j,J,{\bf x}_j,\boldsymbol{\alpha}_j,\boldsymbol{\zeta}_j}$  since $\mc{A}_{\mc{P}_j,g,{\bf x}_j,\boldsymbol{\alpha}_j,\boldsymbol{\zeta}_j}\in e^{\eps \bar{c}_-}L^2(M^{b_j})$ for some small $\eps>0$ by Theorem \ref{integrcf}. 
\end{proof}
 
The normalised block amplitude depends on the complex structure $J_j=J|_{\mc{P}_j}$ but does not depend on $g$ since the Weyl anomaly of $\mc{A}_{\mc{P}_j,g,{\bf x}_j,\boldsymbol{\alpha}_j,\boldsymbol{\zeta}_j}$ cancels out with that of $C(\mc{P}_j,g, \boldsymbol{\Delta}_{\boldsymbol{\alpha}_j, {\bf p}_j})$. This amplitude is \emph{normalised} in the sense that (as a consequence of Theorem \ref{pantDOZZ})
\[ \tilde{\mc{B}}_{\mc{P}_j,J,{\bf x}_j,\boldsymbol{\alpha}_j,\boldsymbol{\zeta}_j}({\bf p}_j,0,\dots,0)=1.\]

\subsection{Step 3: Holomorphic factorisation of Segal amplitudes for complex building blocks}
Next, we describe the holomorphic dependence of the conformal block amplitudes  with respect to the moduli space parameters, namely the plumbing parameters, and at the same time the holomorphic factorisation of the Segal amplitude of $\mc{P}_j$ proved in Section \ref{sec:computingamplitudes} (Theorem \ref{pantDOZZ}).
Indeed, we shall apply later Proposition \ref{prop:decompositionAmplitude} and Lemma \ref{lem:blockamplitudes} in the case of a holomorphic families of Riemann surfaces $(\Sigma,J({\bf q}))$  depending on a complex parameter ${\bf q}\in \D^{L}$, and the core of the conformal bootstrap formula is to extract the ``holomorphic part" of the correlation functions with respect to ${\bf q}$ from Theorem \ref{pantDOZZ}. 

We use the plumbing construction on the  moduli space described in Sections \ref{plumbings} and \ref{subsubsec:plumbingparameters} to define the family $(\Sigma,J({\bf q}))$, as we now explain. 
By Section \ref{subsubsec:plumbingparameters}, there are disks $\mc{D}_{jk}\subset \hat{\mc{P}}_j$ containing $\mc{D}_{jk}(\delta)$ and some 
biholomorphisms $\omega_{jk}:\mc{D}_{jk}\subset \hat{\mc{P}}_j\to \D$ induced by analytic extension of the inverse of the maps $z\in\{|z|=\delta\}\mapsto \zeta_{jk}(z/\delta)$ . We then obtain biholomorphisms
\[\psi_{jk}:= \psi_j\circ \omega_{jk}^{-1}: \D\to \psi_{jk}(\D)\subset \hat{\C},\quad  \textrm{ with }\Sigma_j=\hat{\C}\setminus \bigcup_{k=1}^{b_j}\psi_{jk}(\D_\delta)\]
and denote 
\begin{equation}\label{bfz_0} 
{\bf z}_0=(\psi_{j1}(0),\psi_{j2}(0),\psi_{j3}(0))=(-1/2,1/2,i\sqrt{3}/2).
\end{equation}
Now choose a parameter $q_i\in \bbar{\D}$ for each $i=1,\dots,L$, and 
for $i$ so that $\pl_{k}\mc{P}_j=\mc{C}_i$ denote
\[q_{jk}=
\left\{\begin{array}{ll}
q_i, & \zeta_{jk} \textrm{ incoming }\\
1, & \zeta_{jk} \textrm{ outgoing }
\end{array}\right.,\] 
and finally we set ${\bf q}_j:=(q_{j1},\dots,q_{jb_j})$ and ${\bf q}:=(q_{1},\dots,q_{L})$.

We define $\mc{P}_j({\bf q}_j)\subset \hat{\mc{P}}_j$ and  $\Sigma_j({\bf q}_j)\subset \hat{\C}$ 
the Riemann surfaces 
\[\Sigma_j({\bf q}_j):=\hat{\C}\setminus \bigcup_{k=1}^{b_j}\psi_{jk}\Big(\D_{|q_{jk}|}\Big), \quad 
\mc{P}_j({\bf q}_j):=\psi_j^{-1}(\Sigma_j({\bf q}_j))\]
where $\Sigma_j({\bf q}_j)$ is equipped with the canonical complex structure of $\hat{\C}$ and 
$\mc{P}_j({\bf q}_j)$ is equipped with the induced complex structure $\hat{J}_j$ of $\hat{\mc{P}}_j$.
The plumbing construction of Section \ref{subsubsec:plumbingparameters} produces a family of Riemann surfaces $(\Sigma({\bf q}),J({\bf q}))$
obtained by gluing the complex blocks $\mc{P}_j({\bf q}_j)$ along their boundary according to the rules given by the graph $\mc{G}$, using the following analytic parametrisations of the boundary circles of $\mc{P}_j({\bf q}_j)$  
\[ \zeta_{jk}^{{\bf q}}(z):=\left\{\begin{array}{ll}
\omega_{jk}^{-1}\Big(\frac{q_{jk}}{z}\Big), & \pl_k\mc{P}_j \textrm{ is incoming}\\
\omega_{jk}^{-1}(z), & \pl_k\mc{P}_j \textrm{ is outgoing}.
\end{array}\right.\]
This corresponds exactly to performing the plumbing construction with the equation
$\omega_{jk}(x)\omega_{j'k'}(x')=q_i$ for $x\in \omega_{jk}^{-1}(\mathbb{A}_{|q_{i}|})$, $x'\in \omega_{j'k'}^{-1}(\mathbb{A}_{|q_{i}|})$  if $\pl_k\mc{P}_j=\pl_{k'}\mc{P}_{j'}=\mc{C}_i$ in $\Sigma$: if we use the generic notation ${\bf 1}$
 for the vectors $(1,\dots,1)$, the surface $(\Sigma({\bf q}),J({\bf q}))$ corresponds to cutting $(\Sigma({\bf 1}),J({\bf 1}))$ into the pairs of pants $\mc{P}_j({\bf 1})$ and gluing a flat annulus of modulus $|q_i|$ with a twist angle $q_i/|q_i|$ between $\pl_{k}\mc{P}_j({\bf 1})$ and $\pl_{k'}\mc{P}_{j'}({\bf 1})$ if 
 $\mc{C}_i=\pl_{k}\mc{P}_j=\pl_{k'}\mc{P}_{j'}$.
In particular for ${\bf q}_{\delta}:=(\delta^2,\dots,\delta^2)$ (i.e. $q_i=\delta^2$ for each $i$), 
the obtained surface $(\Sigma({\bf q}_{\delta}),J({\bf q}_{\delta}))$ is just our original surface $(\Sigma,J)$, and in terms of $(\Sigma,J)$, $(\Sigma({\bf q}),J({\bf q}))$ can be viewed as gluing an annulus of modulus $|q_i|/\delta^2$ between $\pl_k\mc{P}_j$ and $\pl_{k'}\mc{P}_{j'}$ with twist angle $q_i/|q_i|$, if $|q_i|\leq \delta^2$, while if $|q_i|>\delta^2$ it amounts to remove an annulus of modulus $\delta^2/q_i$ around $\mc{C}_i$, again with a twist angle $q_i/|q_i|$, and glue back.
 To describe the family of Riemann surfaces $(\Sigma({\bf q}),J({\bf q}))$ as elements in moduli space $\mc{M}_{{\bf g},m}$, it is more convenient to pull $(\Sigma({\bf q}),J({\bf q}))$ back on the fixed surface $\Sigma$ by a family of diffeomorphisms $\psi_{\bf q}:\Sigma\to \Sigma({\bf q})$. Since all $\mc{P}_j({\bf q}_j)$ are subsets of $\hat{\mc{P}}_j$, the marked points ${\bf x}_j$ can all be seen as points on $\Sigma({\bf q})$. 
By the plumbing construction, since for each ${\bf q},{\bf q}'$, the blocks $\mc{P}_j({\bf q}_j)$ and $\mc{P}_j({\bf q}'_j)$ are diffeomorphic with diffeomorphisms isotopic to the identity 
and equal to the identity near ${\bf x}_j$, $\psi_{\bf q}:\Sigma\to \Sigma({\bf q})$ can be constructed to be fixing the marked points and so that $\psi_{\bf q}$ are all isotopic to identity. 
The Teichm\"uller class of $(\Sigma({\bf q}),J({\bf q}))$ can then be represented by a holomorphic family $\Phi_{\mc{G},J,\boldsymbol{\zeta}}({\bf q})=(\Sigma,J'({\bf q}))$ of complex structures $J'({\bf q}):=\psi_{{\bf q}}^*J({\bf q})\in \mc{T}_{{\bf g},m}$ on the fixed surface $\Sigma$.  
Denote by $(\mc{C}_i({\bf q}))_i\subset \Sigma({\bf q})$ the family of simple curves bounding the complex blocks $\mc{P}_j({\bf q}_j)$, with $\mc{C}_i({\bf q})$ in the free homotopy class of $\mc{C}_i$, by $\boldsymbol{\zeta}_j^{\bf q}=(\zeta^{{\bf q}}_{j1},\dots,\zeta^{\bf q}_{jb_j})$ the associated parametrisation of $\pl\mc{P}_j({\bf q}_j)$ and by $\boldsymbol{\zeta}^{\bf q}$ the induced parametrisation of the curves $\mc{C}_i({\bf q})$.
By using the metric $(\omega_{jk}^{-1})^*(|dz|^2/|z|^2)$ on $\mc{P}_j({\bf q}_j)\cap \mc{D}_{jk}$
we obtain a well-defined family of smooth metrics $g_{\bf q}$ on $\Sigma({\bf q})$ as the metric $|dz|^2/|z|^2$ is invariant by the transformation $z\mapsto q/z$ for each $q\in \C^*$, and that it is compatible with the complex structure $J({\bf q})$ of $\Sigma({\bf q})$ by the definition of the plumbing construction. Moreover it is an admissible 
metric for $(J({\bf q}),\boldsymbol{\zeta}^{\bf q})$. We shall write $g'_{\bf q}:=\psi_{{\bf q}}^*g_{\bf q}.$\\

We next write the holomorphic factorisation that will be proved in Theorem \ref{pantDOZZ} in Section \ref{sec:computingamplitudes} (and which follows from Ward identity in Section \ref{sec:ward}), 
and the dependence of the normalised conformal block amplitudes with respect to the plumbing parameters, contained in
Corollaries \ref{plumbedpant} and \ref{annulusDOZZ}. Since $\mc{P}_j({\bf q}_j)$ is obtained from $\mc{P}_j$ by gluing (resp. removing) an annulus $\mathbb{A}_{|q_{jk}|/\delta^2}$ (resp. $\mathbb{A}_{\delta^2/|q_{jk}|}$) to $\pl_{k}\mc{P}_j$ if $|q_{jk}|\leq \delta^2$ (resp. $|q_{jk}|\geq \delta^2$) with a parametrisation 
 of $\pl_k\mc{P}_j({\bf q}_j)$ given by $\zeta_{jk}^{{\bf q}}(z)=\zeta_{jk}(q_{jk}z/|q_{jk}|)$, we obtain that  
(using notation \eqref{notationboldsymbol}) there are complex valued functions 
\begin{equation}\label{defomeganu} 
(\boldsymbol{\nu}_j,{\bf p}_j,\boldsymbol{\alpha}_j)\mapsto w_{\boldsymbol{\nu}_j}^{\mc{P}_j}(\boldsymbol{\alpha}_j,{\bf p}_j)\in \C 
\end{equation} 
given by the expressions (see Theorem \ref{pantDOZZ} for the definition of $\omega_{\mc{P}}(\boldsymbol{\Delta_{\alpha}},\boldsymbol{\nu},{\bf z})$) 
\[ w_{\boldsymbol{\nu}_j}^{{\mc{P}_j}}(\boldsymbol{\alpha}_j,{\bf p}_j):=\left\{\begin{array}{ll}
w_{\mc{P}_j}(\Delta_{Q+ip_{j1}},\Delta_{\alpha_{j1}},\Delta_{\alpha_{j2}},\boldsymbol{\nu}_j,{\bf z}_0) & \textrm{ if }b_j=1\\
w_{\mc{P}_j}(\Delta_{Q+ip_{j1}},\Delta_{Q+ip_{j2}},\Delta_{\alpha_{j1}},\boldsymbol{\nu}_j,{\bf z}_0) & \textrm{ if }b_j=2\\
w_{\mc{P}_j}(\Delta_{Q+ip_{j1}},\Delta_{Q+ip_{j2}},\Delta_{Q+ip_{j3}},\boldsymbol{\nu}_j,{\bf z}_0)) & \textrm{ if }b_j=3
\end{array}\right.
\]
such that 
\begin{equation} \begin{split}
\label{wardresume}
  & \big\cjg \mc{A}_{\mc{P}_j({\bf q}_j),g_{\bf q},{\bf x}_j,\boldsymbol{\alpha}_j,\boldsymbol{\zeta}_j},\bigotimes_{k=1}^{b_j}{\bf C}^{(\sigma_{jk}+1)/2}\Psi_{Q+ip_{jk},\nu_{jk},\tilde{\nu}_{jk}}\big\cjd_{\mc{H}} \\
 & = C_{\mc{P}_j,g}(\boldsymbol{\alpha}_j,{\bf p}_j){\bf C}^{\rm DOZZ}_{\gamma,\mu}(\boldsymbol{\alpha}_j,{\bf p}_j,\boldsymbol{\sigma}_j)  w_{\boldsymbol{\nu}_j}^{\mc{P}_j}( \boldsymbol{\alpha}_j,{\bf p}_j)\,\bbar{w_{\tilde{\boldsymbol{\nu}}_j}^{\mc{P}_j}(\boldsymbol{\alpha}_j,{\bf p}_j)}  \prod_{k=1}^{b_j}
\big(\frac{|q_{jk}|}{\delta^2}\big)^{-\frac{c_{\rm L}}{12}+2\Delta_{Q+ip_{jk}}}\big(\frac{q_{jk}}{\delta^2}\big)^{|\nu_{jk}|}\big(\frac{\bar{q}_{jk}}{\delta^2}\big)^{|\tilde{\nu}_{jk}|}.
\end{split}\end{equation}
Corollaries \ref{plumbedpant} and \ref{annulusDOZZ} applied with no Young diagrams also yield that the metric anomaly constant  \eqref{metricconstantanomaly} satisfies 
\begin{equation}\label{metricanomalyconstant}
C_{\mc{P}_j({\bf q}_j),g_{\bf q}}(\boldsymbol{\alpha}_j,{\bf p}_j)=C_{\mc{P}_j,g}(\boldsymbol{\alpha}_j,{\bf p}_j) \prod_{k=1}^{b_j}
\big(\frac{|q_{jk}|}{\delta^2}\big)^{-\frac{c_{\rm L}}{12}+2\Delta_{Q+ip_{jk}}}.\end{equation}
For ${\bf n}_j=(n_{j1},\dots,n_{jb_j})\in\N^{b_j}$, we define ${\bf w}_{{\bf n}_j}^{\mc{P}_j}(\boldsymbol{\alpha}_j,{\bf p}_j)\in \bigotimes_{k=1}^{b_j}\C^{d_{n_{jk}}}$ 
to be the vector with components
\begin{equation}\label{defwnj} 
{\bf w}_{{\bf n}_j}^{\mc{P}_j}(\boldsymbol{\alpha}_j,{\bf p}_j):=
(w_{\boldsymbol{\nu}_j}^{\mc{P}_j}(\boldsymbol{\alpha}_j,{\bf p}_j))_{\boldsymbol{\nu}_j\in \mc{T}_{{\bf n}_j}}
\end{equation} 
where $\mc{T}_{{\bf n}_j}:=\mc{T}_{n_{j1}}\times\dots\times \mc{T}_{n_{jb_j}}$ and recall (below \eqref{decofT}) that $d_{n}=|\mc{T}_n|$.
Next, recall from Section \ref{subsubsec:spectral} that for $n\in\N$ and $p\in\R_+$,  
the Schapovalov form ${\bf F}_{Q+ip,n}$ is a positive definite quadratic form on $\C^{d_n}$, with matrix coefficients denoted by ${\bf F}_{Q+ip,n}:=(F_{Q+ip}(\nu,\nu'))_{\nu,\nu'\in\mc{T}_n}$. 
As a consequence of \eqref{wardresume}, \eqref{metricanomalyconstant}, Lemma \ref{lem:blockamplitudes} and the definition of $\Pi_{\mc{V}_{p_{jk}}}$, we obtain
\[ \tilde{\mc{B}}_{\mc{P}_j({\bf q}_j),\hat{J}_j,{\bf x}_j,\boldsymbol{\alpha}_j,\boldsymbol{\zeta}_j^{\bf q}}({\bf p}_j)=\sum_{{\bf n}_j\in\N^{b_j}}\prod_{k=1}^{b_j}\big(\frac{q_{jk}}{\delta}\big)^{n_{jk}}\Big(\bigotimes_{k=1}^{b_j}{\bf C}^{(\sigma_{jk}+1)/2}\bbar{{\bf F}^{-1/2}_{Q+ip_{jk},n_{jk}}}\Big){\bf w}_{{\bf n}_j}^{\mc{P}_j}(\boldsymbol{\alpha}_j,{\bf p}_j).
\]
It is a holomorphic function of the variable ${\bf q}_j$ for  ${\bf q}_j\in \D^{b_j}$ which satisfies 
\[ \lim_{{\bf q}_j\to 0} \tilde{\mc{B}}_{\mc{P}_j({\bf q}_j),J,{\bf x}_j,\boldsymbol{\alpha}_j,\boldsymbol{\zeta}_j^{\bf q}}({\bf p}_j)(\nu_1,\dots,\nu_{b_j})=\prod_{k=1}^{b_j}\delta_0(\nu_k)\]
if $\delta$ is the dirac mass at $0$.
\subsection{Step 4: Definition of the normalised conformal blocks}
We are now in position to define the \emph{normalised conformal block} of the family $(\Sigma({\bf q}),g_{\bf q},{\bf x},\boldsymbol{\alpha},\boldsymbol{\zeta}^{\bf q})$.
\begin{definition}[\textbf{Normalised conformal block}]\label{def:conformalblock}
The normalised conformal block of $(\Sigma({\bf q}),g_{\bf q},{\bf x},\boldsymbol{\alpha},\boldsymbol{\zeta}^{\bf q})$
is defined for almost all ${\bf p}\in \R_+^{L}$ as the function of ${\bf q}\in \D^{3{\bf g}-3+m}$ 
\[\tilde{\mc{F}}_{\bf p}(\boldsymbol{\alpha},{\bf q}):=
\int_{\mc{T}^{L}}\big\cjg \delta_{\mc{G}}(\boldsymbol{\nu}_1,\dots,\boldsymbol{\nu}_L)\delta_{\mc{G}}({\bf p}_1,\dots,{\bf p}_L),\prod_{j=1}^N
\tilde{\mc{B}}_{\mc{P}_j({\bf q}_j),g_{\bf q},\boldsymbol{z}_j,\boldsymbol{\alpha}_j,\boldsymbol{\zeta}_j^{\bf q}}({\bf p}_j,\boldsymbol{\nu}_j)\big\cjd \dd\mu^L_{\mc{T}}(\boldsymbol{\nu})\]
where $\delta_{\mc{G}}$ is defined by \eqref{deltamcG}, and we use the notations \eqref{notationboldsymbol}. 
\end{definition}
This holomorphic function is called ``normalised'' since $\tilde{\mc{F}}_{\bf p}(\boldsymbol{\alpha},0)=1$. It is sometimes called conformal block in the physics literature, and when thought of as locally defined on the moduli space, it contains the whole algebraic structure arising from the Virasoro algebra via the Ward coefficients \eqref{defwnj}. On the other hand, there is a notion of conformal blocks that are viewed as holomorphic sections of a holomorphic line bundle over $\mc{T}_{{\bf g},m}$ satisfying 
another type of Ward identity. In the physics treatment of Liouville CFT, they appear in the work of Teschner \cite[Section 12]{Teschner_Vartanov}. We do not discuss further these blocks here, and refer rather to the forthcoming work \cite{BGKR2} (see also the review \cite{guillarmou2024}) where this global holomorphic structure on $\mc{T}_{{\bf g},m}$ as sections of a holomorphic line bundle is treated in details. They are obtained by multiplying the normalised block amplitudes by some appropriate holomorphic functions of the moduli parameters on Teichm\"uller space (rather than moduli space), 
namely the ``holomorphic part of the conformal radii" appearing in the constant \eqref{metricconstant}.\\ 

\noindent\textbf{Example in genus 2:} in our example of Figure \ref{surfacegenre2}, the normalised block amplitudes are (recall $({\bf x},\boldsymbol{\alpha})=\emptyset$ since there is no marked point)
\[
\tilde{\mc{B}}_{\mc{P}_1({\bf q}_1),J({\bf q}),\boldsymbol{\zeta}_1^{\bf q}}(p_1,p_2,p_3)=\sum_{n_1,n_2,n_3\in\N}\delta^{-\sum_{k=1}^3n_k}q_3^{n_3}\bbar{{\bf F}^{-1/2}_{Q+ip_1,n_1}}\bbar{{\bf F}^{-1/2}_{Q+ip_2,n_2}}{\bf F}^{-1/2}_{Q+ip_3,n_3}{\bf w}_{n_1,n_2,n_3}^{\mc{P}_1}(p_1,p_2,p_3)
\]
\[
 \tilde{\mc{B}}_{\mc{P}_2({\bf q}_2),J({\bf q}),\boldsymbol{\zeta}_2^{\bf q}}(p_1,p_2,p_3)=   \sum_{n_1,n_2,n_3\in\N}q_1^{n_1}q_2^{n_2}\delta^{-\sum_{k=1}^3n_k}{\bf F}^{-1/2}_{Q+ip_1,n_1}{\bf F}^{-1/2}_{Q+ip_2,n_2}\bbar{{\bf F}^{-1/2}_{Q+ip_3,n_3}}
{\bf w}_{n_1,n_2,n_3}^{\mc{P}_2}(p_1,p_2,p_3)
\]
and the modified conformal block is given by  
\[\tilde{\mc{F}}_{(p_1,p_2,p_3)}(q_1,q_2,q_3)= \sum_{\substack{\nu_1,\nu_2,\nu_3\in\mc{T}^3\\ \nu'_1,\nu'_2,\nu'_3\in\mc{T}^3}}\Big(\prod_{k=1}^3
\big(\frac{q_k}{\delta^2}\big)^{|\nu_k|}F^{-1}_{Q+ip_k}(\nu_k,\nu_k')\Big) w_{\nu_1',\nu_3',\nu_3}^{\mc{P}_2}(p_1,p_3,p_3)w_{\nu_1,\nu_2',\nu_2}^{\mc{P}_1}(p_1,p_2,p_2).\]
\subsection{Step 5: Conformal bootstrap formula}
We can end this section with our main theorem expressing the correlation functions in terms of normalised conformal blocks and DOZZ constants, in the plumbing coordinates associated to a family of $L=3{\bf g}-3+m$ parametrised cutting curves $\zeta_j: \T\to \mc{C}_j$.  Below, we write ${\bf q}_j={\bf 1}$ when $q_{jk}=1$ for all $k=1,\dots,b_j$.
\begin{theorem}[\textbf{Conformal bootstrap in plumbing coordinates}]\label{th:fullexpressioncorrel} 
Assume \eqref{ass:star} and let $L:=3{\bf g}-3+m$ and $N=2{\bf g}-2+m$ where ${\bf g}$ is the genus of $\Sigma$ and $m$ the number of marked points. The correlation function of the family of surfaces $(\Sigma({\bf q}),g_{\bf q})$ with $m$ 
insertions $({\bf x},\boldsymbol{\alpha})$ is given by the expression
\begin{equation}\label{theoremmodifiedconfblock}
\mc{A}_{\Sigma({\bf q}),g_{\bf q},{\bf x},\boldsymbol{\alpha}} = \frac{2^{\frac{L}{2}}\prod_{j=1}^{L} |q_j|^{\frac{-c_{\rm L}}{12}} }{(2\pi)^{2L-1}}\int_{\R_+^L}C_{\Sigma,g}(\boldsymbol{\alpha},{\bf p})\boldsymbol{\rho}(\boldsymbol{\alpha},{\bf p})\prod_{j=1}^{L} |q_j|^{2\Delta_{Q+ip_j}} 
\Big|\tilde{\mc{F}}_{\bf p}(\boldsymbol{\alpha},{\bf q})\Big|^2\dd{\bf p},
\end{equation}
\[\textrm{with }\,\,  \boldsymbol{\rho}(\boldsymbol{\alpha},{\bf p}) :=  \prod_{j=1}^N{\bf C}^{\rm DOZZ}_{\gamma,\mu}(\boldsymbol{\alpha}_j,{\bf p}_j,\boldsymbol{\sigma}_j)|_{p_{{\rm v}_1(e_i)}=p_{{\rm v}_2(e_i)}=p_i}, \quad C_{\Sigma,g}(\boldsymbol{\alpha},{\bf p}):=\prod_{j=1}^NC_{\mc{P}_j({\bf 1}),g}(\boldsymbol{\alpha}_j, {\bf p}_j)|_{p_{{\rm v}_1(e_i)}=p_{{\rm v}_2(e_i)}=p_i}\]
where $\tilde{\mc{F}}_{\bf p}(\boldsymbol{\alpha},{\bf q})$ is the normalised conformal block associated to 
$(\Sigma({\bf q}),J({\bf q}),{\bf z},\boldsymbol{\alpha},\boldsymbol{\zeta}^{\bf q})$
 and  $C_{\mc{P}_j({\bf 1}),g}(\boldsymbol{\alpha}_j,{\bf p}_j)$ are explicit constants independent of ${\bf q}$ and given by \eqref{metricconstantanomaly} and \eqref{metricconstant}.
\end{theorem}
\begin{proof}
From the definition \eqref{defhatA} (applied to $\mc{P}_j({\bf q}_j)$ rather than $\mc{P}_j$) and viewing $\widehat{\mc{A}}_{\mc{P}_j({\bf q}_j),g_j,{\bf x}_j,\boldsymbol{\alpha}_j,\boldsymbol{\zeta}_j^{\bf q}}$ as an element in $L^2(\R_+^{b_j},d{\bf p}_j;\mc{H}_{\mc{T}}^{\otimes b_j}\otimes \mc{H}_{\mc{T}}^{\otimes b_j})$, we obtain by \eqref{wardresume}  (as an element in $\mc{H}_{\mc{T}}^{\otimes b_j}\otimes \mc{H}_{\mc{T}}^{\otimes b_j}$)
\[\begin{split}
\widehat{\mc{A}}_{\mc{P}_j({\bf q}_j),g_{\bf q},{\bf x}_j,\boldsymbol{\alpha}_j,\boldsymbol{\zeta}_j^{\bf q}}({\bf p}_j)=&
{\bf C}^{\rm DOZZ}_{\gamma,\mu}(\boldsymbol{\alpha}_j,{\bf p}_j,\boldsymbol{\sigma}_j)  \mc{B}_{\mc{P}_j({\bf q}_j),J_j,{\bf x}_j,\boldsymbol{\alpha}_j,\boldsymbol{\zeta}_j^{\bf q}}({\bf p}_j)\otimes \bbar{\mc{B}_{\mc{P}_j({\bf q}_j),J_j,{\bf x}_j,\boldsymbol{\alpha}_j,\boldsymbol{\zeta}_j^{\bf q}}({\bf p}_j)}\\
& =\prod_{k=1}^{b_j}
\big(\frac{|q_{jk}|}{\delta^2}\big)^{-\frac{c_{\rm L}}{12}+2\Delta_{Q+ip_{jk}}}  C_{\mc{P}_j,g}(\boldsymbol{\alpha}_j, {\bf p}_j){\bf C}^{\rm DOZZ}_{\gamma,\mu}(\boldsymbol{\alpha}_j,{\bf p}_j,\boldsymbol{\sigma}_j)\\
&\quad \times \tilde{\mc{B}}_{\mc{P}_j({\bf q}_j),J_j,{\bf x}_j,\boldsymbol{\alpha}_j,\boldsymbol{\zeta}_j^{\bf q}}({\bf p}_j)\otimes \bbar{\tilde{\mc{B}}_{\mc{P}_j({\bf q}_j),J_j,{\bf x}_j,\boldsymbol{\alpha}_j,\boldsymbol{\zeta}_j^{\bf q}}({\bf p}_j)}\\
\end{split}\]  
Combining this with Proposition \ref{prop:decompositionAmplitude} and \eqref{metricanomalyconstant}, we see that
  $\mc{A}_{\Sigma({\bf q}),g_{\bf q},{\bf x},\boldsymbol{\alpha}}$ can be written as \eqref{theoremmodifiedconfblock}.
\end{proof}
In particular, using the Hinich-Vaintrob plumbing coordinates explained in Section \ref{subsubsec:plumbingparameters}, which cover the whole moduli space, 
one obtains an expression of the correlation functions in terms of noramlised conformal blocks and DOZZ constants for all Riemann surfaces.\\

\noindent \textbf{Example in genus 2:} in our example in genus $2$ with no marked point, the partition function is 
\[\begin{split}
\mc{A}_{\Sigma({\bf q}),g_{{\bf q}}} = \frac{2^{\frac{3}{2}}|q_1q_2q_3|^{-\frac{c_{\rm L}}{12}}}{(2\pi)^5}  \int_{\R_+^3} & C^{\rm DOZZ}_{\gamma,\mu}(Q-ip_1,Q-ip_2,Q+ip_2)
C^{\rm DOZZ}_{\gamma,\mu}(Q+ip_1,Q+ip_3,Q-ip_3)\\
&  \times C_{\mc{P}_1({\bf 1}),g}(p_1,p_2,p_2)C_{\mc{P}_2({\bf 1}),g}(p_2,p_3,p_3) \times \prod_{j=1}^3 |q_j|^{2\Delta_{Q+ip_j}} |\tilde{\mc{F}}_p({\bf q})|^2\dd p.
\end{split}\]

    %%%%%%%%%%%%%%%%%%%%%%%%%%%%%%%%%%%%%%%%%%%%%%%%%%%%%%%%%%%%%%%%%%%%%%%%%%%%%%%%%%%%%%%%%%%%%%%%%%%%%%%%%%%%%%%%%%%%%%%%%%%%%%%%%%%%%%%%%%%%
 \section{Special cases}\label{sec:special}
   %%%%%%%%%%%%%%%%%%%%%%%%%%%%%%%%%%%%%%%%%%%%%%%%%%%%%%%%%%%%%%%%%%%%%%%%%%%%%%%%%%%%%%%%%%%%%%%%%%%%%%%%%%%%%%%%%%%%%%%%%%%%%%%%%%%%%%%%%%%%%%%%%%%%%%%%%%%%%%%%%%%%%%%%%%%%%%%%%%%%%%%%%%%%%%%%%%%%%%%%%%%%%%%%%%%%%%%%%%%%%%%%%%%%%%%%%%%%%%%%%%%%%%%%%%%%%%%%%%%%%%%%%%%%%%%%%%%%%%%
In this section, we implement concretely Theorem \ref{th:fullexpressioncorrel}  in the context of the torus (Section \ref{sub:torus}) or the Riemann sphere (Section \ref{sub:sphere})  to illustrate how to implement the conformal bootstrap and also because these special cases have many applications in physics.
\subsection{The torus}

On the torus, we can use more explicit complex parameters for the moduli than those described above using plumbings, as we now explain. 

\subsubsection{1-point correlation function on $\T^2$}\label{sub:torus}
Consider the torus $\T^2_\tau:=\C/(2\pi\Z+2\pi\tau\Z)$ equipped with its canonical complex structure $J_{\C}$ induced by $\C$, where $\tau=\tau_1+i\tau_2$ satisfies $\tau_2>0$, and let us define 
$q=e^{2\pi i\tau}$ which satisfies $|q|<1$. We take the marked point $0$ and recall that the moduli space $\mc{M}_{1,1}=\mathbb{H}^2/{\rm PSL}_2(\Z)$ where $\mathbb{H}^2\subset \C$ is the upper half plane, with parameter $\tau$. 
The parameter $q$ can also be viewed as a parameter for the chosen torus with one marked point.  The torus $\T_\tau^2$ is complex equivalent to the quotient $\C\setminus \{0\}/\cjg z\mapsto qz\cjd$ by the dilation, as can be seen by using the map $z\mapsto e^{iz}$, and the marked point is now $z=1$.
We can then realise $\T^2_\tau$ with $0$ as marked point 
as the gluing of the exterior circle $\mc{C}_1=\{|z|=1\}$, equipped with the canonical parametrisation $\zeta_{1}(e^{i\theta})=e^{i\theta}$, of the annulus $\mathbb{A}_{q}$, with the interior circle $\{|z|=|q|\}$ equipped with the parametrisation  $\zeta_2(e^{i \theta})=qe^{i \theta}$. We use the flat metric $|dz|^2/|z|^2$ on $\C\setminus\{0\}$ and denote by $\boldsymbol{\zeta}:=(\zeta_1,\zeta_2)$. There is only one complex building block $\mc{P}_1(q)=\mathbb{A}_q$ with the marked point $z=1$. To avoid having the marked point on the glued circle, and since the $1$-point correlation function of the torus is independent of the point, we can freely assume the insertion is at $z=|q|^{1/2}$ in the torus $\mathbb{A}_q/\cjg z\mapsto qz\cjd\simeq \mathbb{T}^2_\tau$.

According to Definition \ref{def:conformalblock} (we can use Corollary  \ref{annulusDOZZ} in that case rather than \eqref{wardresume}), the modified conformal block an conformal block for the torus 
$(\T^2_\tau,J_{\C},z,\alpha_1,\boldsymbol{\zeta})$ are given by the series
\begin{equation}\label{conformalblockT2-1point}
\tilde{\mc{F}}_{p}(\alpha_1,q)=\sum_{n\in \N} q^{n}
{\rm Tr}({\bf F}_{Q+ip,n}^{-1}{\bf w}_{n}^{\mathbb{A}}(\alpha_1,p,p))
\end{equation}
converging for almost all $p\in \R_+$ in the disk $\{|q|<1\}$, where ${\bf w}_{n}^{\mathbb{A}}(\alpha_1,p,p):=(w_{\mathbb{A}}(\Delta_{\alpha_1},\Delta_{Q+ip},\Delta_{Q+ip},\nu))_{\nu\in \mc{T}_n}$ using the $w_{\mathbb{A}}$ coefficients of Corollary  \ref{annulusDOZZ}.

Applying Theorem \ref{th:fullexpressioncorrel} (or rather its proof) to this case, we obtain:
\begin{theorem}\label{Torus1pt}
For $\alpha_1\in(0,Q)$, the Liouville $1$-point correlation function on $\T^2_\tau$ is given 
by the $L^1$-integral 
\[\cjg V_{\alpha_1}(0)\cjd_{\T^2_\tau}=\frac{|q|^{-\frac{{\rm c}_L}{12}}}{2e}
\int_0^\infty  |q|^{2\Delta_{Q+ip}} C^{\rm DOZZ}_{\gamma,\mu}(Q+ip,\alpha_1,Q-ip)|\tilde{\mc{F}}_{p}(\alpha_1,q)|^2  \dd p.\]

\end{theorem}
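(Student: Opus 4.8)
The plan is to specialize the general factorization machinery to the torus with one marked point, treating the single annulus $\mathbb{A}_q$ as the only complex building block. Concretely, the torus $\T^2_\tau$ arises from $\mathbb{A}_q$ by the self-gluing of its two boundary circles $\mc{C}_1$ (outgoing) and $\mc{C}_2$ (incoming), and the corresponding graph $\mc{G}$ is a single vertex with one loop, i.e. the decomposition has $N=1$ block, $L=1$ curve, ${\bf g}=1$, $m=1$. I would first check that the crucial positivity assumption \eqref{ass:star} holds: for the annulus with one insertion $\alpha_1$, the condition is $\alpha_1 - (2-b_j)Q = \alpha_1 - 0 = \alpha_1 > 0$ (since $b_j = 2$, $2-b_j=0$), which is fine for $\alpha_1\in(0,Q)$. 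Then the amplitude of $\mathbb{A}_q$ is the semigroup operator from Proposition \ref{prop:annulussimple}, namely $\caA_q = \sqrt{2}\pi\, e^{c_{\rm L}t/12}e^{-t\mathbf{H}}e^{i\theta\mathbf{\Pi}}$ with $q=e^{-t+i\theta}$, and the $1$-point correlation function is obtained by taking the trace of this operator against the vertex-operator insertion at $z=|q|^{1/2}$. Since the graph has a loop, I would invoke Remark \ref{remark:selfgluing} together with Lemma \ref{Lem:splitanamplitudeinL2} (splitting $\mathbb{A}_q$ into two half-annuli glued by an auxiliary circle) to justify that the self-gluing integral is a well-defined $L^1$ trace.

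Next I would apply Proposition \ref{prop:decompositionAmplitude} (the spectral decomposition via $\Pi_{\otimes^2\mc{V}}$ and \eqref{rewriting}) to write the self-glued amplitude as an integral over the spectrum. The self-gluing of the single loop produces $\frac{C_{\rm gluing}}{(2\pi)^L}$ times the trace of $\widehat{\mc{A}}_{\mathbb{A}_q}$ over the paired variable $(p,\nu,\tilde\nu)$, which by \eqref{wardresume} (equivalently Corollary \ref{annulusDOZZ}, which handles the annulus-with-one-marked-point case directly and makes the DOZZ structure constant explicit) factorizes holomorphically. Here $C_{\rm gluing} = \frac{\sqrt{2}}{(\sqrt{2}\pi)^{L-1}} = \sqrt{2}$ since $L=1$, so the prefactor is $\frac{\sqrt{2}}{2\pi}$. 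Setting $p_1=p$ and integrating over the Young diagrams, the trace over $\nu,\tilde\nu$ of the product ${\bf F}^{-1/2}_{Q+ip,n}\otimes \bbar{{\bf F}^{-1/2}_{Q+ip,n}}$ against ${\bf w}_n^{\mathbb{A}}\otimes\bbar{{\bf w}_n^{\mathbb{A}}}$ collapses, using ${\bf F}^{-1/2}{\bf F}^{-1/2}={\bf F}^{-1}$, into ${\rm Tr}({\bf F}_{Q+ip,n}^{-1}{\bf w}_n^{\mathbb{A}}(\alpha_1,p,p))$, which is exactly the coefficient appearing in the definition \eqref{conformalblockT2-1point} of $\mc{F}_p(\alpha_1,q)$. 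The modulus-squared structure $|\mc{F}_p(\alpha_1,q)|^2$ then emerges from pairing the holomorphic block (from $\nu$) with the antiholomorphic one (from $\tilde\nu$), exactly as in the general Theorem \ref{th:fullexpressioncorrel}; the reversal of orientation on the incoming circle is accounted for by the operators $\mathbf{O},\mathbf{C}$ and Proposition \ref{revert}.

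The remaining work is bookkeeping of the constants. The general formula \eqref{expressioncorrel} with ${\bf g}=1$, $m=1$ gives the prefactor $\frac{2^{(3{\bf g}-3+m)/2}}{(2\pi)^{6{\bf g}-6+2m-1}} = \frac{2^{1/2}}{(2\pi)^{1}} = \frac{\sqrt2}{2\pi}$, and $\boldsymbol{\rho}(\alpha,p)$ is here a single DOZZ constant $C^{\rm DOZZ}_{\gamma,\mu}(Q+ip,\alpha_1,Q-ip)$ times the Weyl-anomaly constant $C(\mathbb{A}_q,g,\ldots)$. The one subtlety is that for the annulus block the metric-anomaly constant $C(\mathbb{A}_q,g,\boldsymbol{\alpha}_1,{\bf p}_1)$ and the $\delta$-powers in the block amplitude must combine with the $|q|$-powers $|q|^{-c_{\rm L}/24+\Delta_{Q+ip}}$ (squared, giving $|q|^{-c_{\rm L}/12+2\Delta_{Q+ip}}$) to reproduce the clean normalization: the claim is that all $\delta$-dependence cancels (block amplitudes are $\delta$-independent as asserted in Lemma \ref{lem:blockamplitudes}) and the residual numerical constant collapses to $\frac{1}{2e}$. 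I expect the main obstacle to be precisely this last constant-chasing step --- tracking the $\zeta$-regularized determinant $Z_{\mathbb{A}_q,g_{\mathbb{A}}}$, the factor $2^{-1/2}(-2\pi/\ln|q|)^{1/2}|q|^{-1/12}\prod(1-|q|^{2n})^{-1}$ from \cite{weis}, and the central-charge power $|q|^{-c_{\rm L}/12}$ through the self-gluing --- and verifying that the $\tau_2$-dependence conspires to leave only the universal $\frac{1}{2e} = \frac{1}{2}e^{-1}$ (where the $e^{-1}$ is the familiar shift from the constant-mode integration / the $-\tfrac16$-versus-$1$ bookkeeping in $c_{\rm L} = 1 + 6Q^2$). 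Once the constant is pinned down, $L^1$-integrability of the right-hand side follows from Corollary \ref{boundednessofPi_V} and the fact, already used in Lemma \ref{lem:blockamplitudes}, that $\mc{A}_{\mathbb{A}_q,g,z,\alpha_1,\boldsymbol{\zeta}}\in e^{\varepsilon\bar c_-}L^2$, combined with the known asymptotics of the DOZZ constant on the line $Q+i\R$.
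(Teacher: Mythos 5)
Your proposal follows essentially the same route as the paper: the torus is realized as the self-gluing of the single annular block $\mathbb{A}_q$ with one marked point (moved off the gluing circle), the loop is handled via Remark \ref{remark:selfgluing} and Lemma \ref{Lem:splitanamplitudeinL2}, the spectral decomposition of Proposition \ref{prop:decompositionAmplitude} is applied, and Corollary \ref{annulusDOZZ} supplies the holomorphically factorized DOZZ expression — exactly as in the paper's specialization of Theorem \ref{th:fullexpressioncorrel}. Your constant bookkeeping is also correct: the gluing prefactor $\tfrac{\sqrt{2}}{2\pi}$ times the constant $C=\tfrac{\pi}{\sqrt{2}e}$ of \eqref{constanttorus} yields $\tfrac{1}{2e}$.
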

\subsubsection{k-point correlation function on $\T^2$}

We use the same notations as for the $1$-point function. Take $x_1=0$ and $x_2,\dots,x_k\in \C$ 
with $0<{\rm Im}(x_2)<\dots<{\rm Im}(x_k)<2\pi\tau_2$, which we view as marked points on $\T_\tau^2$. Let us define the marked points $z_j:=e^{ix_j}\in \mathbb{A}_q$. In order to avoid  having $z_1$ on a glued circle, we can scale the points by $\la\in (0,1)$ for $\la<1$ arbitrarily close to $1$, without changing the $k$-point correlation function, the new points become $z'_j=\la z_j$. 
If $\la<1$ is arbitrarily close to $1$, the torus $\mathbb{A}_q/\cjg z\mapsto qz\cjd$ can be cut along the curves $\mc{C}_j:=\{|z|=|z_{j}|\}$ for $j=1,\dots,k$ and $\mc{C}_{k+1}=\{|z|=|q|\}$, where $\mc{C}_1=\{|z|=1\}$ is glued with $\mc{C}_{k+1}$ (the parametrisations of these circles are the canonical ones,  except for $\mc{C}_{k+1}$ which is parametrised by $\zeta_k(e^{i \theta})=qe^{i \theta}$). 
This decomposes $\mathbb{T}^2_\tau$ as $k$ complex building blocks, denoted $\mc{P}_j$ for $j=1,\dots, k$, which are annuli bounded by $\mc{C}_{j}\cup \mc{C}_{j+1}$ each with $1$ marked point given by $z'_j$ (see Figure \ref{figtorus}). By Proposition \ref{Weyl}, the amplitude of $\mc{P}_j$ can also be written 
as $\mc{A}_{\mathbb{A}_{|q_j|},g_{\mathbb{A}},z'_j/|z_j|,\alpha_j,\boldsymbol{\zeta}}$ with $q_j:=z_{j+1}/z_j\in \D$ for $j<k$, and 
$\mc{A}_{\mathbb{A}_{q/|z_k|},g_{\mathbb{A}},z'_k/|z_k|,\alpha_j,\boldsymbol{\zeta}}$ for the last one, with $\boldsymbol{\zeta}$ being the canonical parametrisation except fr the interior circle for $j=k$ having a shift by ${\rm arg}(q)$.  We will also set $q_k:=q/z_k$.

The normalised conformal block in that case can be written, using Definition \ref{def:conformalblock} and Corollary \ref{annulusDOZZ}, as the series 
\begin{equation}\label{torus:conformalblockkpt}
\tilde{\mc{F}}_{\bf p}(\boldsymbol{\alpha},{\bf q}):=\sum_{{\bf n}\in \N^k}(\prod_{j=1}^k q_j^{n_j})\tilde{\mc{F}}_{{\bf p},{\bf n}}(\boldsymbol{\alpha}) , \quad \textrm{ with }
\end{equation}
\[\tilde{\mc{F}}_{{\bf p},{\bf n}}(\boldsymbol{\alpha}):={\rm Tr}({\bf F}_{p_k,n_k}^{-1}{\bf w}_{n_k,n_{k-1}}^{\mathbb{A}}(p_k,\alpha_{k},p_{k-1}){\bf F}_{p_{k-1},n_{k-1}}^{-1}\dots {\bf w}_{n_2,n_1}^{\mathbb{A}}(p_2,\alpha_2,p_1){\bf F}_{p_1,n_1}^{-1}{\bf w}_{n_1,n_{k}}^{\mathbb{A}}(p_1,\alpha_1,p_{k}))\]
where ${\bf p}=(p_1,\dots,p_k)$, ${\bf q}=(q_1,\dots,q_k)$, ${\bf n}=(n_1,\dots,n_k)$ (using the $w_{\mathbb{A}}(\boldsymbol{\Delta}_{\boldsymbol{\alpha}},\boldsymbol{\nu})$ of  Corollary \ref{annulusDOZZ})
\[{\bf w}^{\mathbb{A}}_{n,n'}(p,\alpha,p'):=(w_{\mathbb{A}}(\Delta_{Q+ip},\Delta_{\alpha},\Delta_{Q+ip'},\nu,\nu'))_{(\nu,\nu')\in \mc{T}_n\times \mc{T}_{n'}}.\] 
Here we have used Proposition \ref{Weyl} and the scaling $z\mapsto z/|z_j|$ to reduce the $\mc{P}_j$ amplitude to the amplitude of $\mc{A}_{\mathbb{A}_{|q_j|},g_{\mathbb{A}},z'_j/|z_j|,\alpha_j,\zeta_j}$  to be able to apply Corollary \ref{annulusDOZZ}. 
 The series $\tilde{\mc{F}}_{{\bf p},{\bf n}}(\boldsymbol{\alpha})$ is absolutely converging for almost all ${\bf p}$, and  it does not depend on $\la$.  

The proof of Theorem \ref{th:fullexpressioncorrel} also gives:
\begin{theorem}\label{th:kpointtorus}
Let $\boldsymbol{\alpha}:=(\alpha_1,\dots,\alpha_k)\in (0,Q)^k$.
The $k$-point correlation function on $\T^2_\tau$ equipped with the Euclidean metric $|dz|^2$ is given for $x_1=0$ and ${\rm Im}(x_j)<{\rm Im}(x_{j+1})<2\pi \tau_2$ by the $L^1$-integral
\[\begin{split}
\cjg \prod_{j=1}^kV_{\alpha_j}(x_j)\cjd_{\T_\tau}=\frac{\prod_{j=1}^k|q_j|^{-\frac{c_{\rm L}}{12}}}{2^{2k-1}\pi^{k-1}e^k} 
\int_{\R_+^k} \prod_{j=1}^k|q_j|^{2\Delta_{Q+ip_j}}\Big(\prod_{j=1}^k C^{\rm DOZZ}_{\gamma,\mu}(Q+ip_j,\alpha_j,Q-ip_{j-1})\Big)\Big|_{p_0=p_k}|\tilde{\mc{F}}_{\bf p}(\boldsymbol{\alpha},{\bf q})|^2\dd{\bf p}.
\end{split}\]
with $q_j=z_{j+1}/z_j$ for $j=1,\dots,k-1$ and  $q_k=q/z_k$.
\end{theorem}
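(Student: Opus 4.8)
The plan is to mirror the proof of Theorem~\ref{th:fullexpressioncorrel} but exploit the fact that for the torus there is only a single pair of pants replaced by a chain of annuli, so the graph $\mc{G}$ is a single cycle and the combinatorics simplify drastically. First I would set up the decomposition of $(\T^2_\tau,|dz|^2)$ with marked points $x_1=0,x_2,\dots,x_k$ (scaled by $\lambda<1$ to push $z_1'$ off the glued circle) as the $k$ complex building blocks $\mc{P}_j$, each an annulus with one insertion, glued cyclically along the curves $\mc{C}_1,\dots,\mc{C}_{k+1}$ with $\mc{C}_1$ identified with $\mc{C}_{k+1}$; this is the $k$-point analogue of the discussion preceding the theorem. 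Using Proposition~\ref{Weyl} and the dilation $z\mapsto z/|z_j|$, each $\mc{P}_j$-amplitude is identified with $\mc{A}_{\mathbb{A}_{|q_j|},g_{\mathbb{A}},z_j'/|z_j|,\alpha_j,\boldsymbol{\zeta}_j}$, which brings us into the regime where Corollary~\ref{annulusDOZZ} applies. The hypothesis $\alpha_j\in(0,Q)$ guarantees the Seiberg-type condition \eqref{ass:star} for each annular block (here $\chi(\mc{P}_j)=0$, so one needs $\alpha_j>0$), hence each block amplitude is Hilbert--Schmidt by Lemma~\ref{Lem:splitanamplitudeinL2}, and the gluing formula Proposition~\ref{glueampli} applies iteratively along the cycle.

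Next I would apply Proposition~\ref{prop:decompositionAmplitude}: inserting the spectral resolution of Corollary~\ref{boundednessofPi_V} at each of the $k$ gluing circles turns $\mc{A}_{\T_\tau,|dz|^2}$ into an integral over ${\bf p}=(p_1,\dots,p_k)\in\R_+^k$ and a sum over Young diagrams $\boldsymbol{\nu},\tilde{\boldsymbol{\nu}}\in\mc{T}^k$ of a trace around the cycle. Because $\mc{G}$ is a single loop, Remark~\ref{remark:selfgluing} applies — one uses Lemma~\ref{Lem:splitanamplitudeinL2} to cut one annulus into two $L^2$-pieces so the self-gluing is legitimate — and the resulting expression is literally a trace of an ordered product ${\bf F}^{-1}_{p_k,n_k}{\bf w}^{\mathbb{A}}_{n_k,n_{k-1}}\cdots {\bf F}^{-1}_{p_1,n_1}{\bf w}^{\mathbb{A}}_{n_1,n_k}$ over the auxiliary Hilbert space $\mc{H}_{\mc{T}}$. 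Feeding in Corollary~\ref{annulusDOZZ} (the annulus-with-one-insertion specialization of the Ward identities of Theorem~\ref{pantDOZZ}) produces the factor $\prod_j C^{\rm DOZZ}_{\gamma,\mu}(Q+ip_j,\alpha_j,Q-ip_{j-1})$ together with the holomorphic $q_j$-powers $|q_j|^{-c_L/24+\Delta_{Q+ip_j}}q_j^{n_j}$; the holomorphic and antiholomorphic sums factorize into $\mc{F}_{\bf p}(\boldsymbol{\alpha},{\bf q})$ and its conjugate. The constant $\frac{1}{2^{2k-1}\pi^{k-1}e^k}$ is then bookkeeping: collect $C_{\rm gluing}=\sqrt 2/(\sqrt2\pi)^{k-1}$ from \eqref{Cgluing} (with $L=k$, $N=k$), the $(2\pi)^{-k}$ from the $k$ Plancherel insertions, and the factor $\frac1{2e}$ per annular block coming from the metric-dependent constant $C(\mathbb{A},g,\dots)$ and the $\zeta$-determinant normalization that already appeared in Theorem~\ref{Torus1pt}; one checks $\sqrt2\cdot(\sqrt2\pi)^{-(k-1)}\cdot(2\pi)^{-k}\cdot(\text{per-block constants})$ collapses to the stated prefactor.

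I would then address the two analytic points. First, absolute convergence: the bound in Proposition~\ref{holomorphicpsi} (equivalently the last display of Corollary~\ref{boundednessofPi_V}) controls the spectral integrals, and the $L^2(\mc{H}_{\mc{T}})$-convergence of each block's conformal-block series for almost every ${\bf p}_j$ (established in the Corollary following Lemma~\ref{lem:blockamplitudes}) combines via Cauchy--Schwarz and \eqref{L^2boundcomposition} to give that the integrand of \eqref{torus:conformalblockkpt} is finite and the whole expression is an honest $L^1$-integral; this is where I'd invoke Theorem~\ref{integrcf} to get the $e^{\eps\bar c_-}$-integrability needed to legitimately pair the amplitudes with the generalized eigenstates $H_{Q+ip,\nu,\tilde\nu}$. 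Second, $\lambda$-independence: the correlation function does not depend on the scaling parameter $\lambda$ (the points $z_j'=\lambda z_j$ all lie in the same torus), and since $q_j=z_{j+1}/z_j$ and $q_k=q/z_k$ are unchanged under $z_j\mapsto \lambda z_j$, the right-hand side is manifestly $\lambda$-independent, so one may pass to the limit $\lambda\to1$ freely. The main obstacle I anticipate is not conceptual but the careful tracking of the cyclic structure through Remark~\ref{remark:selfgluing} — making sure the single-loop self-gluing is decomposed into genuine Hilbert--Schmidt pieces so that Lemma~\ref{compositionHS} gives a well-defined trace — together with pinning down the numerical constant, which requires matching the per-annulus normalizations exactly as in the $k=1$ case of Theorem~\ref{Torus1pt}.
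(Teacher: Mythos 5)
Your proposal is correct and follows essentially the same route as the paper: the paper's proof of Theorem~\ref{th:kpointtorus} is precisely the application of the machinery of Theorem~\ref{th:fullexpressioncorrel} to the cyclic decomposition of $\T^2_\tau$ into $k$ one-insertion annuli set up just before the statement, using Proposition~\ref{Weyl}, Corollary~\ref{annulusDOZZ}, the self-gluing via Remark~\ref{remark:selfgluing} and Lemma~\ref{Lem:splitanamplitudeinL2}, and the same constant bookkeeping (your per-block factor is really $\pi/(\sqrt{2}e)$ from Corollary~\ref{annulusDOZZ} rather than $1/(2e)$, but your final collapse to $2^{-(2k-1)}\pi^{-(k-1)}e^{-k}$ is the correct computation). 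No gaps.
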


\begin{figure}[h] 
\centering
{\begin{tikzpicture}[xscale=1,yscale=1] 
\tikzstyle{sommet}=[circle,draw,fill=yellow,scale=0.3] 
\draw[color=red,fill=blue!30] (0,0) circle (3);
\draw[color=red,fill=white] (0,0) circle (0.7);
\node[color=red] (C) at (0.6,0.5){ $\mathbf{+}$  $q$};
\node[color=red,rotate=60] (D) at (3,0){ $\mathbf{+}$};
\node[color=red,rotate=60] (D) at (2.7,0){ $\mathbf{+}$};
\node[color=red,rotate=60] (D) at (1.9,0){ $\mathbf{+}$};
\node[color=red,rotate=60] (D) at (1.3,0){ $\mathbf{+}$};
\draw[color=red,dashed] (0,0) circle (1.3);
\node[rotate=30]  (E) at (1.1,-0.5){ $\mathbf{+}$};
\node[]  (E) at (1,-0.2){ $\lambda z_1$};
\draw[color=red,dashed] (0,0) circle (1.9);
\node[rotate=30]  (F) at (0,-1.8){ $\mathbf{+}$};
\node[]  (G) at (0.3,-1.6){ $\lambda z_2$};
\node[]  (H) at (2.2,0){ $\dots$};
\draw[color=red,dashed] (0,0) circle (2.7);
\node[rotate=30]  (F) at (2,2.1){ $\mathbf{+}$};
\node[]  (G) at (2.5,2.1){ $\lambda z_k$};
\end{tikzpicture}
}
\caption{Cutting for $k$ point correlation functions on the torus. The marked points are black, the dashed circles are the cutting circles $\mc{C}_j$ with radii $r_j=|z_j|$ and red points stand for the origins of the parametrised boundaries.}
\label{figtorus}
\end{figure}

\subsection{The Riemann sphere  in the linear channel}\label{sub:sphere}

  We consider the Riemann sphere $\mathbb{S}^2=\hat{\C}$    with the DOZZ metric $g_{\rm dozz}$ and let $(z_j)_{j=1,\dots,k}$  be $k\geq 4$ points with $z_1=0$, $z_k=\infty$ and $|z_j|<|z_{j+1}|$. Without loss of generality we assume $|z_2|<1$ and $|z_{k-1}|>1$. 
 We also choose $(\alpha_j)_{j=1,\dots,k}\in (0,Q)^k$ some weights satisfying $\sum_j \alpha_j>2Q$ and set $\boldsymbol{\alpha}=(\alpha_1,\dots,\alpha_k)$.
 We can cut $\hat \C$ into $k-2$ complex building blocks $\mc{P}_j$ separated by 
$k-3$ circles $(\mc{C}_j)_{j=2,\dots,k-2}$ chosen as follows. Let $\la>1$ chosen arbitrarily close to 
$1$ and set $r_j=\la |z_j|$, then define $\mc{C}_j:=\{|z|=r_j\}$ and let 
$\mc{P}_1=\bbar{\D}_{r_2}$, $\mc{P}_j=\{|z|\in [r_j,r_{j+1}]\}$ for $j=2,\dots,k-3$ and 
$\mc{P}_{k-2}:=\hat{\C}\setminus \D_{r_{k-2}}$. Each circle is parametrised canonically by $\zeta_j(e^{i\theta})=r_j e^{i\theta}$. We choose a conformal metric $g=e^{h}g_{\rm dozz}$   which is equal to $|dz|^2/|z|^2$ on $|z|\in [\eps,\eps^{-1}]$ for $\min(|z_2|,|z_{k-1}|^{-1})>\eps>0$ small enough and equal to $g_{\rm dozz}$  near $0$, invariant by $z\mapsto 1/z$. Recall, by Weyl anomaly, that the choice of the metric is irrelevant up to an overall multiplicative factor. One can then view the $k$ point correlation functions with points $z_j$ and weights $\alpha_j$ as a gluing of amplitudes.
By Proposition \ref{Weyl} (using both 1) and 2)), the amplitudes of $\mc{P}_1$ and $\mc{P}_{k-2}$ are respectively given by  
\[\begin{gathered}
\mc{A}_{\mathbb{D}_{r_2},g,(0,z_2),(\alpha_1,\alpha_2),r_2\zeta_0}=\Big|\frac{r_2}{z_2}\Big|^{-2\Delta_{\alpha_2}}r_2^{-2\Delta_{\alpha_1}}e^{(1+6Q^2)S_{\rm L}^0(\D_{r_2},g_{\D},h)}\mc{A}_{\mathbb{D},g_{\D},(0,\frac{z_2}{r_2}),(\alpha_1,\alpha_2),\zeta_0}, \\ 
\mc{A}_{\mathbb{D}_{r_{k-2}^{-1}},g,(0,\frac{1}{z_{k-1}}),(\alpha_k,\alpha_{k-1}),r_{k-2}^{-1}\bbar{\zeta}_0}=\Big|\frac{z_{k-1}}{r_{k-2}}\Big|^{-2\Delta_{\alpha_{k-1}}}r_{k-2}^{2\Delta_{\alpha_k}}e^{(1+6Q^2)S_{\rm L}^0(\D_{1/r_{k-2}},g_{\D},h)}\mc{A}_{\mathbb{D},g_{\D},(0,\frac{r_{k-2}}{z_{k-1}}),(\alpha_k,\alpha_{k-1}),\bbar{\zeta}_0}
\end{gathered}\]
where $\zeta_0(e^{i\theta})=e^{i\theta}$ (we applied an inversion $z\mapsto 1/z$ in the second case, and some dilations $z\mapsto rz$ in both cases for $r=r_2$ and $r=r_{k-2}^{-1}$). Again using Proposition \ref{Weyl} we also have that the amplitude of $\mc{P}_j$ for $j=2,\dots,k-3$ is 
\[\mc{A}_{\mathbb{A}_{r_j/r_{j+1}},g_{\mathbb{A}},\frac{z_{j+1}}{r_{j+1}},\alpha_{j+1},\boldsymbol{\zeta}}\]
with $\boldsymbol{\zeta}(e^{i\theta})=(e^{i\theta}r_j/r_{j+1},e^{i\theta})$. By \cite[Proposition 7.12]{GKRV20_bootstrap}\footnote{We use that $\mc{A}^0_{\D,g_\D}=1$ and therefore $\mc{A}_{\mathbb{D},g_{\D},(0,z),(\alpha_1,\alpha_2),\zeta_0}=Z_{\D,g_\D}U_{\alpha_1,\alpha_2}(0,z)$ in the terminology of \cite{GKRV20_bootstrap})}
we can write for $z\in \D$
\begin{equation}\label{warddisk}
\begin{split}
\cjg \mc{A}_{\mathbb{D},g_{\D},(0,z),(\alpha_1,\alpha_2),\zeta_0},\bbar{\Psi_{Q+ip,\nu,\tilde{\nu}}} \cjd_{\mc{H}}=& \frac{Z_{\mathbb{D},g_{\D}}}{2}C^{\rm DOZZ}_{\gamma,\mu}(\alpha_1,\alpha_2,Q+ip)z^{|\nu|}\bar{z}^{\tilde{\nu}}|z|^{2(\Delta_{Q+ip}-\Delta_{\alpha_1}-\Delta_{\alpha_2})}\\
& \times w_{\D}(\Delta_{\alpha_1},\Delta_{\alpha_2},\Delta_{Q+ip},\nu)\bbar{w_{\D}(\Delta_{\alpha_1},\Delta_{\alpha_2},\Delta_{Q+ip},\tilde{\nu})} 
\end{split}\end{equation}
where $w_{\D}$ are explicit coefficients similar to those in Corollary \ref{annulusDOZZ} (see eq (7.36) of \cite{GKRV20_bootstrap} for the formula). Define 
${\bf w}^{\D}_n(\alpha_1,\alpha_2,p)=(w_{\D}(\Delta_{\alpha_1},\Delta_{\alpha_2},\Delta_{Q+ip},\nu))_{\nu \in \mc{T}_n}$.
Let us define the series 
\begin{equation}\label{sphere:conformalblockkpt}
\hat{\mc{F}}_{\bf p}(\boldsymbol{\alpha},{\bf q}):= \Big(\prod_{j,0<|z_j|<1} |z_j|^{-\Delta_{\alpha_j}}\Big)\Big(\prod_{j, 1\leq |z_j|<\infty} |z_j|^{\Delta_{\alpha_j}}\Big)
 |z_2|^{-\Delta_{\alpha_1}}|z_{k-1}|^{\Delta_{\alpha_{k}}}(\prod_{j=2}^{k-2} |q_j|^{\Delta_{Q+ip_j}})\sum_{{\bf n}\in \N^{k-3}}(\prod_{j=2}^{k-2} q_j^{n_j})\tilde{\mc{F}}_{{\bf p},{\bf n}}(\boldsymbol{\alpha}) , 
\end{equation}
\[   \textrm{ with }\quad \tilde{\mc{F}}_{{\bf p},{\bf n}}(\boldsymbol{\alpha}):=\cjg {\bf F}_{p_2,n_2}^{-1}{\bf w}^{\mathbb{A}}_{n_2,n_3}(p_2,\alpha_3,p_3){\bf F}_{p_3,n_3}^{-1}\dots {\bf F}_{p_{k-2},n_{k-2}}^{-1}{\bf w}^{\D}_{n_{k-2}}(p_{k-2},\alpha_{k-1},\alpha_k),{\bf w}^{\D}_{n_2}(p_2,\alpha_2,\alpha_1)\cjd_{\mc{H}_{\mc{T}}}\]
where we have set  $q_j:=z_{j}/z_{j+1}\in \D$ for $2\leq j\leq k-2$, while ${\bf p}=(p_2,\dots,p_{k-2})$, ${\bf q}=(q_2,\dots,q_{k-2})$, ${\bf n}=(n_2,\dots,n_{k-2})$. We observe that 
\[ e^{(1+6Q^2)S_{\rm L}^0(\D_{r_2},g_{\D},h)}e^{(1+6Q^2)S_{\rm L}^0(\D_{1/r_{k-2}},g_{\D},h)}\prod_{j=2}^{k-2}|q_j|^{-\frac{c_{\rm L}}{12}}=
e^{(1+6Q^2)S_{\rm L}^0(\hat{\C},g_{\rm dozz},g)}\]
is the global Weyl anomaly relating the metrics $g=e^hg_{\rm dozz}$ to the DOZZ metric $g_{\rm dozz}$ on $\hat\C$. 
Using Proposition \ref{prop:decompositionAmplitude}, Corollary \ref{annulusDOZZ}, \eqref{warddisk},
we obtain 
\[\cjg \prod_{j=1}^kV_{\alpha_j}(z_j)\cjd_{\hat\C,g}=\frac{2^{-\frac{3}{2}} Z_{\D,g_\D}^2}{(2\pi)^{k-3} (2e)^{k-4}}e^{(1+6Q^2)S_{\rm L}^0(\hat{\C},g_{\rm dozz},g)-\sum_{j=2}^{k-1}\Delta_{\alpha_j}h(z_j)}\int_{\R_+^{k-3}}\boldsymbol{\rho}(\boldsymbol{\alpha},{\bf p})|\hat{\mc{F}}_{\bf p}(\boldsymbol{\alpha},{\bf q})|^2d{\bf p}\]
with 
\begin{equation}\label{rhosphere}
\boldsymbol{\rho}(\boldsymbol{\alpha},{\bf p}):=C^{\rm DOZZ}_{\gamma,\mu}(\alpha_1,\alpha_2,Q-ip_2)C^{\rm DOZZ}_{\gamma,\mu}(\alpha_k,\alpha_{k-1},Q+ip_{k-2})\prod_{j=2}^{k-3} C^{\rm DOZZ}_{\gamma,\mu}(Q+ip_j,\alpha_{j+1},Q-ip_{j+1}).
\end{equation}
By the Weyl anomaly \eqref{ax1},  we obtain
 \begin{theorem}\label{th:kpointsphere}
Let $\boldsymbol{\alpha}:=(\alpha_1,\dots,\alpha_k)\in (0,Q)^k$ and $\sum_{j}\alpha_j>2Q$.
The $k$-point correlation function on $\hat\C$ equipped with the DOZZ metric $g_{\rm dozz}$ 
is given for $z_1=0$, $z_k=\infty$ and $|z_j|<|z_{j+1}|$ by the $L^1$-integral
\[\begin{split}
\cjg \prod_{j=1}^kV_{\alpha_j}(z_j)\cjd_{\hat\C,g_{\rm dozz}}=\frac{2^{-\frac{3}{2}}Z_{\D,g_\D}^2}{(2\pi)^{k-3} (2e)^{k-4}}\int_{\R_+^{k-3}}\boldsymbol{\rho}(\boldsymbol{\alpha},{\bf p})|\hat{\mc{F}}_{\bf p}(\boldsymbol{\alpha},{\bf q})|^2d{\bf p}
\end{split}\]
with $q_j=z_{j}/z_{j+1}$ for $j=2,\dots,k-2$, ${\bf p}=(p_2,\dots,p_{k-2})$, ${\bf q}=(q_2,\dots,q_{k-2})$, $\boldsymbol{\alpha}=(\alpha_1,\dots,\alpha_k)$ and $\boldsymbol{\rho}(\boldsymbol{\alpha},{\bf p})$ given by \eqref{rhosphere}.
\end{theorem}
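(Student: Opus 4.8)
The plan is to follow the same blueprint as the proof of Theorem \ref{th:fullexpressioncorrel}, but adapted to the explicit ``chain'' pant decomposition of $\hat\C$ described just above the statement. Almost all of the work has in fact been done in the paragraphs preceding the theorem: the surface $\hat\C$ with marked points $z_1=0,\dots,z_k=\infty$ and $|z_j|<|z_{j+1}|$ has been cut into $k-2$ complex building blocks $\mc{P}_1=\bbar{\D}_{r_2}$, $\mc{P}_j=\{|z|\in[r_j,r_{j+1}]\}$ for $2\le j\le k-3$, and $\mc{P}_{k-2}=\hat\C\setminus\D_{r_{k-2}}$, with $r_j=\la|z_j|$ and $\la>1$ close to $1$; the associated multigraph is a simple path $v_1-v_2-\dots-v_{k-2}$. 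So the main step is to record that this satisfies the hypotheses of the general machinery and to push the constants through.

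First I would check the hypothesis \eqref{ass:star}: here two blocks are disks with $2$ insertions (with Euler characteristic $1$, so we need $\alpha_1+\alpha_2>Q$ and $\alpha_{k-1}+\alpha_k>Q$) and the remaining $k-4$ blocks are annuli with $1$ insertion (Euler characteristic $0$, so we need $\alpha_j>0$, automatic). These bounds follow from $\alpha_j\in(0,Q)$ together with $\sum_j\alpha_j>2Q$ in the relevant range — more precisely one chooses the cutting so that the Seiberg-type inequalities hold; this is where the assumption $\sum_j\alpha_j>2Q$ is used, exactly as in the global statement, and I would note that if it fails extra discrete terms appear (as already remarked in the paper). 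Then I would invoke Proposition \ref{Weyl} to write the amplitudes of $\mc{P}_1$ and $\mc{P}_{k-2}$ in terms of the standard disk amplitudes $\mc{A}_{\D,g_\D,\dots}$ (inversion $z\mapsto 1/z$ for the block at $\infty$, dilations $z\mapsto rz$) and the amplitude of each middle block $\mc{P}_j$ as $\mc{A}_{\mathbb{A}_{r_j/r_{j+1}},g_{\mathbb{A}},z_{j+1}/r_{j+1},\alpha_{j+1},\boldsymbol{\zeta}}$; these identities are literally displayed above the statement, so this is just bookkeeping of the conformal weights $\Delta_{\alpha_j}$ and the Weyl-anomaly factors $e^{(1+6Q^2)S_{\rm L}^0}$.

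Next, I would apply Proposition \ref{prop:decompositionAmplitude} (spectral resolution of each gluing via Corollary \ref{boundednessofPi_V} and \eqref{rewriting}) together with the Ward-identity computations: equation \eqref{warddisk} (i.e.\ \cite[Prop.\ 7.12]{GKRV}) for the two disk blocks and Corollary \ref{annulusDOZZ} for the annulus blocks. This expresses $\cjg\prod_j V_{\alpha_j}(z_j)\cjd_{\hat\C,g}$ as an integral over ${\bf p}=(p_2,\dots,p_{k-2})\in\R_+^{k-3}$ of a product of DOZZ constants — which assembles into $\boldsymbol{\rho}(\boldsymbol{\alpha},{\bf p})$ of \eqref{rhosphere} — times the modulus squared of the conformal block $\mc{F}_{\bf p}(\boldsymbol{\alpha},{\bf q})$ of \eqref{sphere:conformalblockkpt}, with the holomorphic/antiholomorphic factorization of pant amplitudes (Theorem \ref{pantDOZZ}) splitting the double Young-diagram sums into $\mc{F}$ and $\bbar{\mc{F}}$. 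The $L^1$ (absolute convergence) statement follows from the estimate \eqref{L^2boundcomposition} / Corollary \ref{boundednessofPi_V}, exactly as in Theorem \ref{th:fullexpressioncorrel}. Independence of $\la$ (hence of the precise cutting radii $r_j$) is inherited from the fact that the conformal blocks on the right are themselves $\la$-independent by construction.

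Finally I would collect the scalar prefactors. The gluing constant is $C_{\rm gluing}=\sqrt2/(\sqrt2\pi)^{L-1}$ with $L=k-3$ here, the two disk amplitudes each contribute a factor $Z_{\D,g_\D}/2$ from \eqref{warddisk}, the annuli contribute the $1/(2e)$ normalization per gluing circle (total $(2e)^{-(k-4)}$, matching the torus computation), and the $|q_j|^{-c_L/12}$ factors together with the two $S_{\rm L}^0$ disk terms recombine into the single global Weyl-anomaly factor $e^{(1+6Q^2)S_{\rm L}^0(\hat\C,g_{\rm dozz},g)}$ relating $g=e^hg_{\rm dozz}$ to $g_{\rm dozz}$ — this last identity is stated in the display just above the theorem. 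Then \eqref{ax1} (Weyl covariance) removes the factors $e^{-\sum_{j=2}^{k-1}\Delta_{\alpha_j}h(z_j)}$ and passes from the auxiliary metric $g$ to $g_{\rm dozz}$, yielding the stated constant $2^{-3/2}Z_{\D,g_\D}^2/((2\pi)^{k-3}(2e)^{k-4})$. The main obstacle is purely one of careful accounting: making sure the various renormalization constants ($Z_{\D,g_\D}$, the $1/2$'s, the $2e$'s, the $\sqrt2$'s in $C_{\rm gluing}$, and the powers of $|q_j|$, $|z_j|$) combine into precisely the announced prefactor and into the normalization $|z_2|^{-\Delta_{\alpha_1}}|z_{k-1}|^{\Delta_{\alpha_k}}\prod|z_j|^{\mp\Delta_{\alpha_j}}$ absorbed in \eqref{sphere:conformalblockkpt}; there is no new conceptual difficulty beyond what is already in Theorem \ref{th:fullexpressioncorrel} and the torus cases.
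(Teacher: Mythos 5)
Your proposal is correct and follows essentially the same route as the paper: the paper's proof of Theorem \ref{th:kpointsphere} is precisely the chain decomposition into two disks with two insertions and $k-4$ annuli with one insertion laid out in the paragraphs preceding the statement, combined with Proposition \ref{Weyl}, the spectral decomposition of Proposition \ref{prop:decompositionAmplitude}, the Ward-identity inputs \eqref{warddisk} and Corollary \ref{annulusDOZZ}, and the recombination of the Weyl-anomaly factors into $e^{(1+6Q^2)S_{\rm L}^0(\hat\C,g_{\rm dozz},g)}$. Your bookkeeping of the constants ($C_{\rm gluing}$ with $L=k-3$, the two factors $Z_{\D,g_\D}/2$, the $(2e)^{-(k-4)}$ from the annuli) matches the paper's.
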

We note here that the constant $ Z_{\D,g_\D}$ is given by $Z_{\D,g_\D}= e^{\frac{1}{4}}\times 2^{\frac{1}{12}}\pi^{\frac{1}{4}}e^{\frac{5}{24}+\zeta_R'(-1)}$. The proof is given at the end of the proof of Corollary \ref{annulusDOZZ}.

\section{Ward identities  }\label{sec:ward}
%%%%%%%%%%%%%%%%%%%%%%%%%%%%%%%%%%%%%%%%%%%%%%%%%%%%%%%%%%%%%%%%%%%%%%%%%%%%%%%%%%%%%%%%%%%%%%%%%%%%%%%%%%%%%%%%%%%%%%%%%%%%%%%%%%%%%%%%%%%%%%%%%%%%%%%%%%%%%%%%%%%%%%%%%%%%%%%%%%%%%%%%%%%%%%%%%%%%%%%%%%%%%%%%%%%%%%%%%%%%%%%%%%%%%%%%%%%%%%%%%%%%%%%%%%%%%%%%%%%%%%%%%%%%%%%%%%%%%%%%%%%%%%%%%%%%%%%%%%%%%%%% 
%%%%%%%%%%%%%%%%%%%%%%%%%%%%%%%%%%%%%%%%%%%%%%%%%%%%%%%%%%%%%%%%%%%%%%%%%%%%%%%%%%%%%%%%%%%%%%%%%%%%%%%%%%%%%%%%%%%%%%%%%%%%%%%%%%%%%%%%%%%%%%%%%%%%%%%%%%%%%%%%%%%%%% 

We have seen that the LCFT correlation functions on a Riemann surface can be expressed as compositions of amplitudes of spheres with  $b$ disks removed and $3-b$ punctures, $b=1,2,3$. Furthermore the amplitudes are Hilbert-Schmidt operators and their compositions can be expressed in terms of the spectral resolution of the LCFT Hamiltonian. This leads to the problem of evaluating the amplitudes on the LCFT eigenstates $\Psi_{\alpha,\nu,\tilde\nu}$ discussed in Proposition \ref{defprop:desc}. In \cite{GKRV20_bootstrap} the eigenstates $\Psi_{\alpha,\nu,\tilde\nu}$ were given a probabilistic expression in the case of sufficiently negative $\alpha\in\R$. In the language of this paper, this expression involves a  disk amplitude where, in addition to the vertex operator $V_\alpha(0)$, there are a number of insertions of the {\it Stress Energy Tensor} (SET). These SET insertions encode the   action of the Virasoro generators  on the eigenstates\footnote{ In \cite{BGKR1}, a more geometric description of this action is explored.}
As will be explained in section \ref{sec:computingamplitudes}, the gluing of the building block amplitude with these disk amplitudes leads to an LCFT correlation function of three vertex operators together with a number of SET insertions on  the Riemann sphere   $\hat \C$. In the following, we introduce the SET insertions in subsection \ref{SET}, then we define the  correlation functions with SET insertions in subsection \ref{sub:ampSEThole} and finally  we   show how the dependence on SET insertions can be  evaluated explicitly using the conformal Ward identities  in Proposition \ref{propward}, which is the main result of this section.

\subsection{Stress Energy Field}\label{SET}

%%%%%%%%%%%%%%%%%%%%%%

We work on a surface $\Sigma$ where either   $\Sigma=\hat\C$ or  $\Sigma\subset\C$  a bounded region (and thus $\partial\Sigma\neq\emptyset$). We equip $\Sigma$  with an admissible metric  $g=e^{\omega(z)}|\dd z|^2$.  The stress energy tensor (SET) does not make sense as a random field but can be given sense at the level of correlation functions as the limit of a regularised SET. Let us introduce the field (recall \eqref{defliouvillefield} for the definition of $\phi_g$) 
\begin{align*}
\Phi_g:=\phi_g+\tfrac{Q}{2}\omega
\end{align*}
and its $|dz|^2$-regularisation $\Phi_{g,\epsilon}$ with the following caveat: we  make a special choice of regularisation by choosing a  nonnegative radial smooth compactly supported function $\rho:\C\to \R$ such that $\int \rho(z)\,\dd z=1$ where $\dd z$ denotes the Lebesque measure and thus $\int z\partial_z \rho(z)\,\dd z=-1$. Then, as usual, we set $\rho_\epsilon(z)=\epsilon^{-2}\rho(z/\epsilon)$ and denote by $h_\epsilon=h\star \rho_\epsilon$ the regularisation of a distribution $h$.  In particular $\Phi_{g,\epsilon}(z)$ is a.s. smooth.
Then the regularised SET is defined by
\begin{equation}\label{defSET}
\quad T_{g,\epsilon}(z):=Q\partial^2_{z}\Phi_{g,\epsilon}(z)-(\partial_z \Phi_{g,\epsilon}(z))^2 +a_{\Sigma,g,\epsilon}(z)
\end{equation}
with the renormalisation constant  given by $a_{\Sigma,g,\epsilon}:=\E[(\partial_z X_{g,D,\epsilon}(z))^2]$ if $\partial\Sigma\not= \emptyset$ and  $a_{\Sigma,g,\epsilon}:=\E[(\partial_z X_{g,\epsilon}(z))^2]$ if $\partial\Sigma=\emptyset$.
 We denote also by $\bbar T_{g,\epsilon}(z)$  the complex conjugate of $T_{g,\epsilon}(z)$. The regularised SET  field $T_{g,\epsilon}(z)$ is a proper random field and, in the case when $\Sigma$ has a boundary, the SET   thus depends on the boundary fields $\boldsymbol{\tilde{\varphi}} \in(H^{s}(\T))^{b}$.  

In this section, we will also sometimes consider regularised vertex operators $V_{\alpha,g,\epsilon}(z)= \epsilon^{\alpha^2/2}e^{\alpha \phi_{g,\epsilon}(z)}$ where $\phi_{g,\epsilon}$ denotes regularisation in the flat background metric; this simplifies computations and this is why we use this regularisation. We will explicitely say when we use the flat metric regularisation for the vertex operator and therefore by default (unless explicitely stated) $V_{\alpha,g,\epsilon}(z)$ will denote regularisation in the $g$ metric; also the formal notation $V_{\alpha,g}(z)$ denotes the limit of the $g$-regularised vertex operator. Both regularisations yield the same limiting quantity up to multiplication by the smooth factor $g(z)^{\alpha^2/2}$ if $\Sigma= \hat\C$.

The renormalising constant $a_{\Sigma,g,\epsilon}$ will play an important role in the following. As a preliminary result, we study how this constant reacts to geometric changes:
 
\begin{lemma}\label{constanteSET}
1) in case $\partial\Sigma=\emptyset$ then $a_{\Sigma,g,\epsilon}(z)=o(1)$ as $\epsilon\to 0$.\\
2)  in case $\partial\Sigma\not=\emptyset$ then $a_{\Sigma,g,\epsilon}(z) $ converges as $\epsilon\to 0$ towards some limit denoted by $a_{\Sigma,g}(z)$ satisfying:
\begin{align*}%\label{}
a_{\Sigma,e^{\sigma}g}(z)=a_{\Sigma,g}(z)
\end{align*}
 for any bounded smooth function $\sigma:\bar\Sigma\to\R$ and 
\begin{align}\label{atransf}
a_{\Sigma,\psi^\ast g}(z)=\psi'(z)^2a_{\Sigma', g}(\psi(z))-\frac{1}{12}  {\rm S}_\psi(z)
\end{align}
where $\psi:\Sigma\to\Sigma'$ is a biholomorphism and
\begin{align}\label{Schwarzian}
{\rm S}_\psi:=\psi'''/\psi'-\frac{3}{2}(\psi''/\psi')^2
\end{align}
 is  the Schwarzian derivative of  $\psi$.
\end{lemma}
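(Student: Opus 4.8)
The statement is about the renormalization constant $a_{\Sigma,g,\epsilon}(z)=\E[(\partial_z X_{g,D,\epsilon}(z))^2]$ (boundary case) or $\E[(\partial_z X_{g,\epsilon}(z))^2]$ (closed case), and its behaviour under the limit $\epsilon\to 0$, under conformal changes of metric, and under biholomorphisms. The key input is the expansion of the relevant Green function near the diagonal. Recall that in the closed case $\E[X_g(x)X_g(x')]=G_g(x,x')$ with $G_g(x,x')=-\log d_g(x,x')+$ (smooth) as $x'\to x$, while in the boundary case $\E[X_{g,D}(x)X_{g,D}(x')]=G_{g,D}(x,x')=-\log|x-x'|+H(x,x')$ with $H$ harmonic in each variable and smooth up to the diagonal in the interior of $\Sigma$ (here I use that $g=e^\omega|dz|^2$ so $d_g$ and Euclidean distance agree up to a smooth conformal factor; the logarithmic singularity is metric-independent). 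First I would write, using the mollifier $\rho_\epsilon$ and $\partial_z$ acting in one variable,
\[
a_{\Sigma,g,\epsilon}(z)=\partial_z\partial_{z'}\big(G_\bullet\star_{x}\rho_\epsilon\star_{x'}\rho_\epsilon\big)(z,z)
\]
where $G_\bullet$ is $G_g$ or $G_{g,D}$ as appropriate, and then split $G_\bullet=-\log|x-x'|+R(x,x')$ with $R$ smooth near the diagonal (in the interior). The $R$ term contributes $\partial_z\partial_{z'}R(z,z)$ in the limit, which is finite; the singular term $-\log|x-x'|$ contributes a universal $\epsilon$-independent constant after the regularization because $\partial_z\partial_{z'}(-\log|x-x'|)=\partial_z\partial_{z'}(-\tfrac12\log((x-x')(\bar x-\bar x')))=0$ (it is $\tfrac12\partial_z\partial_{z'}$ of something annihilated by $\partial_z$, since $-\log(x-x')$ is holomorphic in $x$ off the diagonal). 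So the only surviving finite part is $\partial_z\partial_{z'}R(z,z)$.

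For part 1) (closed case): here the natural quantity is $\lim_\epsilon \E[X_{g,\epsilon}^2(z)]+\log\epsilon=W_g(z)$ from \eqref{varYg}, so one wants the analogous statement for $\partial_z$. The point is that $a_{\Sigma,g,\epsilon}(z)=\partial_z\partial_{z'}G_g(x,x')|$ regularized, and with $G_g(x,x')=-\tfrac12\log|x-x'|^2+\tfrac12 W_g$-type smooth remainder, the mixed derivative $\partial_z\partial_{z'}$ kills the $\log$ and kills the would-be singular part; moreover one must check the remainder's contribution vanishes. Actually for the closed surface the claim is that $a_{\Sigma,g,\epsilon}(z)=o(1)$; this should follow because in the closed case one can use a \emph{conformal} Green function whose diagonal expansion is, after the holomorphic mixed derivative, $O(\epsilon)$ — more precisely $\partial_z\partial_{z'}$ applied to the smooth part $R$, evaluated on the diagonal, can be normalized to zero by the specific choice of regularization $\rho$ satisfying $\int z\partial_z\rho=-1$ together with the fact that $R$ itself is $\partial_{\bar z}$-harmonic-type near the diagonal for the round/conformal metric up to terms that vanish on the diagonal. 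The cleanest route: compute $a_{\Sigma,g,\epsilon}$ explicitly for one reference metric (e.g. the flat metric on a small disk around $z$, where $\partial_z\partial_{z'}(-\log|x-x'|)=0$ exactly), then use the conformal covariance established in 2) to transfer; since the $o(1)$ statement is conformally invariant (by the first displayed identity in 2)) and holds for the flat reference, it holds in general. This is the slightly delicate point.

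For part 2) (boundary case, convergence and $a_{\Sigma,e^\sigma g}=a_{\Sigma,g}$): convergence follows from the above since $R=H$ (the regular part of the Dirichlet Green function) is smooth up to the diagonal in the interior, so $\partial_z\partial_{z'}(H\star\rho_\epsilon\star\rho_\epsilon)(z,z)\to \partial_z\partial_{z'}H(z,z)=:a_{\Sigma,g}(z)$; the singular part contributes $0$ for every $\epsilon>0$ by the holomorphy argument (with the chosen $\rho$, so no spurious finite constant appears — this is exactly where $\int z\partial_z\rho=-1$ is used, to pin down that $\partial_z\partial_{z'}\log|x-x'|$ regularizes to $0$ and not to some multiple of $1/\epsilon^2$ or a nonzero constant). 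Conformal invariance $a_{\Sigma,e^\sigma g}=a_{\Sigma,g}$ is immediate because the Dirichlet GFF is the \emph{same} for $g$ and $e^\sigma g$ (stated in the excerpt right after \eqref{varYg}), hence $X_{g,D}=X_{e^\sigma g,D}$ and the regularization is the $|dz|^2$-regularization in both cases (the excerpt specifies the SET uses the $|dz|^2$-regularization, independent of $g$), so $a_{\Sigma,e^\sigma g,\epsilon}=a_{\Sigma,g,\epsilon}$ identically.

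For the transformation law \eqref{atransf} under a biholomorphism $\psi:\Sigma\to\Sigma'$: I would use that $X_{g,D}=X_{\psi^*g',D}\stackrel{\rm law}{=}X_{g',D}\circ\psi$ (pushforward of Dirichlet GFF under conformal maps, which is the boundary analogue of the identity $\phi_g=\phi_{\psi_*g}\circ\psi$ used in the proof of Proposition \ref{Weyl}). Writing $Y:=X_{g',D}$, one has $X_{\psi^*g',D}(z)=Y(\psi(z))$, and the $|dz|^2$-regularization on the $\Sigma$ side does not match the $|dz|^2$-regularization on the $\Sigma'$ side: the mollifier $\rho_\epsilon$ in the $z$-variable pushes forward to a mollifier of scale $|\psi'(z)|\epsilon$ in the $\psi(z)$-variable, up to higher-order corrections governed by the second and third derivatives of $\psi$. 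Tracking these corrections is precisely the computation that produces the Schwarzian: one expands $\psi(z+\epsilon u)=\psi(z)+\epsilon u\psi'(z)+\tfrac12\epsilon^2u^2\psi''(z)+\tfrac16\epsilon^3u^3\psi'''(z)+\dots$, substitutes into $\E[(\partial_z(Y\circ\psi)_\epsilon(z))^2]$, uses $\E[Y(w)Y(w')]=-\log|w-w'|+H'(w,w')$, and collects the $\epsilon^0$ terms. The holomorphic-derivative structure makes the $H'$ part transform tensorially, giving $\psi'(z)^2 a_{\Sigma',g'}(\psi(z))$, while the $-\log|w-w'|$ part, whose naive contribution was $0$, now produces a nonzero finite remainder from the nonlinearity of $\psi$, and a standard computation (the same one that appears in the theory of the Schwarzian and in the transformation of the SET) identifies this remainder as $-\tfrac1{12}{\rm S}_\psi(z)$. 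The constant $-\tfrac1{12}$ is forced by matching with the known central-charge-$1$ part of the SET transformation law; alternatively it can be obtained directly by carrying out the $u$-integrals against $\rho$ using $\int\rho=1$, $\int u\partial_u\rho\,du=-1$ and the evenness/normalization of $\rho$.

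\textbf{Main obstacle.} The delicate step is the explicit extraction of the $-\tfrac1{12}{\rm S}_\psi$ term in \eqref{atransf}: one must carefully control how the fixed mollifier $\rho_\epsilon$ in the source coordinate transforms under $\psi$ and isolate the finite $\epsilon^0$ contribution coming from the logarithmic (universal) part of the Green function composed with the nonlinear map $\psi$. Getting the combinatorial coefficient right requires using the specific normalization conditions imposed on $\rho$ (namely $\int\rho=1$ and $\int z\partial_z\rho\,dz=-1$) and being careful that the regularization is performed with respect to $|dz|^2$ and not the metric $g$; this is exactly the point at which the paper's ad hoc choice of $\rho$ becomes essential, and it is the only genuinely computational part of the proof. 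The rest (convergence, conformal invariance, finiteness of the $H$-contribution) is soft and follows from smoothness of the regular part of the Dirichlet Green function in the interior together with the conformal invariance of the Dirichlet GFF.
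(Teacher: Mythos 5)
Your overall strategy is the same as the paper's: expand the relevant Green function near the diagonal as $\ln\frac{1}{|x-x'|}$ plus a regular part, show the regular part contributes its mixed holomorphic derivative on the diagonal, and extract the Schwarzian from the third-order Taylor expansion of $\psi$ inside the logarithm. However, there are two genuine gaps. First, your justification for why the logarithmic singularity contributes nothing is wrong: $-\log(x-x')$ is holomorphic in $x$, hence annihilated by $\partial_{\bar x}$, \emph{not} by $\partial_x$, and in fact $\partial_x\partial_{x'}\log|x-x'|=\tfrac{1}{2(x-x')^2}\neq 0$. The correct statement, which the paper uses, is that the \emph{regularized} quantity $\int\!\!\int \partial\rho(u)\,\partial\rho(v)\,\ln\frac{1}{|u-v|}\,\dd u\,\dd v$ vanishes; after integrating by parts this reduces to $-\tfrac12\int\!\!\int\rho(u)\rho(v)(u-v)^{-2}\dd u\,\dd v$, whose vanishing is a property of the double convolution against the kernel $w^{-2}$ (e.g.\ the vanishing angular average of $w^{-2}$ when $\rho\ast\rho$ is radial), not a consequence of holomorphy. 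Since your identification of the finite part as $\partial_z\partial_{z'}R(z,z)$, and hence everything downstream, rests on this step, it must be repaired.

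Second, your argument for part 1) is circular and misses the structural fact that makes it work. You propose to verify $a_{\Sigma,g,\epsilon}=o(1)$ for a flat reference metric and transfer by the conformal invariance of part 2); but that invariance is proved for the \emph{Dirichlet} GFF (same field for $g$ and $e^\sigma g$), whereas in the closed case the field $X_g$ and its Green function $G_g$ genuinely depend on $g$ through the zero-mean normalization, and there is no flat reference metric on $\hat\C$. The paper's argument is direct: on $\hat\C$ the Green function has the form $G_g(z,z')=\ln\frac{1}{|z-z'|}+h(z,\bar z)+h(z',\bar z')$, i.e.\ the regular part splits into single-variable functions, so its mixed derivative $\partial_z\partial_{z'}$ vanishes identically and only the (vanishing) log contribution remains. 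Finally, for \eqref{atransf} you correctly identify the needed computation (Taylor-expand $\psi$ to third order inside the log and integrate against $\partial\rho(u)\partial\rho(v)$ using $\int\partial\rho=0$, $\int u\,\partial\rho(u)\dd u=-1$, $\int\bar u\,\partial\rho(u)\dd u=0$), but you do not carry it out, and your fallback of fixing the coefficient $-\tfrac1{12}$ by "matching with the known SET transformation law" is circular here, since this lemma is an ingredient in establishing that transformation law (Proposition \ref{diffeoSET}). Your observation that $a_{\Sigma,e^\sigma g,\epsilon}=a_{\Sigma,g,\epsilon}$ identically for every $\epsilon>0$, because both the Dirichlet field and the mollifier are metric-independent, is correct and slightly cleaner than the paper's phrasing of that point.
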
 

\begin{proof}
The constant is given by $a_{\Sigma,g,\epsilon}(z)=\epsilon^{-2}\int\int \partial_z\rho(u) \partial_z\rho(v)G(z+\epsilon u,z+\epsilon v)\dd u\dd v$ with $G$ the Green function in the metric $g$ with boundary condition given by Dirichlet or vanishing mean depending on $\partial\Sigma\not= \emptyset$ or not. We will use the two following computations to prove all our claims: let $h(z,\bar z,z',\bar z')$ be a smooth function and $\psi$ a holomorphic map, both of them  defined in a neighborhood of $z$ for each of their variables and $\psi'(z)\not =0$
\begin{align}
&\epsilon^{-2}\int\int\partial_z\rho(u)\partial_z\rho(v)h(z+u\epsilon,\bar z+\epsilon\bar u,z+v\epsilon,\bar z+\epsilon\bar v)\dd u\dd v=\partial^2_{zz'} h(z,\bar z,z,\bar z)+o(1)\label{cst1}\\
&\epsilon^{-2}\int\int\partial_z\rho(u)\partial_z\rho(v)\log\frac{1}{|\psi(z+u\epsilon)-\psi( z+v\epsilon)|}\dd u\dd v=- \frac{1}{12}{\rm S}_\psi+o(1).\label{cst2}
\end{align}
Indeed, the first claim follows by Taylor expanding $h$ in \eqref{cst1} ($h$ and its derivatives below are evaluated at $z,\bar z,z,\bar z$)
\[\begin{split}
h(z+u\epsilon, \bar z+\epsilon\bar u,z+v\epsilon,\bar z+\epsilon\bar v)=& h +\epsilon u\partial_zh+\epsilon\bar u\partial_{\bar z}h+\epsilon v\partial_{z'}h+\epsilon\bar v\partial_{\bar z'}h+\frac{\epsilon^2}{2}\big(u^2\partial^2_{zz}h+\bar u^2\partial^2_{\bar z\bar z}h+v^2\partial^2_{z'z'}h+\bar v^2\partial^2_{\bar z'\bar z'}h)\\
&+\frac{\epsilon^2}{2}\big(2u\bar u \partial^2_{z\bar z}h+ 2uv \partial^2_{zz'}h+2u\bar v \partial^2_{z\bar z'}h+2\bar uv \partial^2_{\bar z  z'}h+2\bar u\bar v \partial^2_{\bar z  \bar z'}h+2v\bar v \partial^2_{z'  \bar z'}h\big)+o(\epsilon^2)
\end{split}\] 
and then using that 
\begin{equation}\label{various}
\int\partial_z\rho(u)\dd u=0,\quad\int\partial_z\rho(u)u\dd u=-1,\quad\int\partial_z\rho(u)\bar u\dd u=0.
\end{equation}

The second claim follows by Taylor expanding $\psi$ at order $3$ to get 
\begin{align*}
\epsilon^{-2}\int\int&\partial_z\rho(u)\partial_z\rho(v)\log\frac{1}{|\psi(z+u\epsilon)-\psi( z+v\epsilon)|}\dd u\dd v\\
&=\epsilon^{-2}\int\int\partial_z\rho(u)\partial_z\rho(v)\log\frac{1}{|\epsilon (u-v)\psi'(z)+\frac{\epsilon^2}{2}(u^2-v^2)\psi''(z)+\frac{\epsilon^3}{6}(u^3-v^3)\psi'''(z)+o(\epsilon^3) |}\dd u\dd v\\
&=\epsilon^{-2}\int\int\partial_z\rho(u)\partial_z\rho(v)\log\frac{1}{|  1+\frac{\epsilon}{2}(u+v)\frac{\psi''(z)}{\psi'(z)}+\frac{\epsilon^2}{6}(u^2+uv+v^2)\frac{\psi'''(z)}{\psi'(z)}+o(\epsilon^2) |}\dd u\dd v
\end{align*} 
where we have used $\int\int\partial_z\rho(u)\partial_z\rho(v)\log\frac{1}{|  u-v|}\dd u\dd v=0$. We can then Taylor expand the $\log$ and use \eqref{various} to conclude as previously.

Now in case $\partial\Sigma=\emptyset$ and $g$ is conformal to $|dz|^2$,  then the Green function is of the form $G_g(z,z')=\log\frac{1}{|z-z'|}+h(z,\bar z)+h(z',\bar z')$. Combining our two claims \eqref{cst1}+\eqref{cst2} with $\psi(z)=z$ proves 1). In the  case $\partial\Sigma\not =\emptyset$ then $G_{g,D}$ is of the form $G_{g,D}(z,z')=\log\frac{1}{|z-z'|}+h(z,\bar z,z',\bar z')$ with $h$ smooth and our two claims give that the limit of $a_{\Sigma,g,\epsilon}(z)$ is given by $\partial^2_{zz'} h(z,\bar z,z,\bar z)$. Furthermore $h$ does not depend on the conformal factor. Finally if $\psi:\Sigma\to\Sigma'$ be a biholomorphism then  $G_{\psi^*g,D}(z,z')=\log\frac{1}{|\psi(z)-\psi(z')|}+h(\psi(z),\psi(\bar z),\psi(z'),\psi(\bar z'))$ with $G_g(z,z')=\log\frac{1}{|z-z'|}+h(z,\bar z,z',\bar z')$ the Green function of $(\Sigma',g)$. Our claims then give that $a_{\Sigma,\psi^* g}(z)=\partial_{zz'}(h(\psi(z),\psi(\bar z),\psi(z'),\psi(\bar z')))_{z'=z} -\frac{1}{12}  {\rm S}_\psi(z)$ from which the result follows.
 \end{proof}

\subsection{Correlation functions with SET insertions}\label{sub:ampSEThole}
%%%%%%%%%%%%%%%%%%%%%%%%%

Now we turn to studying correlation functions with $T_{g,\epsilon}$ insertions and their limits as $\epsilon\to 0$. As will be explained in Section \ref{sec:computingamplitudes}, the analytic continuation to real values of the complex weight $\alpha$ of the eigenstates leads to disk amplitudes  functions with further SET insertions.  When building block amplitudes are then evaluated at the eigenstates, this  yield (by the gluing lemma) correlation functions with SET insertions.   We will construct the SET insertions with the caveat that the Liouville potential is removed from a neighborhood, called hole, of the insertions.  These holes emerge from the intertwining property \eqref{intert:desc}, which is the bridge between the analytic definition of the amplitude and their probabilistic representation (disk amplitude) for real values of the complex weight $\alpha$. Technically speaking, they will be convenient to deal with because they have a regularising effect: indeed, singularities in the potential coming from vertex operators are located inside these holes, hence taming the effects of these singularities.

Locations of the SET insertions will be labelled with a couple $({\bf u},{\bf v})$ (${\bf u}$ collecting the $T$-insertions, ${\bf v}$ for the $\bar T$-insertions) and vertex operator locations  will still be labelled with ${\bf z}$. We will need to keep all these  parameters distinct. Formalizing, given  a bounded open set $U \subset \C$ and ${\bf z}\in \C^m$ for some $m\geq 3$, we introduce the sets
\begin{align*}
 \caO^{\bf z}_{\C,U}=\{({\bf u},{\bf v})\in U^{k+\tilde k}\,|\, u_j,v_{j'} \text{ all distinct and } \forall j ,\forall i, \:  u_j\neq z_i\; ,v_j\neq z_i  \},\\
  \caO^{\rm ext}_{\C,U} =\{({\bf z},{\bf u},{\bf v})\in \C^m\times U^{k+\tilde k}\,|\, z_i,u_j,v_{j'} \text{ all distinct} \}.
\end{align*}

For $\mathbf{u}= (u_1, \dots, u_k)$ we set (recall \eqref{defSET})
\begin{equation*}
  T_{g,\epsilon}( \mathbf{u}):= \prod_{i=1}^k  T_{g,\epsilon}(u_i)
\end{equation*}
and similarly for $ T_{g,\epsilon}( \mathbf{v})$. We then define
\begin{align}\label{amplitudeSET1}%\label{}
  \langle T_g( \mathbf{u})\bbar T_g( \mathbf{v})\prod_{i=1}^mV_{\alpha_i,g}(z_i)\rangle_{\hat\C,U,g}
   & 
:=  \big(\frac{{\det}'(\Delta_{g})}{{\rm v}_{g}(\Sigma)}\big)^{-1/2}  \lim_{\epsilon\to 0} \lim_{\epsilon'\to 0} \,\int_\R  \E\Big[ \prod_{i=1}^m V_{\alpha_i,g,\epsilon'}( z_i)  T_{g,\epsilon}({\bf u})\bbar T_{g,\epsilon}({\bf v}) 
 \\&
 \times  \exp\Big( -\tfrac{Q}{4\pi}\int_{\C}K_{g}(c+ X_{g} )\,{\rm dv}_{g} - \mu  e^{\gamma c}M^g_\gamma( \phi_g, \C\setminus U)  \Big) \Big]\,\dd c . \nonumber
\end{align}
The set $U$ stands for the holes in the potential.  Notice that the   SET insertions are located inside the holes.
 The following statement is a  straightforward adaptation from \cite[Prop 9.1]{GKRV20_bootstrap}. We stress again that the role of the hole $U$ is crucial as it removes any problematic singularity in the treatment of correlation functions. So the statement below can be seen as soft
  and it does not imply any property for the correlation functions of LCFT   with SET insertions (meaning with empty hole $U$, 
  which is more difficult).

\begin{proposition}\label{ampSETholomorphic}
Let $\alpha_i$ satisfy the Seiberg bounds \eqref{seiberg1} and \eqref{seiberg2}. Then
 the limit 
  in \eqref{amplitudeSET1}  exists  and  defines  a function which is smooth in  $\bf u$ and $\bf v$ and continuous in $\bf z$  for  $({\bf z},{\bf u},{\bf v}) \in \caO^{\rm ext}_{\C,U}$. It is also smooth in the variables $z_i$'s that belong to  $  U$.
\end{proposition}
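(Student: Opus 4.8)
The statement is asserted to be a "straightforward adaptation" of \cite[Prop 9.1]{GKRV}, so the plan is to reduce to the corresponding result there by handling the extra ingredients: the presence of marked points $z_i$, the conformal factor $\omega$ hidden inside $\Phi_g = \phi_g + \tfrac{Q}{2}\omega$, and the removal of the Liouville potential on the hole $U$. First I would apply the Girsanov transform to the product $\prod_i V_{\alpha_i,g,\epsilon'}(z_i)$ together with the curvature term $e^{-\frac{Q}{4\pi}\int K_g(c+X_g)dv_g}$, exactly as in the proof of Theorem \ref{integrcf} and as in \cite{DKRV,GKRV}. This shifts the law of $X_g$ (or $X_{g,D}$) by a smooth function $\mathfrak{u}(w) = \sum_i \alpha_i G_g(w,z_i) - \frac{Q}{4\pi}\int G_g(w,y)K_g(y)dv_g(y)$, which is harmonic away from the $z_i$ and in particular real-analytic in a neighborhood of the hole $U$ once $U$ is taken disjoint from (or, for the $z_i\in U$ case, with the singularities localized at) the marked points. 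After this transform the $\epsilon'\to 0$ limit of the vertex operator regularization is trivial (it produces the usual $e^{\frac{\alpha_i^2}{2}W_g(z_i)}$ and pairwise $e^{\alpha_i\alpha_j G_g(z_i,z_j)}$ factors), and one is left with an expectation of $T_{g,\epsilon}({\bf u})\bar T_{g,\epsilon}({\bf v})$ against the shifted field times the Liouville exponential $\exp(-\mu e^{\gamma c}M^g_\gamma(\phi_g,\C\setminus U))$ restricted to the complement of the hole.

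Next I would expand $T_{g,\epsilon}(u_j) = Q\partial_w^2\Phi_{g,\epsilon}(u_j) - (\partial_w\Phi_{g,\epsilon}(u_j))^2 + a_{\Sigma,g,\epsilon}(u_j)$ and, after the Girsanov shift, write $\partial_w\Phi_{g,\epsilon}(u_j) = \partial_w X_{g,\epsilon}(u_j) + \partial_w(\mathfrak{u}+\tfrac{Q}{2}\omega)_\epsilon(u_j)$, the second summand being smooth and convergent. Expanding the product over $j$ using Wick's theorem / the Gaussian structure of $X_g$ reduces everything to (i) pairings $\E[\partial_w X_{g,\epsilon}(u_j)\partial_w X_{g,\epsilon}(u_{j'})]$ for $j\neq j'$, which converge to $\partial_w\partial_{w'}G_g(u_j,u_{j'})$ (smooth off the diagonal), (ii) the self-pairing at coinciding points which is exactly cancelled by the renormalization constant $a_{\Sigma,g,\epsilon}$ by Lemma \ref{constanteSET}, and (iii) "vertex-like" insertions $\partial_w X_{g,\epsilon}(u_j)$ paired against the GMC term $M^g_\gamma(\phi_g,\C\setminus U)$, which after a further Girsanov/Cameron-Martin argument (or simply differentiating under the expectation) produces a GMC measure with a smooth $\partial_w G_g(u_j,\cdot)$ weight. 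The crucial point that makes all of this clean is that $u_j \in U$ while the Liouville potential only lives on $\C\setminus U$, so the functions $w\mapsto G_g(u_j,w)$ being integrated against $M^g_\gamma$ are smooth and bounded on the region of integration — there is no short-distance blow-up. This is precisely why the hole $U$ is essential, as emphasized in the paragraph preceding the statement.

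Having reduced to a finite sum of terms each of which is a product of manifestly smooth functions of ${\bf u},{\bf v}$ (the $\partial\partial G$ pairings, the smooth shift derivatives, the constants from Lemma \ref{constanteSET}) times expectations of the form $\E[\prod(\text{smooth GMC-weighted factors})\exp(-\mu e^{\gamma c}M^g_\gamma(\cdot,\C\setminus U))]$, smoothness in ${\bf u},{\bf v}$ for $({\bf z},{\bf u},{\bf v})\in\caO^{\rm ext}_{\C,U}$ follows by dominated convergence: one differentiates under the $\E$ and $\int_\R dc$ signs, the derivatives hitting only the smooth $G_g$-factors, and the negative-moment bounds for GMC \cite[Th.~2.12]{review} together with the integrability estimates already used in Theorem \ref{integrcf} provide the domination in $c$ and in $\phi$. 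Continuity in ${\bf z}$ (and smoothness in those $z_i$ lying in $U$, where the Girsanov-shifted field is itself smooth near the remaining integration region) follows the same way, using joint continuity of $G_g$ off the diagonal and continuity of the GMC-type expectations in the weight. I expect the only genuinely delicate point — the "main obstacle" — to be bookkeeping the interchange of the two limits $\epsilon\to 0$ then $\epsilon'\to 0$ with the $c$-integral and the $\E$: one must check uniform integrability of the regularized correlators, which is exactly the content carried over from \cite[Prop 9.1]{GKRV} and ultimately rests on the uniform-in-$\epsilon$ bounds for $\E[(\partial X_{g,\epsilon})^2]+\text{const}$ and on the fact that the hole keeps all singular kernels away from the GMC support. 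Once that interchange is justified, the rest is routine, and I would simply cite \cite{GKRV} for the parts that transfer verbatim.
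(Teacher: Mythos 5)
Your proposal is correct and follows essentially the same route as the paper: Gaussian integration by parts (your Girsanov--Wick expansion) reduces the regularized correlator to a finite combination of terms of the form \eqref{basicterm0}, i.e.\ products of derivatives of Green functions at non-coinciding points (smooth precisely because the hole $U$ keeps ${\bf u},{\bf v}$ away from the support of the GMC) times correlation functions with extra $V_{\gamma}$ insertions integrated over $\C\setminus U$, and the same iterated integration by parts \eqref{basicterm1} yields smoothness in the $z_i\in U$. The one point to make precise is that the uniform integrability you defer to \cite{GKRV} is exactly the a priori bound \eqref{nontrivial} on the integrated correlation functions with extra $\gamma$-insertions (\cite[Lemma 3.3]{KRV}), which is the genuinely non-trivial input here, rather than the GMC negative-moment estimates alone.
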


\proof For later reference let us summarise the argument in \cite[Prop 9.1]{GKRV20_bootstrap}. We set
\begin{equation*}
\tilde{G}_{g,\epsilon,\epsilon'}(z,z')= \E[  \tilde{X}_{g,\epsilon}(z) \tilde{X}_{g,\epsilon'}(z')  ]
\end{equation*}
the regularisation of $\tilde{G}_{g}$ the Green function of the GFF $\tilde{X}_g$ which has a zero mean with 
respect to the curvature of the metric $g=e^\omega|dz|^2$, i.e. $\tilde{X}_g= X_g- \frac{1}{4 \pi} \int_{\C} X_g K_g \dd v_g$ and therefore
\begin{equation}\label{tildeG}
\tilde{G}_{g}(z,z')= \log \frac{1}{|z-z'|}- \frac{1}{4} \omega(z)-\frac{1}{4}  \omega(z')+C 
\end{equation}
for some constant $C$. Let us stress here that $\tilde{X}_{g,\epsilon}$ denotes regularisation in the flat background metric with respect to $\rho_\epsilon$. In this context, $V_{\alpha_i ,g,\epsilon}(z_i)$ will be $\epsilon^{\alpha_i^2/2} e^{\alpha_i X_{g,\epsilon}}$ where $X_{g,\epsilon}$ denotes regularisation in the flat background metric and this only affects the limit \eqref{amplitudeSET1} up to a smooth factor depending on the $z_i$.
Using  Gaussian integration by parts, the expectation on the RHS of \eqref{amplitudeSET1}
can be written as a linear combination of terms of the form  
\begin{align}\label{basicterm0}
\int_{(\C\setminus U)^l}{\boldsymbol \partial}^{2k} \tilde{G}_{g,\epsilon,\epsilon'}({\bf u},{\bf u}')\bar{\boldsymbol \partial}^{2\tilde k} \tilde{G}_{g,\epsilon,\epsilon'}({\bf v},{\bf v}')
 \langle \prod_{j=1}^l V_{\gamma,g }(w_j)\prod_{i=1}^m V_{\alpha_i ,g,\epsilon'}(z_i) \rangle_{\hat\C,U,g}
  \prod_{j=1}^l \dd{\rm v}_g(w_j) 
\end{align}
  where ${\bf u}'=({\bf u},{\bf w},{\bf z})$ (with ${\bf w}= (w_1, \cdots, w_l)$) and  ${\bf v}'=({\bf v},{\bf w},{\bf z})$ and ${\boldsymbol \partial}^{2k}  \tilde{G}_{g,\epsilon,\epsilon'}({\bf u},{\bf u}')$ is a shorthand for a product of holomorphic derivatives of  regularised Green functions 
  $${\boldsymbol\partial}^{2 k}  \tilde{G}_{g,\epsilon,\epsilon'}({\bf u},{\bf u}')=\prod_{\alpha,\beta}\partial_{u_\alpha}^{a_{\alpha\beta}}\partial_{u_\beta'}^{b_{\alpha\beta}}\tilde{G}_{g,\epsilon,\epsilon'}(u_\alpha,u_\beta')
  $$ 
  where $u_\alpha\neq u_\beta'$ and  $\bar\partial^{2\tilde k}  G_{\epsilon,\epsilon'}({\bf v},{\bf v}')$ similarly for  anti-holomorphic derivatives. Here $a_{\alpha\beta}, b_{\alpha\beta}\in \{0,1,2\}$ and  $b_{\alpha\beta}\neq 0$ only if $u_\beta'=u_r$ for some $r$; in other words, all the derivatives act on the $\bf u$ variables. Here $2k$ and ${2\tilde k}$ are the total degrees 
      \begin{align*}
  \sum_{\alpha\beta}(a_{\alpha\beta}+ b_{\alpha\beta})=2k,\ \ \sum_{\alpha\beta}(\tilde a_{\alpha\beta}+ \tilde b_{\alpha\beta})=2\tilde k.
  \end{align*}
  Second, a non-trivial fact is the following bound (see \cite[Lemma 3.3]{KRV_DOZZ} or \cite[Lemma 9.2]{GKRV20_bootstrap})
  \begin{align}\label{nontrivial}
  \int_{(\C\setminus U)^m}
   \sup_{\epsilon'} \: \langle \prod_{j=1}^l V_{\gamma,g }(w_j)\prod_{i=1}^m V_{\alpha_i ,g,\epsilon'}(z_i) \rangle_{\hat\C,U,g}   \prod_{j=1}^l \dd{\rm v}_g(w_j) \leq C\langle \prod_{i=1}^m V_{\alpha_i,g }(z_i) \rangle_{\hat\C,U,g}.
\end{align}
 Since the Green function is   smooth at non coinciding points one then concludes the limit \eqref{basicterm0} exists, is continuous in  $({\bf z},{\bf u},{\bf v}) \in \caO^{\rm ext}_{\C,U}$, smooth in  $\bf u$ and $\bf v$ and the limit doesn't depend on the regularisation.  
 Finally, to prove smoothness in the $z_i$'s that belong to $U$   we note that, by Gaussian integration by parts, 
 \begin{align}\label{basicterm1}
\partial_{z_i}  \langle \prod_{j=1}^l V_{\gamma }(w_j)\prod_{j=1}^m V_{\alpha_j}(z_j) \rangle_{\hat\C,U,g}&=\alpha_i(\sum_{j\neq i}\alpha_j\partial_{z_i} \tilde{G}_g(z_i,z_j)+\gamma\sum_{u=1}^l \partial_{z_i} \tilde{G}_g(z_i,w_u)) \langle \prod_{j=1}^l V_{\gamma,g }(w_j)\prod_{i=1}^m V_{\alpha_i,g }(z_i) \rangle_{\hat\C,U,g}\\&-\mu\alpha_i\gamma\int_{\C\setminus U}\partial_{z_i} \tilde{G}_g(z_i,w_{m+1}) \langle \prod_{j=1}^{m+1}V_{\gamma,g }(w_j)\prod_{i=1}^mV_{\alpha_i,g }(z_i) \rangle_{\hat\C,U,g} \dd{\rm v}_g(w_{m+1})\nonumber.
\end{align}
Let us stress here that in the above formula the vertex operators $V_{\alpha_j}(z_j)$ are defined as the limit of regularised vertex operators $V_{\alpha_j, \epsilon}(z_j)$ in the background metric $g$ (and not the flat metric): with this regularisation, one can check that the diagonal term $\E[  \partial_{z_i} \tilde{X}_{g,\epsilon}(z_i) \tilde{X}_{g,\epsilon}(z_i)    ]$ goes to $0$ as $\epsilon$ goes to $0$ hence the sum $\sum_j$ in \eqref{basicterm1} is indeed on $j\neq i$. Iterating this formula we deduce smoothness  again from smoothness of the Green function at non-coinciding points together with the a priori bound \eqref{nontrivial}. \qed

 %%%%%%%%%%%%%%%%%%%%%%%%%%%%%%%%%%%%%%%%%%%%%%%%%%%%%%%%%%%%%%%%%%%%%%%%%%%%%%%%%%%%%%%%%%%%%%%%%%%%%%%%%%%%%%%%%%%%%%%%%%%%%%%%%%%%%%
\subsection{Ward identities}\label{setupward}
%%%%%%%%%%%%%%%%%%%%%%%%%%%%%%%%%%%%%%%%%%%%%%%%%%%%%%%%%%%%%%%%%%%%%%%%%%%%%%%%%%%%%%%%%%%%%%%%%%%%%%%%%%%%%%%%%%%%%%%%%%%%%%%%%%%%%% 
 
 The Ward identities allow to express the correlation functions \eqref{amplitudeSET1} with SET insertions in terms of partial differential operators of correlation functions without SET insertions. They can be formulated  quite generally but since we will use them for a very specific purpose, namely in the computation of the building block amplitudes, we will specialize immediately to that setup (recall subsection \ref{geometricblocks}).
 
We first introduce the notations related to the holes. We will consider 3 holes of two types: $b$ of them will contain  SET insertions and $3-b$ will not  contain any SET insertions. Then we will place the first three components of ${\bf z}\in\C^m$ in each of these holes, which will play a special role (other marked points $z_i$ for i$\geq 4$ will be called later artificial as they will just be added to satisfy the Seiberg bound but their respective weights will be sent to $0$ in the end).  We start with the material needed for the holes that will contain SET. For $b \leq 3$, let  $\psi_i:\D\to \C$ for $i=1,\dots,b$ be conformal maps. We suppose $\caD_i=\psi_i(\D)$ are disjoint. For each $i$, let $A^i_j$ (with $j=1,\dots, k_i$) and $ \tilde A^i_j $ (for $j=1,\dots, \tilde k_i$) be disjoint annuli in $\D$ surrounding a disjoint ball $B^i$ centered at origin. For $t\geq 0$ we set  $\caD_{i,t}=\psi_i(e^{-t}\D)$ and $U^{\rm set}_t=\cup_{i=1}^b \caD_{i,t}$, which will stand for holes containing SET insertions. Let $\caA^i_{j,t}=\psi_i(e^{-t}A^i_j)$ and similarly for $\tilde\caA^i_{j,t}$ and $\caB^i_{t}$. Thus we have a family of conformal  annuli  surrounding a conformal ball in each $\caD_{t,i}$ and all these sets are separated from each other and from the boundary $\partial \caD_{i,t}$ by a distance $\caO(e^{-t})$  (see Figure \ref{figsetup}). We also introduce $ {U}^{\rm noset}_t= \cup_{i=b+1}^3 \caD_{t,i} $ where $ \caD_{t,i}$ is the ball of center $z_i$ and radius $e^{-t}$ for $i=b+1,\dots,3$. The set ${U}^{\rm noset}_t$ is made up of holes containing the $z_i$'s (for $i\leq 3$) that will be surrounded by no SET insertions.

\begin{figure}[h] 
\centering
{\begin{tikzpicture}[xscale=0.7,yscale=0.7] 
\draw[color=red,fill=white] (0,0) circle (3.5);
\draw[color=red,fill=blue!30] (0,0) circle (3.3);
\draw[color=red,fill=white] (0,0) circle (2.9);
\draw[color=red,fill=green!30] (0,0) circle (2.6);
\draw[color=red,fill=white] (0,0) circle (2.1);
\draw[color=red,fill=green!30] (0,0) circle (1.9);
\draw[color=red,fill=white] (0,0) circle (0.7);
\draw[color=red,fill=red!30] (0,0) circle (0.5);
\node[] (C) at (0,0){    $B_i$};
\node[]  (G) at (3,2.2){ $\D$};
\node[]  (H) at (2.2,2.2){ $A_1^1$};
\node[]  (E) at (1.1,2.1){ $\tilde A_1^1$};
\node[]  (F) at (0.6,1.1){ $\tilde A_2^1$};
\draw[domain=0:6.28,samples=100,color=red,fill=white] plot ({10+2.7*cos(\x r)*(1+0.2*cos(10*\x r))},{2.7*sin(\x r)});
\draw[domain=0:6.28,samples=100,color=red,fill=blue!30] plot ({10+2.4*cos(\x r)*(1+0.2*cos(10*\x r))},{2.5*sin(\x r)});
\draw[domain=0:6.28,samples=100,color=red,fill=white] plot ({10+2*cos(\x r)*(1+0.2*cos(10*\x r))},{2.1*sin(\x r)});
\draw[domain=0:6.28,samples=100,color=red,fill=green!30] plot ({10+1.5*cos(\x r)*(1+0.2*cos(10*\x r))},{1.9*sin(\x r)});
\draw[domain=0:6.28,samples=100,color=red,fill=white] plot ({10+1.3*cos(\x r)*(1+0.2*cos(10*\x r))},{1.7*sin(\x r)});
\draw[domain=0:6.28,samples=100,color=red,fill=green!30] plot ({10+1*cos(\x r)*(1+0.2*cos(10*\x r))},{1.5*sin(\x r)});
\draw[domain=0:6.28,samples=100,color=red,fill=white] plot ({10+0.5*cos(\x r)*(1+0.2*cos(10*\x r))},{1.2*sin(\x r)});
\draw[domain=0:6.28,samples=100,color=red,fill=red!30] plot ({10+0.3*cos(\x r)*(1+0.2*cos(10*\x r))},{0.6*sin(\x r)});
\node[]  (A) at (13, 2){ $\caD_{1,t}$};
\node[]  (A) at (11, 2){ $\caA^1_{1,t}$};
\node[]  (A) at (10, 0){ $\caB^1_{t}$};
\node[]  (A) at (10.5, 1){ $\tilde \caA^1_{2,t}$};
\node[]  (A) at (9.3, 1.5){ $\tilde \caA^1_{1,t}$};
\draw[->] (4,0) -- (6,0) node[above,midway]{$\psi_1(e^{-t}\cdot)$};
\end{tikzpicture}
}
\caption{Configuration of disjoint annuli in $\D$ for $k_1=1$ and $\tilde k_1=2$ and their images under $\psi_1(e^{-t}\cdot)$.}
\label{figsetup}
\end{figure}

We then consider the correlation function \eqref{amplitudeSET1} with $U=U_t=U^{\rm set}_t \cup U^{\rm noset}_t $, ${\bf u}=({\bf u}^1,\dots,{\bf u}^b)$  and  ${\bf v}=({\bf v}^1,\dots,{\bf v}^b)$ where $\mathbf{u}^i=(u^i_{1},\dots,u^i_{k_i})$ with $u^i_{j}\in \caA^i_{j,t}$, and similarly for   $\mathbf{v}^i$. Hence ${\bf u}\in\C^k$ with $k=\sum_{i=1}^kk_i$ and  ${\bf v}\in\C^{\tilde k}$ with $\tilde k=\sum_{i=1}^k\tilde k_i$. 
By Proposition \ref{ampSETholomorphic}  it is smooth in $(\bf u,\bf v)\in  \caO^{\bf z}_{\C,U}$  and smooth in the first three variables on   $(z_1,z_2,z_3)\in\prod_{i=1}^b \caB^i_{t} \times \prod_{i=b+1}^3 \caD_{t,i} $ and continuous in $({\bf z},{\bf u},{\bf v}) \in \caO^{\rm ext}_{\C,U}$. The Ward identity will involve derivatives in all the variables $\bf u,v,z$ and we will consider these in the distributional sense in the variables $z_i$ for $i=4,\dots,m$ \footnote{Using the method of \cite{Joona2019} one can in fact show smoothness in all the variables $  z_i$ at noncoinciding points for all $t$ but to keep the discussion simple we will not embark on this.}.  For this purpose, we pick disjoint balls $B_i \subset U_{t_0}^c$ for some $t_0 \geq 0$ and centered at $z_i$, $i=4,\dots, m$. Take  $f_i\in C_0^\infty( B_i)$.  When we  will smear  \eqref{amplitudeSET1}  with test functions in these variables, we denote the result  (by slight abuse of notation) by  $\langle T_g( \mathbf{u})\bar T_g( \mathbf{v}) \prod_{i=1}^3V_{\alpha_i,g}(z_i)\prod_{i=4}^mV_{\alpha,g}(f_i)\rangle_{\hat\C,U_t,g}$.
Finally, indexing the ${\bf u}$ as ${\bf u}=(u_1,\dots,u_k)$ we will write ${\bf u}^{(l)}$ for the vector  in $\C^{k-1}$ with the same entries as  ${\bf u}$ with $l$-th entry removed and we will sometimes iterate this procedure to remove further entries and write   ${\bf u}^{(l,l')}$ and so on. The same notations are used for the vector ${\bf v}$.  
 
 With these definitions we have:
 \begin{proposition}[{\bf Ward Identity}]\label{propward}
 Recall that $c_{\rm L}=1+6Q^2$ and the metric we consider is $g=e^{\omega}|dz|^2$. Then 
 \begin{align}\nonumber
 \langle T_g( \mathbf{u})\bbar T_g( \mathbf{v})\prod_{i=1}^mV_{\alpha_i,g}(z_i)\rangle_{\hat\C,U_t,g} &= \frac{1}{2}\sum_{l=1}^{k-1}\frac{c_{\rm L}}{(u_k-u_l)^4} \langle  T_g(\mathbf{u}^{(k,l)})\bbar T_g({ \mathbf{v}})\prod_{i=1}^mV_{\alpha_i,g}(z_i)\rangle_{\hat\C,U_t,g}
 \\
 &+ \sum_{l=1}^{k-1}\Big(\frac{2}{(u_k-u_l)^2} +\frac{1}{(u_k-u_l) }\partial_{u_l} \Big)
  \langle  T_g(\mathbf{u}^{(k)})\bbar T_g({ \mathbf{v}})\prod_{i=1}^mV_{\alpha_i,g}(z_i)\rangle_{\hat\C,U_t,g}\nonumber
   \\
 &+\sum_{i=1}^{m}\Big(\frac{\Delta_{\alpha_i}}{(u_k-z_i)^2} +\frac{\partial_{z_i}+\Delta_{\alpha_i} \partial_{z_i}\omega }{(u_k-z_i) }\Big)
  \langle  T_g(\mathbf{u}^{(k)})\bbar T_g({ \mathbf{v}})\prod_{i=1}^mV_{\alpha_i,g}(z_i)\rangle_{\hat\C,U_t,g}\nonumber
  \\
  &+  \frac{i\mu}{2}\oint_{\partial{U}_t}\frac{1}{  u_k-  w}\langle   T_g(\mathbf{u}^{(k)})  \bbar T_g({ \mathbf{v}})  V_{\gamma,g }(w)\prod_{i=1}^mV_{\alpha_i,g}(z_i) \rangle_{\hat\C,U_t,g} g(w) \dd\bar w\nonumber
 \end{align}
 and  
 \begin{align}\nonumber
 \langle T_g( \mathbf{u})\bbar T_g( \mathbf{v})\prod_{i=1}^mV_{\alpha_i,g}(z_i)\rangle_{\hat\C,U_t,g} &= \frac{1}{2}\sum_{l=1}^{\tilde k-1}\frac{c_{\rm L}}{(\bar v_{\tilde k}-\bar v_l)^4} \langle  T_g(\mathbf{u})\bbar T_g({ \mathbf{v}^{(\tilde k,l)} })\prod_{i=1}^mV_{\alpha_i,g}(z_i)\rangle_{\hat\C,U_t,g}
 \\
 &+ \sum_{l=1}^{\tilde k-1}\Big(\frac{2}{(\bar v_{\tilde k}-\bar v_l)^2} +\frac{1}{(\bar v_{\tilde k}-\bar v_l) }\partial_{\bar v_l} \Big)
  \langle  T_g(\mathbf{u})\bbar T_g({ \mathbf{v}^{(\tilde k)}})\prod_{i=1}^mV_{\alpha_i,g}(z_i)\rangle_{\hat\C,U_t,g}\nonumber
   \\
 &+\sum_{i=1}^{m}\Big(\frac{\Delta_{\alpha_i}}{(\bar v_{\tilde k}-\bar z_i)^2} +\frac{\partial_{\bar z_i}+\Delta_{\alpha_i}\partial_{\bar z_i}\omega }{(\bar v_{\tilde k}-\bar z_i) } \Big)
  \langle  T_g(\mathbf{u})\bar T_g({ \mathbf{v}}^{(\tilde k)})\prod_{i=1}^mV_{\alpha_i,g}(z_i)\rangle_{\hat\C,U_t,g}\nonumber
  \\
  &+  \frac{i\mu}{2}\oint_{\partial{U}_t}\frac{1}{ \bar  v_{\tilde k}-  \bar w}\langle   T_g(\mathbf{x})  \bbar T_g({ \mathbf{v}}^{(\tilde k)})  V_{\gamma ,g}(w)\prod_{i=1}^mV_{\alpha_i,g}(z_i) \rangle_{\hat\C,U_t,g} g(w) \dd w\nonumber
 \end{align}
The derivatives in the variables $z_{4}, \dots, z_m$ are in the sense of distributions and the derivatives in $z_{1}, \dots, z_3$ are in the classical sense.

  \end{proposition}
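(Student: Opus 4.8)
\textbf{Proof strategy for the Ward identities (Proposition \ref{propward}).}

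The plan is to derive the first identity (the second follows by complex conjugation symmetry, replacing holomorphic by antiholomorphic derivatives throughout) by contour deformation, starting from the Gaussian integration by parts representation already established in the proof of Proposition \ref{ampSETholomorphic}. Recall that insertions of $T_{g,\epsilon}(u_k)=Q\partial_z^2\Phi_{g,\epsilon}(u_k)-(\partial_z\Phi_{g,\epsilon}(u_k))^2+a_{\Sigma,g,\epsilon}(u_k)$ are built from $\partial_z\Phi_{g,\epsilon}$, and applying Gaussian integration by parts with respect to the field $\tilde X_g$ to the distinguished variable $u_k$ produces contractions $\partial_{u_k}\tilde G_{g,\epsilon,\epsilon'}(u_k,\cdot)$ against every other field appearing in the correlation function: the remaining SET insertions $T_g(\mathbf{u}^{(k)})$ and $\bar T_g(\mathbf{v})$, the vertex operators $V_{\alpha_i,g}(z_i)$, and — because the Liouville potential $\mu e^{\gamma c}M^g_\gamma(\phi_g,\C\setminus U_t)$ is also a functional of $\tilde X_g$ — a term producing an integral of a vertex operator $V_{\gamma,g}(w)$ over $\C\setminus U_t$. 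The key input from Section \ref{sec:ward} is the explicit form \eqref{tildeG} of $\tilde G_g$, namely $\tilde G_g(z,z')=\ln|z-z'|^{-1}-\tfrac14\omega(z)-\tfrac14\omega(z')+C$, so that $\partial_{u_k}\tilde G_g(u_k,z')=-\tfrac{1}{2(u_k-z')}-\tfrac14\partial_{u_k}\omega(u_k)$; the $\omega$-contributions reassemble into the conformal weight terms $\Delta_{\alpha_i}\partial_{z_i}\omega$ and the central-charge contribution, exactly as in the classical CFT computation.

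Concretely, I would first take $\epsilon'\to 0$ then $\epsilon\to 0$ (justified by Proposition \ref{ampSETholomorphic} and the a priori bound \eqref{nontrivial}), keeping $u_k$ bounded away from all other $u_l,v_j,z_i$ and from $\partial U_t$. For each pair $(T_g(u_k),T_g(u_l))$, the double contraction $\partial_{u_k}\tilde G\cdot\partial_{u_k}\tilde G$ gives the $c_{\rm L}/(u_k-u_l)^4$ term (after the renormalization constant $a_{\Sigma,g,\epsilon}$ absorbs the divergence — this is where Lemma \ref{constanteSET} and its Schwarzian formula enter to guarantee finiteness and the right constant), the single contraction with $(\partial_z\Phi)^2$ in $T_g(u_l)$ gives $2/(u_k-u_l)^2$, and the contraction with the single $\partial_z\Phi$ remaining gives $(u_k-u_l)^{-1}\partial_{u_l}$ acting on $T_g(\mathbf{u}^{(k)})$. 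Contractions with $V_{\alpha_i,g}(z_i)$ produce, using \eqref{basicterm1} and \eqref{tildeG}, the term $\Delta_{\alpha_i}/(u_k-z_i)^2+(u_k-z_i)^{-1}(\partial_{z_i}+\Delta_{\alpha_i}\partial_{z_i}\omega)$ — the square appearing because $T$ contains $Q\partial^2\Phi$ whose contraction produces $\partial_{u_k}^2\tilde G$, combining with the product of two $\partial_{u_k}\tilde G$'s from the $(\partial\Phi)^2$ piece to give precisely $\Delta_{\alpha_i}$. The contraction with the Liouville potential gives $\mu\gamma\alpha_i$-type terms which, after recognizing $\partial_{u_k}\tilde G_g(u_k,w)=\partial_{u_k}(\ln|u_k-w|^{-1})+\text{(smooth)}$ and applying Stokes/Green's theorem on $\C\setminus U_t$ to convert the area integral of a $\bar\partial$-exact quantity into a boundary integral over $\partial U_t$, yields the last contour term $\tfrac{i}{2}\oint_{\partial U_t}(u_k-w)^{-1}\langle\cdots V_{\gamma,g}(w)\cdots\rangle g(w)\,\dd\bar w$; here one uses $\bar\partial_w\ln|u_k-w|^{-2}=-(\bar u_k-\bar w)^{-1}$ together with $\dd w\wedge\dd\bar w$ orientation bookkeeping.

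The main obstacle is making the Stokes-theorem step rigorous: the quantity $\langle T_g(\mathbf{u}^{(k)})\bar T_g(\mathbf{v})V_{\gamma,g}(w)\prod_iV_{\alpha_i,g}(z_i)\rangle_{\hat\C,U_t,g}$ is, as a function of the integration variable $w$ over $\C\setminus U_t$, only integrable near the boundary $\partial U_t$ (by \eqref{nontrivial}) and one must verify that the formal application of Green's identity — writing the area integral of $\partial_{\bar w}[(u_k-w)^{-1}\times(\text{kernel})]$ as a boundary integral — is licit despite the singularity of the integrand as $w\to\partial U_t$ and the fact that $z_4,\dots,z_m$ are handled only distributionally. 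I would control this by working first with $w$ restricted to $\{d(w,\partial U_t)>\eta\}$, applying Stokes cleanly there, and then letting $\eta\to 0$ using the uniform bound \eqref{nontrivial} and dominated convergence; the smearing against $f_i\in C_0^\infty(B_i)$ with $B_i\subset U_{t_0}^c$ keeps everything away from the $z_i$-singularities so that the only delicate limit is the one at $\partial U_t$, which is where the geometry of the holes $\caD_{i,t}$ (all at distance $\mathcal{O}(e^{-t})$ from the annuli) is used. Once the contour term is established, the remaining bookkeeping — matching signs, confirming that the $\omega$-dependent pieces organize into exactly the stated form, and checking the distributional interpretation of $\partial_{z_i}$ for $i\geq 4$ — is routine and parallels the classical derivation.
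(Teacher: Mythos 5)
Your strategy is essentially the one the paper follows: Gaussian integration by parts on the distinguished insertion $T_g(u_k)$ using the explicit Green function \eqref{tildeG}, identification of the $c_{\rm L}/(u_k-u_l)^4$, $2/(u_k-u_l)^2$, $(u_k-u_l)^{-1}\partial_{u_l}$ and $\Delta_{\alpha_i}$ terms from the various contractions, conversion of the contraction with the Liouville potential into a boundary contour term by writing $(u_k-x)^{-2}=\partial_x(u_k-x)^{-1}$ and applying Green's formula on $\C\setminus U_t$, and a final cancellation of all remaining terms. So the verdict is: same approach, correct in outline.

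Three points where your sketch misplaces the actual difficulties are worth flagging. First, the $c_{\rm L}/(u_k-u_l)^4$ term comes from the double contraction of $T_g(u_k)$ with $T_g(u_l)$ at \emph{distinct} points and is automatically finite; the renormalization constant $a_{\Sigma,g,\epsilon}$ does not "absorb" anything there — its role (via Lemma \ref{constanteSET}) is to Wick-order the self-contraction inside a single $T_g(u_k)$. Second, the Stokes step itself is not the delicate point: the integrand is controlled uniformly by the a priori bound \eqref{nontrivial}, and the integration by parts is in the holomorphic derivative $\partial_x$ (yielding the $\dd\bar w$ contour), not a $\bar\partial$-exact form as you write. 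The genuinely delicate convergence is elsewhere, in the derivative term $I(\mathbf{u},\mathbf{z})=\sum_p(u_k-z_p)^{-1}\partial_{z_p}\langle\cdots\rangle$: its Gaussian IBP produces $\int_{\C\setminus U_t}(z_p-x)^{-1}\langle\cdots V_{\gamma,g}(x)\cdots\rangle\,\dd x$ with $z_p\notin U_t$ for $p\geq 4$, whose integrability is unclear and is only established by an algebraic rearrangement against a term coming from the potential contraction (Lemma \ref{deriv2} in the paper). Third, what you call "routine bookkeeping" — checking that the metric-dependent pieces cancel (this requires first reducing to the round metric so that $\partial_z^2\omega=\tfrac12(\partial_z\omega)^2$ and working with $\tilde V_\alpha=e^{\Delta_\alpha\omega}V_\alpha$, repeatedly using \eqref{KPZtypeidentity}) and that the sixteen families of contraction terms cancel against the IBP expansions of the derivative terms $S$, $L$, $I$ via partial-fraction identities — is in fact the bulk of the proof; it is elementary but not automatic, and your proposal as written does not engage with it.
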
   

The proof is a computationally  tedious extension of the one in \cite{GKRV20_bootstrap} and we defer it to Appendix \ref{wardproof}.

\vskip 2mm

Iterating the  Ward identity allows us to express the correlation function with SET insertions in terms of derivatives of correlation function without SET insertions.  To state the result we need some notation.  
 Given $\tilde {\bf u}\in\C^p, \tilde{\bf v}\in \C^{p'}$ with all $\tilde u_i, \tilde v_j$ distinct and  ${\bf q}=
(q_{ij})_{1\leq i<j\leq p},{\bf n}=
(n_{ij})_{1\leq i\leq p,1\leq j\leq \tilde p}$ where $q_{ij}\geq 0$  and $n_{ij}\geq 0$ are integers, let
\begin{align}\label{defpn}
p_{\bf q}(\tilde {\bf u}):=\prod_{i<j} (\tilde u_i-\tilde u_j)^{-q_{ij}},\ \ p_{\bf n}(\tilde {\bf u},\tilde {\bf v}):=\prod_{i, j} (\tilde u_i-\tilde v_j)^{-n_{ij}}.
\end{align}
We denote the homogeneity degrees by $|{\bf q}|=\sum _{i<j}q_{ij}$ and $|{\bf n}|=\sum _{i, j}n_{ij}$.
Also, given   ${\bf r}=
(r_{i})_{1\leq i\leq m}$ with $r_i\geq 0$ integers, we define   the differential operator $\partial^{\bf r}_{\bf z}:=\prod_{i=1}^m\partial_{z_i}^{r_i}$ and $|{\bf r}|=\sum_i r_i$. Then we have

\begin{proposition}\label{wardite}
The following identity holds
 \begin{align}
 \prod_{i=1}^m e^{\Delta_{\alpha_i} \omega (z_i)} \langle T_g( \mathbf{u})\bbar T_g( \mathbf{v})\prod_{i=1}^mV_{\alpha_i,g}(z_i)\rangle_{\hat \C,g,U_t}
  &= D({\bf u},{\bf z}) D({\bf \bar v},{\bf \bar z}) \left (    \prod_{i=1}^m e^{\Delta_{\alpha_i} \omega (z_i)} \langle  \prod_{i=1}^mV_{\alpha_i,g }(z_i) \rangle_{\hat \C,g,U_t}\right )+
 \epsilon_t{(\bf u,v,z})
  \label{wrdd0}
 \end{align}
 where  
 \begin{align*}
  D({\bf u},{\bf z})
  =\sum_{{\bf q},{\bf n}, {\bf r}}  a_{\bf q,n, r} p_{\bf q}({\bf u}) p_{\bf n}({\bf u},{\bf z})\partial^{\bf r}_{\bf z} ,
\end{align*}
with $ |{\bf q}|+ |{\bf n}|+|{\bf r}|=2k$ (and similarly for the antiholomorphic part $D({\bf \bar v},{\bf \bar z})$) and the coefficients $a_{\bf q,n, r}\in\R$ are polynomials in the conformal weights $\Delta_{\alpha_i}$. The right hand side of the above relation is to be considered in the following sense: the derivatives in $z_i$ for $i \leq 3$ are in the classical sense and the derivatives in $z_i$ for $i \geq 4$ are in the sense of distributions. The remainder satisfies uniformly in ${\bf u,v}, z_1, \dots,z_3 $ (satisfying the previous conditions) 
\begin{align}\label{reminder}
|  \int_{\C^{m-3}}    \epsilon_t({\bf u,v,z})  f_{4}(z_{4})  \dots f_m(z_m)  \dd z_{4} \dots \dd z_m  |\leq Ce^{(2( k+\tilde k-1)+\gamma\alpha)t}  \prod_{i=4}^m\|f_i\|_{k,\tilde k}
\end{align}
where  $\alpha=\max_{i\leq 3}\alpha_i$ and $\|f\|_{k,\tilde k}=\sum_{a=0}^k\sum_{c=0}^{\tilde k}\|\partial^a_z \partial^c_{\bar{z}} f(z) \|_\infty$.

\end{proposition}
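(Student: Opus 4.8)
The plan is to iterate the two Ward identities of Proposition \ref{propward} in order to remove the SET insertions $T_g(\mathbf{u})$ and $\bar T_g(\mathbf{v})$ one at a time, keeping careful track of the $e^{-t}$-scale of the geometric configuration. The starting point is the observation that in each Ward identity the right-hand side involves correlation functions with one fewer holomorphic SET insertion (the terms with $\mathbf{u}^{(k)}$), together with a purely ``contour'' term $\frac{i}{2}\oint_{\partial U_t}\frac{1}{u_k-w}\langle\cdots V_{\gamma,g}(w)\cdots\rangle g(w)\,\dd\bar w$. The first step is to expand this contour integral. Since the contour $\partial U_t$ lies at distance $\mathcal{O}(e^{-t})$ from all the $u_i,v_j$ and from $z_1,\dots,z_3$, and the insertion points $u_k$ lie in the conformal annuli $\caA^i_{j,t}$ which are strictly inside $\caD_{i,t}$, the integrand $\frac{1}{u_k-w}$ is holomorphic in $w$ across $\partial U_t$ except at the pole $w=u_k$; deforming the contour and using the explicit form $g=e^\omega|dz|^2$ together with the bound \eqref{nontrivial}, the contour term produces, besides an $\mathcal{O}(e^{\gamma\alpha t})$-controlled remainder, only lower-order contributions. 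More precisely, the contour term near the boundaries surrounding $z_1,z_2,z_3$ reconstitutes the ``$z_i$'' terms already present ($i=1,2,3$), while near the boundaries of the balls $\caD_{i,t}$, $i=b+1,\dots,m$, it produces (after smearing against $f_i$ and integrating by parts, using the formulas \eqref{basicterm1} and Proposition \ref{ampSETholomorphic}) precisely the distributional derivatives $\partial_{z_i}$ acting on the correlation function, up to an error bounded by a power of $e^{-t}$ times the pivotal quantity.

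The second step is a bookkeeping induction on $k+\tilde k$. At each stage, removing $u_k$ replaces the correlation function by a sum of terms, each of which is a rational prefactor — a product of factors $(u_i-u_j)^{-1}$, $(u_i-z_j)^{-1}$ of total degree $2$ in the variable $u_k$ being removed, combined with at most one holomorphic derivative $\partial_{z_i}$ — applied to a correlation function with one fewer SET insertion. Multiplying through by $\prod_i e^{\Delta_{\alpha_i}\omega(z_i)}$ converts the $\partial_{z_i}+\Delta_{\alpha_i}\partial_{z_i}\omega$ combinations into plain $\partial_{z_i}$ acting on $\prod_i e^{\Delta_{\alpha_i}\omega(z_i)}\langle\cdots\rangle$, which is exactly why the conjugated correlation function appears in \eqref{wrdd0}. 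Iterating $k$ times on the holomorphic side and $\tilde k$ times on the antiholomorphic side yields the operators $D(\mathbf{u},\mathbf{z})$ and $D(\bar{\mathbf{v}},\bar{\mathbf{z}})$ with the stated structure: each monomial has homogeneity degree $|\mathbf{q}|+|\mathbf{n}|+|\mathbf{r}|=2k$ (resp. $2\tilde k$), the coefficients $a_{\mathbf{q},\mathbf{n},\mathbf{r}}$ are polynomials in the $\Delta_{\alpha_i}$ since they are built from the constants in Proposition \ref{propward}, and the constraint that $a_{\mathbf{q},\mathbf{n},\mathbf{r}}=0$ whenever some $q_{ij}\neq 0$ with $r_j=0$ follows by tracking which $(u_i-u_j)^{-1}$ factors can only be produced together with a derivative hitting $z_j$ (they arise from commuting the rational prefactors past the $\partial_{z_l}$ at earlier stages, by the Leibniz rule). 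One should also check that the $c_{\rm L}$-terms $\frac{c_{\rm L}}{(u_k-u_l)^4}$ only contribute when $k\geq 2$ and are consistently absorbed into the homogeneity count.

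The third and more delicate step is controlling the remainder $\epsilon_t(\mathbf{u},\mathbf{v},\mathbf{z})$, i.e. establishing \eqref{reminder}. The errors come from three sources: (i) the deformation of the contour $\partial U_t$ in each of the $k+\tilde k$ contour terms; (ii) the replacement, in the contour term near $\caD_{i,t}$ for $i\geq 4$, of the exact correlation function by its Taylor expansion around $z_i$, which is where the distributional nature of the $z_i$-derivatives for $i\geq 4$ enters and where the test-function norms $\|f_i\|_{k,\tilde k}$ appear; (iii) the $\mu$-term $M_\gamma^g(\phi_g,\C\setminus U_t)$ whose domain $\C\setminus U_t$ grows with $t$, but by \eqref{nontrivial} this only costs a factor controlled by $\langle\prod V_{\alpha_i,g}(z_i)\rangle_{\hat\C,U_t,g}$. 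The key scaling input is that each contour integral over $\partial U_t$ of a kernel with a simple pole, against an integrand bounded using \eqref{nontrivial}, contributes a factor $\mathcal{O}(e^{(2+\gamma\alpha)t})$ per SET insertion removed — more carefully, each of the $k+\tilde k$ removals contributes a factor $e^{2t}$ from the scaling of the rational kernels at scale $e^{-t}$, one overall factor $e^{-2t}$ is saved, and the Liouville potential term brings the $e^{\gamma\alpha t}$. Assembling these gives the exponent $2(k+\tilde k-1)+\gamma\alpha$ in \eqref{reminder}. I expect the main obstacle to be precisely this remainder estimate: one must simultaneously (a) justify all the contour deformations rigorously using the analyticity statement of Proposition \ref{ampSETholomorphic} and the separation of the annuli $\caA^i_{j,t},\tilde\caA^i_{j,t}$ from each other and from $\partial\caD_{i,t}$, (b) handle the $z_i$, $i\geq 4$, variables only in the distributional sense by smearing against $f_i$ and integrating by parts, picking up the derivative norms $\|f_i\|_{k,\tilde k}$, and (c) verify that the power of $e^t$ accumulated over the nested iteration is exactly $2(k+\tilde k-1)+\gamma\alpha$ and not worse, which requires care because naive iteration of the contour terms would seem to give $e^{2(k+\tilde k)t}$. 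The reference \cite{GKRV} handles the $b=3$, $m=3$ analogue and the proof here is a bookkeeping extension of that argument to general $m$ and general $b\leq 3$, deferred to Appendix \ref{wardproof}.
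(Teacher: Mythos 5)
Your overall skeleton for the main term is the right one and matches the paper: iterate the two Ward identities of Proposition \ref{propward} a total of $k+\tilde k$ times, track the homogeneity degree $2$ spent per removed SET insertion, and conjugate by $\prod_i e^{\Delta_{\alpha_i}\omega(z_i)}$ to turn $\partial_{z_i}+\Delta_{\alpha_i}\partial_{z_i}\omega$ into a plain $\partial_{z_i}$; the constraint $a_{\mathbf{q},\mathbf{n},\mathbf{r}}=0$ when $q_{ij}\neq 0$, $r_j=0$ also comes out of that bookkeeping as you say.

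However, your Step 1 rests on a mechanism that is both unnecessary and unworkable, and it propagates into your remainder estimate. You propose to ``expand'' or ``deform'' the contour term $\frac{i}{2}\oint_{\partial U_t}\frac{1}{u_k-w}\langle\cdots V_{\gamma,g}(w)\cdots\rangle g(w)\,\dd\bar w$ so that it ``reconstitutes'' the $z_i$-terms for $i\leq 3$ and ``produces'' the distributional derivatives $\partial_{z_i}$ for $i\geq 4$. This cannot work: the integrand is not meromorphic in $w$ (it carries a $V_{\gamma,g}(w)$ insertion and is paired with $\dd\bar w$ — it is the boundary term of a Green-formula integration by parts of the Liouville potential over $\C\setminus U_t$, not a Cauchy integral), so there are no residues to pick up; moreover the points $z_i$ for $i\geq 4$ are not enclosed by $\partial U_t$ at all ($U_t$ only contains the conformal disks around the boundary insertions and balls around $z_{b+1},\dots,z_3$). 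In the actual argument all the $\frac{\Delta_{\alpha_i}}{(u_k-z_i)^2}+\frac{\partial_{z_i}+\Delta_{\alpha_i}\partial_{z_i}\omega}{u_k-z_i}$ terms — classical for $i\leq 3$, distributional for $i\geq 4$ — are already explicitly present in Proposition \ref{propward}; the iteration is applied \emph{only} to the non-contour terms, and every contour term generated at any stage is dumped wholesale into $\epsilon_t$. The remainder is then estimated directly, not by contour deformation: the accumulated rational prefactors and derivatives are bounded by $e^{t}$ per unit of homogeneity degree (all relevant point separations being comparable to $e^{-t}$), the surviving SET insertions are unwound by Gaussian integration by parts via \eqref{basicterm0} and controlled by \eqref{nontrivial}, the total degree is $2(k+\tilde k-1)$ by the homogeneity identity (the $-1$ because the SET insertion that produced the contour term is spent on an $\mathcal{O}(1)$ contour integral), and the extra $e^{\gamma\alpha t}$ comes from the fusion bound \eqref{upperapri} applied to the $V_{\gamma}(w)$ insertion sitting at distance $e^{-t}$ from $z_1,\dots,z_3$. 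Your ``factor $e^{(2+\gamma\alpha)t}$ per removal'' accounting would give the wrong exponent and has no mechanism behind it; and your error sources (contour deformation, Taylor expansion at $z_i$) do not occur. Finally, note that Appendix \ref{wardproof} proves Proposition \ref{propward}, not this proposition, whose proof is the short inline argument just described.
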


\proof
We iterate the Ward identity $k+\tilde k$ times at each step applying  it only to the terms that do not have the contour integral. This way we end up with terms having no 
 SET insertions and contour integral terms which form the remainder $\epsilon_t$.  The former terms take the form of the first term in the right hand side of identity \eqref{wrdd0}. In this iteration procedure where we iterate the identity of Proposition \ref{propward} to the terms which are not under the form of a contour integral, the remainder terms appear as integrals with respect to one contour integral, i.e.
 \begin{equation*}
  \epsilon_t({\bf u,v,z})=  \frac{i}{2}\oint_{\partial{U}_t}\frac{1}{  u_k-  w}\langle   T_g(\mathbf{u}^{(k)})  \bbar T_g({ \mathbf{v}})  V_{\gamma,g }(w)\prod_{i=1}^mV_{\alpha_i,g}(z_i) \rangle_{\hat\C,U_t,g} g(w) \dd\bar w\nonumber+ \cdots
 \end{equation*}
In the sequel of the proof, for simplicity we suppose that $\omega(z_i)=0$ for all $i$.
To write the remainder smeared with $f_i$ let us introduce the operators
\begin{align*}%\label{}
\caD_{u_l,i}=\frac{\Delta_{\alpha_i}}{(u_l-z_i)^2} -\partial_{z_i}\frac{1}{(u_l-z_i) }
\end{align*}
i.e. the operator such that for $F,G$ real valued functions $\int_\C( \frac{\Delta_{\alpha_i}}{(u_l-z_i)^2} +\frac{1}{(u_l-z_i) }\partial_{z_i})F G \dd z_i=\int_\C F \caD_{u_l,i} G \dd z_i $ where $\dd z_i$ denotes the standard Lebesgue measure (recall that $\frac{\Delta_{\alpha_i}}{(u_l-z_i)^2} +\frac{1}{(u_l-z_i) }\partial_{z_i}$ is the operator appearing in the Ward identity). In fact, the remainder terms will involve functions $g_i$ of the form
\begin{equation}\label{gijoe}
g_i=(\prod_{j\in J_i} \caD_{u_j,i})(\prod_{j\in\tilde J_i} \bbar\caD_{v_j,i})f_i.
\end{equation}  
where $J_i$ is a subset of $\lbrace 1, \dots, k \rbrace$ and $\tilde J_i$ is a subset of $\lbrace 1, \dots, \tilde k \rbrace$.
In this context, the remainder   $\epsilon_t({\bf u,v,z})$ integrated against $ f_{4}(z_{4})  \dots f_m(z_m)  \dd z_{4} \dots \dd z_m$  is a sum of terms of the form 
\begin{align}\label{expansion}
\oint_{\partial{\caD_{t,j}}}\frac{1}{(\bar u_l-  \bar w)^{1+\tau}} p({\bf u},{\bf v},{\bf z'})
\partial^{\bf m}_{\bf u'} \partial^{\bf n}_{\bar{\bf v}'}\partial^{\bf p}_{\bf z'}\partial^{\bf q}_{\bf \bar z'}\langle  T_g(\mathbf{u}')  \bbar T_g({ \mathbf{v}'}) V_{\gamma,g}(w) \prod_{i=1}^3 V_{\alpha_i,g}(z_i)\prod_{i=4}^mV_{\alpha_i}(g_i)\rangle_{\hat\C,U_t,g}   g(w) \dd w
\end{align}
or integration with respect to $\frac{1}{  u_l-  w}\dd\bar w$ instead where $\tau$ is an integer and $g_i$ is of the form \eqref{gijoe} with non intersecting $J_i$ and non intersecting $\tilde J_i$ (recall that the notation $V_{\alpha_i}(g_i)$ stands for integration against $g_i(z_i) \dd z_i$). Here $j=1,\dots,m$, ${\bf z'}=(z_1,\dots, z_b)$ and  ${\bf u'}\in\C^a,{\bf v'}\in\C^{\tilde a}$ are subsets of the coordinates of $\bf u$  and ${\bf v}$. The term $p({\bf u},{\bf v},{\bf z'})$ is a product 
\[
p({\bf u},{\bf v},{\bf z'})=p_{\bf k}({\bf u})p_{\bf l}(\bar{\bf v})p_{\bf r}({\bf u},{\bf z'})p_{\bf s}(\bar{\bf v},\bar{\bf z}')
\] 
The total homogeneity degree satisfies
\begin{equation}\label{homo}
\tau+ |{\bf k}|+ |{\bf l}|+|{\bf r}|+|{\bf s}|+|{\bf m}|+|{\bf n}|+|{\bf p}|+|{\bf q}|+2a +2 \tilde a+2\sum_{i=b+1}^m(|J_i|+|\tilde J_i|)= 2(k+\tilde k-1).
\end{equation}
 For the estimates  note that  the differences $|u_i-u_j|$, $|u_i-z_l|$, $|v_i-v_j|$, $|v_i-z_l|$  are comparable to $ e^{-t}$. Hence 
\[
|p({\bf u},{\bf v},{\bf z'})|\leq Ce^{(|{\bf k}|+ |{\bf l}|+|{\bf r}|+|{\bf s}|)t}.
\]
Estimation of the derivative of $\langle  T_g(\mathbf{u}')  \bar T_g({ \mathbf{v}'}) V_{\gamma,g}(w) \prod_{i=1}^3 V_{\alpha_i,g}(z_i)\prod_{i=4}^mV_{\alpha_i}(g_i)\rangle_{\hat\C,U_t,g} $ which appears in \eqref{expansion} is completely straightforward using  expansion into terms of the form \eqref{basicterm0} and then formula \eqref{basicterm1} ($ \tilde{G}_g$ was introduced in the proof of proposition \ref{ampSETholomorphic})
once we realise that all the Green functions that will enter in these expressions involve points whose distance is bounded from below by $ce^{-t}$. First, by using integration by parts on the SET insertions we expand in terms of the form \eqref{basicterm0} and then use formula \eqref{basicterm1} to get rid of the $z$ derivatives. As a result the term 
\[\partial^{\bf m}_{\bf u'} \partial^{\bf n}_{\bar{\bf v}'}\partial^{\bf p}_{\bf z'}\partial^{\bf q}_{\bar{\bf z}'}\langle  T_g(\mathbf{u}')  \bbar T_g({ \mathbf{v}'}) V_{\gamma,g}(w) \prod_{i=1}^3 V_{\alpha_i,g}(z_i)\prod_{i=4}^mV_{\alpha_i}(g_i)\rangle_{\hat\C,U_t,g}\] 
is a linear combination of terms 

\begin{align}\label{basicterm11}
\int_{(\C\setminus U_t)^p}
 \prod_{\alpha,\beta}\partial^{a_{\alpha\beta}} \tilde{G}_g(\tilde{u}_\alpha,\tilde{u}_\beta)
 \prod_{\alpha,\beta}\bar\partial^{b_{\alpha\beta} } \tilde{G}_g (\tilde{v}_\alpha,\tilde{v}_\beta)
 \langle  V_{\gamma,g}(w)\prod_{i=1}^pV_{\gamma,g}(w_i) \prod_{i=1}^3V_{\alpha_i,g}(z_i)\prod_{i=4}^mV_{\alpha_i,g}(g'_i)\rangle_{\hat\C,U_t,g} \prod_{i=1}^p \dd{\rm v}_g(w_{i})  
\end{align}
where $\tilde{u}_\alpha,\tilde{u}_\beta, \tilde{v}_\alpha,\tilde{v}_\beta$ belong to $({\bf u',v',z',w},w)$ (with ${\bf w}= (w_1, \dots , w_p)$)  so that $|\tilde{u}_\alpha-\tilde{u}_\beta|, |\tilde{v}_\alpha-\tilde{v}_\beta|\geq ce^{-t}$. Furthermore
$g_i'=\partial_{\bf u'}^{\bf e}\bar\partial_{\bf v'}^{\bf d}g_i$ for some ${\bf e}$,  ${\bf d}$ and it is readily verified that
\begin{equation}  \label{boundgf}%\label{}
\|g_i'\|_\infty\leq C\|f_i\|_{k,\tilde k}.
\end{equation}
Indeed there are no more than $k$ $z_i$-derivatives and no more than $\tilde k$ $\bar z_i$-derivatives acting on $f_i$ and all the rational functions occurring are bounded by $\caO(1)$ since they involve differences of coordinates inside $U_t$ and at $\mc{O}(1)$ distance from $U_t$. Using the a priori bound \eqref{nontrivial} to control the $(w_1, \dots, w_p)$ integral we end up with the bound (recall that ${\bf u'}\in\C^a,{\bf v'}\in\C^{\tilde a}$)
\begin{align*}
|\eqref{basicterm11}|\leq Ce^{(|{\bf m}|+|{\bf n}| +|{\bf p}|+|{\bf q}|+2a+2\tilde a)t}\langle  V_{\gamma,g}(w) \prod_{i=1}^3V_{\alpha_i}(z_i)\prod_{i=4}^mV_{\alpha_i}(|g'_i|)\rangle_{\hat\C,U_t,g}.
\end{align*}

 Gathering the previous considerations we get that the absolute value of \eqref{expansion} is bounded by 
 \begin{equation}\label{BoundBound}
 C e^{(|{\bf k}|+ |{\bf l}|+|{\bf r}|+|{\bf s}|+|{\bf m}|+|{\bf n}| +|{\bf p}|+|{\bf q}|+2a+2\tilde a)t}  \oint_{\partial{\caD_{t,j}}}   \Big|\frac{1}{\bar u_l-  \bar w}\Big|^{1+\tau}   \langle  V_{\gamma,g}(w) \prod_{i=1}^3 V_{\alpha_i}(z_i)\prod_{i=4}^mV_{\alpha_i}(|g'_i|)\rangle_{\hat\C,U_t,g}  |dw|
 \end{equation}
 We now bound the contour integral in \eqref{BoundBound}. We use the following upper bound for the correlation function (as can be seen using the explicit expression for the correlations: see \cite[Theorem 3.4]{GKRV20_bootstrap}) 
\begin{align}\label{upperapri}
\langle V_{\gamma,g}(w) \prod_{i=1}^mV_{\alpha_i,g}(z_i)\rangle_{\hat\C,U_t,g}\leq C\prod_{i=1}^m |w-z_i|^{-\gamma\alpha_j}\prod_{1 \leq i<j \leq m}|z_i-z_j|^{-\alpha_i\alpha_j}.
\end{align}
When $w \in \partial{\caD_{t,j}}$ we have $|w-z_j|\sim e^{-t}$ whereas the other distances in the bound \eqref{upperapri} are $\caO(1)$ hence 
\begin{equation*}
 \oint_{\partial{\caD_{t,j}}}   \Big|\frac{1}{\bar u_l-  \bar w}\Big|^{1+\tau}   \langle  V_{\gamma,g}(w) \prod_{i=1}^3 V_{\alpha_i}(z_i)\prod_{i=4}^mV_{\alpha_i}(|g'_i|)\rangle_{\hat\C,U_t,g}  |dw|   \leq C e^{(u+\gamma \alpha_j) t}  \prod_{i=4}^m \|g_i'\|_\infty \leq C  e^{(\tau+\gamma \alpha_i )t}   \prod_{i=4}^m \|f_i\|_{k,\tilde k}
\end{equation*}
where we have used \eqref{boundgf} for the last inequality. This yields the result.\qed

\begin{remark}
We are going to apply this result in the next section to the case where $\alpha$ is very negative so that   the contribution of $\epsilon_t$ tends to 0 as $t\to\infty$. 
\end{remark}

So far we have proven differentiability of the correlation functions $\langle \prod_{i=1}^m V_{\alpha_i,g}(z_i)\rangle_{\hat\C,U_t,g}
$ in the variables $z_1,\dots,z_3$ in a region that shrinks to points as $t\to\infty$. However, if the corresponding weights $\alpha_i$ are negative enough   differentiability  can be established in the limit $t$ going to infinity. 
 \begin{lemma}\label{difflemma}
Let  $N<2-\gamma\alpha$ where $\alpha=\max_{i=1,\dots,3}\alpha_i$ and fix $t_0 \geq 0$. For all $t \geq 0$, the correlation functions $\langle \prod_{i=1}^mV_{\alpha_i,g}(z_i)\rangle_{\hat\C,U_t,g}
$  are  $C^N$ in $z_i \in \caD_{i,t_0}$ for $i=1 \dots 3$ and $C^0$  in $(z_{4},\dots, z_n)$ in the region $z_i\in \C\setminus \bar U_{t_0}$ with $z_i\neq z_j$ and they converge uniformly on compact subsets of the aforementionned region together with all these derivatives as $t\to \infty$ to $\langle \prod_{i=1}^m V_{\alpha_i,g}(z_i)\rangle_{\hat\C,g}
$.
\end{lemma}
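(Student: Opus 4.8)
The statement combines two things: higher differentiability of the ``holey'' correlation functions $\langle \prod_i V_{\alpha_i,g}(z_i)\rangle_{\hat\C,U_t,g}$ in the variables $z_1,z_2,z_3$ living in the small disks $\caD_{i,t_0}$, and convergence of these correlation functions (together with derivatives) to the genuine LCFT correlation function $\langle \prod_i V_{\alpha_i,g}(z_i)\rangle_{\hat\C,g}$ as $t\to\infty$. The plan is to obtain both from the Ward identities of Proposition \ref{wardite} applied at the level of the \emph{first and second order derivatives in the insertion points}, exploiting the freedom that when the weights $\alpha_i$ are negative enough, the remainder $\epsilon_t$ in \eqref{wrdd0} decays.

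First I would derive a closed formula for $\partial_{z_i}\langle \prod_i V_{\alpha_i,g}(z_i)\rangle_{\hat\C,U_t,g}$ by integrating the regularized correlation function against a test function $f_i\in C_0^\infty(\caD_{i,t_0})$ and performing Gaussian integration by parts, exactly as in \eqref{basicterm1}; this identifies the derivative, in the distributional sense, with an explicit expression involving $\partial_{z_i}\tilde G_g$ and the same correlation function with one extra $V_\gamma$ insertion integrated over $\C\setminus U_t$. The key estimate is the a priori bound \eqref{nontrivial}, which controls that extra integrated insertion uniformly in $\epsilon'$ and, crucially, \emph{uniformly in $t\geq t_0$} since the domain $\C\setminus U_t$ only grows as $t$ increases while the pole $\partial_{z_i}\tilde G_g(z_i,w)$ with $z_i\in\caD_{i,t_0}$, $w\in\C\setminus U_t$ stays at distance $\geq ce^{-t_0}$. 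Iterating \eqref{basicterm1} $\lfloor L\rfloor$ times, each iteration costing at most a factor $C$ and each Green function evaluated at points separated by $\caO(e^{-t_0})$, I get that the $\ell$-th distributional derivative in $z_1,z_2,z_3$ (for $\ell\leq L$) is represented by a locally bounded function, hence $\langle\cdots\rangle_{\hat\C,U_t,g}$ is $C^{\lfloor L\rfloor}$ there; the condition $L<2-\gamma\alpha$ is used to make sure the powers $|w-z_i|^{-\gamma\alpha_j}$ appearing through \eqref{upperapri} remain integrable after the differentiations (the fractional part of $L$ being handled by a Hölder estimate on the last derivative using the same Green-function bounds).

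Second, for the convergence as $t\to\infty$, I would compare $\langle \prod_i V_{\alpha_i,g}(z_i)\rangle_{\hat\C,U_t,g}$ to $\langle \prod_i V_{\alpha_i,g}(z_i)\rangle_{\hat\C,g}$ directly: the only difference is that the Liouville potential $\mu e^{\gamma c}M_\gamma^g(\phi_g,\cdot)$ is integrated over $\C\setminus U_t$ in the first and over all of $\C$ (equivalently $\hat\C$) in the second. Writing $e^{-\mu e^{\gamma c}M_\gamma^g(\phi_g,\C)}=e^{-\mu e^{\gamma c}M_\gamma^g(\phi_g,\C\setminus U_t)}e^{-\mu e^{\gamma c}M_\gamma^g(\phi_g,U_t)}$ and using $|e^{-x}-e^{-y}|\leq |x-y|$ for $x,y\geq 0$, the difference is bounded by $\mu e^{\gamma c}M_\gamma^g(\phi_g,U_t)$ times the integrand, and after the $c$-integration and a Girsanov shift this reduces to controlling $\int_{U_t}$ of a GMC-type quantity with vertex insertions, which is $\caO(|U_t|^{\text{(positive power)}})=\caO(e^{-ct})$ by the moment bounds for GMC combined with \eqref{nontrivial}. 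The same argument applied to the (already established) derivative formulas from the first step — i.e. differentiating the comparison identity — gives convergence together with all the $z_i$-derivatives, uniformly on compact subsets of $\{z_i\in\C\setminus\bar U_{t_0},\ z_i\neq z_j\}$; one should be slightly careful that the higher $z_i$-derivatives in the distributional variables $z_4,\dots,z_m$ are also covered, which is automatic since the representation from \eqref{basicterm1} expresses everything in terms of correlation functions with extra $V_\gamma$ insertions to which the same $\caO(e^{-ct})$ comparison applies.

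The main obstacle I anticipate is bookkeeping the \emph{uniformity in $t$} of all the a priori bounds: the domain of integration $\C\setminus U_t$ of the auxiliary $V_\gamma$ insertions changes with $t$, and the constants in \eqref{nontrivial}, \eqref{upperapri} must be shown not to blow up as $U_t$ shrinks — this is true because the relevant singularities $|w-z_i|^{-\gamma\alpha_j}$ are integrable near $z_i\in\caD_{i,t_0}$ for $\gamma\alpha<2$ and the $z_i$ stay at distance $\caO(e^{-t_0})$, \emph{fixed}, from $\partial U_t$ for $t\geq t_0$. Once this uniformity is in place, both the differentiability and the convergence follow by the dominated convergence theorem and the $|U_t|\to 0$ estimate. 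The remainder $\epsilon_t$ of Proposition \ref{wardite} is not strictly needed for this Lemma but provides the cleanest route to the derivative identities; if one prefers, the direct Gaussian integration-by-parts argument of \eqref{basicterm1} suffices and is what I would actually write.
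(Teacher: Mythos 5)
Your overall strategy --- iterated Gaussian integration by parts as in \eqref{basicterm1}, the a priori bound \eqref{nontrivial} to integrate out the auxiliary $V_\gamma$ insertions, and the fusion bound \eqref{upperapri} to produce the condition $L<2-\gamma\alpha$ --- is indeed the route the paper takes. But your justification rests on a claim that is false and that, were it true, would make the lemma trivial and the hypothesis $L<2-\gamma\alpha$ superfluous. You assert that for $z_i\in\caD_{i,t_0}$ and $w\in\C\setminus U_t$ the singularity of $\partial_{z_i}\tilde{G}_g(z_i,w)$ "stays at distance $\geq ce^{-t_0}$". This is backwards: for $t\geq t_0$ one has $U_t\subset U_{t_0}$, so $\C\setminus U_t\supset\C\setminus U_{t_0}$, and the integration variable $w$ of the $V_\gamma$ insertion sweeps through $\caD_{i,t_0}\setminus U_t$, i.e.\ it comes arbitrarily close to (indeed reaches) $z_i$ once $z_i$ leaves the shrinking hole. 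The entire difficulty of the lemma is this collision: after $N$ integrations by parts one faces integrals of $\prod_l\partial^{b_l}\tilde{G}_{g,\epsilon}(z_1,w_l)$ with $\sum_l b_l=N$ against a correlation with $V_\gamma(w_l)$ insertions, each $w_l$ free to approach $z_1$. The paper reduces this to a single integrable singularity by splitting the region of integration according to which $w_{\pi(1)}$ is nearest to $z_1$ (the indicators $\chi_\pi$ in \eqref{permu}), bounding the whole product of Green-function derivatives by $(|z_1-w_{\pi(1)}|+\epsilon)^{-N}$, integrating out all other $w_l$ via \eqref{nontrivial}, and only then using $\langle V_{\gamma,g}(w_{\pi(1)})\prod_l V_{\alpha_l,g,\epsilon}(z_l)\rangle\lesssim e^{\gamma\alpha_1\tilde{G}_{g,\epsilon}(w_{\pi(1)},z_1)}$ to land on a two-dimensional integral of $(|z_1-w|+\epsilon)^{-N-\gamma\alpha_1}$, bounded uniformly in $\epsilon$ and $t$ exactly when $N+\gamma\alpha_1<2$. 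You do eventually invoke the correct condition, but your write-up never explains how derivatives distributed over several colliding $w_l$'s are concentrated onto one integrable singularity, and it contradicts your earlier assertion that no collision occurs; as written, the differentiability step does not go through.

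Two smaller points. Showing that a distributional derivative is "represented by a locally bounded function" does not yield $C^{\lfloor L\rfloor}$ regularity; the paper instead bounds the derivatives of the $\epsilon$-regularized correlation functions uniformly in $\epsilon$ and $t$ and gets regularity of the limit from uniform convergence of genuinely smooth functions. For the convergence as $t\to\infty$, your comparison of the potentials over $\C\setminus U_t$ and $\C$ is in the spirit of Lemma \ref{opeconv} and is workable, but no rate $\caO(e^{-ct})$ is needed: once the uniform-in-$t$ domination from the first step is in place, almost sure convergence of the exponential of the truncated potential together with dominated convergence gives the convergence of the correlation functions and of all their derivatives, without having to differentiate a comparison identity.
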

 \proof
 We will show that $\langle \prod_{i=1}^m V_{\alpha_i,g}(z_i)\rangle_{\hat\C,g}$ is $C^N$ in $z_i \in \caD_{i,t_0}$  for $i=1 \dots 3$ for $N<2-\gamma\alpha$.  As an example let us show the existence of 
$\partial_{z_1}^{N}\langle \prod_{i=1}^mV_{\alpha_i,g,\epsilon}(z_i)\rangle_{\hat\C,g}$ for $N<2-\gamma\alpha_1$. Let us consider the regularised correlation function $\langle \prod_{i=1}^mV_{\alpha_i,g,\epsilon}(z_i)\rangle_{\hat\C,U,g}$ where we have replaced $V_{\alpha_i,g}$ with the regularised $V_{\alpha_i,g,\epsilon}$ (here we smooth the GFF with $\rho_\epsilon$ rather than with the background metric in the definition of $V_{\alpha_i,g,\epsilon}$; this only affects the correlations up to a smooth prefactor). These are smooth in $z_i$ and we can use the Gaussian integration by parts formula \eqref{basicterm1} (with regularised Green functions) as in deriving  \eqref{basicterm11}. We get that 
$\partial_{z_1}^{N}\langle \prod_{i=1}^mV_{\alpha_i,g,\epsilon}(z_i)\rangle_{\hat\C,U_t,g}$ is a linear combination of terms 
\begin{equation}\label{partialterms} 
 \prod_{j=1}^m\partial^{a_{j}} \tilde{G}_{g,\epsilon,\epsilon}(z_1,z_j)
\int_{\C^p}
 \prod_{i=1}^p\partial^{b_{i}} \tilde{G}_{g,\epsilon}(z_{1},w_{i})
 \langle \prod_{i=1}^pV_{\gamma,g}(w_i) \prod_{i=1}^mV_{\alpha_i,g,\epsilon}(z_i)\rangle_{\hat\C,g}   \prod_{i=1}^p \dd{\rm v}_g(w_i)
\end{equation} 
  where $\tilde{G}_{g,\epsilon,\epsilon}=\rho_\epsilon\ast \tilde{G}_g\ast \rho_\epsilon$ and $\tilde{G}_{g,\epsilon}=\rho_\epsilon\ast \tilde{G}_g$ are smooth functions ($\tilde{G}_g$ was introduced in  \eqref{tildeG}) and $\sum_{j=1}^m a_j+\sum_{i=1}^p b_i=N$. The prefactor $ \prod_{j=1}^m\partial^{a_{j}} \tilde{G}_{g,\epsilon,\epsilon}(z_1,z_j)$ converges to a smooth limit (including the $j=1$ term). Outside of a ball $B$ around $z_1$ the $\partial^{b_{i}} \tilde{G}_{g,\epsilon}(z_{1},w_{i})$ terms are bounded and one has the following bound (see \cite[Lemma 3.3]{KRV_DOZZ} or \cite[Lemma 9.2]{GKRV20_bootstrap})
  \begin{align}\label{nontrivialagain}
  \int_{\C^m}
   \sup_{\epsilon'} \: \langle \prod_{j=1}^l V_{\gamma,g }(w_j)\prod_{i=1}^m V_{\alpha_i ,g,\epsilon'}(z_i) \rangle_{\hat\C,g}   \prod_{i=1}^p \dd{\rm v}_g(w_i) \leq C\langle \prod_{i=1}^m V_{\alpha_i,g }(z_i) \rangle_{\hat\C, g}.
\end{align}
Hence we can concentrate on the most singular part of the integral where all the $w_i$ are in the ball $B$. 
  
 For this consider, for $\pi\in S_p$ a permutation of $\lbrace1, \dots, p \rbrace$, the indicator function $\chi_\pi({\bf w})$ of the set 
 $\{|z_1-w_{\pi(1)}|\leq  |z_1-w_{\pi(2)}|\leq\dots\leq |z_1-x_{\pi(p)}|\}$.
  \begin{align*}
\Big|\int_{B^p}\chi_\pi({\bf w})& \prod^p_{l=1}\partial^{b_{l}} \tilde{G}_{g,\epsilon}(z_{1},w_{l})
 \langle \prod_{i=1}^pV_{\gamma}(w_i) \prod_{i=1}^mV_{\alpha_i,g,\epsilon}(z_i)\rangle_{\hat\C,g}  \prod_{i=1}^p \dd{\rm v}_g(w_i)\Big|
 \\
 &\leq C\int_{B^p}(1+(|z_1-w_{\pi(1)}|+\epsilon)^{-N})   \langle \prod_{i=1}^pV_{\gamma,g}(w_i) \prod_{i=1}^mV_{\alpha_i,g,\epsilon}(z_i)\rangle_{\hat\C,U_t,g}   \prod_{i=1}^p \dd{\rm v}_g(w_i) \\
 &\leq C\int_{B}(1+(|z_1-w_{\pi(1)}|+\epsilon)^{-N}) \Big ( \int_{B^{p-1}} \langle \prod_{i=1}^pV_{\gamma,g}(w_i) \prod_{i=1}^mV_{\alpha_i,g,\epsilon}(z_i)\rangle_{\hat\C, g}   \prod_{i \not = \pi(1)}   \dd{\rm v}_g(w_i)  \Big )  \dd{\rm v}_g(w_{\pi(1)})    \\
 &\leq C\int_{B}(1+(|z_1-w_{\pi(1)}|+\epsilon)^{-N}\langle V_{\gamma,g}(w_{\pi(1)}) \prod_{l=1}^mV_{\alpha_l,g,\epsilon}(z_l)\rangle_{\hat\C, g} {\rm dv}_g(w_{\pi(1)})  
\end{align*}
where we used 
$$
|\partial^{m}_{z_i}\tilde{G}_{g,\epsilon}(z_1,w)|\leq C(1+(|z_1-w|+\epsilon)^{-m})
$$
and the following bound which is a regularised version of \eqref{nontrivialagain} 
 \begin{equation*}
  \int_{\C^m}
   \langle \prod_{j=1}^l V_{\gamma,g }(w_j)\prod_{i=1}^m V_{\alpha_i ,g,\epsilon}(z_i) \rangle_{\hat\C,g}   \prod_{i=1}^p \dd{\rm v}_g(w_i) \leq C\langle \prod_{i=1}^m V_{\alpha_i,g, \epsilon }(z_i) \rangle_{\hat\C, g}.
\end{equation*}
to do the $w_i$ integrals with $i\neq\pi(1)$.
Finally the following regularised version of the bound \eqref{upperapri} is true for $w_{\pi(1)}$ around $z_1$
\[
\langle V_\gamma(w_{\pi(1)}) \prod_{l=1}^mV_{\alpha_l,g,\epsilon}(z_l)\rangle_{\hat\C,g}\leq C (|w_{\pi(1)}-z_1|+\epsilon)^{-\gamma \alpha_1}.
\]
Hence one can take the limit as $\epsilon$ goes to $0$  of $\partial_{z_1}^{N}\langle \prod_{i=1}^mV_{\alpha_i,g,\epsilon}(z_i)\rangle_{\hat\C,g}$. The other terms in the expansion go obviously the same way.
 \qed

 %%%%%%%%%%%%%%%%%%%%%%%%%%%%%%%%%%%%%%%%%%%%%%%%%%%%%%%%%%%%%%%%%%%%%%%%%%%%%%%%%%%%%%%%%%%%%%%%%%%%%%%%%%%%%%%%%%%%%%%%%%%%%%%%%%%%%%%%%%%%%%%%%%%%%%%%%%%%%%%%%%%%%%%%%%%%%%%%%%%%%%%%%%%%%%%%%%%%%%%%%%%%%%%%%%%%%%%%%%%%%%%%%%%%%%%%%%%%%%%%%%%%%%%%%%%%%%%%%%%%%%%%%%

%

 %%%%%%%%%%%%%%%%%%%%%%%%%%%%%%%%%%%%%%%%%%%%%%%%%%%%%%%%%%%%%%%%%%%%%%%%%%%%%%%%%%%%%%%%%%%%%%%%%%%%%%%%%%%%%%%%%%%%%%%%%%%%%%%%%%%%%%%%%%%%%%%%%%%%%%%%%%%%%%%%%%%%%%%%%%%%%%%%%%%%%%%%%%%%%%%%%%%%%%%%%%%%%%%%%%%%%%%%%
\section{Amplitudes of building blocks}\label{sec:computingamplitudes}
%%%%%%%%%%%%%%%%%%%%%%%%%%%%%%%%%%%%%%%%%%%%%%%%%%%%%%%%%%%%%%%%%%%%%%%%%%%%%%%%%%%%%%%%%%%%%%%%%%%%%%%%%%%%%%%%%%%%%%%%%%%%%%%%%%%%%%%%%%%%%%%%%%%%%%%%%%%%%%%%%%%%%%%%%%%%%%%%%%%%%%%%%%%%%%%%%%%%%%%%%%%%%%%%%%%%%%%%%%%%%%%%%%%%%%%%%%%%%%%%%%%%%%%%%%%%%%%%%%%%%%%%%%
In this section we will use the Ward identities established in Proposition \ref{propward} to compute the  amplitudes of the building blocks that are involved in the construction of conformal blocks. In particular we need  to evaluate these amplitudes at the eigenfunctions $\Psi_{Q+ip,\nu,\tilde\nu}$ of the LCFT Hamiltonian and establish the factorisation of the result in its dependence on the two Virasoro algebra representation labels $\nu,\tilde\nu$ as stated in Theorem \ref{pantDOZZ} below, which is the main result of this section.

\subsection{Holomorphic factorisation}
%%%%%%%%%%%%%%%%%%%%%%%%%%%%
Let $\{\caP,{\bf x}^{\rm mp},\boldsymbol{\zeta}\}$ be a complex building block in the sense of Section \ref{geometricblocks}. 
Here $\partial\caP=\cup_{i=1}^b\caC_i$, $b\in \{1,2,3\}$ and $x_i$, $i\in \{b+1,\dots, 3\}$, are  distinct points in the interior of $\caP$ collected in the vector ${\bf x}^{\rm mp}$ (the superscript ${\rm mp}$ refers to ``marked point''). The surface $\caP$  is equipped with a complex structure $J$ and $\boldsymbol{\zeta}=(\zeta_1,\dots,\zeta_b)$ are analytic parametrisations of   $\partial\caP$.
 We denote often for brevity  $\{\caP,{\bf x}^{\rm mp},\boldsymbol{\zeta}\}$ by $\caP$. %and labels $\alpha_j<Q$ attached to the marked points. 
 We fix also an admissible metric $g_\caP$ on $\caP$. Recall also that we set $\sigma_i=-1$ if the boundary component $\caC_i$ has outgoing orientation,  i.e. if its orientation induced by the parametrisation agrees with that inherited from the orientation of $\caP$, and  we set $\sigma_i=1$ if $\caC_i$  has incoming orientation. 
 We collect in the vector $\boldsymbol{\alpha}^{\rm mp}$ the weights $\alpha_i<Q$, $i=b+1,\dots, 3$, attached to the marked points in ${\bf x}^{\rm mp}$. 
  The corresponding  amplitude    is a function $\caA_{\caP, g_\caP,{\bf x}^{\rm mp},\boldsymbol{\alpha}^{\rm mp},\boldsymbol{\zeta}}:H^s(\T)^b\to \C$. Let us use the notation
 \begin{align}\label{ampf}
 \big\cjg \caA_{\caP, g_\caP,{\bf x}^{\rm mp},\boldsymbol{\alpha}^{\rm mp},\boldsymbol{\zeta}},\otimes_{j=1}^{b}f_j\big\cjd_{\mc{H}}:=\int \caA_{\caP, g_\caP,{\bf x}^{\rm mp},\boldsymbol{\alpha}^{\rm mp},\boldsymbol{\zeta}}(\tilde{\boldsymbol{\varphi}})\big(\prod_{j=1}^b \bbar{f_j(\tilde\varphi_j)}\big) \dd \mu_0^{\otimes_b}(  \tilde{\boldsymbol{\varphi}}) .
\end{align}

As was explained in Section \ref{Section:blocks}, we need to
 evaluate \eqref{ampf} with  $f_j={\bf C}^{(1+\sigma_j)/2}\Psi_{Q+ip_j,\nu_j,\tilde\nu_j}$  where ${\bf C}$ is the complex conjugation.   
To state our result, we need some more material related to the uniformization  of the surface $\caP$. As explained in Section \ref{subsubsec:plumbingparameters} and Section \ref{sub:cfb}, we can glue a disc $\mc{D}_j\simeq \D$  to each boundary curve $\caC_j$ using the parametrisation $\zeta_j:\T\to \mc{C}_j$
 so that the resulting surface $\hat\caP$ is conformal  to $\hat\C$ with its standard complex structure. 
 Denote by $x_j\in \hat\caP$ the center of $\mc{D}_j$, $j=1,\dots,b$ and collect these entries in ${\bf x}^{\rm di}=(x_j)_{j=1,\dots,b}$ (the superscript ${\rm di}$ refers to ``disk insertion'', i.e. to emphasize that it collects the added $x_j$'s lying in the disks glued to the pant to obtain $\hat{\C}$). 
 We will also write $\hat{{\bf x}}:=(x_1,x_2,x_3)=({\bf x}^{\rm di}, {\bf x}^{\rm mp})$ for the set of $3$ punctures of the punctured sphere $\hat{\mc{P}}$
 (the hat refers to $\hat{\mc{P}}$).

Therefore  $\hat\caP$ has an atlas $\{(U_i,\rho_i)\}_{i=1}^N$, $N>b$ with $\D\subset \rho_j(U_j)$ for $j=1,\dots,b$  and there is a biholomorphic map $\psi:\hat\caP\to\hat\C$ so that $\psi(x_j)=z_j$ for $j=1,\dots,3$ and $z_1,z_2,z_3\in\hat\C$  are  three prescribed points which we may choose at will. For later technical reasons, we will take all  $z_1,z_2,z_3$ in  the unit disk $\D$.  Similarly to the notations used for the vectors associated to the $x_j$'s, we will use the notations ${\bf z}^{\rm di}=(z_{1},\dots,z_b)$, ${\bf z}^{\rm mp}=(z_{b+1},\dots,z_3)$ and  $\hat{{\bf z}}=(z_{1},z_2,z_3)$. We also define $\psi_j:=\psi\circ \rho_j^{-1}:\D\to\psi(\mc{D}_j)$ for $j=1,\dots,b$, hence $\psi_j(0)=z_j$. Furthermore $\hat\C\setminus\psi(\caP)=\cup_{j=1}^b\psi(\caD_j)$ and $\psi(\caD_j)$ are simply connected regions in ${\C}$ with analytic boundaries. Define the conformal radius of $\psi(\mc{D}_j)$ by
\begin{equation}\label{defrad} 
{\rm Rad}_{z_j}(\psi(\mc{D}_j)):=|\psi_j'(0)|.
\end{equation}
Finally we consider the function $\hat\omega:\hat\C\to\R$ defined by $\psi_* g_{\mc{P}}=e^{\hat\omega}g_{\rm dozz}$ where $g_{\rm dozz}=(|z|\vee 1)^{-4}|dz|^2$  is the DOZZ metric.  Our purpose is to prove the following statement, which will be sometimes referred to as the  {\it holomorphic factorisation}:

\begin{theorem}[\textbf{Holomorphic factorisation}]\label{pantDOZZ}
Let $(\caP,{\bf x}^{\rm mp},\boldsymbol{\zeta})$ be a building block. Let $\alpha_j= Q+ip_j$, $p_j\in\R$, $j=1,\dots, b$, let $\alpha_j<Q$ for $j=b+1,..,3$ and denote $\hat{\boldsymbol{\alpha}}=(\alpha_1,\alpha_2,\alpha_3)$.  Assume (recall that $\chi(\mc{P})$ is the Euler characteristic of $\mc{P}$)
\begin{equation}\label{ass:starward}
   \sum_{j=b+1}^{3}\alpha_{j }-\chi(\mc{P})Q>0 .
\end{equation}
Then, if $\boldsymbol{\nu}=(\nu_1,\dots,\nu_{b})\in \mc{T}^b$ and $\boldsymbol{\tilde{\nu}}=(\tilde{\nu}_1,\dots,\tilde{\nu}_{b})\in \mc{T}^b$, 
the following formula holds

 \begin{align}\label{pantclaim}
\big\cjg  \caA_{\caP, g_\caP,{\bf x}^{\rm mp},\boldsymbol{\alpha}^{\rm mp},\boldsymbol{\zeta}},  \otimes _{j=1}^b{\bf C}^{(1+\sigma_j)/2}\Psi_{\alpha_j,\nu_j,\tilde\nu_j}  \big\cjd_{\mc{H}}
=&  C(\mc{P},g_{\mc{P}},\boldsymbol{\Delta}_{\hat{\boldsymbol{\alpha}}})
C_{\gamma,\mu}^{{\rm DOZZ}} (\boldsymbol{\tilde\alpha}) \nonumber \\&\times 
  w_{\caP}(\boldsymbol{\Delta}_{\hat{\boldsymbol{\alpha}}},
 \boldsymbol{\nu},\hat{{\bf z}})
 \overline{ w_{\caP}(\boldsymbol{\Delta}_{\hat{\boldsymbol{\alpha}}},
 \boldsymbol{\tilde\nu},\hat{{\bf z}})}P(\hat{{\bf z}}) 
 \end{align}
 
where   $\boldsymbol{\tilde \alpha}=(\tilde\alpha_1,\tilde\alpha_2,\tilde\alpha_3)$ with  $\tilde\alpha_j=Q+i \sigma_j p_j$ for $j=1,..,b$ and $\tilde\alpha_j=\alpha_j$  for  $j=b+1,..,3$. Furthermore:
\begin{itemize}
\item the function $w_{\caP}$ is a polynomial in the conformal weights $\boldsymbol{\Delta}_{\hat{\boldsymbol{\alpha}}}=(\Delta_{\alpha_1},\Delta_{\alpha_2},\Delta_{\alpha_3})$ with coefficients depending only on the complex structure of $\caP$ and on the Young diagram  
\begin{align*}%\label{}
 w_{\caP}(\boldsymbol{\Delta}_{\hat{\boldsymbol{\alpha}}},
 \boldsymbol{\nu},\hat{{\bf z}})=\sum_{{\bf n}=(n_1,n_2,n_3)} a_{\bf n}(\caP, \boldsymbol{\nu},\hat{{\bf z}})\boldsymbol{\Delta}_{\hat{\boldsymbol{\alpha}}}^{\bf n},
 \end{align*}
where   $w_{\caP}(\Delta_{\alpha_1} ,\Delta_{\alpha_2},\Delta_{\alpha_3},\emptyset,\hat{{\bf z}})=1$ and the sum runs over ${\bf n}\in\N^3$ with finitely many coefficients $a_{\bf n}(\caP, \boldsymbol{\nu},\hat{{\bf z}})$ that are non zero.\\
\item $P$ is given by
\begin{equation}\label{defp0}
 P(\hat{{\bf z}}):= |z_1-z_2|^{2(\Delta_{\alpha_3}-\Delta_{\alpha_2} -\Delta_{\alpha_1})}|z_3-z_2|^{2(\Delta_{\alpha_1}-\Delta_{\alpha_2} -\Delta_{\alpha_3})}|z_1-z_3|^{2(\Delta_{\alpha_2}-\Delta_{\alpha_1} -\Delta_{\alpha_3})}
\end{equation}
\item the metric dependent constant $C(\mc{P},g_{\mc{P}},\boldsymbol{\Delta}_{\hat{\boldsymbol{\alpha}}})$ is given by
\begin{align}\label{metricconstant}
C(\mc{P},g_{\mc{P}},\boldsymbol{\Delta}_{\hat{\boldsymbol{\alpha}}})= &\frac{(\sqrt{2}\pi)^{b-1}}{2\sqrt{2}Z_{\D,g_\D}^{b}}  \big({\frac{{\rm v}_{g_{\rm dozz}}(\hat\C)}{{\det}'(\Delta_{g_{\rm dozz}})}} \big)^\hf \prod_{j=1}^b {\rm Rad}_{z_j}(\psi(\mc{D}_j))^{2\Delta_{\alpha_j}}\prod_{j=b+1}^3e^{-\Delta_{\alpha_j}\hat\omega(z_j)}  \\ \times&e^{c_{\rm L}(S^0_{\rm L}(\psi(\caP),g_{\rm dozz},\psi_*g_{\mc{P}})+\sum_{j=1}^bS^0_{\rm L}(\psi_j(\caD_j),g_{\rm dozz},(\psi_j)_*|dz|^2))}\nonumber
 \end{align}
where %the metric $g_{\rm dozz}$ on $\hat\C$ is defined by  $g_{\rm dozz}= |z|_+^{-4}|dz|^2$ (and 
we made a slight abuse of notations by identifying $S_{\rm L}^0(\hat\C,g,g')$ with $S_{\rm L}^0(\hat\C,g,\omega)$ in the case when the metric $g'=e^\omega g$ is conformal to $g$.
\end{itemize}
\end{theorem}
\begin{remark} For the benefit of the readers familiar with the physics literature we explain briefly how
the coefficients $w_{\mc{P}}$ can actually be computed recursively. We discuss the case when $\mc{P}$ is the complex plane with $3$ disjoint Euclidean disks removed with incoming boundary components (with canonical parametrisation) and respective radii $r_1,r_2,r_3$ and centers $z_1,z_2,z_3$. The 
general case of  "in/out" boundary components follows as explained in the very beginning of the proof of Theorem \ref{pantDOZZ}) and the case with $b$ disks removed and $3-b$ marked points is a consequence when observing that dealing with a  marked point amounts to removing a disk and then gluing a disk amplitude with marked point $\Psi_{\alpha_j,\emptyset,\emptyset} $ and empty Young diagrams. To express the recursion rules, it will be convenient to consider the quantity 
\[ (\mathbf{L}_{-\nu_1}\widetilde{\mathbf{L}}_{- \tilde \nu_1}V_{\alpha_1}|\mathbf{L}_{-\nu_2}\widetilde{\mathbf{L}}_{-\tilde \nu_2}V_{\alpha_2}|\mathbf{L}_{-\nu_3}\widetilde{\mathbf{L}}_{-\tilde \nu_3}V_{\alpha_3} ):=\frac{\big\cjg   \caA_{\caP, g_\caP, \boldsymbol{\zeta}},  \otimes _{j=1}^3{\bf C} \Psi_{\alpha_j,\nu_j,\tilde\nu_j}  \big\cjd_{\mc{H}}}{C(\mc{P},g_{\mc{P}},\boldsymbol{\Delta}_{\hat{\boldsymbol{\alpha}}})\prod_{j=1}^3r_j^{2\Delta_{\alpha_j+| \nu_j|+|\tilde\nu_j|}}}\] 
for $\nu_i,\tilde{\nu}_i\in \mc{T}$ (i=1,2,3), where $\mathbf{L}_{- \nu }:=  \mathbf{L}_{-\nu(1)} \dots \mathbf{L}_{-\nu(k)}$ (where $k=s( \nu)$ is the size of $ \nu$) stands for a formal composition of operators $ (\mathbf{L}_{n})_{n\in \Z}$ and similarly for $\widetilde{\mathbf{L}}_{- \nu}$ in term of a family $ (\widetilde{\mathbf{L}}_{n})_{n\in \Z}$. This quantity is a function of $z_1,z_2,z_3$ and of the Young diagrams $\nu_j,\tilde\nu_j$ for $j=1,2,3$. For $u\in\C$, $\nu,\tilde{\nu}\in \mc{T}$ and $I\subset \{1,2,3\}$, consider the differential operators $\mathbf{D}^{(u,I)}_{\nu}$, $\tilde{\mathbf{D}}^{(u,I)}_{\tilde{\nu}}$   defined by 
\begin{equation}
\mathbf{D}^{(u,I)}_{\nu}= \mathbf{D}_{\nu(k)}^{(u,I)}\dots \mathbf{D}^{(u,I)}_{\nu(1)}\quad \text{ and } 
\quad \tilde{\mathbf{D}}_{\tilde{\nu}}^{(u,I)}= \tilde{\mathbf{D}}_{\tilde{\nu}(j)}^{(u,I)}\dots \tilde{\mathbf{D}}_{\tilde{\nu}(1)}^{(u,I)}
\end{equation}
where for $n\in\Z$
\begin{align}
\mathbf{D}_{n}^{(u,I)}=&\sum_{i\in I}\Big(-\frac{1}{(z_i-u)^{n-1}}\partial_{z_i}+\frac{(n-1) }{(z_i-u)^n}\Delta_{\alpha_i}\Big)\\
\tilde{\mathbf{D}}_{n}^{(u,I)}=&\sum_{i\in I}\Big(-\frac{1}{(\bar z_i-\bar u)^{n-1}}\partial_{\bar z_i}+\frac{(n-1) }{(\bar z_i-\bar u)^n}\Delta_{\alpha_i}\Big).
\end{align}
 The following rules are then in force:
\begin{enumerate}
\item the $(\mathbf{L}_{n} )_n$ and $ (\widetilde{\mathbf{L}}_{m})_m$ commute, i.e. in each entry we can replace a symbol   $\widetilde{\mathbf{L}}_{m}\mathbf{L}_{n}$ by $\mathbf{L}_{n}\widetilde{\mathbf{L}}_{m}$.
\item (Commutation relations) the commutator of the $(\mathbf{L}_{n} )_n$ (resp. $(\widetilde{\mathbf{L}}_{n} )_n$) satisfies
\begin{align}
[\mathbf{L}_n,\mathbf{L}_m]=&(n-m)\mathbf{L}_{n+m}+\frac{c_{\mathbf{L}}}{12}(n^3-n)\delta_{n=-m}{\rm I}\label{commut1}\\
[\widetilde{\mathbf{L}}_n,\widetilde{\mathbf{L}}_m]=&(n-m)\widetilde{\mathbf{L}}_{n+m}+\frac{c_{\mathbf{L}}}{12}(n^3-n)\delta_{n=-m}{\rm I}\label{commut2}.
\end{align}
\item  (one descendant case) if $( \nu_1,\tilde \nu_1)$ are Young diagrams then, for $I=\{2,3\}$
$$ (\mathbf{L}_{- \nu_1}\widetilde{\mathbf{L}}_{-\tilde \nu_1}V_{\alpha_1}| V_{\alpha_2}| V_{\alpha_3} )  
= \mathbf{D}^{(z_1,I)}_{  \nu_1}\tilde{\mathbf{D}}^{(z_1,I)}_{{ \tilde \nu_1}}\langle V_{\alpha_1}(z_1) V_{\alpha_2}(z_2)V_{\alpha_3}(z_3)\rangle_{\hat\C,g_0} .$$
\item (switching  the 3rd entry) for $m\in \N$  
\begin{align*}
 (\mathbf{L}_{- \nu_1}\widetilde{\mathbf{L}}_{- \tilde \nu_1}&V_{\alpha_1}|\mathbf{L}_{- \nu_2}\widetilde{\mathbf{L}}_{- \tilde \nu_2}V_{\alpha_2}| \mathbf{L}_{-m}\mathbf{L}_{- \nu_3}\widetilde{\mathbf{L}}_{-\tilde \nu_3}V_{\alpha_3} )\\
=&
\sum_{0\leq k\leq |\nu_2|+1}C^{m-2}_{k+m-2}(-1)^{k-1}(z_2-z_3)^{1-m-k}(\mathbf{L}_{- \nu_1}\widetilde{\mathbf{L}}_{-\tilde \nu_1}V_{\alpha_1}|\mathbf{L}_{k-1}\mathbf{L}_{-\nu_2}\widetilde{\mathbf{L}}_{-\tilde \nu_2}V_{\alpha_2}|  \mathbf{L}_{-\nu_3}\widetilde{\mathbf{L}}_{-\tilde \nu_3}V_{\alpha_3} )
\\& 
+ \sum_{0\leq k\leq |\nu_1|+1}C^{m-2}_{k+m-2}(-1)^{k-1}(z_1-z_3)^{1-m-k}  (\mathbf{L}_{k-1}\mathbf{L}_{-\nu_1}\widetilde{\mathbf{L}}_{-\tilde \nu_1}V_{\alpha_1}|\mathbf{L}_{-\nu_2}\widetilde{\mathbf{L}}_{-\tilde \nu_2}V_{\alpha_2}|  \mathbf{L}_{-\nu_3}\widetilde{\mathbf{L}}_{-\tilde \nu_3}V_{\alpha_3} )
\end{align*}
and a similar relation for $\widetilde{\mathbf{L}}_{-m}$.
\item (switching  the 2nd entry) for $m\in \N$  and $I=\{z_3\}$
\begin{align*}
 (\mathbf{L}_{- \nu_1}\widetilde{\mathbf{L}}_{- \tilde \nu_1}&V_{\alpha_1}| \mathbf{L}_{-m}\mathbf{L}_{- \nu_2}\widetilde{\mathbf{L}}_{- \tilde \nu_2}V_{\alpha_2}|  V_{\alpha_3} )\\
=&
 - \mathbf{D}^{(z_2,I)}_{ m}(\mathbf{L}_{- \nu_1}\widetilde{\mathbf{L}}_{-\tilde \nu_1}V_{\alpha_1}|\mathbf{L}_{-\nu_2}\widetilde{\mathbf{L}}_{-\tilde \nu_2}V_{\alpha_2}|   V_{\alpha_3} )
\\& 
+ \sum_{0\leq k\leq |\nu_1|+1}C^{m-2}_{k+m-2}(-1)^{k-1}(z_1-z_2)^{1-m-k}  (\mathbf{L}_{k-1}\mathbf{L}_{-\nu_1}\widetilde{\mathbf{L}}_{-\tilde \nu_1}V_{\alpha_1}|\mathbf{L}_{-\nu_2}\widetilde{\mathbf{L}}_{-\tilde \nu_2}V_{\alpha_2}|  V_{\alpha_3} )
\end{align*}
and a similar relation for $\widetilde{\mathbf{L}}_{-m}$.
 \item (Annihilation) if $m\in\N$ satisfies $m>|\nu|$   then $ \mathbf{L}_{m}\mathbf{L}_{- \nu}\widetilde{\mathbf{L}}_{-\tilde \nu}V_{\alpha_j}=0$,
 and a similar relation for the $\widetilde{\mathbf{L}}_{m}$.
  \item (Eigenstates)  $\mathbf{L}_{0}V_{\alpha_j}=  \Delta_{\alpha_j}V_{\alpha_j}$   and a similar relation for $\widetilde{\mathbf{L}}_{0}$.
  
 These relations can be established by computing the residues in the Ward identities as explained in \cite{Gawedzki96_CFT}. In the case when the complex plane has analytic disks removed, then the coefficients can be expressed in terms of the coefficients computed above and the coefficients of the Taylor expansion of the uniformizing maps of the analytic disks. We won't give any further details in this latter situation.
\end{enumerate}
\end{remark}

 In a forthcoming work, we will address the question of the global definition of the conformal blocks on the   Teichm\"uller space. For this, an important input will be the following formula. 
 \begin{proposition}\label{remarkblocks}
 Let $(\caP,{\bf x}^{\rm mp},\boldsymbol{\zeta})$ be a building block. Let $\alpha_j= Q+ip_j$, $p_j\in\R$, $j=1,\dots, b$, let $\alpha_j<Q$ for $j=b+1,..,3$ and denote $\hat{\boldsymbol{\alpha}}=(\alpha_1,\alpha_2,\alpha_3)$.  Assume  
\begin{equation}\label{ass:starwardrem}
   \sum_{j=b+1}^{3}\alpha_{j }-\chi(\mc{P})Q>0 .
\end{equation}
For each $n\geq 0$, denote by $ w^n_{\caP}(\boldsymbol{\Delta}_{\hat{\boldsymbol{\alpha}}},
  \hat{{\bf z}})$ the coefficient  $w_{\caP}(\boldsymbol{\Delta}_{\hat{\boldsymbol{\alpha}}},
 \boldsymbol{\nu},\hat{{\bf z}})$ where    $\boldsymbol{\nu}=(\nu_1,\dots,\nu_{b})\in \mc{T}^b$ is chosen in such a way that   $\nu_{j}=\emptyset$ for $j>1$ and  $\nu_1$ is the Young diagram $\nu_1=(n,0,\dots,0,\dots)$ (with the convention that $ w^n_{\caP}(\boldsymbol{\Delta}_{\hat{\boldsymbol{\alpha}}},
  \hat{{\bf z}})=0$ for $n<0$). Consider a meromorphic vector field $v=v(z)\partial_z$ on $\hat\C$ with a unique pole at $z_1$, and vanishing at $z_2$ and $z_3$, which we can expand in coordinates as $v\circ \psi_1(z)/\psi_1'(z)=\sum_{n\in\Z}v_nz^{1-n}$. Then 
the following formula holds
\begin{equation}
 \sum_{n\geq 1} w^n_{\caP}(\boldsymbol{\Delta}_{\hat{\boldsymbol{\alpha}}},
  \hat{{\bf z}})\frac{1}{2i\pi}\oint_{\sigma} \frac{v\circ \psi_1(z)}{\psi_1'(z)}z^{n-2}\,dz=-\frac{{ c}_{\rm L}}{12}{\rm Res}_{z_1} (v   S_{\psi_1^{-1}}) -\Delta_{\alpha_1}v_0 -\Delta_{\alpha_2}v'(z_2)-\Delta_{\alpha_3}v'(z_3)
\end{equation}
where $\sigma$ is a small contour around $0$.
\end{proposition}
This formula is a consequence of  the proof of Theorem \ref{pantDOZZ}, as will be explained later.
\medskip

Now we state a corollary of Theorem \ref{pantDOZZ} used in   subsection \ref{sub:cfb}. Let $\caP=(\caP,{\bf x}^{\rm mp},\boldsymbol{\zeta})$ be a building block equipped with an admissible metric $g_{\mc{P}}$ as previously. Let ${\bf q}=(q_1,\dots,q_b)\in \D^b$ be complex parameters. For each $i=1,\dots,b$ we can glue an annulus  $\mathbb{A}_{q_i}$  defined in \eqref{defaq} (with  metric and parametrisation  defined there) to each boundary curve $\mc{C}_i$ to obtain a Riemann surface denoted  $\caP({\bf q})$, with metric $g_{\mc{P}}^{\bf q}$ and parametrisation $\boldsymbol{\zeta}^{\bf q}$ obtained by gluing. We claim

\begin{corollary}{\bf (Plumbed pants)}\label{plumbedpant}
Assume \eqref{ass:starward}. For all ${\bf q}\in \D^{b}$, ${\bf p}=(p_1,\dots,p_b)\in \R^b$ and 
$\boldsymbol{\nu}=(\nu_1,\dots,\nu_b)\in \mc{T}^b$, $\boldsymbol{\tilde{\nu}}=(\tilde{\nu}_1,\dots,\tilde{\nu}_b)\in \mc{T}^b$, we have
\begin{align*}
 \big\cjg &\mc{A}_{\mc{P}({\bf q}), g_{\mc{P}}^{\bf q},{\bf x}^{\rm mp},\boldsymbol{\alpha}^{\rm mp},\boldsymbol{\zeta}^{\bf q}},\otimes_{j=1}^{b}{\bf C}^{(\sigma_{j}+1)/2}\Psi_{Q+ip_{j},\nu_{j},\tilde{\nu}_{j}}\big\cjd_{\mc{H}} \\
& =  \big\cjg \mc{A}_{\mc{P}, g_{\mc{P}},{\bf x}^{\rm mp},\boldsymbol{\alpha}^{\rm mp},\boldsymbol{\zeta}},\otimes_{j=1}^{b}{\bf C}^{(\sigma_{j}+1)/2}\Psi_{Q+ip_{j},\nu_{j},\tilde{\nu}_{j}}\big\cjd_{\mc{H}}
  \prod_{j=1}^{b}|q_{j}|^{-c_{\rm L}/12}q_{j}^{\Delta_{Q+ip_{j}}+|\nu_{j}|}\bar{q}_{j}^{\Delta_{Q+ip_{j}}+|\tilde{\nu}_{j}|}\nonumber 
 \end{align*}
with  $c_{\rm L}=1+6Q^2$ the central charge. 
\end{corollary}

\begin{proof} By  the gluing lemma (Proposition \ref{glueampli}), the $\mc{P}({\bf q})$-amplitude is the composition of the $\mc{P}$-amplitude with the annuli $\mathbb{A}_{q_i}$-amplitudes, namely
\begin{align*}
\big\cjg  \mc{A}_{\mc{P}({\bf q}), g_{\mc{P}}^{\bf q},{\bf x}^{\rm mp},\boldsymbol{\alpha}^{\rm mp},\boldsymbol{\zeta}^{\bf q}},\otimes_{j=1}^{b}f_j\big\cjd_{\mc{H}} =& \frac{1}{(\sqrt{2}\pi)^b } 
\big\cjg  \mc{A}_{\mc{P}, g_{\mc{P}},{\bf x}^{\rm mp},\boldsymbol{\alpha}^{\rm mp},\boldsymbol{\zeta} },\otimes_{j=1}^{b}   \mc{A}_{\mathbb{A}_{q_j}, g_{\mathbb{A}}, \boldsymbol{\zeta} }  f_j\big\cjd_{\mc{H}} .
 \end{align*}
The annulus amplitude is then computed with Proposition \ref{prop:annulussimple} and finally we use Propositions \ref{defprop:desc} and \ref{propPi} to get that 
\[ e^{i \arg(q_j)\boldsymbol{\Pi}} e^{\log |q_j|\mathbf{H}}\Psi_{Q+ip_{j},\nu_{j},\tilde{\nu}_j}=q_{j}^{\Delta_{Q+ip_j}+|\nu_j|}\bar{q}_j^{\Delta_{Q+ip_j}+|\tilde{\nu}_{j}|}\Psi_{Q+ip_{j},\nu_{j},\tilde{\nu}_{j}}.\]
This implies the result.
\end{proof}

Finally, we state a last corollary, used in Section \ref{sec:special}, whose proof is postponed to the very end of this section:

\begin{corollary}{\bf (Plumbed annulus)}\label{annulusDOZZ}
Assume $\alpha_3>0$.  For all $|z|<1$ and $|q/z|<1$, $\boldsymbol{\nu}=(\nu_1,\nu_2)\in \mc{T}^2$, 
$\tilde{\boldsymbol{\nu}}=(\tilde \nu_1,\tilde\nu_2)\in \mc{T}^2$. Then using the convention \eqref{defaq} for the parametrised annulus $\mathbb{A}_q$, we have 
\begin{align}\label{claimannulus}
\big\cjg &\mc{A}_{\mathbb{A}_q, g_{\mathbb{A}},z, \alpha_1,\boldsymbol{\zeta} },\otimes_{j=1}^{2}{\bf C}^{(\sigma_{j}+1)/2}\Psi_{Q+ip_j,\nu_j,\tilde{\nu}_j}\big\cjd_{\mc{H}} \\
=&   C 
C_{\gamma,\mu}^{{\rm DOZZ}} (\boldsymbol{\tilde\alpha}) w_{\mathbb{A}}(\boldsymbol{\Delta}_{\hat{\boldsymbol{\alpha}}},
 \boldsymbol{\nu} )
 \overline{ w_{\mathbb{A}}(\boldsymbol{\Delta}_{\hat{\boldsymbol{\alpha}}},
 \tilde{\boldsymbol{\nu}} )}  |q |^{-\frac{{c}_{\rm L}}{12}+2\Delta_{Q+ip_{1}}} |z|^{2\Delta_{Q+ip_{2}}-   2\Delta_{Q+ip_{1}}   }(q/z)^{|\nu_{1}|}(\bar{q}/\bar{z})^{ |\tilde{\nu}_{1}|} z^{|\nu_{2}|}\bar{z}^{|\tilde\nu_{2}|}\nonumber 
 \end{align}
with the constant $C_{\gamma,\mu}^{{\rm DOZZ}} (\boldsymbol{\tilde\alpha})$ defined as in Theorem   \ref{pantDOZZ}, ${c}_{\rm L}=1+6Q^2$,
\begin{equation}\label{constanttorus}
C= \frac{\pi}{2^{1/2}e},%\frac{\pi}{2}  Z_{\D,g_\D}^{-2}\sqrt{\frac{{\rm v}_{g_0}(\hat\C)}{{\det}'(\Delta_{\hat{\C},g_0})}} e^{S_{\rm L}^0(\hat\C,g_0,g_{\rm dozz})} ,
\end{equation}
the coefficients $w_{\mathbb{A}}(\boldsymbol{\Delta}_{\hat{\boldsymbol{\alpha}}}, \boldsymbol{\nu} )$ are polynomials in the conformal weights $\boldsymbol{\Delta}_{\hat{\boldsymbol{\alpha}}}=(\Delta_{\alpha_1},\Delta_{\alpha_2},\Delta_{\alpha_3})$  and  $w_{\mathbb{A}}(\boldsymbol{\Delta}_{\hat{\boldsymbol{\alpha}}},\emptyset )=1$.
\end{corollary}

\subsection{Proof of Theorem \ref{pantDOZZ}}\label{debutproof}
%%%%%%%%%%%%%%%%%%%%%%%%%%%%%%%%%

The rest of this section is devoted to the proof of Theorem \ref{pantDOZZ}. We need some preliminaries.  
First, recall  that flipping the orientation of a boundary circle is implemented by the map ${\bf O}$ on $e^{-\beta c_-}L^2(\R\times \Omega_\T)$, see subsection \ref{sub:revert}. Hence if we denote by $\tilde{\boldsymbol{\zeta}}$ the parametrisation where all the boundary components are "in" then 
\begin{align*}%\label{}
 \big\cjg \caA_{\caP, g_\caP,{\bf x}^{\rm mp},\boldsymbol{\alpha}^{\rm mp},\boldsymbol{\zeta}},\otimes_{j=1}^{b}f_j\big\cjd_{\mc{H}}= \big\cjg \caA_{\caP, g_\caP,{\bf x}^{\rm mp},\boldsymbol{\alpha}^{\rm mp},{\boldsymbol{\tilde\zeta}}},\otimes_{j=1}^{b}{\bf O}^{(1-\sigma_j)/2}f_j\big\cjd_{\mc{H}}.
\end{align*}
Recalling that ${\bf CO}\Psi_{Q+ip,\nu,\tilde\nu}=\Psi_{Q-ip,\nu,\tilde\nu}$ (Prop. \eqref{revert}) we obtain
\begin{align*}%\label{}
 \big\cjg \caA_{\caP, g_\caP,{\bf x}^{\rm mp},\boldsymbol{\alpha}^{\rm mp},\boldsymbol{\zeta}}, \otimes_{j=1}^b{\bf C}^{(1+\sigma_j)/2}\Psi_{Q+ip_j,\nu_j,\tilde\nu_j}\big\cjd_{\mc{H}}=\caA_{\caP, g_\caP,{\bf x}^{\rm mp},\boldsymbol{\alpha}^{\rm mp},\tilde{\boldsymbol{\zeta}}}(\otimes_{j=1}^b{\bf C}\Psi_{Q+i\sigma_jp_j,\nu_j,\tilde\nu_j})
\end{align*}
Hence it suffices to prove the theorem for the case $\sigma_j=1$ for all $j$.   

Let  $g$  be an admissible metric on $\D$. Beside being admissible, we are free to choose this metric as we please and  we take   $g$ to be Euclidean in a neighborhood of the origin for later computational convenience.   Then $g_\caP$ and $\rho_j^\ast g$ (recall   $\{(U_i,\rho_i)\}_{i=1}^N$ is the atlas on $\hat\caP$) glue to a smooth metric  $g_{\hat\caP}$ on $\hat\caP$ and $\hat g:=\psi_\ast g_{\hat\caP}$ is a smooth metric on $\hat\C$. Obviously $\hat g_{\caP}:=\hat g|_{\psi(\caP)}$ and $\hat g_{j}:=\hat g|_{\psi(\caD_j)}$ are admissible on $\psi(\caP)$ and $\psi(\caD_j)$ and glue to $\hat g$.  

{\bf Briefly, our strategy   is the following:}

\vskip 1mm

\noindent (a) First we analytically continue the states  $\Psi_{\alpha,\nu,\tilde\nu}$   in the parameter $\alpha$ to the real line. For $\alpha$ real, and  small enough (depending on $\nu,\tilde\nu$), we use the intertwining property \eqref{limtpsi} to get a probabilistic expression for the descendant states $\Psi_{\alpha,\nu,\tilde\nu}$  in Lemma \ref{TTLemma}, which can be expressed  in terms of generalised amplitudes on the disk, namely amplitudes with further SET insertions in addition to the vertex insertions (see Definition \ref{def:ampholes}), in Lemma \ref{ampdiskSET}. This generalised disk amplitude can be expressed   as a generalised amplitude on $\psi(\caD_j)$ using the conformal map $\psi$ (section \ref{Conformal map}). Therefore, evaluating a building block amplitudes at the eigenstates $\Psi_{\alpha,\nu,\tilde\nu}$, namely \eqref{pantclaim},  can be interpreted, using Segal's gluing axioms, as correlation function on the Riemann sphere with further SET insertions.
 \vskip 1mm

\noindent (b) There is a caveat regarding the gluing of the aforementioned amplitudes: the states $\Psi_{\alpha,\nu,\tilde\nu}$ for $\alpha\in\R$ small  are in general not in the domain of the amplitude $\caA_{\psi(\caP), \hat g_\caP,{\bf z}^{\rm mp},\boldsymbol{\alpha}^{\rm mp},\boldsymbol{\zeta}}$ due to violation of the Seiberg bounds. To remedy this we regularise this  amplitude by inserting several additional vertex operators  $V_{\alpha_j,\hat g_\caP}(z_j)$, $z_j\in \psi(\caP)$, $j=4,\dots,m$ with $\alpha_j<Q$ and $\sum_j\alpha_j$ large enough, both conditions being dictated by the Seiberg bounds (section \ref{Pant amplitude}).  The resulting amplitude  
can then be evaluated at the states $\Psi_{\alpha_j,\nu_j,\tilde\nu_j}$  and the result will be given as a contour integral of  LCFT correlation function on $\hat\C$ with SET insertions (section \ref{pantglue}).

\vskip 1mm

\noindent (c) The correlation function with SET insertions is then evaluated using the Ward identities after which the contour integrals can be computed using the residue theorem. The result is an expression in terms of $z_j$ derivatives of the correlation function $\langle \prod_{j=1}^mV_{\alpha_j,\hat g}(z_j)\rangle_{\hat\C,\hat g}$ (section \ref{residue}). 
In the latter one we will use analyticity to take the $\alpha_j\to 0$ for $j>3$ resulting to the LCFT three-point function and eventually to the DOZZ structure constant. Then analytic continuation back to the desired values of $\alpha_j$ yields the claim (section \ref{finalst}).

\medskip
Let us now proceed.

\subsubsection{Eigenfunctions and generalised amplitudes}\label{Eigenfunctions and generalised amplitudes}
%%%%%%%%%%%%%%%%%%%%%%%%%%%%%%%%%%%%%%%%%%%%%%%%%%%%%%%%%%%%%%%%%%%%%%%%%%%%%%%%%%%%%%%%%%%%

We start by recalling some results from \cite{GKRV20_bootstrap}. Basically, the content of this subsection is to recall how the generalised eigenfunctions $\Psi_{\alpha,\nu,\tilde\nu}$ can be analytically continued in $\alpha$ from the spectrum line $Q+i\R$ to real values of $\alpha$, region over which they admit a probabilistic representation in terms of LCFT correlation functions with SET insertions. To make this claim precise, let us reformulate a direct consequence of  Proposition \ref{defprop:desc}:

\begin{proposition}\label{eigenana}
Let  $\nu,\tilde\nu$  be Young diagrams and $\ell=|\nu|+|\tilde\nu|$.  There exists a real number $a\in (-\infty,Q-\gamma) $ such that $Q+i\R$ and  $(-\infty ,a]$ belong to the same connected component of $W_\ell$  and $ [-\infty ,a]\subset \mathcal{I}_{\nu,\tilde{\nu}}$. 
\end{proposition}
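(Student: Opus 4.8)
\textbf{Plan of proof for Proposition \ref{eigenana}.}

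The plan is to make the set $\mathcal{I}_{\nu,\tilde\nu}$ of \eqref{Inunu} and the region $W_\ell$ of \eqref{regionWl} explicit for $\alpha$ real and negative, and then to check the two claimed connectivity statements by an elementary real-variable analysis. Recall $\ell=|\nu|+|\tilde\nu|$ and that $W_\ell=\{\alpha\in\C\setminus\mathcal{D}_\ell \mid \mathrm{Re}(\alpha)\leq Q,\ \mathrm{Re}\sqrt{(Q-\alpha)^2-2\ell}>\mathrm{Re}(Q-\alpha)-\gamma/2\}$ with $\mathcal{D}_\ell=\bigcup_{j\geq \ell}\{Q\pm i\sqrt{2(j-\ell)}\}$, while $\mathcal{I}_{\nu,\tilde\nu}=\{\alpha\in\C \mid \exists\,\beta>\mathrm{Re}(Q-\alpha),\ \mathrm{Re}((Q-\alpha)^2)-2\ell>(\beta-\gamma/2)^2\}$.

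First I would treat the spectral line $\alpha=Q+ip$, $p\in\R$. Here $Q-\alpha=-ip$, so $(Q-\alpha)^2-2\ell=-p^2-2\ell<0$, hence $\sqrt{(Q-\alpha)^2-2\ell}=i\sqrt{p^2+2\ell}$ has vanishing real part, and the defining inequality of $W_\ell$ reads $0>\mathrm{Re}(-ip)-\gamma/2=-\gamma/2$, which holds; also $Q+ip\notin\mathcal{D}_\ell$ since $\sqrt{p^2+2\ell}\neq\sqrt{2(j-\ell)}$ would force $p^2=2(j-2\ell)$, and even in that degenerate case a perturbation argument (or simply noting $\mathcal{D}_\ell$ is a discrete set so $Q+i\R$ meets it in measure zero and $W_\ell$ is open) shows $Q+i\R$ lies in the closure of one component; more cleanly, $Q+i\R$ itself is connected and contained in $\overline{W_\ell}$, and since $W_\ell$ is open with $Q+i\R\setminus\mathcal{D}_\ell$ dense in it, $Q+i\R$ lies in a single connected component of $W_\ell$ (after removing the countably many points of $\mathcal D_\ell$, which disconnect $\R$ into intervals all abutting the same component). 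Next I would treat the negative real ray: for $\alpha\in\R$ with $\alpha<Q$, set $x:=Q-\alpha>0$; then $(Q-\alpha)^2-2\ell=x^2-2\ell$, which is $\geq 0$ precisely when $x\geq\sqrt{2\ell}$, i.e. $\alpha\leq Q-\sqrt{2\ell}$. For such $\alpha$ one has $\sqrt{(Q-\alpha)^2-2\ell}=\sqrt{x^2-2\ell}\in[0,x)$, and the $W_\ell$-inequality becomes $\sqrt{x^2-2\ell}>x-\gamma/2$. If $x-\gamma/2<0$, i.e. $x<\gamma/2$, this is automatic; if $x-\gamma/2\geq 0$, squaring gives $x^2-2\ell>x^2-\gamma x+\gamma^2/4$, i.e. $\gamma x>2\ell+\gamma^2/4$, i.e. $x>2\ell/\gamma+\gamma/4$. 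So on the real axis, $W_\ell\cap(-\infty,Q)$ contains at least the ray $\{\alpha<Q \mid Q-\alpha>2\ell/\gamma+\gamma/4\}$, i.e. $(-\infty,\ Q-2\ell/\gamma-\gamma/4)$ (one should double-check the two sub-cases patch together into a single ray, using continuity of the inequality across $x=\gamma/2$). Choose $a_1:=Q-\max(\gamma,\ 2\ell/\gamma+\gamma/4)-1$ (say), which lies in $(-\infty,Q-\gamma)$ and in $W_\ell$.

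Second I would verify $(-\infty,a]\subset\mathcal{I}_{\nu,\tilde\nu}$ for a suitable $a\leq a_1$. For $\alpha\in\R$, $\alpha<Q$, again $x=Q-\alpha>0$ and the condition is: there exists $\beta>x$ with $x^2-2\ell>(\beta-\gamma/2)^2$. This is solvable in $\beta$ iff $x^2-2\ell>(x-\gamma/2)^2$ (taking $\beta\downarrow x$, the right side is continuous and we need the open inequality to have room, so in fact we need $x^2-2\ell>(x-\gamma/2)^2$ strictly, then pick $\beta=x+\epsilon$ for small $\epsilon$), which is exactly the inequality $\gamma x>2\ell+\gamma^2/4$ analyzed above, i.e. $x>2\ell/\gamma+\gamma/4$. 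Hence $(-\infty,\ Q-2\ell/\gamma-\gamma/4)\subset\mathcal{I}_{\nu,\tilde\nu}$, and taking $a:=\min(a_1,\ Q-2\ell/\gamma-\gamma/4-1)$ gives $a\in(-\infty,Q-\gamma)$ with both $(-\infty,a]\subset W_\ell$ and $(-\infty,a]\subset\mathcal{I}_{\nu,\tilde\nu}$.

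Third, connectivity: I must show $Q+i\R$ and $(-\infty,a]$ lie in the same connected component of $W_\ell$. The natural path is the broken line from $a$ leftward to $-\infty$ is not what connects them; instead I would connect $Q+i\R$ to $(-\infty,a]$ by a path staying in $W_\ell$, e.g. travel along the real axis from any real point of $W_\ell$ near $Q$ — but $W_\ell$ may fail on an intermediate real segment. The cleaner route: note that $W_\ell$ is an open set whose complement-relevant boundary is governed by the single smooth inequality $\mathrm{Re}\sqrt{(Q-\alpha)^2-2\ell}>\mathrm{Re}(Q-\alpha)-\gamma/2$ together with $\mathrm{Re}(\alpha)\leq Q$ and the removal of the discrete set $\mathcal D_\ell$. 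One shows the open half-strip $\{\mathrm{Re}(\alpha)<Q-2\ell/\gamma-\gamma/4\}\cap\{|\mathrm{Im}\,\alpha|\ \text{bounded}\}$ together with a neighborhood of $Q+i\R$ both lie in $W_\ell$, and then connects a point $Q+ip_0$ (for $|p_0|$ large) to a point $Q-R$ (for $R$ large) by the arc $\alpha(s)=Q-Re^{i s}$, $s\in[0,\pi/2]$ or similar, checking the inequality persists along it — this is the step where I expect to do a genuine (but still elementary) computation with $(Q-\alpha(s))^2-2\ell$. Since $W_\ell\cap(Q+i\R)$ is itself connected (shown above), once one such connecting arc is found, $Q+i\R$ and $(-\infty,a]$ are in the same component. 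The main obstacle is precisely controlling the sign of $\mathrm{Re}\sqrt{(Q-\alpha)^2-2\ell}-\mathrm{Re}(Q-\alpha)+\gamma/2$ uniformly along an explicit connecting path; I would handle it by parametrizing $Q-\alpha=re^{i\theta}$ with $r$ large, where $\sqrt{(Q-\alpha)^2-2\ell}\approx (Q-\alpha)\sqrt{1-2\ell/(Q-\alpha)^2}\approx (Q-\alpha)-\ell/(Q-\alpha)$, so $\mathrm{Re}\sqrt{(Q-\alpha)^2-2\ell}-\mathrm{Re}(Q-\alpha)\approx -\ell\,\mathrm{Re}(1/(Q-\alpha))=-(\ell/r)\cos\theta$, whose absolute value is $O(1/r)$ and hence eventually dominated by $\gamma/2>0$; thus the whole far region $\{|Q-\alpha|\geq R_0,\ \mathrm{Re}(\alpha)\leq Q\}$ lies in $W_\ell$ for $R_0$ large enough, and both $Q+i\R$ (outside a large disk) and $(-\infty,a]$ (outside a large disk) connect within it, while the bounded parts connect via the already-established real ray and the connectedness of $W_\ell\cap(Q+i\R)$. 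Finally I would remark that the same argument for $\mathcal{I}_{\nu,\tilde\nu}$ only needs the real-ray inclusion just proved, so no further connectivity analysis is required there. This completes the plan.
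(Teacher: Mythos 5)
The paper itself gives no proof of Proposition \ref{eigenana}: it is stated as a direct corollary of the definitions \eqref{regionWl} and \eqref{Inunu}, so there is nothing to compare against line by line. Your elementary verification is correct and supplies exactly what the authors leave implicit: the computations showing that $Q+i\R\setminus\mathcal{D}_\ell$ and the ray $\{\alpha\in\R:\ Q-\alpha>2\ell/\gamma+\gamma/4\}$ lie in $W_\ell$, that the same ray lies in $\mathcal{I}_{\nu,\tilde\nu}$, and that the two sets are joined inside $W_\ell$ through the region $\{|Q-\alpha|\ge R_0,\ \mathrm{Re}\,\alpha\le Q\}$ (needed because for $\ell>0$ the real segment $[\,\gamma/2,\,2\ell/\gamma+\gamma/4\,]$ in the variable $x=Q-\alpha$ genuinely fails the $W_\ell$ inequality). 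Two small points to tidy: membership of $Q+ip$ in $\mathcal{D}_\ell$ means $p^2=2(j-\ell)$, not $2(j-2\ell)$; and the detours around the discrete set $\mathcal{D}_\ell$ on the spectrum line are justified simply because the defining function $\mathrm{Re}\sqrt{(Q-\alpha)^2-2\ell}-\mathrm{Re}(Q-\alpha)+\gamma/2$ is continuous and equals $\gamma/2>0$ at each such point, so a punctured half-disk around it stays in $W_\ell$.
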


Recall that the mapping $\alpha \in W_\ell \mapsto \Psi_{\alpha,\nu,\tilde\nu}\in e^{-\beta c_-}L^2(\R\times \Omega_\T)$ (with $\beta>Q-{\rm Re}(\alpha)$)
is analytic. Via the above proposition and by analyticity,  we will deduce the factorisation property on the spectrum line $\alpha\in Q+i\R$ from relations obtained over the region $\alpha\in (-\infty ,a]$. The  crucial fact  related to this latter region is the intertwining property: for $\alpha\in\R$ with $\alpha<a\wedge(Q-\gamma)$, Proposition \ref{defprop:desc} (taking $\chi=1$) yields
\begin{equation}\label{limtpsi}
 \lim_{t\to +\infty}e^{t (2\Delta_\alpha+|\nu|+|{\tilde{\nu}}|)}e^{-t\mathbf{H}}\Psi^0_{\alpha,\nu,{\tilde{\nu}}}=\Psi_{\alpha,\nu,{\tilde{\nu}}}
 \end{equation}
  in $e^{-\beta c_- }L^2(\R\times \Omega_\T)$ for any $\beta>(Q-\alpha)$.  This relation is the starting point of our probabilistic representation of $\Psi_{\alpha,\nu,\tilde\nu}$. Indeed, recall next that the free eigenfunctions $\Psi^0_{\alpha,\nu,{\tilde{\nu}}}$ have a probabilistic representation in terms of contour integrals of GFF expectations with SET insertions, which will be preserved, up to adding the Liouville potential, when applying the propagator $e^{-t\mathbf{H}}$ using the Feynman-Kac formula \ref{FKgeneral}. To formulate this more precisely, we introduce some notation from \cite{GKRV20_bootstrap}. Let  $f:\C^k\times\C^{\tilde k}\to\C$ and  ${\bf a} \in \C^k$,  ${\bf b} \in \C^{\tilde k}$.       We will denote multiple   nested contour integrals of $f$ as follows:
\begin{align*}
\oint_{|\mathbf{u}-\boldsymbol{a}|=\boldsymbol{\delta} }\oint_{|\mathbf{v} -\boldsymbol{\tilde a}|=\tilde{\boldsymbol{\delta}}}f ({\bf u},{\bf v})\dd {\bf \bar v}\dd {\bf u}:=
\oint_{|u_k-a_k|=\delta_k}\dots \oint_{|u_1-a_1|=\delta_1} \oint_{|v_{\tilde k}-\tilde a_{\tilde k}|=\tilde{\delta}_{\tilde k}}\dots  \oint_{|v_1-\tilde a_1|=\tilde{\delta}_1}  f({\bf u},{\bf v}) \dd \bar v_1\dots \dd \bar v_{j}  \dd u_1\dots \dd u_k\nonumber
\end{align*}
where $\boldsymbol{\delta} :=(\delta_1,\dots, \delta_k)$ with $0<\delta_1<\dots<\delta_k<1$ and similarly for $\tilde{\boldsymbol{\delta}}$.  We always suppose $\delta_i\neq \tilde\delta_j$ for all $i,j$. Given Young diagrams $\nu$, $\tilde\nu$  we denote
\begin{equation}\label{notation}
\mathbf{u}^{1-\nu}:=\prod u_i^{1-\nu(i)},\ \ \ \bar{\mathbf{v}}^{1-\tilde{\nu}}:=\prod \bar {v}_i^{1-\tilde{\nu}(i)}.
\end{equation}
More generally, we will often make use of the shorthand $f({\bf u}):=\prod_i f(u_i)$ if $f:\C\to \C$ is a function and ${\bf u}\in \C^k$.
Recall also   the definitions for Young diagrams in subsection \ref{sub:virasoro}, in particular $s(\nu)$ is the size of $\nu$.

Now we introduce the notations for the SET. We write $g_\D=|dz|^2$ for the flat metric on $\D$. Let $g=e^\omega g_\D$ where $\omega$ is smooth on $\D$. Recall that, on $\D$, $\Phi_g=\phi_g+ \tfrac{Q}{2}\omega$ with $\phi_{g}:=X_{g,D}+P\tilde\varphi$ and we denote $\Phi_{g,\epsilon}$ its $|dz|^2$-regularisation (as in subsection \ref{SET}). The SET is then given by
$$
T_{g,\epsilon}=Q\partial^2_{z}\Phi_{g,\epsilon}-\big(\partial_z(\Phi_{g,\epsilon})\big)^2 +\E[(\partial_z X_{g,D,\epsilon}(z))^2]
$$
and $\bbar T_{g,\epsilon}$ is its complex conjugate.  Then from \cite{GKRV20_bootstrap}   Lemma 7.7  we have:

\begin{lemma}\label{TTLemma} Let $\alpha \in\R$, $\boldsymbol{ \delta},\boldsymbol{\tilde\delta}\in \R^k$. Then
\begin{align}\label{tlim}
 e^{t (2\Delta_\alpha+|\nu|+|{\tilde{\nu}}|)}e^{-t\mathbf{H}}\Psi^0_{\alpha,\nu,{\tilde{\nu}}} 
 =&
  \frac{1}{(2\pi i)^{s(\nu)+s(\tilde{\nu})}}  
 \oint_{|\mathbf{u}|=\boldsymbol{\delta}_t}   \oint_{|\mathbf{v}|=\boldsymbol{\tilde\delta}_t}  
\mathbf{u}^{1-\nu}\bar{\mathbf{v}}^{1-\tilde\nu}  
\Psi_{t, \alpha}(\mathbf{u},\mathbf{v})\dd   \bar{\mathbf{v}}\dd   \mathbf{u}
 \end{align}
where $\boldsymbol{\delta}_t:=e^{-t}\boldsymbol{\delta},\boldsymbol{\tilde \delta}_t:=e^{-t}\boldsymbol{\tilde\delta}$, and 
\begin{equation}\label{limphieps}
 \Psi_{t, \alpha}(\mathbf{u},\mathbf{v}) :=\lim_{{\boldsymbol{\epsilon}\to 0}}\Psi_{t,\epsilon,\alpha}(\mathbf{u},\mathbf{v})  
\end{equation}    with
\begin{align}\label{psiepsi}
\Psi_{t,\epsilon,\alpha}(\mathbf{u},\mathbf{v}) =e^{-Q c}
\E\Big( T_{g_\D,\epsilon}(\mathbf{u})\bbar T_{g_\D,\epsilon}(\mathbf{v}) V_{\alpha,g_\D}(0) e^{-\mu e^{\gamma c}M_\gamma(\phi_{g_\D},\D\setminus\D_{e^{-t}})}\Big) \quad \in e^{-\beta c_- }L^2(\R\times \Omega_\T),
 \end{align}
the expectation is over the Dirichlet field $X_{g_\D,D}$  and we denoted $\D_{e^{-t}}=e^{-t}\D$. The limit \eqref{limphieps} holds in $ e^{-\beta c_- }L^2(\R\times \Omega_\T)$, for $\beta>(Q-\alpha)$, uniformly over the compact subsets of $\{(\mathbf{u},\mathbf{v})|u_i,v_j\in \D_{e^{-t}}\setminus\{0\}, \text{all distinct}\}$.
  \end{lemma}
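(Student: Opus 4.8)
\textbf{Proof plan for Lemma \ref{TTLemma}.}

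The plan is to start from the probabilistic representation of the free descendant state $\Psi^0_{\alpha,\nu,\tilde\nu}$ and track how it is transformed under the propagator $e^{-t\mathbf{H}}$. Recall from Proposition \ref{prop:mainvir0} that $\Psi^0_{\alpha,\nu,\tilde\nu}=\mathcal{Q}_{\alpha,\nu,\tilde\nu}\Psi^0_\alpha$ where $\mathcal{Q}_{\alpha,\nu,\tilde\nu}$ is a polynomial in the Fourier coordinates $(\varphi_n)$, built from the free Virasoro generators $\mathbf{L}^0_{-n},\widetilde{\mathbf{L}}^0_{-n}$ acting on $e^{(\alpha-Q)c}$. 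The first step is to re-express the action of these generators as contour integrals of the free (holomorphic/antiholomorphic) stress energy tensor $T^0(z)=Q\partial_z^2\Phi^0-(\partial_z\Phi^0)^2$: this is the standard identification $\mathbf{L}^0_{-n}\leftrightarrow \frac{1}{2\pi i}\oint z^{1-n}T^0(z)\,dz$ acting on the highest weight vector. This produces an expression of $\Psi^0_{\alpha,\nu,\tilde\nu}$ as a nested contour integral (with the ordering of contours $\delta_1<\dots<\delta_k$ encoding the non-commutative ordering of the $\mathbf{L}^0_{-\nu(i)}$) of $\mathbf{u}^{1-\nu}\bar{\mathbf{v}}^{1-\tilde\nu}$ times a product of SET insertions acting on $V_{\alpha,g_\D}(0)$ in the free (Dirichlet GFF) theory on the disk, with the vertex operator at $0$ and no Liouville potential. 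This is precisely the content already contained in \cite[section 4.4 and Lemma 7.7]{GKRV}, so I would cite it.

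Next I would apply $e^{-t\mathbf{H}}$ via its Feynman--Kac form \eqref{FKgeneral}: the semigroup acts by running the field on the annulus $\{e^{-t}<|z|<1\}$ and inserting the Liouville potential $e^{-\mu e^{\gamma c}M_\gamma(\phi,\D\setminus\D_{e^{-t}})}$. The key structural point is that the SET insertions in the free representation are supported on small circles $|u_i|=\delta_i$; under the dilation $z\mapsto e^{-t}z$ implemented by $e^{-t\mathbf{H}}$, these get pushed to circles $|u_i|=e^{-t}\delta_i=\boldsymbol{\delta}_t$, which lie inside $\D_{e^{-t}}$ where the Liouville potential is absent. One then uses the covariance of the SET under this dilation (a holomorphic change of coordinates, with conformal weight $2$ and a Schwarzian term that vanishes for a dilation, cf. Lemma \ref{constanteSET}), together with the scaling of the vertex operator $V_{\alpha,g_\D}(0)$ picking up the factor $e^{2\Delta_\alpha t}$ and the descendant weights contributing $e^{(|\nu|+|\tilde\nu|)t}$ — these are exactly the prefactors $e^{t(2\Delta_\alpha+|\nu|+|\tilde\nu|)}$ on the left-hand side, which are therefore absorbed. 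After this change of variables the expectation becomes precisely $\Psi_{t,\epsilon,\alpha}(\mathbf{u},\mathbf{v})$ of \eqref{psiepsi} (before taking $\epsilon\to 0$), and the contour integral over the rescaled circles is the right-hand side of \eqref{tlim}.

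The remaining work is the two convergence statements. For the $\epsilon\to 0$ limit \eqref{limphieps}: the regularized SET correlation functions $\Psi_{t,\epsilon,\alpha}$ converge as $\epsilon\to 0$ because the SET insertion points $u_i,v_j$ are all distinct and bounded away from $0$ and from $\partial\D_{e^{-t}}$, so the Gaussian integration by parts expansion (as in Proposition \ref{ampSETholomorphic} and its proof) only involves Green functions evaluated at non-coinciding points, plus a GMC expectation of a bounded exponential functional; uniform integrability in $c$ in the weighted space $e^{-\beta c_-}L^2$ for $\beta>Q-\alpha$ comes from the standard Girsanov/Seiberg-bound estimates (the condition $\alpha<Q-\gamma$ is what lets us take $\chi=1$ in Proposition \ref{defprop:desc}). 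The uniformity over compact subsets of the configuration space of distinct points in $\D_{e^{-t}}\setminus\{0\}$ is immediate from the same estimates. For the identity \eqref{tlim} itself as an equality in $e^{-\beta c_-}L^2$: one interchanges the (finite-dimensional) contour integrals with the $L^2$-norm, justified by continuity of $(\mathbf{u},\mathbf{v})\mapsto \Psi_{t,\alpha}(\mathbf{u},\mathbf{v})$ into $e^{-\beta c_-}L^2$ on the relevant compact contours. The main obstacle I expect is the bookkeeping in the first step — correctly matching the non-commutative ordering of the Virasoro generators $\mathbf{L}^0_{-\nu}=\mathbf{L}^0_{-\nu(k)}\cdots\mathbf{L}^0_{-\nu(1)}$ with the nested ordering $\delta_1<\dots<\delta_k$ of the contours, and verifying that the normal-ordering constant $\mathbb{E}[(\partial_z X_{g_\D,D,\epsilon})^2]$ in \eqref{defSET} is exactly the one needed to make the free SET contour integrals reproduce the $\mathbf{L}^0_n$ without extra anomalous terms — but all of this is essentially done in \cite{GKRV} and I would mostly be assembling those ingredients.
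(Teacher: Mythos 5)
The paper does not actually prove this lemma: it is imported verbatim as \cite[Lemma 7.7]{GKRV}, so there is no in-text argument to compare against. Your plan is, as far as I can tell, a faithful reconstruction of how the cited result is established there — contour-integral representation of the free descendants via the free SET, Feynman--Kac action of $e^{-t\mathbf{H}}$ pushing the contours to radii $e^{-t}\boldsymbol{\delta}$ inside the potential-free hole $\D_{e^{-t}}$, cancellation of the dilation factors $e^{nt}$ per generator and $e^{2\Delta_\alpha t}$ from the vertex against the explicit prefactor, then Gaussian IBP plus Girsanov/GMC estimates for the $\epsilon\to0$ limit and the weighted-$L^2$ convergence. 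Two small points worth making explicit if you were to write this out: (i) converting the Feynman--Kac evaluation of the free state at the field $\phi_t$ on the circle $|z|=e^{-t}$ into the single disk amplitude \eqref{psiepsi} with hole $\D_{e^{-t}}$ uses the domain Markov decomposition of the GFF (harmonic extension into $\D_{e^{-t}}$ plus an independent Dirichlet field there), which your sketch leaves implicit; (ii) the condition $\alpha<Q-\gamma$ is not needed for this lemma (which holds for all $\alpha\in\R$ with $\beta>Q-\alpha$) — it only enters later, in Proposition \ref{defprop:desc}, to take $\chi=1$ in the $t\to\infty$ limit. Neither affects the soundness of the plan.
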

  
Note that the expression  \eqref{psiepsi} is the expectation involved in the definition of amplitudes \eqref{amplitude} with the function $F$ given by  
$$F=T_{g_\D,\epsilon}(\mathbf{u})\bbar T_{g_\D,\epsilon}(\mathbf{v}) e^{\mu e^{\gamma c}M_\gamma(\phi_{g_\D},\D_{e^{-t}})}.$$
It has  a hole in the potential, i.e. $\D_{e^{-t}}$, and further SET insertions.
Hence we need to extend the definition of amplitudes for the limiting object as $\epsilon\to 0$, having in mind to  extend  later the gluing rule of amplitudes to this notion of generalised amplitudes.

 The framework is the following. Consider an admissible surface $(\Sigma,g, {\bf z},\boldsymbol{\zeta})$ embedded in the Riemann sphere $\hat{\C}$ (viewed as the complex plane), with marked points ${\bf z}=(z_1,\dots,z_m)$ and associated weights  $\boldsymbol{\alpha}=(\alpha_1,\dots,\alpha_m)\in\R^m$ satisfying $\alpha_i<Q$ for all $i$. Furthermore we consider an open set $U\subset\Sigma$, which will stand for   holes in $\Sigma$ and which is not necessarily connected: this will typically be the case as we will put holes around many different vertex insertions. We also consider two vectors $\mathbf{u}=(u_1,\dots,u_k)\in U^k ,\mathbf{v}=(v_1,\dots,v_{\tilde k})\in U^{\tilde k}$. 
Recall now the definition of the sets
\begin{align}\label{caOt}
 \caO^{\bf z}_{\Sigma,U}=\{({\bf u},{\bf v})\in U^{k+\tilde k}\,|\, u_j,v_{j'} \text{ all distinct and } \forall j ,\forall i, \:  u_j\neq z_i\; ,v_j\neq z_i  \},\\
  \caO^{\rm ext}_{\Sigma,U} =\{({\bf z},{\bf u},{\bf v})\in \Sigma^m\times U^{k+\tilde k}\,|\, z_i,u_j,v_{j'} \text{ all distinct} \}.
\end{align}

\begin{definition}[\textbf{Generalised amplitudes}]\label{def:ampholes}
Let $({\bf u},{\bf v})\in  \caO^{\bf z}_{\Sigma,U}$.

\noindent {\bf (A) }
If $\partial\Sigma=\emptyset$, i.e. $\Sigma=\hat\C$, then we assume that the Seiberg bound \eqref{seiberg1} (thus with ${\bf g}=0$)
is satisfied and   the generalised amplitude with holes and SET insertions  is   defined by (recall \eqref{amplitudeSET1})
\[
 \caA_{\Sigma,U, g, {\bf z},\boldsymbol{\alpha}, \boldsymbol{\zeta}} (T_g( \mathbf{u})\bbar T_g( \mathbf{v}))  :=   \langle T_g( \mathbf{u})\bbar T_g( \mathbf{v})\prod_{i=1}^mV_{\alpha,g}(z_i)\rangle_{\hat\C,U,g}.
\]

 \vskip 3mm
 
\noindent {\bf (B)}  If  $\partial\Sigma$ has $b>0$ boundary connected components then for boundary fields $\tilde{\boldsymbol{\varphi}}:=(\tilde\varphi_1,\dots,\tilde\varphi_b)\in (H^{s}(\T))^b$ with $s<0$ we set  
\begin{multline}
 \label{amplitudeSET}
 \caA_{\Sigma,U, g, {\bf z},\boldsymbol{\alpha}, \boldsymbol{\zeta}}( T_g( \mathbf{u})\bbar T_g( \mathbf{v}),\tilde{\boldsymbol{\varphi}}) \\
 :=\lim_{\epsilon\to 0}  Z_{\Sigma,g}\caA^0_{\Sigma,g}(\tilde{\boldsymbol{\varphi}})
 \E \big[T_{g,\epsilon}({\bf u})\bbar T_{g,\epsilon}({\bf v})\prod_{i=1}^m V_{\alpha_i,g }(z_i)e^{-\frac{Q}{4\pi}\int_\Sigma K_g\phi_g\dd {\rm v}_g-\frac{Q}{2\pi}\int_{\partial\Sigma}k_g\phi_g\dd \ell_g -\mu M_\gamma^g (\phi_g,\Sigma\setminus U)}\big].
 \end{multline}
where  the  expectation $\E$ is over the Dirichlet GFF $X_{g,D}$ and $Z_{\Sigma,g},\caA^0_{\Sigma,g}(\tilde{\boldsymbol{\varphi}})$ are defined in \eqref{znormal}, \eqref{amplifree}.
\end{definition}

The above definition (existence of the limit) is well grounded. Indeed, the following statement is a straightforward adaptation from \cite[Prop 9.1]{GKRV20_bootstrap}

\begin{proposition}\label{ampSETholo}
The limit $\epsilon\to 0$ in Definition \ref{def:ampholes} is well defined and  defines a  continuous function of the variables $({\bf z},{\bf u},{\bf v})\in   \caO^{\rm ext}_{\Sigma,U} $, $\mu_0^{\otimes b}$ almost surely in $\tilde{\boldsymbol{\varphi}}\in (H^{s}(\T))^b$, with $s<0$.  
\end{proposition}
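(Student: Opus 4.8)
The proof of Proposition \ref{ampSETholo} is a direct adaptation of \cite[Prop 9.1]{GKRV}, so I will only indicate the main structure of the argument, the two cases being treated separately.

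\textbf{Case $\partial\Sigma=\emptyset$.} This is exactly Proposition \ref{ampSETholomorphic}, whose proof was already recalled in the excerpt. There is nothing new to do here.

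\textbf{Case $\partial\Sigma\neq\emptyset$.} First I would fix the boundary field $\tilde{\boldsymbol{\varphi}}$ and note that the free-field prefactor $Z_{\Sigma,g}\caA^0_{\Sigma,g}(\tilde{\boldsymbol{\varphi}})$ is independent of $\epsilon$ and finite $\mu_0^{\otimes b}$-a.e. by Lemma \ref{lem:amp1}, so it suffices to analyze the expectation in \eqref{amplitudeSET}. Recall that $\Phi_{g,\epsilon}=\phi_{g,\epsilon}+\tfrac{Q}{2}\omega_\epsilon$ with $\phi_g=X_{g,D}+P\tilde{\boldsymbol{\varphi}}$, so the regularized SET $T_{g,\epsilon}(u)$ is a polynomial of degree two in $\partial_z\Phi_{g,\epsilon}(u)$ and $\partial_z^2\Phi_{g,\epsilon}(u)$, minus the renormalization constant $a_{\Sigma,g,\epsilon}(u)=\E[(\partial_zX_{g,D,\epsilon}(u))^2]$, which by Lemma \ref{constanteSET} converges to the smooth function $a_{\Sigma,g}(u)$. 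The plan is then to apply Gaussian integration by parts in the variable $X_{g,D}$ exactly as in the proof of Proposition \ref{ampSETholomorphic}: this expresses the expectation as a finite linear combination of terms of the form
\begin{align*}
\int_{(\Sigma\setminus U)^l}{\boldsymbol\partial}^{2k}G_{g,D,\epsilon}({\bf u},{\bf u}')\bbar{\boldsymbol\partial}^{2\tilde k}G_{g,D,\epsilon}({\bf v},{\bf v}')\,
\mathcal{B}_{\Sigma,g}\big(\textstyle\prod_{j=1}^lV_{\gamma,g}(w_j)\prod_{i=1}^mV_{\alpha_i,g}(z_i),\tilde{\boldsymbol{\varphi}}\big)\prod_{j=1}^l\dd{\rm v}_g(w_j)
\end{align*}
where ${\bf u}'=({\bf u},{\bf w},{\bf z})$, ${\bf v}'=({\bf v},{\bf w},{\bf z})$, the holomorphic (resp. anti-holomorphic) derivatives act only on the ${\bf u}$ (resp. ${\bf v}$) arguments, the renormalization constants $a_{\Sigma,g,\epsilon}$ exactly cancel the coinciding-point divergences coming from $\partial_z\partial_{z'}G_{g,D,\epsilon}$ at $u_i=u_i$ (using Lemma \ref{constanteSET} and \eqref{DSigma-Green}), and $\mathcal{B}_{\Sigma,g}$ denotes the positive functional built from the Dirichlet GFF with potential cut on $\Sigma\setminus U$ and $l$ extra vertex insertions of weight $\gamma$. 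The key analytic input, the analogue of the a priori bound \eqref{nontrivial}, is that for $\tilde{\boldsymbol{\varphi}}$ fixed outside a $\mu_0^{\otimes b}$-null set one has $\int_{(\Sigma\setminus U)^l}\sup_{\epsilon'}\mathcal{B}_{\Sigma,g}(\prod_jV_{\gamma,g}(w_j)\prod_iV_{\alpha_i,g,\epsilon'}(z_i),\tilde{\boldsymbol{\varphi}})\prod_j\dd{\rm v}_g(w_j)<\infty$; this follows from the Girsanov-type computation in Lemma \ref{lem:amp3} combined with the negative-moment bounds for GMC exactly as in \cite[Lemma 9.2]{GKRV} (the presence of the hole $U$ and of the boundary curvature term $\int_{\partial\Sigma}k_g\phi_g$ only contributes bounded multiplicative factors once $\tilde{\boldsymbol{\varphi}}$ is frozen).

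Granting this bound, since $G_{g,D}$ and all its derivatives are smooth away from the diagonal, each term above converges as $\epsilon\to0$ to a continuous function of $({\bf z},{\bf u},{\bf v})\in\caO^{\rm ext}_{\Sigma,U}$, uniformly on compacts, and the limit is independent of the mollifier $\rho$; the continuity in $\tilde{\boldsymbol{\varphi}}$-a.e. sense is automatic because $\tilde{\boldsymbol{\varphi}}$ only enters through the harmonic extension $P\tilde{\boldsymbol{\varphi}}$, which is smooth in $\Sigma^\circ\supset\{{\bf u},{\bf v}\}\cup({\bf z}\cap\Sigma^\circ)$, and through the bounded prefactor. Summing the finitely many terms gives the claim. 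The main obstacle, as in \cite{GKRV}, is establishing the uniform integrability bound replacing \eqref{nontrivial} in the boundary case, i.e. controlling the GMC moments uniformly in the regularization while keeping track of the dependence on the frozen boundary data; but this is handled by the same Girsanov plus Hölder argument as in Lemmas \ref{lem:amp3} and \cite[Lemma 9.2]{GKRV}.
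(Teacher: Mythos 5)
Your proposal is correct and follows exactly the route the paper intends: the paper gives no separate proof of Proposition \ref{ampSETholo}, stating only that it is a straightforward adaptation of \cite[Prop 9.1]{GKRV}, and your argument (freeze $\tilde{\boldsymbol{\varphi}}$, Gaussian integration by parts against the Dirichlet Green function, cancellation of $a_{\Sigma,g,\epsilon}$ against the coinciding-point divergence, and the a priori integrability bound in the spirit of \eqref{nontrivial} obtained via the Girsanov computation of Lemma \ref{lem:amp3}) is precisely that adaptation. The only cosmetic slip is writing the regularized field as $\phi_{g,\epsilon}+\tfrac{Q}{2}\omega_\epsilon$; the convolution $\rho_\epsilon$ is applied to $\Phi_g=\phi_g+\tfrac{Q}{2}\omega$ as a whole, but this does not affect the argument.
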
 

Now we have formulated the definition of generalised amplitudes, we summarise below two important results. First, in view of manipulating later the geometry of  these amplitudes,   we state the Weyl covariance  (Prop. \ref{Weyl}) for generalised amplitudes:
\begin{proposition}\label{WeylSET}{\bf  (Weyl covariance)}  Let   $\omega\in H^1_0(\Sigma)$ if $\partial\Sigma\not=\emptyset$ or $\omega\in H^1(\Sigma)$ if $\partial\Sigma=\emptyset$. Then % for $F:H^{-s}(\Sigma,g)\to \R^+$  measurable  
\begin{align*}
\caA_{\Sigma,U,e^\omega g,{\bf z},\boldsymbol{\alpha}, \boldsymbol{\zeta}}(T_{e^\omega g}( \mathbf{u})\bbar T_{e^\omega g}( \mathbf{v}),\tilde{\boldsymbol{\varphi}})= e^{c_LS_{\rm L}^0(\Sigma,g,\omega)-\sum_i\Delta_{\alpha_i}\omega(z_i) }\caA_{\Sigma, U,g,{\bf z},\boldsymbol{\alpha},\boldsymbol{\zeta} }\big( T_g( \mathbf{u})\bbar T_g( \mathbf{v}), \tilde{\boldsymbol{\varphi}}\big).
\end{align*}
\end{proposition}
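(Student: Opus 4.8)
The plan is to reduce Proposition \ref{WeylSET} to the already-proven Weyl covariance of amplitudes (Proposition \ref{Weyl}) by carefully tracking how the stress-energy insertions $T_{g,\epsilon}$ transform under a conformal change of metric $g'=e^\omega g$. The subtlety compared to Proposition \ref{Weyl} is that $T_{g,\epsilon}$ is not a functional of the Liouville field $\phi_g$ that is invariant under the shift $\phi_g\mapsto \phi_g-\tfrac{Q}{2}\omega$ coming from the Girsanov argument; rather, the field entering the SET is $\Phi_g=\phi_g+\tfrac{Q}{2}\omega$, which is precisely designed so that $\Phi_{e^\omega g}=\phi_{e^\omega g}+\tfrac{Q}{2}(\omega_g+\omega)$ where $\omega_g$ is the conformal factor of $g$ relative to $|dz|^2$. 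The first step is therefore to establish, at the level of regularized quantities on a fixed complex chart, the pointwise identity relating $T_{e^\omega g,\epsilon}(u)$ and $T_{g,\epsilon}(u)$. Using $\Phi_{e^\omega g}=\Phi_g+\tfrac{Q}{2}\omega$ (both are the $|dz|^2$-regularization of the same underlying object plus the respective deterministic shift), one gets $\partial_z\Phi_{e^\omega g,\epsilon}=\partial_z\Phi_{g,\epsilon}+\tfrac{Q}{2}\partial_z\omega_\epsilon$ and likewise for $\partial_z^2$, and the renormalization constant $a_{\Sigma,g,\epsilon}$ is unchanged (Lemma \ref{constanteSET}, part 2, gives $a_{\Sigma,e^\omega g}=a_{\Sigma,g}$; since $\omega\in H^1_0$ or the relevant conformal factor is smooth, the cross terms involving $\partial_z\omega_\epsilon$ have deterministic limits). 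Expanding the square, $T_{e^\omega g,\epsilon}(u)=T_{g,\epsilon}(u)+Q\big(\tfrac{Q}{2}\partial_z^2\omega_\epsilon(u)\big)-\big(2\partial_z\Phi_{g,\epsilon}(u)\tfrac{Q}{2}\partial_z\omega_\epsilon(u)+\tfrac{Q^2}{4}(\partial_z\omega_\epsilon(u))^2\big)$, so $T_{e^\omega g,\epsilon}$ equals $T_{g,\epsilon}$ plus terms that are either deterministic or linear in $\partial_z\Phi_{g,\epsilon}$.

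The second step is to plug this into the definition \eqref{amplitudeSET} (or \eqref{amplitudeSET1} in the closed case) of $\caA_{\Sigma,U,e^\omega g,{\bf z},\boldsymbol\alpha,\boldsymbol\zeta}(T_{e^\omega g}(\mathbf u)\bbar T_{e^\omega g}(\mathbf v),\tilde{\boldsymbol\varphi})$ and perform exactly the same Girsanov/Cameron-Martin manipulation as in the proof of Proposition \ref{Weyl}: the $\exp(-\tfrac{Q}{4\pi}\int\Delta_g\omega\,X_{g,D}\,{\rm dv}_g)$ factor produced by rewriting the curvature and boundary terms in the metric $e^\omega g$ shifts $X_{g,D}\mapsto X_{g,D}-\tfrac{Q}{2}\omega$, generating the prefactor $\exp\big(\tfrac{6Q^2}{96\pi}\int(|d\omega|_g^2+2K_g\omega)\,{\rm dv}_g\big)$ together with the vertex-operator factors $e^{\tfrac{\alpha_i^2}{4}\omega(z_i)}$ from \eqref{scalingvertex} and the change in $M_\gamma^g$; combined with the Polyakov anomaly from $Z_{\Sigma,e^\omega g}$ as in Proposition \ref{Weyl}, this assembles into $e^{c_L S_{\rm L}^0(\Sigma,g,\omega)-\sum_i\Delta_{\alpha_i}\omega(z_i)}$. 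The point is that the Girsanov shift $X_{g,D}\mapsto X_{g,D}-\tfrac{Q}{2}\omega$ applied to $T_{g,\epsilon}(u)$ — which depends on $\phi_g=X_{g,D}+P\tilde{\boldsymbol\varphi}$ through $\Phi_g=\phi_g+\tfrac{Q}{2}\omega_g$ — turns $\Phi_g$ into $\Phi_g-\tfrac{Q}{2}\omega$, hence $\partial_z\Phi_{g,\epsilon}\mapsto\partial_z\Phi_{g,\epsilon}-\tfrac{Q}{2}\partial_z\omega_\epsilon$ and $\partial_z^2\Phi_{g,\epsilon}\mapsto\partial_z^2\Phi_{g,\epsilon}-\tfrac{Q}{2}\partial_z^2\omega_\epsilon$, so that the shifted $T_{g,\epsilon}(u)$ becomes exactly $T_{e^\omega g,\epsilon}(u)$ with the sign of the $\omega$-corrections flipped. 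A short computation confirms that the corrections from the previous paragraph and the corrections from the Girsanov shift cancel precisely, leaving $T_{g,\epsilon}(u)$ itself inside the shifted expectation — i.e. after the Girsanov transform one is computing $\caA_{\Sigma,U,g,{\bf z},\boldsymbol\alpha,\boldsymbol\zeta}(T_g(\mathbf u)\bbar T_g(\mathbf v),\tilde{\boldsymbol\varphi})$ up to the anomaly prefactor. One then takes $\epsilon\to0$ using Proposition \ref{ampSETholo} to justify that all these regularized identities pass to the limit, noting that the deterministic $\omega$-dependent terms have uniformly bounded limits on $\caO^{\rm ext}_{\Sigma,U}$ and the linear-in-$\partial_z\Phi_{g,\epsilon}$ cross terms are handled by the same Gaussian integration-by-parts bounds \eqref{nontrivial}–\eqref{upperapri} used in Proposition \ref{ampSETholomorphic}.

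I expect the main obstacle to be bookkeeping: making sure that every $\omega$-dependent correction term generated by the three sources — (i) the difference $T_{e^\omega g,\epsilon}-T_{g,\epsilon}$, (ii) the Girsanov shift acting on $T_{g,\epsilon}$, and (iii) the standard anomaly/vertex/determinant factors — is accounted for with the correct sign, and verifying the exact cancellation in (i)+(ii). A clean way to organize this is to observe that $\Phi_{e^\omega g}$ is built from $X_{g,D}$ via $\Phi_{e^\omega g}=X_{g,D}+P\tilde{\boldsymbol\varphi}+\tfrac{Q}{2}\omega_g+\tfrac{Q}{2}\omega$, while the Girsanov-shifted version of $\Phi_g$ is $(X_{g,D}-\tfrac{Q}{2}\omega)+P\tilde{\boldsymbol\varphi}+\tfrac{Q}{2}\omega_g = \Phi_{e^\omega g}-Q\omega$; but since $\omega\in H^1_0(\Sigma)$ vanishes on $\partial\Sigma$ and — more importantly — since $T$ is built from \emph{derivatives} of $\Phi$, and here we only need the identity on the holes $U$ away from the region where $\omega$ matters for boundary terms, a more direct route is to just note $\partial_z$ of $(\Phi_g$ shifted$)$ versus $\partial_z\Phi_{e^\omega g}$ differ by $\partial_z(Q\omega)$, expand both SETs, and check the algebra. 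This is purely deterministic and elementary once set up, so the real work is notational care rather than any new analytic estimate; all the hard analysis (existence of limits, a priori bounds) is already supplied by Propositions \ref{ampSETholo} and \ref{ampSETholomorphic} and Lemma \ref{constanteSET}.
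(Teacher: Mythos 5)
Your proposal is correct and follows essentially the same route as the paper: the paper's proof consists precisely of applying Proposition \ref{Weyl} to the functional $F=T_{e^{\omega}g,\epsilon}(\mathbf{u})\bbar T_{e^\omega g,\epsilon}(\mathbf{v}) e^{\mu e^{\gamma c}M_\gamma(\phi_g+\frac{Q}{2}\omega,U)}$ and then passing to the limit $\epsilon\to 0$ via Proposition \ref{ampSETholo}, and the cancellation you verify by hand is exactly the observation that $F(\cdot-\tfrac{Q}{2}\omega)$ turns $T_{e^\omega g,\epsilon}$ into $T_{g,\epsilon}$ because $\Phi_{e^\omega g}=\Phi_g+\tfrac{Q}{2}\omega$ while the renormalization constants agree by Lemma \ref{constanteSET}. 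The only stylistic difference is that you re-derive the Girsanov computation rather than invoking Proposition \ref{Weyl} as a black box.
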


\begin{proof}
This results from Prop \ref{Weyl} applied to $F=T_{  e^{\omega}g,\epsilon}(\mathbf{u})\bbar T_{e^\omega g,\epsilon}(\mathbf{v}) e^{\mu e^{\gamma c}M_\gamma(\phi_g+\frac{Q}{2}\omega,U)}$ and then taking the limit $\epsilon\to 0$ using Prop. \ref{ampSETholo}.
\end{proof}

Next, and as a consequence of the previous discussion,  we  write the analytically continued eigenstates (more precisely \eqref{tlim}) as   generalised amplitudes on $\D$ with boundary  $\partial\D$ equipped with the  parametrisation $\zeta_\D(e^{i\theta})=e^{i\theta}$: 
\begin{lemma} \label{ampdiskSET}
 Let $\alpha\in\R$ with $\alpha<Q$ and $\nu,\tilde{\nu}\in \mc{T}$. Then 
\begin{align*}
  e^{(2\Delta_{\alpha}+|\nu|+|\tilde\nu|)t}   e^{-t\mathbf{H}}\Psi^0_{\alpha,\nu,\tilde\nu}(\tilde\varphi)= &\frac{Z_{\D,g_\D} ^{-1}}{(2\pi i)^{s(\nu)+s(\tilde \nu)}} 
 \oint_{|\mathbf{u}|=\boldsymbol{\delta}_t}   \oint_{|\mathbf{v}|=\boldsymbol{\tilde\delta}_t}  
\mathbf{u}^{1-\nu}\bbar{\mathbf{v}}^{1-\tilde\nu} \mathcal{A}_{\D,\D_{e^{-t}},g_\D,0,\alpha ,\zeta_\D}(T_g({\bf u})   \bbar T_g({\bf v}),\tilde\varphi)    \dd   \bbar{\mathbf{v}}\dd   \mathbf{u}.
 \end{align*}
\end{lemma}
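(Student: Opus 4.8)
\textbf{Proof plan for Lemma \ref{ampdiskSET}.}

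The plan is to recognize that the right-hand side of \eqref{tlim} in Lemma \ref{TTLemma} is, up to the constant $Z_{\D,g_\D}$, exactly a generalized disk amplitude with a hole and SET insertions, as defined in Definition \ref{def:ampholes}(B). The work therefore consists in matching \eqref{psiepsi} with \eqref{amplitudeSET} on the unit disk $\D$ equipped with the flat metric $g_\D=|dz|^2$, the parametrization $\zeta_\D(e^{i\theta})=e^{i\theta}$, one marked point $0$ with weight $\alpha$, and hole $U=\D_{e^{-t}}$, and then transporting the contour integral identity through this identification.

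First I would spell out the definition of $\mathcal{A}_{\D,\D_{e^{-t}},g_\D,0,\alpha,\zeta_\D}(T_{g_\D}({\bf u})\bbar T_{g_\D}({\bf v}),\tilde\varphi)$ from \eqref{amplitudeSET}: since $g_\D$ is admissible and flat, $K_{g_\D}=0$ and $k_{g_\D}=0$, so the curvature terms drop out and the normalization constant \eqref{znormal} reduces to $Z_{\D,g_\D}=\det(\Delta_{\D,D})^{-1/2}$; the free-field amplitude is $\caA^0_{\D,g_\D}(\tilde\varphi)=e^{-\frac{1}{2}(\tilde\varphi,(\mathbf{D}_\D-\mathbf{D})\tilde\varphi)_2}$. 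Hence
\[
\mathcal{A}_{\D,\D_{e^{-t}},g_\D,0,\alpha,\zeta_\D}(T_{g_\D}({\bf u})\bbar T_{g_\D}({\bf v}),\tilde\varphi)
= Z_{\D,g_\D}\,\caA^0_{\D,g_\D}(\tilde\varphi)\lim_{\epsilon\to 0}\E\big[T_{g_\D,\epsilon}({\bf u})\bbar T_{g_\D,\epsilon}({\bf v})V_{\alpha,g_\D}(0)e^{-\mu e^{\gamma c}M_\gamma(\phi_{g_\D},\D\setminus \D_{e^{-t}})}\big],
\]
the expectation being over $X_{g_\D,D}$. Next I would compare with $\Psi_{t,\epsilon,\alpha}({\bf u},{\bf v})$ in \eqref{psiepsi}: it is $e^{-Qc}\E[T_{g_\D,\epsilon}({\bf u})\bbar T_{g_\D,\epsilon}({\bf v})V_{\alpha,g_\D}(0)e^{-\mu e^{\gamma c}M_\gamma(\phi_{g_\D},\D\setminus\D_{e^{-t}})}]$. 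Since $\D_{e^{-t}}=e^{-t}\D$, the two expectations coincide, so that
\[
\Psi_{t,\epsilon,\alpha}({\bf u},{\bf v}) = e^{-Qc}\,\caA^0_{\D,g_\D}(\tilde\varphi)^{-1}Z_{\D,g_\D}^{-1}\,\mathcal{A}^{\epsilon}_{\D,\D_{e^{-t}},g_\D,0,\alpha,\zeta_\D}(T_{g_\D}({\bf u})\bbar T_{g_\D}({\bf v}),\tilde\varphi),
\]
where $\mathcal{A}^\epsilon$ denotes the $\epsilon$-regularized amplitude before taking the limit. Here I must be careful about the role of $c$ and $\caA^0_{\D,g_\D}(\tilde\varphi)$: in \eqref{psiepsi} the field $\phi_{g_\D}=X_{g_\D,D}+P\tilde\varphi$ and the notation $e^{-Qc}$ refers to $c$ being the constant mode of the boundary field $\tilde\varphi$, while $\Psi^0_{\alpha}(c,\varphi)=e^{(\alpha-Q)c}$ carries the $e^{-Qc}$ factor; similarly the free-field amplitude factor $\caA^0_{\D,g_\D}(\tilde\varphi)$ appears on the amplitude side but not on the Hamiltonian side. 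The correct bookkeeping is that $e^{-t\mathbf{H}}$ acts on functions of $\tilde\varphi=c+\varphi$ and the identity in \eqref{tlim} already holds as an equality of elements of $e^{-\beta c_-}L^2(\R\times\Omega_\T)$; so I would simply substitute the identification of $\Psi_{t,\epsilon,\alpha}$ into \eqref{tlim}, take the limit $\epsilon\to 0$ using Proposition \ref{ampSETholo} (which guarantees the $\epsilon\to0$ limit of the generalized amplitude exists and is continuous in $({\bf u},{\bf v})$ away from coincidences) together with the uniform convergence over compact subsets stated in Lemma \ref{TTLemma}, and pull the factor $Z_{\D,g_\D}^{-1}$ out of the contour integrals.

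The main point to get right — and the only genuine obstacle — is the consistency of the normalization factors $e^{-Qc}$, $\caA^0_{\D,g_\D}(\tilde\varphi)$ and $Z_{\D,g_\D}$ between the two sides, i.e. checking that the probabilistic object $\Psi_{t,\epsilon,\alpha}$ of \cite{GKRV} and the generalized amplitude of Definition \ref{def:ampholes} differ by exactly the constant $Z_{\D,g_\D}$ and no spurious $c$- or $\varphi$-dependent prefactor. This is a matter of carefully tracing the conventions: the amplitude includes the explicit free-field factor $\caA^0_{\D,g_\D}(\tilde\varphi)$ and the determinant $Z_{\D,g_\D}$, whereas the Feynman–Kac representation \eqref{FKgeneral} of $e^{-t\mathbf{H}}$ incorporates these through the Gaussian structure of the Ornstein–Uhlenbeck dynamics; the computation in the proof of Proposition \ref{prop:annulussimple} (the free-field algebra \eqref{algebrafreefield}–\eqref{kernel0}) already performed precisely this matching for the annulus case, and the disk case is the degenerate limit thereof. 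Once this constant is pinned down, the remaining steps — substituting, commuting the limit with the finite-dimensional contour integrals, and factoring out $Z_{\D,g_\D}^{-1}$ — are routine. So the proof reduces to: (i) unfold both definitions; (ii) invoke Lemma \ref{TTLemma} and $\D_{e^{-t}}=e^{-t}\D$ to identify the expectations; (iii) track the constant $Z_{\D,g_\D}$ using the conventions of Definition \ref{def:amp}; (iv) apply Proposition \ref{ampSETholo} to pass to the limit $\epsilon\to0$ inside the contour integrals.
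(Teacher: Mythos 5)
Your overall strategy is the paper's: identify $\Psi_{t,\alpha}(\mathbf{u},\mathbf{v})$ from Lemma \ref{TTLemma} with the generalized disk amplitude of Definition \ref{def:ampholes} up to the constant $Z_{\D,g_\D}$, substitute into \eqref{tlim}, and pass to the limit. But the one step you correctly single out as "the only genuine obstacle" — matching the factors $e^{-Qc}$ and $\caA^0_{\D,g_\D}(\tilde\varphi)$ — is left unresolved, and the one concrete assertion you make about it is wrong. You claim $k_{g_\D}=0$ for the flat metric $g_\D=|dz|^2$ on $\D$. The footnote "$k_g=0$ when $g$ is admissible" applies to metrics of the form $|dz|^2/|z|^2$ near the boundary; the flat metric is not of this form, and the geodesic curvature of $\T$ in $|dz|^2$ is $k_{g_\D}=1$. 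This is not a cosmetic point: the boundary term $-\frac{Q}{2\pi}\int_{\partial\D}k_{g_\D}\phi_{g_\D}\,\dd\ell_{g_\D}=-Qc$ (only the constant mode of $\tilde\varphi$ survives the integral) is precisely what produces the prefactor $e^{-Qc}$ appearing in \eqref{psiepsi}. With your $k_{g_\D}=0$ the amplitude and $\Psi_{t,\alpha}$ would differ by $e^{-Qc}$ and the stated identity would be off by that factor.

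The second unresolved piece is the factor $\caA^0_{\D,g_\D}(\tilde\varphi)^{-1}$ that you carry through your identification: if it is not identically $1$, it is a $\varphi$-dependent prefactor that destroys the equality. It does equal $1$, because on the flat unit disk the Dirichlet-to-Neumann map is the Fourier multiplier by $|n|$ (computed explicitly in the proof of Lemma \ref{lemmaDSigma-D}), so $\mathbf{D}_\D=\mathbf{D}$ and $\caA^0_{\D,g_\D}=e^{0}=1$; but you never verify this, and your appeal to the annulus computation in Proposition \ref{prop:annulussimple} does not substitute for it, since that computation uses the metric $|dz|^2/|z|^2$ (for which $k_g=0$ and the free-field amplitude is nontrivial) and so does not address either of the two points at issue here. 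Once you insert the two facts $k_{g_\D}=1$ (whence the $e^{-Qc}$) and $\caA^0_{\D,g_\D}=1$, together with $K_{g_\D}=0$, the identification $\caA_{\D,\D_{e^{-t}},g_\D,0,\alpha,\zeta_\D}(\cdots,\tilde\varphi)=Z_{\D,g_\D}\,\Psi_{t,\alpha}(\mathbf{u},\mathbf{v})$ is immediate and the rest of your argument goes through.
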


\begin{proof}
 To get this expression, we combine Lemma \ref{TTLemma} with the obervations that  the flat metric $g_\D$  has $0$ curvature and geodesic curvature $1$ (so that the geodesic curvature term contributes the $e^{-Qc}$ in \eqref{psiepsi}) and $\caA^0_{\D,g_\D}(\tilde{\boldsymbol{\varphi}})=1$.
\end{proof}

\subsubsection{Conformal map} \label{Conformal map} 

 We want to write the disk amplitudes in Lemma \ref{ampdiskSET} and the pant amplitude as   amplitudes on $\caD_j$ and $\psi(\caP)$. Let us start with the former. We fix once and for all an admissible metric  $g=e^\omega g_\D$ on $\D$ where for later convenience we take $\omega=0$ in a neighborhood of the origin. % We have %$\caP_{g_\D}\varphi=\caP_{g}\varphi$ and  $X_{{g_\D},D}\stackrel{d}=X_{g,D}$. 
By the Weyl covariance in Proposition \ref{WeylSET}  (using   $\omega(0)=0$) we obtain
\begin{align} \label{defzg}
\Psi_{t,\alpha_j}(\mathbf{u},\mathbf{v}) =
 Z(g)Z_{\D,g_\D}^{-1}\caA_{\D,e^{-t}\D,g,0,\alpha_j,\zeta_\D} (T_g(\mathbf{u})\bar T_g(\mathbf{v}))
\end{align}
where $$Z(g)=e^{-c_{\rm L}S_{\rm L}^0(\D,g_\D,\omega)}.
$$
 Next, to transport this expression to the surface $\psi(\caD_i)$ we need the transformation property of the SET under conformal maps. Recall the definition of  the Schwarzian derivative 
\begin{align}\label{Schwarzian2}
 {\rm S}_\psi:=\psi'''/\psi'-\frac{3}{2}(\psi''/\psi')^2.
\end{align}
We have then

 \begin{proposition}\label{diffeoSET} 
 {\bf (Conformal changes of coordinates)}  Let $(\Sigma,g)$ and $(\Sigma',g')$ be two admissible surfaces embedded in the complex plane with $\partial \Sigma\not=\emptyset$ and such that there exists a biholomorphism  $\psi :\Sigma\to\Sigma'$ with $g'=\psi_\ast g$.  We suppose $g$ is of  the form  $g=e^{\omega(z)}|dz|^2$ with $\omega\in C^\infty(\Sigma )$.  
 Define
 \begin{align}\label{Schwarzian1}
T^\psi_{g'}:=\psi'^2T_{g'}\circ\psi+\frac{Q^2}{2}
 {\rm S}_\psi.
\end{align} 
Then
\begin{align*}
\caA_{\Sigma,U, g,{\bf z},\boldsymbol{\alpha},\boldsymbol{\zeta}}(T_{g}({\bf u})\bar T_{g} ({\bf v}))=  \caA_{\Sigma',U', g',{\bf z}',\boldsymbol{\alpha},\boldsymbol{\zeta}'}( %T_{g'}^\psi({\bf u}) \bar T_{g'}^\psi({\bf v})
 T^\psi_{g'}({\bf u})  \bar T^\psi_{g'}({\bf v})).
\end{align*}
where $U$ is an open set containing $u_i,v_j$, $U'=\psi(U)$,  ${\bf z}'=\psi({\bf z})$ and $\boldsymbol{\zeta'}=\psi\circ\boldsymbol{\zeta}$.
\end{proposition}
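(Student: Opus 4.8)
The plan is to prove Proposition \ref{diffeoSET} by reducing everything to the definition of generalized amplitudes in Definition \ref{def:ampholes} and the behavior of the regularized SET field \eqref{defSET} under the conformal change $\psi$. First I would recall that $\caA_{\Sigma,\mc{D},g,{\bf z},\boldsymbol{\alpha},\boldsymbol{\zeta}}(T_g({\bf u})\bar T_g({\bf v}),\tilde{\boldsymbol{\varphi}})$ is defined, by \eqref{amplitudeSET}, as the $\eps\to 0$ limit of a normalized expectation involving $T_{g,\eps}({\bf u})\bar T_{g,\eps}({\bf v})$ against the law of the Dirichlet GFF $X_{g,D}$, and that by diffeomorphism invariance of the underlying path integral the Dirichlet GFF pushes forward correctly: since $\psi$ is a biholomorphism with $g'=\psi_*g$, one has $X_{g',D}\stackrel{\rm law}{=}X_{g,D}\circ\psi^{-1}$, the harmonic extensions transform as $P_{\Sigma'}(\tilde{\boldsymbol{\varphi}})=P_\Sigma(\tilde{\boldsymbol{\varphi}})\circ\psi^{-1}$ (using that $\Delta$ is conformally covariant on functions, hence harmonicity is preserved and boundary values/parametrizations match via $\boldsymbol{\zeta}'=\psi\circ\boldsymbol{\zeta}$), the GMC measures satisfy $M_\gamma^{g'}(\phi_{g'},\psi(A))=M_\gamma^g(\phi_g,A)$ for $A=\Sigma\setminus\mc{D}$ by the standard conformal covariance of multiplicative chaos together with the $\frac{\gamma Q}{2}$-shift absorbed into $\Phi$, the vertex operators transform with the factor $\prod_i|\psi'(z_i)|^{-2\Delta_{\alpha_i}}$, and the determinant/geodesic-curvature normalization $Z_{\Sigma,g}$ is conformally invariant (Polyakov's formula with $\omega|_{\pl\Sigma}$ contributions cancelling, as in the proof of Proposition \ref{Weyl}). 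The upshot is that, modulo the SET insertions, $\caA_{\Sigma,\mc{D},g,{\bf z},\boldsymbol{\alpha},\boldsymbol{\zeta}}$ and $\caA_{\Sigma',\mc{D}',g',{\bf z}',\boldsymbol{\alpha},\boldsymbol{\zeta}'}$ are equal after transporting all fields by $\psi$, so the entire content of the proposition is the transformation law of $T_{g,\eps}$.

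Next I would carry out the key computation: the anomalous transformation of the regularized SET. Writing $\Phi_g=\phi_g+\frac{Q}{2}\omega$ and $\Phi_{g'}=\phi_{g'}+\frac{Q}{2}\omega'$ with $e^{\omega'}=\psi_*(e^\omega|dz|^2)/|dz|^2$, i.e. $\omega'(\psi(z))=\omega(z)-\log|\psi'(z)|^2$, one has the pointwise identity $\Phi_{g'}(\psi(z))=\Phi_g(z)-\frac{Q}{2}\log|\psi'(z)|^2$ (the $\phi$-parts transform without anomaly, the $\omega$-shift produces the $\log|\psi'|^2$ term). Differentiating, $\partial_z(\Phi_{g'}\circ\psi)(z)=\psi'(z)(\partial_w\Phi_{g'})(\psi(z))-\tfrac{Q}{2}\frac{\psi''(z)}{\psi'(z)}$ and $\partial_z^2(\Phi_{g'}\circ\psi)(z)=\psi'(z)^2(\partial_w^2\Phi_{g'})(\psi(z))+\psi''(z)(\partial_w\Phi_{g'})(\psi(z))-\tfrac{Q}{2}\partial_z\big(\tfrac{\psi''}{\psi'}\big)(z)$; substituting into $Q\partial^2\Phi-(\partial\Phi)^2$ and collecting terms, the first-derivative cross terms combine so that $Q\partial_z^2(\Phi_{g'}\circ\psi)-(\partial_z(\Phi_{g'}\circ\psi))^2=\psi'^2\big(Q\partial_w^2\Phi_{g'}-(\partial_w\Phi_{g'})^2\big)\circ\psi+\tfrac{Q^2}{2}{\rm S}_\psi$ with ${\rm S}_\psi$ as in \eqref{Schwarzian2}. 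It then remains to handle the renormalization constants $a_{\Sigma,g,\eps}$: for the regularized objects one needs to check that the discrepancy between $\eps$-regularizing $\Phi_{g'}\circ\psi$ and $\eps$-regularizing $\Phi_{g'}$ and then precomposing with $\psi$ contributes, in the limit, exactly $\psi'^2 a_{\Sigma',g'}\circ\psi - a_{\Sigma,g} - \tfrac{1}{12}{\rm S}_\psi$ — but this is precisely the content of Lemma \ref{constanteSET}, eq. \eqref{atransf}. Combining, $T_{g,\eps}({\bf u})\to\psi'^2 T_{g'}\circ\psi+\tfrac{Q^2}{2}{\rm S}_\psi=T^\psi_{g'}$ in the sense of the $\eps\to 0$ limit inside the correlation functions, and similarly for the antiholomorphic copies with $\bar\psi',\overline{{\rm S}_\psi}$.

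The main obstacle — really the only nontrivial point — is making the SET transformation rigorous at the level of the regularized fields, i.e. interchanging the conformal change of variables with the mollification by $\rho_\eps$ and the $\eps\to 0$ limit. The subtlety is that the mollifier $\rho_\eps$ is defined with respect to the ambient Euclidean coordinate, so $\Phi_{g',\eps}\circ\psi$ is \emph{not} literally $(\Phi_g\circ\psi)_\eps$; the difference is an $\eps$-dependent but locally-uniformly-controlled quantity whose limit produces exactly the Schwarzian correction to the renormalization constant. I would handle this by Taylor-expanding $\psi$ to third order inside the convolution integrals, exactly as in the proof of Lemma \ref{constanteSET} (the expansions \eqref{cst1}, \eqref{cst2} and the moment identities \eqref{various}), and invoking Proposition \ref{ampSETholo} to justify that the $\eps\to 0$ limit exists and is continuous in $({\bf z},{\bf u},{\bf v})\in\caO^{\rm ext}$, so that all manipulations take place under a convergent expectation and the limiting identity $\caA_{\Sigma,\mc{D},g,{\bf z},\boldsymbol{\alpha},\boldsymbol{\zeta}}(T_g({\bf u})\bar T_g({\bf v}))=\caA_{\Sigma',\mc{D}',g',{\bf z}',\boldsymbol{\alpha},\boldsymbol{\zeta}'}(T^\psi_{g'}({\bf u})\bar T^\psi_{g'}({\bf v}))$ follows. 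The $\pl\Sigma=\emptyset$ case ($\Sigma=\hat\C$) is identical except that one uses part 1) of Lemma \ref{constanteSET} ($a_{\Sigma,g,\eps}=o(1)$) and the Green function $\tilde G_g$ of \eqref{tildeG} in place of the Dirichlet one.
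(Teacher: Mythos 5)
Your proposal is correct and follows essentially the same route as the paper: apply diffeomorphism covariance at the regularized level so that $\Phi_{g,\epsilon}$ becomes the regularization of $\Phi_{g'}\circ\psi+Q\ln|\psi'|$, then extract the $\frac{Q^2}{2}{\rm S}_\psi$ term from the chain-rule computation on $Q\partial^2-(\partial\cdot)^2$. The only cosmetic difference lies in the regularization bookkeeping: the paper performs a Gaussian integration by parts and observes that the Wick-ordering constant $a_{\Sigma,g,\epsilon}$ removes all coinciding-point Green functions, so the $\epsilon\to0$ limit is insensitive to how the mollifier interacts with $\psi$ and one may treat $\Phi_{g'}$ as smooth, whereas you track the transformation of the constant explicitly via \eqref{atransf}; both yield the same conclusion.
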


\begin{proof} From Proposition \ref{WeylSET}, it suffices to treat the case $\omega=0$ in $U$. Also, for notational readability, we only treat the case of $T$ insertions but the argument is the same if including $\bar T$ insertions. %Let $k$ be the number of $T$ insertions. 
Let $T_{g,\epsilon}$ be the regularised SET  \eqref{defSET}.  By diffeomorphism covariance (i.e. Prop. \ref{Weyl}  item 2)
\begin{align}\label{regset}
\caA_{\Sigma,\mc{D}, g,{\bf z},\boldsymbol{\alpha},\boldsymbol{\zeta}}(T_{g,\epsilon}({\bf u})%\bar T_{g,\epsilon} ({\bf v})
)=  \caA_{\Sigma',\mc{D}', g',{\bf z}',\boldsymbol{\alpha},\boldsymbol{\zeta}'}( T'_{g',\epsilon}({\bf u})%\bar T'_{g,\epsilon} ({\bf v})
)
\end{align}
where  on the r.h.s. 
$$ T'_{g',\epsilon}(z):=Q\partial^2_{z}\Phi'_{\epsilon}(z)-(\partial_z \Phi'_{\epsilon}(z))^2 +a_{\Sigma,g,\epsilon}(z),\quad \text{ with } \quad \Phi'_{\epsilon}:=\rho_\epsilon\ast(\Phi_{g'}\circ\psi+Q\log|\psi'|)$$
and we still use the notation $ T'_{g',\epsilon}({\bf u})$ for the product of such quantities evaluated at the entries of the vector ${\bf u}$.  We stress that the constant $a_{\Sigma,g,\epsilon}$ has remained unchanged. The rhs admits a limit as $\epsilon\to 0$ for the same reason as regularised SET insertions do (i.e. \cite[Prop 9.1]{GKRV20_bootstrap}): let us denote by $T'_{g'}$ the limit of $T'_{g',\epsilon}$ as $\epsilon\to 0$ when inserted into the amplitude $ \caA_{\Sigma',\mc{D}', g',{\bf z}',\boldsymbol{\alpha},\boldsymbol{\zeta}'}$. Now we analyze this limit. Denote by $ P_{\Sigma'}\boldsymbol{\tilde \varphi}$ the harmonic extension   on $ \Sigma'$ with parametrisation $\boldsymbol{\zeta'}$ and by $G_{g',D}(z,z')$  the Dirichlet Green function on $\Sigma'$. Next, a simple computation   shows  that $T'_{g}(u)$ can be decomposed as the sum
$$T'_{g'}(u)=T^{\psi,1}(u) + T^{\psi,2}(u)+ \frac{Q^2}{2}S_\psi(u) $$
with
\begin{align*}
T^{\psi,1} (u)=\lim_{\epsilon\to 0}T^{\psi,1}_\eps(u), & &T^{\psi,1}_\eps(u):=&-\Big(\partial_z(\rho_\epsilon\ast(X_{g',D}\circ \psi))(u)\Big)^2+a_{\Sigma,g,\eps}(u)
\\
T^{\psi,2}(u)=\lim_{\epsilon\to 0}T^{\psi,2}_\eps(u),& &T^{\psi,2}_\eps(u):=&(\psi'(u))^2\Big(Q\partial_{zz}^2( \rho_\epsilon\ast  X_{g',D})(\psi(u))+Q\partial_{zz}^2P_{\Sigma'}\boldsymbol{\tilde \varphi}(\psi(u)) -(\partial_zP_{\Sigma'}\boldsymbol{\tilde \varphi}(\psi(u)))^2\\
& & &-2 \partial_z( \rho_\epsilon\ast  X_{g',D})(\psi(u))\partial_zP_{\Sigma'}\boldsymbol{\tilde \varphi}(\psi(u))\Big)  
\end{align*}
and the above limits are understood in the sense of insertions inside amplitudes. The term $T^{\psi,2}(u)$ gathers all terms that are linear in the $X_{g',D}$-derivatives: these terms are easy to handle because they obey standard composition rules of differential calculus. In particular, to get the above decomposition, we have  used the facts that 
\begin{align*}
\lim_{\epsilon\to 0}\partial_{z}( \rho_\epsilon\ast  (X_{g',D}\circ\psi ))(u)=& \lim_{\epsilon\to 0}\,\,\psi'(u)\partial_{z}( \rho_\epsilon\ast   X_{g',D} )(\psi(u))\\
\lim_{\epsilon\to 0}\partial^2_{zz}( \rho_\epsilon\ast  (X_{g',D}\circ\psi ))(u)=&\lim_{\epsilon\to 0}\,\,(\psi'(u))^2\partial^2_{zz}( \rho_\epsilon\ast   X_{g',D} )(\psi(u))+\psi''(u)  \partial_{z}( \rho_\epsilon\ast   X_{g',D} )(\psi(u))
\end{align*}
 when inserted inside amplitudes. The first term $T^{\psi,1} (u)$ is more subtle as it involves a Wick normalisation of the squared derivative. We recall the following elementary Gaussian IBP. Assume $Y\in\C^m$ is a Gaussian vectors with entries of the type  $Y_j\in \{X_{g',D}(x_j),\partial_zX_{g',D}(x_j),\partial^2_{zz}X_{g',D}(x_j)\}$ for some points $x_j\in\Sigma$ all distinct and distinct of $u$, and    $F$ is a smooth function on $  \C^{m}$. Then
\begin{align*}
-\E[T^{\psi,1} (u)F(Y)]=& -\lim_{\eps\to 0}\E[T^{\psi,1}_\eps (u)F(Y)]
\\
=&\lim_{\eps\to 0}\sum_{j,j'}\E[\partial_z(\rho_\epsilon\ast(X_{g',D}\circ \psi))(u)Y_j]\E[\partial_z(\rho_\epsilon\ast(X_{g',D}\circ \psi))(u)Y_{j'}]\E[\partial^2_{Y_jY_{j'}}F(Y)]
\end{align*}
 Now, if $Y_j=\partial^a_zX_{g',D}(x_j)$  for some $a\in \{0,1,2\}$ then
\begin{align*}
\lim_{\eps\to 0}\E[\partial_z(\rho_\epsilon\ast(X_{g',D}\circ \psi))(u)Y_j]=&\psi'(u)\partial_z\partial_{z'}^aG_{g',D}(\psi(u),x_j)\\
=&\lim_{\eps\to 0}\psi'(u)\E[\partial_z(\rho_\epsilon\ast X_{g',D})(\psi(u))Y_j].
\end{align*}
 and this shows that 
 $$\E[T^{\psi,1} (u)F(Y)]=(\psi'(u))^2\lim_{\eps\to 0}\E[T^{1}_\eps (\psi(u))F(Y)]$$
 with $T^{1}_\eps(u):=-\Big(\partial_z( \rho_\epsilon\ast X_{g',D})(u)\Big)^2+a_{\Sigma',g',\eps}(u)$. We deduce that $ T'_{g',\epsilon}=T^\psi_{g'}$ when inserted inside amplitudes.
  \end{proof}

Combining this proposition with \eqref{defzg}, we deduce that
\begin{align} \label{psitpsiP}
\Psi_{t,\alpha_j}(\mathbf{u},\mathbf{v}) =
 Z(g)Z_{\D,g_\D}^{-1}\caA_{\psi_j(\D),\psi_j(e^{-t}\D),\hat g_j,z_j,\alpha_j,\psi_j\circ \zeta_\D} (  T^{\psi_j}_{\hat g_j}(\mathbf{u})\bar T^{\psi_j}_{\hat g_j}(\mathbf{v})).
\end{align}
This is the disk amplitude representation of the analytically continued eigenfunctions we were aiming to.

\subsubsection{Pant amplitude} \label{Pant amplitude} 

Now we focus on the pant amplitude and its regularity with respect to its parameters. First, from Proposition \ref{Weyl}, we can embed the pant amplitude in the complex plane
\begin{equation}\label{pantP}
\caA_{\caP, g_\caP,{\bf x}^{\rm mp},\boldsymbol{\alpha}^{\rm mp},\boldsymbol{\zeta}}=\caA_{\psi(\caP), \hat g_\caP,{\bf z}^{\rm mp},\boldsymbol{\alpha}^{\rm mp},\psi\circ \boldsymbol{\zeta}}.
\end{equation} 
Next, we want to regularise this amplitude by inserting additional vertex operators; these extra vertex operators will be dubbed artificial   as they are just used for regularisation and they will be removed later. So, for $m>3$ fixed (whose precise value will be chosen later), let us consider ${\bf  z}^{\rm ar}:=( z_4,\dots,z_m)\in (\psi(\caP)\cap\D)^{m-3} $  such that $z_i\not =z_j$ for $i\not=j$ and $i,j=1,\dots,m$  and call $\boldsymbol{\alpha}^{\rm ar}:=( \alpha_4,\dots,\alpha_m ) \in \R^{m-3}$ their associated weights; here the superscript ${\rm ar}$ stands for ``artificial''.  We claim the following:

\begin{proposition}
\label{anala} There exists a complex neighborhood $ \mc{U}$ of the set $\{  (\boldsymbol{\alpha}^{\rm mp},\boldsymbol{\alpha}^{\rm ar})  \in \R^{3-b}\times\R^{m-3} \, |\,\forall j=b+1,\dots, m ,\, \alpha_j<Q\}$  s.t. the function   
\begin{align*}%\label{}
(\boldsymbol{\alpha}^{\rm mp},\boldsymbol{\alpha}^{\rm ar}, {\bf   z}^{\rm mp},{\bf   z}^{\rm ar})\to\caA_{\psi(\caP), \hat g_\caP,({\bf   z}^{\rm mp},{\bf   z}^{\rm ar}),(\boldsymbol{\alpha}^{\rm mp},\boldsymbol{\alpha}^{\rm ar}),\psi\circ \boldsymbol{\zeta}}
\end{align*}
extends holomorphically in the variables $(\boldsymbol{\alpha}^{\rm mp},\boldsymbol{\alpha}^{\rm ar})\in \mc{U}$ (and ${\bf   z}^{\rm mp}, {\bf   z}^{\rm ar}$ fixed).
 This extension is continuous in 
  $(\boldsymbol{\alpha}^{\rm mp},\boldsymbol{\alpha}^{\rm ar}, {\bf   z}^{\rm mp}, {\bf  z}^{\rm ar})\in  \mc{U}\times \{({\bf   z}^{\rm mp},{\bf  z}^{\rm ar})\in \psi(\mc{P})^{m-b}| z_i\not =z_j\}$   with values in 
$$e^{\beta(\bar c\wedge 0)-R(\bar c\vee 0 )-a\sum_{j=1}^b(c_j-\bar c)^2} L^2(H^{-s}(\T)^{b},\mu_0^{\otimes_b})$$ for some $a>0$, arbitrary $\beta<s$ with $s:=  \sum_{j=b+1}^m{\rm Re}(\alpha_j)-Q\chi(\mc{P})$ and arbitrary $R>0$. 
\end{proposition}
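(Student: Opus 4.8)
The plan is to exhibit the amplitude in its Girsanov (Cameron--Martin) form and to prove both analyticity and continuity at that level, reducing everything to the estimates already used in the proof of Theorem \ref{integrcf}.

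First I would rewrite $\caA_{\psi(\caP),\hat g_\caP,({\bf z}^{\rm mp},{\bf z}^{\rm ar}),(\boldsymbol{\alpha}^{\rm mp},\boldsymbol{\alpha}^{\rm ar}),\psi\circ\boldsymbol{\zeta}}(\tilde{\boldsymbol{\varphi}})$ exactly as in the proofs of Theorem \ref{integrcf} and Lemma \ref{lem:amp3}: perform the Girsanov transform on the regularized vertex insertions $V_{\alpha_j,\hat g_\caP,\epsilon}(z_j)$, $j=b+1,\dots,m$, and on the curvature term $e^{-\frac{Q}{4\pi}\int K_g X_{g,D}\,\dd{\rm v}_g}$; since $\hat g_\caP$ is admissible, $k_g=0$ and the geodesic curvature term is trivial. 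The powers of $\epsilon$ cancel exactly against the Wick normalization even for complex $\alpha_j$, so the $\epsilon\to0$ limit exists whenever $\mathrm{Re}(\alpha_j)<Q$ for all $j$ (the only input being that the shifted chaos mass $\int_{\psi(\caP)}e^{\gamma u(x,\boldsymbol{\alpha})}M^g_\gamma(\phi_g,\dd x)$ is a.s.\ finite, which holds because $u(\cdot,\boldsymbol{\alpha})$ has only a $\gamma\,\mathrm{Re}(\alpha_j)\log(1/|x-z_j|)$ singularity at $z_j$). This gives, for all such complex weights, the exact identity
\[
\caA(\boldsymbol{\alpha})(\tilde{\boldsymbol{\varphi}})=Z_{\psi(\caP),\hat g_\caP}\,\caA^0_{\psi(\caP),\hat g_\caP}(\tilde{\boldsymbol{\varphi}})\,e^{v(\boldsymbol{\alpha})}\prod_{j=b+1}^m e^{\alpha_j P\tilde{\boldsymbol{\varphi}}(z_j)}\,\E\Big[\exp\Big(-\mu\int_{\psi(\caP)}e^{\gamma u(x,\boldsymbol{\alpha})}M^g_\gamma(\phi_g,\dd x)\Big)\Big],
\]
where $v(\boldsymbol{\alpha})$ is a polynomial of degree $\le2$ in $\boldsymbol{\alpha}$ (Girsanov variances, including vertex renormalizations) and $u(x,\boldsymbol{\alpha})=\sum_j\alpha_j G_{g,D}(x,z_j)-\frac{Q}{4\pi}\int G_{g,D}(x,y)K_g(y)\,{\rm dv}_g(y)$ is affine in $\boldsymbol{\alpha}$.

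Second, I would establish holomorphy of the chaos factor $\boldsymbol{\alpha}\mapsto\E[\exp(-\mu\int e^{\gamma u(x,\boldsymbol{\alpha})}M^g_\gamma(\phi_g,\dd x))]$ on a suitable complex neighborhood $\mc{U}$ by a cut-off plus Vitali argument. Pick small disjoint balls $B_{\delta_0}(z_j)$ avoiding one another and the remaining marked points, and set $G_\delta(\boldsymbol{\alpha})=\exp(-\mu\int_{\psi(\caP)\setminus\cup_j B_\delta(z_j)}e^{\gamma u}M^g_\gamma(\phi_g,\dd x))$. Choosing $\mc{U}$ with $|\mathrm{Im}(\alpha_j)|$ small enough (depending on $\delta_0$) that $\cos(\gamma\,\mathrm{Im}\,u)\ge\tfrac12$ off $\cup_j B_{\delta_0}(z_j)$, one gets $|G_\delta(\boldsymbol{\alpha})|\le\exp(-\tfrac{\mu}{2}\int_{\psi(\caP)\setminus\cup_j B_{\delta_0}(z_j)}e^{\gamma\,\mathrm{Re}\,u}M^g_\gamma(\phi_g,\dd x))\le1$ for all $\delta\le\delta_0$, while $G_\delta(\boldsymbol{\alpha})$ is a.s.\ holomorphic in $\boldsymbol{\alpha}$ (differentiation under a finite-measure integral with bounded integrand). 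Hence each $\E[G_\delta(\cdot)]$ is holomorphic and bounded by $1$ on $\mc{U}$; since $\int_{\psi(\caP)}e^{\gamma\,\mathrm{Re}\,u}M^g_\gamma(\phi_g,\dd x)<\infty$ a.s., $G_\delta(\boldsymbol{\alpha})\to G(\boldsymbol{\alpha}):=\exp(-\mu\int_{\psi(\caP)}e^{\gamma u}M^g_\gamma(\phi_g,\dd x))$ a.s., so by dominated convergence $\E[G_\delta(\boldsymbol{\alpha})]\to\E[G(\boldsymbol{\alpha})]$ pointwise, and Vitali's theorem makes $\boldsymbol{\alpha}\mapsto\E[G(\boldsymbol{\alpha})]$ holomorphic (and bounded by $1$) on $\mc{U}$. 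Thus $\caA(\boldsymbol{\alpha})(\tilde{\boldsymbol{\varphi}})$ is, for each fixed $\tilde{\boldsymbol{\varphi}}$ and fixed positions, holomorphic in $\boldsymbol{\alpha}\in\mc{U}$, being the product of this factor with entire ones.

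Third, and this is the step I expect to be the main obstacle, one must upgrade pointwise (in $\tilde{\boldsymbol{\varphi}}$) holomorphy and continuity to the weighted $L^2$ statement, which requires a bound on $|\caA(\boldsymbol{\alpha})(\tilde{\boldsymbol{\varphi}})|$ uniform for $\boldsymbol{\alpha}$ in compacts $K\subset\mc{U}$ and ${\bf z}$ in compacts of the configuration space. Using $|e^{v(\boldsymbol{\alpha})}|\le C_K$, $|\prod_j e^{\alpha_j P\tilde{\boldsymbol{\varphi}}(z_j)}|=\prod_j e^{\mathrm{Re}(\alpha_j)P\tilde{\boldsymbol{\varphi}}(z_j)}$, and Fatou applied to the bound above on $|G_\delta|$, one obtains
\[
|\caA(\boldsymbol{\alpha})(\tilde{\boldsymbol{\varphi}})|\le C_K\,Z_{\psi(\caP),\hat g_\caP}\,|\caA^0_{\psi(\caP),\hat g_\caP}(\tilde{\boldsymbol{\varphi}})|\prod_{j=b+1}^m e^{\mathrm{Re}(\alpha_j)P\tilde{\boldsymbol{\varphi}}(z_j)}\,\E\Big[\exp\Big(-\tfrac{\mu}{2}\int_{\psi(\caP)\setminus\cup_j B_{\delta_0}(z_j)}e^{\gamma\,\mathrm{Re}\,u}M^g_\gamma(\phi_g,\dd x)\Big)\Big],
\]
whose right-hand side is exactly of the form treated in the proof of Theorem \ref{integrcf} with the real weights $\mathrm{Re}(\alpha_j)$: Lemma \ref{lem:amp1} controls $\caA^0_{\psi(\caP),\hat g_\caP}$, and the argument of Lemma \ref{lem:amp3}, run with a fixed ball $B\subset\psi(\caP)\setminus\cup_j B_{\delta_0}(z_j)$ avoiding the marked points and $\partial\psi(\caP)$, controls the chaos expectation. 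This yields $|\caA(\boldsymbol{\alpha})(\tilde{\boldsymbol{\varphi}})|\le C_R\,e^{s\bar c_--R\bar c_+-a'\sum_j(\bar c-c_j)^2}A(\boldsymbol{\varphi})$ with $s=\sum_{j=b+1}^m\mathrm{Re}(\alpha_j)-Q\chi(\caP)$ and $\int A^2\,\dd\P_\T^{\otimes_b}<\infty$, uniformly over the stated compacts; for $\beta<s$ and $a>0$ small this dominating function lies in $e^{\beta(\bar c\wedge0)-R(\bar c\vee0)-a\sum_j(c_j-\bar c)^2}L^2$. A locally bounded map into this Banach space whose pairings against a dense family are holomorphic is holomorphic, and the same uniform domination, together with the evident pointwise continuity of $\caA(\cdot)(\tilde{\boldsymbol{\varphi}})$ in $(\boldsymbol{\alpha},{\bf z}^{\rm mp},{\bf z}^{\rm ar})$ (from continuity of $v$, $u$, $\caA^0$, $P\tilde{\boldsymbol{\varphi}}(z_j)$ and dominated convergence in the chaos expectation), gives continuity in the weighted $L^2$ norm by dominated convergence in $\tilde{\boldsymbol{\varphi}}$. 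The only genuinely delicate point throughout is that complex weights do not destroy the sign and decay structure of the chaos exponential, which is precisely what the cut-off $\delta_0$ and the smallness of $|\mathrm{Im}(\alpha_j)|$ are engineered to preserve.
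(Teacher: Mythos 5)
Your overall architecture is the right one, and in fact more explicit than the paper's own proof, which simply defers the holomorphy/continuity to \cite[Appendix C.1]{GKRV} and the weighted bound to the argument of Theorem \ref{integrcf}. In particular your third step --- a pointwise bound on $|\caA(\boldsymbol{\alpha})(\tilde{\boldsymbol{\varphi}})|$ uniform over compacts of $\mc{U}$, fed into Lemmas \ref{lem:amp1} and \ref{lem:amp3} to produce a dominating function in the weighted $L^2$ space, combined with ``locally bounded $+$ weakly holomorphic $\Rightarrow$ holomorphic'' for Banach-valued maps --- is exactly what is needed and is sound \emph{given} such a pointwise bound.

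The gap is in your second step, and it is not a repairable technicality: the inequality $|G_\delta(\boldsymbol{\alpha})|\le\exp\big(-\tfrac{\mu}{2}\int_{\psi(\caP)\setminus\cup_j B_{\delta_0}(z_j)}e^{\gamma\,\mathrm{Re}\,u}M^g_\gamma\big)\le 1$ is false for $\delta<\delta_0$. For $\delta<\delta_0$ the integral defining $G_\delta$ also runs over the annuli $B_{\delta_0}(z_j)\setminus B_\delta(z_j)$, where $\mathrm{Im}\,u(x)\sim \mathrm{Im}(\alpha_j)\log(1/|x-z_j|)$ is unbounded no matter how small $|\mathrm{Im}(\alpha_j)|$ is; there $\cos(\gamma\,\mathrm{Im}\,u)$ oscillates through negative values while $e^{\gamma\,\mathrm{Re}\,u}M^g_\gamma$ carries the dominant mass, so $\mathrm{Re}\int_{\psi(\caP)\setminus\cup B_\delta}e^{\gamma u}M^g_\gamma$ can be arbitrarily negative and $|G_\delta|$ arbitrarily large. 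Worse, the limiting object itself is not absolutely integrable: $|G(\boldsymbol{\alpha})|=\exp\big(-\mu\,\mathrm{Re}\int e^{\gamma u}M^g_\gamma\big)$, and on the event that the chaos mass concentrates on an annulus where $\cos(\gamma\,\mathrm{Im}\,u)\le -1/2$ one gets $\mathbb{P}\big(-\mathrm{Re}\int e^{\gamma u}M^g_\gamma>t\big)\gtrsim t^{-\kappa}$ (GMC masses have only polynomial tails), whence $\E[|G(\boldsymbol{\alpha})|]=\infty$ for every non-real $\boldsymbol{\alpha}$. So neither the Vitali step, nor ``dominated convergence gives $\E[G_\delta]\to\E[G]$'', nor the Fatou bound in your third step can be run: the naive complex-weight Girsanov formula does not define the analytic continuation by an absolutely convergent expectation. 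The argument the paper actually invokes (GKRV, Appendix C.1) works instead with the $\epsilon$-regularized amplitudes --- which are genuinely entire in $\boldsymbol{\alpha}$ because $|V_{\alpha,g,\epsilon}|=\epsilon^{-(\mathrm{Im}\alpha)^2/2}V_{\mathrm{Re}\alpha,g,\epsilon}$ and $e^{-\mu M_\gamma}\le 1$ --- and establishes a bound uniform in $\epsilon$ and in $\boldsymbol{\alpha}$ on compacts by exploiting cancellations near the insertions rather than absolute values, after which normal-family arguments and agreement on the real slice give the holomorphic extension. You would need to import that uniform estimate (or reprove it); your cut-off cannot substitute for it.
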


\begin{proof}
The holomorphic extension/continuity is similar  to \cite[appendix C.1]{GKRV20_bootstrap} and details are thus left to the reader. The bound (for possible complex values of the weights) is then proved as in  Theorem \ref{integrcf}.  
 \end{proof}
 
 Finally,  we will also  consider the generalised amplitude $\caA_{\psi(\caP), U^{\rm noset}_t, \hat g_\caP,{\bf z}^{\rm mp},\boldsymbol{\alpha}^{\rm mp},\psi\circ \boldsymbol{\zeta}}$ where $U^{\rm noset}_t= \cup_{i=b+1}^3 \caD_{t,i}$ where $\caD_{t,i}$ is the ball of center $z_i$ and radius $e^{-t}$ (recall the setup detailed in Subsection \ref{setupward}). This amplitude has a hole, namely $U^{\rm noset}_t$, but no SET insertions.  The reason why we introduce it is to gain regularity of correlation functions in the variables $z_{b+1},\cdots,z_3$ since these variables are in the holes of the amplitude (recall Prop. \ref{ampSETholomorphic}). The following lemma will be used to control  this generalised amplitude when we  take $t\to \infty$:
 
 \begin{lemma}\label{opeconv}
 The following limit exists
 \begin{align}\label{opecon}
 \lim_{t\to\infty}\caA_{\psi(\caP),U^{\rm noset}_t,\hat g_\caP,({\bf   z}^{\rm mp}, {\bf   z}^{\rm ar}),(\boldsymbol{\alpha}^{\rm mp},\boldsymbol{\alpha}^{\rm ar}),\psi\circ \boldsymbol{\zeta}}=\caA_{\psi(\caP),\hat g_\caP,({\bf   z}^{\rm mp}, {\bf   z}^{\rm ar}),(\boldsymbol{\alpha}^{\rm mp},\boldsymbol{\alpha}^{\rm ar}),\psi\circ \boldsymbol{\zeta}}
\end{align}
where the convergence is as continuous linear functionals on $\bigotimes_{i=1}^be^{-\beta_i c_-}L^2(\R\times \Omega_\T)$    for $\beta_i>Q-\alpha_i$ and  $i=1,\dots,b$. 
\end{lemma}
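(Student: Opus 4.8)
The plan is to compare the two amplitudes directly from Definition~\ref{def:ampholes}, exploiting that they differ only in the domain over which the Liouville potential is integrated. After performing the $\epsilon\to 0$ limit (legitimate by the Girsanov argument recalled after Definition~\ref{def:amp}, see \cite[Section~3]{DKRV}), both $\caA_{\psi(\caP),\tilde U_t,\hat g_{\caP},\dots}$ and $\caA_{\psi(\caP),\hat g_{\caP},\dots}$ (the $k=\tilde k=0$ case of Definition~\ref{def:ampholes}, i.e.\ no SET insertions) can be written, after shifting the Dirichlet GFF to absorb the vertex operators $V_{\alpha_j,\hat g_{\caP}}(z_j)$, in the form $\Lambda(\tilde{\boldsymbol{\varphi}})\,\E[\,e^{-\frac{Q}{4\pi}\int K_{\hat g_{\caP}}X_{\hat g_{\caP},D}\,\dd{\rm v}_{\hat g_{\caP}}}\,e^{-\mu\int_{\psi(\caP)\setminus U}e^{\gamma u(x)}M^{\hat g_{\caP}}_\gamma(\phi,\dd x)}\,]$, with $U=\tilde U_t$ or $U=\emptyset$; here $\Lambda(\tilde{\boldsymbol{\varphi}})$ collects all GFF–deterministic prefactors (the factor $Z_{\psi(\caP),\hat g_{\caP}}\caA^0_{\psi(\caP),\hat g_{\caP}}(\tilde{\boldsymbol{\varphi}})$, the Girsanov variance terms, the harmonic–extension factors $\prod_j e^{\alpha_j P\tilde{\boldsymbol{\varphi}}(z_j)}$), and $u$ is smooth on $\psi(\caP)\setminus\{z_{b+1},\dots,z_3\}$ with $u(x)=-\alpha_j\log|x-z_j|+O(1)$ near each genuine marked point $z_j$, $j=b+1,\dots,3$ — these being the only insertions enclosed by $\tilde U_t$. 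Since after this shift $M^{\hat g_{\caP}}_\gamma(\phi,\psi(\caP)\setminus\tilde U_t)=\int_{\psi(\caP)}e^{\gamma u}M^{\hat g_{\caP}}_\gamma(\phi,\dd x)-R_t$ with $R_t:=\int_{\tilde U_t\cap\psi(\caP)}e^{\gamma u(x)}M^{\hat g_{\caP}}_\gamma(\phi,\dd x)$, one obtains
\[
\caA_{\psi(\caP),\tilde U_t,\hat g_{\caP},\dots}(\tilde{\boldsymbol{\varphi}})-\caA_{\psi(\caP),\hat g_{\caP},\dots}(\tilde{\boldsymbol{\varphi}})=\Lambda(\tilde{\boldsymbol{\varphi}})\,\E\big[\Xi_t\,(1-e^{-\mu R_t})\big],
\]
where $\Xi_t:=e^{-\frac{Q}{4\pi}\int K_{\hat g_{\caP}}X_{\hat g_{\caP},D}\,\dd{\rm v}_{\hat g_{\caP}}}\,e^{-\mu M^{\hat g_{\caP}}_\gamma(\phi,\psi(\caP)\setminus\tilde U_t)}\ge 0$.

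The next step is to prove $R_t\to 0$ almost surely as $t\to\infty$. The very well-definedness of the no-hole amplitude $\caA_{\psi(\caP),\hat g_{\caP},\dots}$ (valid because $\alpha_j<Q$ for all $j$, cf.\ the Remark after Definition~\ref{def:amp}) already forces $\int_{\psi(\caP)}e^{\gamma u(x)}M^{\hat g_{\caP}}_\gamma(\phi,\dd x)<\infty$ a.s.; equivalently, near each $z_j$ the integrand is comparable, up to a.s.\ bounded and positive factors, to $|x-z_j|^{-\gamma\alpha_j}$, and $\int_{B(z_j,\rho)}|x-z_j|^{-\gamma\alpha_j}M_\gamma(\dd x)<\infty$ a.s.\ precisely for $\alpha_j<Q$, by a fractional-moment bound using the multifractal exponent of subcritical GMC (see e.g.\ \cite{review}). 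Since $M_\gamma$ almost surely charges no point and $\tilde U_t\cap\psi(\caP)=\bigcup_{j=b+1}^3\caD_{t,j}\cap\psi(\caP)\downarrow\{z_{b+1},\dots,z_3\}$ as $t\to\infty$, dominated convergence against the a.s.-integrable majorant $e^{\gamma u}\mathbf 1_{\psi(\caP)}$ gives $R_t\to 0$ a.s., hence $1-e^{-\mu R_t}\to 0$ a.s.

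Finally I would pass to the limit, first inside $\E[\cdot]$ and then through the pairing with test functions. Fixing $t_0$, for $t\ge t_0$ one has $\tilde U_t\subset\tilde U_{t_0}$, hence $\Xi_t\le\Xi_{t_0}$ and, since $0\le 1-e^{-\mu R_t}\le 1$, the integrands are dominated by the $\E$-integrable $\Xi_{t_0}$ (integrability being exactly the a.e.\ finiteness of $\caA_{\psi(\caP),\tilde U_{t_0},\hat g_{\caP},\dots}(\tilde{\boldsymbol{\varphi}})$). Dominated convergence in the GFF then gives $\caA_{\psi(\caP),\tilde U_t,\hat g_{\caP},\dots}(\tilde{\boldsymbol{\varphi}})\to\caA_{\psi(\caP),\hat g_{\caP},\dots}(\tilde{\boldsymbol{\varphi}})$ for $\mu_0^{\otimes_b}$-a.e.\ $\tilde{\boldsymbol{\varphi}}$, together with the monotone domination $0\le\caA_{\psi(\caP),\tilde U_t,\dots}(\tilde{\boldsymbol{\varphi}})\le\caA_{\psi(\caP),\tilde U_{t_0},\dots}(\tilde{\boldsymbol{\varphi}})$ for $t\ge t_0$. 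To upgrade this to convergence as continuous linear functionals on $\bigotimes_{i=1}^b e^{-\beta_i c_-}L^2(\R\times\Omega_\T)$, one tests against $\otimes_{i=1}^b f_i$ with $f_i\in e^{-\beta_i c_-}L^2$ and applies dominated convergence once more, with dominating function $\caA_{\psi(\caP),\tilde U_{t_0},\dots}(\tilde{\boldsymbol{\varphi}})\prod_i|f_i(\tilde\varphi_i)|$: the weighted bound of Proposition~\ref{anala} (of the type of Theorem~\ref{integrcf}) gives $\caA_{\psi(\caP),\tilde U_{t_0},\dots}(\tilde{\boldsymbol{\varphi}})\le C e^{\beta\bar c_--R\bar c_+-a\sum_{j=1}^b(c_j-\bar c)^2}A(\boldsymbol{\varphi})$ with $A\in L^2$ and $\beta$ arbitrarily close to $\sum_{j=b+1}^m\alpha_j-Q\chi(\caP)$, so this majorant is $\mu_0^{\otimes_b}$-integrable, uniformly in $t\ge t_0$. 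The step $R_t\to 0$ is the conceptual core but is essentially standard; the genuine work — the part requiring care — is organizing these two dominated-convergence arguments so that the exponential weights line up, i.e.\ that the weight $\beta$ of Proposition~\ref{anala} beats $\sum_i\beta_i$; as in the gluing estimates of Section~\ref{Section:blocks}, with $\beta_i>Q-\alpha_i$ this reduces to the Seiberg-type inequality $\sum_{i=1}^m\alpha_i>2Q$ guaranteed by \eqref{ass:starward} together with the choice of the artificial insertions, which keeps the pairing absolutely convergent along the whole family $t\ge t_0$.
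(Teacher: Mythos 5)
Your proof is correct and follows essentially the same route as the paper's: write the difference of the two amplitudes as an expectation of a bounded quantity that tends to $0$ almost surely because the GMC measure charges no points, then apply dominated convergence twice — once inside the GFF expectation and once for the $\mu_0^{\otimes b}$-pairing, the latter dominated via the weighted bound of Theorem~\ref{integrcf}/Proposition~\ref{anala}. The only (inessential) difference is that you Girsanov-shift the vertex insertions into the potential and track $R_t$ explicitly, whereas the paper keeps them outside the exponential and simply bounds the relevant factor $S_t$ by $2$.
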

 \begin{proof}
 We proceed as in the proof of Theorem \ref{integrcf}. 
Denote the amplitudes in \eqref{opecon} by $\caA_t$ and $\caA$ respectively. We have
\begin{align*}
\caA_t(\boldsymbol{\tilde\varphi})-\caA(\boldsymbol{\tilde\varphi})=
\caA^0_{\psi(\caP),\hat g_\caP}(\tilde{\boldsymbol{\varphi}})e^{-\frac{Q}{4\pi}\int_{\psi(\caP)} K_{\hat g_\caP}  P\tilde{\boldsymbol{\varphi}} dv_{\hat g_\caP}} \caE_t(\boldsymbol{\tilde\varphi})\end{align*}
where $\caE_t$ is the expectation
\begin{align*}%\label{}
\caE_t:= \E \big[ \prod_{i=b+1}^m V_{\alpha_i,\hat g_\caP}(z_i)e^{-\frac{Q}{4\pi}\int_{\psi(\caP)} K_{\hat g_\caP} X_{\hat g_\caP,D}\dd {\rm v}_{\hat g_\caP}}\big(e^{-\mu M_\gamma^{\hat g_\caP}(\phi_{\hat g_\caP},\psi(\caP))}-e^{-\mu M_\gamma^{\hat g_\caP}(\phi_{\hat g_\caP},\psi(\caP)\setminus U^{\rm noset}_t)}\big)\big]
\end{align*}
Set $S_t:=\big(e^{-\mu M_\gamma^{\hat g_\caP}(\phi_{\hat g_\caP},U^{\rm noset}_{t_0})}-e^{-\mu M_\gamma^{\hat g_\caP}(\phi_{\hat g_\caP},U^{\rm noset}_{t_0}\setminus    U^{\rm noset}_t )}\big)$. Then
$$ \caE_t=\E \big[ \prod_{i=b+1}^m V_{\alpha_i,\hat g_\caP}(z_i)e^{-\frac{Q}{4\pi}\int_{\psi(\caP)} K_{\hat g_\caP} X_{\hat g_\caP,D}\dd {\rm v}_{\hat g_\caP}}  e^{-\mu M_\gamma^{\hat g_\caP}(\phi_{\hat g_\caP},\psi(\caP)\setminus  U^{\rm noset}_{t_0} )}S_t \big].$$
Note that $|S_t|\leq 2$ and $\mu_0$-almost surely in $\boldsymbol{\tilde\varphi}$, $S_t$ converges almost surely (w.r.t. Dirichlet GFF measure) to $0$. Hence $\caE_t$ converges to $0$ $\mu_0$-almost surely in $\boldsymbol{\tilde\varphi}$, and so  does $\caA_t(\boldsymbol{\tilde\varphi})-\caA(\boldsymbol{\tilde\varphi})$.
 Also, using $|S_t|\leq 2$, and Theorem \ref{integrcf}, we deduce that
 \begin{align*}%\label{}
 |\caA_t(\boldsymbol{\tilde\varphi})-\caA(\boldsymbol{\tilde\varphi})|\leq e^{\beta(\bar c\wedge 0)-R(\bar c\vee 0 )-d\sum_{j=1}^b(c_j-\bar c)^2} A(\boldsymbol{\tilde\varphi})
\end{align*}
  for some $A\in L^2(H^{-s}(\T)^{b},\mu_0^{\otimes_b})$, some $d>0$, arbitrary $\beta<s$ with $s:=  \sum_{j=b+1}^m \alpha_j -Q\chi(\mc{P})$ and arbitrary $R>0$.  Hence by the dominated convergence theorem, and for the norm on $\bigotimes_{i=1}^be^{-\beta_i c_-}L^2(\R\times \Omega_\T)$,
  \begin{align*}%\label{}
\lim_{t\to\infty}
\|\caA_t-\caA\|^2&=\lim_{t\to\infty}\int |\caA_t(\boldsymbol{\tilde\varphi})-\caA(\boldsymbol{\tilde\varphi})|^2\prod_{i=1}^be^{-2\beta_i(c_i)_{-}}\,\dd \mu_0^{\otimes_b}(\boldsymbol{\tilde\varphi})=0
\end{align*}
yielding the claim.\end{proof}

\subsubsection{Gluing disks to pants}\label{pantglue} 

Now we want to glue the disk amplitudes with SET insertions to the pant amplitude. Recall the metric $\hat g$ defined in the beginning of Subsection \ref{debutproof}. Recall also, from Proposition \ref{ampSETholo}, that the correlation functions of $T_{\hat g}({\bf u})$ and $\bbar T_{\hat g}({\bf v})$ in the measure $\langle\cdot\rangle_{\hat\C,U,\hat g  }$ are well defined 
at non coinciding points $u_i,v_j\in U$.  We define 
 \begin{align}\label{TpsiSET}
 \psi_j^\ast T_{\hat g}:={\psi'_j}^2  T_{\hat g}\circ\psi_j+\tfrac{c_{\rm L}}{12}S_{\psi_j} .
\end{align}
Note that the difference with the definition of $T^{\psi_j}_{\hat g}$ in Proposition \ref{diffeoSET}  is the constant in front of the Schwarzian derivative: $c_{\rm L}=1+6Q^2$. 
 We claim

\begin{proposition}\label{glluu}
Let $\alpha_j\in\R $, for $j=1,\dots,m$ satisfy the Seiberg bounds \eqref{seiberg1} and \eqref{seiberg2} with $\mathbf{g}=0$. For $j=1,\dots,b$, pick $k_j,\tilde{k}_j\in\N$ and 
$\mathbf{u}_j\in (\D_{e^{-t}})^{k_j}$, $\mathbf{v}_j\in (\D_{e^{-t}})^{\tilde{k}_j}$. Then
\begin{align}\label{pantclaim1}
 \frac{\sqrt{2}}{(\sqrt{2}\pi)^{b-1}} & \caA_{\psi(\caP),  U^{\rm noset}_t ,\hat g_\caP,({\bf   z}^{\rm mp}, {\bf   z}^{\rm ar}),(\boldsymbol{\alpha}^{\rm mp},\boldsymbol{\alpha}^{\rm ar}),\psi\circ \boldsymbol{\zeta}}(\otimes_{j=1}^b\Psi_{t, \alpha_j}(\mathbf{u}_j,\mathbf{v}_j))\\
 = & ( Z(g)Z_{\D,g_\D}^{-1})^b\langle \prod_{j=1}^b \psi_j^\ast T_{\hat g}(\mathbf{u}_j){\psi_j^\ast \bar T_{\hat g}}(\mathbf{v}_j)\prod_{j=1}^mV_{\alpha_j,\hat g}(z_j)  
\rangle_{\hat\C,\hat g,U_t}\nonumber
 \end{align}
where $U_t=\cup_{j=1}^b\psi_j(\D_{e^{-t}}) \cup U^{\rm noset}_t$.
\end{proposition}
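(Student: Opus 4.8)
The plan is to glue the $b$ disk amplitudes carrying SET insertions to the regularized pant amplitude, using the Segal gluing axiom (Proposition~\ref{glueampli}) together with the probabilistic representation of the building blocks. First I would recall from \eqref{psitpsiP} that, for real $\alpha_j$ small enough (depending on the Young diagrams, via Lemma~\ref{ampdiskSET} and Proposition~\ref{eigenana}), the state $\Psi_{t,\alpha_j}(\mathbf{u}_j,\mathbf{v}_j)$ coincides up to the explicit constant $Z(g)Z_{\D,g_\D}^{-1}$ with the generalized disk amplitude $\caA_{\psi_j(\D),\psi_j(\D_{e^{-t}}),\hat g_j,z_j,\alpha_j,\psi_j\circ\zeta_\D}(T^{\psi_j}_{\hat g_j}(\mathbf{u}_j)\bar T^{\psi_j}_{\hat g_j}(\mathbf{v}_j))$ on the surface $\psi_j(\D)\subset\hat\C$. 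The key point to keep straight is that these disk amplitudes use the \emph{diffeomorphism}-transformed SET $T^{\psi_j}$ (Schwarzian coefficient $Q^2/2$, Proposition~\ref{diffeoSET}), not the conformally covariant object.

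The second step is the gluing itself. The surface $\hat\C$ is obtained from $\psi(\caP)$ by gluing the $b$ disks $\psi_j(\D)$ along their boundary circles, with the parametrizations $\psi\circ\boldsymbol\zeta$ on the pant side matching $\psi_j\circ\zeta_\D$ on the disk side (by construction of $\psi_j$). The disks are incoming, the pant boundary components are outgoing in $\tilde{\boldsymbol\zeta}$, so $b$ circles are being glued and Proposition~\ref{glueampli}, item with $\partial\Sigma=\emptyset$, gives the constant $C_{\rm gluing}=\sqrt2/(\sqrt2\pi)^{b-1}$ appearing on the left of \eqref{pantclaim1}. The output of the gluing is a path-integral expectation on $\hat\C$ with the hole $U_t=\cup_j\psi_j(\D_{e^{-t}})\cup\tilde U_t$, with $b$ groups of SET insertions coming from the disks and the $m$ vertex operators $V_{\alpha_j,\hat g}(z_j)$ ($j\le b$ from the disk centers, $j=b+1,\dots,3$ the genuine marked points of $\caP$, $j>3$ the artificial points). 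I would need to check that the metric $\hat g$ obtained by gluing $\hat g_{\caP}=\hat g|_{\psi(\caP)}$ with the $\hat g_j=\hat g|_{\psi_j(\D)}$ is admissible and that the normalization factors $Z_{\Sigma,g}$, $\caA^0_{\Sigma,g}$ and the curvature terms compose correctly — this is exactly what Proposition~\ref{glueampli} asserts, applied with the functional $F=\prod_j T_{\hat g,\epsilon}(\mathbf{u}_j)\bar T_{\hat g,\epsilon}(\mathbf{v}_j)e^{\mu e^{\gamma c}M_\gamma(\cdot,U_t)}$, passing to the limit $\epsilon\to0$ using Proposition~\ref{ampSETholo}.

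The third step is to reconcile the two SET transformation laws. On the disk side we have $T^{\psi_j}_{\hat g_j}$ with the $\frac{Q^2}{2}S_{\psi_j}$ term; on the right of \eqref{pantclaim1} we want $\psi_j^\ast T_{\hat g}={\psi_j'}^2 T_{\hat g}\circ\psi_j+\frac{c_{\rm L}}{12}S_{\psi_j}=\psi_j^\ast T_{\hat g_j}+\frac{1}{12}S_{\psi_j}$ using $c_{\rm L}=1+6Q^2$. The gap is the extra $\frac{1}{12}S_{\psi_j}$ per SET insertion, which is precisely the conformal-anomaly contribution to the SET picked up when one glues a flat disk into a curved sphere, i.e. the difference between the free-field two-point function of $T$ on $\D$ and on $\hat\C$ in the respective metrics. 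I expect this to be the main obstacle: one must track, through the $\epsilon\to0$ Gaussian-integration-by-parts expansion (as in Proposition~\ref{diffeoSET} and Lemma~\ref{constanteSET}), how the renormalization constants $a_{\Sigma,g,\epsilon}$ transform under the biholomorphism and gluing, and verify that the mismatch between $a_{\D,g_\D}$ (which vanishes for the flat metric near $0$) and $a_{\hat\C,\hat g}$ produces exactly the Schwarzian shift $\frac{1}{12}S_{\psi_j}$ by \eqref{atransf}. Concretely, the regularized SET insertions on the disk side convert into insertions of $\psi_j^\ast T_{\hat g,\epsilon}$ plus the constant $a_{\psi_j(\D),\hat g_j}-a_{\hat\C,\hat g}$ restricted appropriately, and \eqref{atransf} identifies this constant difference with $-\frac{1}{12}S_{\psi_j}$ up to the $\frac{Q^2}{2}$ already present, giving the net $\frac{c_{\rm L}}{12}$.

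Finally, since both sides of \eqref{pantclaim1} are, by Proposition~\ref{anala} and the analytic continuation of the states $\Psi_{\alpha,\nu,\tilde\nu}$ in $\alpha$ (Proposition~\ref{defprop:desc}, Proposition~\ref{eigenana}), holomorphic in the weights in a neighborhood of the relevant real range, and since the contour integrals defining $\Psi_{t,\alpha_j}$ from $\Psi_{t,\alpha_j}(\mathbf{u}_j,\mathbf{v}_j)$ in Lemma~\ref{ampdiskSET} are manifestly compatible with these amplitude identities, the identity established for small real $\alpha_j$ extends to all $\alpha_j$ satisfying the Seiberg bounds \eqref{seiberg1}, \eqref{seiberg2} with $\mathbf g=0$ by uniqueness of analytic continuation. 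The use of the generalized pant amplitude with hole $\tilde U_t$ rather than $\psi(\caP)$ itself is what makes the intermediate correlation function differentiable in $z_{b+1},\dots,z_3$ (Proposition~\ref{ampSETholomorphic}), and Lemma~\ref{opeconv} guarantees we recover the hole-free object in the limit $t\to\infty$ when we later apply this proposition inside the proof of Theorem~\ref{pantDOZZ}.
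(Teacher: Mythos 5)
Your proposal follows essentially the same route as the paper: represent the states $\Psi_{t,\alpha_j}(\mathbf{u}_j,\mathbf{v}_j)$ as the transported disk amplitudes of \eqref{psitpsiP}, glue them to the regularized pant amplitude via Proposition~\ref{glueampli} with $\epsilon$-regularized SET insertions, and then track the renormalization constants through Lemma~\ref{constanteSET} and \eqref{atransf} to convert the Schwarzian coefficient $Q^2/2$ of $T^{\psi_j}$ into the $c_{\rm L}/12$ of $\psi_j^\ast T_{\hat g}$ (using $a_{\hat\C,\hat g,\epsilon}\to 0$ and $\psi_j'^2\,a_{\psi_j(\D),\hat g_j}\circ\psi_j\to\tfrac{1}{12}S_{\psi_j}$ since $a_{\D,g}=0$), which is exactly the paper's argument. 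The only superfluous element is your final analytic-continuation step: the identity holds directly for all real $\alpha_j$ satisfying the Seiberg bounds, since Lemma~\ref{ampdiskSET} and the integrability bound of Proposition~\ref{anala} require nothing more; the continuation in $\alpha$ is only needed later, in the proof of Theorem~\ref{pantDOZZ}.
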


\begin{proof} First we observe that the LHS is well defined. Indeed, the  generalised amplitudes   $\Psi_{t, \alpha_j}(\mathbf{u}_j,\mathbf{v}_j)$ belong respectively to  $e^{-\beta_j c_- }L^2(\R\times \Omega_\T)$ with $\beta_j>Q-\alpha_j$ for $j=1,\dots,b$ and, from the bound in Proposition \ref{anala}, the LHS makes sense provided that  $\sum_{j=1}^b(\alpha_j-Q)+s>0$ (with $\chi(\mc{P})=2-b$), which is satisfied due to the Seiberg bound.  

Now we want to see the LHS as a gluing of generalised amplitudes. 
For this, we first observe that the generalised amplitude $ \Psi_{t, \alpha_j}(\mathbf{u}_j,\mathbf{v}_j)$ can be written as in \eqref{psitpsiP} and the pant amplitude as in \eqref{pantP}. Next,  consider  \eqref{psitpsiP} with regularised SET insertions, i.e. with $T_{\hat g_j,\epsilon}$ regularised as in \eqref{defSET}. Likewise let  $T_{\hat g,\epsilon}$ denote the regularised whole plane SET. 
Recalling that $\hat g_{j}=\hat g|_{\psi_j(\D)}$ and denoting ${\bf u}=({\bf u}_1,\dots,{\bf u}_b)$ and ${\bf v}=({\bf v}_1,\dots,{\bf v}_b)$, we obtain 
by Proposition \ref{glueampli}  
\begin{align}\label{gglluu}
 \caA_{\hat \C,U_t, \hat g,{\bf   z} ,\boldsymbol{\alpha}}(\prod_{j=1}^b \psi_j^\ast T_{\hat g,\epsilon}(\mathbf{u}) \psi_j^\ast \bar T_{\hat g,\epsilon}(\mathbf{v}))
  &= \frac{\sqrt{2}}{(\sqrt{2}\pi)^{b-1}} \int  \caA_{\psi(\caP),  \tilde{U}_t, \hat g_\caP,({\bf   z}^{\rm mp},{\bf   z}^{\rm ar}),(\boldsymbol{\alpha}^{\rm mp},\boldsymbol{\alpha}^{\rm ar}),\psi\circ \boldsymbol{\zeta}}(\boldsymbol{\tilde \varphi})\\
  &\times\prod_{j=1}^b\caA_{\psi(\mc{D}_j),\psi(\D_{e^{-t}}),\hat g_j,z_j,\alpha_j,\psi\circ \zeta_\D}(  T^{(j)}_{\epsilon}(\mathbf{u})\bar T^{(j)}_{\epsilon}(\mathbf{v}),
  \boldsymbol{\tilde \varphi}))\,\dd\mu_0^{\otimes_b}( \boldsymbol{\tilde \varphi})
\end{align}
where 
\begin{align}\label{apu}
T^{(j)}_{\epsilon}(u):=T^{\psi_j}_{\hat g_j,\epsilon}(u)+ (\psi'(u))^2a_{\hat\C,\hat g,\epsilon}(\psi(u))-\psi'_j(u)^2a_{\psi(\mc{D}_j),\hat g_j}(\psi_j(u))+\frac{1}{12}S_{\psi_j}(u).
\end{align}
From Lemma \ref{constanteSET} we have $\lim_{\epsilon\to 0}a_{\hat\C,\hat g,\epsilon}=0$ and 
\begin{align}\label{apu1}
\lim_{\epsilon\to 0}\psi'_j(u)^2a_{\psi(\mc{D}_j),\hat g_j}(\psi_j(u))=\frac{1}{12}S_{\psi_j}(u)+a_{\D,g}(u)=\frac{1}{12}S_{\psi_j}(u)
\end{align}
since $a_{\D,g}(u)=0$.
The equality \eqref{gglluu} holds  provided that   the integrand in the RHS is   integrable but this is again a consequence of the Seiberg bound and the bound for the $\psi(\mc{P})$-amplitude in Proposition \ref{anala} (recall that the limit \eqref{limphieps} holds in $ e^{-\beta c_- }L^2(\R\times \Omega_\T)$, for $\beta>(Q-\alpha)$). Also, these estimates ensure that we can pass to the limit as $\epsilon\to 0$ in      the RHS to obtain the LHS of our claim.  Using \eqref{apu} and \eqref{apu1} we then conclude that the LHS converges to the RHS of the claim.
\end{proof} 
 
For later technical reasons  we will need to work with another metric than $\hat g$: recall that this metric depends implicitly on the arbitrary but fixed points $z_1,z_2,z_3$ and we want to remove this dependence. For this we could choose  any metric that is  Euclidean in the unit disk and our choice will be   the DOZZ metric $g_{\rm dozz}=|z|_+^{-4} |dz|^2$, especially because it will be easier later to connect with the DOZZ formula.  Proposition \ref{WeylSET} gives
\begin{align}\label{Zzhat1}
\langle \prod_{j=1}^b \psi_j^\ast T_{\hat g}(\mathbf{u}_j){\psi_j^\ast \bar T_{\hat g}}(\mathbf{v}_j)\prod_{j=1}^mV_{\alpha_j,\hat g}(z_j)  
\rangle_{\hat\C,\hat g,U_t}=Z({\bf z},\boldsymbol{\alpha},\hat g)\langle \prod_{j=1}^b \psi_j^\ast T_{g_{\rm dozz}}(\mathbf{u}_j){\psi_j^\ast \bar T_{g_{\rm dozz}}}(\mathbf{v}_j)\prod_{j=1}^mV_{\alpha_j,g_{\rm dozz}}(z_j)  
\rangle_{\hat\C,g_{\rm dozz},U_t}
\end{align}
with 
\begin{equation}\label{Zzhat}
Z({\bf z},\boldsymbol{\alpha},\hat g):=e^{c_{\rm L} S^0_{\rm L}(\hat\C,g_{\rm dozz},\hat \omega)-\sum_{j=1}^m\Delta_{\alpha_j}\hat\omega(z_j)}
\end{equation}
where we wrote  $\hat g =e^{ \hat\omega} g_{\rm dozz}$ for some   function $ \hat\omega\in H^1(\hat\C)$.
We then have our final result about gluing
(recall the definition of $a$ in Proposition \ref{eigenana}):

\begin{proposition}\label{witht}
Let $\alpha_j\in\R $, for $j=1,\dots,m$, satisfy the Seiberg bounds \eqref{seiberg1} and \eqref{seiberg2} with $\mathbf{g}=0$,   with the condition $\alpha_j<a\wedge(Q-\gamma)$ for  $j=1,..,b$. Consider Young diagrams $\nu_j,\tilde\nu_j$ for $j=1,..,b$. Then 
\begin{align}\label{pantclaimmm}
  \frac{\sqrt{2}}{(\sqrt{2}\pi)^{b-1}}&  \caA_{\psi(\caP), \hat g_\caP,({\bf   z}^{\rm mp},{\bf z}^{\rm ar}),(\boldsymbol{\alpha}^{\rm mp},\boldsymbol{\alpha}^{\rm ar}),\psi\circ \boldsymbol{\zeta}}\big(\otimes_{j=1}^b\Psi_{\alpha_j,\nu_j,\tilde\nu_j}\big)
\\
= & ( Z(g)Z_{\D,g_\D}^{-1})^bZ({\bf z},\boldsymbol{\alpha},\hat g) \frac{1}{(2\pi i)^{\sum_{j=1}^b(s(\nu_j)+s(\tilde{\nu}_j))}} \lim_{t\to\infty} \nonumber
\\
  &    \oint_{|\mathbf{u}|=\boldsymbol{\delta}_t}   \oint_{|\mathbf{v}|=\tilde{\boldsymbol{\delta}}_t}  
\mathbf{u}^{1-\boldsymbol{\nu}}\bar{\mathbf{v}}^{1-\tilde{\boldsymbol{\nu}}} \langle \prod_{j=1}^b \psi_j^\ast T_{g_{\rm dozz}}(\mathbf{u}_j){\psi_j^\ast\bar  T_{g_{\rm dozz}}}(\mathbf{v}_j)\prod_{j=1}^mV_{\alpha_j,g_{\rm dozz}}(z_j)  
\rangle_{\hat\C,g_{\rm dozz},U_t}\dd  \mathbf{u}\dd  \mathbf{v}   \nonumber
\end{align}
where $Z(g)$ was defined just after \eqref{defzg} and we used the notations 
${\bf u}=({\bf u}_1,\dots,{\bf u}_b)$ and ${\bf v}=({\bf v}_1,\dots,{\bf v}_b)$, $\boldsymbol{ \nu}=( \nu_1,\dots,\nu_b)$ and $\tilde{\boldsymbol{\nu}}=( \tilde\nu_1,\dots, \tilde\nu_b)$, the powers $\mathbf{u}^{1-\boldsymbol{\nu}}$ and $\bar{\mathbf{v}}^{1-\tilde{\boldsymbol{\nu}}} $ are shorthands  respectively for  $\prod_{j=1}^b\mathbf{u}_j^{1-\nu_j}$ and $\prod_{j=1}^b\bar{\mathbf{v}}_j^{1-\tilde \nu_j}$, and $\boldsymbol{\delta}_t=e^{-t}\boldsymbol{\delta}$ (similarly for $\tilde{\boldsymbol{\delta}}_t$, with $\boldsymbol{\delta},\tilde{\boldsymbol{\delta}}$ as described in section \ref{Eigenfunctions and generalised amplitudes}).
\end{proposition}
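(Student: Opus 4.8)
The plan is to assemble Proposition \ref{witht} by combining the three ingredients developed just above: the representation of the disc amplitudes with SET insertions, the gluing of amplitudes, and a passage to the limit $t\to\infty$. First I would write, for $\alpha_j<a\wedge(Q-\gamma)$, the identity from Lemma \ref{ampdiskSET} expressing $e^{(2\Delta_{\alpha_j}+|\nu_j|+|\tilde\nu_j|)t}e^{-t\mathbf{H}}\Psi^0_{\alpha_j,\nu_j,\tilde\nu_j}$ as a nested contour integral of the generalized disc amplitude $\caA_{\D,\D_{e^{-t}},g_\D,0,\alpha_j,\zeta_\D}$, and then transport this to the surface $\psi_j(\D)$ via \eqref{psitpsiP}, which uses Proposition \ref{diffeoSET} and the Weyl covariance of Proposition \ref{WeylSET}. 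In parallel, \eqref{pantP} rewrites the pant amplitude as an amplitude on $\psi(\caP)$, regularized by the artificial insertions $({\bf z}^{\rm ar},\boldsymbol{\alpha}^{\rm ar})$ as in Proposition \ref{anala}. The key algebraic step is Proposition \ref{glluu}: gluing the $b$ disc amplitudes with SET insertions to the generalized pant amplitude $\caA_{\psi(\caP),\tilde U_t,\hat g_\caP,\dots}$ using Proposition \ref{glueampli}, with gluing constant $\sqrt2/(\sqrt2\pi)^{b-1}$, produces the whole-sphere correlation function $\langle \prod_j \psi_j^\ast T_{\hat g}(\mathbf{u}_j)\psi_j^\ast\bar T_{\hat g}(\mathbf{v}_j)\prod_j V_{\alpha_j,\hat g}(z_j)\rangle_{\hat\C,\hat g,U_t}$, where the metric-dependent Schwarzian shift $c_{\rm L}/12$ in \eqref{TpsiSET} arises from combining the constant $Q^2/2$ in $T^{\psi_j}$ with the extra $1/12$ coming from the renormalization constants $a_{\hat\C,\hat g,\epsilon}$, $a_{\psi(\mc{D}_j),\hat g_j}$ via \eqref{apu}--\eqref{apu1} and Lemma \ref{constanteSET}.

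Once Proposition \ref{glluu} is in place at fixed $t$, I would change the metric from $\hat g$ to $g_{\rm dozz}$ using the Weyl covariance \eqref{Zzhat1}, picking up the explicit constant $Z({\bf z},\boldsymbol{\alpha},\hat g)$ of \eqref{Zzhat}; this is the step that removes the (artificial) dependence of $\hat g$ on the prescribed points $z_1,z_2,z_3$, replacing it by the fixed DOZZ metric which is Euclidean on $\D$ and hence compatible with the Euclidean choice of $g$ near the origin used in Section \ref{Conformal map}. Substituting the contour-integral representations of the $\Psi_{t,\alpha_j}(\mathbf{u}_j,\mathbf{v}_j)$ then yields, for each fixed $t$, the contour-integral expression on the right-hand side of \eqref{pantclaimmm} but with $\caA_{\psi(\caP),\tilde U_t,\hat g_\caP,\dots}(\otimes_j \Psi_{t,\alpha_j}(\mathbf{u}_j,\mathbf{v}_j))$ on the left. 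Finally I would take $t\to\infty$: on the left the sequence of generalized pant amplitudes $\caA_{\psi(\caP),\tilde U_t,\hat g_\caP,\dots}$ converges to $\caA_{\psi(\caP),\hat g_\caP,\dots}$ as continuous functionals on $\bigotimes_i e^{-\beta_i c_-}L^2(\R\times\Omega_\T)$ by Lemma \ref{opeconv}, while $e^{(2\Delta_{\alpha_j}+|\nu_j|+|\tilde\nu_j|)t}e^{-t\mathbf{H}}\Psi^0_{\alpha_j,\nu_j,\tilde\nu_j}\to\Psi_{\alpha_j,\nu_j,\tilde\nu_j}$ in $e^{-\beta_j c_-}L^2$ by \eqref{limtpsi} (using $\alpha_j<a\wedge(Q-\gamma)$, Proposition \ref{eigenana}); combining the two convergences gives the left-hand side of \eqref{pantclaimmm} and the limit is kept on the right-hand side, which is exactly the stated formula.

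The main obstacle I anticipate is the interchange of the $t\to\infty$ limit with the contour integrals and, more delicately, making sure the convergence on the left is compatible with evaluating the generalized pant amplitude on the $b$-fold tensor product $\otimes_{j=1}^b\Psi_{t,\alpha_j}(\mathbf{u}_j,\mathbf{v}_j)$, each factor of which itself is only defined as a limit and lives in a weighted $L^2$-space. The point to be careful about is that the weights $\beta_j>Q-\alpha_j$ of the disc states must be compatible with the decay weight $s=\sum_{j=b+1}^m\alpha_j-Q\chi(\mc{P})$ controlling the pant amplitude in Proposition \ref{anala}, i.e. one needs $\sum_{j=1}^b(\alpha_j-Q)+s>0$, which is precisely the first Seiberg bound for the glued sphere; this is why the artificial insertions with $\sum_j\alpha_j$ large are introduced. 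With the uniform (in $t$) bounds of Proposition \ref{anala} and the $L^2$-convergence of Lemma \ref{opeconv} and \eqref{limtpsi}, dominated convergence handles the passage to the limit, but writing this out carefully — keeping track of which variables are integrated, which are held fixed in the hole $\tilde U_t$, and the uniformity of all estimates over the compact contours $|\mathbf{u}|=\boldsymbol{\delta}_t$, $|\mathbf{v}|=\tilde{\boldsymbol{\delta}}_t$ — is the technical heart of the argument.
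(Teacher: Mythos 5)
Your proposal is correct and follows essentially the same route as the paper: contour-integrating the fixed-$t$ gluing identity of Proposition \ref{glluu} (after the Weyl change of metric \eqref{Zzhat1}), using Lemma \ref{TTLemma} to interchange expectation and contour integrals, and then passing to the limit $t\to\infty$ via \eqref{limtpsi} together with the convergence of the generalized pant amplitudes in Lemma \ref{opeconv}. You have also correctly identified the role of the artificial insertions in guaranteeing the weight compatibility $\sum_{j=1}^b(\alpha_j-Q)+s>0$ needed for the pairing to make sense, which is exactly the point the paper relies on.
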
 

\begin{proof}
We combine \eqref{pantclaim1} with the Weyl formula \eqref{Zzhat1} and take the contour  integrals of the resulting identity.
Using Lemma  \ref{TTLemma} we may  invert expectation and contour integrals.  The convergence   \eqref{limtpsi}, Lemma  \ref{TTLemma}  and the convergence of the amplitudes Lemma \ref{opeconv}  then show that the contour integrals of the  LHS of  \eqref{pantclaim1} converge to the LHS of \eqref{pantclaimmm}. 
\end{proof}

  We will now turn to computing the limit of the RHS of \eqref{pantclaimmm} using the Ward identities.   
  
\subsubsection{Residue calculation}\label{residue}

Denote ${\bf u}=({\bf u}_1,..,{\bf u}_b)$ with $k_i$ the size of ${\bf u}_i$ (and similarly for ${\bf v}$ with $\tilde k_i$ the size of ${\bf v}_i$) with
${\bf u}_i=(u_{i1},\dots,u_{ik_i})$ and similarly for ${\bf v}_i$. Define the vectors $\boldsymbol{\xi}_i,\boldsymbol{\eta}_i$ for $i=1,\dots, b$ by $\xi_{ij}=\psi_i(u_{ij})$ ($j=1,\dots k_i$) and $\eta_{ij}=\psi_i(v_{ij})$ ($j=1,\dots \tilde k_i$). Also, if $I_i\subset \{1,\dots,k_i\}$ (resp. $J_i\subset \{1,\dots,\tilde k_i\}$) we denote by $\boldsymbol{\xi}_{I_i}$ (resp. $\boldsymbol{\eta}_{J_i}$) the vector $(\xi_{ij})_{j\in I_i}$ (resp. $(\eta_{ij})_{j\in J_i}$) and finally we set $\boldsymbol{\xi}=(\boldsymbol{\xi}_1,\dots,\boldsymbol{\xi}_b)$ and $\boldsymbol{\eta}=(\boldsymbol{\eta}_1,\dots,\boldsymbol{\eta}_b)$.

With these notations and recalling that  $\psi_i^\ast T_{g_{\rm dozz}}(\mathbf{u}_i)$ is a product of sums made up of two terms given  by \eqref{TpsiSET}, we can expand this product (as well as the product $ \psi_i^\ast T_{g_{\rm dozz}}(\mathbf{v}_i)$) in the expectation in the r.h.s. of Proposition \ref{witht}  to obtain
\begin{align}\label{conformal11}
 \langle \prod_{i=1}^b( \psi_i^\ast &T_{g_{\rm dozz}}(\mathbf{u}_i) )(\overline{ \psi_i^\ast T_{g_{\rm dozz}}}(\mathbf{v}_i)) \prod_{j=1}^mV_{\alpha_j,g_{\rm dozz}}(z_j)  
\rangle_{\hat \C,g_{\rm dozz},U_t}
\\
=&\sum_{\bf I,J}
F_{\bf I}({\bf u})\overline{F_{\bf J}({\bf v})} \langle \prod_{i=1}^b( T_{g_{\rm dozz}}(\boldsymbol{\xi}_{I_i})\bbar T_{g_{\rm dozz}}(\boldsymbol{\eta}_{J_i}) \prod_{j=1}^mV_{\alpha_j,g_{\rm dozz}}(z_j) 
\rangle_{\hat \C,g_{\rm dozz},U_t}\nonumber
\end{align}
where the sum runs over ${\bf I}=(I_1,..,I_b)$, ${\bf J}=(J_1,..,J_b)$ with $I_i\subset\{1,\dots, k_i\}$ and $J_i\subset\{1,\dots, \tilde k_i\}$. 
The function  $F_{\bf I}$   is a  product of terms ${\psi'_i}(u_{ij})^2$ and $S_{\psi_i}(u_{ij})$ 
  and therefore analytic and bounded in $u_{ij}\in \D_{e^{-t}}$ .

We can now apply the Ward identities (Proposition \ref{wardite}) to the correlation function in \eqref{conformal11}: if we set ${\bf z}=(z_1,\dots,z_m)$, then 
 \begin{align}
 \langle \prod_{i=1}^b&( T_{{g_{\rm dozz}}}(\boldsymbol{\xi}_{I_i})\bar T_{{g_{\rm dozz}}}(\boldsymbol{\eta}_{J_i}) \prod_{j=1}^mV_{\alpha_j,g_{\rm dozz}}(z_j) 
\rangle_{\hat\C,{g_{\rm dozz}},U_t}\nonumber\\
  &=\sum_{ {\bf r},  \tilde{ \bf r}} 
  \caP_{\bf r}^{\bf I}(\boldsymbol{\xi},{\bf z}) {\caP_{\tilde{\bf r}}^{\bf J}(\bbar{\boldsymbol{\eta}},\bbar{\bf z})}\partial^{\bf r}_{\bf z} \partial^{\tilde{\bf r}}_{\bar{\bf z}}
  \langle  \prod_{j=1}^mV_{\alpha_j,g_{\rm dozz} }(z_j) \rangle_{\hat\C,{g_{\rm dozz}},U_t}+\epsilon_t{(\bf u,v,z})
  \label{wrdd}
 \end{align}
 where   (recall the definition of  $p_{\bf s}(\boldsymbol{\xi})$ and  $p_{\bf n}(\boldsymbol{\xi},{\bf z})$ in \eqref{defpn})  
 \begin{align*}
 \caP_{\bf r}^{\bf I}(\boldsymbol{\xi},{\bf z})=\sum_{{\bf s},{\bf n}}  a_{\bf s,n, r}^{\bf I} p_{\bf s}(\boldsymbol{\xi}) p_{\bf n}(\boldsymbol{\xi},{\bf z}),%
\end{align*}
the sum runs over the set of indices ${\bf r} \in \N^m$ with $ |{\bf n}|+ |{\bf s}|+|{\bf r}|=2\sum_{i=1}^b|I_i|$ (and similarly for ${\tilde {\bf r}}$) and the coefficients $ a^{\bf I}_{\bf s,n, r} \in\R$ are polynomials in the conformal weights $\Delta_{\alpha_i}$. Furthermore $ a^{\bf I}_{\bf s,n, r} =0$ if there is $n_{ij}\neq 0$ and $r_j=0$. Here we have used the fact that $\hat\omega(z_j)=1$ for all $j$ for the DOZZ metric in order to remove both products in front of the correlation functions in the statement of Proposition \ref{wardite}:  this is  one reason why we chose a metric Euclidean over the unit disk.
The derivatives in \eqref{wrdd} are in the sense of distributions in the variables $z_4, \dots, z_m$. 
Note that the set $U_t$ depends on $\hat{{\bf z}}=(z_1,z_2,z_3)=({\bf z}^{\rm di},{\bf z}^{\rm mp})$ since the conformal map $\psi:\hat\caP\to\hat\C$ does. However, the derivatives $\partial^{\bf r}_{\bf z} \partial^{\tilde{\bf r}}_{\bar{\bf z}}
$ do not act on this dependence: the conclusions of Proposition \ref{wardite} are that $\partial^{\bf r}_{\bf z} \partial^{\tilde{\bf r}}_{\bar{\bf z}}
 \langle  \prod_{j=1}^mV_{\alpha_j,g_{\rm dozz} }(z_j) \rangle_{\C,g_{{\rm dozz}},U_t}$ is to be understood as $\partial^{\bf r}_{\bf w} \partial^{\tilde{\bf r}}_{\bar{\bf w}}
  \langle  \prod_{j=1}^mV_{\alpha_j,{\rm dozz} }(w_j) \rangle_{\C,g_{{\rm dozz}},U_t}$ evaluated at ${{\bf w}={\bf z}}$. So, here and in the calculations that follow, we treat $U_t$ as fixed.

The remainder (given by Proposition \ref{wardite}) satisfies for smooth $(f_i)_{i=4, \dots, m}$   with  disjoint compact support (as in Proposition \ref{wardite})
 \begin{align*}
\int_{\C^{m-3}}  \oint_{|\mathbf{u}|=\boldsymbol{\delta}_t}   \oint_{|\mathbf{v}|=\boldsymbol{\tilde\delta}_t}  
|\mathbf{u}^{1-\boldsymbol{\nu}}\bar{\mathbf{v}}^{1-\tilde{\boldsymbol{\nu}}} \epsilon_t{(\bf u,v},{\bf z}) \prod_{i=4}^m f_i(z_i) |\dd{\bf u}\dd{\bf v} \dd {\bf z}^{\rm ar}\leq C e^{(|\boldsymbol{\nu}|+|\tilde{\boldsymbol{\nu}}|-2k-2\tilde k)t}e^{(2(k+\tilde k-1)+\gamma\alpha)t} 
\end{align*}
where $k=\sum_ik_i$ and  $\tilde k=\sum_i\tilde k_i$, $\alpha=\max_{i\leq 3}\alpha_i$ and $C$ is a constant depending on the uniform norm of the derivatives of the $(f_i)_{i=4, \dots, m}$.
Hence the remainder vanishes as $t\to\infty$ provided $\alpha$ is small enough, which we may assume by adding the condition $\alpha_j< a\wedge(Q-\gamma)$ for $j=b+1, \dots, 3$ and changing the value of $a$. %in Prop. \ref{eigenana}.

We are thus left with studying the contour integral
\begin{align*}%\label{}
\caI_{t,\bf I,s,n}({\bf z})= \oint_{|\mathbf{u}|=\boldsymbol{\delta}_t}  \mathbf{u}^{1-\boldsymbol{\nu}}F_{\bf I}({\bf u})p_{\bf s}(\boldsymbol{\xi})p_{\bf n}(\boldsymbol{\xi},{\bf z})\dd{\bf u}
\end{align*}
(the ${\bf v}$-contour integrals are of the same form). 

Now we rewrite both $p_{\bf s}(\boldsymbol{\xi})$ and $p_{\bf n}(\boldsymbol{\xi},{\bf z})$
in a form that will be convenient to compute residues.
For $i=1,\dots,b$, since   $\psi_i(0)=z_i$, we have $\psi_i(u)=z_i+u\zeta_i(u)$ with $\zeta_i$ holomorphic and non-vanishing  on $\D$. 
Setting $\lambda_i=\psi'_i(0)$ we   get then for $q\in\N$
\begin{align}\label{psirel1}
%(x^i_j-x^i_k)^{-1}=(\psi_i(u^i_j)-\psi_i(v^i_k))^{-1}=\xi(u^i_j,v^i_k) ((u^i_j-v^i_k)^{-1}
(\psi_i(u)-\psi_i(u'))^{-q}=\lambda_i^{-q}(u-u')^{-q}(1+\xi_{i,q}(u,u') )
\end{align}
 where  $\xi_i$ is holomorphic in $\D\times\D$ with $\xi_i(0,0)=0$. Similarly for $u\in\D,u'\in\D$, $i\not=j$ and $q\in\N$
 \begin{align}\label{psirel2}
(\psi_i(u)-\psi_j(u'))^{-q}=&(z_i-z_j+u\zeta_i(u)-u'\zeta_j(u'))^{-q}=(z_i-z_j)^{-q}(1+\frac{u\zeta_i(u)-u'\zeta_j(u')}{z_i-z_j})^{-q}\nonumber\\
=&(z_i-z_j)^{-q}\sum_{n\geq 0}c_{q,n}(z_i-z_j)^{-n}( u\zeta_i(u)-u'\zeta_j(u'))^n\nonumber\\
=&(z_i-z_j)^{-q}\sum_{n\geq 0}c_{q,n}(z_i-z_j)^{-n}\sum_{\ell=0}^n   u^{\ell}(u')^{n-\ell} h_{ijn\ell}(u,u')
\end{align}
where $h_{ijn\ell}(u,u')$ is is holomorphic around $(0,0)$. The second line was obtained by expanding the function $(1+x)^{-q}$ as a series and the third line by expanding the  term $( u\zeta_i(u)-u'\zeta_j(u'))^n$ with the Newton binomial formula. Recalling that $\xi_{ij}=\psi(u_{ij})$ we obtain
 \begin{align}\label{pmx}
 p_{\bf s}(\boldsymbol{\xi})=p_{{\bf s}_0}( {\bf z}^{\rm di})\Big(\prod_{i=1}^b p_{{\bf s}_i}(\lambda_i{\bf u}_i)\Big)
 \sum_{\bf s'}\theta_{ {\bf s},{\bf s}'}({\bf u})p_{{\bf s}'}( {\bf z}^{\rm di})  
\end{align}
for some ${\bf s}_i$, $i=1,..,b$ with $|{\bf s}_0|+\sum_{i=1}^b|{\bf s}_i|=|{\bf s}|$, and  where $\theta_{ {\bf s}, \bf s'}$ is  of the form 
$$\theta_{  {\bf s},\bf s'}({\bf u})=\sum_{|\boldsymbol{\ell}|=|{ \bf s'}|}{ \bf u}^{\boldsymbol{\ell}}\theta_{\boldsymbol{\ell},  {\bf s},\bf s'}({\bf u})$$
with $\theta_{\boldsymbol{\ell},  {\bf s},\bf s'}$ analytic in ${\bf u}$ over a neighborhood of $0$ and $\theta_{  {\bf s},\bf 0}({\bf u})=1$. To get this expression, starting from the definition of  $p_{\bf s}(\boldsymbol{\xi})$ by a product \eqref{defpn}, we have just gathered   the terms producing $\lambda_i(u_{ij}-u_{ij'})$ with $j,j'\in I_i$, i.e. from \eqref{psirel1}, to form $p_{{\bf s}_i}(\lambda_i{\bf u}_i)$  and gathered the terms producing the terms $(z_i-z_j)$ with $i\not =j$, i.e. from \eqref{psirel2}, to form $p_{{\bf s}_0}( {\bf z}^{\rm di})$    up to holomorphic error terms given by the series. The series $ \sum_{\bf s'}\theta_{  {\bf s},\bf s'}({\bf u})p_{{\bf s}'}( {\bf z}^{\rm di})  $ converges uniformly in a ($t$-independent) neighborhood of $0$.

Now we proceed similarly to analyze $p_{\bf n}(\boldsymbol{\xi},{\bf z})$.
Taking $u'=0$, we also have for $u\in\D$ and for $q\in\N$
$$
(\psi_i(u)-z_i)^{-q}=\lambda_i^{-q} u^{-q}(1+\eta_{i,q}(u))
$$
with analytic $\eta_{i,q}$ vanishing at $0$. Also, for $z_i\not=z_j$,
\[
 (\psi_i(u)-z_j)^{-q}=(z_i-z_j)^{-q}(1+\frac{u\zeta_{i}(u)}{z_i-z_j})^{-1}.
 \]
Arguing as we did in \eqref{psirel2} , we deduce
  \begin{align}\label{pmxiz}
 p_{\bf n}(\boldsymbol{\xi},{\bf z})=\prod_{i=1}^b(\lambda_i {\bf u}_i)^{-\boldsymbol{\rho}_i}p_{{\bf n}_0}({\bf z}^{\rm di}) 
 \sum_{\bf n'}\vartheta_{ {\bf n},\bf  n'}({\bf u})p_{{\bf n'}}({\bf z}) 
\end{align}
for some (vectorial) exponents $\boldsymbol{\rho}_i$ and ${\bf n}_0$ s.t. $\sum_{i=1}^b|\boldsymbol{\rho}_i|+|{\bf n}_0|=|{\bf n}|$ and 
\[
\vartheta_{{\bf n}, \bf n'}({\bf u})=\sum_{|\boldsymbol{\ell}|=|{ \bf n'}|}{ \bf u}^{\boldsymbol{\ell}}\vartheta_{\boldsymbol{\ell},{\bf n}, \bf n'}({\bf u})
\]
with $\vartheta_{\boldsymbol{\ell},{\bf n}, \bf n'}$ analytic in ${\bf u}$ over a neighborhood of $0$ and $\vartheta_{ {\bf n},\bf 0}({\bf u})=1$.  Again, the series $ \sum_{\bf n'}\vartheta_{{\bf n},\bf  n'}({\bf u})p_{{\bf n'}}({\bf z})$ converges uniformly in a neighborhood of $0$.

Define now the series
$$\sum_{\bf m} t_{\bf s,n,m}({\bf u})p_{{\bf m}}({\bf z}):=\sum_{\bf s',\bf n'}\vartheta_{ {\bf n},\bf  n'}({\bf u}) \theta_{ {\bf s},{\bf s}'}({\bf u})p_{{\bf n'}}({\bf z}) p_{{\bf s}'}( {\bf z}^{\rm di})  $$
with the coefficients $t_{\bf s,n,m}$ of the form $t_{\bf s,n,m}({\bf u})=\sum_{|\boldsymbol{\ell}|=|{ \bf m}|}{ \bf u}^{\boldsymbol{\ell}}t_{\bf \boldsymbol{\ell},s,n,m}({\bf u})$ and $t_{\bf \boldsymbol{\ell},s,n,m}$ analytic in ${\bf u}$ over a neighborhood of $0$ and $t_{\bf s,n,0}({\bf u})=1$. Furthermore, the series $\sum_{\bf m}p_{{\bf m}}({\bf z})t_{\bf s,n,m}({\bf u})$ converges uniformly in a neighborhood of $0$. 
We deduce
\begin{align}  
 \caI_{t,\bf I,s,n}({\bf z})
 =C
  p_{\bf s_0}({\bf z}^{\rm di})p_{\bf n_0}({\bf z}^{\rm di})\sum_{\bf m}p_{{\bf m}}({\bf z}) \oint_{|\mathbf{u}|=\boldsymbol{\delta}_t}  t_{\bf s,n,m}({\bf u}) F_{\bf I}({\bf u})\prod_{i=1}^b\mathbf{u}_i^{1-\nu_i-\boldsymbol{\rho}_i}p_{{\bf s}_i}({\bf u}_i)\dd{\bf u}\label{relforI}
 \end{align}
where the constant $C$ gathers the contribution from the various powers of the $\lambda_i$'s.

To compute the contour integral, we make use of the following elementary result
\begin{lemma}\label{contourp}
Let ${\bf p}=(p_1,\dots,p_k)\in\Z^k$ and ${\bf e}=(e_{ij})_{1\leq i<j\leq k}\in \N^{k(k-1)/2}$. Let $\boldsymbol{\delta}=(\delta_1,\dots,\delta_k)\in \R_+^k$ with $\delta_1<\dots<\delta_k$. Then the contour integral $\oint_{|{\bf u}|=\boldsymbol{\delta}}p_{{\bf e}}({\bf u}){\bf u}^{-{\bf p}}\dd {\bf u}$ vanishes for $|{\bf p}|+|{\bf e}|\not=k$. 
\end{lemma}

\begin{proof} By holomorphicity, the mapping $\lambda\in\R_+ \mapsto \oint_{|{\bf u}|=\lambda\boldsymbol{\delta}}p_{{\bf e}}({\bf u}){\bf u}^{-{\bf p}}\dd {\bf u}$ is actually constant. Furthermore a simple scaling argument shows that $$ \oint_{|{\bf u}|=\lambda\boldsymbol{\delta}}p_{{\bf e}}({\bf u}){\bf u}^{-{\bf p}}\dd {\bf u} = \lambda^{-|{\bf e}|-|{\bf p}|+k}\oint_{|{\bf u}|= \boldsymbol{\delta}}p_{{\bf e}}({\bf u}){\bf u}^{-{\bf p}}\dd {\bf u}$$
from which our claim follows.
\end{proof}

To use this lemma, we expand
\begin{align*}
 \oint_{|\mathbf{u}|=\boldsymbol{\delta}_t}  t_{\bf s,n,m}({\bf u})F_{\bf I}({\bf u}) \prod_{i=1}^b\mathbf{u}_i^{1-\nu_i-\boldsymbol{\rho}_i}p_{{\bf s}_i}({\bf u}_i)\dd{\bf u} 
 &= 
  \sum_{|\boldsymbol{\ell}|=|{ \bf m}|} \oint_{|\mathbf{u}|=\boldsymbol{\delta}_t}  { \bf u}^{\boldsymbol{\ell}} \prod_{i=1}^b\mathbf{u}_i^{1-\nu_i-\boldsymbol{\rho}_i}p_{{\bf s}_i}({\bf u}_i)t_{\bf \boldsymbol{\ell},s,n,m}({\bf u})F_{\bf I}({\bf u})\dd{\bf u}.
\end{align*}
Then we expand again $t_{\bf \boldsymbol{\ell},s,n,m}({\bf u})F_{\bf I}({\bf u})= \sum_{{\bf p} } b_{\bf p}{\bf u}^{\bf p}$ and apply the previous elementary lemma to see that the contour integral is $0$ if $\sum_{i=1}^b |{\bf s}_i|+\sum_{i=1}^b|\boldsymbol{\rho}_i| + |{\boldsymbol{\nu}}|-k-|{\bf p}|-|{\bf m}|\not=k$. In particular, since $\sum_{i=1}^b |I_i| <k$, this contour integral is $0$ if 
\begin{equation}\label{conditionintnonzero}
|{\bf m}|>\sum_{i=1}^b |{\bf s}_i|+\sum_{i=1}^b|\boldsymbol{\rho}_i| + |{\boldsymbol{\nu}}|-2\sum_{i=1}^b |I_i|-|{\bf p}|.
\end{equation}

Considering then the product of the contour integral with $  p_{\bf s_0}({\bf z}^{\rm di})p_{\bf n_0}({\bf z}^{\rm di})$, this gives that 
\[ \caI_{t,\bf I,s,n}({\bf z})=\sum_{\bf m}b_{\bf s,n,m}^{\bf I}p_{\bf m}(\bf z)\]
 where the coefficients $b^{\bf I}_{\bf s,n,m}$ do not depend on $t$ and  $b^{\bf I}_{\bf s,n,m}=0$ if   $|{\bf m}|> |{\bf s}|+ |{\bf n}| + |{\boldsymbol{\nu}}|-2\sum_{i=1}^b |I_i| =|{\boldsymbol{\nu}}| -|{\bf r}|$ (this can be seen by using \eqref{conditionintnonzero} along with $\sum_{i=1}^b|\boldsymbol{\rho}_i|= |{\bf n}|- |{\bf n}_0|$ with $|{\bf n}_0| \geq 0$ and also the relation $ |{\bf n}|+ |{\bf s}|+|{\bf r}|=2\sum_{i=1}^b|I_i|$).

Now we consider the differential operator
 \begin{equation}\label{diffop}
 \caD_{\boldsymbol{\alpha},\boldsymbol{\nu},{\bf z}}=\sum_{\bf r,m}a_{\bf r,m,\boldsymbol{\alpha},\boldsymbol{\nu}}p_{\bf m}({\bf z})\partial^{\bf r}_{\bf z}  \quad \text{with}\quad a_{\bf r,m,\boldsymbol{\alpha},\boldsymbol{\nu}}:=\sum_{\bf I}\sum_{{\bf s},{\bf n},\bf m}b^{\bf I}_{\bf s,n,m} a^{\bf I}_{\bf s,n,r}.
 \end{equation}
 The sum is finite since it runs over $ |{\bf n}|+ |{\bf s}|+|{\bf r}|=2\sum_{i=1}^b|I_i|$  and $|{\bf m}|\leq|\boldsymbol{\nu}|-|{\bf r}|$. Also, the coefficients  are polynomials in $\Delta_{\alpha_j}$.

To summarise, we have obtained: 
  \begin{proposition}\label{withtbis}
 Given $\boldsymbol{\nu}=(\nu_1,\dots,\nu_b)\in \mc{T}^b$, $\tilde{\boldsymbol{\nu}}=(\tilde{\nu}_1,\dots,\tilde{\nu}_b)$, $\alpha_j<a\wedge(Q-\gamma)$ for  $j=1,..,3$ and  $\alpha_j<Q$ for $j>3$ satisfying $\sum_j\alpha_j>2Q$, we have  (in the strong sense in $\hat {\bf z}$ and in the distributional sense in ${\bf z}^{\rm ar}$)
  \begin{align*}
 \frac{1}{(2\pi i)^{\sum_{j=1}^b(s(\nu_j)+s(\tilde{\nu}_j))}} & \oint_{|\mathbf{u}|=\boldsymbol{\delta}_t}   \oint_{|\mathbf{v}|=\boldsymbol{\tilde\delta}_t}  
\mathbf{u}^{1-\boldsymbol{\nu}}\bbar{\mathbf{v}}^{1-\tilde{\boldsymbol{\nu}}}
 \langle \prod_{i=1}^b( \psi_i^\ast T_{g_{\rm dozz}}(\mathbf{u}_i) )( \psi_i^\ast \bar T_{g_{\rm dozz}}(\mathbf{v}_i)) \prod_{j=1}^mV_{\alpha_j,g_{\rm dozz}}(z_j)  
\rangle_{\C,g_{\rm dozz},U_t}
\dd{\bf u}\dd{\bf v}\\
  &=\caD_{\boldsymbol{\alpha},\boldsymbol{\nu},{\bf z}} \caD_{\boldsymbol{\alpha},\tilde{\boldsymbol{\nu}},\bar{\bf z}} 
  \langle  \prod_{j=1}^mV_{\alpha_j,g_{\rm dozz}}(z_j) \rangle_{\hat \C,{g_{\rm dozz}},U_t}+\epsilon_t({\bf z} )
%\label{wrdd}
 \end{align*}
where $\int_{\C^{m-3}} |\epsilon_t({\bf z}) \prod_{i=4}^m f_i(z_i)| \dd {\bf z}^{\rm ar} \to 0$ as $t\to\infty$  for smooth $(f_i)_{i=4, \dots, m}$ satisfying the conditions of Proposition \ref{wardite}.  
\end{proposition}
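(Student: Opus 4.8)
The plan is to assemble the identity by chaining together three ingredients already available: the anomalous conformal transformation law \eqref{TpsiSET} of the stress-energy tensor, the iterated Ward identity of Proposition \ref{wardite}, and an explicit residue computation.

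First I would start from the correlation function on the left-hand side and substitute, in each factor, $\psi_i^\ast T_{g_{\rm dozz}}(u_{ij})=\psi_i'(u_{ij})^2\,T_{g_{\rm dozz}}(\psi_i(u_{ij}))+\tfrac{c_{\rm L}}{12}S_{\psi_i}(u_{ij})$ and analogously for the antiholomorphic factors. Expanding the product gives \eqref{conformal11}: a finite sum over subsets ${\bf I}=(I_1,\dots,I_b)$ and ${\bf J}=(J_1,\dots,J_b)$ of a coefficient $F_{\bf I}({\bf u})\,\overline{F_{\bf J}({\bf v})}$, which is a product of the $\psi_i'(u_{ij})^2$ and Schwarzians $S_{\psi_i}(u_{ij})$ and hence holomorphic and bounded on $\D_{e^{-t}}^{\,k}$ uniformly in $t$, times a SET correlation function with insertion points $\xi_{ij}=\psi_i(u_{ij})$, $\eta_{ij}=\psi_i(v_{ij})$. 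Since for $\boldsymbol{\delta}_t$ small these points together with ${\bf z}$ are pairwise distinct and the interior points $\hat{\bf z}$ lie in the hole $\tilde U_t$, Proposition \ref{ampSETholomorphic} guarantees that each such correlation function is well-defined, continuous in the external variables, and smooth in $\hat{\bf z}$; this is the source of the claimed regularity (classical in $\hat{\bf z}$, distributional in ${\bf z}^{\rm ar}$).

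Next, to each correlation function $\langle\prod_i T_{g_{\rm dozz}}(\boldsymbol{\xi}_{I_i})\bar T_{g_{\rm dozz}}(\boldsymbol{\eta}_{J_i})\prod_j V_{\alpha_j,g_{\rm dozz}}(z_j)\rangle_{\hat\C,g_{\rm dozz},U_t}$ I apply the iterated Ward identity \eqref{wrdd0}, using that for the DOZZ metric $\omega$ vanishes on the unit disk where all the $z_j$ sit, so the $e^{\Delta_{\alpha_j}\omega(z_j)}$ prefactors are trivial. This produces \eqref{wrdd}: a finite sum of terms $\caP_{\bf r}(\boldsymbol{\xi},{\bf z})\,\overline{\caP_{\tilde{\bf r}}(\boldsymbol{\eta},{\bf z})}\,\partial^{\bf r}_{\bf z}\partial^{\tilde{\bf r}}_{\bar{\bf z}}\langle\prod_j V_{\alpha_j,g_{\rm dozz}}(z_j)\rangle_{\hat\C,g_{\rm dozz},U_t}$, with each $\caP_{\bf r}$ a sum of products $p_{\bf s}(\boldsymbol{\xi})p_{\bf n}(\boldsymbol{\xi},{\bf z})$ of the elementary rational functions \eqref{defpn} with coefficients polynomial in the $\Delta_{\alpha_i}$ and vanishing whenever some $n_{ij}\neq 0$ but $r_j=0$, plus a remainder $\epsilon_t$. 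Then I would carry out the ${\bf u}$-contour integrals $\oint_{|{\bf u}|=\boldsymbol{\delta}_t}{\bf u}^{1-\boldsymbol{\nu}}F_{\bf I}({\bf u})p_{\bf s}(\boldsymbol{\xi})p_{\bf n}(\boldsymbol{\xi},{\bf z})\,\dd{\bf u}$ by expanding $\psi_i(u)=z_i+u\zeta_i(u)$ with $\zeta_i$ holomorphic non-vanishing on $\D$ and $\lambda_i=\psi_i'(0)$; the identities \eqref{psirel1}, \eqref{psirel2} turn $p_{\bf s}(\boldsymbol{\xi})$ and $p_{\bf n}(\boldsymbol{\xi},{\bf z})$ into a $t$-independent factor in ${\bf z}^{\rm di}$ times $\prod_i(\lambda_i{\bf u}_i)^{(\cdot)}p_{{\bf s}_i}({\bf u}_i)$ times a power series in ${\bf u}$ with ${\bf z}$-dependent coefficients converging uniformly on a $t$-independent neighborhood of the origin (see \eqref{pmx}, \eqref{pmxiz}, \eqref{relforI}). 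Since $F_{\bf I}$ is also holomorphic, the integrand is a convergent Laurent series in ${\bf u}$, which I integrate term-by-term using the elementary rule that $\oint_{|{\bf u}|=\boldsymbol{\delta}}p_{\bf e}({\bf u}){\bf u}^{-{\bf p}}\,\dd{\bf u}$ vanishes unless $|{\bf p}|+|{\bf e}|$ equals the number of variables. The radii cancel, so the result $\caI_{t,{\bf I},{\bf s},{\bf n}}({\bf z})=\sum_{\bf m}b_{\bf s,n,m}\,p_{\bf m}({\bf z})$ has $t$-independent coefficients supported on $|{\bf m}|\le|\boldsymbol{\nu}|-|{\bf r}|$ and inheriting the vanishing property. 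Collecting all contributions defines $\caD_{\boldsymbol{\alpha},\boldsymbol{\nu},{\bf z}}=\sum_{\bf r,m}a_{\bf r,m,\boldsymbol{\alpha},\boldsymbol{\nu}}p_{\bf m}({\bf z})\partial^{\bf r}_{\bf z}$; the antiholomorphic factors are handled identically and produce $\caD_{\boldsymbol{\alpha},\tilde{\boldsymbol{\nu}},\bar{\bf z}}$. The vanishing property ($a_{\bf r,m,\boldsymbol{\alpha},\boldsymbol{\nu}}=0$ when $r_j=0$, $m_{ij}\neq0$) is exactly what allows $\caD_{\boldsymbol{\alpha},\boldsymbol{\nu},{\bf z}}$ to be evaluated as $\caD_{\boldsymbol{\alpha},\boldsymbol{\nu},\hat{\bf z}}$ whenever it acts on a function of $\hat{\bf z}$ only, which is needed in the subsequent sections.

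Finally, for the remainder: combining the bound on $\epsilon_t$ from Proposition \ref{wardite} (the $k+\tilde k$ SET insertions contributing $e^{(2(k+\tilde k-1)+\gamma\alpha)t}$) with the homogeneity factor $e^{(|\boldsymbol{\nu}|+|\tilde{\boldsymbol{\nu}}|-2k-2\tilde k)t}$ from ${\bf u}^{1-\boldsymbol{\nu}}\bar{\bf v}^{1-\tilde{\boldsymbol{\nu}}}$ on contours of radius $e^{-t}$, the remainder smeared against test functions in ${\bf z}^{\rm ar}$ is $O(e^{(|\boldsymbol{\nu}|+|\tilde{\boldsymbol{\nu}}|-2+\gamma\alpha)t})$, which goes to $0$ as $t\to\infty$ provided $\alpha=\max_{i\le3}\alpha_i$ is negative enough; this is secured by shrinking the threshold $a$ in the hypothesis $\alpha_j<a\wedge(Q-\gamma)$ in a way depending on $\boldsymbol{\nu},\tilde{\boldsymbol{\nu}}$. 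I expect the genuine difficulty to be the combinatorial bookkeeping of the three successive expansions — tracking which monomials $p_{\bf m}({\bf z})$ survive the contour integration and checking that the ``$r_j=0\Rightarrow m_{ij}=0$'' vanishing structure is preserved at each stage — whereas the convergence of the Laurent series and the remainder estimate are routine once Propositions \ref{ampSETholomorphic} and \ref{wardite} are in hand.
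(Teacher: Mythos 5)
Your proposal is correct and follows essentially the same route as the paper: expand via the anomalous transformation law \eqref{TpsiSET} into the sum \eqref{conformal11}, apply the iterated Ward identity of Proposition \ref{wardite}, carry out the contour integrals via the expansions \eqref{psirel1}--\eqref{relforI} and the elementary residue rule, and kill the remainder using the exponential bound together with the homogeneity of $\mathbf{u}^{1-\boldsymbol{\nu}}\bar{\mathbf{v}}^{1-\tilde{\boldsymbol{\nu}}}$ on contours of radius $e^{-t}$, shrinking $a$ if necessary. The bookkeeping of the vanishing property $r_j=0,\ m_{ij}\neq 0\Rightarrow a_{\bf r,m,\boldsymbol{\alpha},\boldsymbol{\nu}}=0$ is exactly the point the paper also singles out, so nothing is missing.
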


By choosing $a$ sufficiently small, one can apply Lemma \ref{difflemma} to get the following convergence
$$
\caD_{\boldsymbol{\alpha},\boldsymbol{\nu},{\bf z}} \caD_{\boldsymbol{\alpha},\boldsymbol{\tilde\nu},\bar{\bf z}} 
  \langle  \prod_{j=1}^mV_{\alpha_j,g_{\rm dozz} }(z_j) \rangle_{\C,{g_{\rm dozz}},U_t}\to \caD_{\boldsymbol{\alpha},\boldsymbol{\nu},{\bf z}} \caD_{\boldsymbol{\alpha},\tilde{\boldsymbol{\nu}},\bar{\bf z}} 
  \langle  \prod_{j=1}^mV_{\alpha_j,g_{\rm dozz} }(z_j) \rangle_{\C,{g_{\rm dozz}}}
$$
as $t\to \infty$ in the strong sense in the variables $z_1,z_2,z_3$ and in the distributional sense for the variables ${\bf z}^{\rm ar}$. We may now combine this result with Proposition \ref{witht}, Proposition \ref{anala} and 
Lemma  \ref{TTLemma} to get:
\begin{proposition}\label{pantalphas}
Given $\boldsymbol{\nu},\tilde{\boldsymbol{\nu}}$ Young diagrams, $\alpha_j<a\wedge(Q-\gamma)$ for  $j=1,..,3$ and  $\alpha_j<Q$ for $j>3$ satisfying $\sum_j\alpha_j>2Q$ (recall the definition \eqref{Zzhat})
 \begin{align}\label{final1}
 & \frac{\sqrt{2}}{(\sqrt{2}\pi)^{b-1}}   \caA_{\psi(\caP),\hat g_\caP,({\bf   z}^{\rm mp}, {\bf   z}^{\rm ar}),(\boldsymbol{\alpha}^{\rm mp},\boldsymbol{\alpha}^{\rm ar}),\psi\circ \boldsymbol{\zeta}}\big(\otimes_{j=1}^b\Psi_{\alpha_j,\nu_j,\tilde{\nu}_j}\big)\\
 & = ( Z(g)Z_{\D,g_\D}^{-1})^bZ({\bf z},\boldsymbol{\alpha},\hat g)
\caD_{\boldsymbol{\alpha},\boldsymbol{\nu},{\bf z}} \caD_{\boldsymbol{\alpha},\tilde{\boldsymbol{\nu}},\bar{\bf z}} 
\langle \prod_{j=1}^mV_{\alpha_j,g_{\rm dozz}}(z_j)
\rangle_{\hat\C,g_{\rm dozz}}.\nonumber
 \end{align}
 The LHS in  \eqref{final1} is continuous in $\bf z$ and holomorphic in $\boldsymbol{\alpha}$ in a complex neighborhood of the above region and the 
 equality \eqref{final1} holds in the distributional sense. 
 \end{proposition}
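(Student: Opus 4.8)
The strategy is to assemble the pieces already proved in this section, in the following order. First I would start from Proposition \ref{witht}, which expresses the left-hand side of \eqref{final1} (up to the constant $\tfrac{\sqrt{2}}{(\sqrt{2}\pi)^{b-1}}$) as the limit as $t\to\infty$ of a contour integral of the correlation function $\langle \prod_{j=1}^b \psi_j^\ast T_{g_{\rm dozz}}(\mathbf{u}_j){\psi_j^\ast\bar T_{g_{\rm dozz}}}(\mathbf{v}_j)\prod_{j=1}^mV_{\alpha_j,g_{\rm dozz}}(z_j)\rangle_{\hat\C,g_{\rm dozz},U_t}$, prefactored by $(Z(g)Z_{\D,g_\D}^{-1})^bZ({\bf z},\boldsymbol{\alpha},\hat g)$. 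The hypotheses of Proposition \ref{witht} require $\alpha_j<a\wedge(Q-\gamma)$ for $j=1,\dots,b$ and the Seiberg bounds with $\mathbf{g}=0$; these are all subsumed (after possibly shrinking $a$) in the hypotheses of the present Proposition, since $\alpha_j<a\wedge(Q-\gamma)$ for $j\le 3$ and $\sum_j\alpha_j>2Q$ give \eqref{seiberg1}, while \eqref{seiberg2} follows from $\alpha_j<Q$ for all $j$.

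Next I would invoke Proposition \ref{withtbis}, which identifies the limit of exactly this contour integral (in the strong sense in $\hat{\bf z}$ and the distributional sense in ${\bf z}^{\rm ar}$) with $\caD_{\boldsymbol{\alpha},\boldsymbol{\nu},{\bf z}} \caD_{\boldsymbol{\alpha},\tilde{\boldsymbol{\nu}},\bar{\bf z}}\langle  \prod_{j=1}^mV_{\alpha_j,g_{\rm dozz}}(z_j) \rangle_{\hat \C,{g_{\rm dozz}},U_t}$ up to a remainder $\epsilon_t({\bf z})$ whose smeared integral against test functions $f_4,\dots,f_m$ tends to $0$. Then I would use Lemma \ref{difflemma}: choosing $a$ small enough so that the relevant order of differentiation $L$ satisfies $L<2-\gamma\max_{i\le 3}\alpha_i$, the functions $\langle \prod_j V_{\alpha_j,g_{\rm dozz}}(z_j)\rangle_{\hat\C,g_{\rm dozz},U_t}$ converge, together with all derivatives $\partial^{\bf r}_{\bf z}\partial^{\tilde{\bf r}}_{\bar{\bf z}}$ appearing in $\caD_{\boldsymbol{\alpha},\boldsymbol{\nu},{\bf z}}\caD_{\boldsymbol{\alpha},\tilde{\boldsymbol{\nu}},\bar{\bf z}}$, uniformly on compacts (strongly in $z_1,z_2,z_3$ and distributionally in ${\bf z}^{\rm ar}$) to the corresponding quantities for $\langle \prod_j V_{\alpha_j,g_{\rm dozz}}(z_j)\rangle_{\hat\C,g_{\rm dozz}}$. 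Combining these three ingredients, the right-hand side of the limit in Proposition \ref{witht} converges to $(Z(g)Z_{\D,g_\D}^{-1})^bZ({\bf z},\boldsymbol{\alpha},\hat g)\,\caD_{\boldsymbol{\alpha},\boldsymbol{\nu},{\bf z}} \caD_{\boldsymbol{\alpha},\tilde{\boldsymbol{\nu}},\bar{\bf z}}\langle \prod_j V_{\alpha_j,g_{\rm dozz}}(z_j)\rangle_{\hat\C,g_{\rm dozz}}$, which is the asserted identity \eqref{final1}.

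Finally, for the regularity and holomorphy statements: continuity of the left-hand side in ${\bf z}$ and holomorphy in $\boldsymbol{\alpha}$ on a complex neighborhood follow from Proposition \ref{anala} (which gives the holomorphic extension of the $\psi(\caP)$-amplitude in $(\boldsymbol{\alpha}^{\rm mp},\boldsymbol{\alpha}^{\rm ar})$ valued in the appropriate weighted $L^2$ space, together with continuity in the insertion points) paired with Proposition \ref{defprop:desc}, which gives the holomorphic dependence of $\Psi_{\alpha_j,\nu_j,\tilde\nu_j}$ on $\alpha_j$ in the region $W_{\ell_j}$ containing the segment $(-\infty,a]$ by Proposition \ref{eigenana}; pairing a holomorphic family valued in $e^{-\beta c_-}L^2$ with a holomorphic family valued in the dual-type weighted space yields a holomorphic scalar function. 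On the right-hand side, holomorphy in $\boldsymbol{\alpha}$ of the DOZZ-type $3$-point correlation (and its higher-point regularizations) is known, and the coefficients of $\caD_{\boldsymbol{\alpha},\boldsymbol{\nu},{\bf z}}$ are polynomials in the $\Delta_{\alpha_j}$, hence entire in $\boldsymbol{\alpha}$; the identity \eqref{final1} then propagates by analytic continuation, and the equality is understood distributionally in ${\bf z}^{\rm ar}$ and strongly in $\hat{\bf z}$ as in Proposition \ref{withtbis}.

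\textbf{Main obstacle.} The delicate point is the interchange of the $t\to\infty$ limit with the contour integrations and, simultaneously, the control of the remainder $\epsilon_t$: one must check that the bound on the smeared remainder in Proposition \ref{withtbis} is uniform enough to survive the multiplication by the prefactors and the passage to the distributional pairing, and that the order-of-differentiation condition required by Lemma \ref{difflemma} is compatible with the (already constrained) smallness of $a$ coming from Proposition \ref{eigenana} and from the condition $\alpha_j<a\wedge(Q-\gamma)$. Since both constraints are of the form ``$\alpha$ sufficiently negative,'' they can be met simultaneously by shrinking $a$; making this quantitative — tracking which finite set of derivatives actually occurs in $\caD_{\boldsymbol{\alpha},\boldsymbol{\nu},{\bf z}}$ as a function of $|\boldsymbol{\nu}|,|\tilde{\boldsymbol{\nu}}|$ and requiring $|\boldsymbol{\nu}|+|\tilde{\boldsymbol{\nu}}|+1<2-\gamma\max_{i\le 3}\alpha_i$ — is the only genuinely careful bookkeeping step, and everything else is assembly of cited results.
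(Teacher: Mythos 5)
Your proposal is correct and follows essentially the same route as the paper: the paper's proof combines Proposition \ref{witht}, the Ward-identity reduction of Proposition \ref{withtbis}, the convergence of derivatives from Lemma \ref{difflemma} (after shrinking $a$), and the analyticity input of Proposition \ref{anala}, exactly as you assemble them. Your identification of the compatibility of the two smallness constraints on $a$ as the only genuinely careful step matches the paper's treatment.
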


We note that the above statement does not make use anymore of strong derivatives. Indeed, strong derivatives were used to keep $z_1,z_2,z_3$ fixed, and therefore the domain $U_t$ fixed, so that IBP with respect to the operator $ \caD_{\boldsymbol{\alpha},\boldsymbol{\nu},{\bf z}}$ are valid in the previous statements: if distributional derivatives in $z_1,z_2,z_3$ were considered instead, then  $U_t$ would have an annoying dependence in these variables and IBP would be problematic. But now that the $U_t$ dependence is removed there is no need anymore to use strong derivatives.

\subsubsection{Proof of Theorem \ref{pantDOZZ}}\label{finalst}
%%%%%%%%%%%%%%%%%%%%%%%%%%%%%%%

We want to integrate \eqref{final1} against test functions to perform an integration by parts in the RHS.  Let $\caO=\{({\bf z} \in\C^{m}  \,|\,\forall i\not= j,\, z_i\neq z_j\}$ and let $f\in C_0^\infty(\caO)$. We test \eqref{final1} against $f$ 
(with $\caD^\dagger_{\boldsymbol{\alpha},\boldsymbol{\nu},{\bf z}}  $ the formal adjoint of $\caD_{\boldsymbol{\alpha},\boldsymbol{\nu},{\bf z}}  $)
\begin{align}\label{test1}
\int
 \frac{\sqrt{2}}{(\sqrt{2}\pi)^{b-1}} &  \caA_{\psi(\caP), \hat g_\caP,({\bf   z}^{\rm mp},{\bf z}^{\rm ar}),(\boldsymbol{\alpha}^{\rm mp},
 \boldsymbol{\alpha}^{\rm ar}),\psi\circ \boldsymbol{\zeta}}\big(\otimes_{j=1}^b\Psi_{\alpha_j,\nu_j, \tilde{\nu}_j}\big)f({\bf z})\dd {\bf z}\nonumber\\
 =&
( Z(g)Z_{\D,g_\D}^{-1})^b
\int
\langle \prod_{j=1}^mV_{\alpha_j,g_{\rm dozz}}(z_j)
\rangle_{\hat\C,g_{\rm dozz}}\caD^\dagger_{\boldsymbol{\alpha},\boldsymbol{\nu},{\bf z}} \caD^\dagger_{\boldsymbol{\alpha},\tilde{\boldsymbol{\nu}},\bar{\bf z}} \big(Z({\bf z},\boldsymbol{\alpha},\hat g)f({\bf z})\big)
\dd\bf z.
\end{align}
Since $a_{\bf r,\bf m,\boldsymbol{\alpha},\boldsymbol{\nu}}$ are polynomials in $\Delta_{\alpha_j}$ (recall \eqref{diffop}) and since the LCFT correlation function in \eqref{test1} is holomorphic in a neighborhood of the region $\alpha_j<Q$, $\sum\alpha_j>2Q$ \cite[Th. 6.1]{KRV_DOZZ} the same holds for the RHS of  \eqref{test1}. By Proposition \ref{pantalphas}  the LHS  is holomorphic in a neighborhood of the region $\alpha_j<a\wedge (Q-\gamma)$, $j\leq 3$, $\alpha_j<Q$ for $j>3$ and $\sum_j\alpha_j>2Q$.

On the other hand the eigenvectors $\Psi_{\alpha_j,\nu_j,\tilde\nu_j}$ are holomorphic in $\alpha_j$ in a connected region  containing $\alpha_j<a\wedge (Q-\gamma)$ and the spectrum line $Q+i\R$. Also, from Proposition \ref{defprop:desc}
and Proposition \ref{anala}, the mapping $\boldsymbol{\alpha}\mapsto \text{RHS of }\eqref{test1}$  is holomorphic in the region 
\[ \Omega:=\{\boldsymbol{\alpha}\in \C^{m} \, | \, \forall j=1,\dots,b, \alpha_j\in W_\ell, \,\,  (\alpha_j)_{j \geq b+1}\in\mc{U},\,  \sum_j{\rm Re}(\alpha_j)>2Q\}.\] 
We can then choose $m$ large enough so that this set contains a non empty connected component which   has a non empty intersection with $(-\infty,a)^3\times    \R^{m-3}$ and contains a neighborhood (in the topology of $\{{\rm Re}(\alpha)\leq Q\}^3\times \C^{m-3}$) 
of  
\[ \mc{S}_b:=\Big\{(\alpha_1,\dots,\alpha_3,0,\dots,0)\in \C^m \, |\, \forall j=1,\dots,b, \alpha_j\in Q+i\R, \, \forall j=b+1,\dots,3,\,  \alpha_j<Q, \, \sum_{j=b+1}^3\alpha_j>\chi(\mc{P})Q\Big\}.\]
($\mc{S}_b$ stands  for ``Seiberg bounds'' since Seiberg bounds are satisfied in this set) .

For $\boldsymbol{\alpha}\in\R^m $ (satisfying the Seiberg bounds), we have the relation (Prop. \ref{pantalphas} with empty Young diagrams)
\begin{align}\label{test2}
 \frac{\sqrt{2}}{(\sqrt{2}\pi)^{b-1}} &  \caA_{\psi(\caP), \hat g_\caP,({\bf z}^{\rm mp}, {\bf z}^{\rm ar}),(\boldsymbol{\alpha}^{\rm mp},\boldsymbol{\alpha}^{\rm ar}),\psi\circ \boldsymbol{\zeta}}\big(\otimes_{j=1}^b\Psi_{\alpha_j,\emptyset,\emptyset}\big) \nonumber\\
 =&
( Z(g)Z_{\D,g_\D}^{-1})^b
\langle \prod_{j=1}^mV_{\alpha_j,g_{\rm dozz}}(z_j)
\rangle_{\hat\C, g_{\rm dozz}} Z({\bf z},\boldsymbol{\alpha},\hat g)  
.
\end{align}
Therefore we can repeat the same argument for the analyticity for the RHS of  \eqref{test2}  with $W_\ell$ replaced by $W_0$. Since $W_\ell\subset W_0$, we conclude that the RHS of \eqref{test2}  can also be continued to a neighborhood of $\mc{S}_b$ on the same connected component as the RHS. Therefore we can take $\boldsymbol{\alpha} \in\mc{S}_b $ in \eqref{test2} (in particular  $\alpha_j=0$ for $j\geq 4$). From \eqref{3pointDOZZ}
\begin{align}\label{3pointDOZZagain}
\langle \prod_{j=1}^3 V_{\alpha_j,  g_{\rm dozz}}(z_j)
\rangle_{\hat\C, g_{\rm dozz}}= &\frac{1}{2}P(\hat{{\bf z}}) C_{\gamma,\mu}^{{\rm DOZZ}} (\alpha_1,\alpha_2,\alpha_3 ) \big({\frac{{\rm v}_{ g_{\rm dozz}}(\Sigma)}{{\det}'(\Delta_{ g_{\rm dozz}})}}\big)^\hf  
\end{align}
{ with  }
\begin{align*}
P(\hat{{\bf z}}):=&|z_1-z_2|^{2(\Delta_{\alpha_3}-\Delta_{\alpha_2} -\Delta_{\alpha_1})}|z_3-z_2|^{2(\Delta_{\alpha_1}-\Delta_{\alpha_2} -\Delta_{\alpha_3})}|z_1-z_3|^{2(\Delta_{\alpha_2}-\Delta_{\alpha_1} -\Delta_{\alpha_3})}.
\end{align*}
We stress here that we have used the fact that $g_{\rm dozz}(z)=1$ for $z\in\D$ to get rid of the product  $\prod_{i=1}^3g_{\rm dozz}(z_i)^{-\Delta_{\alpha_i}}$ in \eqref{3pointDOZZ} and this is the other reason why we choose to work with the metric $g_{\rm dozz}$. 
Therefore, the analytic continuation of the rhs of \eqref{test1} is given by the same expression with $\langle \prod_{j=1}^mV_{\alpha_j,g_{\rm dozz}}(z_j)
\rangle_{\hat\C, g_{\rm dozz}}$  replaced by \eqref{3pointDOZZagain}    and $Z({\bf z},\boldsymbol{\alpha},\hat g) $ replaced by $Z(\hat{{\bf z}},\hat{\boldsymbol{\alpha}},\hat g) $. Since this expression is smooth in $\hat{{\bf z}}$ we conclude that
 \begin{align*}
  \frac{\sqrt{2}}{(\sqrt{2}\pi)^{b-1}}& \mathcal{A}_{\psi(\caP),\hat g_\caP,{\bf z}^{\rm mp},\boldsymbol{\alpha}^{\rm mp},\psi\circ \boldsymbol{\zeta}}
\big(\otimes_{i=1}^b\Psi_{\alpha_i,\nu_i,\tilde\nu_i}\big)=\frac{1}{2} C_{\gamma,\mu}^{{\rm DOZZ}} (\alpha_1,\alpha_2,\alpha_3 ) ( Z(g)Z_{\D,g_\D}^{-1})^b  \big({\frac{{\rm v}_{ g_{\rm dozz}}(\Sigma)}{{\det}'(\Delta_{ g_{\rm dozz}})}}   \big)^\hf \\
 &\times
 Z(\hat{{\bf z}},\hat{\boldsymbol{\alpha}},\hat g) \big(\caD_{ \boldsymbol{\alpha},\boldsymbol{\nu}, {\bf z}} \caD_{ \boldsymbol{\alpha},\tilde{\boldsymbol{\nu}},\bar{ \bf z}}
P(\hat{{\bf z}}) \big) .
 \end{align*}
Next we define $w_{\caP}(\boldsymbol{\Delta}_{\hat{\boldsymbol{\alpha}}},
 \boldsymbol{\nu},\hat{{\bf z}})$, $\tilde w_{\caP}(\boldsymbol{\Delta}_{\hat{\boldsymbol{\alpha}}},
 \boldsymbol{\tilde\nu}, \hat{{\bf z}})$ through
\begin{align*}%\label{}
\caD_{ \boldsymbol{\alpha},\boldsymbol{\nu}, {\bf z}} \caD_{ \boldsymbol{\alpha},\tilde{\boldsymbol{\nu}},\bar{ \bf z}}
\big( 
P(\hat{\bf z})\big)  =w_{\caP}(\boldsymbol{\Delta}_{ \boldsymbol{\alpha}},
 \boldsymbol{\nu}, {\bf z})\tilde w_{\caP}(\boldsymbol{\Delta}_{ \boldsymbol{\alpha}},
 \boldsymbol{\tilde\nu}, {\bf z})P(\hat{\bf z}).
\end{align*}
Here we have just used that $\partial^{\bf r}_{\bf z}\partial^{\bf r'}_{\bar {\bf z}}(f({\bf z})f(\bar {\bf z}))=\partial^{\bf r}_{\bf z}  f({\bf z})\partial^{\bf r'}_{\bar {\bf z}}f(\bar {\bf z})$.  Obviously they are polynomials in the conformal weights $\boldsymbol{\Delta}_{\hat{\boldsymbol{\alpha}}}=(\Delta_{\alpha_1},\Delta_{\alpha_2},\Delta_{\alpha_3})$ (since $\Delta_{\alpha_j}=0$ for $j\geq 4$):
\begin{align*}%\label{}
 w_{\caP}(\boldsymbol{\Delta}_{\hat{\boldsymbol{\alpha}}},
 \boldsymbol{\nu}, {\bf z})=\sum_{{\bf n}=(n_1,..,n_3)} a_{\bf n}(\caP, {\bf z})
 \boldsymbol{\Delta}_{\hat{\boldsymbol{\alpha}}}^{\bf n}, \ \ \ \ \tilde w_{\caP}(\boldsymbol{\Delta}_{\hat{\boldsymbol{\alpha}}},
 \boldsymbol{\nu},{\bf z})=\sum_{{\bf n}=(n_1,..,n_3)} \overline{a_{\bf n}(\caP,{\bf z})}\boldsymbol{\Delta}_{\hat{\boldsymbol{\alpha}}}^{\bf n}.
\end{align*}
Also, if each weight  $\alpha_j$ belongs either to the spectrum line $Q+i\R$ or to the real line $\R$ then the conformal weight $\Delta_{\alpha_j}$ is real so that  $\tilde w_{\caP}(\boldsymbol{\Delta}_{\hat{\boldsymbol{\alpha}}},
 \boldsymbol{\nu}, {\bf z})=\overline{w_{\caP}(\boldsymbol{\Delta}_{\hat{\boldsymbol{\alpha}}},
 \boldsymbol{\nu}, {\bf z})}$. 

Finally, since $\hat g=e^{\hat \omega}g_{\rm dozz}$, and recalling $\hat g=(\psi_i)_\ast g$ in $\psi(\caD_i)$ with $g=g_\D$ near origin,  we have $\hat \omega(z_j)=-2\log |\psi'_j(0)|$  for $j=1,\dots,b$ so that
$$Z(\hat{\bf z},\hat{\boldsymbol{\alpha}},\hat g) =e^{c_{\rm L}S^0_{\rm L}(\hat\C,g_{\rm dozz},\hat \omega) }\prod_{j=1}^b  |\psi'_j(0)|^{2\Delta_{\alpha_j}}\prod_{j=b+1}^3e^{-\Delta_{\alpha_j}\hat\omega(z_j)}.$$
The final constant \eqref{metricconstant} is then obtained by combining the metric dependent constants together and playing with the fact that the Liouville functional $S^0_{\rm L}$ is invariant under biholomorphism, i.e. $S^0_{\rm L}(\Sigma,g,g')=S^0_{\rm L}(\psi(\Sigma),\psi_*g,\psi_*g')$, and is a cocycle 
$S^0_{\rm L}(\Sigma,g,g'')=S^0_{\rm L}(\Sigma,g,g')+S^0_{\rm L}(\Sigma,g',g'')$.

 This finishes the proof of Theorem \ref{pantDOZZ} for the case of incoming boundaries. \qed

   \medskip

   \noindent {\it Proof of Proposition \ref{remarkblocks}.} The proof follows essentially from   computations we have already made as we explain now. Recall that $v\circ \psi_1(z)/\psi_1'(z)=\sum_{n\in\Z}v_nz^{1-n}$ where the sum is for $  n\leq N$ (recall that $v$ is meromorphic with a unique pole at $0$). Also note that $ w^n_{\caP}(\boldsymbol{\Delta}_{\hat{\boldsymbol{\alpha}}},
  \hat{{\bf z}})=0$ for $n<0$ and $ w^n_{\caP}(\boldsymbol{\Delta}_{\hat{\boldsymbol{\alpha}}},
  \hat{{\bf z}})=1$ for $n=0$. Then
   $$ \sum_{n\geq 0} w^n_{\caP}(\boldsymbol{\Delta}_{\hat{\boldsymbol{\alpha}}},
  \hat{{\bf z}})\frac{1}{2i\pi}\oint_{\sigma} \frac{v\circ \psi_1(z)}{\psi_1'(z)}z^{n-2}\,dz=\sum_{n\geq 0} w^n_{\caP}(\boldsymbol{\Delta}_{\hat{\boldsymbol{\alpha}}},
  \hat{{\bf z}})v_n.$$
 Next we sum over $n$ the content of Proposition \ref{witht} to get (recall \eqref{TpsiSET})
 \begin{align}
 \frac{\sqrt{2}}{(\sqrt{2}\pi)^{b-1}} & \sum_{n\geq 1}  v_n \caA_{\psi(\caP), \hat g_\caP,({\bf   z}^{\rm mp},{\bf z}^{\rm ar}),(\boldsymbol{\alpha}^{\rm mp},\boldsymbol{\alpha}^{\rm ar}),\psi\circ \boldsymbol{\zeta}}\big(\Psi_{\alpha_1,n,\emptyset}\otimes_{j=2}^b\Psi_{\alpha_j,\emptyset,\emptyset}\big)\nonumber\\
 =&
  ( Z(g)Z_{\D,g_\D}^{-1})^bZ({\bf z},\boldsymbol{\alpha},\hat g)\lim_{t\to\infty}
   \frac{1}{(2\pi i)}  \oint_{|u|= {\delta}_t}    \sum_{n\in \Z}  v_n
u^{1-n}  \langle  \psi_1^\ast T_{g_{\rm dozz}}(u)  \prod_{j=1}^mV_{\alpha_j,g_{\rm dozz}}(z_j)  
\rangle_{\hat\C,g_{\rm dozz},U_t}\dd u\nonumber\\
 &-
  ( Z(g)Z_{\D,g_\D}^{-1})^bZ({\bf z},\boldsymbol{\alpha},\hat g)\lim_{t\to\infty}
   \frac{v_0}{(2\pi i)}  \oint_{|u|= {\delta}_t}    
u  \langle  \psi_1^\ast T_{g_{\rm dozz}}(u)  \prod_{j=1}^mV_{\alpha_j,g_{\rm dozz}}(z_j)  
\rangle_{\hat\C,g_{\rm dozz},U_t}\dd u.\label{expansionrem}
 \end{align}
 Indeed, in the sum of the rhs above, the $n<0$ don't contribute cause  $\Psi_{\alpha_1,n,\emptyset}=0$ for $n<0$, and we have subtracted the $n=0$ term.
 The first contour integral in the rhs equals
 $$  \frac{1}{(2\pi i)}  \oint_{|u|= {\delta}_t}    \frac{v\circ \psi_1(u)}{\psi_1'(u)}  \langle  \psi_1^\ast T_{g_{\rm dozz}}(u)  \prod_{j=1}^mV_{\alpha_j,g_{\rm dozz}}(z_j)  
\rangle_{\hat\C,g_{\rm dozz},U_t}\dd u$$
 and we   make a change of variables so that the integration contour becomes $ \psi_1(\{u\in\C \,|\, |u|=\delta_t\})$, producing
 \begin{align*}
   \frac{1}{2\pi i}&  \oint_{\psi_1(|u|= {\delta}_t)}    v(u)  \langle   T_{g_{\rm dozz}}(u)  \prod_{j=1}^mV_{\alpha_j,g_{\rm dozz}}(z_j)  
\rangle_{\hat\C,g_{\rm dozz},U_t}\dd u \\
&-\frac{c_{\rm L}}{12} \frac{1}{(2\pi i)}  \oint_{\psi_1(|u|= {\delta}_t)}   v(u)    S_{\psi_1^{-1}}(u)\,du\langle  \prod_{j=1}^mV_{\alpha_j,g_{\rm dozz}}(z_j)  
\rangle_{\hat\C,g_{\rm dozz},U_t} .
 \end{align*}
We use the Ward identities (Prop. \ref{propward}) in the first term    to see that it is equal to (writing $g_{\rm dozz}=e^{\omega_{\rm dozz}}|dz|^2$)
\begin{equation}\label{residueeasy}
 \frac{1}{(2\pi i)}   \oint_{\psi_1(|u|= {\delta}_t)}    v(u) \Big(\sum_{i=1}^{m}\Big(\frac{\Delta_{\alpha_i}}{(u-z_i)^2} +\frac{\partial_{z_i}+\Delta_{\alpha_i} \partial_{z_i}\omega_{\rm dozz} }{(u-z_i) }\Big)\Big)\langle      \prod_{j=1}^mV_{\alpha_j,g_{\rm dozz}}(z_j)  
\rangle_{\hat\C,g_{\rm dozz},U_t}\dd u +B_t
\end{equation}
where $B_t$ is the contour integral of the boundary term appearing in the Ward identity, 
$$B_t:=  \frac{i\mu}{2}   \frac{1}{2\pi i}    \oint_{\psi_1(|u|= {\delta}_t)}        \oint_{\partial{U}_t}\frac{v(u) }{  u-  w}\langle    V_{\gamma,g_{\rm dozz} }(w)\prod_{i=1}^mV_{\alpha_i,g_{\rm dozz}}(z_i) \rangle_{\hat\C,U_t,g_{\rm dozz}} g_{\rm dozz}(w)\dd u \dd\bar w$$
which tends to $0$ as $t\to\infty$, for fixed $z_1,\dots, z_m$,  for $\alpha_1,\alpha_2,\alpha_3\in \R$ and negative enough. Indeed,   a direct use of the Girsanov transform to the term $V_{\gamma,g_{\rm dozz} }(w)$ shows that $$\langle    V_{\gamma,g_{\rm dozz} }(w)\prod_{i=1}^mV_{\alpha_i,g_{\rm dozz}}(z_i) \rangle_{\hat\C,U_t,g_{\rm dozz}}\leq C|w-z_i|^{-\alpha_i\gamma}$$ when $w\to z_i$, uniformly in $t$. The reader can compare with the situation in Prop. \ref{wardite}, which was significantly harder because the boundary term still contained  SET insertions. It remains to compute the contour integral in \eqref{residueeasy}.  In the following we denote by $\sigma(z_i)$ a small contour around $z_j$, which surrounds no $z_{j'}$ for $j\not=j'$. By the residue theorem, we have the relation
$$  \oint_{\psi_1(|u|= {\delta}_t)}  \dots +\sum_{i=2}^m\oint_{\sigma(z_i)}\dots=0$$
where $\dots$ stands for the integrand in \eqref{residueeasy}. It is then an easy task to compute the residues using our assumptions on $v$ (also, recall that $\omega_{\rm dozz}=0$ in $\D$) and get, using that $v(z_2)=v(z_3)=0$,
$$\frac{1}{(2\pi i)}   \oint_{\sigma(z_i)}  \dots= \Big(\Delta_{\alpha_i}v'(z_i) +\delta_{i\geq 4}v(z_i)\partial_{z_i}\Big)  \langle      \prod_{j=1}^mV_{\alpha_j,g_{\rm dozz}}(z_j)  
\rangle_{\hat\C,g_{\rm dozz},U_t}.$$
For the same reasons, the last term in \eqref{expansionrem} can be computed to be equal to (up to the multiplicative factor   $( Z(g)Z_{\D,g_\D}^{-1})^bZ({\bf z},\boldsymbol{\alpha},\hat g)$)
 $$\lim_{t\to\infty} \frac{v_0}{(2\pi i)}   \oint_{\psi_1(|u|= {\delta}_t)}    \frac{\psi_1^{-1}(u)}{(\psi_1^{-1})'(u)} \Big(\sum_{i=1}^{m}\Big(\frac{\Delta_{\alpha_i}}{(u-z_i)^2} +\frac{\partial_{z_i}+\Delta_{\alpha_i} \partial_{z_i}\omega_{\rm dozz} }{(u-z_i) }\Big)\Big)\langle      \prod_{j=1}^mV_{\alpha_j,g_{\rm dozz}}(z_j)  
\rangle_{\hat\C,g_{\rm dozz},U_t}\dd u +B_t,$$
with $B$ the boundary term coming from the Ward identities as before. The contour involved here is a small contour around $z_1$ and, given the fact that   $ \frac{\psi_1^{-1}}{(\psi_1^{-1})'}$ is holomorphic near $z_1$ with an expansion of the form 
$$ \frac{\psi_1^{-1}(u)}{(\psi_1^{-1})'(u)}=(u-z_1)+(u-z_1)^2h(u)$$
with $h$ holomorphic near $z_1$, this contour integral can be easily evaluated to produce in the limit $t\to\infty$
$$v_0\Delta_{\alpha_1} \langle      \prod_{j=1}^mV_{\alpha_j,g_{\rm dozz}}(z_j)  
\rangle_{\hat\C,g_{\rm dozz}}.$$
Gathering the previous considerations  and taking $t\to\infty$, we have proved
\begin{align*}
 \frac{\sqrt{2}}{(\sqrt{2}\pi)^{b-1}} & \sum_{n\geq 1}  v_n \caA_{\psi(\caP), \hat g_\caP,({\bf   z}^{\rm mp},{\bf z}^{\rm ar}),(\boldsymbol{\alpha}^{\rm mp},\boldsymbol{\alpha}^{\rm ar}),\psi\circ \boldsymbol{\zeta}}\big(\Psi_{\alpha_1,n,\emptyset}\otimes_{j=2}^b\Psi_{\alpha_j,\emptyset,\emptyset}\big)\\
 =&
  ( Z(g)Z_{\D,g_\D}^{-1})^bZ({\bf z},\boldsymbol{\alpha},\hat g)\Big(-\Delta_{\alpha_1}v_0 -\sum_{i=2}^m \Big(\Delta_{\alpha_i}v'(z_i) +\delta_{i\geq 4}v(z_i)\partial_{z_i}\Big) \\
  & -\frac{c_{\rm L}}{12} {\rm Res}_{z_1} (v   S_{\psi_1^{-1}}) \Big)
\langle  \prod_{j=1}^mV_{\alpha_j,g_{\rm dozz}}(z_j)  
\rangle_{\hat\C,g_{\rm dozz}}  .
   \end{align*}
Using a similar argument as in the end of the proof of Theorem \ref{pantDOZZ}, we can then analytically continue this relation to $\hat{\boldsymbol{\alpha}}=(\alpha_1,\alpha_2,\alpha_3)$ and $\alpha_i=0$ for $i\geq 4$. Therefore, and using \eqref{test2}, we deduce  
\begin{align*}
  & \sum_{n\geq 1}  v_n \caA_{\psi(\caP), \hat g_\caP,{\bf   z}^{\rm mp},\boldsymbol{\alpha}^{\rm mp},\psi\circ \boldsymbol{\zeta}}\big(\Psi_{\alpha_1,n,\emptyset}\otimes_{j=2}^b\Psi_{\alpha_j,\emptyset,\emptyset}\big)\\
 =&
\Big(-\Delta_{\alpha_1}v_0  -\sum_{i=2}^3 \Delta_{\alpha_i}v'(z_i)   -\frac{c_{\rm L}}{12} {\rm Res}_{z_1} (v   S_{\psi_1^{-1}})   \Big)
 \caA_{\psi(\caP), \hat g_\caP,{\bf   z}^{\rm mp},\boldsymbol{\alpha}^{\rm mp},\psi\circ \boldsymbol{\zeta}}\big( \otimes_{j=1}^b\Psi_{\alpha_j,\emptyset,\emptyset}\big) ,
   \end{align*}
 from which our claim follows using Theorem \ref{pantDOZZ} in both sides of this relation (and using $w^0_{\caP}(\boldsymbol{\Delta}_{\hat{\boldsymbol{\alpha}}},
  \hat{{\bf z}})=1$).
   \qed

\medskip

\noindent {\it Proof of Corollary \ref{annulusDOZZ}.} Let us consider the conformal map $\psi(u)=u/z$, which maps the annulus $\mathbb{A}_q$ to the annulus $(1/z)\mathbb{A}_q=\{u\in\C\,|\,|q/z|\leq u\leq |1/z|\}$, with parametrisation of boundary given by $(z^{-1}\zeta_1,z^{-1}\zeta_2)$. 
By Proposition \ref{Weyl}, we have
$\mc{A}_{\mathbb{A}_q, g_{\mathbb{A}},z, \alpha_1,\boldsymbol{\zeta} }=\mc{A}_{(1/z)\mathbb{A}_q, g_{\mathbb{A}},1, \alpha_1,\psi^{-1}\circ \boldsymbol{\zeta} }$. We fix now arbitrarily $\delta \in (0,1)$ (close to $1$). Then, for $|z|<\delta$ and $|q/z|<\delta$ the annulus $(1/z)\mathbb{A}_q$ can be seen as the gluing of $\mc{P}_\delta:=\delta^{-1}\mathbb{A}_{\delta^2}$, equipped with the metric $g_{\mathbb{A}}=|dz|^2/|z|^2$, with the annuli $\delta \mathbb{A}_{\delta^{-1}q/z}$ and $(1/z) \mathbb{A}_{z\delta^{-1} }$. Corollary \ref{plumbedpant} produces   for all $|z|<\delta$ and $|q/z|<\delta$ (choosing the points to that $P(\hat{\bf z})=1$ as above) 
\begin{align*}
& \langle \mc{A}_{\mathbb{A}_q, g_{\mathbb{A}},z,  \alpha_1,\boldsymbol{\zeta} }, \Psi_{Q+ip_1,\nu_1,\tilde{\nu}_1}\otimes \bbar{ \Psi_{Q+ip_2,\nu_2,\tilde{\nu}_2}}\rangle \\
& =  \cjg\mc{A}_{\delta^{-1}\mathbb{A}_{\delta^2},g_{\mathbb{A}},1,\alpha_1} , \Psi_{Q+ip_1,\nu_1,\tilde{\nu}_1}\otimes \bbar{ \Psi_{Q+ip_2,\nu_2,\tilde{\nu}_2}}\cjd\times  \Big|\frac{z}{\delta}\Big|^{-\frac{c_{\rm L}}{12}+2\Delta_{Q+ip_2}}(\frac{z}{\delta})^{|\nu_2|}(\frac{\bar{z}}{\delta})^{|\tilde{\nu}_2|}
\Big|\frac{q}{\delta z}\Big|^{-\frac{c_{\rm L}}{12}+2\Delta_{Q+ip_1}}(\frac{q}{\delta z})^{|\nu_1|}(\frac{\bar{q}}{\delta\bar{z}})^{|\tilde{\nu}_1|}
\\
 & =   C^{{\rm DOZZ}}_{\gamma,\mu}(\alpha_1,Q-ip_1,Q+ip_2) C(\delta^{-1}\mathbb{A}_{\delta^2},g_{\mathbb{A}},\boldsymbol{\Delta}_{\boldsymbol{\hat\alpha}})  \delta^{\frac{c_{\rm L}}{6}-2\Delta_{Q+ip_1}-2\Delta_{Q+ip_2}-|\nu_1|-|\tilde \nu_1|-|\nu_2|-|\tilde\nu_2|}    \\
  &\qquad \times  |z|^{-\frac{c_{\rm L}}{12}+2\Delta_{Q+ip_2}}z^{|\nu_2|}(\bar{z})^{|\tilde{\nu}_2|}
\Big|\frac{q}{z}\Big|^{-\frac{c_{\rm L}}{12}+2\Delta_{Q+ip_1}}(\frac{q}{z})^{|\nu_1|}(\frac{\bar{q}}{\bar{z}})^{|\tilde{\nu}_1|}w_{\delta^{-1}\mathbb{A}_{\delta^2}}(\boldsymbol{\Delta}_{\hat{\boldsymbol{\alpha}}},
 \boldsymbol{\nu},\hat{{\bf z}})
 \overline{ w_{\delta^{-1}\mathbb{A}_{\delta^2}}(\boldsymbol{\Delta}_{\hat{\boldsymbol{\alpha}}},
 \tilde{\boldsymbol{\nu}},\hat{{\bf z}})} .
\end{align*} 
 Let us set 
 \[
 C_\delta:=C(\delta^{-1}\mathbb{A}_{\delta^2},g_{\mathbb{A}},\boldsymbol{\Delta}_{\boldsymbol{\hat\alpha}})  \delta^{\frac{c_{\rm L}}{6}-2\Delta_{Q+ip_1}-2\Delta_{Q+ip_2}} ,\qquad 
w_{\delta}(\boldsymbol{\Delta}_{\hat{\boldsymbol{\alpha}}}, \boldsymbol{\nu} ) :=
 w_{\delta^{-1}\mathbb{A}_{\delta^2}}(\boldsymbol{\Delta}_{\hat{\boldsymbol{\alpha}}},
 \boldsymbol{\nu},\hat{{\bf z}}) \delta^{ -|\nu_1|-|\nu_2| } 
\]
in such a way that
 \begin{align}
  & \langle \mc{A}_{\mathbb{A}_q, g_{\mathbb{A}},z,  \alpha_1,\boldsymbol{\zeta} }, \Psi_{Q+ip_1,\nu_1,\tilde{\nu}_1}\otimes  \bbar{\Psi_{Q+ip_2,\nu_2,\tilde{\nu}_2}}\rangle \label{intermediate}\\
  &= C_\delta C^{{\rm DOZZ}}_{\gamma,\mu}(\alpha_1,p_1,p_2)w_{\delta}(\boldsymbol{\Delta}_{\hat{\boldsymbol{\alpha}}},
 \boldsymbol{\nu} )\overline{w_{\delta}(\boldsymbol{\Delta}_{\hat{\boldsymbol{\alpha}}},
 \boldsymbol{\nu} )} |z|^{-\frac{c_{\rm L}}{12}+2\Delta_{Q+ip_2}}\Big|\frac{q}{z}\Big|^{-\frac{c_{\rm L}}{12}+2\Delta_{Q+ip_1}}z^{|\nu_2|}(\bar{z})^{|\tilde{\nu}_2|}
 (\frac{q}{z})^{|\nu_1|}(\frac{\bar{q}}{\bar{z}})^{|\tilde{\nu}_1|}. \nonumber
\end{align}
 We are going to compute $C_\delta$ by taking $\boldsymbol{\nu},\tilde{\boldsymbol{\nu}}=(\emptyset,\emptyset)$, in which case $w_{\delta}(\boldsymbol{\Delta}_{\hat{\boldsymbol{\alpha}}},
 \boldsymbol{\nu} )=1$. For this we will proceed similarly as before with a twist that will allow us to keep track of the uniformizing map. 
 Let $g$ still be an admissible metric on $\D$ of the form $g=e^\omega g_\D$, Euclidean over a neighborhood of the origin. Let $\mc{D}_1=q\D$ and $\mc{D}_2=\D^c$. We can glue these two disks to $\mathbb{A}_q$ in such a way that the resulting surface is now the complex plane equipped with a metric $\hat g$ obtained as the gluing of the metric $g_{\mathbb{A}}$ on $\mathbb{A}_q$ with the metrics $(\psi_j)_*g$ on  $\mc{D}_j$ for $j=1,2$ where $\psi_1:\D\to\mc{D}_1$ and $\psi_2:\D\to\mc{D}_2$ are respectively given by $\psi_1(z)=qz$ and $\psi_2(z)=1/z$. As before, the whole plane correlation functions can be obtained as the gluing of the annulus with the two disks: for $\alpha_1,\alpha_2$ real satisfying the Seiberg bound $\sum_{i=1}^3\alpha_i>2Q$ and $\alpha_j<Q$ for $j=1,2$ then (with $\tilde{\boldsymbol{\varphi}}=(\tilde{\varphi}_1,\tilde{\varphi}_2)$)
\[\langle V_{\alpha_1}(0)V_{\alpha_2}(\infty)V_{\alpha_3}(z)\rangle_{\hat \C,\hat g} =\frac{1}{\pi}\int \mc{A}_{\mathbb{A}_q, g_{\mathbb{A}},z, \alpha_3,\boldsymbol{\zeta} }(\tilde{\boldsymbol{\varphi}})\mc{A}_{\mc{D}_1, (\psi_1)_*g,0, \alpha_1,\zeta_1 }(\tilde{\varphi}_1)\mc{A}_{\mc{D}_2, (\psi_2)_*g,\infty, \alpha_2, \zeta_2 }(\tilde \varphi_2)\dd\mu_0^{\otimes 2} (\tilde{\boldsymbol{\varphi}}).\]
Observe now that taking the limit $t\to\infty$ in Lemma \ref{ampdiskSET} produces  (with $\zeta(e^{i\theta})=e^{i\theta}$),
$$\mc{A}_{\D,g_\D,0, \alpha,\zeta } =Z_{\D,g_\D}  \Psi_{\alpha,\emptyset,\emptyset}.$$
Using Proposition \ref{Weyl}, we deduce
\[\mc{A}_{\mc{D}_1, (\psi_1)_*g,0, \alpha_1,\zeta_1 } =e^{c_{\rm L}S_{\rm L}^0(\D,g_\D,\omega)} Z_{\D,g_\D}  \Psi_{\alpha_1,\emptyset,\emptyset},\quad \mc{A}_{\mc{D}_2, (\psi_2)_*g,\infty, \alpha_2,\zeta_2 } =e^{c_{\rm L}S_{\rm L}^0(\D,g_\D,\omega)} Z_{\D,g_\D}  \Psi_{\alpha_2,\emptyset,\emptyset}.
\]
Therefore
$$\big\cjg  \mc{A}_{\mathbb{A}_q, g_{\mathbb{A}},z, \alpha_1,\boldsymbol{\zeta} },\otimes_{j=1}^{2} \Psi_{\alpha_j,\emptyset,\emptyset}\big\cjd_{\mc{H}}=\pi\langle V_{\alpha_1}(0)V_{\alpha_2}(\infty)V_{\alpha_3}(z)\rangle_{\hat \C,\hat g} Z_{\D,g_\D}^{-2}e^{-2c_{\rm L}S_{\rm L}^0(\D,g_\D,\omega)}.$$
On the other hand, from \eqref{3pointDOZZ}, we have
$$\langle V_{\alpha_1}(0)V_{\alpha_2}(\infty)V_{\alpha_3}(z)\rangle_{\hat \C,\hat g} =|q|^{2\Delta_{\alpha_1}}|z|^{2\Delta_{\alpha_2}-2\Delta_{\alpha_1}}\frac{1}{2}C_{\gamma,\mu}^{{\rm DOZZ}} (\alpha_1,\alpha_2,\alpha_3 ) \sqrt{\frac{{\rm v}_{\hat g}(\hat\C)}{{\det}'(\Delta_{\hat g})}} e^{-6Q^2 S_{\rm L}^0(\hat\C,\hat g,g_{\rm dozz})}.$$
Cancelling out the $S_{\rm L}^0$ terms, we are left with 
\begin{align*}
\big\cjg & \mc{A}_{\mathbb{A}_q, g_{\mathbb{A}},z, \alpha_1,\boldsymbol{\zeta} },\otimes_{j=1}^{2} \Psi_{\alpha_j,\emptyset,\emptyset}\big\cjd_{\mc{H}}
\\
=&\pi|q|^{2\Delta_{\alpha_1}}|z|^{2\Delta_{\alpha_2}-2\Delta_{\alpha_1}}\frac{1}{2}C_{\gamma,\mu}^{{\rm DOZZ}} (\alpha_1,\alpha_2,\alpha_3 )  Z_{\D,g_\D}^{-2}\sqrt{\frac{{\rm v}_{g_0}(\hat\C)}{{\det}'(\Delta_{g_0})}} e^{S_{\rm L}^0(\hat\C,g_0,g_{\rm dozz})} e^{(1+6Q^2)S_{\rm L}^0(\mathbb{A}_q,|dz|^2,\hat g)}
\end{align*}
where we have used the conformal anomaly for the regularised determinant $ \sqrt{\frac{{\rm v}_{\hat g}(\hat\C)}{{\det}'(\Delta_{\hat g})}}= \sqrt{\frac{{\rm v}_{g_0}(\hat\C)}{{\det}'(\Delta_{g_0})}} e^{ S_{\rm L}^0(\hat\C,g_0,\hat g)}$ and the fact that $S_{\rm L}^0$ is a cocycle, in particular $S_{\rm L}^0(\hat\C,g_0,\hat g)=S_{\rm L}^0(\hat\C,g_0,g_{\rm dozz})+S_{\rm L}^0(\hat\C,g_{\rm dozz},\hat g)$. The last anomaly term on the annulus can be computed  easily
$$  S_{\rm L}^0(\mathbb{A}_q,|dz|^2,\hat g)=(-\log|q|)/12. $$
Combining, we get the value of the constant $C_\delta=\frac{\pi}{2}  Z_{\D,g_\D}^{-2}\sqrt{\frac{{\rm v}_{g_0}(\hat\C)}{{\det}'(\Delta_{\hat{\C},g_0})}} e^{S_{\rm L}^0(\hat\C,g_0,g_{\rm dozz})} $, which turns out to be independent of $\delta$. This implies that the coefficients $w_{\delta}(\boldsymbol{\Delta}_{\hat{\boldsymbol{\alpha}}}, \boldsymbol{\nu} )$ do not depend on $\delta$ either (as the l.h.s. does not).  Therefore, \eqref{intermediate} holds for $|z|<\delta$ and $|q/z|<\delta$ and for some constant $C:=C_\delta$ and coefficients $w (\boldsymbol{\Delta}_{\hat{\boldsymbol{\alpha}}}, \boldsymbol{\nu} ):=w_{\delta}(\boldsymbol{\Delta}_{\hat{\boldsymbol{\alpha}}}, \boldsymbol{\nu} )$, which do not depend on $\delta$ (and will thus be denoted $w_{\mathbb{A}}(\boldsymbol{\Delta}_{\hat{\boldsymbol{\alpha}}}, \boldsymbol{\nu} )$).
 In conclusion,  \eqref{claimannulus}  holds for some constant $C$ for all $|z|<1$ and $|q/z|<1$. Finally, we notice that the constant \eqref{constanttorus} is given by \cite{QuineChoi,Wei}:
\[ {\det}'(\Delta_{\hat{\C},g_0})=e^{\frac{1}{2}-4\zeta_{R}'(-1)}, \quad Z_{\D,g_\D}= e^{\frac{1}{4}}\times 2^{\frac{1}{12}}\pi^{\frac{1}{4}}e^{\frac{5}{24}+\zeta_R'(-1)}, \quad {\rm v}_{g_0}(\hat\C)=4\pi,\quad S_{\rm L}^0(\hat\C,g_0,g_{\rm dozz})=\frac{1-2\log 2}{6}\]
and thus the constant $C= \frac{\pi}{2^{1/2}e}$.\qed

  %%%%%%%%%%%%%%%%%%%%%%%%%%%%%%%%%%%%%%%%%%%%%%%%%%%%%%%%%%%%%%%%%%%%%%%%%%%%%%%%%%%%%%%%%%%%%%%%%%%%%%%%%%%%%%%%%%%%%%%%%%%%%%%%%%%%%%%%%%%%

%%%%%%%%%%%%%%%%%%%%%%%%%%%%%%%%%%%%%%%%%%%%%%%%%%%%%%%%%%%%%%%%%%%%%%%%%%%%%%%%%%%%%%%%%%%%%%%%%%%%%%%%%%%%%%%%%%%%%%%%%%%%%%%%%%%%%%%%%%%%%
%%%%%%%%%%%%%%%%%%%%%%%%%%%%%%%%%%%%%%%%%%%%%%%%%%%%%%%%%%%%%%%%%%%%%%%%%%%%%%%%%%%%%%%%%%%%%%%%%%%%%%%%%%%%%%%%%%%%%%%%%%%%%%%%%%%%%%%%%%%%
\appendix
\section*{Appendix}
 %%%%%%%%%%%%%%%%%%%%%%%%%%%%%%%%%%%%%%%%%%%%%%%%%%%%%%%%%%%%%%%%%%%%%%%%%%%%%%%%%%%%%%%%%%%%%%%%%%%%%%%%%%%%%%%%%%%%%%%%%%%%%%%%%%%%%%%%%%%%
%%%%%%%%%%%%%%%%%%%%%%%%%%%%%%%%%%%%%%%%%%%%%%%%%%%%%%%%%%%%%%%%%%%%%%%%%%%%%%%%%%%%%%%%%%%%%%%%%%%%%%%%%%%%%%%%%%%%%%%%%%%%%%%%%%%%%%%%%%%%
%%
\section{Adjoint Poisson kernel}\label{appendix:P*}

\begin{lemma}\label{lem:adjointP}
Let $(\Sigma,g)$ be a smooth Riemannian surface with smooth boundary and let $P:C^\infty(\pl \Sigma)\to C^\infty(\Sigma)$ be the Poisson operator defined by $\Delta_gP\varphi=0$ and $(P\varphi)|_{\pl \Sigma}=\varphi$, and denote by $P(x,y)$ its integral kernel.  Let $P^*:C_c^\infty(\Sigma^\circ)\to C^\infty(\pl \Sigma)$ 
be the adjoint operator defined by $P^*f(y):=\int_{\Sigma}P(x,y)f(x)d{\rm v}_g(x)$. Then $P^*$ extends as a continuous map $P^*:C^\infty(\Sigma)\to C^\infty(\pl \Sigma)$.
\end{lemma}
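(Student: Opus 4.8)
The statement is a standard elliptic-regularity fact about the Poisson operator and its $L^2$-adjoint, and the plan is to reduce everything to the mapping properties of the Poisson operator $P$ itself together with duality. First I would recall that $P:C^\infty(\partial\Sigma)\to C^\infty(\Sigma)$ is continuous for the natural Fr\'echet topologies, and that it extends to a continuous map $P:H^s(\partial\Sigma)\to H^{s+1/2}(\Sigma)$ for every $s\in\R$ (this is the classical elliptic boundary-value-problem estimate; $P$ is a Fourier integral operator / Poisson operator of order $0$ in H\"ormander's calculus, gaining $1/2$ a derivative because of the trace). Equivalently $P$ is continuous $\mathcal{D}'(\partial\Sigma)\to\mathcal{D}'(\Sigma)$ and $C^\infty(\partial\Sigma)\to C^\infty(\Sigma)$.

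\textbf{Key steps.} Given $f\in C^\infty(\Sigma)$, the operator $P^*$ is defined by $P^*f(y)=\int_\Sigma P(x,y)f(x)\,d{\rm v}_g(x)$, i.e. $P^*=\iota^*\circ P^t$ where $P^t$ is the transpose of $P$ acting on distributions and $\iota^*$ is restriction. The cleanest route is: (i) observe that $P^*:C_c^\infty(\Sigma^\circ)\to C^\infty(\partial\Sigma)$ is already continuous by the distributional mapping property of $P$ (pairing a smooth density against the Schwartz kernel); (ii) to extend to all of $C^\infty(\Sigma)$, use the explicit representation of $P$ in terms of the Green function. Indeed, as recalled in the paper just before Lemma \ref{lemmaDSigma-D}, one has $P\varphi(x)=\frac{1}{2\pi}\int_{\partial\Sigma}\partial_{\nu'}G_{g,D}(x,y')\varphi(y')\,d\ell_g(y')$, so the kernel of $P$ is $P(x,y)=\frac{1}{2\pi}\partial_{\nu_y}G_{g,D}(x,y)$ for $x\in\Sigma$, $y\in\partial\Sigma$. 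Therefore, for $f\in C^\infty(\Sigma)$,
\[
P^*f(y)=\frac{1}{2\pi}\partial_{\nu_y}\Big(\int_\Sigma G_{g,D}(x,y)f(x)\,d{\rm v}_g(x)\Big)=\frac{1}{2\pi}\partial_{\nu_y}\big(R_{g,D}f\big)(y),
\]
where $R_{g,D}$ is the Dirichlet resolvent, satisfying $\Delta_g R_{g,D}f=2\pi f$ with $R_{g,D}f|_{\partial\Sigma}=0$. (iii) Now invoke elliptic regularity for the Dirichlet problem: since $f\in C^\infty(\Sigma)$, $u:=R_{g,D}f$ solves $\Delta_g u=2\pi f$ with $u|_{\partial\Sigma}=0$, hence $u\in C^\infty(\Sigma)$ up to the boundary, with $\|u\|_{C^{k+2}(\Sigma)}\le C_k\|f\|_{C^k(\Sigma)}$ by the Schauder/Agmon--Douglis--Nirenberg estimates. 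Consequently the normal derivative $\partial_\nu u|_{\partial\Sigma}$ is smooth on $\partial\Sigma$ and depends continuously on $f$, which is precisely the claim $P^*:C^\infty(\Sigma)\to C^\infty(\partial\Sigma)$ continuous.

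\textbf{Main obstacle.} There is no deep obstacle here; the only point requiring a little care is justifying the interchange of $\partial_{\nu_y}$ with the integral $\int_\Sigma G_{g,D}(x,y)f(x)\,d{\rm v}_g(x)$ and the regularity of $R_{g,D}f$ at the boundary, since $G_{g,D}(x,y)$ has a logarithmic singularity on the diagonal. This is handled by the standard fact that $R_{g,D}$ maps $C^k(\Sigma)$ (or $H^s(\Sigma)$) into $C^{k+2}(\Sigma)$ (resp. $H^{s+2}(\Sigma)\cap\{u|_{\partial\Sigma}=0\}$) continuously, so that taking the trace of the gradient on $\partial\Sigma$ is a continuous operation with values in $C^{k+1}(\partial\Sigma)$ (resp. $H^{s+3/2}(\partial\Sigma)$). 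Alternatively, and perhaps most economically for the paper, one can simply cite the pseudodifferential/Fourier-integral-operator calculus: $P$ is a Poisson-type operator of order $0$, so its transpose $P^t$ maps $C^\infty(\Sigma)$ (densities) into $C^\infty(\partial\Sigma)$ continuously by the symbolic calculus, and $P^*$ is the restriction of $P^t$ to honest functions via the volume form. I would present the Green-function argument as the primary proof since it is self-contained given what the paper has already set up, and mention the FIO viewpoint as a remark.
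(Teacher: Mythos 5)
Your proof is correct, but it takes a genuinely different route from the paper. The paper reduces to the model case of the flat unit disk using the parametrix decomposition \eqref{parametrixoperatorP} (the Poisson operator equals the disk Poisson kernel near the boundary plus an operator with smooth kernel), and then proves smoothness of $u(\theta)=\frac{1}{2i}\int_\D ze^{-i\theta}(1-ze^{-i\theta})^{-1}f(z)\,dz\wedge d\bar z$ by hand, via a Taylor expansion of $f$ in powers of $(1-r)$ combined with the decay of Fourier coefficients; this is elementary and self-contained but computational. You instead observe that $P(x,y)=\frac{1}{2\pi}\partial_{\nu_y}G_{g,D}(x,y)$, so that $P^*f=\frac{1}{2\pi}\partial_\nu\big(R_{g,D}f\big)\big|_{\partial\Sigma}$, and conclude by boundary elliptic regularity for the Dirichlet problem; this is shorter and conceptually cleaner, at the price of importing the (standard) fact that $\Delta_g u=2\pi f$, $u|_{\partial\Sigma}=0$ with $f\in C^\infty(\bbar\Sigma)$ has $u\in C^\infty(\bbar\Sigma)$ depending continuously on $f$. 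Two small points to tighten. First, the identity $P^*f=\frac{1}{2\pi}\partial_\nu(R_{g,D}f)|_{\partial\Sigma}$ is most cleanly obtained not by differentiating under the integral (the Poisson kernel is only barely integrable in $x$ near the boundary point $y$) but by Green's second identity applied to the harmonic function $P\varphi$ and to $u=R_{g,D}f$: since $\Delta_g(P\varphi)=0$, $u|_{\partial\Sigma}=0$ and $(P\varphi)|_{\partial\Sigma}=\varphi$, one gets $2\pi\int_\Sigma (P\varphi)f\,d{\rm v}_g=\int_{\partial\Sigma}\varphi\,\partial_\nu u\,d\ell_g$ for all $\varphi\in C^\infty(\partial\Sigma)$, which is exactly the claimed identity. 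Second, the estimate $\|u\|_{C^{k+2}}\le C_k\|f\|_{C^k}$ is false as stated (Schauder requires H\"older spaces); this does not matter for the lemma, since Fr\'echet continuity $C^\infty(\Sigma)\to C^\infty(\partial\Sigma)$ follows from the Sobolev version $\|u\|_{H^{k+2}}\le C_k\|f\|_{H^{k}}$ together with the trace theorem and Sobolev embedding, but the chain of inequalities should be phrased that way.
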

\begin{proof} By  \eqref{parametrixoperatorP}, it suffices to prove the result for the case of the disk $\Sigma=\bbar{\D}$, which in turn is equivalent to showing that for $f\in C^\infty(\bbar{\D})$
\begin{equation}\label{utheta}
u(\theta):=  \frac{1}{2i}\int_{\D} ze^{-i\theta}(1-ze^{-i\theta})^{-1}f(z)dz\wedge d\bar{z}   
\end{equation} 
is smooth in $\theta$. In polar coordinate $z=re^{i\theta'}$, write $f(r,\theta')=\sum_{k=0}^{K}(1-r)^kf_k(\theta')+s_K(r,\theta')$ with $s_K=\mc{O}((1-r)^{K+1})$ at $r=1$ and $f_k\in C^\infty(\T)$. Define $u_k$ similarly as $u$ (in  \eqref{utheta})  with $(1-r)^kf_k(\theta')$ replacing $f$, and define $S_k$ similarly by replacing $f$ by $s_k$. It is easy to check that $S_K\in C^{K}(\theta)$ and 
\[u_k(\theta)= \int_{0}^1\int_{\T}re^{i(\theta-\theta')}\frac{(1-r)^k}{1-re^{i(\theta-\theta')}}f_k(\theta')rdrd\theta'=
\sum_{n=1}^\infty \hat{f}_k(n)e^{in\theta}\int_{0}^1r^{n+1}
(1-r)^kdr.\]
The last identity follows from the fact that $f_k$ are smooth so that $\hat{f}_k(n)=\mc{O}((1+|n|)^{-\infty})$ so that the series and integral converge. Since  $|\int_{0}^1r^{n+1}
(1-r)^kdr|<1$, we deduce that $u_k\in C^\infty(\T)$. Since $u=\sum_{k=0}^K u_k+S_K$, we obtain that $u\in C^K(\T)$, and since $K$ is arbitrary, $u\in C^\infty(\T)$.
\end{proof}

\section{The DOZZ formula}\label{app:dozz}
%%%%%%%%%%%%%%%%%%%%%%%%%%%%%%%%%%%%%%%%%%%
We set $l(z)=\frac{\Gamma (z)}{\Gamma (1-z)}$ where $\Gamma$ denotes the standard Gamma function. We introduce Zamolodchikov's special holomorphic function $\Upsilon_{\frac{\gamma}{2}}(z)$ by the following expression for $0<\Re (z)< Q$
\begin{equation}\label{def:upsilon}
\log \Upsilon_{\frac{\gamma}{2}} (z)  = \int_{0}^\infty  \left ( \Big (\frac{Q}{2}-z \Big )^2  e^{-t}-  \frac{( \sinh( (\frac{Q}{2}-z )\frac{t}{2}  )   )^2}{\sinh (\frac{t \gamma}{4}) \sinh( \frac{t}{\gamma} )}    \right ) \frac{dt}{t}.
\end{equation}
The function $\Upsilon_{\frac{\gamma}{2}}$  is then defined on all $\C$ by analytic continuation of the expression \eqref{def:upsilon} as expression \eqref{def:upsilon}  satisfies the following remarkable functional relations: 
\begin{equation}\label{shiftUpsilon}
\Upsilon_{\frac{\gamma}{2}} (z+\frac{\gamma}{2}) = l( \frac{\gamma}{2}z) (\frac{\gamma}{2})^{1-\gamma z}\Upsilon_{\frac{\gamma}{2}} (z), \quad
\Upsilon_{\frac{\gamma}{2}} (z+\frac{2}{\gamma}) = l(\frac{2}{\gamma}z) (\frac{\gamma}{2})^{\frac{4}{\gamma} z-1} \Upsilon_{\frac{\gamma}{2}} (z).
\end{equation}
The function $\Upsilon_{\frac{\gamma}{2}}$ has no poles in $\C$ and the zeros of $\Upsilon_{\frac{\gamma}{2}}$ are simple (if $\gamma^2 \not \in \Q$) and given by the discrete set $(-\frac{\gamma}{2} \N-\frac{2}{\gamma} \N) \cup (Q+\frac{\gamma}{2} \N+\frac{2}{\gamma} \N )$. 
With these notations, the DOZZ formula is defined for $\alpha_1,\alpha_2,\alpha_3 \in \C$ by the following formula where we set $\bar{\alpha}=\alpha_1+\alpha_2+\alpha_3$ 
\begin{equation}\label{theDOZZformula}
C_{\gamma,\mu}^{{\rm DOZZ}} (\alpha_1,\alpha_2,\alpha_3 )  = (\pi \: \mu \:  l(\frac{\gamma^2}{4})  \: (\frac{\gamma}{2})^{2 -\gamma^2/2} )^{\frac{2 Q -\bar{\alpha}}{\gamma}}   \frac{\Upsilon_{\frac{\gamma}{2}}'(0)\Upsilon_{\frac{\gamma}{2}}(\alpha_1) \Upsilon_{\frac{\gamma}{2}}(\alpha_2) \Upsilon_{\frac{\gamma}{2}}(\alpha_3)}{\Upsilon_{\frac{\gamma}{2}}(\frac{\bar{\alpha}}{2}-Q) 
\Upsilon_{\frac{\gamma}{2}}(\frac{\bar{\alpha}}{2}-\alpha_1) \Upsilon_{\frac{\gamma}{2}}(\frac{\bar{\alpha}}{2}-\alpha_2) \Upsilon_{\frac{\gamma}{2}}(\frac{\bar{\alpha}}{2} -\alpha_3)   }
\end{equation}      
 The DOZZ formula is meromorphic with poles corresponding to the zeroes of the denominator of expression \eqref{theDOZZformula}.

\section{Markov property of the GFF}
 %%%%%%%%%%%%%%%%%%%%%%%%%%%
 
 \begin{proposition}\label{decompGFF}
Let $(\Sigma,g)$ be a Riemannian manifold with smooth boundary $\partial \Sigma$. Let $\mathcal{C}$ be a  union of  smooth non overlapping closed simple curves separating  $\Sigma$ into two connected components $\Sigma_1$ and $\Sigma_2$.\\
1) if $\partial \Sigma\not=\emptyset$ then the Dirichlet GFF $Y_g$ on $\Sigma$ admits the following decomposition in law as a sum of independent processes
$$Y_g\stackrel{law}{=}Y_1+Y_2+P$$
with $Y_q$ a Dirichlet GFF on $\Sigma_q$ for $q=1,2$ and $P$ the harmonic extension on $\Sigma\setminus\mathcal{C}$ of the restriction of $Y_g$ to $\mathcal{C}$ with boundary value $0$ on $\partial \Sigma$.\\
2) if $\partial \Sigma=\emptyset$ then the GFF $X_g$ on $\Sigma$ admits the following decomposition in law  
$$X_g\stackrel{law}{=}Y_1+Y_2+P-c_g$$
where $Y_1,Y_2,P$ are independent, $Y_q$ is a Dirichlet GFF on $\Sigma_q$ for $q=1,2$, $P$ is the harmonic extension on $\Sigma\setminus\mathcal{C}$ of the restriction of $X_g$ to $\mathcal{C}$  and $c_g:=\frac{1}{{\rm v}_g(\Sigma)}\int_\Sigma(Y_1+Y_2+P)\,\dd {\rm v}_g $.
 \end{proposition}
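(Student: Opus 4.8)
The statement to prove is Proposition~\ref{decompGFF}, the Markov property of the GFF along a separating curve. I will assume the reader is comfortable with the Gaussian formalism of Section~\ref{sec:lcft} and prove both claims by explicitly constructing the decomposition and checking covariances.

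\textbf{Approach.} The plan is to work entirely at the level of covariances, using the fact that a centered Gaussian field is determined in law by its covariance kernel. First I would fix a Dirichlet GFF $Y_g$ on $\Sigma$ with covariance $G_{g,D}$ and define $\mathbf{Y}$ to be its restriction to $\mathcal{C}$ (the restriction makes sense by testing against smooth densities on $\mathcal{C}$, exactly as is done repeatedly in Section~\ref{sec:gluing} when restricting the GFF to a circle; I would recall this briefly rather than re-derive it). Set $P := P_{\mc{C}}\mathbf{Y}$ to be the harmonic extension of $\mathbf{Y}$ on $\Sigma\setminus \mc{C}$ vanishing on $\pl\Sigma$, in the sense of the operator $P_{\mc{C}}$ introduced in Subsection~\ref{subDTN}. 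The key point is then the orthogonal decomposition $Y_g - P$ versus $P$: I would show that $Y_g - P$, restricted to $\Sigma_1$ and to $\Sigma_2$, gives two fields $Y_1, Y_2$ that are (i) independent of each other, (ii) independent of $P$, and (iii) distributed as Dirichlet GFFs on $\Sigma_1$ and $\Sigma_2$ respectively. For (iii) the essential computation is that the covariance of $Y_g - P$ restricted to $\Sigma_q\times\Sigma_q$ equals $G_{g_q,D}$, the Dirichlet Green function of $\Sigma_q$; this follows because $G_{g,D}(x,\cdot) - P_{\mc{C}}(G_{g,D}(x,\cdot)|_{\mc{C}})$ is, for $x\in\Sigma_q^\circ$, harmonic on $\Sigma_q\setminus\{x\}$ with the correct logarithmic singularity at $x$ and vanishing boundary values on $\pl\Sigma_q = (\pl\Sigma\cap\bbar{\Sigma_q})\cup\mc{C}$, hence equals $G_{g_q,D}(x,\cdot)$ by uniqueness of the Green function. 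Independence in (i)--(ii) is automatic from Gaussianity once one checks the relevant covariances vanish: $\E[(Y_g-P)(x)(Y_g-P)(x')]=0$ for $x\in\Sigma_1,x'\in\Sigma_2$ because the above shows this covariance is $G_{g_1,D}$ which is supported on $\Sigma_1\times\Sigma_1$; and $\E[(Y_g-P)(x)P(x')]=0$ because $Y_g-P$ vanishes on $\mc{C}$ while $P$ is determined by values on $\mc{C}$ — concretely $P(x')$ is a limit of linear combinations of $\mathbf{Y}$-values, and $\E[(Y_g-P)(x)\mathbf{Y}(y)]=0$ for $y\in\mc{C}$ since $(Y_g-P)$ has zero boundary trace on $\mc{C}$.

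\textbf{The closed case.} For part 2), $\pl\Sigma=\emptyset$, the GFF $X_g$ has covariance $G_g$ which is the Green function with the zero-average normalization $R_g 1 = 0$. I would define $\mathbf{X}$ as the restriction of $X_g$ to $\mc{C}$, $P := P_{\mc{C}}\mathbf{X}$ its harmonic extension on $\Sigma\setminus\mc{C}$ (no boundary condition since $\pl\Sigma=\emptyset$; here $\pl(\Sigma\setminus\mc{C})=\mc{C}$ with two copies), and then $X_g - P$ again splits as $Y_1+Y_2$ into the restrictions to $\Sigma_1$, $\Sigma_2$. The covariance computation is as before: $G_g(x,\cdot)-P_{\mc{C}}(G_g(x,\cdot)|_{\mc{C}})$ is harmonic on $\Sigma_q$ away from $x$, with the right singularity, vanishing on $\mc{C}=\pl\Sigma_q$, hence equals $G_{g_q,D}$. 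The only subtlety is that $X_g$ does not have mean zero relative to each piece: $X_g$ has $\int_\Sigma X_g\,\dd{\rm v}_g=0$, but $Y_1+Y_2+P$ need not, so one must subtract its average $c_g=\frac{1}{{\rm v}_g(\Sigma)}\int_\Sigma(Y_1+Y_2+P)\,\dd{\rm v}_g$ to match. I would verify that $Y_1+Y_2+P-c_g$ has covariance $G_g$ by noting both sides have the same ``ungauged'' covariance up to functions that are constant in each variable, and the zero-average projection kills exactly those constants. The field $c_g$ is then a Gaussian random variable and the stated decomposition holds; I note $Y_1,Y_2,P$ on the right are independent (shown as above) and $c_g$ is their explicit linear functional, consistent with the statement.

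\textbf{Main obstacle.} The routine parts are the Green-function uniqueness arguments and the bookkeeping of boundary components of $\Sigma_q$. The genuinely delicate point is making rigorous the restriction of the GFF to the curve $\mc{C}$ and the harmonic extension $P_{\mc{C}}$ as honest random distributions, together with the identity $\E[(Y_g-P)(x)\mathbf{Y}(y)]=0$ for $y\in\mc{C}$ in the distributional sense — i.e.\ justifying the manipulations ``$P$ depends only on boundary values'' and ``$Y_g-P$ has zero trace'' when everything is a distribution rather than a function. I expect to handle this exactly as in the proof of Proposition~\ref{glueampli} (the paragraph making sense of $\mathbf{Y}$ via the parametrization), invoking that the trace of the Dirichlet GFF on an analytic curve in the interior is well-defined with covariance $G_{g,D}|_{\mc{C}\times\mc{C}}$ (related to $\mathbf{D}_{\Sigma,\mc{C}}^{-1}$ by \eqref{DNmapandGreen}), and that $P_{\mc{C}}$ maps it continuously into $H^s$ fields on $\Sigma\setminus\mc{C}$; the covariance identities then hold first for smooth test data and extend by density.
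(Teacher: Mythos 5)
Your treatment of part 1) is correct and is essentially the paper's argument in a different dress: the paper shows directly that $u:=G_{g,D}-\big(G_{\Sigma_1}\mathbf{1}_{\Sigma_1\times\Sigma_1}+G_{\Sigma_2}\mathbf{1}_{\Sigma_2\times\Sigma_2}\big)$ is harmonic in each variable off $\mc{C}$ with boundary data $G_{g,D}|_{\mc{C}}$ and hence equals the double Poisson integral of $G_{g,D}|_{\mc{C}\times\mc{C}}$, which is the covariance of $P$; your identity $G_{g,D}(x,\cdot)-P_{\mc{C}}\big(G_{g,D}(x,\cdot)|_{\mc{C}}\big)=G_{g_q,D}(x,\cdot)$ and the orthogonality $\E[(Y_g-P)(x)\mathbf{Y}(y)]=0$ are the same computation read off the field rather than the kernel. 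The technical point about traces is handled in the paper exactly as you propose, by deferring to the argument in the proof of Proposition \ref{glueampli}.

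Part 2), however, contains a genuine gap. On a closed surface the Green function is normalized by $R_g1=0$, so $\Delta_{g,y}G_g(x,y)=2\pi\big(\delta_x(y)-{\rm v}_g(\Sigma)^{-1}\big)$: the function $G_g(x,\cdot)$ is \emph{not} harmonic away from $x$, and your claimed identity $G_g(x,\cdot)-P_{\mc{C}}\big(G_g(x,\cdot)|_{\mc{C}}\big)=G_{g_q,D}(x,\cdot)$ is false — the difference solves $\Delta u=2\pi\delta_x-2\pi/{\rm v}_g(\Sigma)$ with zero boundary data, so it is $G_{g_q,D}(x,\cdot)+w_q$ for a nonconstant $w_q$. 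More seriously, the orthogonality on which your whole scheme rests fails here: for $y'\in\mc{C}$ one finds $\E[(X_g-P)(x)\mathbf{X}(y')]=w_q(x)\neq 0$, so the pieces $Y_q:=(X_g-P)|_{\Sigma_q}$ obtained from your field-level decomposition are \emph{not} independent of $P$, whereas the proposition asserts a representation with independent $Y_1,Y_2,P$. (It so happens that the two errors cancel in the diagonal covariance of $X_g-P$, but that does not rescue the independence claim.) The correct statement is about the sum of \emph{independent} copies: one must show that the covariance of $(Y_1+Y_2+P)-c_g$, i.e. the double zero-average projection applied to $G_D+\mc{P}_g\mc{S}_g\mc{P}_g^*$, equals $G_g$. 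Your closing remark that the two kernels "agree up to functions constant in each variable" is exactly the nontrivial identity that has to be verified — this is the explicit bookkeeping of the terms $\frac{1}{{\rm v}_g(\Sigma)}\int G_D(x,y)\,{\rm dv}_g(x)$, $\frac{1}{{\rm v}_g(\Sigma)}\int G_D(x,y)\,{\rm dv}_g(y)$ and the double average carried out in the paper's proof — and it cannot be deduced from the (incorrect) harmonicity argument you give. You should redo part 2) at the level of covariance kernels of independent ingredients, as in the paper.
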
     
 
  \begin{proof} We first prove 1). Let  $G_D:=G_{\Sigma_1}\mathbf{1}_{\Sigma_1\times \Sigma_1}+G_{\Sigma_2}\mathbf{1}_{\Sigma_2\times \Sigma_2}$ with $G_{\Sigma_1},G_{\Sigma_2}$ the Dirichlet Green functions on $\Sigma_1,\Sigma_2$. Set
  \[ u(x,y):=G_{g,D}(x,y)-G_D(x,y)\]
  with $G_{g,D}$ the Dirichlet Green function on $\Sigma$. Then  $u\in C^0(\Sigma\times \Sigma\setminus {\rm diag}(\Sigma))$. For $y\in \Sigma\setminus  \mathcal{C}$ fixed, we have for $x\in \Sigma\setminus \mathcal{C}$ 
\[\Delta_gu(\cdot,y)=\delta(y)-\delta(y)=0 , \quad u(\cdot,y)|_{\mathcal{C}}=G_{g,D}(\cdot,y)|_{\mathcal{C}}\]
 and,  if $\mc{P}_g$ is the Poisson kernel for the pair $(\Sigma,\mathcal{C})$, we deduce
\[  u(x,y)=\int_{\mc{C}} \mc{P}_g(x,x')G_{g,D}(x',y){\rm v}_\mathcal{C}(\dd x') \]
for all $x\in \Sigma,y\in \Sigma\setminus \mathcal{C}$, with ${\rm v}_\mathcal{C}$ the length measure on $\mc{C}$. But this extends by continuity to $(\Sigma\times \Sigma)\setminus \textrm{diag}(\Sigma)$. Similarly, for $x\in \Sigma\setminus \mathcal{C}$, we have for $y\in \Sigma\setminus \mathcal{C}$
\[\Delta_yu(x,y)=0,\quad \textrm{ with }  u(x,\cdot)|_{\mathcal{C}}=\int_{\mc{C}}\mc{P}_g(x,x')G_g(x',\cdot)|_{\mathcal{C}}{\rm v}_\mathcal{C}(\dd x').\] 
This implies that for $y\in \Sigma$, $x\in \Sigma\setminus \mathcal{C}$
\[  u(x,y)= \int_{\mc{C}}\int_{\mc{C}} \mc{P}_g(x,x')G_g(x',y')\mc{P}_g(y,y'){\rm v}_\mathcal{C}(\dd x'){\rm v}_\mathcal{C}(\dd y').\]
This extends by continuity to $(x,y)\in (\Sigma\times \Sigma)\setminus \textrm{diag}(\Sigma)$. Hence our claim.

  Now we prove 2). Let  $G_D:=G_{\Sigma_1}\mathbf{1}_{\Sigma_1\times \Sigma_1}+G_{\Sigma_2}\mathbf{1}_{\Sigma_2\times \Sigma_2}$ with $G_{\Sigma_1},G_{\Sigma_2}$ the Dirichlet Green functions on $\Sigma_1,\Sigma_2$.  Let us consider
  \[ u(x,y):=G_g(x,y)-G_D(x,y)+\frac{1}{{\rm v}_g(\Sigma)}\int_\Sigma G_D(x,y)\,  {\rm v}_g(\dd x) +\frac{1}{{\rm v}_g(\Sigma)}\int_\Sigma G_D(x,y)\, {\rm v}_g(\dd y).\]
Note that $u\in C^0(\Sigma\times \Sigma\setminus {\rm diag}(\Sigma))$. 
For $y\in \Sigma\setminus  \mathcal{C}$ fixed, we have for $x\in \Sigma\setminus \mathcal{C}$ 
\[\Delta_gu(\cdot,y)=\delta(y)-\frac{1}{{\rm v}_g(\Sigma)}-\delta(y)+\frac{1}{{\rm v}_g(\Sigma)}=0 , \quad u(\cdot,y)|_{\mathcal{C}}=G_g(\cdot,y)|_{\mathcal{C}}+c(y)\]
where $c(y):=\frac{1}{{\rm v}_g(\Sigma)}\int_\Sigma G_D(x,y)\,{\rm v}_g(\dd x)$.
Thus, if $\mc{P}_g$ is the Poisson kernel for the pair $(\Sigma,\mathcal{C})$ 
\[ u(x,y)=\int_\Sigma \mc{P}_g(x,x')G_g(x',y){\rm v}_\mathcal{C}(\dd x')+c(y)\]
for all $x\in \Sigma,y\in \Sigma\setminus \mathcal{C}$, with ${\rm v}_\mathcal{C}$ the length measure on $\mc{C}$. But this extends by continuity to $(\Sigma\times \Sigma)\setminus \textrm{diag}(\Sigma)$.
Note that $c(y)=0$ if $y\in \mathcal{C}$. 

Now similarly, for $x\in \Sigma\setminus \mathcal{C}$, we have for $y\in \Sigma\setminus \mathcal{C}$
\[\Delta_yu(x,y)=0,\quad \textrm{ with } u(x,\cdot)|_{\mathcal{C}}=\int_\Sigma \mc{P}_g(x,x')G_g(x',\cdot)|_{\mathcal{C}}{\rm v}_\mathcal{C}(\dd x').\] 
This implies that for $y\in \Sigma$, $x\in \Sigma\setminus \mathcal{C}$
\[u(x,y)= \int_\Sigma \mc{P}_g(x,x')G_g(x',y')\mc{P}_g(y',y){\rm v}_\mathcal{C}(\dd x'){\rm v}_\mathcal{C}(\dd y').\]
This extends by continuity to $(x,y)\in (\Sigma\times \Sigma)\setminus \textrm{diag}(\Sigma)$.
Thus,  denoting  by $\mc{S}_g$ the restriction of $G_g$  to $\mathcal{C}\times \mathcal{C}$
\[ G_g(x,y)=G_D(x,y)-\frac{1}{{\rm v}_g(\Sigma)}\int_\Sigma G_D(x,y) {\rm v}_g(\dd x) -\frac{1}{{\rm v}_g(\Sigma)}\int_\Sigma G_D(x,y)\,{\rm v}_g(\dd y)+\mc{P}_g\mc{S}_g\mc{P}_g^*(x,y).\]
Observe that, if $C:=\int G_D(x',y'){\rm v}_g(\dd x'){\rm v}_g(\dd y')$,  
\[\int_\Sigma u(x,y){\rm v}_g(\dd x)=\int_\Sigma u(x,y){\rm v}_g(\dd y)=C\frac{1}{{\rm v}_g(\Sigma)}, \quad \int_{\Sigma\times \Sigma}u(x,y){\rm v}_g(\dd x){\rm v}_g(\dd y)=C.\]
This implies that 
\[\begin{split}
\mc{P}_g\mc{S}_g\mc{P}_g^*(x,y)=& \mc{P}_g\mc{S}_g\mc{P}_g^*(x,y)-\frac{1}{{\rm v}_g(\Sigma)}\int_\Sigma\mc{P}_g\mc{S}_g\mc{P}_g^*(x,y'){\rm v}_g(\dd y')-\frac{1}{{\rm v}_g(\Sigma)}\int_\Sigma\mc{P}_g\mc{S}_g\mc{P}_g^*(x',y){\rm v}_g(\dd x')\\
& +\frac{1}{{\rm v}_g(\Sigma)^2}\int_{\Sigma\times \Sigma}\mc{P}_g\mc{S}_g\mc{P}_g^*(x',y'){\rm v}_g(\dd y'){\rm v}_g(\dd x')+\frac{1}{{\rm v}_g(\Sigma)^2}\int G_D(x',y'){\rm v}_g(\dd x'){\rm v}_g(\dd y').
\end{split}\]
In particular we have proved that 
\[\begin{split}
G_g(x,y)=& G_D(x,y)+\mc{P}_g\mc{S}_g\mc{P}_g^*(x,y) -\frac{1}{{\rm v}_g(\Sigma)}\int_\Sigma G_D(x',y){\rm v}_g(\dd x') -\frac{1}{{\rm v}_g(\Sigma)}\int_\Sigma G_D(x,y'){\rm v}_g(\dd y')\\ 
 & +\frac{1}{{\rm v}_g(\Sigma)^2} \int_\Sigma G_D(x',y'){\rm v}_g(\dd x'){\rm v}_g(\dd y')-\frac{1}{{\rm v}_g(\Sigma)}\int_\Sigma\mc{P}_g\mc{S}_g\mc{P}_g^*(x,y'){\rm v}_g(\dd y')\\\
 & -\frac{1}{{\rm v}_g(\Sigma)}\int_\Sigma\mc{P}_g\mc{S}_g\mc{P}_g^*(x',y){\rm v}_g(\dd x')+\frac{1}{{\rm v}_g(\Sigma)^2}\int_{\Sigma\times \Sigma}\mc{P}_g\mc{S}_g\mc{P}_g^*(x',y'){\rm v}_g(\dd x'){\rm v}_g(\dd y').
\end{split}\]
This proves our second claim.
   \end{proof}
 %%%%%%%%%%%%%%%%%%%%%%%%%%%%%%%%%%%%%%%%%%%%%%%%%%%%%%%%%%%%%%%%%%%%%%%%%%%%%%%%%%%%%%%%%%%%%%%%%%%%%%%%%%%%%%%%%%%%%%%%%%%%%%%%%%%%%%%%%%%%%%%%%%%%%%%%%%%%%%%%%%%%%%%%%%%%%%%%%%%%%%

\section{Proof of the Ward identity}\label{wardproof}
Here we prove Proposition \ref{propward}. We will only check the first identity, the second (antiholomorphic) one can be proved in a similar way. Furthermore, for simplicity, we will only treat the case when there are only $T$-insertions. Also, we will suppose that the metric $g$ is the round metric on the sphere; the result for other metrics can be deduced via the Weyl anomaly formula. Recall that in the round metric the conformal factor $\omega$ satisfies
\begin{equation}\label{eqconffactor}
\partial_z^2 \omega= \frac{1}{2} (\partial_z \omega)^2
\end{equation}
and this relation will be used very often in the sequel.
We will also consider slightly different correlation functions, i.e. we will work with the vertex $\tilde{V}_{\alpha,g}$ defined as the limit $\epsilon\to 0$ of
\begin{equation*}
\tilde{V}_{\alpha,g,\epsilon}(z)= \epsilon^{\alpha^2/2}  e^{\alpha  \Phi_{g,\epsilon}(z)}
\end{equation*}
(where we recall $\Phi_g=c+X_g+\frac{Q}{2}\omega$) and prove the Ward identity with this vertex definition. In this situation, the Ward identity is slightly different, i.e. without the $\partial_{\bar z_i}\omega$ terms. The Ward identity with $V_{\alpha}$ can then be deduced from the fact that  
\begin{equation*}
 \langle T_g( \mathbf{u})\bar T_g( \mathbf{v})\prod_{i=1}^m\tilde{V}_{\alpha_i,g}(z_i)\rangle_{\hat\C,U_t,g}= \left (  \prod_{i=1}^m e^{\Delta_{\alpha_i}\omega(z_i) }   \right )  \langle T_g( \mathbf{u})\bar T_g( \mathbf{v})\prod_{i=1}^mV_{\alpha_i,g}(z_i)\rangle_{\hat\C,U_t,g}. 
\end{equation*}

For $F$ some functional, we recall the following crucial identity (see \cite[Lemma 3.3]{KRV_DOZZ} or \cite[Lemma 9.2]{GKRV20_bootstrap})
 which will be used throughout the proof
\begin{equation}\label{KPZtypeidentity}
\mu \gamma \int_{\C \setminus U_t}   \langle  F(X_g) \tilde{V}_{\gamma,g} (x)  \prod_{i=1}^m\tilde{V}_{\alpha_i,g }(z_i) \rangle_{\hat\C,U_t,g}  \dd x= (\sum_{k}\alpha_k  -2Q)     \langle F(X_g)  \prod_{i=1}^m \tilde{V}_{\alpha_i,g }(z_i) \rangle_{\hat\C,U_t,g}.
\end{equation}

\noindent{\bf Gaussian integration by parts.} 
We apply the Gaussian integration by parts to the SET-insertion $T(u_k)$ (a reminder about Gaussian IBP, in the context used here, is presented in \cite[Section 9.2 and proof of Prop 9.1]{GKRV20_bootstrap}) as in section \ref{sub:ampSEThole} and we work directly in the $\epsilon\to 0$ limit. In the case of the round sphere, using identity \eqref{eqconffactor}, we get the following expression for the SET-insertion \eqref {defSET} $T_g(u)$ for $u \in \C$
\begin{equation}\label{defSETsimple}
T_g(u)= Q \partial_u^2 X_g(u) - (\partial_u X_g(u))^2- Q\partial_u  \omega(u) \partial_u X_g(u)
\end{equation}
where we also used  that by 1) of Lemma \ref{constanteSET} $a_{\hat\C, g}=0$. The contractions $\E [ \partial_u^a X_g(u) \partial_u^b X_g(u')]$ with $a>0$ entering the integration by parts formula are obtained from 
\begin{align*}%\label{}
\E  [\partial_u X_g(u)  X_g(v)]=-\frac{1}{2}(\frac{1}{u-v}+\hf \partial_u\omega(u))
\end{align*}
by differentiation. Also we define for $u,v\in\C$
    \begin{align}\label{greenderi}
    C(u,v)=-\frac{1}{2}\frac{1}{u-v},\ \quad C_{\epsilon,\epsilon'}(u,v)=\int \rho_{\epsilon}(u-u') \rho_{\epsilon'}(v-v')C(u',v')\dd u\dd v
 \end{align}
 with $( \rho_{\epsilon})_\epsilon$ a mollifying family of the type $\rho_\epsilon(\cdot)=\epsilon^{-2}\rho(|\cdot|/\epsilon)$ for $\rho\in C_c^\infty$.

Applying the Gaussian IBP formula to the two first terms in \eqref{defSETsimple} of the SET-insertion $T(u_k)$  produces plenty of terms which we group in five contributions:
\begin{equation}\label{IPPstress}
\Big\langle  T_g(\mathbf{u})   \prod_{i=1}^m\tilde{V}_{\alpha_i,g }(z_i) \Big\rangle_{\hat\C,U_t,g} =M(\mathbf{u},\mathbf{z})+T(\mathbf{u},\mathbf{z}) + N(\mathbf{u},\mathbf{z})+P(\mathbf{u},\mathbf{z}) + C(\mathbf{u},\mathbf{z})  .
\end{equation}
The first contribution in \eqref{IPPstress} collects the contractions hitting only one $\tilde{V}_{\alpha_p,g}$:
\begin{align}\label{IPPstressM}
 M(\mathbf{u},\mathbf{z})=  \sum_{p=1}^{n} \Big(\frac{Q\alpha_p}{2}- \frac{\alpha_p^2}{4}\Big) \frac{1}{(u_k-z_p)^2}  \langle    T(\mathbf{u}^{(k)})  \prod_{i=1}^m \tilde{V}_{\alpha_i,g }(z_i) \rangle_{\hat\C,U_t,g} +M'(\mathbf{u},\mathbf{z}) 
\end{align}
where $M'(\mathbf{u},\mathbf{z})$ gathers the terms which depend on the metric $g$ and is given by
\begin{align*}
M'(\mathbf{u},\mathbf{z}) & =   - Q \sum_{p=1}^n \frac{\alpha_p}{4} \partial_{u_k}^2 \omega (u_k) \langle    T_g(\mathbf{u}^{(k)})  \prod_{i=1}^n\tilde{V}_{\alpha_i,g }(z_i) \rangle_{\hat\C,U_t,g} \; \; \; (1) \\
& \quad -\sum_{p=1}^n \frac{\alpha_p^2}{4(u_k-z_p)} \partial_{u_k}  \omega (u_k) \langle    T_g(\mathbf{u}^{(k)})  \prod_{i=1}^m \tilde{V}_{\alpha_i,g }(z_i) \rangle_{\hat\C,U_t,g} \; \; \; (2) \\
& \quad - \sum_{p=1}^n \frac{\alpha_p^2}{16}  (\partial_{u_k}  \omega (u_k))^2 \langle    T_g(\mathbf{u}^{(k)})  \prod_{i=1}^m \tilde{V}_{\alpha_i,g }(z_i) \rangle_{\hat\C,U_t,g}. \; \; \; (3)
\end{align*}

The second contribution collects the terms coming from contractions of SET insertions and producing lower order SET insertions:
 \begin{align}\label{OtherstressT}
 T(\mathbf{u}, \mathbf{z})= \frac{1}{2}   \sum_{\ell=1}^{k-1}\frac{1+6Q^2}{(u_k-u_\ell)^4}  \Big\langle   T_g(\mathbf{u}^{(\ell,k)})   \prod_{i=1}^m \tilde{V}_{\alpha_i,g }(z_i) \Big\rangle_{\hat\C,U_t,g}
 \end{align}
 The third contribution is given by terms where all contractions hit $V_{\gamma,g}$:
\begin{align}\nonumber
N(\mathbf{u},\mathbf{z})=&\Big(\frac{\mu\gamma^2}{4}-\frac{\mu\gamma Q}{2}\Big) \int_{\C \setminus U_t} \frac{1}{(u_k-x)^2}\langle   T_g(\mathbf{u}^{(k)})   \tilde{V}_{\gamma,g}(x)\prod_{i=1}^m\tilde{V}_{\alpha_i,g }(z_i) \rangle_{\hat\C,U_t,g}\,\dd x+ N'(\mathbf{u},\mathbf{z})
\\=
  & - \mu   \int_{\C \setminus U_t} \frac{1}{(u_k-x)^2}\langle   T_g(\mathbf{u}^{(k)}) \tilde{V}_{\gamma,g}(x)\prod_{i=1}^m\tilde{V}_{\alpha_i ,g}(z_i) \rangle_{\hat\C,U_t,g} \,\dd x+N'(\mathbf{u},\mathbf{z}) \label{Nterm} %this term is $0$ if $\nu_k=1$
\end{align}
where 
\begin{align*}
N'(\mathbf{u},\mathbf{z}) & =  \frac{\mu \gamma Q}{4} \partial^2_{u_k}  \omega (u_k)  \int_{\C \setminus U_t} \langle   T_g(\mathbf{u}^{(k)})   \tilde{V}_{\gamma,g}(x)\prod_{i=1}^m\tilde{V}_{\alpha_i,g }(z_i) \rangle_{\hat\C,U_t,g}\,\dd x \; \; \; (4) \\
& \quad + \frac{\mu \gamma^2}{16} (\partial_{x_k}  \omega (x_k))^2  \int_{\C \setminus U_t} \langle   T(\mathbf{u}^{(k)})   \tilde{V}_\gamma(x)\prod_{i=1}^m\tilde{V}_{\alpha_i,g }(z_i) \rangle_{\hat\C,U_t,g}\,\dd x  \; \; \; (5) \\
& \quad + \frac{\mu \gamma^2}{4} \partial_{u_k}  \omega (x_k)  \int_{\C \setminus U_t} \frac{1}{u_k-x}\langle   T(\mathbf{u}^{(k)})   \tilde{V}_{\gamma,g}(x)\prod_{i=1}^m\tilde{V}_{\alpha_i,g }(z_i) \rangle_{\hat\C,U_t,g}\,\dd x\,. \; \; \; (6) \\
\end{align*}
%Let us also register here the following
The fourth term  corresponds to the third term in the decomposition \eqref{defSETsimple}
\begin{align*}
&P(\mathbf{u},\mathbf{z}) :=
-Q\partial_{u_k}  \omega (u_k)     \langle    \partial_{u_k} X_g(u_k)   T_g(\mathbf{u}^{(k)})  \prod_{i=1}^m\tilde{V}_{\alpha_i ,g}(z_i)  \rangle_{\hat\C,U_t,g}\\
&= \frac{Q}{2} \partial_{u_k} \omega (u_k)   \sum_{p=1}^n \alpha_p (\frac{1}{u_k-z_p}+\hf\partial_{u_k} \omega(u_k)) \langle   T_g(\mathbf{u}^{(k)})   \prod_{i=1}^m \tilde{V}_{\alpha_i,g }(z_i) \rangle_{\hat\C,U_t,g} \\
&  \quad  -\frac{Q\mu \gamma}{2} \partial_{u_k}  \omega (u_k)  \int_{\C \setminus U_t} (\frac{1}{u_k-x}+\hf\partial_{u_k}\omega(u_k)) \langle   T(\mathbf{u}^{(k)})   \tilde{V}_\gamma(x)\prod_{i=1}^m \tilde{V}_{\alpha_i,g }(z_i) \rangle_{\hat\C,U_t,g}\,\dd x  \\
&\quad + Q^2  \partial_{u_k} \omega (u_k) \sum_{\ell=1}^{k-1}  \frac{1}{(u_k-u_\ell)^3} \langle  T_g(\mathbf{u}^{(k,l)})  \prod_{i=1}^m \tilde{V}_{\alpha_i,g }(z_i) \rangle_{\hat\C,U_t,g}   \\
& \quad -   Q \partial_{u_k} \omega (u_k) \sum_{\ell=1}^{n-1} \frac{1}{(u_k-u_\ell)^2}   \langle \partial_{u_\ell} \Phi_g(u_\ell) T_g(\mathbf{u}^{(k,l)})  \prod_{i=1}^m \tilde{V}_{\alpha_i,g }(z_i) \rangle_{\hat\C,U_t,g}  .
\end{align*}
Using the identity  \eqref{KPZtypeidentity} this becomes
\begin{align*}
&P(\mathbf{u},\mathbf{z})=
 \frac{Q^2}{2} (\partial_{u_k} \omega (u_k))^2 \langle   T(\mathbf{u}^{(k)})   \prod_{i=1}^m \tilde{V}_{\alpha_i,g }(z_i) \rangle_{\hat\C,U_t,g}  \; \; \; (7a)\\
& \quad + Q^2  \partial_{u_k} \omega (u_k) \sum_{\ell=1}^{k-1}  \frac{1}{(u_k-u_\ell)^3} \langle  T_g(\mathbf{u}^{(k,l)})  \prod_{i=1}^m \tilde{V}_{\alpha_i,g }(z_i) \rangle_{\hat\C,U_t,g}  \; \; \; (7b)  \\
& \quad -   Q \partial_{u_k} \omega (u_k) \sum_{\ell=1}^{n-1} \frac{1}{(u_k-u_\ell)^2}   \langle \partial_{u_\ell} \Phi_g(u_\ell) T_g(\mathbf{u}^{(k,l)})  \prod_{i=1}^m \tilde{V}_{\alpha_i,g }(z_i) \rangle_{\hat\C,U_t,g}  (7c) \\
&  \quad  -\frac{Q\mu \gamma}{2} \partial_{u_k}  \omega (u_k)  \int_{\C \setminus U_t} \frac{1}{u_k-x} \langle   T(\mathbf{u}^{(k)})   \tilde{V}_\gamma(x)\prod_{i=1}^m \tilde{V}_{\alpha_i,g }(z_i) \rangle_{\hat\C,U_t,g}\,\dd x   \; \; \; (7d)\\
&\quad + \frac{Q}{2} \partial_{u_k} \omega (u_k)   \sum_{p=1}^n  \frac{\alpha_p}{u_k-z_p} \langle   T_g(\mathbf{u}^{(k)})   \prod_{i=1}^m \tilde{V}_{\alpha_i,g }(z_i) \rangle_{\hat\C,U_t,g}  \; \; \; (7e)
\end{align*}
Finally $C(\mathbf{u},\mathbf{z}) $ gathers all the other terms and is given by
\begin{align*}%\label{}
C(\mathbf{u},\mathbf{z}) =\sum_{i=1}^{15}C_i(\mathbf{u},\mathbf{z}) 
\end{align*}
with the following definitions:
%Contraction 1 is
\begin{align*}
 %&-Q^2  \E[    \partial_{u_k} X_g(u_k)  \partial^2_{u_{\ell}} X_g(u_{\ell})    ] \E[  \partial_{u_k} X_g(u_k)   \partial_{u_{\ell'}}^2 X_g(u_{\ell'})    ] = -\frac{ Q^2}{(u_k-u_\ell)^3(u_k-u_{\ell'})^3} \\
& C_1(\mathbf{u} ,{\bf z}):=  - \sum_{\ell\not =\ell'=1}^{k-1}\frac{ Q^2}{(u_k-u_\ell)^3(u_k-u_{\ell'})^3}  \langle      T_g(\mathbf{u}^{(k,\ell,\ell')})   \prod_{i=1}^m \tilde{V}_{\alpha_i,g }(z_i) \rangle_{\hat\C,U_t,g}\\
%\end{align*}
%Contraction 2 is
%\begin{align*}
%&  -2 Q^2  \E[    \partial_{u_k}^2 X_g(u_k)  \partial_{u_{\ell}} X_g(u_{\ell})    ] \E[  \partial_{u_{\ell}} X_g(u_{\ell})   \partial_{u_{\ell'}}^2 X_g(u_{\ell'})    ] = \frac{ 2 Q^2}{(u_k-u_\ell)^3(u_{\ell}-u_{\ell'})^3},   \\
& C_2(\mathbf{u} ,{\bf z}):=  \sum_{\ell\not =\ell'=1}^{k-1}\frac{2Q^2}{(u_k-u_\ell)^3(u_\ell-u_{\ell'})^3}  \langle     T_g(\mathbf{u}^{(\ell,k,\ell')})   \prod_{i=1}^m\tilde{V}_{\alpha_i,g }(z_i) \rangle_{\hat\C,U_t,g}\\
 &  \qquad\quad\quad\quad  - \sum_{\ell=1}^{k-1}\frac{Q^2}{(u_k-u_\ell)^3}  \partial_{u_{\ell}} \omega (u_{\ell}) \langle     T_g(\mathbf{u}^{(\ell,k})   \prod_{i=1}^m\tilde{V}_{\alpha_i,g }(z_i) \rangle_{\hat\C,U_t,g}\\
%\end{align*}
%Contraction 3 is
%\begin{align*}
%&  2 Q  \E[    \partial_{u_k} X_g(u_k)  \partial_{u_{\ell}}^2 X_g(u_{\ell})    ] \E[  \partial_{u_{k}} X_g(u_{k})   \partial_{u_{\ell'}} X_g(u_{\ell'})    ] = \frac{   Q}{(u_k-u_\ell)^3(u_{k}-u_{\ell'})^2}, \\
& C_3(\mathbf{u} ,{\bf z}):=    \sum_{\ell\not =\ell'=1}^{k-1}\frac{  Q}{(u_k-u_\ell)^3(u_k-u_{\ell'})^2}  \langle    \partial_{u_{\ell'}}\Phi_g(u_{\ell'})  T(\mathbf{u}^{(k,\ell,\ell')})   \prod_{i=1}^m\tilde{V}_{\alpha_i ,g}(z_i) \rangle_{\hat\C,U_t,g}\\
%\end{align*}
%Contraction 4 is
%\begin{align*}
 %&  4 Q  \E[    \partial_{u_k}^2 X_g(u_k)  \partial_{u_{\ell}} X_g(u_{\ell})    ] \E[  \partial_{u_{\ell}} X_g(u_{\ell})   \partial_{u_{\ell'}} X_g(u_{\ell'})    ] = -\frac{  2 Q}{(u_k-u_\ell)^3(u_{\ell}-u_{\ell'})^2} ,\\
& C_4(\mathbf{u} ,{\bf z}):= - \sum_{\ell\not =\ell'=1}^{k-1}\frac{2Q}{(u_k-u_\ell)^3(u_\ell-u_{\ell'})^2}  \langle    \partial_{u_{\ell'}} \Phi_g (u_{\ell'})   T(\mathbf{u}^{(\ell,k,\ell')})   \prod_{i=1}^m\tilde{V}_{\alpha_i,g }(z_i) \rangle_{\hat\C,U_t,g}\\
%\end{align*}
%Contraction 5 is 
%\begin{align*}
%&  -2Q \alpha_p \E[    \partial_{u_k}^2 X_g(u_k)  \partial_{u_\ell} X_g(u_\ell)    ] \E[   \partial_{u_\ell} X_g(u_\ell)  X_g(z_p)   ] = \frac{ Q\alpha_p}{(x_\ell-z_p) (u_k-u_{\ell})^3}+  \frac{Q \alpha_p}{2 (u_k-u_{\ell})^3} \partial_{x_\ell} \omega(x_\ell),\\
& C_{5}(\mathbf{u} ,{\bf z}):=  \sum_{\ell=1}^{k-1} \sum_{p=1}^{m}\frac{ Q\alpha_p}{(u_\ell-z_p) (u_k-u_{\ell})^3}  \langle     T_g(\mathbf{u}^{(k,\ell )})   \prod_{i=1}^m\tilde{V}_{\alpha_i,g }(z_i) \rangle_{\hat\C,U_t,g} 
\\ 
& \qquad \qquad\quad +  \sum_{\ell=1}^{k-1} \sum_{p=1}^{m}  \frac{Q \alpha_p}{2 (u_k-u_{\ell})^3} \partial_{u_\ell} \omega (u_\ell)     \langle     T_g(\mathbf{u}^{(k,\ell )})   \prod_{i=1}^m\tilde{V}_{\alpha_i,g }(z_i) \rangle_{\hat\C,U_t,g}. \; \; \; (8)\\
%\end{align*}
%Contraction 6 is 
%\begin{align*}
% &  2Q \mu \gamma  \E[    \partial_{u_k}^2 X_g(u_k)  \partial_{u_\ell} X_g(u_\ell)    ] \E[   \partial_{u_\ell} X_g(u_\ell )  X_g(x)   ] = -\frac{ Q\mu \gamma}{(u_\ell-x) (u_k-u_{\ell})^3}-  \frac{Q \mu \gamma}{2 (u_k-u_{\ell})^3} \partial_{u_\ell} \omega (u_\ell),\\
&C_{6}(\mathbf{u} ,{\bf z}):=-  \mu Q\gamma  \sum_{\ell=1}^{k-1}\int_{\C \setminus U_t}\frac{ 1}{(u_\ell-x)(u_k-u_\ell)^3 }  \langle   T(\mathbf{u}^{(k,\ell )})   \tilde{V}_{\gamma,g}(x)\prod_{i=1}^m \tilde{V}_{\alpha_i,g }(z_i) \rangle_{\hat\C,U_t,g} \,\dd x \\
& \qquad \qquad\quad  -  \sum_{\ell=1}^{k-1} \frac{Q \mu \gamma}{2 (u_k-u_{\ell})^3} \partial_{u_\ell} \omega (u_\ell) \int_{\C \setminus U_t}  \langle   T_g(\mathbf{u}^{(k,\ell )})   \tilde{V}_{\gamma,g}(x)\prod_{i=1}^m \tilde{V}_{\alpha_i,g }(z_i) \rangle_{\hat\C,U_t,g} \,\dd x \; \; \; (9)\\
%\end{align*}
%Contraction 7 is 
%\begin{align*}
%&  2Q   \E[    \partial_{u_k} X_g(u_k)  \partial_{u_{\ell'}}^2 X_g(u_{\ell'})    ] \E[  \partial_{u_k} X_g(u_k)  \partial_{u_\ell} X_g(u_\ell)  ] = \frac{ Q }{(u_k-u_\ell)^2 (u_k-u_{\ell'})^3}\\
\end{align*}
\begin{align*}
& C_7(\mathbf{u} ,{\bf z}):=  \sum_{\ell\not=\ell'=1}^{k-1}\frac{ Q}{(u_k-u_\ell)^2(u_k-u_{\ell'})^3}  \langle \partial_{u_\ell }\Phi_g(u_{\ell})  
T_g(\mathbf{u}^{(k,\ell,\ell')})   \prod_{i=1}^m \tilde{V}_{\alpha_i,g }(z_i) \rangle_{\hat\C,U_t,g} \\
%\end{align*}
%Contraction 8 is 
%\begin{align*}
 %&  -4 \mu \gamma  \E[    \partial_{u_k} X_g(u_k)  \partial_{u_\ell} X_g(u_\ell)    ] \E[   \partial_{u_k} X_g(u_k)  X_g(x)   ] = - \frac{\mu \gamma}{(u_k-u_\ell)^2(u_k-x)} - \frac{\mu \gamma}{2(u_k-u_\ell)^2} \partial_{u_k} \omega (u_k) \\
& C_{8}(\mathbf{u} ,{\bf z}):=-   \mu  \gamma  \sum_{\ell=1}^{k-1}\int_{\C \setminus U_t} \frac{ 1}{(u_k-x)(u_k-u_\ell)^2 }  \langle  \partial_{u_\ell}\Phi_g(u_{\ell})   T_g(\mathbf{u}^{(k,\ell )})   \tilde{V}_{\gamma,g}(x)\prod_{i=1}^m \tilde{V}_{\alpha_i,g }(z_i) \rangle_{\hat\C,U_t,g} \,\dd x \\
 &\qquad\qquad  -   \mu  \gamma \partial_{u_k} \omega (u_k)  \sum_{\ell=1}^{k-1}  \frac{ 1}{2(u_k-u_\ell)^2 }     \int_{\C \setminus U_t}   \langle  \partial_{u_\ell}\Phi_g(u_{\ell})   T_g(\mathbf{u}^{(k,\ell )})   \tilde{V}_{\gamma,g}(x)\prod_{i=1}^m \tilde{V}_{\alpha_i }(z_i) \rangle_{\hat\C,U_t,g} \,\dd x
\; \; \; (10)   \\
%\end{align*}
%Contraction 9 is 
%\begin{align*}
%&  4  \alpha_p  \E[    \partial_{u_k} X_g(u_k)  \partial_{u_\ell} X_g(u_\ell)    ] \E[   \partial_{u_k} X_g(u_k)  X_g(z_p)   ] =  \frac{\alpha_p}{(u_k-u_\ell)^2(x_k-z_p)} + \frac{\alpha_p}{2(u_k-u_\ell)^2} \partial_{u_k} \omega (u_k), \\
& C_9(\mathbf{u} ,{\bf z}):=     \sum_{\ell=1}^{k-1} \sum_{p=1}^{m}\frac{ \alpha_p}{(u_k-u_\ell)^2(u_k-z_p)}  \langle \partial_{u_\ell}\Phi_g (u_{\ell})       T_g(\mathbf{u}^{(k,\ell)})   \prod_{i=1}^m \tilde{V}_{\alpha_i,g }(z_i) \rangle_{\hat\C,U_t,g} \\
&  \qquad\qquad \quad  +     \partial_{u_k} \omega (u_k)  \sum_{\ell=1}^{k-1} \sum_{p=1}^{m} \frac{ \alpha_p}{2(u_k-u_\ell)^2 }       \langle  \partial_{u_\ell} \Phi_g (u_{\ell})   T_g(\mathbf{u}^{(k,\ell )})   \prod_{i=1}^m \tilde{V}_{\alpha_i,g }(z_i) \rangle_{\hat\C,U_t,g}. \; \; \; (11)\\
%\end{align*}
%Contraction 10 is 
%\begin{align*}
%&  - \alpha_p \alpha_{p'} \E[    \partial_{u_k} X_g (u_k)   X_g(z_p)    ] \E[   \partial_{u_k} X_g(u_k)  X_g(z_{p'})  ] \\ 
%&\qquad  = - \frac{\alpha_p \alpha_{p'}}{4}  (\frac{1}{(u_k-z_p)} + \frac{1}{2} \partial_{u_k}\omega (x_k)  )( \frac{1}{(u_k-z_{p'})}+ \frac{1}{2} \partial_{u_k} \omega(u_k)),\\
& C_{10}(\mathbf{u} ,{\bf z})\\
& \qquad := - \frac{1}{4} \sum_{p\not =p'=1}^{m}  \alpha_p\alpha_{p'}    \Big(\frac{1}{(u_k-z_p)} + \frac{1}{2} \partial_{u_k}\omega (u_k)  \Big)\Big( \frac{1}{(u_k-z_{p'})}+ \frac{1}{2} \partial_{u_k} \omega (u_k)\Big)  \langle    T_g(\mathbf{u}^{(k )})   \prod_{i=1}^m \tilde{V}_{\alpha_i,g }(z_i) \rangle_{\hat\C,U_t,g}  . \;  (12)  \\
%\end{align*}
%Contraction 11 is
%\begin{align*}
%&  -\mu^2 \gamma^2 \E[    \partial_{u_k} X_g(u_k)   X_g(x)    ] \E[   \partial_{u_k} X_g(u_k)  X_g(x')]=  -\frac{\mu^2 \gamma^2}{4}  \Big(\frac{1}{(u_k-x)} + \frac{1}{2} \partial_{u_k}\omega (u_k)\Big)\Big( \frac{1}{(u_k-x')}+ \frac{1}{2} \partial_{u_k}\omega (u_k)\Big), \\
& C_{11}(\mathbf{u} ,{\bf z}) := -  \frac{\mu^2  \gamma^2}{4}   \int_{\C \setminus U_t}\int_{\C \setminus U_t} \Big(\frac{1}{(u_k-x)} + \frac{1}{2} \partial_{u_k}\omega (u_k)  \Big)\Big( \frac{1}{(u_k-x')}+ \frac{1}{2} \partial_{u_k}\omega (u_k)  \Big) \\
& \qquad \qquad \qquad \qquad\qquad \qquad\qquad \qquad\qquad \qquad\quad \times \langle    T_g(\mathbf{u}^{(k  )})   \tilde{V}_{\gamma,g}(x) \tilde{V}_{\gamma,g}(x')\prod_{i=1}^m \tilde{V}_{\alpha_i,g }(z_i) \rangle_{\hat\C,U_t,g} \,\dd x\,\dd x' . \; \; \; (13) \\
%\end{align*}
%Contraction 12 is 
%\begin{align*}
 %& -4  \E[   \partial_{u_k} X_g(u_k)  \partial_{u_\ell} X_g(u_\ell)  ]   \E[   \partial_{u_k} X_g(u_k)  \partial_{u_{\ell'}} X_g(u_{\ell'})  ]= -\frac{1}{(u_k-u_\ell)^2(u_k-u_{\ell'})^2} , \\
& C_{12}(\mathbf{u} ,{\bf z}):=  -  \sum_{\ell\not=\ell'=1}^{k-1}\frac{ 1}{(u_k-u_\ell)^2(u_k-u_{\ell'})^2}  \langle \partial_{u_\ell} \Phi_g(u_{\ell})    \partial_{u_{\ell'}}\Phi_g(u_{\ell'})     T_g(\mathbf{u}^{(k,\ell,\ell')})   \prod_{i=1}^m \tilde{V}_{\alpha_i,g }(z_i) \rangle_{\hat\C,U_t,g}.
\end{align*}
%Contraction 13 is
\begin{align*}
%&  2 \mu \gamma Q \E[    \partial_{u_k} X_g(u_k)  \partial^2_{u_\ell} X_g(u_\ell)    ] \E[   \partial_{u_k} X_g(u_k)  X_g(x)   ]  =  \frac{\mu \gamma Q}{( u_k-u_\ell )^3}   (  \frac{1}{(  u_k- x )}  + \frac{1}{2}  \partial_{u_k} \omega(u_k) ),  \\
& C_{13}(\mathbf{u} ,{\bf z}):=  \mu Q\gamma  \sum_{\ell=1}^{k-1}\int_{\C \setminus U_t}\frac{ 1}{(u_k-x)(u_k-u_\ell)^3 }  \langle   T(\mathbf{u}^{(k,\ell )})   V_{\gamma,g}(x)\prod_{i=1}^m V_{\alpha_i ,g}(z_i) \rangle_{\hat\C,U_t,g} \,\dd x
\\
 &\qquad\qquad\quad  + \frac{\mu Q\gamma}{2} \partial_{u_k} \omega (u_k)  \sum_{\ell=1}^{k-1} \frac{ 1}{(u_k-u_\ell)^3 } \int_{\C \setminus U_t }  \langle   T(\mathbf{u}^{(k,\ell )})   \tilde{V}_{\gamma,g}(x)\prod_{i=1}^m \tilde{V}_{\alpha_i ,g}(z_i) \rangle_{\hat\C,U_t,g} \,\dd x.
  \; \; \; (14)  \\
%\end{align*}
%Contraction 14 is 
%\begin{align*}
%&  2 \mu \gamma \alpha_p  \E[    \partial_{u_k} X_g (u_k) X_g(z_p)    ] \E[   \partial_{u_k} X_g(u_k)  X_g(x)   ]  =  \frac{\mu \gamma \alpha_p}{2}   (  \frac{1}{(  u_k- z_p )}  +\frac{1}{2}  \partial_{u_k} \omega (u_k)   )   (  \frac{1}{(  u_k- x )}  + \frac{1}{2}  \partial_{u_k} \omega (u_k)),\\
& C_{14}(\mathbf{u} ,{\bf z}):=   \frac{\mu  \gamma}{2} \sum_{p=1}^{m}\alpha_p \int_{\C \setminus U_t } \Big [ \Big(  \frac{1}{(  u_k- z_p )}  +\frac{1}{2}  \partial_{u_k} \omega (u_k)   \Big)   \Big(  \frac{1}{(  u_k- x )}  + \frac{1}{2}  \partial_{u_k} \omega (u_k)\Big)  \\
& \qquad\qquad\qquad \qquad\qquad\qquad \qquad\qquad\qquad\qquad\qquad\qquad \qquad\times  \langle    T(\mathbf{u}^{(k  )})   \tilde{V}_{\gamma,g}(x)\prod_{i=1}^m \tilde{V}_{\alpha_i,g }(z_i) \rangle_{\hat\C,U_t,g}\Big] \,\dd x .  \; \; \; (15)\\
%\end{align*}
%Contraction 15 is
%\begin{align}
%&  -2Q\alpha_p  \E[    \partial_{u_k} X_g(u_k) \partial^2_{u_\ell} X_g(u_\ell)    ] \E[   \partial_{u_k} X_g(u_k)  X_g(z_p)   ]  =  -\frac{Q \alpha_p}{(u_k-u_\ell)^3} (  \frac{1}{(  u_k- z_p )}  +\frac{1}{2}  \partial_{u_k} \omega (u_k)) ,\nonumber \\
& C_{15}(\mathbf{u} ,{\bf z}):= -\sum_{\ell=1}^{k-1} \sum_{p=1}^{m}\frac{ Q\alpha_p}{(u_k-z_p) (u_k-u_{\ell})^3}  \langle     T(\mathbf{u}^{(k,\ell )})   \prod_{i=1}^m \tilde{V}_{\alpha_i ,g}(z_i) \rangle_{\hat\C,U_t,g}\nonumber \\
&\qquad \qquad\quad  -\frac{1}{2} \partial_{u_k} \omega (u_k) \sum_{\ell=1}^{k-1} \sum_{p=1}^{m}\frac{ Q\alpha_p}{ (u_k-u_{\ell})^3}  \langle     T(\mathbf{u}^{(k,\ell )})   \prod_{i=1}^m \tilde{V}_{\alpha_i ,g}(z_i) \rangle_{\hat\C,U_t,g}  \; \; \; (16) .%\label{Cterms}
\end{align*}

\paragraph{{\bf The metric dependent terms}}  
The first step is to show that the metric dependent terms cancel. This follows by a repeated use fo the identity  \eqref{KPZtypeidentity}. Applying it with $F=T_g(\mathbf{u}^{(k,\ell )})$ to the last term in $C_2$ and (8), (9) we see they cancel:
%There are two types of metric dependent terms: sums of terms $\partial_{u_\ell} \omega (u_\ell)$ with $\ell \leq k-1$ and terms involving derivatives of $\omega (u_k)$. By identity \eqref{KPZtypeidentity} we have for all $\ell, k$ 
%\[
%\mu \gamma \int_{\C \setminus U_t}  \langle   T_g(\mathbf{u}^{(k,\ell )})   \tilde{V}_{\gamma,g}(x)\prod_{i=1}^m \tilde{V}_{\alpha_i,g }(z_i) \rangle_{\hat\C,U_t,g} \,\dd x = \Big(\sum_{p=1}^m \alpha_p -2 Q   \Big) \langle   T_g(\mathbf{u}^{(k,\ell )}) \prod_{i=1}^m \tilde{V}_{\alpha_i,g }(z_i) \rangle_{\hat\C,U_t,g}.
%\]
%Using the above identity, we conclude that 
\[
-    \sum_{\ell=1}^{k-1}\frac{Q^2}{(u_k-u_\ell)^3}  \partial_{u_{\ell}} \omega (u_{\ell}) \langle     T_g(\mathbf{u}^{(\ell,k})   \prod_{i=1}^m \tilde{V}_{\alpha_i,g }(z_i) \rangle_{\hat\C,U_t,g}+(8)+(9)=0.
\]

\vspace{0.2 cm}

\noindent
Next, consider terms proportional to $\partial_{u_k}^2\omega (u_k)$ or $(\partial_{u_k}\omega (u_k))^2$ .%We first gather all the terms of the form 
%\[
% \partial^2_{u_k} \omega (u_k)  \langle   T_g(\mathbf{u}^{(k)})   \prod_{i=1}^m \tilde{V}_{\alpha_i ,g}(z_i) \rangle_{\hat\C,U_t,g}, \quad  (\partial_{u_k} \omega (u_k))^2  \langle   T_g(\mathbf{u}^{(k)})   \prod_{i=1}^m \tilde{V}_{\alpha_i,g }(z_i) \rangle_{\hat\C,U_t,g}.
%\]
For the former we obtain
\[
(1)+(4)=  -\frac{Q^2}{2}  \partial^2_{u_k} \omega (u_k)  \langle   T(\mathbf{u}^{(k)})   \prod_{i=1}^m \tilde{V}_{\alpha_i,g }(z_i) \rangle_{\hat\C,U_t,g}.
\] 
For the latter summing these terms in %$(\partial_{u_k} \omega (u_k))^2$ of 
in $(3), (5), (12), (13), (15)$ we get
\[
- \frac{Q^2}{4} (\partial_{u_k} \omega g(u_k))^2  \langle   T(\mathbf{u}^{(k)})   \prod_{i=1}^m \tilde{V}_{\alpha_i }(z_i) \rangle_{\hat\C,U_t,g}.
\]
Combining these with %the $(\partial_{u_k} \omega (u_k))^2$ term in 
(7a) all cancel
 %\[
 %\frac{Q^2}{2} (\partial_{u_k} \omega (u_k))^2 \langle   T(\mathbf{u}^{(k)})   \prod_{i=1}^m \tilde{V}_{\alpha_i,g }(z_i) \rangle_{\hat\C,U_t,g}
 %\]
 by using the relation $\frac{1}{2} (\partial_{u} \omega (u))^2 - \partial^2_{u} \omega(u)=0 $.

 Next we turn to the terms proportional to $\frac{\partial_{u_k} \omega (u_k)}{u_k-z_p} $.  They occur  in 
 $(2)$, $(7e)$, $(12)$, $(15)$ and again, using the   identity \eqref{KPZtypeidentity},  their sum is seen to vanish.  
%  \begin{align*}
 % & (2)+(7)+(12)+(15)   \quad \quad  ( \text{terms in }  \frac{\partial_{u_k} \omega (u_k)}{u_k-z_p}  )  \\
%  &   -\frac{\partial_{u_k} \omega (u_k)}{4} \sum_{p=1}^m  \frac{\alpha_p^2}{(u_k-z_p)}   \langle   T_g(\mathbf{u}^{(k)})   \prod_{i=1}^m \tilde{V}_{\alpha_i ,g}(z_i) \rangle_{\hat\C,U_t,g}  
%    + \frac{Q}{2} \partial_{u_k} \omega (u_k)   \sum_{p=1}^n  \frac{\alpha_p}{(u_k-z_p)} \langle   T_g(\mathbf{u}^{(k)})   \prod_{i=1}^m \tilde{V}_{\alpha_i,g }(z_i) \rangle_{\hat\C,U_t,g} \\
% &    \quad - \frac{1}{4} \partial_{u_k} \omega (u_k)  \sum_{p \not = p'}  \frac{\alpha_p \alpha_{p'}}{u_k-z_p}  \langle   T_g(\mathbf{u}^{(k)})   \prod_{i=1}^m \tilde{V}_{\alpha_i }(z_i) \rangle_{\hat\C,U_t,g} 
 %+ \frac{\mu \gamma}{4}  (\sum_{p=1}^n \frac{\alpha_p}{u_k-z_p}) \int_{\C \setminus U_t}   \langle   T_g(\mathbf{u}^{(k)}) \tilde{V}_{\gamma,g} (x)  \prod_{i=1}^m \tilde{V}_{\alpha_i,g }(z_i) \rangle_{\hat\C,U_t,g} \,\dd x \\
 % & = 0. 
 % \end{align*}
  % Next, we turn to terms of the form 
 % $\partial_{u_k}  \omega (u_k) \int_{\C \setminus U_t}  \frac{1}{u_k-x}  \langle   T_g(\mathbf{u}^{(k)}) \tilde{V}_{\gamma,g} (x)  \prod_{i=1}^m \tilde{V}_{\alpha_i,g }(z_i) \rangle_{\hat\C,U_t,g} \,\dd x$.  
 
Next we gather terms from  $(6)$, $(7d)$, $(13)$, $(15)$ involving $V_{\gamma,g}$ insertions obtaining
  \begin{align*}
%  & (6)+(7)+(13)+(15)      \quad \quad  ( \text{terms in }  \int_{\C \setminus U_t}  \frac{1}{u_k-x}  \langle   T_g(\mathbf{u}^{(k)}) \tilde{V}_{\gamma,g} (x)  \prod_{i=1}^m \tilde{V}_{\alpha_i,g }(z_i) \rangle_{\hat\C,U_t,g} \,\dd x  )  \\
  %& =  
 & (\frac{\mu \gamma^2}{4}-\frac{Q\mu \gamma}{2} + \frac{\mu \gamma}{4} \sum_{p=1}^m \alpha_p )
  \partial_{u_k}  \omega (u_k)  \int_{\C \setminus U_t} \frac{1}{u_k-x}\langle   T(\mathbf{u}^{(k)})   \tilde{V}_{\gamma,g}(x)\prod_{i=1}^m \tilde{V}_{\alpha_i,g }(z_i) \rangle_{\hat\C,U_t,g}\,\dd x  \\
 %&  \quad  
%-\frac{Q\mu \gamma}{2} \partial_{u_k} \omega (u_k)  \int_{\C \setminus U_t} \frac{1}{u_k-x} \langle   T(\mathbf{u}^{(k)})   \tilde{V}_\gamma(x)\prod_{i=1}^m \tilde{V}_{\alpha_i,g }(z_i) \rangle_{\hat\C,U_t,g}\,\dd x  \\
 & \quad -\frac{\mu^2 \gamma^2}{4} \partial_{u_k}  \omega (u_k)   \int_{\C \setminus U_t}  \int_{ \C \setminus U_t} \frac{1}{u_k-x} \langle   T(\mathbf{u}^{(k)})   \tilde{V}_{\gamma,g}(x) \tilde{V}_{\gamma,g}(x') \prod_{i=1}^m \tilde{V}_{\alpha_i,g }(z_i) \rangle_{\hat\C,U_t,g}\,\dd x \,\dd x' %  \\
 % & \quad + \frac{\mu \gamma}{4} \left ( \sum_{p=1}^m \alpha_p   \right ) \partial_{u_k}  \omega (u_k)    \int_{\C \setminus U_t} \frac{1}{u_k-x} \langle   T(\mathbf{u}^{(k)})   \tilde{V}_{\gamma,g}(x)\prod_{i=1}^m \tilde{V}_{\alpha_i,g }(z_i) \rangle_{\hat\C,U_t,g}\,\dd x  
  \end{align*}
  and these sum to zero using \eqref{KPZtypeidentity} on the $\dd x' $ integral (note that we use \eqref{KPZtypeidentity} with $\gamma+\sum_{p=1}^m \alpha_p$ instead of $\sum_{p=1}^m \alpha_p$).
  
The remaining terms proportional to   $ \partial_{u_k} \omega (u_k) $ are now  $(7b)+(7c)+(10)+(11)+(14)+(16)$ which again sum to zero.
  
\vskip 2mm

\noindent{\bf The $N$-term.}
 %%%%%%%%%%%%%%%% 
We will next rewrite the $N$-contribution to make it cancel with some $C$-terms. For this, we regularise the vertex insertions  (beside the $\tilde{V}_{\gamma,g}$ insertion, we also regularise the $\tilde{V}_{\alpha_i,g}$'s for later need)  
in $N(\mathbf{u},\mathbf{z})$ given by \eqref{Nterm}, and perform the integration by parts (Green formula) in the $x$ integral  to get  
\begin{align*} 
 %T_7
 N(\mathbf{u},\mathbf{z}) =& - \mu  \lim_{\epsilon\to 0} \int_{\C \setminus U_t} \partial_x\frac{1}{u_k-x }\langle   T_g(\mathbf{u}^{(k)})   \tilde{V}_{\gamma,g,\epsilon}(x)\prod_{i=1}^m \tilde{V}_{\alpha_i,g,\epsilon }(z_i) \rangle_{\hat\C,U_t,g} \,\dd x\\
=& B(\mathbf{u},\mathbf{z}) + \mu \gamma  \lim_{\epsilon\to 0} \int_{\C \setminus U_t}  \frac{1}{u_k-x } \langle   T_g(\mathbf{u}^{(k)})   \partial_x\Phi_{g,\epsilon}(x)\tilde{V}_{\gamma,g,\epsilon}(x)\prod_{i=1}^m \tilde{V}_{\alpha_i,g,\epsilon}(z_i) \rangle_{\hat\C,U_t,g} \, \dd x\\=: &B(\mathbf{u},\mathbf{z}) + \tilde N(\mathbf{u},\mathbf{z}),
  \end{align*} 
where  we used the relation
%\begin{equation*}
$\partial_x \tilde{V}_{\gamma,\epsilon}(x) = \gamma \partial_x   \Phi_{g, \epsilon}(x)  \tilde{V}_{\gamma,\epsilon}(x)$
%\end{equation*}
and denoted with $ B(\mathbf{u},\mathbf{z})$ the $\epsilon\to 0$ limit of the boundary term appearing in the Green formula:
 \begin{equation}\label{BT}
B(\mathbf{u},\mathbf{z})=\frac{i}{2}\oint_{\partial U_t }\frac{1}{  u_k-  x}\langle   T(\mathbf{x}^{(k)})    \tilde{V}_{\gamma,g }(x)\prod_{i=1}^m \tilde{V}_{\alpha_i ,g }(z_i) \rangle_{\hat\C,U_t,g}  \dd\bar x.
\end{equation}
In $\tilde N(\mathbf{u},\mathbf{z})$ we integrate by parts the $ \partial_x\Phi_{g,\epsilon}(x)$ and end up with 
  \begin{align}\nonumber
\tilde N(\mathbf{x},\mathbf{z})
=&  - \mu Q \gamma\sum_{\ell=1}^{k-1}  \int_{\C \setminus U_t}   \frac{1}{(u_k-x)(x-u_\ell)^3 } \langle   T_g(\mathbf{u}^{(\ell,k)})   \tilde{V}_{\gamma,g}(x)\prod_{i=1}^m \tilde{V}_{\alpha_i,g }(z_i) \rangle_{\hat\C,U_t,g} \,\dd x
\\
& +\mu  \gamma\sum_{\ell=1}^{k-1}  \int_{\C \setminus U_t}   \frac{1}{(u_k-x)(x-u_\ell)^2 } \langle \partial_{u_\ell} \Phi_g (u_\ell)  T_g(\mathbf{u}^{(\ell,k)})   \tilde{V}_{\gamma,g}(x)\prod_{i=1}^m \tilde{V}_{\alpha_i,g }(z_i) \rangle_{\hat\C,U_t,g} \,\dd x \nonumber
\\
&  +\frac{\mu^2  \gamma^2  }{2}  \int_{\C \setminus U_t}  \int_{\C \setminus U_t}   \frac{1}{(u_k-x) (x-x') } \langle  T_g(\mathbf{u}^{(k)}) \tilde{V}_{\gamma,g}(x) \tilde{V}_{\gamma,g}(x')\prod_{i=1}^m \tilde{V}_{\alpha_i,g }(z_i) \rangle_{\hat\C,U_t,g} \,\dd x \,\dd x' \nonumber
\\
&+  \mu  \gamma      \sum_{p=1}^m \int_{\C \setminus U_t}   \frac{\alpha_p}{(u_k-x)  } C_{\epsilon,0}(x,z_p) \langle  T_g(\mathbf{u}^{(k)})  \tilde{V}_{\gamma,g}(x)\prod_{i=1}^m \tilde{V}_{\alpha_i,g,\epsilon }(z_i) \rangle_{\hat\C,U_t,g}  \,\dd x +  \tilde N'(\mathbf{u},\mathbf{z})  \nonumber
\\
&=: 
C'_6(\mathbf{u},\mathbf{z})+C'_8(\mathbf{u},\mathbf{z})+ C'_{11}(\mathbf{u},\mathbf{z})+ C'_{14}(\mathbf{u},\mathbf{z}) +  \tilde N'(\mathbf{u},\mathbf{z})  \label{T4leq}
\end{align}
where again we took the  $\epsilon\to 0$ limit in the terms where it was obvious and where the extra terms are proportional to $\partial_x \omega (x)$ and again vanish due to  \eqref{KPZtypeidentity}:
\begin{align*}
 \tilde N' (\mathbf{x},\mathbf{z})'  & =  \frac{\mu \gamma Q}{2}    \int_{\C \setminus U_t}   \frac{1}{u_k-x} \partial_x \omega (x) \langle   T_g(\mathbf{u}^{(k)})   \tilde{V}_{\gamma,g}(x)\prod_{i=1}^m \tilde{V}_{\alpha_i,g }(z_i) \rangle_{\hat\C,U_t,g}  \,\dd x  \\
&  \quad +\frac{\mu^2  \gamma^2  }{4}  \int_{\C \setminus U_t}  \int_{\C \setminus U_t}   \frac{1}{(u_k-x) } \partial_x \omega (x) \langle  T_g(\mathbf{u}^{(k)}) \tilde{V}_{\gamma,g}(x)V_{\gamma,g}(x')\prod_{i=1}^m \tilde{V}_{\alpha_i,g }(z_i) \rangle_{\hat\C,U_t,g} \,\dd x \,\dd x'  \\
& \quad - \frac{\mu  \gamma}{4}       ( \sum_{p=1}^m  \alpha_p )\int_{\C \setminus U_t}   \frac{1}{(u_k-x)  } \partial_x \omega (x)  \langle  T_g(\mathbf{u}^{(k)})  \tilde{V}_{\gamma,g}(x)\prod_{i=1}^m \tilde{V}_{\alpha_i,g,\epsilon }(z_i) \rangle_{\hat\C,U_t,g}  \,\dd x \\
& \quad - \frac{\mu \gamma^2}{4} \int_{\C \setminus U_t}   \frac{1}{(u_k-x)  } \partial_x \omega (x)  \langle  T_g(\mathbf{u}^{(k)})  \tilde{V}_{\gamma,g}(x)\prod_{i=1}^m \tilde{V}_{\alpha_i,g,\epsilon }(z_i) \rangle_{\hat\C,U_t,g}  \,\dd x = 0 .
\end{align*}

 In particular the identity \eqref{T4leq} proves that the limit on the RHS, denoted by $C'_{14}(\mathbf{u},\mathbf{z})$, exists. The numbering of these terms and the ones below will be used when comparing with the $C$ terms.

\vskip 2mm

\noindent {\bf Derivatives of correlation functions.} We want to compare the expression \eqref{IPPstress} to  the derivatives of the function $\langle   T_g(\mathbf{u}^{(k)})   \prod_{i=1}^ m \tilde{V}_{\alpha_i,g }(z_i) \rangle_{\hat\C,U_t,g}$ involved in the Ward identity.  We have:
\begin{lemma}\label{deriv2}
Let
   \begin{align*}
I_\epsilon(\mathbf{u}, {\bf z}):=
\sum_{p=1}^m\frac{1}{u_k-z_p}&  \partial_{z_p}\langle   T_g(\mathbf{u}^k)  \prod_{i=1}^m \tilde{V}_{\alpha_i,g,\epsilon }(z_i) \rangle_{\hat\C,U_t,g} .% = K_\epsilon({\bf u})+L_\epsilon({\bf u})
  \end{align*}  
  Then $\lim_{\epsilon\to 0}I_\epsilon(\mathbf{u},{\bf z}):= I(\mathbf{u},{\bf z})$
exists and defines a continuous function for non coinciding $(\mathbf{z},{\bf u})\in \caO^{\rm ext}_{\C,U_t} $ satisfying, for fixed $\mathbf{u}$,
  \begin{align}\label{contourI}
  \int  I(\mathbf{u},{\bf z}) \bar{\varphi}({\bf z})\dd {\bf z}=  \int  \langle   T_g(\mathbf{u}^k)  \prod_{i=1}^m \tilde{V}_{\alpha_i,g }(z_i) \rangle_{\hat\C,U_t,g} \hat{\bf D}^*\bar{\varphi}({\bf z})\dd {\bf z}
\end{align}
for all smooth function $\varphi $ with compact support in the set $\{{\bf z}\in \hat\C^n\,|\, (\mathbf{z},{\bf u})\in \caO^{\rm ext}_{\C,U_t}\}$, and $\hat{\bf D}=\sum_{p=1}^n\frac{1}{u_k-z_p}   \partial_{z_p}$. Moreover, the derivatives in $z_1, z_2, z_3$ which appear in the definition of $I(\mathbf{u},{\bf z})$ exist in the classical sense.
\end{lemma}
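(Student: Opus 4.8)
The statement to prove (Lemma~\ref{deriv2}) asserts the existence, continuity, and a weak integration-by-parts identity for the limit of $I_\epsilon(\mathbf{u},\mathbf{z})$, together with classical differentiability in the three distinguished variables $z_1,z_2,z_3$. The plan is to reduce everything to the Gaussian integration-by-parts machinery already set up in the proof of Proposition~\ref{ampSETholomorphic}: there, the $\epsilon$-regularized correlation function with SET insertions is expanded (via Gaussian IBP) into a finite linear combination of terms of the form \eqref{basicterm0}, i.e. products of holomorphic derivatives of the regularized Green function $\tilde G_{g,\epsilon,\epsilon'}$ times Liouville correlation functions with $\gamma$-insertions integrated over $(\C\setminus U_t)^l$. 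First I would differentiate this expansion term by term in $z_p$ using the analogue of \eqref{basicterm1} (Gaussian IBP in the $z_p$ variable), which is legitimate before taking limits since everything is smooth; this expresses $\partial_{z_p}\langle T_{g}(\mathbf{u}^{(k)})\prod_i \tilde V_{\alpha_i,g,\epsilon}(z_i)\rangle_{\hat\C,U_t,g}$ again as a finite linear combination of terms of type \eqref{basicterm0}, now with one extra holomorphic derivative on a Green function or one extra integrated $\gamma$-insertion. Since the points $z_p$ and $u_i$ are all non-coinciding and $u_i\in U_t$ while the $\gamma$-insertions lie in $\C\setminus U_t$, every Green function appearing is evaluated at non-coinciding points, so it is smooth; combined with the a~priori bound \eqref{nontrivial} to control the integrals over $\C\setminus U_t$, each term converges as $\epsilon\to 0$ uniformly on compact subsets of $\caO^{\rm ext}_{\C,U_t}$. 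Multiplying by $\tfrac{1}{u_k-z_p}$ and summing over $p$ then gives the existence of $I(\mathbf{u},\mathbf{z})=\lim_{\epsilon\to0}I_\epsilon(\mathbf{u},\mathbf{z})$ and its continuity on $\caO^{\rm ext}_{\C,U_t}$.

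For the weak identity \eqref{contourI}, I would argue as follows. For fixed $\mathbf{u}$ and a test function $\varphi$ supported away from the coincidence set, write
\[
\int I_\epsilon(\mathbf{u},\mathbf{z})\,\bar\varphi(\mathbf{z})\,\dd\mathbf{z}
=\int \sum_{p=1}^m \frac{1}{u_k-z_p}\,\partial_{z_p}\langle T_g(\mathbf{u}^{(k)})\textstyle\prod_i \tilde V_{\alpha_i,g,\epsilon}(z_i)\rangle_{\hat\C,U_t,g}\,\bar\varphi(\mathbf{z})\,\dd\mathbf{z},
\]
and integrate by parts in $\mathbf{z}$ in the classical sense (all integrands are smooth at the regularized level), producing $\int \langle T_g(\mathbf{u}^{(k)})\prod_i \tilde V_{\alpha_i,g,\epsilon}(z_i)\rangle_{\hat\C,U_t,g}\,\hat{\mathbf D}^*\bar\varphi(\mathbf{z})\,\dd\mathbf{z}$, where $\hat{\mathbf D}^*\bar\varphi=-\sum_p\partial_{z_p}\big(\tfrac{1}{u_k-z_p}\bar\varphi\big)$ is the formal adjoint. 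Then I would pass to the limit $\epsilon\to0$ on both sides: on the left using the already-established convergence of $I_\epsilon$ together with dominated convergence (the bound \eqref{nontrivial} provides a uniform-in-$\epsilon$ majorant on the compact support of $\varphi$), and on the right using Proposition~\ref{ampSETholo}, which gives convergence of $\langle T_g(\mathbf{u}^{(k)})\prod_i \tilde V_{\alpha_i,g}(z_i)\rangle_{\hat\C,U_t,g}$ as a continuous function on $\caO^{\rm ext}_{\C,U_t}$. This yields \eqref{contourI}.

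Finally, for classical differentiability in $z_1,z_2,z_3$: these variables lie in the holes $\caD_{t,i}$ of the amplitude $\psi(\mc{P})$ (equivalently, in the region where the Liouville potential has been removed), so Proposition~\ref{ampSETholomorphic} already asserts that $\langle T_g(\mathbf{u})\bar T_g(\mathbf{v})\prod_i \tilde V_{\alpha_i,g}(z_i)\rangle_{\hat\C,U_t,g}$ is smooth in $z_1,z_2,z_3$. Hence the $z_p$-derivatives for $p\le 3$ in the definition of $I_\epsilon$ (and their limits) exist in the strong sense, and the limit of the derivative equals the derivative of the limit by the locally uniform convergence established above. The main obstacle I anticipate is bookkeeping: one must check that differentiating the \eqref{basicterm0}-expansion and re-integrating by parts does not spoil the applicability of \eqref{nontrivial} (i.e. that the number and type of $\gamma$-insertions stays controlled and that no Green function is forced onto the diagonal), but this is exactly parallel to the argument already given for \eqref{basicterm1} in the proof of Proposition~\ref{ampSETholomorphic}, so no genuinely new estimate is needed—only careful tracking of indices.
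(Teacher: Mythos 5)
Your overall strategy --- Gaussian integration by parts in the $z_p$ variables at the regularized level, followed by a term-by-term passage to the limit, with the weak identity \eqref{contourI} obtained by integrating by parts against the test function before removing the regularization --- is the same as the paper's, and your explanation of why the derivatives in $z_1,z_2,z_3$ are classical (these points sit inside the hole $U_t$, so Proposition \ref{ampSETholomorphic} applies) is correct. However, there is a genuine gap in the convergence step, and it is precisely the point you dismiss as ``bookkeeping''. When $\partial_{z_p}\Phi_{g,\epsilon}(z_p)$ is contracted against the Liouville potential, the Gaussian IBP produces the term
\begin{align*}
-\mu\gamma\,\alpha_p\int_{\C\setminus U_t}C_{\epsilon,0}(z_p,x)\,\big\langle T_g(\mathbf{u}^{(k)})\,\tilde V_{\gamma,g}(x)\prod_{i=1}^m\tilde V_{\alpha_i,g,\epsilon}(z_i)\big\rangle_{\hat\C,U_t,g}\,\dd x,
\end{align*}
with $C_{0,0}(z_p,x)=-\tfrac12(z_p-x)^{-1}$. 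For $p\geq 4$ the point $z_p$ lies in the integration region $\C\setminus U_t$, so a Green-function derivative \emph{is} forced onto the diagonal, contrary to your claim; by the fusion bound \eqref{upperapri} the correlation function blows up like $|x-z_p|^{-\gamma\alpha_p}$ as $x\to z_p$, so the limiting integrand behaves like $|x-z_p|^{-1-\gamma\alpha_p}$, which is not absolutely integrable once $\gamma\alpha_p\geq 1$ (nothing forces the artificial weights $\alpha_p$, $p\geq 4$, to be small). The a priori bound \eqref{nontrivial} is of no help here: it controls the unweighted integral of the correlation function over $(\C\setminus U_t)^l$, not its integral against the singular kernel $(z_p-x)^{-1}$. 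This is exactly the obstruction the paper flags (``it is not clear that $\ldots$ is integrable (except for $p=1,2,3$)'').

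The paper's resolution uses the prefactor $\tfrac{1}{u_k-z_p}$ in an essential way, via the identity
\begin{align*}
\frac{1}{u_k-z_p}=\frac{1}{u_k-x}+\frac{z_p-x}{(u_k-z_p)(u_k-x)}.
\end{align*}
The second piece cancels the $(z_p-x)^{-1}$ singularity and yields a manifestly convergent term, while the first piece reproduces the quantity $C'_{14}$ of \eqref{T4leq}; the convergence of $C'_{14}$ is itself established \emph{indirectly}, by rewriting the $N$-term \eqref{Nterm} through the Green formula as a boundary contour term plus $\tilde N$, expanding $\tilde N$ by a further Gaussian IBP, and noting that every other term in the resulting identity \eqref{T4leq} has a limit. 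Thus the existence of $\lim_{\epsilon\to0}I_\epsilon$ rests on a cancellation against the analysis of the $N$-term, not on absolute convergence; a purely term-by-term argument based on \eqref{basicterm0}, \eqref{basicterm1} and \eqref{nontrivial} cannot close this step. (Relatedly, the pointwise limit of $\partial_{z_p}\langle\cdots\rangle$ alone for $p\geq 4$ is never established --- which is why those derivatives remain distributional --- so your plan of first taking the limit of each derivative and then multiplying by $\tfrac{1}{u_k-z_p}$ does not get off the ground.) Your treatment of \eqref{contourI} is fine in spirit but inherits this gap, since the dominated-convergence step on the left-hand side presupposes the locally uniform convergence of $I_\epsilon$.
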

\begin{proof}
We have
   \begin{align*}
I_\epsilon(\mathbf{u},{\bf z})=\sum_{p=1}^n\frac{\alpha_p}{u_k-z_p}&  \langle   T(\mathbf{u}^{(k)}) \partial_{z_p}\Phi_{g,\epsilon}(z_p)\prod_{i=1}^m \tilde{V}_{\alpha_i,g,\epsilon }(z_i) \rangle_{\hat\C,U_t,g}  = D_\epsilon(\mathbf{u}, {\bf z})+L_\epsilon(\mathbf{u}, {\bf z})
  \end{align*}  
  where we integrate by parts the $\partial_{z_p}\Phi_{g,\epsilon}(z_p)$ and $D_\epsilon({\bf u},{\bf z})$ collects the terms with an obvious $\epsilon\to 0$ limit $D({\bf u},{\bf z})$:  
    \begin{align*}
 D(\mathbf{u},{\bf z})  =&- \sum_{p=1}^m\sum_{\ell=1}^{k-1}\frac{Q\alpha_p}{(u_k-z_p)(z_p-u_\ell)^3}  \langle   T_g(\mathbf{u}^{(\ell,k)})  \prod_{i=1}^m \tilde{V}_{\alpha_i ,g}(z_i) \rangle_{\hat\C,U_t,g}  %\red{ \quad  \frac{Q\alpha_p z_p^{1-\nu_k}}{(z_p-u_\ell)^3}}  ?
   \\
&+ \sum_{p=1}^m\sum_{\ell=1}^{k-1}\frac{\alpha_p}{(u_k-z_p)(z_p-u_\ell)^2}  \langle  \partial_{z} \Phi_g (x_\ell) T_g(\mathbf{u}^{(\ell,k)})   \prod_{i=1}^m \tilde{V}_{\alpha_i,g }(z_i) \rangle_{\hat\C,U_t,g}     %  \red{ \quad  \frac{-\alpha_p z_p^{1-\nu_k}}{(z_p-u_\ell)^2}}  
    %  \red{ \quad  \frac{-\alpha_p z_p^{1-\nu_k}}{(z_p-u_\ell)^2}}  
 \\  
&- \sum_{p\not= p'=1}^m \frac{\alpha_p\alpha_{p'}}{2}\frac{1}{(u_k-z_p)(z_p-z_{p'})}  \langle  T_g(\mathbf{u}^{(k)})  \prod_{i=1}^m \tilde{V}_{\alpha_i,g }(z_i) \rangle_{\hat\C,U_t,g}
   \\
%& +\frac{\mu  \gamma  }{2}  \int_{|x|>1} \frac{1}{(u_k-x)x} \Big( (\alpha+\gamma +\sum_{p=1}^n \alpha_p-2Q)  \langle   T(\mathbf{u}^{(k)})  V_\alpha(0)V_\gamma(x)\prod_{i=1}^nV_{\alpha_i }(z_i) \rangle_t\\
  =:&D_5(\mathbf{u},{\bf z})+D_{9}(\mathbf{u},{\bf z})+D_{10}(\mathbf{u},{\bf z}) ,
  \end{align*}  
whereas 
\begin{align*}
 L_\epsilon(\mathbf{u},{\bf z}) %(=D_4(\mathbf{u}))
=-\mu\gamma\sum_{p=1}^m  %\frac{\alpha_p }{2}
\alpha_p\int_{  \C \setminus U_t}%\frac{1}{(u_k-z_p)(z_p-x)} 
\frac{1}{u_k-z_p}C_{\epsilon,0}(z_p,x)  \langle  T_g(\mathbf{u}^{(k)})  \tilde{V}_{\gamma,g}(x)\prod_{i=1}^m \tilde{V}_{\alpha_i ,g,\epsilon}(z_i) \rangle_{\hat\C,U_t,g}     \,\dd x   .
  \end{align*}  
A simple computation shows that the metric terms disappear.
Since $C_{0,0}(z_p,x) =-\frac{1}{2}\frac{1}{z_p-x}$  and since it is not clear that $\frac{1}{z_p-x} \langle  T(\mathbf{u}^{k})  \tilde{V}_{\gamma,g}(x)\prod_{i=1}^m \tilde{V}_{\alpha_i,g }(z_i) \rangle_t    $ is integrable (except for $p=1,2,3$) the $\epsilon\to 0$ limit of $L_\epsilon$ is problematic. However, we can compare it with the term ${C_{14}'}$ in \eqref{T4leq}, for which convergence was established there.  Writing 
$$\frac{1}{u_k-z_p}=\frac{1}{u_k-x}+\frac{z_p-x}{(u_k-z_p)(u_k-x)},
$$
we conclude that $ L_\epsilon$ converges:
\begin{align*}
\lim_{\epsilon\to 0} L_\epsilon(\mathbf{u},{\bf z})
 =&\frac{\mu \gamma}{2} \sum_{p=1}^n  %\frac{\alpha_p }{2}
\alpha_p  \int_{ \C \setminus U_t }
\frac{1}{(u_k-x)(z_p-x)}   \langle  T(\mathbf{x}^{(k)})  \tilde{V}_{\gamma,g}(x)\prod_{i=1}^m \tilde{V}_{\alpha_i,g,\epsilon }(z_i) \rangle_{\hat\C,U_t,g}    \,\dd x
\\
& + \frac{\mu \gamma}{2}  \int_{\C \setminus U_t}
\frac{1}{(u_k-z_p)(u_k-x)}   \langle  T(\mathbf{u}^{(k)})  \tilde{V}_{\gamma,g}(x)\prod_{i=1}^m \tilde{V}_{\alpha_i,g }(z_i) \rangle_{\hat\C,U_t,g}     \,\dd x 
\\
 =&C_{14}'(\mathbf{u},{\bf z})+C_{14}(\mathbf{u},{\bf z}).
  \end{align*}  
  We will use the identity 
  \begin{equation}\label{eq:derward}
I(\mathbf{u},{\bf z})=D_5(\mathbf{u},{\bf z})+D_{9}(\mathbf{u},{\bf z})+D_{10}(\mathbf{u},{\bf z})+C_{14}'(\mathbf{u},{\bf z})+C_{14}(\mathbf{u},{\bf z})
\end{equation}
in the following.
\end{proof}

Now we turn to the derivatives of the SET-insertions. Let 
\begin{equation}\label{Sterm}
S(\mathbf{u},{\bf z}):=\sum_{\ell=1}^{k-1}\frac{1}{u_k-u_{\ell}}\partial_{u_{\ell}}\langle   T_g(\mathbf{u}^{(k)}) \prod_{i=1}^m \tilde{V}_{\alpha_i,g }(z_i) \rangle_{\hat\C,U_t,g}.
\end{equation}
From \eqref{defSETsimple} we get
\begin{align*}%\label{}
\partial_{u_{\ell}}T_g(\mathbf{u}^{(\ell)})=Q \partial_{u_\ell}^3 X_g(u_\ell)- 2  \partial_{u_\ell} X_g(u_\ell) \partial_{u_\ell}^2 X_g(u_\ell)-Q\partial_{u_\ell} \omega (u_\ell)\partial_{u_\ell}^2 X_g(u_\ell) -Q\partial_{u_\ell}^2 \omega (u_\ell)\partial_{u_\ell} X_g(u_\ell).
\end{align*}
We collect the  metric dependent terms to
\begin{align*}
S' (\mathbf{u},{\bf z})& :=  -Q \sum_{\ell=1}^{k-1}  \frac{\partial_{u_\ell} \omega (u_\ell) }{u_k-u_\ell} \langle  \partial_{u_\ell}^2 X_g(u_\ell) T_g(\mathbf{u}^{(k ,\ell)})    \prod_{i=1}^m \tilde{V}_{\alpha_i,g }(z_i)   \rangle_{\hat\C,U_t,g} \\ & -Q   \sum_{\ell=1}^{k-1}\frac{\partial_{u_\ell}^2 \omega (u_\ell) }{u_k-u_\ell}
  \langle  \partial_{u_\ell} X_g(u_\ell) T_g(\mathbf{u}^{(k ,\ell)})    \prod_{i=1}^m \tilde{V}_{\alpha_i,g }(z_i)   \rangle_{\hat\C,U_t,g}
\end{align*}
Then we claim that
\begin{align*}
S(\mathbf{u},{\bf z})=\sum_{i}S_i(\mathbf{u},{\bf z})+S'(\mathbf{u},{\bf z})
\end{align*}
 with the terms $S_i$ given by:%\\
 %Contraction 1 is 
\begin{align*}
 %&  Q^2 \E[\partial_{u_\ell}^3 X_g(u_\ell) \partial_{u_{\ell'}}^2 X_g(u_{\ell'})   ]=  -\frac{12 Q^2}{(u_\ell- u_{\ell'})^5}, \\ 
& S_1(\mathbf{u},{\bf z}):= - \sum_{\ell\not=\ell'=1}^{k-1}\frac{12 Q^2}{(u_k-u_{\ell})(u_\ell-u_{\ell'})^5} \langle   T_g(\mathbf{u}^{(k,\ell,\ell')})  \prod_{i=1}^m \tilde{V}_{\alpha_i,g }(z_i) \rangle_{\hat\C,U_t,g}\\
%\end{align*}
%Contraction 3 is
%\begin{align*}
%& -2 Q \E[  \partial_{u_\ell}^2 X(u_\ell) \partial_{u_{\ell'}}^2 X(u_{\ell'})    ] =   \frac{-6Q}{(u_\ell-u_{\ell'})^4},  \\
& S_{3}(\mathbf{u},{\bf z}):=  -6Q  \sum_{\ell\not=\ell'=1}^{k-1}   \frac{ 1}{(u_k-u_{\ell})(u_{\ell}- u_{\ell'})^4} \langle   \partial_{u_\ell}X_g(u_{\ell}) T(\mathbf{u}^{(k ,\ell,\ell')})    \prod_{i=1}^mV_{\alpha_i,g }(z_i) \rangle_{\hat\C,U_t,g} \\
%\end{align*}
%Contraction 5 is 
%\begin{align*}
%& Q \alpha_p \E[  \partial_{u_\ell}^3 X_g(u_\ell)  X(z_p) ] =   -\frac{Q \alpha_p}{(u_\ell-z_p)^3}-\frac{Q \alpha_p}{4} \partial_{u_\ell}^3 \omega (u_\ell)  \\
& S_{5}(\mathbf{u},{\bf z}):= - \sum_{\ell=1}^{k-1} \sum_{p=1}^m    \frac{ Q\alpha_p}{(u_k-u_{\ell})(u_\ell-z_p)^3} \langle  T_g(\mathbf{u}^{(k,\ell )})   \prod_{i=1}^m \tilde{V}_{\alpha_i,g }(z_i) \rangle_{\hat\C,U_t,g},\\
& \qquad \qquad \quad - \frac{1}{4} \sum_{\ell=1}^{k-1} \sum_{p=1}^m    \frac{ Q\alpha_p}{(u_k-u_{\ell})} \partial_{u_\ell}^3 \omega (u_\ell)   \langle  T_g(\mathbf{u}^{(k,\ell )})   \prod_{i=1}^mV_{\alpha_i,g }(z_i) \rangle_{\hat\C,U_t,g}\\
%\end{align*}
%Contraction 6 is
%\begin{align*}
%& -Q\mu \gamma \E[  \partial_{u_\ell}^3 X_g(u_\ell)  X_g(x)  ] =   \frac{Q \mu \gamma }{(u_\ell-x)^3}+\frac{Q \mu \gamma }{4} \partial_{u_\ell}^3 \omega (u_\ell)  \\
& S_{6}(\mathbf{u},{\bf z}):=  Q\mu\gamma  \sum_{\ell=1}^{k-1}  \int_{\C \setminus U_t} \frac{ 1}{(u_k-u_{\ell})(u_{\ell}- x)^3} \langle  T_g(\mathbf{u}^{(k ,\ell)})   \tilde{V}_{\gamma,g}(x)\prod_{i=1}^m \tilde{V}_{\alpha_i,g }(z_i) \rangle_{\hat\C,U_t,g}\,\dd x\\
& \qquad \qquad \quad +  \frac{Q\mu\gamma}{4}  \sum_{\ell=1}^{k-1} \partial_{u_\ell}^3 \omega (u_\ell)  \int_{\C \setminus U_t  } \frac{ 1}{(u_k-u_{\ell})}  \langle  T_g(\mathbf{u}^{(k ,\ell)})   \tilde{V}_{\gamma,g}(x)\prod_{i=1}^m \tilde{V}_{\alpha_i,g }(z_i) \rangle_{\hat\C,U_t,g}\,\dd x\\
%\end{align*}
%Contraction 7 is
%\begin{align*}
%& -2 Q  \E[  \partial_{u_\ell}^3 X_g(u_\ell) \partial_{u_{\ell'}} X_g(u_{\ell'})     ] =   \frac{6Q}{ (u_\ell-u_{\ell'})},  \\
& S_7(\mathbf{u},{\bf z}) := 6Q \sum_{\ell\not=\ell'=1}^{k-1}\frac{1}{(u_k-u_{\ell})(u_\ell-u_{\ell'})^4} \langle  \partial_{u_{\ell'}}\Phi_g(u_{\ell'}) T_g(\mathbf{u}^{(k,\ell,\ell')})  \prod_{i=1}^m \tilde{V}_{\alpha_i,g }(z_i) \rangle_{\hat\C,U_t,g}\\
%\end{align*}
%Contraction 8 is
%\begin{align*}
% & 2 \mu \gamma   \E[  \partial_{u_\ell}^2 X_g(u_\ell)  X_g(x) ] =   \frac{\mu \gamma}{  (u_\ell-x)^2}  -\frac{\mu \gamma}{2} \partial_{u_\ell}^2 \omega (u_\ell), \\
& S_{8}(\mathbf{x},{\bf z}):= + \mu\gamma \sum_{\ell=1}^{k-1}  \int_{\C \setminus U_t } \frac{ 1}{(u_k-u_{\ell})(u_{\ell}- x)^2} \langle   \partial_{u_\ell} X(u_{\ell} )T_g(\mathbf{u}^{(k,\ell )})   \tilde{V}_{\gamma,g}(x)\prod_{i=1}^m \tilde{V}_{\alpha_i,g }(z_i) \rangle_{\hat\C,U_t,g}\,\dd x  \\
&\qquad \qquad - \frac{\mu\gamma}{2}  \sum_{\ell=1}^{k-1} \partial_{u_\ell}^2 \omega (u_\ell) \int_{\C \setminus U_t} \frac{ 1}{(u_k-u_{\ell})} \langle   \partial_{u_\ell} X(u_{\ell} )T_g(\mathbf{u}^{(k,\ell )})   \tilde{V}_{\gamma,g}(x)\prod_{i=1}^m \tilde{V}_{\alpha_i,g }(z_i) \rangle_{\hat\C,U_t,g}\,\dd x\\
%\end{align*}
%Contraction 9 is 
%\begin{align*}
%& -2 \alpha_p  \E[  \partial_{u_\ell}^2 X_g(u_\ell)  X_g(z_p)     ] =  - \frac{\alpha_p}{  (u_\ell-z_p)^2}  +\frac{\alpha_p}{2} \partial_{u_\ell}^2 \omega (u_\ell),\\
& S_{9}(\mathbf{u},{\bf z}):= -  \sum_{\ell=1}^{k-1}\sum_{p=1}^m  \frac{\alpha_p}{(u_k-u_{\ell})(u_{\ell}-z_p)^2} \langle  \partial_{u_\ell}X_g(u_{\ell} ) T(\mathbf{u}^{(k,\ell  )})    \prod_{i=1}^m \tilde{V}_{\alpha_i,g }(z_i) \rangle_{\hat\C,U_t,g} \\
& \qquad\qquad\quad +  \sum_{\ell=1}^{k-1}\sum_{p=1}^m  \frac{\alpha_p}{2 (u_k-u_{\ell})} \partial_{u_\ell}^2 \omega (u_\ell) \langle  \partial_{u_\ell}X_g(u_{\ell} ) T_g(\mathbf{u}^{(k,\ell  )})    \prod_{i=1}^m \tilde{V}_{\alpha_i,g }(z_i) \rangle_{\hat\C,U_t,g}\\
%\end{align*}
%Contraction 12 is 
%\begin{align*}
% & 4  \E[  \partial_{u_\ell}^2 X_g(u_\ell) \partial_{u_{\ell'}} X_g(u_{\ell'})       ] =  \frac{4}{  (u_\ell-u_{\ell'})^3},  \\
& S_{12}(\mathbf{u},{\bf z}):= 4 \sum_{\ell\not=\ell'=1}^{k-1}   \frac{ 1}{(u_k-u_{\ell})( u_\ell-u_{\ell'})^3} \langle  \partial_{u_\ell}X_g(u_{\ell} )\partial_{u_{\ell'}}\Phi_g(u_{\ell'} )T(\mathbf{u}^{(k,\ell,\ell' )})  \prod_{i=1}^m \tilde{V}_{\alpha_i,g }(z_i) \rangle_{\hat\C,U_t,g} .
\end{align*}
Again, the numbering will be used to compare with the $C$ terms. Also, to establish this formula, we regularise the SET-insertion $T(u_\ell)$, differentiate it, then use Gaussian integration by parts  and then pass to the limit as $\epsilon\to 0$. Notice that the convergence of all these terms is obvious as the variables $u_\ell$ for $\ell=1,\dots,k-1$ belong to $U_t$. The same strategy can be applied to establish that
\begin{equation}\label{Lterm}
L(\mathbf{u}, {\bf z}):=\sum_{\ell=1}^{k-1}\frac{2}{(u_k-u_{\ell})^2} \langle   T_g(\mathbf{u}^{(k)})    \prod_{i=1}^m \tilde{V}_{\alpha_i,g }(z_i) \rangle_{\hat\C,U_t,g}
\end{equation}
is the sum of a metric dependent term which is
\[
- \sum_{\ell=1}^{k-1}\frac{2 Q}{(u_k-u_{\ell})^2} \partial_{u_\ell} \omega (u_\ell) \langle \partial_{u_\ell}  X_g (u_\ell)   T(\mathbf{u}^{(k,\ell)})    \prod_{i=1}^m \tilde{V}_{\alpha_i,g }(z_i) \rangle_{\hat\C,U_t,g}
\]
and another term which can be written as  $\sum_{i=1}^{}L_i(\mathbf{u}, {\bf z})$ with the terms $L_i$'s given by:
%Contraction 1 is 
\begin{align*}
%&  Q^2 \E[\partial_{u_\ell}^2 X_g(u_\ell) \partial_{u_{\ell'}}^2 X_g(u_{\ell'})  ] = \frac{3 Q^2}{(u_\ell-u_{\ell'})^4} , \\
&  L_1(\mathbf{u}, {\bf z}) := \sum_{\ell\not=\ell'=1}^{k-1}\frac{6Q^2}{(u_k-u_\ell)^2(x_\ell-x_{\ell'})^4} \langle   T_g(\mathbf{u}^{(k,\ell,\ell')}) \prod_{i=1}^m \tilde{V}_{\alpha_i,g }(z_i) \rangle_{\hat\C,U_t,g} \\ 
%\end{align*}
%Contraction 3 is 
%\begin{align*}
%&  -Q \E[\partial_{u_\ell} X_g(u_\ell) \partial_{u_{\ell'}}^2 X_g(u_{\ell'})  ] = \frac{ Q}{(u_\ell-u_{\ell'})^3} , \\
& L_3(\mathbf{u},{\bf z}) :=  2 Q  \sum_{\ell\not=\ell'=1}^{k-1} \frac{1}{(u_k-u_\ell)^2(u_\ell-u_{\ell'})^3} \langle      \partial_{u_\ell} X_g(u_{\ell} )   T_g(\mathbf{u}^{(k,\ell ,\ell')})    \prod_{i=1}^m \tilde{V}_{\alpha_i,g }(z_i) \rangle_{\hat\C,U_t,g}   \\
%\end{align*}
%Contraction 5 is
%\begin{align*}
 %& Q \alpha_p \E[\partial_{u_\ell}^2 X_g(u_\ell)  X_g(z_p)  ] = \frac{ Q \alpha_p }{2(u_\ell-z_p)^2}  - \frac{Q \alpha_p}{4}  \partial_{u_\ell}^2 \omega (u_\ell)    \\
& L_5(\mathbf{x}, {\bf z}) :=\sum_{\ell=1}^{k-1} \sum_{p=1}^m \frac{Q\alpha_p}{(u_k-u_{\ell})^2(u_\ell-z_p)^2}   \langle      T_g(\mathbf{u}^{(k,\ell )})   \prod_{i=1}^m \tilde{V}_{\alpha_i,g }(z_i) \rangle_{\hat\C,U_t,g} , \\
&\qquad\qquad\quad -\sum_{\ell=1}^{k-1} \sum_{p=1}^m \frac{Q\alpha_p}{2 (u_k-u_{\ell})^2} \partial_{u_\ell}^2  \omega (u_\ell)   \langle      T_g(\mathbf{u}^{(k,\ell )})   \prod_{i=1}^m \tilde{V}_{\alpha_i,g }(z_i) \rangle_{\hat\C,U_t,g}\\
%\end{align*}
%Contraction 6 is 
%\begin{align*}
%& -Q \mu \gamma \E[\partial_{u_\ell}^2 X_g(u_\ell)  X_g(x)  ] = -\frac{ Q \mu \gamma }{2(u_\ell-z_p)^2}  + \frac{Q \mu \gamma}{4}  \partial_{u_\ell}^2 \omega (u_\ell) , \\
& L_{6}(\mathbf{u} ,{\bf z}):= -\mu\gamma Q\sum_{\ell=1}^{k-1} \int_{\C \setminus U_t } \frac{1}{(u_k-u_\ell)^2(x_\ell-x)^2} \langle      T_g(\mathbf{u}^{(k,\ell )})  \tilde{V}_{\gamma,g}(x)\prod_{i=1}^m \tilde{V}_{\alpha_i,g }(z_i) \rangle_{\hat\C,U_t,g}  \,\dd x  \\
& \qquad\qquad\quad +\mu\gamma Q\sum_{\ell=1}^{k-1} \int_{ \C \setminus U_t } \frac{1}{2 (u_k-u_\ell)^2}   \partial_{u_\ell}^2 \omega (u_\ell)  \langle      T_g(\mathbf{u}^{(k,\ell )})  \tilde{V}_{\gamma,g}(x)\prod_{i=1}^m \tilde{V}_{\alpha_i,g }(z_i) \rangle_{\hat\C,U_t,g}  \,\dd x \\
%\end{align*}
%Contraction 7 is
%\begin{align*}
%& -2 Q  \E[\partial_{u_\ell}^2 X(u_\ell)   \partial_{u_{\ell'}} X(u_{\ell'})     ] = -2 \frac{ Q }{(u_\ell-u_{\ell'})^3},\\
& L_7(\mathbf{u}, {\bf z}) :=-4Q \sum_{\ell\not=\ell'=1}^{k-1}\frac{1}{(u_k-u_\ell)^2(u_\ell-u_{\ell'})^3}     \langle     \partial_{u_{\ell'}}\Phi_g(u_{\ell'} )T_g(\mathbf{u}^{(k,\ell,\ell')})   \prod_{i=1}^m \tilde{V}_{\alpha_i,g }(z_i) \rangle_{\hat\C,U_t,g}\\
%\end{align*}
%Contraction 8 is 
%\begin{align*}
%& \mu \gamma  \E[\partial_{u_\ell} X_g(u_\ell)   X_g(x)     ] = - \frac{ \mu \gamma }{2 (u_\ell-x)}  -\frac{\mu \gamma}{4} \partial_{u_\ell}  \omega (u_\ell),  \\
& L_{8}(\mathbf{x} ,{\bf z}) :=-\mu\gamma   \sum_{\ell=1}^{k-1}\int_{\C \setminus U_t } \frac{1}{(u_k-u_{\ell})^2(u_{\ell}-x)} \langle      \partial_{u_\ell}X_g(u_{\ell} )    T_g(\mathbf{u}^{(k ,\ell)})   \tilde{V}_{\gamma,g}(x)\prod_{i=1}^m \tilde{V}_{\alpha_i,g }(z_i) \rangle_{\hat\C,U_t,g}  \,\dd x\\
& \qquad\qquad \quad -\frac{\mu\gamma}{2}   \sum_{\ell=1}^{k-1}  \partial_{u_\ell}  \omega (u_\ell)  \int_{\C \setminus U_t  } \frac{1}{(u_k-u_{\ell})^2} \langle      \partial_{u_\ell} X_g(u_{\ell} )    T_g(\mathbf{u}^{(k ,\ell)})   \tilde{V}_{\gamma,g}(x)\prod_{i=1}^m \tilde{V}_{\alpha_i,g }(z_i) \rangle_{\hat\C,U_t,g}  \,\dd x\\
%\end{align*}
%Contraction 9 is 
%\begin{align*}
%& -\alpha_p \E[\partial_{u_\ell} X_g(u_\ell)   X_g(z_p) ] =  \frac{ \alpha_p }{2 (u_\ell-z_p)}  +\frac{\alpha_p}{4} \partial_{u_\ell}  \omega (u_\ell)      \\
& L_{9}(\mathbf{u}, {\bf z}) :=\sum_{\ell=1}^{k-1} \sum_{p=1}^m \frac{ \alpha_p}{(u_k-u_{\ell})^2(u_\ell-z_p) }   \langle         \partial_{u_\ell}X_g(u_{\ell} )       T_g(\mathbf{u}^{(k,\ell  )})    \prod_{i=1}^m \tilde{V}_{\alpha_i,g }(z_i) \rangle_{\hat\C,U_t,g}    
\\
&\qquad\qquad\quad +  \sum_{\ell=1}^{k-1} \sum_{p=1}^m \frac{ \alpha_p}{2 (u_k-u_{\ell})^2 }  \partial_{u_\ell}  \omega (u_\ell)   \langle         \partial_{u_\ell}X(u_{\ell} )       T_g(\mathbf{u}^{(k,\ell  )})    \prod_{i=1}^m \tilde{V}_{\alpha_i,g }(z_i) \rangle_{\hat\C,U_t,g} \\
%\end{align*}
%Contraction 12 is 
%\begin{align*}
%& 2 \E[\partial_{u_\ell} X_g(u_\ell) \partial_{u_{\ell'}} X_g(u_{\ell'})      ] =  - \frac{ 1} { (u_\ell-u_{\ell'})^2} ,      \\
& L_{12}(\mathbf{u} ,{\bf z}):=- 2\sum_{\ell\not=\ell'=1}^{k-1} \frac{1}{(u_k-u_{\ell})^2(u_\ell-u_{\ell'})^2} \langle      \partial_{u_\ell}X_g(u_{\ell} )  \partial_{u_{\ell'}} \Phi_g (u_{\ell'} )   T_g(\mathbf{u}^{(k,\ell,\ell' )})   \prod_{i=1}^m \tilde{V}_{\alpha_i,g }(z_i) \rangle_{\hat\C,U_t,g}  .
\end{align*}

\noindent {\bf Ward algebra.}
A long and tedious computation (using \eqref{eqconffactor}) shows that the metric dependent terms in $S(\mathbf{u} ,{\bf z})$ and $L(\mathbf{u} ,{\bf z})$ combine in such a way that $S(\mathbf{u} ,{\bf z})= \sum_i \tilde{S}_i$ and $L(\mathbf{u} ,{\bf z})= \sum_i \tilde{L}_i$ where $\tilde{S}_i$ ($\tilde{L}_i$) is obtained from $S_i$ ($L_i$) by replacing all the $\partial_{u_\ell} X_g(u_\ell)$ which appear in the definition of $S_i$ ($L_i$) by $\partial_{u_\ell} \Phi_g(u_\ell)$ (if no terms of this form appear than $\tilde{S}_i=S_i$).  This now allows direct comparison with  $C(\mathbf{u}, \mathbf{z})+ N(\mathbf{u}, \mathbf{z})$. Now we are going to show that the appropriate combination of all these expressions combine to produce the desired identity. Let us consider the expression
\begin{align*} 
K(\mathbf{u},{\bf z}):=&\langle   T_g(\mathbf{u}) \prod_{i=1}^ m \tilde{V}_{\alpha_i,g }(z_i) \rangle_{\hat\C,U_t,g }  -I(\mathbf{u}, {\bf z})- M(\mathbf{u}, \mathbf{z})- T(\mathbf{u}, \mathbf{z})-S(\mathbf{u}, \mathbf{z})-L(\mathbf{u}, \mathbf{z}) , \\
=&C(\mathbf{u}, \mathbf{z})   + N(\mathbf{u}, \mathbf{z})    -I(\mathbf{u},{\bf z})-S(\mathbf{u},\mathbf{z})-L(\mathbf{u},\mathbf{z})  .%-J(\mathbf{u},{\bf z})
\end{align*}
 Also, we have obtained the relation
\begin{align}
N -I &=B+C'_6+C'_8+C'_{11}   -D_5-D_9-D_{10}-C_{14},\label{N-I}
\end{align}
in such a way that $K$ can be rewritten as
\begin{align*} 
K =& A_1+A_3+A_5+A_6+A_7+A_8+A_9+A_{10}+A_{11}+A_{12}+B
\end{align*}
with

 \begin{align}
A_1:=&C_1+C_2-\tilde{S}_1-L_1\label{theone}\\
A_3:=&C_3+C_4/2-\tilde{S}_3/2-\tilde{S}_7/2-\tilde{L}_3/2-\tilde{L}_7/2\label{thethree}\\
A_5:=&C_5+C_{15}-D_5-\tilde{S}_5-\tilde{L}_5\label{thefive}\\
 A_6:=& C_6+C_{13}-C'_6-\tilde{S}_6-\tilde{L}_6\label{thesix}\\
 A_7:=&C_7+C_4/2-\tilde{S}_3/2-\tilde{S}_7/2-\tilde{L}_3/2-\tilde{L}_7/2\label{theseven}\\
A_8:=&C_{8}+C'_{8}-\tilde{S}_{8}-\tilde{L}_{8} \label{theeight}\\
A_9:=& C_9-D_9-\tilde{S}_9-\tilde{L}_9 \label{thenine}\\
A_{10}:=& C_{10}-D_{10}\\
A_{11}:=&C_{11}+C'_{11}\\
A_{12}:= &C_{12}-\tilde{S}_{12}-\tilde{L}_{12}\label{fuck!!!!}.
\end{align}
Finally, we claim that all the $A_i$'s vanish.  Indeed, this is straightforward for $A_{10}, A_{11}$: it comes from the relation 
$$\frac{1}{(u_k-z_p)(u_k-z_{p'})}=\frac{1}{z_p-z_{p'}}\Big(\frac{1}{u_k-z_p}-\frac{1}{u_k-z_{p'}}\Big)$$ and a re-indexation of the double sum for $A_{10}$.

All the other terms results from  algebraic identities:  for \eqref{theone}, we use
\begin{align*}
\frac{1}{(z-u_\ell)^3 (z-u_{\ell'})^3}=&\frac{1}{(z-u_\ell)^3 (u_\ell-u_{\ell'})^3}-\frac{3}{(z-u_\ell)^2 (u_\ell-u_{\ell'})^4}+\frac{6}{(z-u_\ell) (u_\ell-u_{\ell'})^5}
\\ 
&-\frac{1}{(z-u_{\ell'})^3 (u_\ell-u_{\ell'})^3}-\frac{3}{(z-u_{\ell'})^2 (u_\ell-u_{\ell'})^4}-\frac{6}{(z-u_\ell) (u_{\ell'}-u_{\ell})^5}.
\end{align*}
For \eqref{thethree} and \eqref{theseven} we use 
\begin{align*}\frac{1}{(z-u'_{\ell'})^3(z-u'_{r'})^2}=&\frac{1}{(z-u'_{\ell'})^3(u'_{\ell'}-u'_{r'})^2}-\frac{2}{(z-u'_{\ell'})^2(u'_{\ell'}-u'_{r'})^3} 
\\
&+\frac{3}{(z-u'_{\ell'})(u'_{\ell'}-u'_{r'})^4} +\frac{1}{(z-u'_{r'})^2(u'_{r'}-u'_{\ell'})^3}- \frac{3}{(z-u'_{r'}) (u'_{r'}-u'_{\ell'})^4}.
\end{align*} 
For \eqref{thefive} and \eqref{thesix} we use
$$  \frac{1}{(u_k-u_\ell)^3(u_k-x)}= \frac{1}{(u_k-u_\ell)^3(u_\ell-x)} - \frac{1}{(u_k-u_\ell)^2(x-u_\ell)^2}+\frac{1}{(u_k-u_\ell)(u_\ell-x)^3}-\frac{1}{(u_k-x)(u_\ell-x)^3}. $$
For \eqref{theeight} and \eqref{thenine} we use
$$ \frac{1}{(u_k-x_\ell)^2(u_k-x)}= \frac{1}{(u_k-u_\ell)^2(u_\ell-x)} - \frac{1}{(u_k-u_\ell)(x-u_\ell)^2}+\frac{1}{(x-u_\ell)^2(u_k-x)} .$$
For \eqref{fuck!!!!} we use the identity
$$\frac{1}{(z-u_\ell)^2(z-x_{\ell'})^2}=\frac{1}{(z-u_\ell)^2(u_\ell-u_{\ell'})^2} -\frac{2}{(z-u_\ell) (u_\ell-u_{\ell'})^3}+\frac{1}{(u_\ell-u_{\ell'})^2(z-u_{\ell'})^2}-\frac{2}{(u_{\ell'}-u_\ell)^3(z-u_{\ell'}) }.$$
This concludes the proof of Proposition \ref{propward}.\qed

%%%%%%%%%%%%%%%%%%%%%%%%%%%%%%%%%%%%%%%%%%%%%%%%%%%%%%%%%%%%%%%%%%%%%%%%%%%%%%%%%%%%%%%%%%%%%%%%%%%%%%%

\bibliographystyle{alpha}
\bibliography{Segal}

\end{document}